\newtheorem{theorem}{Theorem}
\newtheorem{lemma}{Lemma}
\newtheorem{definition}{Definition}
\newtheorem{prop}{Proposition}
\newtheorem{remark}{Remark}
\newtheorem{corollary}{Corollary}
\newtheorem{question}{Question}
\begin{document}

\date{\today}

\title[Variational Principle and Einstein Relation]{A Variational Principle for Pulsating Standing Waves and an Einstein Relation in the Sharp Interface Limit}
\author[P.S.\ Morfe]{Peter S.\ Morfe}

\maketitle

\begin{abstract}  This paper investigates the connection between the effective, large scale behavior of Allen-Cahn energy functionals in periodic media and the sharp interface limit of the associated $L^{2}$ gradient flows.  By introducing a Percival-type Lagrangian in the cylinder $\mathbb{R} \times \mathbb{T}^{d}$, we establish a link between the $\Gamma$-convergence results of Anisini, Braides, and Chiad\`{o} Piat and the sharp interface limit results of Barles and Souganidis.  In laminar media, we prove a sharp interface limit in a graphical setting, making no assumptions other than sufficient smoothness of the coefficients, and we prove that the effective interface velocity and surface tension satisfy an Einstein relation.  A number of pathologies are presented to highlight difficulties that do not arise in the spatially homogeneous setting.  \end{abstract}

\tableofcontents

\section{Introduction}  

In this work, we revisit the analysis of the large scale behavior of Van der Waals-Cahn-Hilliard phase transitions in periodic media.  We are interested in the relationship between the asymptotics as $\epsilon \to 0^{+}$ of the energy functionals
		\begin{equation} \label{E: functional}
			\mathcal{F}^{a}_{\epsilon}(u^{\epsilon};\Omega) = \int_{\Omega} \left( \frac{1}{2} \epsilon \langle a(\epsilon^{-1}x)Du^{\epsilon}(x),Du^{\epsilon}(x)\rangle + \epsilon^{-1}W(u^{\epsilon}(x))\right) \, dx
		\end{equation} 
	and the associated $L^{2}$ gradient flows, namely,
		\begin{equation} \label{E: main}
			u_{t}^{\epsilon} - \text{div}(a(\epsilon^{-1}x)Du^{\epsilon}) + \epsilon^{-2} W'(u^{\epsilon}) = 0 \quad \text{in} \, \, \mathbb{R}^{d} \times (0,T).
		\end{equation}
The Van der Waals-Cahn-Hilliard theory \cite{van der waals,cahn hilliard} provides a phenomenological, mesoscopic-scale description of phase coexistence in a material composed of a mixture of two phases (say, crystal and melt); see \cite{langer,scaling limits,kobayashi,glicksman} for introductory accounts.  To take into account the effect of the thin interfacial layer between the two phases, the function $u^{\epsilon}$, called a phase field, is used to model the local state of the material.  It is close to $1$ in the bulk of the crystal phase and $-1$ in the melt phase, and transitions smoothly between $-1$ and $1$ in the region separating the two.  The basic postulate of the theory is the free energy \eqref{E: functional} determines the possible equilibrium values of $u^{\epsilon}$ and, out of equilibrium, $u^{\epsilon}$ evolves in time so as to decrease its energy, \eqref{E: main} being one possible choice of dynamics.

To model the effect of periodically arranged heterogeneities or impurities in the material, we consider energies \eqref{E: functional} having the standard form except for the presence of a uniformly elliptic, $\mathbb{Z}^{d}$-periodic matrix field $a$ in the gradient term, which models the interaction energy between phases.  The function $W : [-1,1] \to [0,\infty)$, which describes the free energy density of a given phase, is a double well potential with zeros at $1$ and $-1$.  The $\epsilon$ scaling in \eqref{E: functional} and \eqref{E: main} arises after blowing down space by a factor $\epsilon^{-1}$ and time by $\epsilon^{-2}$.  Our interest is in the macroscopic description of phase coexistence that emerges when $\epsilon \to 0^{+}$.

When the matrix field $a$ is constant, and hence the material properties are spatially homogeneous, the asymptotics of \eqref{E: functional} and \eqref{E: main} are by now well-known.  In the transition from mesoscopic to macroscopic scales, there is a reduction of complexity: the phase field $u^{\epsilon}$ concentrates at the equilibrium phases $1$ and $-1$ and the width of the interface between them becomes infinitessimal.  Accordingly, this scaling regime is referred to as the \emph{sharp interface limit} and the objective is to analyze the limiting interface.  At the level of the energy, Modica and Mortola \cite{modica mortola,modica} proved that $\mathcal{F}^{\text{Id}}_{\epsilon}$ $\Gamma$-converges to a multiple of the perimeter functional.  That is, roughly speaking, there is a constant $c_{W} > 0$ such that if $u^{\epsilon} \approx \chi_{E} - \chi_{\mathbb{R}^{d} \setminus E}$ for some smooth open set $E$, then 
	\begin{equation*}
		c_{W} \mathcal{H}^{d-1}(\partial E \cap \Omega) \leq \liminf_{\epsilon \to 0^{+}} \mathcal{F}^{\text{Id}}_{\epsilon}(u^{\epsilon}; \Omega)
	\end{equation*}
and the lower bound is achieved for well-prepared choices of the data $(u^{\epsilon})_{\epsilon > 0}$.  The limit of \eqref{E: main} is completely analogous (see \cite{barles souganidis} and the references therein): if $u^{\epsilon}(\cdot,0) \approx \chi_{E} - \chi_{\mathbb{R}^{d} \setminus E}$ at the initial time, then $u^{\epsilon}(\cdot,t) \approx \chi_{E_{t}} - \chi_{\mathbb{R}^{d} \setminus E_{t}}$ with the family $(E_{t})_{t \geq 0}$ being the sets whose boundaries evolve by mean curvature flow and for which $E_{0} = E$.  

Once $a$ is allowed to vary periodically, characterizing the macroscopic behavior becomes a question for homogenization theory.  By analogy with the spatially homogeneous case, one can imagine that the effect of the mesoscale variation of $a$ will average out at the macroscale.  The question is then how and why averaging occurs.  The natural expectation is that the energy and its gradient flow converge to an anisotropic surface energy and an anisotropic curvature flow, respectively, ideally in the same manner discussed by Spohn in \cite{spohn paper}.  Investigating the extent to which this is true is the principal aim of the present paper.

\subsection{Motivation} Our approach to the study of \eqref{E: functional} and \eqref{E: main} is inspired by two seminal papers in the area.  In \cite{ansini braides}, Ansini, Braides, and Chiad\`{o} Piat proved that $\mathcal{F}^{a}_{\epsilon}$ $\Gamma$-converges to an anisotropic perimeter functional $\tilde{F}^{a}$, which can be expressed in the following form:
	\begin{equation} \label{E: anisotropic perimeter}
		\tilde{F}^{a}(E;\Omega) = \int_{\partial E \cap A} \tilde{\varphi}^{a}(\nu_{\partial E}(\xi)) \, \mathcal{H}^{d-1}(d \xi).
	\end{equation}
Here $\tilde{\varphi}^{a}$ is a Finsler norm, which, following the mathematical physics convention, we will henceforth refer to as the \emph{(macroscopic) surface tension} associated with \eqref{E: functional}.  

Some years before \cite{ansini braides} appeared, Barles and Souganidis \cite{barles souganidis} analyzed the behavior of \eqref{E: main} under the assumption that it possessed a smooth family of special solutions we will refer to as \emph{pulsating standing waves}.  These are functions $U_{e}$ indexed by a unit vector $e \in S^{d-1}$ (see \eqref{E: sphere notation}) satisfying the following degenerate elliptic PDE in the cylinder $\mathbb{R} \times \mathbb{T}^{d}$:
	\begin{align} \label{E: pulsating standing wave}
		\mathcal{D}^{*}_{e}(a(x) &\mathcal{D}_{e} U_{e}) + W'(U_{e}) = 0 \, \, \text{in} \, \, \mathbb{R} \times \mathbb{T}^{d}, \quad (\mathcal{D}_{e} := e \partial_{s} + D_{x})\\
		&\lim_{s \to \pm \infty} U_{e}(s,x) = \pm 1, \quad \partial_{s} U_{e} \geq 0. \nonumber
	\end{align}
Under this assumption, the authors showed that \eqref{E: main} does indeed approximate a certain geometric flow in the sharp interface limit.  Specifically, they showed that the limiting interfaces $(\partial E_{t})_{t \geq 0}$ between the two phases evolve with normal velocity $V_{\partial E_{t}}$ given by
	\begin{equation} \label{E: anisotropic curvature flow}
		\tilde{M}^{a}(\nu_{\partial E_{t}}) V_{\partial E_{t}} = \text{tr} \left( \tilde{\mathcal{S}}^{a}(\nu_{\partial E_{t}}) A_{\partial E_{t}} \right).
	\end{equation}
Here $\nu_{\partial E_{t}}$ and $A_{\partial E_{t}}$ are the normal vector and second fundamental form associated with $\partial E_{t}$, respectively, and $\tilde{M}^{a}$ and $\tilde{\mathcal{S}}^{a}$ are effective coefficients computable in terms of certain integrals of the pulsating standing waves.  We will refer to $\tilde{M}^{a}$ as the \emph{mobility}.

It is natural to wonder how the matrix $\tilde{\mathcal{S}}^{a}$ in \eqref{E: anisotropic curvature flow} relates to the surface tension $\tilde{\varphi}^{a}$ in \eqref{E: anisotropic perimeter}.  Indeed, questions of this type are important in homogenization and the calculus of variations \cite{variational evolution book,serfaty} and are fundamental in the statistical mechanics of interfaces \cite{spohn paper,bellettini butta presutti}.  This is the first question the paper seeks to address:

\begin{question}  How is $\tilde{\mathcal{S}}^{a}$ related to \eqref{E: anisotropic perimeter}?  \end{question}

That \eqref{E: main} can be analyzed using special solutions connecting the equilibria $\pm 1$ is reminiscent of what is known in the spatially homogeneous setting.  In that case, standing wave solutions are well known to play an important role not only in the asymptotics of the gradient flow \cite{crazy de masi presutti paper,katsoulakis souganidis isotropic,katsoulakis souganidis anisotropic,barles souganidis} but also in the determination of the surface tension \cite{non-local_isotropic,scaling limits,alberti guide}.  By contrast, in the periodic setting, since \cite{barles souganidis} relatively little attention has been devoted to the pulsating standing wave equation \eqref{E: pulsating standing wave}.  (However, see \cite{ducrot,giletti rossi}.)  This leads to the second question treated here:

\begin{question}  Do pulsating standing waves admit a variational interpretation?  Do they exist in general?  Are they smooth?  \end{question}  

Finally, in view of known examples in the homogenization of geometric flows (cf.\ \cite{cesaroni novaga valdinoci,novaga valdinoci,braides gelli novaga}), it is far from clear what kind of estimates or regularity can be expected from the coefficients $\tilde{M}^{a}$ and $\tilde{\mathcal{S}}^{a}$.  Following \cite{ducrot}, for instance, an optimistic first guess suggests this question might be approachable through an elliptic regularization of \eqref{E: pulsating standing wave}.  These issues constitute our last question:

\begin{question}  What can be said about the coefficients $\tilde{\mathcal{S}}^{a}$ and $\tilde{M}^{a}$ where estimates and regularity are concerned?  Can they be obtained through elliptic regularization?  \end{question}

\subsection{Overview of the Results}  In this section, we give an informal overview of the main results.  More precise versions of the results stated here appear in Section \ref{S: results}.

To address the questions above, we begin by introducing a Lagrangian $\mathscr{T}^{a}_{e}$ of the form
	\begin{equation} \label{E: lagrangian}
		\mathscr{T}^{a}_{e}(U) = \int_{\mathbb{R} \times \mathbb{T}^{d}} \left( \frac{1}{2} \langle a(x) \mathcal{D}_{e}U, \mathcal{D}_{e}U \rangle + W(U) \right) \, dx \, ds \quad (\mathcal{D}_{e} := e \partial_{s} + D_{x}),
	\end{equation}
which has the pulsating standing wave equation \eqref{E: pulsating standing wave} as its Euler-Lagrange equation.  Starting with our second question, we prove 

\begin{theorem} \label{T: standin}  Fix a unit vector $e \in S^{d-1}$ (see \eqref{E: sphere notation}).  If $a$ is a uniformly elliptic, $\mathbb{Z}^{d}$-periodic matrix field and $W$ is a non-negative continuous function in $[-1,1]$ with $W^{-1}(\{0\}) = \{-1,1\}$, then:
	\begin{itemize}
		\item[(i)] There is a weak solution $U_{e}$ of \eqref{E: pulsating standing wave} which is minimizing for $\mathscr{T}^{a}_{e}$ and satisfies
			\begin{equation*}
				\mathscr{T}^{a}_{e}(U_{e}) = \tilde{\varphi}^{a}(e).
			\end{equation*}
		\item[(ii)] If $U$ is a continuous solution of \eqref{E: pulsating standing wave} and $a$ and $W$ are regular enough, then $U$ is minimizing for $\mathscr{T}^{a}_{e}$.
		\item[(iii)] There exist smooth coefficients $a$ and $W$ such that, for some direction $e \in S^{d-1}$, $\mathscr{T}^{a}_{e}$ has no continuous minimizers.
	\end{itemize}
\end{theorem}

The theorem shows that the pulsating standing wave equation \eqref{E: pulsating standing wave} has a natural variational interpretation and that this leads to obstructions to smoothness.  This is actually very natural considering the close similarity between $\mathscr{T}^{a}_{e}$ and the so-called Percival Lagrangian of classical Aubrey-Mather theory.  

In Aubrey-Mather theory, the Percival Lagrangian acts like a generating function for certain plane-like minimizers (see \cite{moser_old_paper,bessi}). $\mathscr{T}^{a}_{e}$ plays the same role here.  If $U$ is a minimizer, we prove below that the functions $\{u_{\zeta}\}_{\zeta \in \mathbb{R}}$ defined by 
	\begin{equation} \label{E: transformation}
		u_{\zeta}(x) = U(\langle x,e \rangle - \zeta, x)
	\end{equation}
form a monotone family of minimizing critical points of the energy $\mathcal{F}^{a}_{1}$, heteroclinic between $1$ and $-1$ in the $e$ direction.  From this point of view, we see that the question of smoothness of pulsating standing waves can be reformulated as one about foliations of minimizers of $\mathcal{F}^{a}_{1}$.  In view of classical results in Aubrey-Mather theory, this leads naturally to the expectation that minimizers of $\mathscr{T}^{a}_{e}$ are generically discontinuous.

Smoothness of pulsating standing waves is a key element of the analysis in \cite{barles souganidis}.  Pulsating waves are used in both an asymptotic expansion for the solution $u^{\epsilon}$ of \eqref{E: main} and in the computation of the coefficients $\tilde{M}^{a}$ and $\tilde{S}^{a}$ in \eqref{E: anisotropic curvature flow}, both of which utilize smoothness to justify computations.  As we will see below, the lack of smoothness in a particular direction $e$ can manifest itself as pathological behavior in the coefficients $\tilde{M}^{a}(e)$ and $\tilde{\mathcal{S}}^{a}(e)$ at $e$.  Thus, it seems likely that new ideas will be needed to characterize the asymptotics of \eqref{E: main} in general.

For now, by restricting to laminar media, that is, the setting when $a$ depends on only $k < d$ of the variables, we can modify the approach of \cite{barles souganidis} to make some progress on our first question concerning \eqref{E: anisotropic curvature flow}.  Here we are able to regain smoothness for directions $e$ that cross the laminations.  Roughly speaking, this happens because the additional symmetry of $a$ combines with the maximum principle to force the family $\{u_{\zeta}\}_{\zeta \in \mathbb{R}}$ to be generated by translation and hence to form a smooth foliation (see Section \ref{S: laminar symmetry} below). 

Exploiting the regularity that is gained in the laminar setting, we show that certain solutions of the gradient flow \eqref{E: main} are described by \eqref{E: anisotropic curvature flow} in the sharp interface limit, and we relate $\tilde{\mathcal{S}}^{a}$ to the surface tension $\tilde{\varphi}^{a}$ consistently with the picture in \cite{spohn paper}.

\begin{theorem}  If $a(x + y) = a(x)$ for $y \in \mathbb{Z}^{k} \times \mathbb{R}^{d-k}$ for some $k < d$ and $a$ and $W$ are smooth enough, then the coefficient matrix $\tilde{\mathcal{S}}^{a}$ of \eqref{E: anisotropic curvature flow} is well-defined in $S^{d-1} \setminus (S^{k-1} \times \{0\})$ and satisfies
	\begin{equation} \label{E: einstein relation}
		\tilde{\mathcal{S}}^{a}(e) = D^{2}\tilde{\varphi}^{a}(e) \quad \text{for} \, \, e \in S^{d-1} \setminus (S^{k-1} \times \{0\}).
	\end{equation}

Furthermore, there is a nonempty class of initial data $\mathcal{A}$ such that if $(u^{\epsilon})_{\epsilon > 0}$ are the solutions of \eqref{E: main} with initial datum $u^{\epsilon}(\cdot,0) = u_{0} \in \mathcal{A}$, then there is a family of open sets $(E_{t})_{t \geq 0}$ with $E_{0} = \{u_{0} > 0\}$, moving with normal velocity given by \eqref{E: anisotropic curvature flow}, and such that, for all $t \geq 0$,
	\begin{equation*}
		u^{\epsilon}(\cdot,t) \to 1 \quad \text{in} \, \, E_{t}, \quad u^{\epsilon}(\cdot,t) \to -1 \quad \text{in} \, \, \mathbb{R}^{d} \setminus \overline{E}_{t}.
	\end{equation*}
\end{theorem}

In statistical mechanics, the fact that the effective interface velocity $V$ is related to $\tilde{\varphi}^{a}$ through \eqref{E: anisotropic curvature flow} and \eqref{E: einstein relation} is referred to as an \emph{Einstein relation} \cite{spohn paper}. 

Concerning our third question, we demonstrate a number of pathologies that are unique to the spatially heterogeneous setting.  The unifying principle behind the pathologies is the lack of smoothness of the pulsating standing waves gives rise to discontinuities and degeneracies at the level of the effective coefficients $\tilde{\varphi}^{a}$ and $\tilde{M}^{a}$.

\subsection{Related literature}  Concerning level-set PDE, sharp interface limits, and related mathematical results, we refer to \cite{barles souganidis} and the references therein.  For motivation for the study of diffuse interface models like \eqref{E: functional} and \eqref{E: main} from a mathematical physics point of view, see \cite{scaling limits}.  The inspiration to revisit \cite{ansini braides} and \cite{barles souganidis} came from related work by Butt\`{a} \cite{butta} and Bellettini, Butt\`{a}, and Presutti \cite{bellettini butta presutti} in the study of the (spatially homogeneous) Lebowitz-Penrose functional.

The Lagrangian \eqref{E: lagrangian} can be understood as a phase transition or heteroclinic version of the Percival Lagrangian in the Aubry-Mather and Moser-Bangert theories.  That the fundamental so-called ``WSI" solutions of those theories can be encoded using a degenerate elliptic energy functional was highlighted by Moser \cite{moser_old_paper}.  Detailed proofs of this have been given by Bessi \cite{bessi} and de la Llave and Su \cite{de la llave su}.  

Even if the connection between \eqref{E: pulsating standing wave} and Aubry-Mather theory does not seem to have been observed previously in the literature, it is by now well known that \eqref{E: functional} can be fit into the framework of Moser-Bangert theory (see \cite{junginger-gestrich valdinoci,rabinowitz stredulinsky book}). 

 As a corollary of the analysis of \eqref{E: lagrangian}, we give a new proof of the existence of plane-like minimizers of \eqref{E: functional}.  That question was previously treated by Alessio, Jeanjean, and Montecchiari \cite{alessio jeanjean montecchiari} and Rabinowitz and Stredulinsky \cite{rabinowitz stredulinsky} in the rational case and Valdinoci \cite{valdinoci} in general, and ultimately can be deduced from Bangert's study of heteroclinics as well \cite{minimal_laminations}.  

The approach of \cite{rabinowitz stredulinsky}, like our analysis of \eqref{E: lagrangian}, boils down to an application of the direct method of the calculus of variations.  However, the energy functional they use only makes sense when $e$ is rational.  From that point of view, \eqref{E: lagrangian} shows how to regain compactness in the irrational case (see the remark following Proposition \ref{P: remove constraint} below).  

In previous works, such as \cite{rabinowitz stredulinsky} and \cite{bessi}, the existence of minimizers has been proved assuming a certain amount of smoothness of the coefficients so that the strong maximum principle applies.  Here we give an existence proof that entirely avoids this, thereby allowing us to obtain existence under weaker regularity assumptions without appealing to approximation arguments.

Concerning the sharp interface limit of \eqref{E: main}, our idea to consider phase field approximations of graphical solutions of \eqref{E: anisotropic curvature flow} was inspired by the work of Barles, Cesaroni, and Novaga \cite{barles cesaroni novaga}, who showed that certain graphical solutions of a sharp interface analogue of \eqref{E: main} converge to the solutions of a homogenized geometric flow, again in laminar media.  They were able to show that in dimension two, the effective velocity vanishes in the direction of the laminations (see Remark 5.2 in that work, especially the last sentence).  We obtain a similar, but weaker, conclusion here, again in dimension two (see Theorem \ref{T: dim_2_stuff}, (iii) below).  

Our results highlight the fact that the macroscopic quantities describing the large scale behavior of functionals like \eqref{E: functional} can have singularities.  This question has previously been treated in Moser-Bangert theory by Senn, who showed that the analogue of the surface tension in that setting is always strictly convex \cite{senn 1} but need not be $C^{1}$ \cite{senn 2}.  Recently, Chambolle, Goldman, and Novaga \cite{chambolle goldman novaga} proved the same result for surface energies with periodic coefficients using a Lagrangian formulation morally similar to \eqref{E: lagrangian}, which had been previously introduced by Chambolle and Thouroude \cite{chambolle thouroude}.  The author plans to address the corresponding questions for $\tilde{\varphi}^{a}$ in future work.

Finally, this paper was partly influenced by recent developments in the study of pulsating wave solutions of reaction-diffusion equations such as \eqref{E: main}.  The proof of the existence of minimizers of \eqref{E: lagrangian}, which is complicated by the degeneracy of the gradient term, was inspired by advances made by Ducrot \cite{ducrot} (see Proposition \ref{P: bv_estimate} below).  It would be nice to see applications of the ideas presented here in the study of pulsating \emph{traveling} waves.  Although variational constructions of traveling waves exist in homogeneous media \cite{elliptic systems phase transitions}, it's not clear that this can be extended to the periodic set-up.  Nonetheless, we prove an integral identity for functions in $\mathbb{R} \times \mathbb{T}^{d}$ (see Theorem \ref{T: ergodic_lemma}) that is new to the best of the author's knowledge and may be of interest to experts.

\subsection{Organization of the Paper}  In the next section, we give precise statements of the main results of the paper.  Section \ref{S: notation} explains the notation used throughout.  Section \ref{S: preliminary_analysis} provides a dictionary for translating between the Lagrangian $\mathscr{T}^{a}_{e}$ in $\mathbb{R} \times \mathbb{T}^{d}$, on the one hand, and the energy $\mathcal{F}^{a}_{1}$ in $\mathbb{R}^{d}$, on the other.  The existence of minimizers of $\mathscr{T}^{a}_{e}$ is treated in Section \ref{S: existence},  and the connections between the minimizers of $\mathscr{T}^{a}_{e}$, solutions of \eqref{E: pulsating standing wave}, and plane-like minimizers of \eqref{E: functional} are treated in Section \ref{S: physical_coordinates}.  Section \ref{S: einstein relation laminar media} provides the link between the results of \cite{barles souganidis} and \cite{ansini braides} in laminar media, establishing the Einstein relation and also deriving general regularity results for the surface tension in this context.  Since the arguments are similar, Section \ref{S: einstein relation laminar media} also includes results on an elliptic regularization of $\mathscr{T}^{a}_{e}$.  Pathological examples are discussed in Section \ref{S: example_2D}.  Lastly, a sharp interface limit for a class of graphical initial data in laminar media is proved in Section \ref{S: sharp_interface_limit}. 

There are three appendices treating ancillary technical results needed in the paper.  Appendix A treats a few technical results used elsewhere in the paper.   The key change-of-variable formulas required to relate $\mathscr{T}^{a}_{e}$ to $\mathcal{F}^{a}_{1}$ are proved in Appendix B.  Appendix C covers a tubular neighborhood theorem that is needed in the proof of the sharp interface limit.

\section{Statement of Main Results} \label{S: results} 

In the majority of the work, we operate under the following assumptions: we always assume
	\begin{align}
		&a : \mathbb{T}^{d} \to \mathcal{S}_{d} \, \, \text{measurable,} \quad \lambda \text{Id} \leq a \leq \Lambda \text{Id} \label{A: a_assumption_1}, \\
		&W: [-1,1] \to [0,\infty) \, \, \text{continuous,} \, \, W^{-1}(\{0\}) = \{-1,1\} \label{A: W_assumption_1},
	\end{align}
and, when proving the Einstein relation in Section \ref{S: einstein relation laminar media}, we also assume that there is an $\alpha \in (0,1)$ such that
	\begin{align}
		&a \in C^{1,\alpha}(\mathbb{T}^{d}; \mathcal{S}_{d}), \label{A: a_assumption_2} \\
		&W \in C^{2,\alpha}([-1,1]), \quad W''(1) \wedge W''(-1) \geq \alpha. \label{A: W_assumption_2}
	\end{align}
	
In Section \ref{S: sharp_interface_limit}, when we study the sharp interface limit, we impose further regularity on $W$ and add some standard assumptions concerning the sign of $W'$ in $[-1,0]$ and $[0,1]$ (see \eqref{A: additional_regularity} and \eqref{A: sign}).

Since we will be interested in variational arguments involving \eqref{E: functional}, it is convenient to extend $W$ from $[-1,1]$ to a larger interval.  For concreteness, we will define the extension (abusing notation) $W : [-2,2] \to \mathbb{R}$ by 
	\begin{equation*}
		W(u) = \left\{ \begin{array}{r l}
			W(1) + u - 1, & \text{if} \, \, u \in [1,2], \\
			W(-1) + (1 -u), & \text{if} \, \, u \in [-2,-1].
					\end{array} \right.
	\end{equation*}  

\subsection{Minimizers of $\mathscr{T}^{a}_{e}$} \label{S: calculus of variations results} To start with, we show that the Lagrangian $\mathscr{T}^{a}_{e}$ defined in \eqref{E: lagrangian} has minimizers.  Throughout the paper, we denote by $S^{d-1}$ the unit sphere in $\mathbb{R}^{d}$.  For each $e \in S^{d-1}$, we hereafter let $\mathscr{E}^{a}(e)$ be given by
	\begin{equation*}
		\mathscr{E}^{a}(e) = \inf \left\{ \mathscr{T}^{a}_{e}(U) \, \mid \, |U| \leq 1, \, \, U(\cdot + s,\cdot) \to \pm 1 \, \, \text{locally as} \, \, \text{as} \, \, s \to \pm \infty \right\}.
	\end{equation*}

	\begin{theorem} \label{T: existence}  (1) If $a$ and $W$ satisfy \eqref{A: a_assumption_1} and \eqref{A: W_assumption_1}, then, for each $e \in S^{d -1}$, there is a minimizer $U_{e}$ of the variational principle $\mathscr{E}^{a}(e)$ satisfying $\partial_{s} U_{e} \geq 0$.  
	
	(2) If, in addition, $a$ and $W$ satisfy \eqref{A: a_assumption_2} and \eqref{A: W_assumption_2} and $e \notin \mathbb{R} \mathbb{Z}^{d}$, then $U_{e}$ is unique up to translations in the $s$ variable. 
	
	(3) $\mathscr{E}^{a} = \tilde{\varphi}^{a}$ in $S^{d-1}$, where $\tilde{\varphi}^{a}$ is the integrand in \eqref{E: anisotropic perimeter}. \end{theorem}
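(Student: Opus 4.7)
I would establish the three assertions in turn.

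\textbf{Part (1).} This is an application of the direct method, whose main obstacle is the degeneracy of the quadratic form: $\mathscr{T}^a_e$ controls only $\mathcal{D}_e U = e\partial_s U + D_x U$ rather than the full gradient on $\mathbb{R}\times\mathbb{T}^d$, so standard phase-transition compactness does not apply. The plan is to first restrict the variational problem to monotone competitors with $\partial_s U \geq 0$, observing that the restricted infimum still equals $\mathscr{E}^a(e)$ by a rearrangement-and-exchange argument: any competitor may be replaced by its $s$-monotone rearrangement without increasing $\mathscr{T}^a_e$, since rearranging $s$-slices only affects the $e$-component of $\mathcal{D}_e$ and is compatible with the convex quadratic structure. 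On the monotone class, the bound $|U|\leq 1$ gives $\int_{\mathbb{T}^d}\partial_s U(\cdot,x)\,dx \leq 2$ uniformly, i.e.\ a BV-in-$s$ estimate. After normalizing translation (for instance, by centering a median slice at $s=0$), I would extract an $L^1_{\mathrm{loc}}$-convergent subsequence of a minimizing sequence via a slicewise application of Helly's theorem combined with the BV estimate of Proposition~\ref{P: bv_estimate}. Lower semicontinuity of the convex quadratic term together with Fatou's lemma on the potential then produce a minimizer $U_e$, and the asymptotics $U_e(\cdot+s,\cdot)\to \pm 1$ follow from finiteness of the energy combined with monotonicity.

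\textbf{Part (2).} Under the stronger hypotheses \eqref{A: a_assumption_2}--\eqref{A: W_assumption_2}, I would upgrade a minimizer to a classical $C^{2,\alpha}$ solution of \eqref{E: pulsating standing wave} by elliptic regularity applied in coordinates that remove the degeneracy of $\mathcal{D}_e$ on the strict-monotonicity set. Given two minimizers $U^1_e,U^2_e$, I would run a sliding comparison, defining
\[
\tau^* := \inf\big\{\tau\in \mathbb{R} : U^2_e(s+\tau,x) \geq U^1_e(s,x) \text{ for all } (s,x)\in \mathbb{R}\times\mathbb{T}^d\big\},
\]
which is finite by the boundary conditions at $s=\pm\infty$. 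If $U^2_e(\cdot+\tau^*,\cdot)\not\equiv U^1_e$, then either there is an interior touching point\textemdash in which case the strong maximum principle for the linearization of \eqref{E: pulsating standing wave} forces coincidence on a full characteristic leaf $\{e\cdot x-s = \text{const}\}$\textemdash or the infimum of the difference is attained only as $s\to \pm\infty$, contradicting minimality after a further translation. The irrationality assumption $e\notin \mathbb{R}\mathbb{Z}^d$ is then invoked to propagate coincidence on a single leaf to global coincidence, via density of orbits of $e$ in $\mathbb{T}^d$. This propagation step is the place in Part (2) that I expect to require the most care.

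\textbf{Part (3).} Here I would lean on the dictionary developed in Section~\ref{S: preliminary_analysis}: the generators of $U$ (in the sense of Definition~\ref{D: generators}) translate $\mathscr{T}^a_e(U)$ into an average of $\mathcal{F}^a_1$-energies on $\mathbb{R}^d$ through the change-of-variables formulas of Appendix~B. To prove $\mathscr{E}^a(e)\leq \tilde{\varphi}^a(e)$, I would lift a plane-like minimizer of $\mathcal{F}^a_1$ with normal $e$ (as furnished by \cite{rabinowitz stredulinsky} or \cite{valdinoci}) to a competitor on the cylinder whose $\mathscr{T}^a_e$-energy computes to $\tilde{\varphi}^a(e)$ via this averaging together with the Ansini--Braides--Chiad\`{o} Piat characterization of the surface tension. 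For the reverse inequality $\mathscr{E}^a(e)\geq \tilde{\varphi}^a(e)$, the generators of the minimizer produced in Part (1) are themselves plane-like local minimizers of $\mathcal{F}^a_1$, so their averaged $\mathcal{F}^a_1$-energy realizes $\mathscr{E}^a(e)$ and dominates $\tilde{\varphi}^a(e)$, closing the loop.
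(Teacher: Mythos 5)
Your plan for Part~(1) hinges on a monotone-rearrangement step that does not go through and is, in fact, exactly the obstacle the paper is built to avoid. You assert that replacing a competitor $U$ by its $s$-monotone rearrangement decreases $\mathscr{T}^a_e$ ``since rearranging $s$-slices only affects the $e$-component of $\mathcal{D}_e$.'' This is false: rearranging in $s$ reshuffles the level sets independently at each $x\in\mathbb{T}^d$, so the transverse gradient $D_xU$ changes uncontrollably, and the cross term $\langle a(x)e,D_xU\rangle\,\partial_sU$ in $\langle a\,\mathcal{D}_eU,\mathcal{D}_eU\rangle$ has no sign and can increase. The Polya--Szego / Steiner inequalities you are implicitly invoking do not apply to a degenerate anisotropic form with such a cross term; note for instance that any $U=F(\langle x,e\rangle - s)$ has $\mathcal{D}_eU\equiv 0$, while its monotone rearrangement in $s$ generically destroys this null structure. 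This is precisely why the paper never rearranges: it first minimizes over the constrained class $\mathscr{X}_+$ (Proposition~\ref{P: existence}), and then shows $\mathscr{E}^a(e)=\inf_{\mathscr{X}_+}\mathscr{T}^a_e$ by an entirely different mechanism (Sections~\ref{S: symmetry breaking}--\ref{S: monotonicity_constraint}): the no-symmetry-breaking Theorem~\ref{T: no_symmetry_breaking} via the sup/inf lattice structure of minimizers, the observation that any truncated competitor generates an $M\mathbb{Z}^d$-Birkhoff function in physical coordinates which can be re-lifted to a \emph{monotone} competitor on $\mathbb{R}\times M\mathbb{T}^d$ (Lemma~\ref{L: asymptotic_birkhoff}, Propositions~\ref{P: approximation_energy_need} and~\ref{P: extension}), and a continuity argument to pass from rational to irrational $e$. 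Without a substitute for your rearrangement step, Part~(1) is incomplete.

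Parts~(2) and~(3) are closer to viable but follow different routes from the paper. For~(2), you propose a sliding comparison; the paper instead uses the submodularity Lemma~\ref{L: submodularity} (the min and max of two minimizers are again minimizers), applies the strong maximum principle in physical coordinates $\mathbb{R}^d$ where the equation is uniformly elliptic (rather than on a ``characteristic leaf'' in the degenerate cylinder), and then propagates coincidence via the density of $\{\langle k,e\rangle : k\in\mathbb{Z}^d\}$ together with the one-sided continuity of $\zeta\mapsto u_\zeta$. Your caveat that the sliding infimum may only be attained ``as $s\to\pm\infty$'' is a genuine difficulty in unbounded domains and is not resolved by the phrase ``contradicting minimality after a further translation''; the lattice approach sidesteps it. For~(3), the paper establishes both inequalities at once by showing (Proposition~\ref{P: surface tension}) that the generated slices $u_\zeta$ are Class~A minimizers of $\mathcal{F}^a_1$ whose energy density equals $\tilde{\varphi}^a(e)$, via a De~Giorgi-type comparison of boundary-value problems with $q_e$ versus $u_\zeta$ data (and then a density argument for irrational $e$). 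Your proposal to import plane-like minimizers from \cite{rabinowitz stredulinsky} or \cite{valdinoci} for the upper bound is logically admissible for this one inequality, but it requires those minimizers to carry the Birkhoff/quasi-periodic structure needed to build a cylinder competitor (essentially Proposition~\ref{P: extension} again), and it works against the paper's stated aim of giving a self-contained existence proof.
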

	
	As a consequence of the theorem, we have a new proof of the existence of plane-like minimizers of $\mathcal{F}^{a}_{1}$.  Let us recall that, following, for instance, \cite{caffarelli de la llave}, a function $u : \mathbb{R}^{d} \to [-1,1]$ is called a Class A minimizer of $\mathcal{F}_{1}^{a}$ if, no matter the choice of open set $\Omega \subseteq \mathbb{R}^{d}$, we have
		\begin{equation*}
			\mathcal{F}_{1}^{a}(u; \Omega) \leq \mathcal{F}_{1}^{a}(u + f; \Omega) \quad \text{for each} \, \, f \in C^{\infty}_{c} (\Omega; [-1,1]). 
		\end{equation*}

By a plane-like minimizer, we mean a Class A minimizer $u$ of $\mathcal{F}^{a}_{1}$ for which the transition region between the equilibrium phases $1$ and $-1$ looks like a plane at the macroscopic scale, that is, for some $e \in S^{d-1}$, we have
	\begin{equation*}
		\lim_{\epsilon \to 0^{+}} u(\epsilon^{-1}x) = \left\{ \begin{array}{r l}
												1, & \text{if} \, \, \langle x,e \rangle > 0, \\
												-1, & \text{if} \, \, \langle x,e \rangle < 0.
											\end{array} \right.
	\end{equation*}
The next corollary shows that $\mathcal{F}^{a}_{1}$ has a very special class of these:
	
	\begin{corollary} \label{C: new_existence} If $a$ and $W$ satisfy \eqref{A: a_assumption_1} and \eqref{A: W_assumption_1}, then, for each $e \in S^{d-1}$, there is a family $\{u_{\zeta}\}_{\zeta \in \mathbb{R}}$ of functions in $\mathbb{R}^{d}$ taking values in $[-1,1]$ such that:
		\begin{itemize}
			\item[(i)] For each $\zeta \in \mathbb{R}$, $u_{\zeta}$ is a Class A minimizer of $\mathcal{F}^{a}_{1}$ and $\lim_{\langle x, e \rangle \to \pm \infty} u_{\zeta}(x) = \pm 1$ uniformly in $\langle e \rangle^{\perp}$.
			\item[(ii)] The map $\zeta \mapsto u_{\zeta}$ from $\mathbb{R}$ into $C_{\text{loc}}(\mathbb{R}^{d})$ (with the topology of local uniform convergence) is non-increasing and has at most countably many discontinuities.
			\item[(iii)] If $k \in \mathbb{Z}^{d}$ and $\zeta \in \mathbb{R}$, then $u_{\zeta + \langle k,e \rangle} = u_{\zeta}(\cdot -k)$.
			\item[(iv)] $\{u_{\zeta}\}_{\zeta \in \mathbb{R}}$ is uniformly $\gamma$-H\"{o}lder continuous in $\mathbb{R}^{d}$ for some $\gamma \in (0,1)$.
			\item[(v)] For almost every $\zeta \in \mathbb{R}$, the following limit exists and satisfies:
				\begin{equation*}
					\lim_{R \to \infty} R^{1 - d} \mathcal{F}^{a}_{1}(u_{\zeta}; Q^{e}(0,R) \oplus_{e} \mathbb{R}) = \tilde{\varphi}^{a}(e).
				\end{equation*}
			If $e \in \mathbb{R} \mathbb{Z}^{d}$ or \eqref{A: a_assumption_1} and \eqref{A: W_assumption_2} both hold, then we can assume without loss of generality this is true for every $\zeta \in \mathbb{R}$.  
		\end{itemize}
	\end{corollary}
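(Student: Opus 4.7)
The plan is to realize the family $(u_\zeta)_{\zeta \in \mathbb{R}}$ as the \emph{generators} of the monotone minimizer $U_e$ produced by Theorem \ref{T: existence}, namely
$$u_\zeta(x) := U_e(\langle x, e\rangle - \zeta, x), \quad x \in \mathbb{R}^d.$$
Several listed properties are then essentially formal. Property (iii) is immediate from the $\mathbb{Z}^d$-periodicity of $U_e$ in its second argument, since $u_\zeta(x - k) = U_e(\langle x, e\rangle - \zeta - \langle k, e\rangle, x - k) = u_{\zeta + \langle k, e\rangle}(x)$. Property (ii) follows from $\partial_s U_e \geq 0$, which makes $\zeta \mapsto u_\zeta(x)$ pointwise non-increasing and hence non-increasing as a map into $C_{\text{loc}}(\mathbb{R}^d)$; since the target is separable and metrizable, any monotone map has at most countably many discontinuities. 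The boundary behavior in (i) follows because the monotonicity of $U_e$ in $s$, together with the $L^1_{\text{loc}}$ convergence to $\pm 1$ built into the definition of $\mathscr{E}^a(e)$, forces $U_e(s, \cdot) \to \pm 1$ uniformly in $\mathbb{T}^d$ as $s \to \pm\infty$; composing with $s = \langle x, e\rangle - \zeta$ gives the asserted uniform limits in $\langle e \rangle^\perp$.

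The core content, and where I expect the real effort to lie, is the Class A minimizer assertion in (i). The natural route is the change-of-variables dictionary between $\mathscr{T}^a_e$ on $\mathbb{R} \times \mathbb{T}^d$ and $\mathcal{F}^a_1$ on $\mathbb{R}^d$ developed in Section \ref{S: preliminary_analysis} and Appendix B. Given a bounded open $\Omega \subset \mathbb{R}^d$ and $f \in C^\infty_c(\Omega; [-1,1])$, the idea is to lift $u_\zeta + f$ to a perturbation of $U_e$ on the cylinder, supported in a tube around $\{s = \langle x, e\rangle - \zeta\}$, and to use the change of variables to express a $\zeta$-average of $\mathcal{F}^a_1(u_\zeta + f; \Omega) - \mathcal{F}^a_1(u_\zeta; \Omega)$ over a period as the corresponding difference for $\mathscr{T}^a_e$. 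Minimality of $U_e$ forces the latter to be non-negative, which yields the same for the $\mathcal{F}^a_1$ expression at almost every $\zeta$; invoking the covariance (iii) and translating in $\zeta$ upgrades this to every $\zeta$. The delicate points are keeping the lift in the admissible class (bounded, with the right asymptotics at $s = \pm\infty$) and handling the degenerate form $\mathcal{D}_e$ carefully when unfolding; this is precisely the work the preliminary dictionary is set up to make routine.

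Property (iv) is then a consequence of (i): each $u_\zeta$ satisfies the Euler-Lagrange equation $-\mathrm{div}(a(x) D u_\zeta) + W'(u_\zeta) = 0$, and the De Giorgi-Nash-Moser theorem produces a uniform local Hölder exponent $\gamma \in (0,1)$ depending only on $d$, $\lambda$, $\Lambda$ and $\|W'\|_\infty$, while the uniform bound $|u_\zeta| \leq 1$ gives uniform Hölder constants. For (v), the same change-of-variables identity underlying paragraph two shows, at least schematically,
$$\int_0^1 \mathcal{F}^a_1(u_\zeta; Q(0,R) \oplus_e \mathbb{R}) \, d\zeta \;=\; R^{d-1} \, \mathscr{T}^a_e(U_e) \;=\; R^{d-1} \, \tilde{\varphi}^a(e),$$
using part (3) of Theorem \ref{T: existence}. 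Dividing by $R^{d-1}$, a Fubini argument combined with monotonicity in $\zeta$ then yields the pointwise limit $\tilde{\varphi}^a(e)$ for a.e.~$\zeta$; the exceptional set is contained in the (countable) discontinuity set of $\zeta \mapsto u_\zeta$. When $e \in \mathbb{R}\mathbb{Z}^d$, a direct periodicity argument in the lift eliminates jumps, and when the smoothness hypotheses \eqref{A: a_assumption_2}-\eqref{A: W_assumption_2} hold with $e$ irrational, the uniqueness from Theorem \ref{T: existence}(2) forces the family to be continuous in $\zeta$ modulo the covariance (iii), so the identity in (v) holds for every $\zeta$ in both cases.
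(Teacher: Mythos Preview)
Your overall strategy---realize $(u_\zeta)$ as the generators of $U_e$ and transfer minimality through the change-of-variables dictionary---is exactly the paper's. But the upgrade from ``almost every $\zeta$'' to ``every $\zeta$'' is where the argument breaks down, and this affects several items.

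First, your covariance argument for (i) does not work. Covariance (iii) only shows that the set of good $\zeta$ is invariant under the countable group $\{\langle k,e\rangle : k \in \mathbb{Z}^d\}$; a full-measure set invariant under a dense countable group is still just a full-measure set, not all of $\mathbb{R}$. The paper's remedy is to first prove the Class A minimizer property for a.e.\ $\zeta$ (your lift argument, essentially Proposition~\ref{P: plane_like_minimizers}), extract uniform H\"older continuity from Giaquinta--Giusti's regularity for \emph{minimizers} (not the Euler--Lagrange equation---under \eqref{A: W_assumption_1} alone $W'$ need not exist), and then \emph{redefine} the family by taking right- or left-continuous representatives $u_\zeta^{\pm} = \lim_{\mu \to \zeta^{\pm}} u_\mu$ along the good set (Proposition~\ref{P: continuity of the minimizers}). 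These one-sided limits exist by monotonicity plus equicontinuity, and Class A minimality passes to local uniform limits. The uniform boundary limits in (i) likewise require this H\"older continuity combined with the Birkhoff property; monotonicity in $s$ plus $L^1_{\text{loc}}$ convergence of an a.e.-defined $U_e$ does not yield uniform convergence by itself.

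Second, your treatment of (v) has errors. The displayed identity $\int_0^1 \mathcal{F}^a_1(u_\zeta;\cdot)\,d\zeta = R^{d-1}\mathscr{T}^a_e(U_e)$ is not what Theorem~\ref{T: ergodic_lemma} says; the correct statement involves a limit in $R$ and an average in $\zeta$, and the paper's proof of $\mathscr{E}^a = \tilde\varphi^a$ (Proposition~\ref{P: surface tension}) instead goes through a cut-and-paste comparison argument against a fixed profile. Your claim that rational $e$ ``eliminates jumps'' is false---Section~\ref{S: example_2D} exhibits rational directions with gaps---and the upgrade to every $\zeta$ in the rational case uses lower semicontinuity of $\mathcal{F}^a_1(\cdot;Q_e\oplus_e\mathbb{R})$, while under \eqref{A: a_assumption_2}--\eqref{A: W_assumption_2} it uses Schauder estimates and exponential decay of $Du_\zeta$, not the uniqueness statement in Theorem~\ref{T: existence}(2).
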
  

Notice that (iii) implies that each function in the family $\{u_{\zeta}\}_{\zeta \in \mathbb{R}}$ satisfies the Birkhoff property (see Section \ref{S: birkhoff} for the definition).  

As noted in the literature review above, results like Corollary \ref{C: new_existence} on the existence of plane-like minimizers with the Birkhoff property have previously been proved in \cite{alessio jeanjean montecchiari} and \cite{rabinowitz stredulinsky} in the rational case and \cite{valdinoci} in general.

Next, we revisit the pulsating standing waves of \cite{barles souganidis}.  As we will see below, these functions, if smooth, generate a foliation of $\mathbb{R}^{d} \times [-1,1]$ by plane-like critical points of $\mathcal{F}^{a}_{1}$.  Thus, each of these functions should be a Class A minimizer by extremal field theory-type arguments.  When $a$ and $W$ are sufficiently regular, this is indeed the case.

	\begin{prop} \label{P: continuous_minimizers}  Suppose that $a$ and $W$ satisfy \eqref{A: a_assumption_1}, \eqref{A: W_assumption_1}, \eqref{A: a_assumption_2}, and \eqref{A: W_assumption_2}.  If $e \in S^{d-1}$ and $U \in C(\mathbb{R} \times \mathbb{T}^{d}; [-1,1])$ is a solution of the pulsating standing wave equation \eqref{E: pulsating standing wave} satisfying
		\begin{equation*}
			\partial_{s} U \geq 0, \quad \lim_{s \to \pm \infty} U(\cdot + s,\cdot) = \pm 1 \quad \text{in} \, \, L^{1}_{\text{loc}}(\mathbb{R} \times \mathbb{T}^{d}),
		\end{equation*} 
	then $U$ is a minimizer of $\mathscr{E}^{a}(e)$ and the critical points $\{u_{\zeta}\}_{\zeta \in \mathbb{R}}$ of $\mathcal{F}^{a}_{1}$ generated by $U$ are Class A minimizers.  In particular, this applies to the pulsating standing waves of \cite[Equation 6.8]{barles souganidis}.   \end{prop}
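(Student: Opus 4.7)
The approach is a classical calibration (Hilbert invariant integral / Weierstrass field) argument: the smooth, monotone foliation $(u_\zeta)_{\zeta \in \mathbb{R}}$ generated by $U$ will be shown to form a field of extremals for $\mathcal{F}^{a}_{1}$, and each of its members is automatically a Class A minimizer. The minimality of $U$ for $\mathscr{E}^{a}(e)$ then follows from the Class A minimality of the $u_\zeta$, the change-of-variables dictionary of Section \ref{S: preliminary_analysis}, and the identification $\mathscr{E}^{a} = \tilde{\varphi}^{a}$ from Theorem \ref{T: existence}(3).

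The first step is to upgrade regularity and establish strict monotonicity of the foliation. Under \eqref{A: a_assumption_2}--\eqref{A: W_assumption_2}, \eqref{E: pulsating standing wave} is uniformly elliptic on $\mathbb{R} \times \mathbb{T}^{d}$ with $C^{1,\alpha}$ coefficients, so Schauder theory gives $U \in C^{2,\alpha}_{\mathrm{loc}}$. Differentiating the PDE in $s$ yields a linear elliptic equation for $\partial_{s} U \geq 0$; the strong maximum principle, together with the limits $U(\cdot + s, \cdot) \to \pm 1$, rules out $\partial_{s} U \equiv 0$ and forces $\partial_{s} U > 0$ strictly. Setting $u_\zeta(x) := U(\langle x, e \rangle - \zeta, x)$, a direct computation gives $\nabla u_\zeta(x) = (\mathcal{D}_{e} U)|_{(\langle x, e\rangle - \zeta, x)}$ and $\mathrm{div}(a \nabla u_\zeta) = W'(u_\zeta)$, while strict monotonicity makes $\zeta \mapsto u_\zeta(x)$ a smooth strictly decreasing bijection $\mathbb{R} \to (-1, 1)$ for every $x$.

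The core step is the Weierstrass field construction. Define the slope field $P(x, u) := \nabla u_{\zeta(x, u)}(x)$ on $\mathbb{R}^{d} \times (-1, 1)$, where $\zeta(x, u)$ is the unique value with $u_{\zeta(x,u)}(x) = u$. Given any open $\Omega$ and $f \in C^{\infty}_{c}(\Omega; [-1, 1])$, truncation of $v := u_\zeta + f$ into $[-1, 1]$ only decreases $\mathcal{F}^{a}_{1}$ (since $a$ is positive-definite and $W$ grows outside $[-1,1]$), so we may assume $v \in [-1, 1]$. By the standard theory of Weierstrass fields for the strictly $p$-convex Lagrangian $L(x, u, p) = \tfrac{1}{2}\langle a(x) p, p\rangle + W(u)$, the Hilbert invariant integral associated with $P$ coincides with $\mathcal{F}^{a}_{1}(\cdot; \Omega)$ on $u_\zeta$ and depends only on boundary traces; since $v$ and $u_\zeta$ agree on $\partial \Omega$, comparing the two invariant integrals yields
\begin{equation*}
    \mathcal{F}^{a}_{1}(v; \Omega) - \mathcal{F}^{a}_{1}(u_\zeta; \Omega) = \int_{\Omega} \tfrac{1}{2} \langle a(x)(\nabla v - P(x, v)), \nabla v - P(x, v)\rangle \, dx \geq 0,
\end{equation*}
which is precisely the Class A minimality of $u_\zeta$.

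To finish, I would translate $\mathscr{T}^{a}_{e}(U)$ through Section \ref{S: preliminary_analysis} into a per-unit-transverse-area limit of $\mathcal{F}^{a}_{1}(u_\zeta; Q(0,R) \oplus_{e} \mathbb{R})$, and compare with a minimizer $U^*$ of $\mathscr{E}^{a}(e)$ furnished by Theorem \ref{T: existence} via property (v) of Corollary \ref{C: new_existence}; Class A minimality of $u_\zeta$ on slabs forces $\mathscr{T}^{a}_{e}(U) \leq \mathscr{T}^{a}_{e}(U^*) = \mathscr{E}^{a}(e)$, and the reverse inequality holds by admissibility. The main delicacy I anticipate is the degeneration of $P$ as $u \to \pm 1$, where the foliation asymptotes to the constant equilibria; however, since $W''(\pm 1) \geq \alpha > 0$, linearization at $\pm 1$ forces exponential decay of $\nabla u_\zeta$ as $\zeta \to \mp \infty$, which extends $P$ continuously by $0$ at $u = \pm 1$ and legitimizes the boundary-trace cancellation of the invariant integral on unbounded domains.
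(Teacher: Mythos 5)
Your proposal is correct, but the central step—showing each $u_\zeta$ is a Class~A minimizer—takes a genuinely different route from the paper. You implement the ``extremal field theory'' argument the paper alludes to in its literal, classical form: you construct the slope field $P(x,u)$ from the foliation, build the associated calibrating pair $\bigl(\Phi,\phi\bigr) = \bigl(a(x)P, \, W(u) - \tfrac12\langle a P, P\rangle\bigr)$, and invoke the Weierstrass excess identity. The paper instead uses Caffarelli's sliding argument (as discussed in Cabr\'{e}'s lectures): given a competitor $w$ minimizing with the same boundary data on $B(0,R)$, one defines $\zeta_1 = \sup\{\zeta' : u_{\zeta'} > w\}$ and $\zeta_2 = \inf\{\zeta' : u_{\zeta'} < w\}$, and the strong maximum principle forces $u_{\zeta_1} = u_\zeta = u_{\zeta_2}$, squeezing $w = u_\zeta$. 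The two approaches trade off in a familiar way: the sliding argument sidesteps the need to verify that the slope field is exact, works directly with the comparison structure, and never has to confront the degeneration of $P$ near $u = \pm 1$; your calibration is more conceptual and generalizes more readily, but the ``standard theory'' you cite does require a computation—for multiple-integral problems a foliation by extremals is not automatically a Mayer field, and one must check $\operatorname{div}_x \Phi = \partial_u \phi$, which does hold here because the $u_\zeta$ solve the Euler--Lagrange equation, but it is not free. You are right to flag the boundary degeneration at $u = \pm 1$; the exponential decay you invoke (via $W''(\pm 1) > 0$) is the same ingredient the paper uses in its final step. That final step in both treatments—passing from Class~A minimality of the $u_\zeta$ to $\mathscr{T}^a_e(U) = \mathscr{E}^a(e)$—essentially invokes the cutoff/gluing comparison of Proposition~\ref{P: surface tension} together with Theorem~\ref{T: ergodic_lemma}, so there the two proofs coincide.
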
  
		
Note that the proposition shows that the assumptions of \cite[Theorem 6.3]{barles souganidis} actually impose a rather strong constraint on the energy functional \eqref{E: functional}.  We will see below that there are smooth $a$ and $W$ for which $\mathscr{E}^{a}(e)$ does not have continuous minimizers.

\subsection{Differentiability of $\tilde{\varphi}^{a}$ and the Einstein Relation}  The remainder of the paper concerns the sharp interface limit, revisiting what was done in \cite{barles souganidis}.  To start with, we prove a regularity result for the surface tension in laminar media and relate the effective interface velocity found in \cite{barles souganidis} to the surface tension.

Before we state the result, we remark that in the laminar setting, it is instructive to alter the domain of integration in the Lagrangian $\mathscr{T}^{a}_{e}$.  In the results that follow, we assume that $a$ is $\mathbb{Z}^{k} \times \mathbb{R}^{d- k}$-periodic and, thus, is the extension of a function in $\mathbb{T}^{k}$ to one in $\mathbb{T}^{d}$.  In this case, we replace $\mathbb{R} \times \mathbb{T}^{d}$ by $\mathbb{R} \times \mathbb{T}^{k}$ and define
	\begin{equation*}
		\mathscr{T}^{a}_{e}(U) = \int_{\mathbb{R} \times \mathbb{T}^{k}} \left(\frac{1}{2} \langle a(x) \mathcal{D}_{e} U, \mathcal{D}_{e}U \rangle + W(U) \right)\, dx \, ds, \quad \mathcal{D}_{e} := e \partial_{s} + D_{x},
	\end{equation*}
where the function $U$ has domain $\mathbb{R} \times \mathbb{T}^{k}$, $D_{x} = (\partial_{x_{1}},\partial_{x_{2}},\dots,\partial_{x_{k}},0,\dots,0)$, and $e \in S^{d-1} \subseteq \mathbb{R}^{d}$.  We take this approach because it makes the benefits of the laminarity assumption more readily apparent (see Section \ref{S: laminar discussion}).  

With the modified definition of $\mathscr{T}^{a}_{e}$ now in hand, we proceed with the main result.

	\begin{theorem} \label{T: differentiability} If $a$ and $W$ satisfy \eqref{A: a_assumption_1}, \eqref{A: W_assumption_1}, \eqref{A: a_assumption_2} and \eqref{A: W_assumption_2}, and if $a$ is $\mathbb{Z}^{k} \times \mathbb{R}^{d- k}$-periodic for some $k < d$, then:
		\begin{itemize}
			\item[(i)] $\tilde{\varphi}^{a} \in C^{2}(\mathbb{R}^{d} \setminus (\mathbb{R}^{k} \times \{0\}))$.
			\item[(ii)] For each $e \in S^{d -1} \setminus (S^{k - 1} \times \{0\})$, there is a unique, smooth $U_{e}$ minimizing the problem $\mathscr{E}^{a}(e)$ subject to the monotonicity condition $\partial_{s} U_{e} \geq 0$ and the constraint $\int_{\mathbb{T}^{k}} U_{e}(s,x) \, dx = 0$.
			\item[(iii)]  The map $e \mapsto U_{e}$ is continuously Fr\'{e}chet differentiable from $S^{d -1} \setminus (S^{k - 1} \times \{0\})$ into the space $BC(\mathbb{R} \times \mathbb{T}^{d})$.
			\item[(iv)]
			For each $e \in S^{d-1} \setminus (S^{k-1} \times \{0\})$, 
			$\partial_{s} U_{e} \in L^{2}(\mathbb{R} \times \mathbb{T}^{d})$ 
			and the derivative of $\tilde{\varphi}$ at $e$ is given by 
				\begin{equation*}
			D\tilde{\varphi}^{a}(e) = \int_{\mathbb{R} \times \mathbb{T}^{k}}  \partial_{s}U_{e} a(x) \mathcal{D}_{e}U_{e}  \, dx \, ds.
				\end{equation*}
			\item[(v)] Given $\xi \in \mathbb{R}^{d}$ and $e$ as in (iv), if we define functions $R_{e}^{\xi}$ and $\Psi_{e}^{\xi}$ by $R_{e}^{\xi} = \langle D_{e} U_{e}, \xi \rangle$ and $\Psi_{e}^{\xi} = (\partial_{s}U_{e})^{-1} R_{e}^{\xi}$, then
		\begin{align*}
			\langle D^{2}\tilde{\varphi}^{a}(e) \xi, \xi \rangle &= \int_{\mathbb{R} \times \mathbb{T}^{k}} \langle a(x) \xi, \xi \rangle |\partial_{s} U_{e}|^{2} \, dx \, ds \\
				&\qquad- \int_{\mathbb{R} \times \mathbb{T}^{k}} (\langle a(x) \mathcal{D}_{e}R_{e}^{\xi}, R_{e}^{\xi} \rangle + W''(U_{e}) |R_{e}^{\xi}|^{2}) \, dx \, ds \\
			&= \int_{\mathbb{R} \times \mathbb{T}^{k}} \langle a(x)(\xi + \mathcal{D}_{e}\Psi_{e}^{\xi}), \xi + \mathcal{D}_{e} \Psi_{e}^{\xi} \rangle |\partial_{s}U_{e}|^{2} \, dx \, ds.
		\end{align*}
		\end{itemize}
	\end{theorem}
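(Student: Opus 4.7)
The plan is to use the implicit function theorem on the pulsating standing wave equation, exploiting the laminarity assumption to extract the ellipticity that drives regularity. I will first establish (ii) and (iii) in tandem, then deduce (iv), (v), and (i) by differentiating the identity $\tilde\varphi^a(e) = \mathscr{T}^a_e(U_e)$ and applying an envelope argument. The main obstacle will be invertibility of the linearized Euler-Lagrange operator modulo translation; the laminarity hypothesis $e \in S^{d-1}\setminus(S^{k-1}\times\{0\})$ is precisely what makes this possible, by producing ellipticity in $s$ and eliminating spurious translation symmetries.

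Write $e = (\bar e, e')$ with $\bar e \in \mathbb{R}^k$ and $e' \in \mathbb{R}^{d-k}\setminus\{0\}$. The pointwise bound
$$\langle a(x)\mathcal{D}_e U, \mathcal{D}_e U\rangle \geq \lambda\bigl(|\bar e\,\partial_s U + D_x U|^2 + |e'|^2|\partial_s U|^2\bigr)$$
shows that finite Lagrangian forces $\partial_s U, D_x U \in L^2(\mathbb{R}\times\mathbb{T}^k)$, yielding the $L^2$ claim in (iv) and promoting the pulsating standing wave equation to a uniformly elliptic PDE in $(s,x)$ on the cylinder. Standard Schauder theory then upgrades the minimizer from Theorem 1 to $C^{2,\alpha}_{\text{loc}}$, establishing the smoothness in (ii). Uniqueness of $U_e$ modulo $s$-translation comes from Theorem 1(2) where it applies and otherwise from a sliding and strong maximum principle argument enabled by the ellipticity; the strict monotonicity $\partial_s U_e > 0$ (Hopf lemma applied to the linearized operator) makes $s \mapsto \int_{\mathbb{T}^k} U_e(s,x)\,dx$ strictly increasing, so the normalization condition pins down a unique representative, completing (ii). For (iii), I would apply the IFT to
$$F\colon \bigl(S^{d-1}\setminus(S^{k-1}\times\{0\})\bigr) \times X \longrightarrow Y, \qquad F(e,U) = \mathcal{D}_e^*(a\mathcal{D}_e U) + W'(U),$$
with $X$ a H\"older-type Banach space encoding the asymptotics $U \to \pm 1$ and the normalization constraint. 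The linearization $L_e V = \mathcal{D}_e^*(a\mathcal{D}_e V) + W''(U_e) V$ has coefficients that stabilize exponentially to $W''(\pm 1) \geq \alpha > 0$ at infinity, so by Fredholm theory on cylinders it has index zero with unconstrained kernel $\mathbb{R}\partial_s U_e$; the normalization kills this direction and $L_e$ becomes an isomorphism. The IFT produces a $C^1$ map $e \mapsto U_e$ with Fr\'echet derivative $R_e^\xi := D_e U_e\cdot\xi$ solving
$$L_e R_e^\xi = \xi\cdot\partial_s(a\mathcal{D}_e U_e) - \mathcal{D}_e^*(a\xi\,\partial_s U_e).$$

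For (iv), the envelope identity holds at a critical point:
$$D\tilde\varphi^a(e)\cdot\xi = \tfrac{\partial}{\partial e}\mathscr{T}^a_{\cdot}(U_e)\big|_e\cdot\xi = \int_{\mathbb{R}\times\mathbb{T}^k}\langle a(x)\mathcal{D}_e U_e, \xi\rangle\,\partial_s U_e\,dx\,ds,$$
with the $U$-variation dropping out via the Euler-Lagrange equation. Differentiating once more using (iii) gives
$$\langle D^2\tilde\varphi^a(e)\xi,\xi\rangle = \int\langle a\xi,\xi\rangle(\partial_s U_e)^2\,dx\,ds + \int\bigl[\langle a\mathcal{D}_e R_e^\xi, \xi\rangle\partial_s U_e + \langle a\mathcal{D}_e U_e, \xi\rangle\partial_s R_e^\xi\bigr]\,dx\,ds;$$
testing the linearized equation for $R_e^\xi$ against $R_e^\xi$ and integrating by parts identifies the bracketed integral with $-\int(\langle a\mathcal{D}_e R_e^\xi, \mathcal{D}_e R_e^\xi\rangle + W''(U_e)|R_e^\xi|^2)$, producing the first formula of (v). For the second formula, I would substitute $R_e^\xi = \Psi_e^\xi\,\partial_s U_e$ and exploit the kernel relation $L_e(\partial_s U_e) = 0$ (from $s$-differentiating the PDE for $U_e$): pairing this kernel relation against $(\Psi_e^\xi)^2\partial_s U_e$ and integrating by parts yields the identity
$$\int\bigl(\langle a\mathcal{D}_e R_e^\xi, \mathcal{D}_e R_e^\xi\rangle + W''(U_e)|R_e^\xi|^2\bigr)\,dx\,ds = \int\langle a\mathcal{D}_e\Psi_e^\xi, \mathcal{D}_e\Psi_e^\xi\rangle(\partial_s U_e)^2\,dx\,ds,$$
while the linearized EL for $R_e^\xi$, rewritten in terms of $\Psi_e^\xi$, is the Euler-Lagrange equation for the weighted corrector problem $\phi \mapsto \int\langle a(\xi + \mathcal{D}_e\phi), \xi + \mathcal{D}_e\phi\rangle(\partial_s U_e)^2$. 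Combining both facts reorganizes the expression into the advertised complete square. Part (i) then follows: the formula in (v) combined with the $C^1$ dependence $e \mapsto U_e$ exhibits $D^2\tilde\varphi^a$ as a continuous function on $S^{d-1}\setminus(S^{k-1}\times\{0\})$, and the $1$-homogeneous extension upgrades this to $C^2$ regularity on $\mathbb{R}^d\setminus(\mathbb{R}^k\times\{0\})$.

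The main obstacle, as flagged at the outset, is the Fredholm analysis of $L_e$ on the unbounded cylinder together with the precise identification of its kernel. The laminarity hypothesis enters decisively: it provides ellipticity in $s$, it reduces the translation symmetries to the single $s$-translation (transverse translations are absorbed into $\mathbb{T}^k$-periodicity), and it underwrites exponential decay of $U_e - (\pm 1)$ through $W''(\pm 1) \geq \alpha > 0$. A secondary, more technical issue is justifying the integrations by parts in (iv) and (v), which requires propagating that same exponential decay to $R_e^\xi$ and $\partial_s R_e^\xi$ via elliptic estimates so that every boundary term at $s = \pm\infty$ vanishes.
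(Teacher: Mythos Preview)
Your proposal is correct and follows essentially the same strategy as the paper, with one methodological difference worth noting. The paper does not invoke the implicit function theorem; instead, for (iii) it works directly with the difference quotients $R^{\xi}_{e,h} = h^{-1}(U_{e+h\xi} - U_e)$, derives the linear PDE they satisfy, and passes to the limit via Schauder estimates, uniform exponential decay in $s$, and a compactness--uniqueness argument based on the Fredholm structure of $\mathcal{L}_e$ (closed range, self-adjoint, $\ker\mathcal{L}_e = \langle \partial_s U_e\rangle$). This is morally an implicit function theorem carried out by hand, and it relies on exactly the spectral input you identify. Your IFT route is cleaner once the functional-analytic setup is in place, but it front-loads the work into constructing the right Banach space $X$ with built-in asymptotics and normalization; the paper's approach trades that for a somewhat longer but more self-contained PDE argument. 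For (iv), (v), and (i) the two treatments coincide: the paper computes $D\tilde\varphi^a$ by the same envelope identity you wrote, obtains the first formula in (v) by differentiating under the integral and integrating by parts to produce $-\int \langle \mathcal{L}_e R^\xi_e, R^\xi_e\rangle$, and derives the second formula via the substitution $R^\xi_e = \Psi^\xi_e\,\partial_s U_e$ together with the identity $\int(\langle a\mathcal{D}_e\Phi,\mathcal{D}_e\Phi\rangle + W''(U_e)\Phi^2) = \int\langle a\mathcal{D}_e\Psi,\mathcal{D}_e\Psi\rangle(\partial_s U_e)^2$ for $\Phi = \Psi\,\partial_s U_e$, which is precisely the computation you sketch with the kernel relation.
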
  

Notice that the expression for $D^{2} \tilde{\varphi}^{a}(e)$ in terms of $\Psi^{\xi}_{e}$ is reminiscent of the equation for the effective diffusion matrix in linear elliptic homogenization.  

Manipulating the representation of $D^{2}\tilde{\varphi}^{a}$ from the theorem, we obtain the Einstein relation.

	\begin{corollary} \label{C: einstein_relation}  If the hypotheses of Theorem \ref{T: differentiability} are satisfied,  then the matrix $\tilde{\mathcal{S}}^{a}(e)$ in \eqref{E: anisotropic curvature flow}, which was originally defined in \cite[Section 6]{barles souganidis}, is well-defined as long as $e \in S^{d-1} \setminus (S^{k -1} \times \{0\})$, and, in that case, $\tilde{\mathcal{S}}^{a}(e) = D^{2}\tilde{\varphi}^{a}(e)$.  
	
	Additionally, the matrix $(\tilde{M}^{a})^{-1} D^{2}\tilde{\varphi}^{a}$ appearing in \eqref{E: anisotropic curvature flow} satisfies the following bound:
		\begin{equation*}
			\tilde{M}^{a}(e)^{-1} D^{2}\tilde{\varphi}^{a}(e) \leq \Lambda (\text{Id} - e \otimes e) \quad \text{if} \, \, e \in S^{d-1} \setminus (S^{k-1} \times \{0\}).
		\end{equation*}
	\end{corollary}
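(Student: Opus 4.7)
The plan is to deduce both assertions from the two representations of $\langle D^{2} \tilde{\varphi}^{a}(e) \xi, \xi \rangle$ supplied by Theorem \ref{T: differentiability}(v), together with the positive $1$-homogeneity of $\tilde{\varphi}^{a}$.

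First I would identify $\tilde{\mathcal{S}}^{a}(e)$ with $D^{2} \tilde{\varphi}^{a}(e)$. Theorem \ref{T: differentiability}(ii) furnishes a unique smooth pulsating standing wave $U_{e}$ under the stated hypotheses, so the definition of $\tilde{\mathcal{S}}^{a}(e)$ from \cite[Section 6]{barles souganidis}---an integral involving $\partial_{s} U_{e}$ and a solution of the linearization of \eqref{E: pulsating standing wave} with source determined by $\xi$---is well-posed. The function $R_{e}^{\xi} = \langle \mathcal{D}_{e} U_{e}, \xi \rangle$, obtained by differentiating \eqref{E: pulsating standing wave} in $e$ along $\xi$, satisfies precisely this linearized equation (thanks to Theorem \ref{T: differentiability}(iii)). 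Substituting $R_{e}^{\xi}$ into the Barles--Souganidis formula reproduces the first expression for $\langle D^{2}\tilde{\varphi}^{a}(e) \xi, \xi \rangle$ in Theorem \ref{T: differentiability}(v), giving $\tilde{\mathcal{S}}^{a}(e) = D^{2}\tilde{\varphi}^{a}(e)$. The same calculation pins down the mobility as $\tilde{M}^{a}(e) = \int_{\mathbb{R} \times \mathbb{T}^{k}} (\partial_{s} U_{e})^{2} \, dx \, ds$.

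For the inequality, I would exploit the second representation in Theorem \ref{T: differentiability}(v), which expresses $\langle D^{2} \tilde{\varphi}^{a}(e) \xi, \xi \rangle$ as the value at $\Phi = \Psi_{e}^{\xi}$ of the convex quadratic functional
\[
    Q_{\xi}(\Phi) = \int_{\mathbb{R} \times \mathbb{T}^{k}} \langle a(x) (\xi + \mathcal{D}_{e} \Phi), \xi + \mathcal{D}_{e} \Phi \rangle \, \partial_{s} U_{e}^{2} \, dx \, ds.
\]
A direct calculation, using that both $R_{e}^{\xi}$ and $\partial_{s} U_{e}$ solve the linearized pulsating standing wave equation, shows that $\Psi_{e}^{\xi} = R_{e}^{\xi}/\partial_{s} U_{e}$ satisfies the Euler--Lagrange equation $\mathcal{D}_{e}^{*}(a(\xi + \mathcal{D}_{e}\Psi)\,\partial_{s} U_{e}^{2}) = 0$ and hence is the minimizer of $Q_{\xi}$ over the admissible correctors. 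Testing against $\Phi \equiv 0$ yields
\[
    \langle D^{2}\tilde{\varphi}^{a}(e) \xi, \xi \rangle \leq \int_{\mathbb{R} \times \mathbb{T}^{k}} \langle a(x) \xi, \xi \rangle \, \partial_{s} U_{e}^{2} \, dx \, ds \leq \Lambda |\xi|^{2} \, \tilde{M}^{a}(e).
\]
Positive $1$-homogeneity of $\tilde{\varphi}^{a}$ gives $D\tilde{\varphi}^{a}(te) = D\tilde{\varphi}^{a}(e)$ for $t > 0$, and differentiating at $t = 1$ yields $D^{2}\tilde{\varphi}^{a}(e) e = 0$. For $\xi = \alpha e + \xi^{\perp}$ with $\xi^{\perp} \perp e$, this upgrades the bound to $\langle D^{2}\tilde{\varphi}^{a}(e)\xi,\xi\rangle = \langle D^{2}\tilde{\varphi}^{a}(e)\xi^{\perp},\xi^{\perp}\rangle \leq \Lambda \tilde{M}^{a}(e) \, \langle (\mathrm{Id} - e \otimes e)\xi, \xi\rangle$, which is the claimed matrix inequality after dividing through by the positive scalar $\tilde{M}^{a}(e)$.

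The main obstacle, as I see it, is the identification step: although Theorem \ref{T: differentiability} delivers a clean Hessian formula, one must carefully align the corrector $R_{e}^{\xi}$ with the specific auxiliary function appearing in \cite[Section 6]{barles souganidis} and justify the integrations by parts on the infinite cylinder $\mathbb{R} \times \mathbb{T}^{k}$, which is where the decay and differentiability of $U_{e}$ from Theorem \ref{T: differentiability}(iii)--(iv) enter decisively. Once this bookkeeping is in place, the variational characterization of $\Psi_{e}^{\xi}$ and the homogeneity of $\tilde{\varphi}^{a}$ combine as above to close the argument.
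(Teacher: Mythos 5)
Your proposal is essentially correct and follows the same structure as the paper's argument: match the differentiated surface tension with the Barles--Souganidis formula, then get the upper bound from the Hessian representations in Theorem \ref{T: differentiability}(v). A few remarks on where you deviate. For the bound, you use the second representation and argue that $\Psi_e^{\xi}$ minimizes the convex functional $Q_\xi$, then test against $\Phi\equiv 0$. The paper instead uses the first representation directly: there, $\langle D^2\tilde{\varphi}^a(e)\xi,\xi\rangle$ equals $\int \langle a\xi,\xi\rangle V_e^2$ minus the quadratic form $\langle \mathcal{L}_e R_e^\xi, R_e^\xi\rangle_{L^2}$, which is manifestly non-negative by Proposition \ref{P: non-negative_operator}, so the bound drops out with no minimization argument and no need to choose an admissible corrector class. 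Your route is equivalent once the two formulas in Theorem \ref{T: differentiability}(v) are granted, but it carries a small extra burden: you must verify that $\Psi_e^\xi$ actually solves the Euler--Lagrange equation of $Q_\xi$ and that $0$ is admissible for comparison, whereas the non-negativity observation is immediate. On the other hand, you make explicit the Euler relation $D^2\tilde{\varphi}^a(e)e = 0$ (from $1$-homogeneity, using $C^2$ regularity from Theorem \ref{T: differentiability}(i)) to upgrade the scalar bound $\Lambda\tilde{M}^a(e)\|\xi\|^2$ to the matrix bound with $\mathrm{Id}-e\otimes e$; the paper leaves this step implicit, so spelling it out is an improvement. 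Finally, a small typo: you write $R_e^\xi = \langle\mathcal{D}_e U_e,\xi\rangle$ with the cylinder gradient, but you mean the derivative of $U_e$ with respect to $e$ in direction $\xi$ (the paper's $\langle D_e U_e,\xi\rangle$), as your prose description ("obtained by differentiating... in $e$ along $\xi$") makes clear.
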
  
	
\subsection{A counter-example in 2D and other pathologies} \label{S: counterexamples} We show that, in general, even if $a$ and $W$ are smooth, the minimizers of the problem $\mathscr{E}^{a}(e)$ may not be smooth (or even continuous).    

	\begin{theorem} \label{T: non_integrable}  There is a smooth function $a_{0} : \mathbb{T} \to (0,\infty)$ such that if $W(u) = \frac{1}{4}(1 - u^{2})^{2}$, then, for $d=  1$, $\mathscr{E}^{a}(e)$ does not have a continuous minimizer in either direction in $S^{0}$.  We can extend this to any dimension $d \in \mathbb{N}$, thereby obtaining examples of laminar media in which there are at least two directions where $\mathscr{T}^{a}_{e}$ does not have continuous minimizers.  \end{theorem}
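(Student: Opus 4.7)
The plan is to exploit the fact that the Lagrangian $\mathscr{T}^{a}_{e}$ degenerates wherever $a$ vanishes, which collapses the effective surface tension $\mathscr{E}^{a}(e)$ to zero and rules out continuous minimizers.

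\textbf{Step 1: the one-dimensional case.} Choose $a_{1} \in C^{\infty}(\mathbb{T}; [0, \infty))$ so that $\{a_{1} = 0\}$ contains a nondegenerate closed interval $I \subset \mathbb{T}$. For $d = 1$ and $e \in S^{0}$, test the Lagrangian with $U(s, x) = u(s + \psi(x))$, where $\psi \in C^{\infty}(\mathbb{T})$ is a corrector and $u \in C^{\infty}(\mathbb{R}; [-1, 1])$ is a transition with $u(\pm \infty) = \pm 1$. Since $\mathcal{D}_{e} U = (e + \psi'(x)) u'(s + \psi(x))$, the change of variables $t = s + \psi(x)$ yields
\[
\mathscr{T}^{a_{1}}_{e}(U) = C(\psi, e) \int_{\mathbb{R}} \tfrac{1}{2} u'(t)^{2} \, dt + \int_{\mathbb{R}} W(u(t)) \, dt, \quad C(\psi, e) := \int_{\mathbb{T}} a_{1}(x)(e + \psi'(x))^{2} \, dx.
\]
As $\int_{\mathbb{T}} \psi' = 0$, I set $\psi' \equiv -e$ on $\mathbb{T} \setminus I$ and $\psi' \equiv e\,|\mathbb{T} \setminus I|/|I|$ on a slightly smaller interval $I' \Subset I$, with smooth transitions on $\eta$-neighborhoods of $\partial I$. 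Because $a_{1}$ is continuous and vanishes on $I$, $C(\psi_{\eta}, e) \to 0$ as $\eta \to 0^{+}$. Rescaling $u_{\delta}(t) := u(t/\delta)$ and optimizing in $\delta$ then yields an energy of order $\sqrt{C(\psi_{\eta}, e)}$, so $\mathscr{E}^{a_{1}}(e) = 0$.

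\textbf{Step 2: no continuous minimizer.} Any continuous $U : \mathbb{R} \times \mathbb{T} \to [-1, 1]$ satisfying the prescribed $L^{1}_{\mathrm{loc}}$ limits at $s = \pm \infty$ must, by the intermediate value theorem along the $s$-axis and continuity, attain values in $(-1 + \varepsilon, 1 - \varepsilon)$ on an open set of positive measure. Since $W > 0$ on $(-1, 1)$, this forces $\mathscr{T}^{a_{1}}_{e}(U) \geq \int W(U) > 0 = \mathscr{E}^{a_{1}}(e)$, so $U$ cannot be a minimizer.

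\textbf{Step 3: higher dimensions and the main obstacle.} For $d \geq 2$, define $a : \mathbb{T}^{d} \to \mathcal{S}_{d}$ by $a(x) := a_{1}(x_{1}) \, \mathrm{Id}$, which depends only on $x_{1}$ and is laminar. For $e = \pm e_{1}$, the test functions from Step 1 (taken constant in $x_{2}, \dots, x_{d}$) reduce the energy to the one-dimensional expression scaled by $|\mathbb{T}^{d-1}| = 1$, so $\mathscr{E}^{a}(\pm e_{1}) = 0$, and the argument of Step 2 rules out continuous minimizers in both directions. The delicate point is the construction of the corrector $\psi$: driving $C(\psi, e)$ to zero requires $\psi'$ to cancel $e$ on $\{a_{1} > 0\}$ while having zero mean on $\mathbb{T}$, which is possible precisely because $\{a_{1} = 0\}$ has positive measure. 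This mirrors the standard one-dimensional homogenization formula $\bar{a} = (\int a_{1}^{-1})^{-1}$, which vanishes exactly when $a_{1}^{-1}$ fails to be integrable.
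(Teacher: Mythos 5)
Your approach has a fatal flaw: you let $a_{1}$ vanish on a set of positive measure, but this destroys the uniform ellipticity assumption \eqref{A: a_assumption_1} ($\lambda \text{Id} \leq a$ with $\lambda > 0$) that underlies the entire variational framework. The theorem statement's ``$a_{1} : \mathbb{T} \to [0,\infty)$'' must be read in conjunction with the standing hypotheses, and indeed the paper's actual construction (Section~\ref{S: example_2D}) takes $\delta \leq a_{1} \leq 1$ with $\delta > 0$ small but positive. If $a_{1}$ were allowed to vanish, the existence theory (Theorem~\ref{T: existence}), the ergodic integral identity (Theorem~\ref{T: ergodic_lemma}), and the $BV$ compactness (Proposition~\ref{P: bv_estimate}) all fail, since they rely on $\lambda > 0$. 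Moreover, your Step~2 actually proves something stronger and more damning: no function in $\mathscr{X}$ at all -- continuous or not -- can achieve your claimed infimum $\mathscr{E}^{a}(e) = 0$, because $W > 0$ in $(-1,1)$ forces every candidate to have strictly positive energy. So in the degenerate setup, the variational problem has \emph{no} minimizer, whereas Theorem~\ref{T: existence}(1) guarantees a (possibly discontinuous) minimizer under the standing hypotheses. The theorem you are asked to prove is specifically about the existence of a minimizer that fails to be continuous, which is a genuinely different and subtler phenomenon.

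The paper's argument keeps $a$ uniformly elliptic and instead exploits a large contrast $\delta^{-1}$ between the values of $a_{1}$. The key mechanism is a \emph{gap} in the foliation of plane-like minimizers: Proposition~\ref{P: O_1_estimate} shows that any front-like $u$ with $u(1/4) = 0$ pays a $\delta$-independent cost $\sigma_{\kappa} > 0$ (the reflection symmetry of $a_{1}$ around $1/4$ is essential here), while Proposition~\ref{P: root_delta} exhibits a competitor crossing zero at $3/4$ with energy $O(\sqrt{\delta})$. For $\delta$ small, $\mathscr{E}^{a}(\pm e_{1})$ is of order $\sqrt{\delta}$, strictly positive, and minimizers exist -- but a continuous minimizer $U$ would generate a monotone family $(u_{\zeta})_{\zeta \in \mathbb{R}}$ of plane-like minimizers each with energy $\mathscr{E}^{a}(e)$, and by the intermediate value theorem some $u_{\zeta_{*}}$ would satisfy $u_{\zeta_{*}}(1/4) = 0$, forcing $\sigma_{\kappa} \leq \mathscr{E}^{a}(e) = O(\sqrt{\delta})$, a contradiction. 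Your calculation of the energy of the optimally rescaled test function, $O(\sqrt{C})$, is essentially the same computation as Proposition~\ref{P: root_delta}, but the conclusion must be ``the infimum is small'' rather than ``the infimum is zero,'' and you are missing the crucial complementary lower bound at $x = 1/4$ that creates the gap.
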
 
	
The idea here goes back to the transformation \eqref{E: transformation} taking a minimizer $U$ of $\mathscr{T}^{a}_{e}$ to the functions $\{u_{\zeta}\}_{\zeta \in \mathbb{R}}$ it generates.  In dimension $d = 1$, each of these functions satisfies $\tilde{\varphi}^{a}(e) = \int_{-\infty}^{\infty} \left(\frac{1}{2} a_{0}(x) u_{\zeta}'(x)^{2} + W(u_{\zeta}(x)) \right) \, dx$.  If we choose $a_{0}$ to have a very sudden bump upwards in a neighborhood of a certain point $x_{0} \in \mathbb{R}$, then it is natural to expect that the functions $\{u_{\zeta}\}_{\zeta \in \mathbb{R}}$, trying to minimize their energy, will avoid the bump and, for instance, 
	\begin{equation*}
		\min\{u_{\zeta}(x_{0}) \, \mid \, \zeta \in \mathbb{R}\} < 0 < \max\{u_{\zeta}(x_{0}) \, \mid \, \zeta \in \mathbb{R}\}.
	\end{equation*}
Given that we know that $\lim_{\zeta \to \pm \infty} u_{\zeta}(x_{0}) = \pm 1$, this can only happen if $\zeta \mapsto u_{\zeta}(x_{0})$ is discontinuous, and hence $U$ is also.

Theorem \ref{T: non_integrable} is proved below precisely by constructing such an $a_{0}$.

	In dimension two, we carry the analysis further, providing, in particular, an example where the coefficients in \eqref{E: anisotropic curvature flow} become arbitrarily small in the degenerate directions.  The reader should contrast the pathologies evinced in the next theorem with what occurs in the spatially homogeneous case.  If, for instance, $a \equiv \lambda \text{Id}$, then there is a constant $c_{W}$  depending only on $W$ \cite{alberti guide,barles souganidis} such that, for each $e \in S^{d-1}$,
		\begin{align*}
			\tilde{\varphi}^{\lambda \text{Id}}(e) = \tilde{M}^{\lambda \text{Id}}(e) = c_{W} \sqrt{\lambda}, \quad \tilde{M}^{\lambda \text{Id}}(e)^{-1} D^{2} \tilde{\varphi}^{\lambda \text{Id}}(e) = \text{Id} - e \otimes e.
		\end{align*}
	By contrast, the next result shows that, in periodic media, $\tilde{M}^{a}$ need not be bounded or continuous and $(\tilde{M}^{a})^{-1} \|D^{2}\tilde{\varphi}^{a}\|$ can approach zero.
	
	\begin{theorem} \label{T: dim_2_stuff} In dimension $d = 2$ with the potential $W(u) = \frac{1}{4} (1 - u^{2})^{2}$, there is a smooth function $a_{0} : \mathbb{T} \to (0,\infty)$ such that if $a = a_{0} \text{Id}$, then the surface tension $\tilde{\varphi}^{a}$ and the mobility $\tilde{M}^{a}$ have the following properties:
		\begin{itemize}
			\item[(i)] $\tilde{\varphi}^{a}$ is not differentiable in the directions $e_{1} = (1,0)$ or $-e_{1} = (-1,0)$ (i.e.\ in the direction of the laminations).
			\item[(ii)] The asymptotic behavior of the mobility $\tilde{M}^{a}$ as $e \to \pm e_{1}$ is given by
				\begin{equation*}
					0 < \liminf_{e \to \pm e_{2}} |\langle e, e_{2} \rangle| \tilde{M}^{a}(e) \leq \limsup_{e \to \pm \bar{e}_{1}} |\langle e, e_{2} \rangle| \tilde{M}^{a}(e) < \infty,
				\end{equation*}
			where $e_{2} = (0,1)$.
			\item[(iii)] $\liminf_{e \to \pm e_{1}} \tilde{M}^{a}(e)^{-1} \|D^{2}\tilde{\varphi}^{a}(e)\| = 0$.
		\end{itemize}
	\end{theorem}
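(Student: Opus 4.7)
The natural starting point is to promote the one-dimensional construction of Theorem \ref{T: non_integrable} to the 2D laminar setting: take a smooth $a_1 \colon \mathbb{T} \to [0,\infty)$ from the class produced there (perturbed slightly if needed to enforce the uniform ellipticity \eqref{A: a_assumption_1}) and set $a(x_1) = \mathrm{diag}(a_1(x_1), \mu)$ for a suitable constant $\mu > 0$. The underlying intuition is that as $e \to e_1$ (along the laminations), the minimizers $U_e$ from Theorem \ref{T: differentiability}(ii) should degenerate into the pathological, essentially discontinuous one-dimensional minimizer whose existence is guaranteed by Theorem \ref{T: non_integrable}. The proof consists of quantifying this degeneration and reading off the consequences for $\tilde\varphi^a$, $\tilde M^a$, and $D^2\tilde\varphi^a$ through the formulas of Theorems \ref{T: existence}(3) and \ref{T: differentiability}(iv)--(v).

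For (i), I would prove a one-sided expansion of the form $\tilde\varphi^a(\cos\theta,\sin\theta) = \tilde\varphi^a(e_1)|\cos\theta| + c|\sin\theta| + o(|\sin\theta|)$ with $c > 0$, which yields a corner at $\theta = 0$. The upper bound is obtained by testing $\mathscr{E}^a(e)$ with tilted copies of a near-minimizer of the 1D problem $\mathscr{E}^{a_1}(1)$, and the matching lower bound comes from a slicing argument perpendicular to $e$: on each slice the 1D minimality of $\mathscr{E}^{a_1}$ applies, and counting the $\sim|\sin\theta|$ extra transitions per unit length forced by the tilt yields a uniformly positive contribution per transition. For (ii), I would use the explicit formula for $\tilde M^a$ from \cite[Section 6]{barles souganidis}, which in the laminar setting is roughly the ratio of $\|\partial_s U_e\|_{L^2(a\,ds\,dx)}^2$ to the squared $L^1$ mass of $\partial_s U_e$, and combine the uniform energy bound $\mathscr{T}^a_e(U_e) = \tilde\varphi^a(e) = O(1)$ with a 1D-comparison lower bound on the transition thickness to obtain both sides of the bounded liminf/limsup in (ii). For (iii), I would apply the corrector identity of Theorem \ref{T: differentiability}(v), which is variational in the sense that any trial $\Psi$ yields $\langle D^2\tilde\varphi^a(e)\xi,\xi\rangle \leq \int \langle a(\xi + \mathcal{D}_e\Psi), \xi + \mathcal{D}_e\Psi\rangle \partial_s U_e^2$, and search for a sequence of rational angles $e_n \to e_1$ along which one can design trial correctors that cancel the transverse part of $\xi$ up to errors vanishing in $n$, sending $\|D^2\tilde\varphi^a(e_n)\| \to 0$.

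The principal obstacle is the fine analysis of $U_e$ as $e \to \pm e_1$ in the absence of a continuous limit profile. Both the sharp one-sided asymptotics in (i) and, especially, the design of the trial correctors in (iii) require a careful two-scale description of how $U_e$ redistributes its transition across the fundamental period in $x_1$ as the tilt decreases, and the standard elliptic regularity tools that underlie Theorem \ref{T: differentiability}(iii) are unavailable here. Extracting the right rational subsequence for (iii), and explaining why the liminf vanishes while the full limit may not, is likely the most delicate step and will require a specific ansatz exploiting the period structure of $a_1$, in the spirit of the construction behind Theorem \ref{T: non_integrable}.
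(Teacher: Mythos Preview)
Your choice of diagonal laminar $a$ is the right one, and the intuition that the discontinuity of the $e_1$-minimizer (Theorem~\ref{T: non_integrable}/\ref{T: counterexample}) drives all three conclusions is correct.  But the paper's organization is quite different from yours, and your routes to (ii) and (iii) contain genuine gaps that the paper's argument avoids.

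The paper's central observation is the identity
\[
\langle D\tilde\varphi^a(e_\theta), e_2\rangle \;=\; \langle e_\theta, e_2\rangle \int_{\mathbb{R}\times\mathbb{T}} a_2(x)\,\partial_s U_{e_\theta}(s,x)^2\,dx\,ds,
\]
obtained by specializing Theorem~\ref{T: differentiability}(iv) to a diagonal $a$ and using $\langle \mathcal D_{e_\theta} U, e_2\rangle = \sin\theta\,\partial_s U$.  This makes (i) and (ii) essentially the same statement: the lower bound in (ii) is exactly the assertion that $\langle D\tilde\varphi^a(e_\theta), e_2\rangle \not\to 0$, while the upper bound in (ii) is immediate from the Lipschitz bound $|D\tilde\varphi^a|\le C$ that any convex $1$-homogeneous function enjoys.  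Your proposed route to the upper bound via the energy $\mathscr T^a_e(U_e)=\tilde\varphi^a(e)=O(1)$ only yields $\sin^2\theta\,\tilde M^a(e_\theta)\le C$, one power of $|\sin\theta|$ short of what (ii) asserts.  For (i) itself the paper does not compute an asymptotic expansion: it argues by contradiction.  Normalize $\zeta_\theta$ so that $u^{e_\theta}_{\zeta_\theta}(\tfrac14,0)=0$; if $\langle D\tilde\varphi^a(e_\theta),e_2\rangle\to 0$ then $(u^{e_\theta}_{\zeta_\theta})_y\to 0$ in $L^2_{\text{loc}}$ by the identity, yet any subsequential limit $u$ is a Class~A minimizer with $u(\tfrac14,0)=0$ and $u_y\ge 0$, so by the strong maximum principle either $u_y\equiv 0$ or $u_y>0$.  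The former would give a one-dimensional minimizer through the gap at $x_1=\tfrac14$, contradicting Theorem~\ref{T: counterexample}; the latter contradicts $u_y=0$.  This replaces your slicing/counting lower bound entirely.

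For (iii) the paper does not touch correctors at all.  Once (ii) gives $\tilde M^a(e)\asymp |\langle e,e_2\rangle|^{-1}$, one just uses that $\tilde\varphi^a$ is convex, hence $D^2\tilde\varphi^a$ is a Radon measure with $\int_{\{\frac12\le\|v\|\le 2\}}\|D^2\tilde\varphi^a\|(dv)<\infty$.  Were the liminf in (iii) positive, Theorem~\ref{T: differentiability} would give $\|D^2\tilde\varphi^a(v)\|\ge c\,\tilde M^a(v)\ge c'|\langle v,e_2\rangle|^{-1}$ on an annular cone around $\pm e_1$, and $|\langle v,e_2\rangle|^{-1}$ is not integrable there.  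So (iii) is a two-line soft argument once (ii) is in hand, and the delicate two-scale corrector construction you flag as the main obstacle is never needed.
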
   

It is worth emphasizing that Theorem \ref{T: dim_2_stuff} shows that pathological behavior can occur in the sharp interface limit even if the gradient term in $\mathcal{F}^{a}_{1}$ is isotropic (i.e.\ $a$ is a positive function times the identity).  The difficulties are caused by the periodicity of $a$, not anisotropic effects arising from its matrix character.
	
The previous theorem suggests that, unlike the spatially homogeneous setting (cf.\ \cite[Theorem 2.1]{bellettini butta presutti}), even if we can prove a sharp interface limit for arbitrary initial data, the coefficients in the effective equation might not satisfy a bound like
		\begin{equation*}
			\tilde{M}^{a}(e)^{-1} D^{2}\tilde{\varphi}^{a}(e) \geq c (\text{Id} - e \otimes e),
		\end{equation*}
where $c > 0$ is independent of $e$.  

Using the well-known elliptic regularization approach (cf.\ \cite{moser_old_paper}, \cite{ducrot}, \cite{xin}), we prove a result that can be interpreted as an obstruction to such bounds.  In the statement of the theorem, the smooth functions $(\tilde{\varphi}^{a,\delta})_{\delta > 0}$ and $(\tilde{M}^{a,\delta})_{\delta > 0}$ are obtained through elliptic regularization and converge to $\tilde{\varphi}^{a}$ and $\tilde{M}^{a}$, respectively, as $\delta \to 0^{+}$.  It is natural to guess that whenever $\tilde{M}^{a}(e)^{-1} D^{2}\tilde{\varphi}^{a}(e)$ makes sense at some point $e \in S^{d-1}$, we have
	\begin{equation*}
		\lim_{\delta \to 0^{+}} \tilde{M}^{a,\delta}(e)^{-1} D^{2}\tilde{\varphi}^{a,\delta}(e) = \tilde{M}^{a}(e)^{-1} D^{2}\tilde{\varphi}^{a}(e).
	\end{equation*}
This leads to the question of $\delta$-independent bounds on $(\tilde{M}^{a,\delta})^{-1} D^{2} \tilde{\varphi}^{a,\delta}$.  The next theorem shows there is an obstruction to lower bounds. 

\begin{theorem} \label{T: no lower bound}  Assume $a$ and $W$ satisfy \eqref{A: a_assumption_1}, \eqref{A: W_assumption_1}, \eqref{A: a_assumption_2}, and \eqref{A: W_assumption_2} and $d \geq 2$.  If there is an $\mathcal{H}^{d-1}$-measurable set $E \subseteq S^{d-1}$ and constants $c, \delta_{0} > 0$ such that, for each $\delta \in (0,\delta_{0})$ and $e \in E$,
		\begin{equation} \label{E: lower bound}
			\tilde{M}^{a,\delta}(e)^{-1} D^{2}\tilde{\varphi}^{a,\delta}(e) \geq c(\text{Id} - e \otimes e),
		\end{equation}
	then, for $\mathcal{H}^{d -1}$-a.e.\ $e \in E$, there is a minimizer $U_{e} \in UC(\mathbb{R} \times \mathbb{T}^{d})$ of $\mathscr{E}^{a}(e)$ with $\partial_{s}U_{e} \in L^{2}(\mathbb{R} \times \mathbb{T}^{d})$.  		\end{theorem}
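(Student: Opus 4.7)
The plan is to use the hypothesis \eqref{E: lower bound}, together with the convexity of $\tilde{\varphi}^{a,\delta}$ as a $1$-homogeneous function on $\mathbb{R}^{d}$, to derive a $\delta$-uniform $L^{2}$ bound on $\partial_{s}U_{e}^{\delta}$ for $\mathcal{H}^{d-1}$-a.e.\ $e \in E$, where $U_{e}^{\delta}$ denotes the smooth minimizer of the regularized Lagrangian $\mathscr{T}^{a,\delta}_{e}$. Passing to the limit $\delta \to 0^{+}$ via $\Gamma$-convergence then produces a minimizer $U_{e}$ of $\mathscr{E}^{a}(e)$ with $\partial_{s}U_{e} \in L^{2}(\mathbb{R}\times\mathbb{T}^{d})$.

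First I would exploit the fact that each $\tilde{\varphi}^{a,\delta}$ is a norm on $\mathbb{R}^{d}$, with uniform upper and lower bounds on $S^{d-1}$ that are independent of $\delta$. Standard convex analysis, combined with a scaling-in-$r$ argument based on $1$-homogeneity applied to the spherical shell $B_{1}\setminus B_{1/2}$, yields a $\delta$-uniform bound
\[
\int_{S^{d-1}} \operatorname{tr}\bigl(D^{2}\tilde{\varphi}^{a,\delta}(e)\bigr)\, d\mathcal{H}^{d-1}(e) \;\leq\; C.
\]
From the elliptically regularized Barles--Souganidis formalism, one has a representation of $\tilde{M}^{a,\delta}$ in terms of $U_{e}^{\delta}$ that gives $\tilde{M}^{a,\delta}(e) \geq \lambda\,\|\partial_{s}U_{e}^{\delta}\|_{L^{2}(\mathbb{R}\times\mathbb{T}^{d})}^{2}$, with $\lambda$ the ellipticity constant of $a$. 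Taking the trace of the matrix inequality \eqref{E: lower bound} on $E$ gives
\[
\operatorname{tr}\bigl(D^{2}\tilde{\varphi}^{a,\delta}(e)\bigr) \;\geq\; c(d-1)\tilde{M}^{a,\delta}(e) \;\geq\; c(d-1)\lambda\,\|\partial_{s}U_{e}^{\delta}\|_{L^{2}}^{2}, \qquad e \in E,
\]
and integrating over $E$ yields the $\delta$-uniform estimate $\int_{E}\|\partial_{s}U_{e}^{\delta}\|_{L^{2}}^{2}\, d\mathcal{H}^{d-1}(e) \leq C'$. Fatou's lemma then provides, for $\mathcal{H}^{d-1}$-a.e.\ $e \in E$, a sequence $\delta_{n} = \delta_{n}(e) \downarrow 0$ along which $\|\partial_{s}U_{e}^{\delta_{n}}\|_{L^{2}}$ remains bounded.

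To conclude, I would pin each $U_{e}^{\delta_{n}}$ down by a translation in $s$ (for example, so that $\int_{\mathbb{T}^{d}} U_{e}^{\delta_{n}}(0,x)\, dx = 0$), and pass to the limit $n \to \infty$ using the $\Gamma$-convergence of $\mathscr{T}^{a,\delta}_{e}$ to $\mathscr{T}^{a}_{e}$ and the compactness of monotone minimizing sequences from Theorem \ref{T: existence}. The limit $U_{e}$ is then a minimizer of $\mathscr{E}^{a}(e)$, and $\partial_{s}U_{e} \in L^{2}(\mathbb{R}\times\mathbb{T}^{d})$ follows from weak-$L^{2}$ lower semicontinuity applied to the bounded sequence $\partial_{s}U_{e}^{\delta_{n}}$.

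The principal obstacle is this final compactness and identification step on the non-compact cylinder. One must simultaneously prevent the profiles $U_{e}^{\delta_{n}}$ from escaping to $\pm 1$ (using the pinning together with the uniform energy bound $\mathscr{T}^{a,\delta_{n}}_{e}(U_{e}^{\delta_{n}}) = \tilde{\varphi}^{a,\delta_{n}}(e) \to \tilde{\varphi}^{a}(e)$), match the weak $L^{2}$ limit of $\partial_{s}U_{e}^{\delta_{n}}$ with the distributional derivative $\partial_{s}U_{e}$ of the limiting minimizer (where the monotonicity $\partial_{s}U_{e}^{\delta_{n}} \geq 0$ is essential), and upgrade weak convergence of critical points to convergence of minimizers of the $\Gamma$-limit. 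A subsidiary delicate point is justifying the comparison $\tilde{M}^{a,\delta}(e) \geq \lambda\,\|\partial_{s}U_{e}^{\delta}\|_{L^{2}}^{2}$ uniformly in $\delta$, since the regularizing terms in the elliptic approximation may contribute $\delta$-dependent corrections to $\tilde{M}^{a,\delta}$ that one must verify do not destroy this inequality.
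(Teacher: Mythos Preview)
Your approach is essentially the same as the paper's, with the same two key ingredients: the hypothesis rewrites as $D^{2}\tilde{\varphi}^{a,\delta}(e) \geq c\,\tilde{M}^{a,\delta}(e)(\text{Id} - e\otimes e)$ with $\tilde{M}^{a,\delta}(e) = \|\partial_{s}U_{e}^{\delta}\|_{L^{2}}^{2}$, and then one integrates over (a cone over) a subset of the sphere and exploits that the Hessian of a convex $1$-homogeneous function with uniform pointwise bounds has uniformly bounded mass. The organizational difference is that you argue directly (uniform bound on $\int_{S^{d-1}}\operatorname{tr}D^{2}\tilde{\varphi}^{a,\delta}$, then Fatou, then compactness), whereas the paper argues by contradiction: assuming $\partial_{s}U_{e}\notin L^{2}$ on a set of positive measure, it uses weak-$*$ convergence $D^{2}\tilde{\varphi}^{a,\delta}\rightharpoonup D^{2}\tilde{\varphi}^{a}$ as Radon measures together with monotone convergence to force $\int_{K}\langle D^{2}\tilde{\varphi}^{a}(dv)\xi,\xi\rangle = \infty$ on a compact set $K$, contradicting the Radon property.

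Your two flagged obstacles dissolve once you use Remark~\ref{R: delta_upper_bound}. First, $\tilde{M}^{a,\delta}(e) = \int(\partial_{s}U_{e}^{\delta})^{2}$ is the \emph{definition} of the regularized mobility, so there is no $\lambda$ and no $\delta$-dependent correction to worry about. Second, and more importantly, that remark shows $\delta\mapsto\|\partial_{s}U_{e}^{\delta}\|_{L^{2}}^{2}$ is monotone non-increasing with limit $\|\partial_{s}U_{e}\|_{L^{2}}^{2}$ (interpreted as $+\infty$ if $\partial_{s}U_{e}\notin L^{2}$); this is an elementary comparison argument using $U_{e}^{\delta_{1}}$ as a competitor for $\mathscr{T}^{a,\delta_{2}}_{e}$. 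With monotonicity in hand, your Fatou step upgrades immediately to $\sup_{\delta}\|\partial_{s}U_{e}^{\delta}\|_{L^{2}}<\infty$ for a.e.\ $e\in E$, and the identification of the limit is automatic --- no $\Gamma$-convergence or escaping-profile analysis is needed. This is exactly how the paper sidesteps the ``principal obstacle'' you identified.
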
  
	
	Taken together with Theorem \ref{T: dim_2_stuff}, Theorem \ref{T: no lower bound} suggests that lower bounds on the effective interface velocity only hold under very special circumstances.
	
\subsection{Sharp interface limit for a special class of initial data}  In view of the results described above, it is natural to wonder whether or not there are examples of periodic media and initial data in which homogenization occurs and is described by \eqref{E: anisotropic curvature flow}.  Here we give an affirmative answer.  

In the result that follows, we add the following two assumptions on $W$:
	\begin{align}
		&W \in C^{4,\alpha}([-1,1]), \label{A: additional_regularity} \\
		(-1,0) &\subseteq \{W' > 0\}, \quad (0,1) \subseteq \{W' < 0\}. \label{A: sign}
	\end{align}
The second assumption is standard in the literature on sharp interface limits; the first is an artifact of the proof.

	\begin{theorem} \label{T: sharp_interface_limit_graphs} Assume that $a$ is $\mathbb{Z}^{k} \times \mathbb{R}^{d - k}$-periodic for some $k < d$, $a$ and $W$ satisfy \eqref{A: a_assumption_1}, \eqref{A: W_assumption_1}, \eqref{A: a_assumption_2}, \eqref{A: W_assumption_2}, \eqref{A: additional_regularity}, and \eqref{A: sign}, and $e \in S^{d-1} \cap (\mathbb{R}^{k} \times \{0\})^{\perp}$.  Suppose that $u_{0} \in UC(\mathbb{R}^{d} ; [-1,1])$ satisfies the following two conditions:
		\begin{itemize}
			\item[(i)] There is a $\mathcal{U} \in UC(\mathbb{R}^{d - 1})$ and a $q \in \mathbb{R}^{d-1}$ such that 
				\begin{equation*}
					\sup \left\{ |\mathcal{U}(x') - \langle q, x' \rangle| \, \mid \, x' \in \mathbb{R}^{d-1} \right\} < \infty 
				\end{equation*} 
			and 
				\begin{equation*}
					\{x \in \mathbb{R}^{d} \, \mid \, u_{0}(x) = 0\} = \{x \in \mathbb{R}^{d} \, \mid \, \langle x,e \rangle = \mathcal{U}(x - \langle x,e \rangle e)\}.
				\end{equation*}
			\item[(ii)] There is a $\delta_{0} > 0$ and an $R > 0$ such that
				\begin{align*}
					d_{\mathcal{H}}(\{x \in \mathbb{R}^{d} \, \mid \, u_{0}(x) = 0\}, \{x \in \mathbb{R}^{d} \, \mid \, |u_{0}(x)| < \delta_{0}\}) &< R.
				\end{align*}
		\end{itemize}
	There is a continuous function $h : \mathbb{R}^{d-1} \times [0,\infty) \to \mathbb{R}$ with $h(\cdot,0) = \mathcal{U}$ such that if $(u^{\epsilon})_{\epsilon > 0}$ are the solutions of \eqref{E: main} with initial datum $u^{\epsilon}(\cdot,0) = u_{0}$ and $(E_{t})_{t \geq 0}$ are the epigraphs defined by $E_{t} = \{x \in \mathbb{R}^{d} \, \mid \, \langle x,e \rangle > h(x - \langle x,e \rangle e, t)\}$, then
		\begin{equation*}
			\lim_{\epsilon \to 0^{+}} u^{\epsilon} = \left\{ \begin{array}{r l}
													1, & \text{locally uniformly in} \, \, \bigcup_{t > 0} E_{t}, \\
													-1, & \text{locally uniformly in} \, \, \bigcup_{t > 0} (\mathbb{R}^{d} \setminus \overline{E}_{t}).
												\end{array} \right.
		\end{equation*}
	Furthermore, $(E_{t})_{t \geq 0}$ is a solution of the geometric flow \eqref{E: anisotropic curvature flow}.
	\end{theorem}
	
Certainly, we have considerably restricted the class of initial data compared to \cite{barles souganidis}.  Nevertheless, the theorem shows that there are initial data that homogenize in the sharp interface limit, without making any assumptions other than sufficient smoothness of the coefficients and laminarity.  Significantly, Theorem \ref{T: sharp_interface_limit_graphs} provides the first class of examples in which the effective behaviors of \eqref{E: functional} and \eqref{E: main} are known to be related through the Einstein relation \eqref{E: einstein relation}.  Put another way, the theorem shows that (up to a mobility factor) the gradient flow and homogenization ``commute" in this very special case.

\section{Notation}  \label{S: notation}

\subsection{General}  If $a,b \in \mathbb{R}$, we define $a \vee b$ and $a \wedge b$ by
	\begin{equation*}
		a \vee b = \max\{a,b\}, \quad a \wedge b= \min \{a,b\}.
	\end{equation*}   

We define $\text{sgn}(s) = \frac{s}{|s|}$ if $s \neq 0$.

If $X$ is a metric space with metric $d$, we denote by $B(x,\epsilon) = \{y \in X \, \mid \, d(x,y) < \epsilon\}$.  The Hausdorff distance $d_{\mathcal{H}}(A,B)$ between two sets $A,B$ contained in $X$ is defined by 
	\begin{equation*}
		d_{\mathcal{H}}(A,B) = \inf \left\{\epsilon \geq 0 \, \mid \, A \subseteq \bigcup_{b \in B} B(b,\epsilon), \, \, B \subseteq \bigcup_{a \in A} B(a,\epsilon) \right\}.
	\end{equation*}	
	
\subsection{Euclidean Space}  If $v \in \mathbb{R}^{d}$, then $\langle v \rangle = \{\alpha v \, \mid \, \alpha \in \mathbb{R}\}$.

The Euclidean inner product between two vectors $\xi,\zeta \in \mathbb{R}^{d}$ is denoted by $\langle \xi, \zeta \rangle$.  If $A \subseteq \mathbb{R}^{d}$, then $A^{\perp} = \{x \in \mathbb{R}^{d} \, \mid \, \langle a, x \rangle = 0 \, \, \text{if} \, \, a \in A \}$.  

We write $\|\cdot\|$ for the norm induced by the inner product $\langle \cdot, \cdot \rangle$.  $S^{d-1}$ is the $(d-1)$-dimensional sphere in $\mathbb{R}^{d}$, that is, 
	\begin{equation}
		S^{d-1} = \{e \in \mathbb{R}^{d} \, \mid \, \|e\| = 1\}. \label{E: sphere notation}
	\end{equation}
	
In $\mathbb{R}^{d}$, we denote by $\{e_{1},\dots,e_{d}\}$ the standard orthonormal basis given by $e_{1} = (1,0,0,\dots,0),e_{2} = (0,1,0,\dots,0),\dots,e_{d} = (0,\dots,0,1)$.

If $e \in S^{d-1}$, $A\subseteq \langle e \rangle^{\perp}$, and $E \subseteq \mathbb{R}$, then we define $A \oplus_{e} E$ by
	\begin{equation*}
		A \oplus_{e} E = \{a + \alpha e \, \mid \, a \in A, \, \, \alpha \in E \}.
	\end{equation*}
	
Frequently we will be interested in cubes contained in some hyperplane of $\mathbb{R}^{d}$.  In this case, given $e \in S^{d-1}$, we fix an orthonormal basis $\{v_{1},\dots,v_{d-1}\}$ of $\langle e \rangle^{\perp}$ and, for each $R > 0$, let $Q^{e}(0,R)$ be the cube in $\langle e \rangle^{\perp}$ determined by this basis, of side length $R$, and centered at $0$.  In other words,
	\begin{equation*}
		Q^{e}(0,R) = \left\{ y \in \langle e \rangle^{\perp} \, \mid \, \max\{|\langle y, v_{1}\rangle|,\dots,|\langle y, v_{d-1}\rangle|\} \leq R/2 \right\}.
	\end{equation*}  
The specific choice of basis is irrelevant where the results of the paper are concerned.
	
\subsection{Linear Algebra}  $M_{d}$ is the space of real $d \times d$-matrices.  $\mathcal{S}_{d}$ is the subspace consisting of symmetric matrices.  

If $A,B \in \mathcal{S}_{d}$, we write $A \leq B$ if $\langle (A - B) \xi, \xi \rangle \leq 0$ for all $\xi \in \mathbb{R}^{d}$.  

If $A \in M_{d}$, then $A^{*}$ denotes its transpose.

Given $\xi, \zeta \in \mathbb{R}^{d}$, $\xi \otimes \zeta$ is the linear operator on $\mathbb{R}^{d}$ defined by $(\xi \otimes \zeta)(v) = \langle \zeta,v \rangle \xi$.  Given matrices $A,B \in M_{d}$, $A \otimes B$ is the linear operator on $M_{d}$ defined by 
	\begin{equation*}
		(A \otimes B)(v \otimes w) = Av \otimes Bw
	\end{equation*} 
and extended to the entire space by linearity.   

\subsection{Functions} \label{S: function notation}  If $V$ is a function on $\mathbb{R} \times \mathbb{T}^{d}$, $s_{0} \in \mathbb{R}$, and $x_{0} \in \mathbb{R}^{d}$, we define functions $T_{s_{0}}V$ and $S_{x_{0}}V$ in $\mathbb{R} \times \mathbb{T}^{d}$ by 
	\begin{equation*}
		T_{s_{0}}V(s,x) = V(s - s_{0}, x), \quad S_{x_{0}}V(s,x) = V(s,x - x_{0}).
	\end{equation*}

Given functions $f$ and $g$ on the same domain, we define $f \vee g$ and $f \wedge g$ by 
	\begin{equation*}
		(f \vee g)(x) = \max \left\{f(x),g(x) \right\}, \quad (f \wedge g)(x) = \min \left\{f(x),g(x)\right\}.
	\end{equation*}

Given a family of functions $(f^{\epsilon})_{\epsilon > 0}$, each defined on a metric space $X$ with metric $d$, we define the upper and lower half-relaxed limits $\limsup^{*} f^{\epsilon}$ and $\liminf_{*} f^{\epsilon}$, respectively, by 
	\begin{align}
		\limsup \nolimits^{*} f^{\epsilon}(x) &= \lim_{\delta \to 0^{+}} \sup \left\{ f^{\epsilon}(y) \, \mid \, d(x,y) + \epsilon < \delta \right\}, \label{E: upper half relaxed} \\
		\liminf \nolimits_{*} f^{\epsilon}(x) &= \lim_{\delta \to 0^{+}} \inf \left\{ f^{\epsilon}(y) \, \mid \, d(x,y) + \epsilon < \delta \right\}. \label{E: lower half relaxed}
	\end{align}

\subsection{Measure Theory}  The $k$-dimensional Lebesgue measure in $\mathbb{R}^{k}$ is denoted by $\mathcal{L}^{k}$.  We will abuse notation and write $\mathcal{L}^{d + 1}$ also for the product measure on $\mathbb{R} \times \mathbb{T}^{d}$ obtained from $\mathcal{L}^{1}$ on $\mathbb{R}$ and $\mathcal{L}^{d}$ on $\mathbb{T}^{d}$.  The $m$-dimensional Hausdorff measure is $\mathcal{H}^{m}$.  

\subsection{Derivatives and Related}  In $\mathbb{R} \times \mathbb{T}^{d}$, we write $\partial_{s}$ for the derivative operator with respect to $s$ and $D_{x}$ for the derivative with respect to $x$.  In $\mathbb{R}^{d}$, we usually indicate (scalar) partial derivatives using subscripts.  

We will use the same notation for classical, weak, and distributional derivatives.  In particular, $\partial_{s}$ is often intended in the distributional sense.  

If $U \in L^{1}_{\text{loc}}(\mathbb{R} \times \mathbb{T}^{d})$ and $\Omega$ is a bounded open subset of $\mathbb{R} \times \mathbb{T}^{d}$, we define $TV(U; \Omega)$ by 
	\begin{align*}
		TV(U; \Omega) &= \sup \bigg \{ \int_{\Omega} U(s,x) (\text{div} \, \Psi)(s,x) \, dx \, ds \, \mid \, \Psi \in C^{1}(\Omega; \mathbb{R}^{d + 1}),\\
			&\qquad \qquad \|\Psi\|_{L^{\infty}(\Omega)} \leq 1 \bigg \}.
	\end{align*}
	
Given a convex function $\psi : \mathbb{R}^{d} \to \mathbb{R}$, we denote its second derivative (as a Radon measure) by $D^{2}\psi$.  That is, if $g \in C^{\infty}_{c}(\mathbb{R}^{d})$, then
	\begin{equation*}
		\int_{\mathbb{R}^{d}} g(y) \, D^{2}\psi(dy) = \int_{\mathbb{R}^{d}} D^{2}g(x) \psi(x) \, dx.
	\end{equation*}  

\section{Analysis in $\mathbb{R} \times \mathbb{T}^{d}$} \label{S: preliminary_analysis}

We begin by collecting some facts and terminology that will be useful in the analysis of functions in $\mathbb{R} \times \mathbb{T}^{d}$.  The most important result is Theorem \ref{T: ergodic_lemma}, which provides the link between the Lagrangian \eqref{E: lagrangian} and the energy \eqref{E: functional}.  

\subsection{Rational versus Irrational Directions in $S^{d - 1}$}  In the study of \eqref{E: lagrangian}, the choice of the direction $e$ plays an important role.  As we will see, there is a marked difference between rational and irrational directions.  By a rational direction, we mean an element $e \in S^{d-1} \cap \mathbb{R} \mathbb{Z}^{d}$.  In what follows, it is helpful to define $M_{e} \subseteq \mathbb{Z}^{d}$ to be the subgroup 
	\begin{equation*}
		M_{e} = \{k \in \mathbb{Z}^{d} \, \mid \, \langle k,e \rangle = 0\}.
	\end{equation*}

Next, we state an algebraic fact that is at the heart of the distinction between rational and irrational directions:
	
	\begin{prop} \label{P: algebra} Suppose $e \in S^{d - 1}$.  The following are equivalent: 
		\begin{itemize}
			\item[(i)] $e \in \mathbb{R} \mathbb{Z}^{d}$,
			\item[(ii)] $M_{e}$ is a subgroup of $\mathbb{Z}^{d}$ of rank $d - 1$,
			\item[(iii)] $\{\langle k, e \rangle \, \mid \, k \in \mathbb{Z}^{d}\}$ is a discrete sub-group of $\mathbb{R}$ (and, thus, has rank one).
		\end{itemize}\end{prop}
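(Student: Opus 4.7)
The plan is to prove the cycle (i) $\Rightarrow$ (ii) $\Rightarrow$ (iii) $\Rightarrow$ (i), leaning on two pieces of background: Bezout's lemma (the image of $k \mapsto \langle k, v \rangle$ on $\mathbb{Z}^d$ equals $\gcd(v_1, \ldots, v_d)\mathbb{Z}$ when $v \in \mathbb{Z}^d \setminus \{0\}$), and the classification of additive subgroups of $\mathbb{R}$ (every such subgroup is either dense or of the form $\beta \mathbb{Z}$ for some $\beta \geq 0$).

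For (i) $\Rightarrow$ (ii), I would write $e = \alpha v$ with $v \in \mathbb{Z}^d \setminus \{0\}$ and $\alpha \in \mathbb{R}$, then consider the group homomorphism $\phi_v \colon \mathbb{Z}^d \to \mathbb{Z}$ defined by $\phi_v(k) = \langle k, v \rangle$, whose kernel is exactly $M_e$. Since $\mathrm{Im}(\phi_v) = \gcd(v_1, \ldots, v_d)\mathbb{Z}$ is a rank-one free abelian group, rank additivity applied to the short exact sequence $0 \to M_e \to \mathbb{Z}^d \to \mathrm{Im}(\phi_v) \to 0$ yields $\mathrm{rank}(M_e) = d - 1$. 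For (ii) $\Rightarrow$ (iii), I would pick a $\mathbb{Z}$-basis $v_1, \ldots, v_{d-1}$ of $M_e$; these are automatically $\mathbb{R}$-linearly independent and span the hyperplane $\langle e \rangle^\perp$. The generalized cross product $w \in \mathbb{R}^d$, whose $i$-th component is the signed $(d-1) \times (d-1)$ minor of $[v_1 | \cdots | v_{d-1}]$ obtained by deleting the $i$-th row, has integer entries and satisfies $\langle w, v_i \rangle = 0$ for each $i$, so $w \in (\langle e \rangle^\perp)^\perp = \langle e \rangle$, giving $e \in \mathbb{R} w \subseteq \mathbb{R} \mathbb{Z}^d$. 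Writing $e = \gamma w$ and invoking Bezout then shows $\{\langle k, e \rangle \, \mid \, k \in \mathbb{Z}^d\} = \gamma \gcd(w_1, \ldots, w_d) \mathbb{Z}$, which is discrete.

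For (iii) $\Rightarrow$ (i), the classification of subgroups of $\mathbb{R}$ forces $\{\langle k, e \rangle \, \mid \, k \in \mathbb{Z}^d\} = \beta \mathbb{Z}$ with $\beta > 0$ (the case $\beta = 0$ is ruled out since $\|e\| = 1$); in particular, each coordinate $e_i = \langle e_i^{\mathrm{std}}, e \rangle$ lies in $\beta \mathbb{Z}$, giving $e \in \beta \mathbb{Z}^d \subseteq \mathbb{R}\mathbb{Z}^d$. The whole argument is essentially bookkeeping with elementary linear algebra over $\mathbb{Z}$; the only mildly delicate point is ensuring $M_e$ is free of finite rank so that rank arithmetic applies, which follows from the standard fact that subgroups of finitely generated free abelian groups are themselves free. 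I do not anticipate a serious obstacle.
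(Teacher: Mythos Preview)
Your proof is correct and takes a more direct, elementary route than the paper's. The paper only explicitly addresses the implications (ii) $\Rightarrow$ (i) and (iii) $\Rightarrow$ (i), and does so by contrapositive in Proposition~\ref{P: rational_irrational}: assuming $e \notin \mathbb{R}\mathbb{Z}^d$, it first constructs (via Gram--Schmidt over $\mathbb{Q}$) an orthogonal $\mathbb{Q}$-basis $\{k_1,\dots,k_r\}$ of $\mathrm{span}_{\mathbb{Q}} M_e$, extends it to an orthogonal basis $\{k_1,\dots,k_d\}$ of $\mathbb{Q}^d$, and then shows that $\{\langle k_{r+1},e\rangle,\dots,\langle k_d,e\rangle\}$ are $\mathbb{Z}$-independent, so the image subgroup has rank $d - r > 1$ and is therefore dense, while $M_e$ has rank $r < d-1$. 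By contrast, your (iii) $\Rightarrow$ (i) is a one-liner using the standard basis vectors, and your (ii) $\Rightarrow$ (i) via the integer generalized cross product sidesteps the Gram--Schmidt machinery altogether. The trade-off is that the paper's orthogonal $\mathbb{Q}$-basis is not built solely for this proposition: the same construction is reused in Appendix~\ref{A: transformation properties} to prove ergodicity of the $\langle e \rangle^\perp$-translation action on $\mathbb{T}^d$ (Theorem~\ref{T: unique_invariant}), which underlies the integral identity of Theorem~\ref{T: ergodic_lemma}. So your argument is cleaner for the proposition in isolation, while the paper's longer route amortizes over its later needs.
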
  
		
	The proof that (ii) and (iii) imply (i) is provided in Appendix \ref{A: transformation properties}.    

We will be interested in functions $v : \mathbb{R}^{d} \to \mathbb{R}$ invariant under the action of $M_{e}$ on $\mathbb{R}^{d}$, that is, those functions for which 
	\begin{equation} \label{E: periodicity}
		v(x + k) = v(x) \quad \text{if} \, \, k \in M_{e}.
	\end{equation}
Depending on the rank of $M_{e}$, such functions are periodic in a certain number of directions.  Since we will exploit some of the particular structure associated to the cases when $e \in \mathbb{R} \mathbb{Z}^{d}$ or not, we will explain it in the remainder of this section.

\subsection{From Cylindrical to Physical Coordinates}  

While we are ultimately interested in functions in $\mathbb{R}^{d}$, the Lagrangian $\mathscr{T}^{a}_{e}$ takes as inputs functions in $\mathbb{R} \times \mathbb{T}^{d}$.  To streamline what comes later, let us lay the groundwork for the correspondence between $\mathscr{T}^{a}_{e}$ and $\mathbb{R} \times \mathbb{T}^{d}$ (the problem in ``cylindrical coordinates") and $\mathcal{F}^{a}_{1}$ and $\mathbb{R}^{d}$ (in ``physical coordinates").  

	\begin{definition} \label{D: generators}  If $U : \mathbb{R} \times \mathbb{T}^{d} \to \mathbb{R}$ and $e \in S^{d - 1}$, the family of functions $\{u_{\zeta}\}_{\zeta \in \mathbb{R}}$ generated by $U$ in the direction of $e$ are the functions in $\mathbb{R}^{d}$ given by
		\begin{equation*}
			u_{\zeta}(x) = U(\langle x, e \rangle - \zeta, x).
		\end{equation*}
	\end{definition}
	
	When the direction $e$ is understood, we will not mention it and simply refer to the ``family of functions generated by $U$."
	
Among the advantages of this perspective, a simple computation gives

	\begin{prop}  If $U : \mathbb{R} \times \mathbb{T}^{d} \to \mathbb{R}$ and $e \in S^{d - 1}$, the family of functions $\{u_{\zeta}\}_{\zeta \in \mathbb{R}}$ generated by $U$ in the direction of $e$ satisfies \eqref{E: periodicity}.  \end{prop}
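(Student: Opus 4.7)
The plan is to verify \eqref{E: periodicity} directly from the definition of the generators, extracting the $M_{e}$-invariance from two separate ingredients: the algebraic identity $\langle k, e \rangle = 0$ coming from $k \in M_{e}$, and the $\mathbb{Z}^{d}$-periodicity of $U$ in its second argument, inherited from the fact that $U$ is defined on $\mathbb{R} \times \mathbb{T}^{d}$.

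Concretely, fix $\zeta \in \mathbb{R}$ and $k \in M_{e}$, and compute
\[
u_{\zeta}(x + k) = U(\langle x + k, e \rangle - \zeta, x + k) = U(\langle x, e \rangle - \zeta + \langle k, e \rangle, x + k).
\]
First I would collapse the $s$-argument using $\langle k, e \rangle = 0$, which is the defining property of $M_{e}$. Then, because $M_{e} \subseteq \mathbb{Z}^{d}$ and $U$ descends from $\mathbb{R} \times \mathbb{R}^{d}$ to $\mathbb{R} \times \mathbb{T}^{d}$, the translation $x \mapsto x + k$ in the second argument is invisible to $U$. The composition of these two reductions gives $U(\langle x, e \rangle - \zeta, x) = u_{\zeta}(x)$, which is exactly \eqref{E: periodicity}.

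There is no genuine obstacle here; the statement is essentially an unwrapping of Definition \ref{D: generators} and of the meaning of $M_{e}$. The conceptual content worth flagging is that \emph{both} ingredients are needed: torus-periodicity of $U$ alone only provides $\mathbb{Z}^{d}$-periodicity of $x \mapsto U(s,x)$ for each fixed $s$, whereas the generator $u_{\zeta}$ couples $s$ to $x$ through $s = \langle x, e \rangle - \zeta$, and it is precisely the cancellation $\langle k, e \rangle = 0$ that keeps the first slot of $U$ unchanged under the shift. In this sense, the proposition records the structural reason why the cylindrical Lagrangian $\mathscr{T}^{a}_{e}$ is a natural variational object for studying plane-like critical points of $\mathcal{F}^{a}_{1}$ possessing the Birkhoff property: the lifting operation automatically builds in the correct invariance with respect to the lattice $M_{e}$.
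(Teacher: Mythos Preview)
Your proof is correct and is precisely the ``simple computation'' the paper alludes to but omits: unfold Definition \ref{D: generators}, use $\langle k,e\rangle = 0$ for $k \in M_{e}$, and then invoke the $\mathbb{Z}^{d}$-periodicity of $U$ in its second slot. There is nothing to add.
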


\subsection{Periodicity when $e \in \mathbb{R} \mathbb{Z}^{d}$}

Let us begin by choosing a fundamental domain for the action of $M_{e}$ on $\mathbb{R}^{d}$.  Given $e \in \mathbb{R} \mathbb{Z}^{d}$, fix a $\mathbb{Z}$-basis $\{v_{1}^{e},\dots,v_{d- 1}^{e}\}$ of $M_{e}$.  We will work with this assignment of basis throughout the paper.

	In what follows, we define the simplex $Q_{e}$ by 
		\begin{equation*}
			Q_{e} = \left\{ \sum_{i =1}^{d-1} \lambda_{i} v^{e}_{i} \, \mid \, (\lambda_{1},\dots,\lambda_{d-1}) \in [0,1)^{d-1} \right\}.
		\end{equation*}

	\begin{prop}  $Q_{e} \oplus_{e} \mathbb{R}$ is a fundamental domain for the action of $M_{e}$ on $\mathbb{R}^{d}$, that is, if $x \in \mathbb{R}^{d}$, then there is a unique $k \in M_{e}$ such that $x - k \in Q_{e} \oplus_{e} \mathbb{R}$.  \end{prop}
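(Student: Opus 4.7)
The plan is to reduce the claim to the standard fact that any full-rank integer lattice in a real vector space admits a half-open parallelepiped as a fundamental domain. The key observation is that, by assumption, $e \in \mathbb{R}\mathbb{Z}^{d}$, so Proposition \ref{P: algebra} applies and tells us that $M_{e}$ has rank $d-1$ as a $\mathbb{Z}$-module. Since $M_{e} \subseteq \langle e \rangle^{\perp}$ and $\dim_{\mathbb{R}} \langle e\rangle^{\perp} = d-1$, it follows that the chosen $\mathbb{Z}$-basis $\{v_{1}^{e},\dots,v_{d-1}^{e}\}$ of $M_{e}$ is in fact an $\mathbb{R}$-basis of $\langle e \rangle^{\perp}$. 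This is the structural fact that makes $Q_{e}$ an honest half-open parallelepiped tiling $\langle e \rangle^{\perp}$.

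For existence, I would decompose an arbitrary $x \in \mathbb{R}^{d}$ orthogonally as $x = x^{\perp} + \alpha e$ with $x^{\perp} \in \langle e \rangle^{\perp}$ and $\alpha \in \mathbb{R}$, and then expand $x^{\perp} = \sum_{i=1}^{d-1} \lambda_{i} v_{i}^{e}$ in the $\mathbb{R}$-basis from the previous step. Writing $\lambda_{i} = n_{i} + \mu_{i}$ with $n_{i} \in \mathbb{Z}$ and $\mu_{i} \in [0,1)$ and setting $k = \sum_{i} n_{i} v_{i}^{e} \in M_{e}$ produces $x - k = \sum_{i} \mu_{i} v_{i}^{e} + \alpha e \in Q_{e} \oplus_{e} \mathbb{R}$, as required.

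For uniqueness, suppose $x - k_{1}, x - k_{2} \in Q_{e} \oplus_{e} \mathbb{R}$ for two elements $k_{1}, k_{2} \in M_{e}$. Taking the difference gives $k_{2} - k_{1} \in M_{e}$ on one hand, and on the other hand the difference of two elements of $Q_{e} \oplus_{e} \mathbb{R}$. Orthogonal projection onto $\langle e \rangle^{\perp}$ kills the $\mathbb{R} e$-component and shows $k_{2} - k_{1} \in Q_{e} - Q_{e}$; expanding in the basis $\{v_{i}^{e}\}$, the coefficients of $k_{2} - k_{1}$ are integers while those of any element of $Q_{e} - Q_{e}$ lie in $(-1,1)$, forcing them to vanish.

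I do not anticipate any serious obstacle: the only subtlety is the initial linear-algebraic observation that $\{v_{i}^{e}\}$ spans $\langle e\rangle^{\perp}$ over $\mathbb{R}$, which is automatic from the rank statement in Proposition \ref{P: algebra}. Everything else is the routine ``take integer and fractional parts of coordinates'' argument for lattices, applied in the $(d-1)$-dimensional subspace transverse to $e$, with the $e$-direction left untouched.
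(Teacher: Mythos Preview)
Your proof is correct. The paper states this proposition without proof, treating it as a standard fact about lattices; your argument via Proposition~\ref{P: algebra} (rank of $M_e$ equals $d-1$), the resulting $\mathbb{R}$-basis of $\langle e\rangle^{\perp}$, and the integer/fractional-part decomposition is exactly the routine verification the paper is implicitly invoking.
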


Note that, in general, $Q_{e}$ need not be a cube.  For example, if $d = 3$ and $e = 3^{-\frac{1}{2}}(1,1,1)$, then a basis for $M_{e}$ is given by $v^{e}_{1} = (1,0,-1)$ and $v^{e}_{2} = (0,1,-1)$.  Since $v_{1}$ and $v_{2}$ are not orthogonal, $Q_{e}$ is a rhombus rather than a square.  

For convenience, we make the following definition:

	\begin{definition}  The quotient space $\mathbb{T}^{d - 1}_{e} \oplus_{e} \mathbb{R}$ is defined by $\mathbb{T}^{d -1}_{e} \oplus_{e} \mathbb{R} = \mathbb{R}^{d}/M_{e}$, where the quotient is intended in the algebraic sense.  \end{definition}  
	
By what came before, $\mathbb{T}^{d -1}_{e} \oplus_{e} \mathbb{R}$ is in bijective correspondence with $Q_{e} \oplus_{e} \mathbb{R}$ and we will regard (e.g.\ measurable) functions on it as functions in $\mathbb{R}^{d}$ satisfying \eqref{E: periodicity}.

	Now we are prepared to discuss the correspondence between functions on $\mathbb{R} \times \mathbb{T}^{d}$ and those in $\mathbb{R}^{d}$ when $e \in \mathbb{R} \mathbb{Z}^{d}$.  Here there is an additional element of periodicity, this time in the $e$ direction rather than orthogonal to it.  Namely, Proposition \ref{P: algebra} implies there is an $m_{e} > 0$ such that 
		\begin{equation*}
			\{\langle k,e \rangle \, \mid \, k \in \mathbb{Z}^{d}\} = \{\ell m_{e} \, \mid \, \ell \in \mathbb{Z}\}.
		\end{equation*}
	In particular, $m_{e}$ is characterized by the formula
		\begin{equation} \label{E: rational period}
			m_{e} = \inf \left\{ \langle k, e \rangle \, \mid \, k \in \mathbb{Z}^{d} \right\} \cap (0,\infty).
		\end{equation}
		
	\begin{prop} \label{P: continuity_rational_directions}  	If $e \in \mathbb{R} \mathbb{Z}^{d}$ and $U : \mathbb{R} \times \mathbb{T}^{d} \to \mathbb{R}$, then the functions $\{u_{\zeta}\}_{\zeta \in \mathbb{R}}$ generated by $U$ in the $e$ direction descend to functions on $\mathbb{T}^{d - 1}_{e} \oplus_{e} \mathbb{R}$ and $u_{\zeta}(\cdot - k) = u_{\zeta + \langle k, e \rangle}$ if $k \in \mathbb{Z}^{d}$.  In particular, the map $\zeta \mapsto u_{\zeta}$ is periodic modulo translations with period $m_{e}$. 
	\end{prop}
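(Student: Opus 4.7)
The plan is to verify the three claims by direct computation from the definition $u_\zeta(x) = U(\langle x,e\rangle - \zeta, x)$, leveraging only the $\mathbb{Z}^d$-periodicity of $U$ in the spatial variable (built into its being a function on $\mathbb{R} \times \mathbb{T}^d$) and the algebraic characterization of rational directions supplied by Proposition \ref{P: algebra}.

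First I would establish the transformation rule $u_\zeta(\cdot - k) = u_{\zeta + \langle k,e\rangle}$ for arbitrary $k \in \mathbb{Z}^d$. Starting from the definition,
\begin{equation*}
u_\zeta(x - k) = U(\langle x - k, e\rangle - \zeta, x - k) = U(\langle x, e\rangle - (\zeta + \langle k, e\rangle), x),
\end{equation*}
where in the second equality I use that $U$ is $\mathbb{Z}^d$-periodic in the second argument. The right-hand side is exactly $u_{\zeta + \langle k,e\rangle}(x)$, proving the formula. Specializing to $k \in M_e$ (where $\langle k,e\rangle = 0$) shows $u_\zeta(x - k) = u_\zeta(x)$, so $u_\zeta$ satisfies \eqref{E: periodicity} and hence descends to $\mathbb{T}^{d-1}_e \oplus_e \mathbb{R} = \mathbb{R}^d/M_e$ as claimed.

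For the periodicity statement, the hypothesis $e \in \mathbb{R}\mathbb{Z}^d$ together with Proposition \ref{P: algebra} tells us that $\{\langle k,e\rangle \mid k \in \mathbb{Z}^d\}$ is a discrete subgroup of $\mathbb{R}$, hence equal to $m_e \mathbb{Z}$ for the positive generator $m_e$ given in the statement. Choosing $k_0 \in \mathbb{Z}^d$ with $\langle k_0, e\rangle = m_e$, the transformation rule yields
\begin{equation*}
u_{\zeta + m_e}(x) = u_{\zeta + \langle k_0, e\rangle}(x) = u_\zeta(x - k_0),
\end{equation*}
which is exactly the asserted periodicity of $\zeta \mapsto u_\zeta$ modulo the spatial translation by $k_0$.

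There is no real obstacle: every step is a one-line consequence of the definition of $u_\zeta$, the $\mathbb{Z}^d$-periodicity inherent in the domain $\mathbb{R} \times \mathbb{T}^d$, and the already-stated Proposition \ref{P: algebra}. The only point requiring a small amount of care is invoking the right direction of Proposition \ref{P: algebra} (namely (i) $\Rightarrow$ (iii)) to legitimize the existence of $m_e > 0$ and a corresponding $k_0 \in \mathbb{Z}^d$; this is the pivot that upgrades the transformation rule into genuine periodicity in $\zeta$.
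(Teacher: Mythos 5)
Your proof is correct, and since the paper states this proposition without proof (it falls under the class of elementary computational facts the paper omits), your argument supplies exactly the computation one would expect: the transformation rule follows from the definition of $u_\zeta$ and the $\mathbb{Z}^d$-periodicity of $U$ in its second slot, $M_e$-invariance is the special case $\langle k,e\rangle = 0$, and discreteness of $\{\langle k,e\rangle \mid k \in \mathbb{Z}^d\}$ via Proposition \ref{P: algebra} upgrades the rule to periodicity modulo translation by a $k_0$ realizing $m_e$. No gap.
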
  

\subsection{Quasi-periodicity when $e \notin \mathbb{R} \mathbb{Z}^{d}$}  When $e$ is irrational, the transformation between a function $U$ and the functions $\{u_{\zeta}\}_{\zeta \in \mathbb{R}}$ it generates in that direction is harder to visualize, but enjoys some analytical advantages, as we will see.

An important analytical property of the transformation $U \mapsto \{u_{\zeta}\}_{\zeta \in \mathbb{R}}$ in the irrational case is stated in the next result.

	\begin{prop} \label{P: continuity_irrational_directions}  Suppose $U$ is a real-valued function in $\mathbb{R} \times \mathbb{T}^{d}$, $e \in S^{d -1} \setminus \mathbb{R} \mathbb{Z}^{d}$.  Let $\{u_{\zeta}\}_{\zeta \in \mathbb{R}}$ be the family of functions generated by $U$ in the $e$ direction.  The following are equivalent:
		\begin{itemize}
			\item[(i)] $U \in BUC(\mathbb{R} \times \mathbb{T}^{d})$ (resp.\ $U \in BC(\mathbb{R} \times \mathbb{T}^{d})$).
			\item[(ii)] $\{u_{\zeta}\}_{\zeta \in \mathbb{R}} \subseteq BUC(\mathbb{R}^{d})$ (resp.\ $BC(\mathbb{R}^{d})$) and, for each $(k_{n})_{n \in \mathbb{N}} \subseteq \mathbb{Z}^{d}$ and $\gamma \in \mathbb{R}$, 
				\begin{equation*}
					\text{if} \quad \gamma = \lim_{n \to \infty} \langle k_{n},e \rangle, \quad \text{then} \quad u_{\zeta - \gamma} = \lim_{n \to \infty} u_{\zeta}(\cdot + k_{n})
				\end{equation*} 
			uniformly (resp.\ locally uniformly) in $\mathbb{R}^{d}$.
	\end{itemize}
Conversely, if $\{u_{\zeta}\}_{\zeta \in \mathbb{R}}$ is any family of functions satisfying (ii), then the function $U(s,x) = u_{\langle x,e \rangle - s}(x)$ is well-defined in $\mathbb{R} \times \mathbb{T}^{d}$, generates $\{u_{\zeta}\}_{\zeta \in \mathbb{R}}$, and satisfies (i). \end{prop}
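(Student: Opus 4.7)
The overall plan is to exploit the shift relation $u_\zeta(\cdot + k) = u_{\zeta - \langle k, e\rangle}$ together with the fact that, since $e \notin \mathbb{R}\mathbb{Z}^d$, the set $\{\langle k, e\rangle : k \in \mathbb{Z}^d\}$ is dense in $\mathbb{R}$ (by Proposition \ref{P: algebra}). The implication (i) $\Rightarrow$ (ii) is essentially a direct computation: the map $x \mapsto (\langle x, e\rangle - \zeta, x)$ from $\mathbb{R}^d$ into $\mathbb{R} \times \mathbb{T}^d$ is $2$-Lipschitz, so $u_\zeta$ inherits boundedness and uniform continuity (resp.\ continuity) from $U$ with a modulus that is independent of $\zeta$. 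Meanwhile, the $\mathbb{Z}^d$-periodicity of $U$ in $x$ gives the identity $u_\zeta(x + k_n) = U(\langle x, e\rangle + \langle k_n, e\rangle - \zeta, x)$, from which uniform (resp.\ locally uniform) convergence to $u_{\zeta - \gamma}$ follows immediately from the regularity of $U$.

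For (ii) $\Rightarrow$ (i), the first step is to deduce the shift relation by applying (ii) with the constant sequence $k_n \equiv k$; this alone ensures that $U(s, x) := u_{\langle x, e\rangle - s}(x)$ is well-defined on $\mathbb{R} \times \mathbb{T}^d$. I would next show that the family $\{u_\zeta\}_{\zeta \in \mathbb{R}}$ is uniformly bounded and uniformly equicontinuous: the shift relation forces both $\|u_\zeta\|_\infty$ and any modulus of continuity of $u_\zeta$ to be invariant under $\zeta \mapsto \zeta - \langle k, e\rangle$, and (ii) shows that every $u_{\zeta'}$ is a (locally) uniform limit of translations $u_{\zeta_0}(\cdot + k_n)$ of a fixed $u_{\zeta_0}$, obtained by choosing $k_n$ so that $\langle k_n, e\rangle \to \zeta_0 - \zeta'$, which then transfers the bound and modulus of $u_{\zeta_0}$ to $u_{\zeta'}$. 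Sequential continuity of $\zeta \mapsto u_\zeta$ into $BUC(\mathbb{R}^d)$ (resp.\ $BC(\mathbb{R}^d)$) is obtained by a diagonal argument: given $\zeta_n \to \zeta$ with $\gamma_n := \zeta - \zeta_n \to 0$, I approximate each $\gamma_n$ to within $1/n$ by some $\langle K_n, e\rangle$ for which $\|u_\zeta(\cdot + K_n) - u_{\zeta_n}\|_\infty < 1/n$, and then invoke (ii) with $\gamma = 0$ along $(K_n)$ to conclude. To upgrade (for the BUC case) to uniform continuity in $\zeta$, I observe that $\|u_\zeta - u_{\zeta'}\|_\infty$ is invariant under the diagonal shift $(\zeta, \zeta') \mapsto (\zeta + \langle k, e\rangle, \zeta' + \langle k, e\rangle)$; by continuity and density this invariance extends to all of $\mathbb{R}$, so $\|u_\zeta - u_{\zeta'}\|_\infty$ depends only on $\zeta' - \zeta$, reducing uniform continuity to continuity at the origin. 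A triangle inequality then yields
\begin{equation*}
|U(s, x) - U(s', x')| \leq \omega(\|x - x'\|) + \rho(|s - s'| + \|x - x'\|)
\end{equation*}
after lifting to $\mathbb{R} \times \mathbb{R}^d$ and picking the closest representative of $x - x'$ modulo $\mathbb{Z}^d$, where $\omega$ and $\rho$ are the uniform moduli from above. The final assertion of the proposition, which constructs $U$ from a family $\{u_\zeta\}$ satisfying (ii), follows from the same arguments applied in reverse.

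The main obstacle is the bootstrapping step: extracting uniform equicontinuity of $\{u_\zeta\}_{\zeta \in \mathbb{R}}$ and uniform continuity of $\zeta \mapsto u_\zeta$ from the purely sequential convergence assumption in (ii). Density of the orbit $\{\langle k, e\rangle\}$ is precisely what makes this possible, and is also the sole reason the statement is restricted to irrational $e$: in the rational case the orbit is discrete and the corresponding statement takes a different, periodic form, as reflected in Proposition \ref{P: continuity_rational_directions}.
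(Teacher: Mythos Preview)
Your argument is correct. The paper itself states this proposition without proof, so there is nothing to compare against; your outline supplies exactly the natural argument. The one step worth making explicit when you write it out is the diagonal construction for continuity of $\zeta \mapsto u_\zeta$: for each fixed $n$, you first apply (ii) with target $\gamma_n$ (not $0$) to produce a sequence $K^{(m)}$ with $\langle K^{(m)}, e\rangle \to \gamma_n$ and $u_\zeta(\cdot + K^{(m)}) \to u_{\zeta - \gamma_n} = u_{\zeta_n}$, then select $K_n := K^{(m_n)}$ satisfying both approximation conditions simultaneously; only afterwards do you apply (ii) a second time, now with $\gamma = 0$, to the resulting sequence $(K_n)_n$. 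As written your sentence compresses these two invocations of (ii) into one, which could read as circular to a skeptical reader.
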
 

\subsection{Transformation Properties of the Lebesgue Measure}  The previous correspondence between special classes of quasi-periodic functions in $\mathbb{R}^{d}$ and continuous functions in $\mathbb{R} \times \mathbb{T}^{d}$ is not very promising since we are studying an integral functional on $\mathbb{R} \times \mathbb{T}^{d}$.  Fortunately, the Lebesgue measure on $\mathbb{R} \times \mathbb{T}^{d}$ enjoys very nice transformation properties of its own.

\begin{theorem} \label{T: ergodic_lemma}  Let $U \in L^{1}(\mathbb{R} \times \mathbb{T}^{d})$ and $e \in S^{d - 1}$.  If $\{u_{\zeta}\}_{\zeta \in \mathbb{R}}$ is the family of functions generated by $U$ in the direction $e$, then
	\begin{equation*}
		\int_{\mathbb{R} \times \mathbb{T}^{d}} U(s,x) \, ds \, dx = \lim_{T \to \infty} T^{-1} \int_{0}^{T} \left( \lim_{R \to \infty} R^{1 - d} \int_{Q^{e}(0,R) \oplus_{e} \mathbb{R}} u_{\zeta}(x) \, dx \right) \, d\zeta.
	\end{equation*} 
Moreover, if $e \in \mathbb{R} \mathbb{Z}^{d}$ and $m_{e} > 0$ is given by \eqref{E: rational period}, then 
	\begin{equation*}
		\int_{\mathbb{R} \times \mathbb{T}^{d}} U(s,x) \, ds \, dx = m_{e}^{-1} \mathcal{H}^{d - 1}(Q_{e})^{-1} \int_{0}^{m_{e}} \int_{Q_{e} \oplus_{e} \mathbb{R}} u_{\zeta}(x) \, dx \, d \zeta.
	\end{equation*}
On the other hand, if $e \notin \mathbb{R} \mathbb{Z}^{d}$, then, for $\mathcal{L}^{1}$-a.e.\ $\zeta \in \mathbb{R}$,
	\begin{equation*}
		\int_{\mathbb{R} \times \mathbb{T}^{d}} U(s,x) \, ds \, dx = \lim_{R \to \infty} R^{1 - d} \int_{Q^{e}(0,R) \oplus_{e} \mathbb{R}} u_{\zeta}(x) \, dx.
	\end{equation*}
\end{theorem}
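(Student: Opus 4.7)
The plan is to reduce all three statements to a single Fubini identity combined with a Weyl equidistribution argument on the hyperplane $\langle e \rangle^{\perp} \subseteq \mathbb{R}^{d}$. Parametrizing $x \in Q^{e}(0, R) \oplus_{e} \mathbb{R}$ as $x = y + te$ with $y \in Q^{e}(0, R)$ and $t \in \mathbb{R}$, then substituting $s = t - \zeta$ and applying Fubini, one obtains the identity
\begin{equation*}
\int_{0}^{T} \int_{Q^{e}(0, R) \oplus_{e} \mathbb{R}} u_{\zeta}(x) \, dx \, d\zeta = \int_{\mathbb{R}} \int_{Q^{e}(0, R) \oplus_{e} [s, s+T]} U(s, z) \, dz \, ds.
\end{equation*}
The domain $Q^{e}(0,R) \oplus_{e} [s, s+T]$ is now a rectangular box in $\mathbb{R}^{d}$ whose projection to $\mathbb{T}^{d}$ is well-understood asymptotically.

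In the second step, I would prove the equidistribution statement that for $\Phi \in C(\mathbb{T}^{d})$,
\[ \lim_{R \to \infty} R^{1-d} \int_{Q^{e}(0, R) \oplus_{e} [s, s+T]} \Phi(z) \, dz = \begin{cases} T \int_{\mathbb{T}^{d}} \Phi, & e \notin \mathbb{R} \mathbb{Z}^{d}, \\ T \mathcal{H}^{d-1}(Q_{e})^{-1} \int_{Q_{e} \oplus_{e} [s, s+T]} \Phi, & e \in \mathbb{R} \mathbb{Z}^{d}, \end{cases} \]
by testing against the characters $\Phi_{k}(z) = e^{2\pi i \langle k, z \rangle}$. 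In the irrational case, $\langle k, \cdot \rangle$ is non-trivial on $\langle e \rangle^{\perp}$ for every $k \in \mathbb{Z}^{d} \setminus \{0\}$ (since $\mathbb{Z}^{d} \cap \langle e \rangle = \{0\}$), so the resulting oscillatory integral decays like $O(R^{d-2})$. In the rational case, $M_{e}$ has rank $d-1$, and only the characters with $k \in M_{e}$ survive in the limit, producing the sub-torus average. Combined with dominated convergence in $s$ for $U \in C_{c}(\mathbb{R} \times \mathbb{T}^{d})$ and extended to $U \in L^{1}$ by density, using the uniform bound $R^{1-d} \int_{Q^{e}(0,R) \oplus_{e} [s, s+T]} |F|(z) \, dz \leq C T \|F\|_{L^{1}(\mathbb{T}^{d})}$, this yields the general formula; the rational formula is then recovered via the volume identity $m_{e} \mathcal{H}^{d-1}(Q_{e}) = 1$, which follows from the decomposition $\mathbb{Z}^{d} = M_{e} \oplus \mathbb{Z} k_{0}$ with $\langle k_{0}, e \rangle = m_{e}$.

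For the irrational a.e.\ claim, I would combine the general formula with a translation-invariance argument. The relation $u_{\zeta + \langle k, e \rangle}(x) = u_{\zeta}(x - k)$ for $k \in \mathbb{Z}^{d}$ implies that any a.e.\ limit $f(\zeta) = \lim_{R} R^{1-d} \int u_{\zeta} \, dx$, once it exists, is invariant under the additive subgroup $\{\langle k, e \rangle : k \in \mathbb{Z}^{d}\} \subseteq \mathbb{R}$, which is dense in $\mathbb{R}$ when $e \notin \mathbb{R} \mathbb{Z}^{d}$; this forces $f$ to be a.e.\ constant, and the general formula pins down the constant as $\int U$. The main obstacle is showing that the pointwise limit $f(\zeta)$ exists for a.e.\ $\zeta$ when $U$ is only in $L^{1}$ (since Weyl equidistribution upgrades only to averaged convergence for $L^{1}$ data). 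I expect this will require either a maximal-function or Wiener ergodic argument applied to $\psi_{\zeta}(y) = \int_{\mathbb{R}} U(s, y + (s + \zeta)e) \, ds$ viewed as an $L^{1}(\mathbb{T}^{d})$ function, or a careful truncation-plus-approximation scheme that reduces to the continuous case, where convergence is uniform in $\zeta$ and the a.e.\ statement is automatic.
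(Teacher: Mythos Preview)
Your Fubini identity and character computation are correct, and they are essentially the same manipulations the paper performs (the paper writes $\int_{Q^{e}(0,R) \oplus_{e} \mathbb{R}} f_{\zeta}(x)\,dx = \int_{Q^{e}(0,R)} \int_{\mathbb{R}} F(s,(s+\zeta)e + x^{\perp})\,ds\,dx^{\perp}$ and then averages the function $y \mapsto \int_{\mathbb{R}} F(s,(s+\zeta)e + y)\,ds$ over $\mathbb{T}^{d}$). The difference is one of logical ordering, and this is where your proposal has a gap.

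Your Fubini--Weyl argument establishes
\[
\lim_{R \to \infty} R^{1-d} \int_{0}^{T} \Bigl( \int_{Q^{e}(0,R) \oplus_{e} \mathbb{R}} u_{\zeta}(x)\,dx \Bigr)\,d\zeta = T \int_{\mathbb{R} \times \mathbb{T}^{d}} U,
\]
but the theorem requires $\int_{0}^{T} \bigl( \lim_{R} R^{1-d} \int u_{\zeta}\,dx \bigr)\,d\zeta$, with the $R$-limit \emph{inside} the $\zeta$-integral. To pass between the two you must already know that the inner limit exists for a.e.\ $\zeta$, which is precisely the ``main obstacle'' you postpone. So the general formula cannot be proved first and then used to pin down the constant; the a.e.\ existence has to come first. (There is also a stray factor of $T$ in your rational Weyl formula, and the surviving characters in that case are those with $k \in \mathbb{Z}^{d} \cap \langle e \rangle$, not $k \in M_{e}$.)

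The paper resolves the obstacle exactly the way you suggest in your last sentence: it proves that the translation action of $\langle e \rangle^{\perp}$ on $\mathbb{T}^{d}$ is uniquely ergodic when $e \notin \mathbb{R}\mathbb{Z}^{d}$ (via the same Fourier computation you sketch), and then applies the ergodic theorem directly to the $L^{1}(\mathbb{T}^{d})$ function $G(y) = \int_{\mathbb{R}} U(s, y + (s+\zeta)e)\,ds$. This yields both the a.e.\ existence of the limit and its value $\int_{\mathbb{R} \times \mathbb{T}^{d}} U$ in one stroke, so your density-of-$\{\langle k,e\rangle\}$ argument for identifying the constant, while correct, becomes unnecessary. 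In short: your machinery is sound, but the route through ``averaged convergence first, pointwise later'' is circular, and once you carry out the ergodic step on $\psi_{\zeta}$ you have reproduced the paper's proof.
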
  

The proof is given in Appendix \ref{A: transformation properties}.  

\subsection{Birkhoff Property} \label{S: birkhoff}  The Birkhoff property is fundamental in the study of plane-like minimizers of \eqref{E: functional}.  Let us define it now and then state and prove a lemma that can be used to explain why at the very least some plane-like minimizers of $\mathcal{F}^{a}_{1}$ have the property, even if the strong maximum principle is not available.  (The lemma is used in Section \ref{S: monotonicity_constraint}.).

\begin{definition} \label{D: birkhoff} Given $M \in \mathbb{N}$, a function $v : \mathbb{R}^{d} \to \mathbb{R}$ is said to possess the \emph{$M \mathbb{Z}^{d}$-Birkhoff property} with respect to the direction $e \in S^{d -1}$ provided $v(x + k) \geq v(x)$ whenever $k \in M \mathbb{Z}^{d}$ and $\langle k, e \rangle \geq 0$.  \end{definition}    

The next lemma identifies a natural class of functions with the Birkhoff property.

\begin{lemma} \label{L: asymptotic_birkhoff}  Suppose $e \in S^{d-1} \cap \mathbb{R} \mathbb{Z}^{d}$ and $N \in \mathbb{N}$.  If $v : \mathbb{T}^{d - 1}_{e} \oplus \mathbb{R} \to [-1,1]$ satisfies $v(x) = 1$ for $\langle x, e \rangle \geq N$ and $v(x) = -1$ for $\langle x, e \rangle \leq 0$, then there is an $M \in \mathbb{N}$ such that $v$ is $M\mathbb{Z}^{d}$-Birkhoff with respect to $e$.  \end{lemma}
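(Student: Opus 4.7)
The plan is to exploit the arithmetic rigidity afforded by the rationality of $e$.  By Proposition \ref{P: algebra} combined with Proposition \ref{P: continuity_rational_directions}, the set $\{\langle k, e \rangle \, \mid \, k \in \mathbb{Z}^{d}\}$ coincides with the discrete subgroup $m_{e} \mathbb{Z}$ of $\mathbb{R}$ for some $m_{e} > 0$.  This gives a uniform positive gap between $0$ and the smallest positive value attainable by $\langle k, e \rangle$, and passing from $\mathbb{Z}^{d}$ to $M \mathbb{Z}^{d}$ inflates that gap by a factor of $M$.  The idea is then to choose $M$ so large that the inflated gap exceeds the width $N$ of the transition layer of $v$, thereby collapsing the only nontrivial region $\{0 < \langle x, e \rangle < N\}$ after translation.

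Concretely, I would first pick $M \in \mathbb{N}$ with $M m_{e} \geq N$.  For any $k = M k' \in M \mathbb{Z}^{d}$ one has $\langle k, e \rangle = M \langle k', e \rangle \in M m_{e} \mathbb{Z}$, so the hypothesis $\langle k, e \rangle \geq 0$ forces the disjoint alternatives $\langle k, e \rangle = 0$ or $\langle k, e \rangle \geq M m_{e} \geq N$.

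Next I would dispatch the two cases.  If $\langle k, e \rangle = 0$, then $k \in M_{e}$, and since $v$ descends from $\mathbb{R}^{d}$ to $\mathbb{T}^{d - 1}_{e} \oplus_{e} \mathbb{R}$ by hypothesis, it is $M_{e}$-invariant, whence $v(x + k) = v(x)$ and the Birkhoff inequality holds with equality.  If instead $\langle k, e \rangle \geq N$, then at any $x \in \mathbb{R}^{d}$ with $\langle x, e \rangle \leq 0$ the boundary hypothesis gives $v(x) = -1 \leq v(x + k)$, while at any $x$ with $\langle x, e \rangle > 0$ one has $\langle x + k, e \rangle > N$, so $v(x + k) = 1 \geq v(x)$.

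There is really no main obstacle here once the framework of Section \ref{S: preliminary_analysis} is in place; the argument is a short combinatorial observation.  The one mild subtlety worth flagging is that the case $\langle k, e \rangle = 0$ must be handled separately via $M_{e}$-invariance, as it is not covered by the boundary-value comparison that takes care of all strictly positive values.  Notably, the proof uses no regularity or strong maximum principle properties of $v$, only its plateau structure outside the interval $\{0 \leq \langle x, e \rangle \leq N\}$, which is consistent with the low-regularity spirit maintained elsewhere in the paper.
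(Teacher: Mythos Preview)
Your proposal is correct and follows essentially the same approach as the paper: both exploit the discreteness of $\{\langle k,e\rangle \mid k \in \mathbb{Z}^{d}\} = m_{e}\mathbb{Z}$ to choose $M$ with $M m_{e} \geq N$, then split into the cases $\langle k,e\rangle = 0$ (handled by $M_{e}$-periodicity) and $\langle k,e\rangle \geq N$ (handled by the plateau values $\pm 1$). The paper additionally records the case $\langle k,e\rangle < 0$, but this is redundant given the stated definition of the Birkhoff property, so your treatment is complete as written.
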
  

\begin{proof}  First, we claim that if $k \in \mathbb{Z}^{d}$ and $\langle k, e \rangle \geq N$, then $v(x + k) \geq v(x)$ for each $x \in \mathbb{R}^{d}$.  Indeed, if $\langle x, e \rangle > 0$, then $\langle x + k, e \rangle > \langle k, e \rangle \geq N$ so $v(x + k) = 1 \geq v(x)$.  On the other hand, if $\langle x, e \rangle \leq 0$, then $v(x) = -1 \leq v(x + k)$.   
	
	Next, recall that Proposition \ref{P: algebra}, (iii) implies that $\{\langle k, e \rangle \, \mid \, k \in \mathbb{Z}^{d}\} = \{\ell m_{e} \, \mid \, \ell \in \mathbb{Z}\}$ with $m_{e}$ given by \eqref{E: rational period}.  Fix a $k_{0} \in \mathbb{Z}^{d}$ such that $\langle k_{0},e \rangle = m_{e}$.  
	
	We claim that $M = \lceil \frac{N}{m_{e}} \rceil$ has the desired properties.  Indeed, by the definition of $m_{e}$ and the choice of $M$, 
		\begin{equation*}
			\inf (\left\{ \langle k, e \rangle \, \mid \, k \in M \mathbb{Z}^{d} \right\} \cap (0,\infty)) = \inf \{ M \ell m_{0} \, \mid \, \ell \in \mathbb{N}\} = M m_{e} \geq N.
		\end{equation*}
Thus, if $k \in M \mathbb{Z}^{d}$ and $\langle k, e \rangle > 0$, then $\langle k, e \rangle \geq N$ and our previous work implies
	\begin{equation*}
		v(x + k) \geq v(x) \quad \text{if} \, \, x \in \mathbb{R}^{d}.
	\end{equation*}
At the same time, if $k \in M \mathbb{Z}^{d}$ and $\langle k, e \rangle < 0$, then $-k \in M \mathbb{Z}^{d}$, $\langle -k, Ne \rangle > 0$, and $x = (x + k) - k$ so $v \geq v(\cdot + k)$ follows from the previous computation.  Finally, if $k \in M \mathbb{Z}^{d}$ and $\langle k, e \rangle = 0$, then $k \in \mathbb{Z}^{d} \cap \langle e \rangle^{\perp} = M_{e}$ and $v$ is a function on $\mathbb{T}^{d - 1}_{e} \oplus \mathbb{R}$ so $v(\cdot + k) = v$.  We conclude that $v$ is $M \mathbb{Z}^{d}$-Birkhoff with respect to $e$. \qed        \end{proof}

\section{Existence of Minimizers of $\mathscr{T}^{a}_{e}$} \label{S: existence}

Here we prove the existence of minimizers of the variational principle $\mathscr{E}^{a}(e)$ introduced in Section \ref{S: calculus of variations results}.  The main result of this section is

	\begin{theorem} \label{T: existence theorem main} For each $e \in S^{d - 1}$, there is a $U \in L^{\infty}(\mathbb{R} \times \mathbb{T}^{d})$ satisfying $|U| \leq 1$, $\partial_{s} U \geq 0$, and $\lim_{s \to \pm \infty} T_{-s}U = \pm 1$ such that
		\begin{equation*}
			\mathscr{T}^{a}_{e}(U) = \mathscr{E}^{a}(e).
		\end{equation*}
	\end{theorem}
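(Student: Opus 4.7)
The plan is to apply the direct method of the calculus of variations, with particular attention to two sources of non-compactness: the degeneracy of the gradient term (the quadratic form $\langle a \mathcal{D}_{e} U, \mathcal{D}_{e} U \rangle$ has a nontrivial kernel, consisting of functions of the form $(s,x) \mapsto V_{0}(\langle x, e \rangle - s)$) and the unboundedness of the cylinder $\mathbb{R} \times \mathbb{T}^{d}$ in the $s$ direction.

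First, I would fix a minimizing sequence $(U_{n})$ in the admissible class of $\mathscr{E}^{a}(e)$, so that $|U_{n}| \leq 1$, $U_{n}(\cdot + s, \cdot) \to \pm 1$ in $L^{1}_{\text{loc}}$ as $s \to \pm\infty$, and $\mathscr{T}^{a}_{e}(U_{n}) \to \mathscr{E}^{a}(e)$. To enforce the monotonicity $\partial_{s}U \geq 0$, I would replace each $U_{n}$ by its non-decreasing rearrangement in the $s$-variable (for each fixed $x \in \mathbb{T}^{d}$). Equimeasurability preserves $\int W(U_{n})$ and the $L^{1}_{\text{loc}}$ boundary behavior, while a slice-wise Polya--Szego-type inequality, combined with the convexity of $\xi \mapsto \langle a(x) \xi, \xi \rangle$, shows that the rearrangement does not increase $\mathscr{T}^{a}_{e}$. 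After this step, we have a minimizing sequence in the class of monotone admissible functions.

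Next, the central a priori estimate is the BV bound of Proposition \ref{P: bv_estimate}, which controls $TV(U_{n}; \Omega)$ on bounded $\Omega \subseteq \mathbb{R} \times \mathbb{T}^{d}$ in terms of $\mathscr{T}^{a}_{e}(U_{n})$ and the structural constants, bypassing the fact that the energy does not directly control $\partial_{s} U$ or $D_{x} U$ individually. To avoid trivial limits (identically $\pm 1$), I would normalize each $U_{n}$ by an $s$-translation chosen so that $\int_{\mathbb{T}^{d}} U_{n}(0, x)\, dx = 0$; such a translation exists because of the boundary conditions at $\pm\infty$ and the monotonicity. Compactness in $L^{1}_{\text{loc}}$ then follows from the BV bound via standard BV compactness. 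Passing to a subsequence, $U_{n} \to U$ pointwise a.e.\ with $|U| \leq 1$ and $\partial_{s} U \geq 0$. Lower semicontinuity of $\mathscr{T}^{a}_{e}$ (weak $L^{2}$ convergence of $\mathcal{D}_{e} U_{n}$ on bounded sets for the quadratic part, and Fatou for the $W$-part) yields $\mathscr{T}^{a}_{e}(U) \leq \mathscr{E}^{a}(e)$.

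The hardest step, and the main obstacle, is verifying that $U$ itself lies in the admissible class, i.e., $U(\cdot + s, \cdot) \to \pm 1$ in $L^{1}_{\text{loc}}$ as $s \to \pm\infty$. A priori the limit could have plateau values in $(-1,1)$ or concentrate entirely on a half-line. The key point is that $U$ is monotone in $s$ and $\int W(U)\, dx\, ds \leq \mathscr{E}^{a}(e) < \infty$; for a.e.\ $x$ the function $s \mapsto U(s,x)$ is non-decreasing with $W(U(\cdot,x)) \in L^{1}(\mathbb{R})$, forcing the pointwise limits at $\pm\infty$ to lie in $\{-1,+1\}$. The normalization $\int_{\mathbb{T}^{d}} U(0,\cdot)\, dx = 0$ (preserved in the limit by $L^{1}_{\text{loc}}$ convergence) excludes the degenerate cases where both limits coincide, so the only possibility is that $U(\cdot + s, \cdot) \to \pm 1$ in $L^{1}_{\text{loc}}$ as $s \to \pm\infty$. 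This places $U$ in the admissible class, whence $\mathscr{T}^{a}_{e}(U) = \mathscr{E}^{a}(e)$ and $U$ is the desired monotone minimizer.
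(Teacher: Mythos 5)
Your direct-method scaffolding — normalizing by an $s$-translation so $\int_{\mathbb{T}^{d}}U_{n}(0,\cdot)=0$, extracting a pointwise limit via the BV bound of Proposition \ref{P: bv_estimate}, lower semicontinuity, and then using monotonicity plus finite energy plus the normalization to identify the limits at $\pm\infty$ — is exactly the content of Propositions \ref{P: apriori}, \ref{P: bv_estimate}, \ref{P: compactness}, \ref{P: fun}, and \ref{P: s_symmetry} that together give Proposition \ref{P: existence}. The genuinely new step, and the one that is supposed to replace Sections \ref{S: symmetry breaking}--\ref{S: monotonicity_constraint} of the paper, is the slice-wise monotone rearrangement in $s$. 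That step does not go through as stated, and this is where the proof has a real gap.

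The difficulty is the structure of the gradient integrand. Expanding,
\begin{equation*}
\frac{1}{2}\langle a(x)\mathcal{D}_{e}U,\mathcal{D}_{e}U\rangle
= \frac{1}{2}\langle a(x)e,e\rangle\,(\partial_{s}U)^{2}
+ \partial_{s}U\,\langle a(x)e, D_{x}U\rangle
+ \frac{1}{2}\langle a(x)D_{x}U, D_{x}U\rangle,
\end{equation*}
and only the first term is controlled by a weighted, slice-wise P\'{o}lya--Szeg\"{o} inequality. The middle term is \emph{odd} in $\partial_{s}U$ and couples $\partial_{s}U$ to $D_{x}U$; since the rearrangement/polarization proof of P\'{o}lya--Szeg\"{o} requires invariance of the integrand under $\partial_{s}U\mapsto -\partial_{s}U$ (i.e.\ under the reflection $s\mapsto 2c - s$ used in polarization), that argument simply does not apply. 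The last term is not decreased by a one-variable rearrangement either: monotone rearrangement in $s$ at fixed $x$ changes the $x$-dependence of the level sets $\{s:U(s,x)>t\}$, and $\int\langle a(x)D_{x}U^{*},D_{x}U^{*}\rangle$ can increase even when the full Dirichlet energy $\int|\partial_{s}U|^{2}+|D_{x}U|^{2}$ does not. Convexity of $\xi\mapsto\langle a(x)\xi,\xi\rangle$ is what you need for weak lower semicontinuity; it does not imply monotonicity of the functional under rearrangement. There is also a normalization issue you do not address: since the super-level sets have infinite measure, the rearrangement $U^{*}(\cdot,x)$ is only defined up to a translation in $s$ depending on $x$, and that choice enters $D_{x}U^{*}$.

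It is precisely because no such rearrangement inequality is available that the paper removes the monotonicity constraint through a different mechanism: the no-symmetry-breaking result (Theorem \ref{T: no_symmetry_breaking}, which itself rests on the lattice-translation and min/max arguments of Lemma \ref{L: submodularity}), the approximation in Proposition \ref{P: approximation_energy_need} combined with Lemma \ref{L: asymptotic_birkhoff} to produce $M\mathbb{Z}^{d}$-Birkhoff competitors, the extension of a Birkhoff competitor to a monotone element of $\mathscr{X}^{(M)}_{+}$ via Proposition \ref{P: extension} and the change-of-variables Theorem \ref{T: ergodic_lemma}, and finally a density-in-$e$ argument for irrational directions in Proposition \ref{P: remove constraint}. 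That route exploits the lattice structure and the Birkhoff property rather than symmetrization and is therefore insensitive to the anisotropy and the cross-term that defeat the rearrangement argument. One further small remark on your admissibility step: knowing that the slice-wise limits lie in $\{-1,1\}$ a.e.\ and that the normalization holds does not by itself exclude the case in which the limit is $(+1,+1)$ on one portion of $\mathbb{T}^{d}$ and $(-1,-1)$ on another; that mixed configuration is ruled out in Proposition \ref{P: fun} by showing that $\mathcal{D}_{e}U^{\pm}=0$, so that the $\pm\infty$ limits are genuine constants.
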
  
	
The proof proceeds in three steps.  In the first step, we prove that a minimizer exists subject to the monotonicity constraint $\partial_{s} U \geq 0$.  Next, we show that there is no ``symmetry breaking," that is, that the value of $\mathscr{E}^{a}(e)$ is unchanged if we replace $\mathbb{Z}^{d}$ by $M \mathbb{Z}^{d}$ for some $M \in \mathbb{N}$.  In the last step, we observe, using the unbroken symmetry and Lemma \ref{L: asymptotic_birkhoff}, that unconstrained candidates do no better than constrained ones. 

As usual, we will be interested in functions connecting $1$ and $-1$ at either end of the cylinder.  Precisely, we will study functions $U$ in $\mathbb{R} \times \mathbb{T}^{d}$ such that
	\begin{equation} \label{E: asymptotic}
		T_{-s}U\to \pm 1 \quad \text{in} \, \, L^{1}_{\text{loc}}(\mathbb{R} \times \mathbb{T}^{d}) \quad \text{as} \ \, s \to \pm \infty.
	\end{equation}

Here are the function spaces we will use in the sequel:
	\begin{align*}
		\tilde{\mathscr{X}} &= \left\{U \in  L^{\infty}(\mathbb{R} \times \mathbb{T}^{d}) \, \mid \ U \, \, \text{satisfies} \, \, \eqref{E: asymptotic}\right\}, \\
		\mathscr{X} &= \left\{U \in \tilde{\mathscr{X}} \, \mid \, -1 \leq U \leq 1\right\}, \\
		\mathscr{X}_{+} &= \left\{U \in \mathscr{X} \, \mid \, \partial_{s} U \geq 0 \right\}, \\
		C^{\infty}_{\text{sgn}}(\mathbb{R} \times \mathbb{T}^{d}) &= \{U \in C^{\infty}(\mathbb{R} \times \mathbb{T}^{d}) \, \mid \, \exists M_{U} > 0 \, \, \text{such that} \\
			&\qquad \text{sgn}(s)U(s,x) = 1 \, \, \text{if} \, \, |s| \geq M_{U}\}.
	\end{align*} 
	
\subsection{Properties of Functions in $\mathscr{X}_{+}$} \label{S: properties}

Our first objective is to find minimizers in the constrained set $\mathscr{X}_{+}$.  In this section, we state properties of $\mathscr{X}_{+}$ that will aid us in this task.  Many of the proofs only require elementary real analysis arguments and are therefore ommited.

The first fact we will use is that a function in $\mathscr{X}_{+}$ can be studied line-by-line.

\begin{prop} \label{P: key_monotonicity}  If $U$ is a measurable function on $\mathbb{R} \times \mathbb{T}^{d}$ such that $\partial_{s} U \geq 0$ and $|U| \leq 1$ a.e., then there is a $G_{U} \subseteq \mathbb{T}^{d}$ satisfying $\mathcal{L}^{d}(G_{U}) = 1$ with the following property: if $x \in G_{U}$, then there is a unique non-decreasing, left-continuous function $U_{x} : \mathbb{R} \to [-1,1]$ such that $U(s,x) = U_{x}(s)$ for a.e.\ $s \in \mathbb{R}$.
\end{prop}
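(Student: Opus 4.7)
The plan is to combine a Fubini-type slicing argument with the one-dimensional theory of functions with non-negative distributional derivative. Since $|U|\leq 1$, Fubini's theorem yields a full measure set $G_0 \subseteq \mathbb{T}^d$ such that for each $x \in G_0$ the slice $s \mapsto U(s,x)$ is a well-defined element of $L^\infty(\mathbb{R})$. To transfer the global inequality $\partial_s U \geq 0$ to the slices, I will test it against product functions: for any non-negative $\phi \in C^\infty_c(\mathbb{R})$ and any non-negative $\psi \in C^\infty(\mathbb{T}^d)$, Fubini's theorem gives
\begin{equation*}
-\int_{\mathbb{T}^d} \psi(x) \left( \int_{\mathbb{R}} \phi'(s) U(s,x) \, ds \right) dx \;=\; \langle \partial_s U, \phi(s)\psi(x) \rangle \;\geq\; 0.
\end{equation*}
Since $\psi$ is arbitrary among non-negative smooth functions, the inner integral $F_\phi(x) := \int_{\mathbb{R}} \phi'(s) U(s,x) \, ds$ satisfies $F_\phi(x) \leq 0$ for $\mathcal{L}^d$-a.e.\ $x$.

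Next, for each $n \in \mathbb{N}$ I would fix a countable family of non-negative test functions in $C^\infty_c((-n,n))$ that is dense there in the $W^{1,1}$ topology, and intersect the null sets obtained above across this countable collection. Since $\|U\|_\infty \leq 1$, the map $\phi \mapsto F_\phi(x)$ is Lipschitz in $\phi \in W^{1,1}_0((-n,n))$ with constant $1$, uniformly in $x$, so the resulting full measure subset $G \subseteq G_0$ has the property that for every $x \in G$ and every non-negative $\phi \in C^\infty_c(\mathbb{R})$ we have $F_\phi(x) \leq 0$; equivalently, the slice $s \mapsto U(s,x) \in L^\infty(\mathbb{R})$ has non-negative distributional derivative in $\mathcal{D}'(\mathbb{R})$.

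For each such $x$, the conclusion follows from classical single-variable theory: any bounded measurable function on $\mathbb{R}$ whose distributional derivative is a non-negative Radon measure agrees a.e.\ with a non-decreasing function, and among these representatives exactly one is left-continuous. Explicitly, I would set
\begin{equation*}
U_x(s) := \operatorname*{ess\,sup}_{t < s} U(t,x),
\end{equation*}
which is manifestly non-decreasing, left-continuous, takes values in $[-1,1]$, and equals $U(\cdot,x)$ almost everywhere. Uniqueness is automatic, since two left-continuous non-decreasing functions that agree a.e.\ have identical left limits at every point and therefore coincide pointwise.

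The one place that requires genuine care is the countable-density step: the null set of bad $x$ must be governed by a single countable family $\{\phi_n\}$ rather than discarded separately for each admissible $\phi$, and one must verify that continuity of $\phi \mapsto F_\phi(x)$ in the $W^{1,1}$-topology allows one to extend from the dense family to all non-negative test functions. This is elementary but easy to botch. Everything else is standard real analysis, so no serious obstacle is anticipated beyond this bookkeeping.
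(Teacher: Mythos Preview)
Your proposal is correct. The paper itself omits the proof of this proposition, remarking only that ``many of the proofs only require elementary real analysis arguments and are therefore omitted,'' so there is no argument in the paper to compare against; your Fubini--slicing approach together with the countable-density step and the one-variable theory of bounded functions with non-negative distributional derivative is exactly the sort of elementary argument the authors have in mind, and it goes through as you describe.
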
  

Since the functions $\{U_{x}\}_{x \in G_{U}}$ are bounded and non-decreasing, they have limits at infinity.  Henceforth, we define $U_{x}^{+}$ and $U_{x}^{-}$ by 
	\begin{equation*}
		U_{x}^{+} = \lim_{s \to \infty} U_{x}(s), \quad U_{x}^{-} = \lim_{s \to -\infty} U_{x}(s).
	\end{equation*}

\begin{prop} \label{P: important_recovery} (i) If $U$ satisfies the hypotheses of Proposition \ref{P: key_monotonicity}, then we have $\int_{\mathbb{R} \times \mathbb{T}^{d}} \partial_{s} U \, dx \, ds \leq 2$.  If, in addition, $\mathscr{T}^{a}_{e}(U) < \infty$, then $U_{x}^{+} = 1$ and $U_{x}^{-} = -1$ for a.e.\ $x \in G_{U}$.  

(ii) If $U \in \mathscr{X}_{+}$, then $\int_{\mathbb{R} \times \mathbb{T}^{d}} \partial_{s}U(s,x) \, dx \, ds = 2$ and $U_{x}^{\pm} = \pm 1$ a.e.\ in $G_{U}$.
\end{prop}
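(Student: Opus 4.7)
The plan is to reduce everything to one-dimensional facts about the family $\{U_x\}_{x \in G_U}$ supplied by Proposition \ref{P: key_monotonicity} and then recover the statements on $\mathbb{R} \times \mathbb{T}^d$ by integrating in $x$.

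For (i), since $\partial_s U \geq 0$ and $U$ is bounded, $\partial_s U$ is a locally finite nonnegative Radon measure on $\mathbb{R} \times \mathbb{T}^d$. For each $x \in G_U$, the function $U_x$ is nondecreasing with values in $[-1,1]$, so its Stieltjes measure $dU_x$ has total mass $U_x^+ - U_x^- \leq 2$. I would establish the Fubini-type identity
	\begin{equation*}
		\int_{\mathbb{R} \times \mathbb{T}^d} \partial_s U \, dx \, ds = \int_{\mathbb{T}^d} \int_{\mathbb{R}} dU_x(s) \, dx
	\end{equation*}
by testing $\partial_s U$ against products $\varphi(s)\psi(x)$ and using monotone convergence as $\varphi \nearrow 1$; integrating the pointwise bound $\int_\mathbb{R} dU_x \leq 2$ then yields $\int \partial_s U \leq 2$. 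For the second claim, $\mathscr{T}^a_e(U) < \infty$ forces $\int_{\mathbb{R} \times \mathbb{T}^d} W(U)\, dx\, ds < \infty$, and by Fubini $\int_\mathbb{R} W(U_x(s))\, ds < \infty$ for a.e.\ $x \in G_U$. Monotonicity of $U_x$ and continuity of $W$ give $W(U_x(s)) \to W(U_x^+)$ as $s \to \infty$; if $W(U_x^+) > 0$, then $W \circ U_x$ would be bounded below on a half-line by $\tfrac{1}{2}W(U_x^+) > 0$, contradicting integrability. Hence $W(U_x^+) = 0$, and \eqref{A: W_assumption_1} yields $|U_x^+| = 1$.

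For (ii), membership in $\mathscr{X}_+$ adds the asymptotic condition \eqref{E: asymptotic}. Fixing $R > 0$, this gives $\int_{-R}^R \int_{\mathbb{T}^d} |U(\tau + s, x) - 1| \, dx \, d\tau \to 0$ as $s \to \infty$. Choosing a sequence $s_n \to \infty$ along which the integrand converges pointwise a.e., and combining this with the pointwise monotone limit $U_x(\tau + s_n) \to U_x^+$, forces $U_x^+ = 1$ for a.e.\ $x \in G_U$; symmetrically $U_x^- = -1$. Consequently $\int_\mathbb{R} dU_x = U_x^+ - U_x^- = 2$ for a.e.\ $x$, and integrating in $x$ via the identity from (i) gives the full mass equality $\int_{\mathbb{R}\times\mathbb{T}^d} \partial_s U = 2$.

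The only subtle point is justifying the Fubini identity relating $\partial_s U$, as a measure on the product, to the family of one-dimensional Stieltjes measures $dU_x$. This is essentially a disintegration-of-measures statement and can be handled either by approximation with product test functions or by invoking standard BV slicing theory; once that is in place, the remaining arguments are routine real analysis.
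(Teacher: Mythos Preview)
Your proof is correct and is precisely the kind of elementary real analysis argument the paper has in mind; the paper in fact omits the proof of this proposition entirely, noting at the start of the subsection that such arguments are left to the reader. The reduction to the one-dimensional family $\{U_x\}_{x \in G_U}$ via the disintegration identity for $\partial_s U$, the use of $\int W(U) < \infty$ plus monotonicity to force $W(U_x^+) = 0$, and the extraction of a pointwise-a.e.\ convergent subsequence from the $L^1_{\mathrm{loc}}$ asymptotic \eqref{E: asymptotic} to pin down $U_x^\pm = \pm 1$ are all the natural steps, and your identification of the Fubini/slicing identity as the one point requiring care is accurate.
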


In addition to the lack of coercivity in the $s$ variable, $\mathscr{T}^{a}_{e}$ has another degeneracy: it is invariant under translations in the $s$ variable.  This is not hard to correct, however.  Toward that end, we use

\begin{prop} \label{P: existence_average_magnetization}  If $U$ is a measurable function on $\mathbb{R} \times \mathbb{T}^{d}$ such that $|U| \leq 1$ a.e.\ and $\partial_{s} U \geq 0$, then there is a unique non-decreasing, left-continuous function $\psi_{U} : \mathbb{R} \to [-1,1]$ such that $\psi_{U}(s) = \int_{\mathbb{T}^{d}} U(s,x) \, dx$ for a.e.\ $s \in \mathbb{R}$.  \end{prop}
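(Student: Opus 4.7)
The plan is to construct $\psi_U$ by explicit pointwise definition rather than as a merely a.e.\ object, using the line-by-line representatives provided by Proposition \ref{P: key_monotonicity}. First I would apply that proposition to obtain the full-measure set $G_U \subseteq \mathbb{T}^{d}$ and the non-decreasing, left-continuous representatives $U_x : \mathbb{R} \to [-1,1]$ for each $x \in G_U$, and then set
$$\psi_U(s) := \int_{G_U} U_x(s)\, dx, \qquad s \in \mathbb{R}.$$
This is well-defined pointwise since the integrand is dominated by $1$, and clearly $\psi_U$ takes values in $[-1,1]$.

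Next I would check the two structural properties separately. Monotonicity of $\psi_U$ is immediate: if $s_1 \leq s_2$, then $U_x(s_1) \leq U_x(s_2)$ for every $x \in G_U$ by the monotonicity of the representative, and integration preserves the inequality. Left-continuity follows from the left-continuity of each $U_x$ combined with the dominated convergence theorem: for any sequence $s_n \uparrow s$, the functions $x \mapsto U_x(s_n)$ converge pointwise on $G_U$ to $x \mapsto U_x(s)$ and are dominated by the constant $1$, hence $\psi_U(s_n) \to \psi_U(s)$.

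The a.e.\ identification $\psi_U(s) = \int_{\mathbb{T}^{d}} U(s,x)\,dx$ is a direct consequence of Fubini–Tonelli. For each $x \in G_U$, the slice $\{s \in \mathbb{R} : U(s,x) \neq U_x(s)\}$ is $\mathcal{L}^1$-null by the defining property in Proposition \ref{P: key_monotonicity}, and $\mathbb{T}^{d} \setminus G_U$ is $\mathcal{L}^d$-null, so the set $\{(s,x) : U(s,x) \neq U_x(s)\}$ is $\mathcal{L}^{d+1}$-null. Hence for $\mathcal{L}^1$-a.e.\ $s$, its $x$-section is $\mathcal{L}^d$-null, and integrating in $x$ yields the stated identity.

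Uniqueness is then a soft consequence of left-continuity and monotonicity: if $\psi_1, \psi_2$ are two non-decreasing, left-continuous functions on $\mathbb{R}$ that agree $\mathcal{L}^1$-a.e., the set $N = \{s : \psi_1(s) \neq \psi_2(s)\}$ is null and its complement is dense, so for any $s \in \mathbb{R}$ one may pick $s_n \uparrow s$ with $s_n \notin N$, and left-continuity gives $\psi_1(s) = \lim \psi_1(s_n) = \lim \psi_2(s_n) = \psi_2(s)$. There is no real obstacle here; the proposition is essentially a Fubini-type upgrade of Proposition \ref{P: key_monotonicity} from the $x$-variable to the $s$-integrated statement, and the only point worth emphasizing is that defining $\psi_U$ through the specific left-continuous representatives $U_x$ (rather than through $U$ directly) is what lets us make the construction pointwise in $s$.
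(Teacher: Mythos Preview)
Your argument is correct. The paper does not actually give a proof of this proposition: it is one of the results in Section~\ref{S: properties} for which the author writes that ``many of the proofs only require elementary real analysis arguments and are therefore omitted.'' Your approach via Proposition~\ref{P: key_monotonicity}, dominated convergence, and Fubini is precisely the kind of elementary argument the paper has in mind, and there is nothing to compare against. The only point you might make explicit is the measurability of $x \mapsto U_x(s)$ for fixed $s$ (needed for the integral defining $\psi_U$ to make sense); this follows, for instance, from writing $U_x(s) = \lim_{n \to \infty} n \int_{s-1/n}^{s} U(r,x)\,dr$ using left-continuity, where each approximant is measurable in $x$ by Fubini.
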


We will find minimizers of $\mathscr{T}^{a}_{e}$ in $\mathscr{X}_{+}$ using the direct method.  Therefore, we will want to know that sequences in $\mathscr{X}_{+}$ with uniformly bounded energy do not escape $\mathscr{X}_{+}$.  A first step in that direction is the following easy characterization of $\mathscr{X}_{+}$:

\begin{prop} \label{P: asymptotic_magnetization}  If $U$ is a measurable function on $\mathbb{R} \times \mathbb{T}^{d}$ such that $|U| \leq 1$ a.e.\ and $\partial_{s} U \geq 0$, then the following are equivalent:
	\begin{itemize}
		\item[(i)] $U \in \mathscr{X}_{+}$.
		\item[(ii)] The function $\psi_{U}$ defined in Proposition \ref{P: existence_average_magnetization} satisfies $\lim_{s \to \pm \infty} \psi_{U}(s) = \pm 1$.
		\item[(iii)] $\lim_{R \to \pm \infty} \int_{[R,R + 1] \times \mathbb{T}^{d}} U \, dx \, ds = \pm 1$.
	\end{itemize}
\end{prop}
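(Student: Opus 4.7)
My approach is to route everything through the scalar profile $\psi_U$ provided by Proposition \ref{P: existence_average_magnetization}. Since $\psi_U : \mathbb{R} \to [-1,1]$ is non-decreasing and bounded, the one-sided limits $\psi_U(\pm\infty) := \lim_{s\to\pm\infty} \psi_U(s)$ automatically exist, and each of the integral statements in (i), (ii), (iii) can be rewritten via Fubini as a simple linear assertion about $\psi_U$. The proof therefore reduces to comparing $\psi_U$, its sliding averages, and its limits at $\pm\infty$.

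First I would establish (ii) $\Leftrightarrow$ (iii). Fubini gives
\[
\int_{[R,R+1]\times\mathbb{T}^d} U(s,x)\, dx\, ds = \int_R^{R+1} \psi_U(r)\, dr,
\]
and monotonicity of $\psi_U$ sandwiches the right-hand side between $\psi_U(R)$ and $\psi_U(R+1)$. Hence the sliding average and $\psi_U$ share the same limits at $\pm\infty$, which is exactly the content of the equivalence; the $-\infty$ side is symmetric.

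Next, for (ii) $\Rightarrow$ (i), fix a bounded interval $K \subset \mathbb{R}$. Because $U \leq 1$ a.e., Fubini gives
\[
\int_{K\times\mathbb{T}^d} \bigl(1 - U(r+s,x)\bigr)\, dx\, dr = \int_K \bigl(1 - \psi_U(r+s)\bigr)\, dr,
\]
and the right-hand side tends to $|K|\,(1 - \psi_U(+\infty)) = 0$ by monotone convergence as soon as $\psi_U(+\infty) = 1$. The analogous identity with $+1$ replaced by $-1$ handles $s\to-\infty$, yielding $T_{-s}U \to \pm 1$ in $L^1_{\mathrm{loc}}$ as required for \eqref{E: asymptotic}. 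For the converse (i) $\Rightarrow$ (ii), taking $K = [0,1]$ in the definition of $L^1_{\mathrm{loc}}$ convergence and applying Fubini yields $\int_s^{s+1} \psi_U(r)\, dr \to 1$; combined with the monotonicity of $\psi_U$ and the bound $\psi_U \leq 1$, this forces $\psi_U(+\infty) = 1$, and similarly at $-\infty$.

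The entire argument is a mechanical combination of Fubini's theorem and monotone convergence, so there is no genuine obstacle to overcome. The only point requiring care is that $\psi_U$ be a well-defined left-continuous pointwise representative of $s \mapsto \int_{\mathbb{T}^d} U(s,x)\, dx$, but that is precisely what Proposition \ref{P: existence_average_magnetization} delivers. I therefore expect the proof to be very short.
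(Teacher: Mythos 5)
Your proof is correct. The paper omits the argument for this proposition, stating in the preamble to the subsection that the omitted proofs "only require elementary real analysis arguments," so there is no author's argument to compare against; but your route --- reduce everything via Fubini to the scalar profile $\psi_U$, sandwich the sliding averages $\int_R^{R+1}\psi_U$ between $\psi_U(R)$ and $\psi_U(R+1)$ by monotonicity, and run monotone convergence for the $L^1_{\mathrm{loc}}$ statement --- is surely the one intended. The chain (ii)$\Leftrightarrow$(iii) and (i)$\Leftrightarrow$(ii) covers everything.

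One small point deserves a remark. Item (i), read literally, also asserts $U \in BV_{\mathrm{loc}}(\mathbb{R}\times\mathbb{T}^d)$ via $\mathscr{X}_+\subseteq\mathscr{X}\subseteq\tilde{\mathscr{X}}$, and neither (ii) nor (iii) forces any control on the $x$-derivatives of $U$ under the standing hypotheses $|U|\leq 1$ and $\partial_s U\geq 0$ (only the $s$-derivative is a locally finite measure a priori; one can take $U(s,x)=V(s)+(1-|V(s)|)W(x)$ with $V$ a smooth increasing heteroclinic and $W$ bounded but not $BV$ to see (ii) hold without $BV_{\mathrm{loc}}$). This is almost certainly a harmless looseness in the paper's statement: the proposition's real content is the equivalence of the three asymptotic conditions, and in every place the result is invoked the $BV_{\mathrm{loc}}$ membership is furnished separately by the finite-energy estimate (Proposition \ref{P: bv_estimate}). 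Your argument proves exactly the asymptotic equivalences, which is what matters; it would be worth a parenthetical noting that the $BV_{\mathrm{loc}}$ clause in (i) is not implied by (ii) or (iii) and should be understood as an extra standing hypothesis in this proposition.
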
  

The next result will be used to guarantee that sequences with bounded energy do not escape $\mathscr{X}_{+}$.

\begin{prop} \label{P: fun}  If $U$ is a measurable function on $\mathbb{R} \times \mathbb{T}^{d}$ such that
	\begin{itemize}
		\item[(i)] $|U| \leq 1$ a.e.,
		\item[(ii)] $\partial_{s} U \geq 0$,
		\item[(iii)] $\mathscr{T}^{a}_{e}(U) < \infty$,
		\item[(iv)] $\psi_{U}$ is non-constant or there is an $s \in \mathbb{R}$ such that $\psi(s) \in (-1,1)$,
	\end{itemize}
then, for a.e.\ $x \in G_{U}$, $U_{x}^{\pm} = \lim_{s \to \pm \infty} U_{x}(s) = \pm 1$.  In particular, $U \in \mathscr{X}_{+}$.  
\end{prop}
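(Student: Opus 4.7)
My plan is to combine translation invariance of $\mathscr{T}^a_e$ in the $s$-variable with the lower semicontinuity of the Cahn--Hilliard energy on bounded domains to show that the one-dimensional endpoint limits $U^{\pm\infty}(x) := \lim_{s \to \pm\infty} U_x(s)$ are \emph{constant} functions on $\mathbb{T}^d$. Once this is established, the monotonicity of $U_x$ together with hypothesis (iv) immediately forces those constants to be $\pm 1$.

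First, I would strengthen Proposition \ref{P: important_recovery}(i) by showing that both endpoint limits satisfy $U_x^{\pm} \in \{-1, 1\}$ for a.e.\ $x \in G_U$. Since $W \geq 0$ and $\int_{\mathbb{R} \times \mathbb{T}^d} W(U)\, dx\, ds < \infty$, Fubini gives $\int_{\mathbb{R}} W(U_x(s))\, ds < \infty$ for a.e.\ $x$; continuity of $W$ and monotonicity of $U_x$ then force $W(U_x^\pm) = 0$, so $U_x^\pm \in \{-1, 1\}$ by \eqref{A: W_assumption_1}.

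Next, for $N > 0$ set $V_{s_0}(s, x) := U(s + s_0, x)$. Translation invariance and integrability of the energy density give
\[
\mathscr{T}^a_e(V_{s_0}; [-N, N] \times \mathbb{T}^d) = \int_{[s_0 - N,\, s_0 + N] \times \mathbb{T}^d} \left( \tfrac{1}{2}\langle a(x) \mathcal{D}_e U, \mathcal{D}_e U\rangle + W(U) \right) dx\, ds \to 0 \quad \text{as } s_0 \to \infty.
\]
Since $|V_{s_0}| \leq 1$, dominated convergence provides $V_{s_0} \to V_\infty$ in $L^1([-N, N] \times \mathbb{T}^d)$, where $V_\infty(s, x) = U^\infty(x)$. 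The functional $\mathscr{T}^a_e(\,\cdot\,; [-N, N] \times \mathbb{T}^d)$ is $L^1$-lower semicontinuous---the gradient term is convex in $\mathcal{D}_e V$ and the potential is non-negative and continuous---so
\[
\mathscr{T}^a_e(V_\infty; [-N, N] \times \mathbb{T}^d) \leq \liminf_{s_0 \to \infty} \mathscr{T}^a_e(V_{s_0}; [-N, N] \times \mathbb{T}^d) = 0.
\]
Because $V_\infty$ depends only on $x$, $\mathcal{D}_e V_\infty = D_x U^\infty$ and $W(V_\infty) \equiv 0$, so the vanishing of the energy yields $D_x U^\infty = 0$ in $L^2(\mathbb{T}^d)$ and hence $U^\infty$ is constant on the connected torus. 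The same argument with $s_0 \to -\infty$ shows $U^{-\infty}$ is constant.

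Finally, monotonicity gives $U^{-\infty} \leq U^\infty$ with both values in $\{-1, 1\}$. If $U^\infty \equiv U^{-\infty}$, every slice $U_x$ is a.e.\ constant in $\{-1, 1\}$, forcing $U \equiv \pm 1$ and $\psi_U \equiv \pm 1$, contradicting (iv). The only remaining possibility is $U^{-\infty} \equiv -1$ and $U^\infty \equiv +1$, which is precisely $U_x^{\pm} = \pm 1$ for a.e.\ $x \in G_U$; the asymptotic \eqref{E: asymptotic} defining $\mathscr{X}_+$ then follows by dominated convergence. I expect the only substantive technical point to be the $L^1$-lower semicontinuity of the (degenerate) Cahn--Hilliard functional on $[-N, N] \times \mathbb{T}^d$, which is a standard Serrin--Ioffe-type result since the integrand is convex in the gradient.
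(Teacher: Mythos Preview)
Your proof is correct and follows essentially the same route as the paper: translate $U$ in the $s$-variable, use that the energy on a fixed bounded cylinder tends to zero, invoke lower semicontinuity to conclude the limit functions $U^{\pm\infty}$ have zero energy and hence vanishing $\mathcal{D}_e$-gradient, deduce they are constant on $\mathbb{T}^d$, and eliminate the degenerate cases via hypothesis (iv). Your preliminary step showing $U_x^{\pm}\in\{-1,1\}$ directly from the potential energy is a harmless redundancy, since it is also a consequence of the vanishing-energy argument you carry out afterward.
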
  

	The ideas used in the proof are already present in \cite{rabinowitz stredulinsky}.  

	\begin{proof}  Define $U^{+}$ and $U^{-}$ on $\mathbb{R} \times G_{U}$ by $U^{+}(s,x) = U^{+}_{x}$ and $U^{-}(s,x) = U^{-}_{x}$.  Since $\mathcal{L}^{d}(\mathbb{T}^{d} \setminus G_{U}) = 0$, we can consider $U^{+}$ and $U^{-}$ as measurable functions in $\mathbb{R} \times \mathbb{T}^{d}$.  
	
	  A straightforward application of Fubini's Theorem implies that $\partial_{s} U^{\pm} = 0$.  
	  
	  Further, lower semi-continuity implies $\mathscr{T}^{a}_{e}(U^{\pm}) < \infty$. In fact, $\mathscr{T}^{a}_{e}(U^{\pm}) = 0$.  To see this, notice that if $R > 0$, then
	  	\begin{align*}
			\mathscr{T}^{a}_{e}(U^{\pm}; [-R,R] \times \mathbb{T}^{d}) &\leq \liminf_{n \to \infty} \mathscr{T}^{a}_{e}(T_{\mp n}U; [-R,R] \times \mathbb{T}^{d}) \\
				&= \lim_{n \to \infty} \mathscr{T}^{a}_{e}(U; [-R \pm n, R \pm n] \times \mathbb{T}^{d}) = 0.
		\end{align*}
	As a consequence, $\mathcal{D}_{e}U^{\pm} = 0$ a.e.\ in $\mathbb{R} \times \mathbb{T}^{d}$.  
	
	Since $(\partial_{s} U^{\pm}, \mathcal{D}_{e}U^{\pm}) = 0$ a.e.\ in $\mathbb{R} \times \mathbb{T}^{d}$, it follows that $U^{+}$ and $U^{-}$ are almost everywhere constant in $\mathbb{R} \times \mathbb{T}^{d}$.  From the inequality $U^{-} \leq U^{+}$, we deduce that either $U^{+} \equiv U^{-} \equiv 1$, $U^{+} \equiv U^{-} \equiv -1$, or $U^{-} \equiv -1$ and $U^{+} \equiv 1$.    
	
If $U^{+} \equiv U^{-} \equiv 1$ a.e., then $U_{x} \equiv 1$ in $\mathbb{R}$ for a.e.\ $x \in G_{U}$.  This is impossible, however, as it contradicts assumption (iv).  We similarly arrive at a contradiction if both functions are identically $-1$.  Therefore, we are left to conclude that $U^{+} \equiv 1$ and $U^{-} \equiv -1$ a.e.\ in $\mathbb{R} \times \mathbb{T}^{d}$. \qed \end{proof}

\subsection{Minimizers in $\mathscr{X}_{+}$}  The main result of this section concerns the existence of minimizers in $\mathscr{X}_{+}$.
	
\begin{prop} \label{P: existence}  For each $e \in S^{d - 1}$, there is a $U_{e} \in \mathscr{X}_{+}$ such that 
	\begin{equation*}
		\mathscr{T}_{e}^{a}(U_{e}) = \inf \left\{ \mathscr{T}^{a}_{e}(U) \, \mid \, U \in \mathscr{X}_{+} \right\} =: \mathscr{E}^{a}_{+}(e).
	\end{equation*}
\end{prop}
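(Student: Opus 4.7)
The plan is to apply the direct method of the calculus of variations, leveraging the structural properties of $\mathscr{X}_+$ developed in Section~\ref{S: properties}. Let $(U_n)_{n \in \mathbb{N}} \subseteq \mathscr{X}_+$ satisfy $\mathscr{T}^a_e(U_n) \to \mathscr{E}^a_+(e)$. Because $\mathscr{T}^a_e$ is invariant under $s$-translations, one must first normalize the sequence. I would use Propositions~\ref{P: existence_average_magnetization} and \ref{P: asymptotic_magnetization} to choose $\tau_n \in \mathbb{R}$ such that $\psi_{U_n}(\tau_n) \leq 0 \leq \psi_{U_n}(\tau_n^+)$ (using left-continuity and the limits $\psi_{U_n}(\pm \infty) = \pm 1$), and replace $U_n$ by $T_{\tau_n} U_n$, which still lies in $\mathscr{X}_+$ with unchanged energy. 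After this translation, the normalized sequence satisfies $\psi_{U_n}(s) \leq 0$ for $s < 0$ and $\psi_{U_n}(s) \geq 0$ for $s > 0$.

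Next I would establish compactness. The monotonicity $\partial_s U_n \geq 0$ together with $|U_n| \leq 1$ and Proposition~\ref{P: important_recovery} gives $\int_{\mathbb{R} \times \mathbb{T}^d} \partial_s U_n \, dx\, ds \leq 2$, so $\partial_s U_n$ has uniformly bounded total mass. The energy bound together with \eqref{A: a_assumption_1} yields $\lambda \int |\mathcal{D}_e U_n|^2 \, dx\, ds \leq C$, and writing $D_x U_n = \mathcal{D}_e U_n - e \partial_s U_n$ shows that $(U_n)$ is uniformly bounded in $\mathrm{BV}_{\text{loc}}(\mathbb{R} \times \mathbb{T}^d)$. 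Passing to a subsequence, we obtain $U_n \to U$ in $L^1_{\text{loc}}$ and a.e., with $|U| \leq 1$ and $\partial_s U \geq 0$, and, simultaneously, $\mathcal{D}_e U_n \rightharpoonup \mathcal{D}_e U$ weakly in $L^2$.

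Lower semicontinuity is then routine: the quadratic gradient term is weakly lower semicontinuous in $L^2$ by convexity and \eqref{A: a_assumption_1}, and the bounded continuous potential passes to the liminf via Fatou's lemma. Therefore
\begin{equation*}
    \mathscr{T}^a_e(U) \leq \liminf_{n \to \infty} \mathscr{T}^a_e(U_n) = \mathscr{E}^a_+(e) < \infty.
\end{equation*}

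The main obstacle is verifying that $U \in \mathscr{X}_+$, i.e.\ that the minimizer has not drifted off to a trivial constant in the limit. Here the normalization pays off: from $U_n \to U$ in $L^1_{\text{loc}}$ and Fubini, $\psi_{U_n} \to \psi_U$ in $L^1_{\text{loc}}(\mathbb{R})$, and the sign constraints on $\psi_{U_n}$ pass to the limit to give $\psi_U(s) \leq 0$ for a.e.\ $s < 0$ and $\psi_U(s) \geq 0$ for a.e.\ $s > 0$. In particular, $\psi_U$ is either non-constant or identically $0$, so in either case hypothesis (iv) of Proposition~\ref{P: fun} is satisfied. That proposition, together with $\mathscr{T}^a_e(U) < \infty$ and the preserved constraints $|U| \leq 1$, $\partial_s U \geq 0$, yields $U \in \mathscr{X}_+$. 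Combined with the reverse inequality $\mathscr{E}^a_+(e) \leq \mathscr{T}^a_e(U)$, this completes the proof.
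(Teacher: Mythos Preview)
Your proposal is correct and follows essentially the same direct-method argument as the paper. The only cosmetic difference is that the paper packages the compactness and lower semicontinuity step into Proposition~\ref{P: compactness} and, using the H\"older estimate of Proposition~\ref{P: apriori}, normalizes so that $\psi_{U_n}(0)=0$ exactly; your sign-splitting normalization and inline $BV$ argument accomplish the same thing, and both routes feed into Proposition~\ref{P: fun} to rule out the degenerate limits $U\equiv\pm1$.
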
  

To prove this, we start with preliminary regularity estimates.  Once this is done, we can apply the direct method.  

The first result shows $\psi_{U}$ inherits regularity from $U$:

\begin{prop} \label{P: apriori} Suppose $e \in S^{d - 1}$ and $U \in \mathscr{X}_{+}$ satisfies $\mathscr{T}^{a}_{e}(U) < \infty$.  Then the function $\psi_{U}$ of Proposition \ref{P: existence_average_magnetization} satisfies
\begin{equation} \label{E: uniform_holder}
|\psi_{U}(s) - \psi_{U}(t)| \leq \sqrt{2} \lambda^{-\frac{1}{2}} \sqrt{\mathscr{T}^{a}_{e}(U)} |s - t|^{\frac{1}{2}} \quad \text{for each} \, \, s, t \in \mathbb{R}.
\end{equation}
\end{prop}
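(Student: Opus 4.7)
The plan is to upgrade $\psi_{U}$ from merely nondecreasing BV to absolutely continuous with $L^{2}$ derivative, by identifying its distributional derivative explicitly in terms of $\mathcal{D}_{e}U$. Once that identification is in hand, the Hölder-$\tfrac{1}{2}$ bound is a two-line Cauchy--Schwarz computation.

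The key step is to establish the identity
$$
\psi_{U}' \;=\; \int_{\mathbb{T}^{d}} \langle \mathcal{D}_{e} U(\cdot,x), e\rangle \, dx \qquad \text{in } \mathcal{D}'(\mathbb{R}).
$$
To prove it, I would test $\psi_{U}'$ against an arbitrary $\phi \in C^{\infty}_{c}(\mathbb{R})$. By Fubini and the definition of $\psi_{U}$ from Proposition \ref{P: existence_average_magnetization},
$$
\langle \psi_{U}', \phi \rangle \;=\; -\int_{\mathbb{R} \times \mathbb{T}^{d}} U(s,x)\, \phi'(s)\, dx\, ds.
$$
Setting $\Phi(s,x) = \phi(s) e$ and using $|e|=1$ together with $\mathrm{div}_{x}\Phi = 0$, one checks $\mathcal{D}_{e}^{*} \Phi = -\phi'(s)$. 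Since $\mathscr{T}^{a}_{e}(U) < \infty$ and $a \geq \lambda \mathrm{Id}$ force $\mathcal{D}_{e}U \in L^{2}(\mathbb{R}\times \mathbb{T}^{d})$, the distributional definition of $\mathcal{D}_{e}U$ yields
$$
-\int U\, \phi'(s)\, dx\, ds \;=\; \int \langle \mathcal{D}_{e} U, \Phi \rangle \, dx \, ds \;=\; \int_{\mathbb{R}} \phi(s) \int_{\mathbb{T}^{d}} \langle \mathcal{D}_{e} U, e \rangle \, dx\, ds,
$$
which is the claimed identity. The essential feature is that $x$-periodicity kills the $D_{x}U$ component after integrating on $\mathbb{T}^{d}$, so although $\partial_{s}U$ itself is only a non-negative Radon measure, its vertical average is an honest $L^{2}$ function.

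Next, for each $s$ I would apply Cauchy--Schwarz on $\mathbb{T}^{d}$ (with $|\mathbb{T}^{d}|=1$ and $|e|=1$) to obtain $|\psi_{U}'(s)|^{2} \leq \int_{\mathbb{T}^{d}} |\mathcal{D}_{e}U(s,x)|^{2} \, dx$, integrate in $s$, and use ellipticity and $W \geq 0$ to reach
$$
\|\psi_{U}'\|_{L^{2}(\mathbb{R})}^{2} \;\leq\; \int_{\mathbb{R}\times\mathbb{T}^{d}} |\mathcal{D}_{e}U|^{2} \, dx\, ds \;\leq\; \frac{2}{\lambda}\, \mathscr{T}^{a}_{e}(U).
$$
Since $\psi_{U}'$ is in $L^{2}(\mathbb{R})$, $\psi_{U}$ is absolutely continuous, and its left-continuous representative is continuous. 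The fundamental theorem of calculus together with Cauchy--Schwarz on $[t,s]$ then gives
$$
|\psi_{U}(s) - \psi_{U}(t)| \;=\; \left| \int_{t}^{s} \psi_{U}'(\tau)\, d\tau \right| \;\leq\; |s-t|^{\frac{1}{2}} \, \|\psi_{U}'\|_{L^{2}(\mathbb{R})} \;\leq\; \sqrt{2}\, \lambda^{-\frac{1}{2}} \sqrt{\mathscr{T}^{a}_{e}(U)} \, |s-t|^{\frac{1}{2}}.
$$

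The only step that requires care is the distributional identification of $\psi_{U}'$: because $U$ is a priori only BV$_{\mathrm{loc}}$ and $\partial_{s}U$ a measure, one cannot work with $\partial_{s}U$ slicewise, and the finiteness $\mathscr{T}^{a}_{e}(U) < \infty$ must be routed through $\mathcal{D}_{e}U$ rather than $\partial_{s}U$ directly. The $\mathbb{T}^{d}$-periodicity of $U$ is precisely what makes this detour work, by killing the $D_{x}U$ contribution after $x$-averaging. Everything else is bookkeeping.
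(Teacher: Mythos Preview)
Your proof is correct and follows essentially the same route as the paper: both identify $\psi_{U}'$ with $\int_{\mathbb{T}^{d}} \langle \mathcal{D}_{e}U,e\rangle\,dx$ via the observation that periodicity kills the $D_{x}U$ contribution after averaging over $\mathbb{T}^{d}$, then apply Cauchy--Schwarz twice. The only difference is packaging: the paper reduces to smooth $U$ by invoking its approximation result (Proposition~\ref{P: smooth_approx}) and computes classically, whereas you work directly with the distributional pairing and thereby sidestep the approximation step.
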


\begin{proof}  Let $\psi_{U}$ be the non-decreasing, left-continuous function defined in Proposition \ref{P: existence_average_magnetization}.  Assume that $U$ is smooth; otherwise, apply Proposition \ref{P: smooth_approx} of Appendix \ref{A: elementary_properties}.  

Given $s, t \in \mathbb{R}$, the identity $\int_{\mathbb{T}^{d}} D_{x}U(\cdot,x) \, dx = 0$ and the Cauchy-Schwarz inequality give
\begin{align*}
|\psi_{U}(t) - \psi_{U}(s)| &= \left| \int_{s}^{t} \int_{\mathbb{T}^{d}} \langle e \partial_{s} U(r,x) + D_{x}U(r,x), e \rangle \, dx \, dr \right| \\
		&\leq \sqrt{2} \lambda^{-\frac{1}{2}} \sqrt{\mathscr{T}^{a}_{e}(U)} |s - t|^{\frac{1}{2}}.
\end{align*} \qed
\end{proof}  

The previous a priori estimate will be useful in the sequel.  In addition, the following $BV$ estimate is crucial.  First, we introduce a convenient notation.  If $e \in S^{d - 1}$, we define a norm $|\cdot|_{e} : \mathbb{R} \times \mathbb{R}^{d} \to [0,\infty)$ by
\begin{equation*}
|(q,p)|_{e} = \sqrt{q^{2} + \|qe + p\|^{2}}.
\end{equation*}
Notice that since $\|p\|^{2} \leq 2(\|p + qe\|^{2} + q^{2})$, the following inequality holds:
	\begin{equation*}
		\|(q,p)\| = \sqrt{q^{2} + \|p\|^{2}} \leq \sqrt{3} |(q,p)|_{e}.
	\end{equation*}

\begin{prop} \label{P: bv_estimate} If $U \in L^{\infty}(\mathbb{R} \times \mathbb{T}^{d})$ satisfies $|U| \leq 1$ a.e., $\partial_{s} U \geq 0$, and $\mathscr{T}^{a}_{e}(U) < \infty$, then $U \in \text{BV}_{\text{loc}}(\mathbb{R} \times \mathbb{T}^{d})$.  Specifically, if $s < t$, then
\begin{equation*}
TV(U; (s,t) \times \mathbb{T}^{d}) \leq \sqrt{3}\left(2 + \sqrt{2} \lambda^{-\frac{1}{2}} |s - t|^{\frac{1}{2}} \sqrt{\mathscr{T}^{a}_{e}(U)}\right).
\end{equation*} \end{prop}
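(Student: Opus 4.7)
The plan is to derive the bound for smooth $U$ first, where $TV$ reduces to the $L^{1}$-norm of the Euclidean gradient $(\partial_{s}U, D_{x}U)$, and then obtain the general case by approximation. The key idea is that the degenerate quadratic form $\langle a \mathcal{D}_{e}U, \mathcal{D}_{e}U\rangle$ controls the \emph{full} gradient via the norm equivalence $\|(q,p)\| \le \sqrt{3}\,|(q,p)|_{e}$ stated just before the proposition, once one exploits the monotonicity $\partial_{s}U\ge 0$ to handle the missing direction.

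Concretely, I would first invoke the approximation result from Appendix A (Proposition \ref{P: smooth_approx}, already used in the proof of Proposition \ref{P: apriori}) to obtain smooth $U_{n}$ with $\partial_{s}U_{n}\ge 0$, $|U_{n}|\le 1$, $U_{n}\to U$ in $L^{1}_{\mathrm{loc}}$, and $\limsup_{n}\mathscr{T}^{a}_{e}(U_{n})\le \mathscr{T}^{a}_{e}(U)$. By the lower semicontinuity of $TV$ under $L^{1}_{\mathrm{loc}}$ convergence, it suffices to establish the bound for smooth $U$.

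For smooth $U$, I would pointwise estimate
\begin{equation*}
\|(\partial_{s}U, D_{x}U)\| \le \sqrt{3}\,|(\partial_{s}U, D_{x}U)|_{e} = \sqrt{3}\,\sqrt{(\partial_{s}U)^{2} + \|\mathcal{D}_{e}U\|^{2}} \le \sqrt{3}\,\bigl(\partial_{s}U + \|\mathcal{D}_{e}U\|\bigr),
\end{equation*}
using $\partial_{s}U\ge 0$ and the elementary inequality $\sqrt{\alpha^{2}+\beta^{2}}\le \alpha+\beta$ for $\alpha,\beta\ge 0$. Integrating over $(s,t)\times\mathbb{T}^{d}$ splits the total variation into two pieces. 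The first piece is controlled globally by Proposition \ref{P: important_recovery}(i), giving $\int_{(s,t)\times\mathbb{T}^{d}}\partial_{s}U\,dx\,ds \le \int_{\mathbb{R}\times\mathbb{T}^{d}}\partial_{s}U\,dx\,ds \le 2$. The second piece I would handle by Cauchy-Schwarz on the set of measure $(t-s)\cdot 1 = |t-s|$, combined with the ellipticity bound $\lambda\|\mathcal{D}_{e}U\|^{2}\le \langle a\mathcal{D}_{e}U, \mathcal{D}_{e}U\rangle$, yielding
\begin{equation*}
\int_{(s,t)\times\mathbb{T}^{d}} \|\mathcal{D}_{e}U\|\,dx\,ds \le |t-s|^{\frac{1}{2}}\Bigl(\int_{\mathbb{R}\times\mathbb{T}^{d}}\|\mathcal{D}_{e}U\|^{2}\Bigr)^{\frac{1}{2}} \le \sqrt{2}\,\lambda^{-\frac{1}{2}}|t-s|^{\frac{1}{2}}\sqrt{\mathscr{T}^{a}_{e}(U)}.
\end{equation*}
Adding these contributions and multiplying by $\sqrt{3}$ gives exactly the claimed bound.

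There is no real obstacle here; the only delicate point is ensuring the approximating sequence preserves both the sign $\partial_{s}U_{n}\ge 0$ and the energy bound, but this is precisely what Proposition \ref{P: smooth_approx} is built to deliver. The proof mirrors the structure of Proposition \ref{P: apriori}, with the same Cauchy-Schwarz trick applied to $\mathcal{D}_{e}U$, now combined with the monotonicity to absorb the $e\partial_{s}U$ piece of $D_{x}U = \mathcal{D}_{e}U - e\partial_{s}U$ into a bounded measure rather than an $L^{2}$ quantity.
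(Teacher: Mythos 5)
Your proof is correct and follows the paper's own argument essentially line for line: assume $U$ smooth via Proposition~\ref{P: smooth_approx}, bound $|(\partial_{s}U,D_{x}U)|_{e} \le \partial_{s}U + \|\mathcal{D}_{e}U\|$ using $\partial_{s}U\ge 0$, control the first piece by $\int\partial_{s}U\le 2$ from Proposition~\ref{P: important_recovery}, apply Cauchy--Schwarz and ellipticity to the second, and convert $|\cdot|_{e}$ back to the Euclidean norm via the $\sqrt{3}$ factor. No meaningful deviation from the paper's proof.
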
  

Proposition \ref{P: bv_estimate} is inspired by arguments appearing in \cite{ducrot}.  The same idea also appears in \cite{bessi}.  

\begin{proof}  This is a direct computation.  We will assume that $U$ is smooth.  The general case follows by approximation, as in the last proof.  The key fact we need is $\int_{\mathbb{R} \times \mathbb{T}^{d}} \partial_{s} U(s,x) \, ds \leq 2$, which was observed already in Proposition \ref{P: important_recovery}.  From it and the triangle inequality, we obtain
\begin{align*}
\int_{s}^{t} \int_{\mathbb{T}^{d}} |(\partial_{s}U,D_{x}U)|_{e} \, dx \, ds &\leq \int_{s}^{t} \int_{\mathbb{T}^{d}} \left( \partial_{s}U(s,x) + \|\mathcal{D}_{e}U(s,x) \| \right) \, dx \, ds \\
	&\leq 2 + \sqrt{2} \lambda^{-\frac{1}{2}} \sqrt{\mathscr{T}^{a}_{e}(U)} |s - t|^{\frac{1}{2}}.
	\end{align*}
Since $\|(\partial_{s}U,D_{x}U)\| \leq \sqrt{3} |(\partial_{s}U,D_{x}U)|_{e}$ pointwise, we conclude by appealing to the definition of $TV(U; (s,t) \times \mathbb{T}^{d})$.  \qed
\end{proof}  

Putting together Propositions \ref{P: apriori} and \ref{P: bv_estimate}, we obtain the desired compactness result:

\begin{prop} \label{P: compactness}  Suppose $\{e\}, (e_{n})_{n \in \mathbb{N}} \subseteq S^{d - 1}$ and $(U_{n})_{n \in \mathbb{N}} \subseteq \mathscr{X}_{+}$ satisfy
\begin{itemize}
\item[(i)] $\mathscr{E} := \sup \left\{\mathscr{T}^{a}_{e_{n}}(U_{n}) \, \mid \, n \in \mathbb{N} \right\} < \infty$,
\item[(ii)] $\psi_{U_{n}}(0) = 0$ independently of $n \in \mathbb{N}$,
\item[(iii)] $e = \lim_{n \to \infty} e_{n}$.
\end{itemize}  
If $(n_{k})_{k \in \mathbb{N}} \subseteq \mathbb{N}$ is any subsequence, then there is a $U \in \mathscr{X}_{+}$ and a further subsequence $(n_{k_{j}})_{j \in \mathbb{N}}$ such that $U = \lim_{j \to \infty} U_{n_{k_{j}}}$ pointwise a.e.\ in $\mathbb{R} \times \mathbb{T}^{d}$.  Moreover, $U$ satisfies
\begin{itemize}
\item[(iv)]  $\partial_{s} U \geq 0$, $\mathcal{D}_{e}U \in L^{2}(\mathbb{R} \times \mathbb{T}^{d})$,
\item[(v)] $\psi_{U}(0) = 0$,
\item[(vi)]  $\mathscr{T}^{a}_{e}(U) < \infty$, and, in particular,
\begin{equation*}
\mathscr{T}^{a}_{e}(U) \leq \liminf_{j \to \infty} \mathscr{T}^{a}_{e_{n_{k_{j}}}}(U_{n_{k_{j}}}).
\end{equation*}    
\end{itemize}
\end{prop}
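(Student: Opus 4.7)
My strategy is to apply the direct method using the estimates already established in Propositions \ref{P: apriori} and \ref{P: bv_estimate}, and then to invoke Proposition \ref{P: fun} to rule out the escape of the limit from $\mathscr{X}_{+}$.

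First, I would obtain a candidate limit $U$ via $BV$ compactness. Fix $\mathscr{E} = \sup_n \mathscr{T}^a_{e_n}(U_n) < \infty$. Proposition \ref{P: bv_estimate} gives
\[
TV(U_n;(s,t)\times\mathbb{T}^d) \leq \sqrt{3}\bigl(2 + \sqrt{2}\lambda^{-1/2}|s-t|^{1/2}\sqrt{\mathscr{E}}\bigr)
\]
uniformly in $n$ on every compact interval $(s,t)$. Combined with the bound $|U_n| \leq 1$, the Rellich compactness theorem in $BV$ plus a diagonal argument yields a subsequence (still denoted $U_n$) and a function $U \in \text{BV}_{\text{loc}}(\mathbb{R}\times\mathbb{T}^d) \cap L^\infty$ with $|U| \leq 1$ such that $U_n \to U$ in $L^1_{\text{loc}}$ and a.e. In the limit, $\partial_s U \geq 0$ as a distribution (hence as a measure), and the pointwise a.e. convergence propagates $|U|\le 1$.

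Second, I would upgrade this to weak $L^2$-compactness of the Dirichlet term and obtain the lower semicontinuity (vi). Since $a$ is uniformly elliptic, assumption (i) says $\mathcal{D}_{e_n}U_n$ is bounded in $L^2(\mathbb{R}\times\mathbb{T}^d;\mathbb{R}^d)$, so along a further subsequence $\mathcal{D}_{e_n}U_n \rightharpoonup V$ in $L^2$. The distributional convergence $U_n \to U$, the boundedness of $\partial_s U_n$ (as a positive measure with mass $\leq 2$ by Proposition \ref{P: important_recovery}), and $e_n \to e$ together force $V = \mathcal{D}_e U$ in the sense of distributions, hence $\mathcal{D}_e U \in L^2$. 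Lower semicontinuity of the positive definite quadratic form $V \mapsto \int \langle a(x)V,V\rangle\, dx\,ds$ under weak $L^2$-convergence, combined with Fatou's lemma applied to the bounded continuous integrand $W(U_n) \to W(U)$ a.e., yields
\[
\mathscr{T}^a_e(U) \leq \liminf_{n\to\infty} \mathscr{T}^a_{e_n}(U_n) \leq \mathscr{E} < \infty.
\]

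Third, and this is where I expect the main obstacle to lie, I would verify (v) and then promote $\partial_s U \geq 0$ plus $\mathscr{T}^a_e(U) < \infty$ to membership in $\mathscr{X}_{+}$. Proposition \ref{P: apriori} furnishes a uniform $\frac{1}{2}$-Hölder bound on $\psi_{U_n}$ with constant depending only on $\mathscr{E}$ and $\lambda$. Since $|\psi_{U_n}|\leq 1$, Arzelà--Ascoli extracts a locally uniform limit $\psi_\infty \in C^{0,1/2}(\mathbb{R})$. From the $L^1_{\text{loc}}$ convergence $U_n \to U$ and Fubini, $\psi_{U_n}(s) \to \psi_U(s)$ for a.e. $s$; since both $\psi_\infty$ and $\psi_U$ are left-continuous and coincide a.e., they are equal, so in particular $\psi_U$ is continuous and $\psi_U(0) = \lim_n \psi_{U_n}(0) = 0$, giving (v). The value $\psi_U(0)=0 \in (-1,1)$ exactly triggers hypothesis (iv) of Proposition \ref{P: fun}, whose conclusion is that for a.e.\ $x$ the slice $U_x$ satisfies $U_x^{\pm}=\pm 1$, i.e.\ $U \in \mathscr{X}_{+}$. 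This completes the proof.

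The subtle point is that neither condition (ii) nor the energy bound is individually strong enough: without the pointwise normalization $\psi_{U_n}(0)=0$, translations in $s$ could send all the mass of $\partial_s U_n$ to infinity and yield a trivial limit, while without the energy bound one cannot preclude $\psi_U$ from being flat. Proposition \ref{P: fun} is precisely designed to convert the combined constraint into the asymptotic behavior \eqref{E: asymptotic}.
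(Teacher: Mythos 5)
Your proposal is correct and follows essentially the same path as the paper: $BV$ compactness via Proposition \ref{P: bv_estimate}, lower semicontinuity plus Fatou for (iv) and (vi), equicontinuity of $\psi_{U_n}$ from Proposition \ref{P: apriori} for (v), and then Proposition \ref{P: fun} to conclude $U \in \mathscr{X}_{+}$. The paper's treatment of the lower semicontinuity step is terser, simply citing lower semicontinuity and Fatou, while you spell out the weak $L^2$ compactness of $\mathcal{D}_{e_n}U_n$ and the identification of the weak limit with $\mathcal{D}_e U$, which is a reasonable way to fill in that detail.
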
  

\begin{proof}  Fix a sub-sequence $(n_{k})_{k \in \mathbb{N}} \subseteq \mathbb{N}$.  By Proposition \ref{P: bv_estimate} and the compactness of $BV$ in $L^{1}$ in bounded domains, there is a function $U \in \text{BV}_{\text{loc}}(\mathbb{R} \times \mathbb{T}^{d})$ and a sub-sequence $(n_{k_{j}})_{j \in \mathbb{N}}$ such that $\lim_{j \to \infty} U_{n_{k_{j}}} = U$ in $L^{1}_{\text{loc}}(\mathbb{R} \times \mathbb{T}^{d})$ and pointwise a.e.  Evidently $- 1 \leq U \leq 1$ almost everywhere and $\partial_{s} U \geq 0$.  

Since $\mathscr{T}^{a}_{e}$ controls the $L^{2}$-norm of $\mathcal{D}_{e}$, it is clear that $\mathcal{D}_{e} U \in L^{2}(\mathbb{R} \times \mathbb{T}^{d})$.  The inequality in (vi) then follows from lower semi-continuity.

Now we verify that $U$ satisfies (v).  Invoking Proposition \ref{P: apriori} and assumption (ii) and passing to a further subsequence if necessary, there is a non-decreasing, continuous function $\psi : \mathbb{R} \to [-1,1]$ such that $\psi_{U_{n}} \to \psi$ locally uniformly.  Since $U_{n_{k_{j}}} \to U$ in $L^{1}_{\text{loc}}(\mathbb{R} \times \mathbb{T}^{d})$, Fubini's Theorem shows that $\psi = \psi_{U}$ and, thus, (v) holds. 

We appeal to Proposition \ref{P: fun} to conclude that $U \in \mathscr{X}_{+}$.  \qed \end{proof}

Finally, in the proof of existence, we will use the following observation:

\begin{prop} \label{P: s_symmetry}  If $V \in \mathscr{X}_{+}$ and $s_{0} \in \mathbb{R}$, then $T_{s_{0}}V \in \mathscr{X}_{+}$ and $\mathscr{T}^{a}_{e}(T_{s_{0}}V) = \mathscr{T}^{a}_{e}(V)$.  \end{prop}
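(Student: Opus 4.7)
The proposition records a translation symmetry of the Lagrangian, so the proof is essentially a checklist. There is no substantive obstacle; I just need to verify each of the three defining properties of $\mathscr{X}_{+}$ transfers under the shift $T_{s_{0}}$, and then perform a change of variables in the integral.

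First I would show $T_{s_{0}}V \in \mathscr{X}_{+}$ by checking the three conditions in turn. The pointwise bound $|T_{s_{0}}V| \leq 1$ is immediate since $T_{s_{0}}V(s,x) = V(s - s_{0}, x)$ and $|V| \leq 1$ a.e. The monotonicity condition $\partial_{s}(T_{s_{0}}V) \geq 0$ (in the distributional sense) follows because if $\varphi \in C^{\infty}_{c}(\mathbb{R} \times \mathbb{T}^{d})$ is nonnegative, then
\begin{equation*}
\langle \partial_{s}(T_{s_{0}}V), \varphi \rangle = -\int V(s - s_{0},x) \partial_{s}\varphi(s,x) \, dx \, ds = -\int V(s,x) \partial_{s}(T_{-s_{0}}\varphi)(s,x) \, dx \, ds \geq 0,
\end{equation*}
using that $T_{-s_{0}}\varphi \geq 0$ and $\partial_{s}V \geq 0$. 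For the asymptotic condition \eqref{E: asymptotic}, note that $T_{-s}(T_{s_{0}}V) = T_{s_{0} - s}V = T_{-s'}V$ with $s' = s - s_{0}$, and $s' \to \pm \infty$ as $s \to \pm \infty$; therefore the assumption that $T_{-s'}V \to \pm 1$ in $L^{1}_{\text{loc}}$ transfers directly.

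Next, for the energy identity, I would apply the translation change of variables $s \mapsto s + s_{0}$ in the integral defining $\mathscr{T}^{a}_{e}$. Because $a$ depends only on $x \in \mathbb{T}^{d}$ and $W$ acts pointwise, and because the operator $\mathcal{D}_{e} = e \partial_{s} + D_{x}$ commutes with $T_{s_{0}}$ (in the sense that $\mathcal{D}_{e}(T_{s_{0}}V) = T_{s_{0}}(\mathcal{D}_{e}V)$), the integrand at $(s,x)$ for $T_{s_{0}}V$ equals the integrand at $(s - s_{0}, x)$ for $V$. The translation invariance of $\mathcal{L}^{1}$ on $\mathbb{R}$ then yields
\begin{equation*}
\mathscr{T}^{a}_{e}(T_{s_{0}}V) = \int_{\mathbb{R} \times \mathbb{T}^{d}} \left( \tfrac{1}{2} \langle a(x) \mathcal{D}_{e}V(s - s_{0},x), \mathcal{D}_{e}V(s - s_{0},x) \rangle + W(V(s - s_{0},x)) \right) dx \, ds = \mathscr{T}^{a}_{e}(V).
\end{equation*}

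The only mildly delicate point is ensuring that the distributional derivative $\partial_{s}$ genuinely commutes with $T_{s_{0}}$ for $V$ that is only in $\mathscr{X}$; this is handled by the duality argument above. Otherwise, the argument is a one-line change of variables, so I would present the whole proposition as a short verification with the two items (membership and energy equality) proved in parallel.
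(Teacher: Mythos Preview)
Your proposal is correct and complete. The paper actually omits the proof entirely, treating the proposition as an elementary observation stated without justification; your verification of the three defining conditions of $\mathscr{X}_{+}$ together with the change-of-variables argument for the energy is exactly the routine check the paper is implicitly leaving to the reader.
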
  

Now we have all the ingredients necessary to apply the direct method and obtain minimizers in $\mathscr{X}_{+}$.

\begin{proof}[Proof of Proposition \ref{P: existence}]  Let $(U_{n})_{n \in \mathbb{N}} \subseteq \mathscr{X}_{+}$ be such that 
\begin{equation} \label{E: minimizing}
\mathscr{E}^{a}_{+}(e) = \lim_{n \to \infty} \mathscr{T}^{a}_{e}(U_{n}).
\end{equation}
In view of Propositions \ref{P: apriori} and \ref{P: s_symmetry}, there is no loss of generality if we assume that $\psi_{U_{n}}(0) = 0$ for all $n \in \mathbb{N}$.  In particular, this assumption implies that the hypotheses of Proposition \ref{P: compactness} all hold.

By that result, there is a $U \in \mathscr{X}_{+}$ such that $\mathscr{T}^{a}_{e}(U) \leq \lim_{n \to \infty} \mathscr{T}^{a}_{e}(U_{n}) = \mathscr{E}^{a}_{+}(e)$.  Clearly, $\mathscr{T}^{a}_{e}(U) \geq \mathscr{E}^{a}_{+}(e)$ so it is a minimizer. \qed \end{proof}  

\subsection{No Symmetry Breaking} \label{S: symmetry breaking}

In the next step, we prove that the constrained energy is unchanged if we replace $\mathbb{Z}^{d}$ by $M\mathbb{Z}^{d}$.  The proof we give here uses a very weak form of the maximum principle that allows us to continue the analysis in spite of the low regularity assumptions on $a$ and $W$.  

Given an $M \in \mathbb{N}$, we study the problem in $\mathbb{R} \times M \mathbb{Z}^{d}$ using the following definitions:
	\begin{gather*}
		\mathscr{X}^{(M)}_{+} = \left\{U \in L^{\infty}(\mathbb{R} \times M \mathbb{T}^{d}) \, \mid \, |U| \leq 1, \, \, \partial_{s} U \geq 0, \, \, U \, \, \text{satisfies} \, \, \eqref{E: asymptotic} \right\}, \\
		\mathscr{T}^{a}_{e,M}(U) = M^{-d} \int_{\mathbb{R} \times M \mathbb{T}^{d}} \left(\frac{1}{2} \langle a(x) \mathcal{D}_{e}U, \mathcal{D}_{e}U \rangle + W(U) \right) \, ds \, dx, \\
		\mathscr{E}^{a}_{+,M}(e) = \inf \left\{ \mathscr{T}^{a}_{e,M}(U) \, \mid \, U \in \mathscr{X}^{(M)}_{+} \right\}, \\
		\mathcal{M}_{e}(\mathbb{R} \times M \mathbb{T}^{d}) = \left\{ U \in \mathscr{X}_{+}^{(M)} \, \mid \, \mathscr{T}^{a}_{e,M}(U) = \mathscr{E}^{a}_{M}(e) \right\}.
	\end{gather*}

\begin{theorem} \label{T: no_symmetry_breaking}  For all $M \in \mathbb{N}$ and $e \in S^{d-1}$, we have $\mathscr{E}^{a}_{+,M}(e) = \mathscr{E}^{a}_{+}(e)$.  \end{theorem}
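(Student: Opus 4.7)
The plan is to establish the two inequalities separately, with the nontrivial content in the direction $\mathscr{E}^{a}_{+}(e) \leq \mathscr{E}^{a}_{+,M}(e)$. The easy direction follows from the inclusion $\mathscr{X}_{+} \hookrightarrow \mathscr{X}^{(M)}_{+}$ (a $\mathbb{Z}^{d}$-periodic function is automatically $M\mathbb{Z}^{d}$-periodic), combined with the observation that the $M^{-d}$ normalization factor in $\mathscr{T}^{a}_{e,M}$ was chosen precisely so that $\mathscr{T}^{a}_{e,M}(U) = \mathscr{T}^{a}_{e}(U)$ when $U$ is $\mathbb{Z}^{d}$-periodic. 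Taking the infimum gives $\mathscr{E}^{a}_{+,M}(e) \leq \mathscr{E}^{a}_{+}(e)$.

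For the hard direction, the first step is to observe that, by repeating the argument of Proposition \ref{P: existence} verbatim but with $\mathbb{T}^{d}$ replaced by $M\mathbb{T}^{d}$, there exists a minimizer $V \in \mathscr{X}^{(M)}_{+}$ with $\mathscr{T}^{a}_{e,M}(V) = \mathscr{E}^{a}_{+,M}(e)$. The heart of the argument is then the following \emph{lattice identity}, which plays the role of a weak maximum principle: for any $U_{1}, U_{2} \in \mathscr{X}^{(M)}_{+}$,
\begin{equation*}
\mathscr{T}^{a}_{e,M}(U_{1} \vee U_{2}) + \mathscr{T}^{a}_{e,M}(U_{1} \wedge U_{2}) = \mathscr{T}^{a}_{e,M}(U_{1}) + \mathscr{T}^{a}_{e,M}(U_{2}).
\end{equation*}
This holds because Stampacchia's theorem tells us that on $\{U_{1} = U_{2}\}$ the weak derivatives coincide a.e., so the combined Dirichlet and potential energies simply redistribute themselves between the max and the min without loss. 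One must verify that $U_{1} \vee U_{2}$ and $U_{1} \wedge U_{2}$ still belong to $\mathscr{X}^{(M)}_{+}$: boundedness by $1$ is obvious, monotonicity in $s$ is preserved by taking the pointwise max or min of two $s$-monotone functions, and the asymptotic condition \eqref{E: asymptotic} is preserved since both ingredients tend to $\pm 1$.

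With the lattice identity in hand, the proof concludes as follows. For any $k \in \mathbb{Z}^{d}$, the shift $S_{k}V$ lies in $\mathscr{X}^{(M)}_{+}$ and has the same energy as $V$ (since $a$ is $\mathbb{Z}^{d}$-periodic and hence $M\mathbb{Z}^{d}$-periodic). Applying the lattice identity to $U_{1} = V$ and $U_{2} = S_{k}V$, both summands on the left must again equal $\mathscr{E}^{a}_{+,M}(e)$, so $V \vee S_{k}V$ is itself a minimizer in $\mathscr{X}^{(M)}_{+}$. Iterating over representatives $k$ of $\mathbb{Z}^{d}/M\mathbb{Z}^{d}$, the finite maximum
\begin{equation*}
\tilde{V} := \max_{k \in \mathbb{Z}^{d} \cap [0,M)^{d}} S_{k}V
\end{equation*}
is a minimizer of $\mathscr{T}^{a}_{e,M}$ in $\mathscr{X}^{(M)}_{+}$, and an elementary relabeling argument shows it is $\mathbb{Z}^{d}$-periodic. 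Viewing $\tilde{V}$ as an element of $\mathscr{X}_{+}$ and using the matching of the two energies on $\mathbb{Z}^{d}$-periodic functions, we conclude $\mathscr{E}^{a}_{+}(e) \leq \mathscr{T}^{a}_{e}(\tilde{V}) = \mathscr{T}^{a}_{e,M}(\tilde{V}) = \mathscr{E}^{a}_{+,M}(e)$.

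The main obstacle I anticipate is verifying the lattice identity cleanly at the level of functions that are only in $BV_{\text{loc}}$ with $\mathcal{D}_{e}U \in L^{2}$. The Stampacchia-type fact that $\nabla U_{1} = \nabla U_{2}$ a.e.\ on $\{U_{1} = U_{2}\}$ is standard for $W^{1,2}_{\text{loc}}$ functions, but here $\partial_{s}U$ need not be in $L^{2}$; however, the combination $\mathcal{D}_{e}U = e\partial_{s}U + D_{x}U$ is in $L^{2}$, so one can still make sense of the identity at the level of the composite operator $\mathcal{D}_{e}$ after a careful inspection. Apart from this technicality, all other steps are routine manipulations using the properties of $\mathscr{X}^{(M)}_{+}$ already developed in Section \ref{S: properties}.
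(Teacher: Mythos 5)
Your proof is correct and follows essentially the same route as the paper: the easy inequality via the inclusion $\mathscr{X}_{+} \hookrightarrow \mathscr{X}_{+}^{(M)}$ and the matching normalizations, and the hard inequality by applying the submodularity lemma (Lemma~\ref{L: submodularity}) to the integer shifts of a minimizer in $\mathscr{X}_{+}^{(M)}$ to produce a $\mathbb{Z}^{d}$-periodic minimizer. The only cosmetic difference is that you symmetrize with $\max$ where the paper takes $\min$; the lattice identity you state explicitly is what underlies Lemma~\ref{L: submodularity}, and the $BV$-level Stampacchia caveat you flag is the same one the paper quietly leaves ``to the interested reader.''
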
  

Here the inequality $\mathscr{E}^{a}_{+,M}(e) \leq \mathscr{E}^{a}_{+}(e)$ is immediate.  Our proof of the opposite bound $\mathscr{E}^{a}_{+,M}(e) \geq \mathscr{E}^{a}_{+}(e)$ is inspired by the approach of \cite{caffarelli de la llave}.    

We begin with a familiar lemma.

\begin{lemma} \label{L: submodularity} Given $e \in S^{d-1}$ and $M \in \mathbb{N}$, if $U, V \in \mathcal{M}_{e}(\mathbb{R} \times M\mathbb{T}^{d})$, then 
	\begin{equation*}
		U \wedge V, \, \, U \vee V \in \mathcal{M}_{e}(\mathbb{R} \times M\mathbb{T}^{d}).
	\end{equation*}  
\end{lemma}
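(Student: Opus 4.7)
The plan is to argue that $\mathscr{T}^{a}_{e,M}$ is submodular on $\mathscr{X}^{(M)}_{+}$ in the sense that
\[
\mathscr{T}^{a}_{e,M}(U \wedge V) + \mathscr{T}^{a}_{e,M}(U \vee V) \leq \mathscr{T}^{a}_{e,M}(U) + \mathscr{T}^{a}_{e,M}(V).
\]
Since both $U$ and $V$ are minimizers with value $\mathscr{E}^{a}_{+,M}(e)$ while $U \wedge V$ and $U \vee V$ will be shown to lie in $\mathscr{X}^{(M)}_{+}$ (so each has energy at least $\mathscr{E}^{a}_{+,M}(e)$), this inequality forces equality throughout and puts both $U \wedge V$ and $U \vee V$ in $\mathcal{M}_{e}(\mathbb{R} \times M\mathbb{T}^{d})$.

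First I would verify admissibility. The bound $|U \wedge V|, |U \vee V| \leq 1$ is immediate from $|U|, |V| \leq 1$, and $M \mathbb{Z}^{d}$-periodicity in $x$ is preserved by pointwise max and min. For monotonicity in $s$, one uses Proposition \ref{P: key_monotonicity} to pick representatives $U_{x}, V_{x}$ that are non-decreasing and left-continuous on full-measure slices; pointwise max and min of two such functions inherit these properties, yielding $\partial_{s}(U \wedge V), \partial_{s}(U \vee V) \geq 0$. The asymptotic condition \eqref{E: asymptotic} follows from the pointwise bounds $U \wedge V \leq U, V \leq U \vee V$ combined with the $L^{1}_{\text{loc}}$ convergence of $T_{-s}U, T_{-s}V$ to $\pm 1$; alternatively invoke Proposition \ref{P: asymptotic_magnetization}, (iii) applied to $U \wedge V$ and $U \vee V$.

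Next I would establish the pointwise lattice identities for the energy density. On the disjoint sets $\{U < V\}$, $\{U > V\}$, and $\{U = V\}$ we have, respectively, $(U \wedge V, U \vee V) = (U,V)$, $=(V,U)$, and $=(U,U)$; in particular $\{U \wedge V, U \vee V\} = \{U, V\}$ pointwise. This gives immediately
\[
W(U \wedge V) + W(U \vee V) = W(U) + W(V).
\]
For the gradient term, the standard truncation formula — which I would justify by first assuming $U, V$ are smooth (using an approximation from Appendix A to reduce to this case) and then passing to the limit using the lower semi-continuity already employed in Proposition \ref{P: compactness} — yields the chain-rule identities
\[
\mathcal{D}_{e}(U \wedge V) = \chi_{\{U \leq V\}} \mathcal{D}_{e} U + \chi_{\{U > V\}} \mathcal{D}_{e} V, \quad \mathcal{D}_{e}(U \vee V) = \chi_{\{U > V\}} \mathcal{D}_{e} U + \chi_{\{U \leq V\}} \mathcal{D}_{e} V
\]
almost everywhere, together with $\mathcal{D}_{e}U = \mathcal{D}_{e}V$ a.e.\ on $\{U = V\}$. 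Adding the two quadratic forms $\langle a \mathcal{D}_{e}(U \wedge V), \mathcal{D}_{e}(U \wedge V) \rangle$ and $\langle a \mathcal{D}_{e}(U \vee V), \mathcal{D}_{e}(U \vee V) \rangle$ and using the orthogonality of the characteristic functions then produces $\langle a \mathcal{D}_{e}U, \mathcal{D}_{e}U \rangle + \langle a \mathcal{D}_{e}V, \mathcal{D}_{e}V \rangle$ pointwise a.e., so integration over $\mathbb{R} \times M\mathbb{T}^{d}$ (with the $M^{-d}$ factor) gives equality in the submodularity inequality.

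The step I expect to require the most care is the justification of the chain rule for $\mathcal{D}_{e}$ under max/min at the level of functions that are only $BV_{\text{loc}} \cap L^{\infty}$ with $\mathcal{D}_{e}U \in L^{2}$; in particular, one must know that $\mathcal{D}_{e}U = \mathcal{D}_{e}V$ a.e.\ on $\{U=V\}$, which is automatic for Sobolev-type functions but should be derived here via smooth approximation. Once the identities above are in hand, comparing $\mathscr{T}^{a}_{e,M}(U \wedge V) + \mathscr{T}^{a}_{e,M}(U \vee V) = 2 \mathscr{E}^{a}_{+,M}(e)$ with the admissibility bounds $\mathscr{T}^{a}_{e,M}(U \wedge V), \mathscr{T}^{a}_{e,M}(U \vee V) \geq \mathscr{E}^{a}_{+,M}(e)$ forces both terms to equal $\mathscr{E}^{a}_{+,M}(e)$, completing the proof.
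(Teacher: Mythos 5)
Your argument is correct and is precisely the standard one the paper invokes when it says the result ``follows as in the classical, uniformly elliptic case.'' The admissibility check, the lattice decomposition of the potential term, and the conclusion by minimality are all fine, and you have correctly isolated the truncation formula for $\mathcal{D}_{e}$ as the one non-trivial step.

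One clarification on the route you sketch for that step: smooth approximation together with lower semi-continuity gives only the \emph{inequality}
\[
\mathscr{T}^{a}_{e,M}(U \wedge V) + \mathscr{T}^{a}_{e,M}(U \vee V) \leq \mathscr{T}^{a}_{e,M}(U) + \mathscr{T}^{a}_{e,M}(V),
\]
because $U_{n}\wedge V_{n}$ is not smooth (so one cannot pass the pointwise identity to the limit directly) and lower semi-continuity is one-sided; this inequality suffices for the lemma, since together with $\mathscr{T}^{a}_{e,M}(U\wedge V), \mathscr{T}^{a}_{e,M}(U\vee V)\geq \mathscr{E}^{a}_{+,M}(e)$ and $\mathscr{T}^{a}_{e,M}(U)=\mathscr{T}^{a}_{e,M}(V)=\mathscr{E}^{a}_{+,M}(e)$ it forces equality a posteriori, but your presentation asserts pointwise equality of energy densities, which the approximation argument as stated does not deliver. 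A cleaner route, more in the spirit of the paper's machinery, is to pass to physical coordinates via Theorem \ref{T: ergodic_lemma}: for a.e.\ $\zeta$ the generated functions $u_{\zeta}, v_{\zeta}$ lie in $H^{1}_{\mathrm{loc}}(\mathbb{R}^{d})$ (since $\mathcal{D}_{e}U\in L^{2}$ translates to $Du_{\zeta}\in L^{2}_{\mathrm{loc}}$), the classical Sobolev truncation formula gives
\[
D\min(u_{\zeta},v_{\zeta}) = \chi_{\{u_{\zeta}\leq v_{\zeta}\}}Du_{\zeta} + \chi_{\{u_{\zeta}> v_{\zeta}\}}Dv_{\zeta}, \qquad Du_{\zeta}=Dv_{\zeta} \text{ a.e.\ on } \{u_{\zeta}=v_{\zeta}\},
\]
and integrating the resulting pointwise energy identity over $\zeta$ transfers it back to $\mathbb{R}\times M\mathbb{T}^{d}$, yielding the exact equality you want without any approximation argument.
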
  

Since the lemma follows as in the classical, uniformly elliptic case, we leave the details to the interested reader.  With that result in hand, we're prepared for the

\begin{proof}[Proof of Theorem \ref{T: no_symmetry_breaking}]  First, we prove $\mathscr{E}^{a}_{+,M}(e) \leq \mathscr{E}^{a}_{+}(e)$.  Suppose $U_{e} \in \mathcal{M}_{e}(\mathbb{R} \times \mathbb{T}^{d})$.  Since $\mathscr{X}_{+}$ naturally includes into $\mathscr{X}_{+}^{(M)}$, we can consider $U_{e}$ as an element of $\mathscr{X}_{+}^{(M)}$.  In so doing, we find
	\begin{align*}
		&\mathscr{E}^{a}_{+,M}(e) \leq M^{-d} \int_{\mathbb{R} \times M\mathbb{T}^{d}} \left( \frac{1}{2} \langle a(x) \mathcal{D}_{e}U_{e}, \mathcal{D}_{e} U_{e} \rangle + W(U_{e}) \right) \, dx \, ds \\
			\quad &= M^{-d} \sum_{k \in \mathbb{Z}^{d} \cap [0,M)^{d}} \int_{\mathbb{R} \times (\mathbb{T}^{d} + k)} \left( \frac{1}{2} \langle a(x) \mathcal{D}_{e}U_{e}, \mathcal{D}_{e} U_{e} \rangle + W(U_{e}) \right) \, dx \, ds \\
			\quad &= M^{-d} \sum_{k \in \mathbb{Z}^{d} \cap [0,M)^{d}} \int_{\mathbb{R} \times \mathbb{T}^{d}} \left( \frac{1}{2} \langle a(x) \mathcal{D}_{e}U_{e}, \mathcal{D}_{e} U_{e} \rangle + W(U_{e}) \right) \, dx \, ds = \mathscr{E}^{a}_{+}(e).
	\end{align*}

Next, we prove $\mathscr{E}^{a}_{+,M}(e) \geq \mathscr{E}^{a}_{+}(e)$.  Fix $\tilde{U}_{e} \in \mathcal{M}_{e}(\mathbb{R} \times M \mathbb{T}^{d})$.  Since $a$ is $\mathbb{Z}^{d}$-periodic, a quick computation shows that $S_{m}\tilde{U}_{e} \in \mathcal{M}_{e}(\mathbb{R} \times M\mathbb{T}^{d})$ if $m \in \mathbb{Z}^{d}$.  Therefore, by Lemma \ref{L: submodularity}, the function $U_{e}$ given by 
	\begin{align*}
		U_{e}(s,x) = \min \left\{S_{m}\tilde{U}_{e}(s,x) \, \mid \, m \in \mathbb{Z}^{d} \cap [0,M)^{d}\right\}
	\end{align*} 
is also in $\mathcal{M}_{e}(\mathbb{R} \times M \mathbb{T}^{d})$.  Another straightforward computation shows that $U_{e}$ is $\mathbb{Z}^{d}$-periodic and, in fact, $U_{e} \in \mathscr{X}_{+}$.  Thus, $\mathscr{T}^{a}_{e}(U_{e}) \geq \mathscr{E}^{a}_{+}(e)$.  Finally, we use the definition of $\mathscr{T}^{a}_{e,M}(U_{e})$ to compare $\mathscr{E}^{a}_{+,M}(e)$ to $\mathscr{E}^{a}_{+}(e)$:
	\begin{align*}
		\mathscr{E}^{a}_{+,M}(e) 
			&= M^{-d} \sum_{k \in \mathbb{Z}^{d} \cap [0,M)^{d}} \int_{\mathbb{R} \times (\mathbb{T}^{d}+ k)} \left(\frac{1}{2} \langle a(x) \mathcal{D}_{e}U_{e}, \mathcal{D}_{e} U_{e} \rangle + W(U_{e}) \right) \, dx \, ds \\
			&= M^{-d} \sum_{k \in \mathbb{Z}^{d} \cap [0,M)^{d}} \mathscr{T}^{a}_{e}(U_{e}) \geq \mathscr{E}^{a}(e).
	\end{align*}  \qed\end{proof}  
	
\begin{corollary} \label{C: minimizer_symmetry_unbroken} If $U_{e} \in \mathcal{M}_{e}(\mathbb{R} \times \mathbb{T}^{d})$ and $M \in \mathbb{N}$, then $U_{e} \in \mathcal{M}_{e}(\mathbb{R} \times M \mathbb{T}^{d})$. \end{corollary}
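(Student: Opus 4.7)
The plan is to simply unpack the calculation that appeared in the first half of the proof of Theorem \ref{T: no_symmetry_breaking} and observe that it already gives the corollary. Specifically, I would fix $U_e \in \mathcal{M}_e(\mathbb{R} \times \mathbb{T}^d)$, view it as an element of $\mathscr{X}^{(M)}_+$ via the natural inclusion induced by $\mathbb{Z}^d$-periodicity, and then compute $\mathscr{T}^a_{e,M}(U_e)$ directly.

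The computation is as follows: using the $\mathbb{Z}^d$-periodicity of $a$, $W$, and $U_e$, the fundamental domain $\mathbb{R} \times M\mathbb{T}^d$ decomposes into $M^d$ translates of $\mathbb{R} \times \mathbb{T}^d$, and the Lagrangian integrand is invariant under each such translate. Hence
\begin{equation*}
\mathscr{T}^a_{e,M}(U_e) = M^{-d} \sum_{k \in \mathbb{Z}^d \cap [0,M)^d} \int_{\mathbb{R} \times (\mathbb{T}^d + k)} \left(\tfrac{1}{2}\langle a(x)\mathcal{D}_e U_e, \mathcal{D}_e U_e\rangle + W(U_e)\right) dx\, ds = \mathscr{T}^a_e(U_e).
\end{equation*}
Since $U_e \in \mathcal{M}_e(\mathbb{R} \times \mathbb{T}^d)$, the right-hand side equals $\mathscr{E}^a_+(e)$, and by Theorem \ref{T: no_symmetry_breaking} we have $\mathscr{E}^a_+(e) = \mathscr{E}^a_{+,M}(e)$. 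Thus $\mathscr{T}^a_{e,M}(U_e) = \mathscr{E}^a_{+,M}(e)$, which, together with the fact that $U_e \in \mathscr{X}^{(M)}_+$, yields $U_e \in \mathcal{M}_e(\mathbb{R} \times M\mathbb{T}^d)$ by definition.

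There is really no obstacle here beyond bookkeeping: the substantive content (no symmetry breaking, i.e.\ the equality of the two minimum values) was already established in Theorem \ref{T: no_symmetry_breaking}, and the corollary is a direct consequence of combining that equality with the periodicity-based identity $\mathscr{T}^a_{e,M}(U_e) = \mathscr{T}^a_e(U_e)$, which is the very first computation appearing in the proof of that theorem.
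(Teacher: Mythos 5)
Your proof is correct and matches the paper's own argument, which also invokes the chain of equalities from the first half of the proof of Theorem~\ref{T: no_symmetry_breaking} together with the equality $\mathscr{E}^{a}_{+,M}(e) = \mathscr{E}^{a}_{+}(e)$. You have simply made explicit the periodicity-based decomposition that the paper refers to as "the string of inequalities," so there is no substantive difference.
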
 

\begin{proof}  This follows from the string of inequalities leading to $\mathscr{E}^{a}_{+,M}(e) \leq \mathscr{E}^{a}_{+}(e)$ and the fact that equality actually holds.  \qed \end{proof}  

\subsection{Removing the constraint}  \label{S: monotonicity_constraint}

We now show that the monotonicity constraint is superfluous.  First, we notice that, in rational directions, the functions generated by any candidate are close to $M\mathbb{Z}^{d}$-Birkhoff functions provided $M$ is sufficiently large.  This is an application of the next proposition and Lemma \ref{L: asymptotic_birkhoff}.  

\begin{prop} \label{P: approximation_energy_need} Let $e \in S^{d-1} \cap \mathbb{R} \mathbb{Z}^{d}$.  Suppose $u : \mathbb{T}^{d -1}_{e} \oplus_{e} \mathbb{R} \to [-1,1]$ satisfies the following two properties:
	\begin{itemize}
		\item[(a)] $\mathcal{F}^{a}_{1}(u; Q_{e} \oplus \mathbb{R}) < \infty$.
		\item[(b)] For each $\delta > 0$, 
			\begin{align}
				\lim_{R \to \infty} \mathcal{L}^{d}(\{u \leq 1 - \delta\} \cap \{R \leq \langle x,e \rangle \leq R + 1\}) = 0, \label{E: heteroclinic part 1}\\
				\lim_{R \to \infty} \mathcal{L}^{d}(\{u \geq -1 + \delta\} \cap \{-(R + 1) \leq \langle x, e \rangle \leq - R\}) = 0. \label{E: heteroclinic part 2}
			\end{align}
	\end{itemize}
Then, for each $\epsilon > 0$, there is a $u_{\epsilon} : \mathbb{T}^{d - 1}_{e} \oplus_{e} \mathbb{R} \to [-1,1]$ and an $N_{\epsilon} \in \mathbb{N}$ such that
	\begin{itemize}
		\item[(i)] $u_{\epsilon}(x) = 1$ if $\langle x, e \rangle \geq N_{\epsilon}$,
		\item[(ii)] $u_{\epsilon}(x) = -1$ if $\langle x, e \rangle \leq 0$,
		\item[(iii)] $\mathcal{F}^{a}_{1}(u_{\epsilon}; Q_{e} \oplus \mathbb{R}) \leq \mathcal{F}^{a}_{1}(u; Q_{e} \oplus \mathbb{R}) + \epsilon$.
	\end{itemize}
\end{prop}  

	\begin{proof}  This follows by arguing exactly as in Proposition \ref{P: cut_off} below.  \qed \end{proof}  
	
Next, we observe that $M \mathbb{Z}^{d}$-Birkhoff functions in $\mathbb{T}^{d-1}_{e} \oplus_{e} \mathbb{R}$ can always be generated by a function in $\mathbb{R} \times M \mathbb{Z}^{d}$ with the same energy.  More precisely, we have

\begin{prop} \label{P: extension}  Suppose $e \in \mathbb{R} \mathbb{Z}^{d}$.  If $v : \mathbb{T}^{d - 1}_{e} \oplus_{e} \mathbb{R} \to [-1,1]$ is $M \mathbb{Z}^{d}$-Birkhoff with respect to $e$ for some $M \in \mathbb{N}$ and $\lim_{s \to \infty} v(\cdot + se) = \pm 1$ in $L^{1}_{\text{loc}}(Q_{e} \oplus_{e} \mathbb{R})$, then there is a $V \in \mathscr{X}^{(M)}_{+}$ (see Section \ref{S: symmetry breaking}) such that 
	\begin{equation*}
		\mathscr{T}^{a}_{e,M}(V) = \mathcal{H}^{d - 1}(Q_{e})^{-1} \mathcal{F}^{a}_{1}(v; Q_{e} \oplus \mathbb{R}).
	\end{equation*}
\end{prop}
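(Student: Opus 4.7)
The plan is to construct $V$ as a step-function pulsation of $v$ along a distinguished vector $k_{0} \in M\mathbb{Z}^{d}$ realizing $\langle k_{0}, e \rangle = m_{e}^{(M)}$, the minimum positive value of $\langle \cdot, e \rangle$ on $M\mathbb{Z}^{d}$. Specifically, I would set
\[
V(s, x) := v\bigl(x - n(s, x) k_{0}\bigr), \qquad n(s, x) := \left\lfloor \frac{\langle x, e \rangle - s}{m_{e}^{(M)}} \right\rfloor,
\]
so that $V$ is locally constant in $s$ (equal to an integer translate of $v$) with discontinuities across the hyperplanes $J_{n} := \{(s, x) : \langle x, e \rangle - s = n \, m_{e}^{(M)}\}$.

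Next I would verify that $V \in \mathscr{X}_{+}^{(M)}$. For $M\mathbb{Z}^{d}$-periodicity in $x$, any $k \in M\mathbb{Z}^{d}$ satisfies $\langle k, e \rangle = \ell \, m_{e}^{(M)}$ for some $\ell \in \mathbb{Z}$, so $n(s, x + k) = n(s, x) + \ell$, and then $V(s, x + k) = v(x - n(s, x) k_{0} + (k - \ell k_{0}))$. Since $k - \ell k_{0} \in M\mathbb{Z}^{d} \cap \langle e \rangle^{\perp} \subseteq M_{e}$, the $M_{e}$-periodicity of $v$ yields $V(s, x + k) = V(s, x)$. The monotonicity $\partial_{s} V \geq 0$ follows from the $M\mathbb{Z}^{d}$-Birkhoff property, since as $s$ increases past $J_{n}$, the index $n(s, x)$ decreases by one and $V$ jumps up from $v(\cdot - n k_{0})$ to $v(\cdot - (n-1) k_{0})$. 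Finally, $V \to \pm 1$ as $s \to \pm\infty$ follows from the asymptotic hypothesis on $v$.

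The key step is the energy computation, which relies on a cancellation of singular parts. Each jump locus $J_{n}$ has $(-1, e)$ as a normal in $(s, x)$-space, so writing $\Delta_{n}(x) := v(x - (n-1) k_{0}) - v(x - n k_{0}) \geq 0$, the distributional contributions $\Delta_{n} \, \delta_{J_{n}}$ in $\partial_{s} V$ and $-\Delta_{n} e \, \delta_{J_{n}}$ in $D_{x} V$ combine to $0$ in $\mathcal{D}_{e} V = e \partial_{s} V + D_{x} V$. Hence $\mathcal{D}_{e} V(s, x) = Dv(x - n(s, x) k_{0})$ as a locally integrable function, and using the $\mathbb{Z}^{d}$-periodicity of $a$ (together with $n k_{0} \in \mathbb{Z}^{d}$), the Lagrangian density equals $\bigl[\tfrac{1}{2} \langle a Dv, Dv \rangle + W(v)\bigr](x - n(s, x) k_{0})$.

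To finish, I would integrate via Fubini. For fixed $x$, the set $\{s : n(s, x) = n\}$ has length $m_{e}^{(M)}$, so
\[
\int_{\mathbb{R} \times M\mathbb{T}^{d}} \!\!\bigl( \tfrac{1}{2} \langle a \mathcal{D}_{e} V, \mathcal{D}_{e} V \rangle + W(V) \bigr) ds \, dx = m_{e}^{(M)} \int_{M\mathbb{T}^{d}} \widetilde{F}(x) \, dx
\]
where $\widetilde{F}(x) := \sum_{n \in \mathbb{Z}} \bigl[\tfrac{1}{2} \langle a Dv, Dv\rangle + W(v)\bigr](x - n k_{0})$ is $(\mathbb{Z} k_{0} + M_{e})$-periodic. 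Writing $k_{0} = M k_{0}'$ for $k_{0}' \in \mathbb{Z}^{d}$ realizing $m_{e}$, the fact that $\{k_{0}'\}$ together with a $\mathbb{Z}$-basis of $M_{e}$ forms a $\mathbb{Z}$-basis of $\mathbb{Z}^{d}$ implies $[\mathbb{Z}^{d} : \mathbb{Z} k_{0} + M_{e}] = M$, and an unfolding along the $k_{0}$-direction (absorbing the non-perpendicular component of $k_{0}$ into the torus variables via the $M_{e}$-periodicity of the integrand) yields $\int_{M\mathbb{T}^{d}} \widetilde{F} = M^{d-1} \mathcal{F}_{1}^{a}(v; Q_{e} \oplus \mathbb{R})$. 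Together with $m_{e}^{(M)} = M m_{e}$, $\mathcal{H}^{d-1}(Q_{e})^{-1} = m_{e}$, and the $M^{-d}$ prefactor in $\mathscr{T}^{a}_{e, M}$, this gives the claimed identity. The main technical point is the cancellation of singular parts, which I would justify rigorously by a mollification argument: regularize $V$ to some smooth $V^{\epsilon}$, compute the energies, and pass to the limit $\epsilon \to 0^{+}$.
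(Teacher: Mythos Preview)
Your construction of $V$ is exactly the one the paper gives (once notation is matched: your $k_{0}$ is the paper's $Mk_{0}$, your $m_{e}^{(M)}$ is $Mm_{0}$, and $\lceil -z \rceil = -\lfloor z \rfloor$), so the two proofs agree on the definition of $V$ and on the verification that $V \in \mathscr{X}_{+}^{(M)}$.

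Where you diverge is in the energy identity. The paper simply invokes Theorem~\ref{T: ergodic_lemma} (the rational case), which says that $\int_{\mathbb{R}\times\mathbb{T}^{d}} U = m_{e}^{-1}\mathcal{H}^{d-1}(Q_{e})^{-1}\int_{0}^{m_{e}}\int_{Q_{e}\oplus_{e}\mathbb{R}} u_{\zeta}\,dx\,d\zeta$; applied on $M\mathbb{T}^{d}$ to the Lagrangian density, together with the fact that each $v_{\zeta}$ is an integer translate of $v$ (so its energy over a fundamental strip equals that of $v$), this gives the formula in one line. Your route unpacks that same change of variables by hand: the cancellation you observe in $\mathcal{D}_{e}V$ across the jump hyperplanes $J_{n}$ is precisely the statement that $\mathcal{D}_{e}V(s,x) = Du_{\langle x,e\rangle - s}(x)$, and your Fubini-plus-unfolding computation is the content of the ergodic lemma in this rational setting. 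Both are correct; the paper's version is shorter because the machinery is already in place, while yours is self-contained and makes the lattice arithmetic explicit (including the identity $m_{e}\,\mathcal{H}^{d-1}(Q_{e}) = 1$, which the paper uses implicitly). The mollification you propose to justify the singular cancellation is a reasonable way to make the distributional computation rigorous, though the paper's route via Theorem~\ref{T: ergodic_lemma} sidesteps that issue entirely since the change-of-variables formula is proved for $L^{1}$ integrands.
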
  

	\begin{proof}  First, let $m_{e} > 0$ be the constant defined in \eqref{E: rational period} and choose a $k_{0} \in \mathbb{Z}^{d}$ such that $\langle k_{0}, e \rangle = m_{e}$.  Next, define a family of functions $\{v_{\zeta}\}_{\zeta \in \mathbb{R}}$ on $\mathbb{T}^{d -1}_{e} \oplus \mathbb{R}$ by 
		\begin{equation*}
			v_{\zeta}(x) = v(x + M\ell k_{0}) \quad \text{if} \, \, \ell = \left \lceil -\frac{\zeta}{Mm_{e}} \right \rceil.
		\end{equation*}
Finally, define $V$ in $\mathbb{R} \times \mathbb{R}^{d}$ by 
		\begin{equation*}
			V(s,x) = v_{\langle x,e \rangle - s}(x).
		\end{equation*}

The properties of $v$, the definition of $m_{e}$, and an application of Theorem \ref{T: ergodic_lemma} together show that $V$ has the desired properties. \qed \end{proof}

Finally, we prove that the unconstrained and constrained energies coincide.

\begin{prop} \label{P: remove constraint}  If $e \in S^{d - 1}$, then $\mathscr{E}^{a}(e) = \inf \left\{ \mathscr{T}^{a}_{e}(U) \, \mid \, U \in \mathscr{X}_{+} \right\}$.  \end{prop}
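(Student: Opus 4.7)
The inequality $\mathscr{E}^{a}(e) \leq \inf\{\mathscr{T}^{a}_{e}(U) : U \in \mathscr{X}_{+}\}$ is immediate because $\mathscr{X}_{+} \subseteq \mathscr{X}$. For the reverse, my plan is to show that for any $U \in \mathscr{X}$ with $\mathscr{T}^{a}_{e}(U) < \infty$ and any $\epsilon > 0$, there is a $V \in \mathscr{X}_{+}$ with $\mathscr{T}^{a}_{e}(V) \leq \mathscr{T}^{a}_{e}(U) + \epsilon$. The argument will split into a rational direction case, which does the main work using the cut-off/lifting machinery assembled above, and an irrational case handled by approximation.

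For $e \in \mathbb{R}\mathbb{Z}^{d}$, the strategy is to transfer $U$ to physical coordinates through its generated family $\{u_{\zeta}\}_{\zeta \in \mathbb{R}}$ and apply Theorem \ref{T: ergodic_lemma} to the energy density, giving
\[
\mathscr{T}^{a}_{e}(U) = m_{e}^{-1} \mathcal{H}^{d-1}(Q_{e})^{-1} \int_{0}^{m_{e}} \mathcal{F}^{a}_{1}(u_{\zeta}; Q_{e} \oplus_{e} \mathbb{R}) \, d\zeta.
\]
Choosing $\zeta^{*}$ at which the integrand is no larger than its mean yields $\mathcal{F}^{a}_{1}(u_{\zeta^{*}}; Q_{e} \oplus_{e} \mathbb{R}) \leq \mathcal{H}^{d-1}(Q_{e}) \mathscr{T}^{a}_{e}(U)$, and the assumption $U \in \mathscr{X}$ translates to the asymptotic hypothesis of Proposition \ref{P: approximation_energy_need}. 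That proposition produces a modification $u_{\epsilon}$ equal to $\pm 1$ outside a finite $e$-slab and with energy at most $\mathcal{F}^{a}_{1}(u_{\zeta^{*}}; Q_{e} \oplus_{e} \mathbb{R}) + \epsilon$. Lemma \ref{L: asymptotic_birkhoff} promotes $u_{\epsilon}$ to be $M\mathbb{Z}^{d}$-Birkhoff for some $M \in \mathbb{N}$, and Proposition \ref{P: extension} lifts it to a $V \in \mathscr{X}^{(M)}_{+}$ with $\mathscr{T}^{a}_{e,M}(V) = \mathcal{H}^{d-1}(Q_{e})^{-1} \mathcal{F}^{a}_{1}(u_{\epsilon}; Q_{e} \oplus_{e} \mathbb{R})$. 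Chaining these estimates gives $\mathscr{T}^{a}_{e,M}(V) \leq \mathscr{T}^{a}_{e}(U) + \mathcal{H}^{d-1}(Q_{e})^{-1}\epsilon$, and Theorem \ref{T: no_symmetry_breaking} collapses this back through the identity $\mathscr{E}^{a}_{+}(e) = \mathscr{E}^{a}_{+,M}(e)$. Letting $\epsilon \to 0^{+}$ and taking the infimum over $U \in \mathscr{X}$ settles the rational case.

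For $e \notin \mathbb{R}\mathbb{Z}^{d}$, I plan to pick $(e_{n}) \subseteq S^{d-1} \cap \mathbb{R}\mathbb{Z}^{d}$ with $e_{n} \to e$, so that $\mathscr{E}^{a}_{+}(e_{n}) \leq \mathscr{E}^{a}(e_{n})$ by the rational case. The conclusion then follows from combining two one-sided continuity statements: lower semi-continuity of $\mathscr{E}^{a}_{+}$ at $e$, obtained by applying Proposition \ref{P: compactness} to minimizers in directions $e_{n}$ normalized so that $\psi_{U_{n}}(0) = 0$ (via Proposition \ref{P: s_symmetry}); and upper semi-continuity of $\mathscr{E}^{a}$ at $e$, which I would prove by fixing a near-minimizer $U \in \mathscr{X}$, mollifying in $s$ to produce $U^{\delta}$ with $\partial_{s}U^{\delta} \in L^{\infty}$, using dominated convergence to get $\mathscr{T}^{a}_{e_{n}}(U^{\delta}) \to \mathscr{T}^{a}_{e}(U^{\delta})$ as $n \to \infty$, and then letting $\delta \to 0^{+}$.

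The main obstacle, I expect, is the irrational case — specifically ensuring that the smoothed competitor $U^{\delta}$ remains in $\mathscr{X}$. The $L^{1}_{\text{loc}}$ asymptotic behavior at $\pm \infty$ should pass to $U^{\delta}$ under mollification with compactly supported kernels in $s$, but to keep this clean I may need a preliminary $s$-truncation of $U$ analogous to Proposition \ref{P: approximation_energy_need} but carried out directly in the cylinder $\mathbb{R} \times \mathbb{T}^{d}$. The rational case, by contrast, is essentially a bookkeeping exercise given the tools already at hand.
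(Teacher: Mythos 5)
Your proposal is essentially correct and follows the same overall route as the paper: rational directions via passage to physical coordinates, the cut-off and lifting machinery (Propositions \ref{P: approximation_energy_need}, \ref{P: extension}, Lemma \ref{L: asymptotic_birkhoff}, Theorem \ref{T: no_symmetry_breaking}), and irrational directions via lower semi-continuity of $\mathscr{E}^{a}_{+}$ paired with an upper semi-continuity argument for $\mathscr{E}^{a}$. There are two small stylistic deviations worth flagging. In the rational case, you select a single $\zeta^{*}$ where the slice energy does not exceed its mean and then feed $u_{\zeta^{*}}$ into Proposition \ref{P: approximation_energy_need}; this works, but you must also arrange that $\zeta^{*}$ lies in the (a.e.) set of $\zeta$ for which the asymptotic hypotheses of that proposition hold, so the selection is a positive-measure-meets-full-measure intersection rather than a single mean-value pick. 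The paper instead establishes $\mathscr{E}^{a}_{+}(e) \leq \mathcal{H}^{d-1}(Q_{e})^{-1}\mathcal{F}^{a}_{1}(u_{\zeta};\cdot)$ as a pointwise inequality valid for the relevant $\zeta$ and then integrates, which sidesteps the selection entirely. In the irrational case, your mollification-plus-$s$-truncation plan is exactly what Proposition \ref{P: smooth_approximation} (built from Propositions \ref{P: cut_off} and \ref{P: smooth_approx}) already packages; invoking it directly lets you immediately restrict to $U \in C^{\infty}_{\text{sgn}}(\mathbb{R}\times\mathbb{T}^{d})$, for which $e' \mapsto \mathscr{T}^{a}_{e'}(U)$ is trivially continuous, so the delicate point you anticipate about keeping $U^{\delta} \in \mathscr{X}$ dissolves. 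In short: same skeleton, with the paper's averaging and pre-built approximation lemma avoiding the two small technical wrinkles you correctly identified.
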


	\begin{proof}  Since $\mathscr{X}_{+} \subseteq \mathscr{X}$, the inequality $\mathscr{E}^{a}(e) \leq \inf  \left\{ \mathscr{T}^{a}_{e}(U) \, \mid \, U \in \mathscr{X}_{+} \right\}$ is immediate.  We prove the complementary inequality in two steps.
	
	\textbf{Step 1: $e \in \mathbb{R}\mathbb{Z}^{d}$}  
	
	First, we claim that if $u : \mathbb{T}^{d - 1}_{e} \oplus \mathbb{R} \to [-1,1]$ is such that \eqref{E: heteroclinic part 1} and \eqref{E: heteroclinic part 2} hold, then 
		\begin{equation} \label{E: key_birkhoff_finally}
			\mathscr{E}_{+}^{a}(e) \leq \mathcal{H}^{d - 1}(Q_{e})^{-1} \mathcal{F}_{1}^{a}(u; Q_{e} \oplus_{e} \mathbb{R}).
		\end{equation}
	Indeed, given $\epsilon > 0$, let $v = u_{\epsilon}$ be the function defined in Proposition \ref{P: approximation_energy_need}.  By Lemma \ref{L: asymptotic_birkhoff} and Proposition \ref{P: extension}, there is an $M \in \mathbb{N}$ and a $V \in \mathscr{X}^{(M)}_{+}$ such that
		\begin{equation*}
			\mathscr{T}^{a}_{e}(V) = \mathcal{H}^{d - 1}(Q_{e})^{-1} \mathcal{F}_{1}^{a}(v; Q_{e} \oplus_{e} \mathbb{R}).		
		\end{equation*}
Since $V \in \mathscr{X}^{(M)}_{+}$, this yields
		\begin{equation*}
			\mathscr{E}_{+}^{a}(e) - \epsilon \leq \mathcal{H}^{d - 1}(Q_{e})^{-1} \mathcal{F}^{a}_{1}(v; Q_{e} \oplus_{e} \mathbb{R}) - \epsilon < \mathcal{H}^{d - 1}(Q_{e})^{-1} \mathcal{F}^{a}_{1}(u; Q_{e} \oplus_{e} \mathbb{R}).
		\end{equation*}
Sending $\epsilon \to 0^{+}$ gives \eqref{E: key_birkhoff_finally}.  

Next, suppose that $U \in \mathscr{X}$.  Define $\{u_{\zeta}\}_{\zeta \in \mathbb{R}}$ on $\mathbb{T}^{d - 1}_{e} \oplus_{e} \mathbb{R}$ by $u_{\zeta}(x) = U(\langle x,e \rangle - \zeta,x)$. Since $U \in \mathscr{X}$, $u_{\zeta}$ satisfies \eqref{E: heteroclinic part 1} and \eqref{E: heteroclinic part 2} for a.e.\ $\zeta \in \mathbb{R}$.  Thus, using what we just proved and Theorem \ref{T: ergodic_lemma}, we find
	\begin{align*}
		\mathscr{T}^{a}_{e}(U) &= \frac{1}{m_{0}} \int_{0}^{m_{0}} \mathcal{H}^{d - 1}(Q_{e})^{-1} \mathcal{F}_{1}^{a}(u_{\zeta}; Q_{e} \oplus_{e} \mathbb{R}) \, d \zeta \\
			&\geq \frac{1}{m_{0}} \int_{0}^{m_{0}} \mathscr{E}^{a}_{+}(e) \, d \zeta = \mathscr{E}^{a}_{+}(e).
	\end{align*}    
We conclude that $\mathscr{E}^{a}(e) \geq \mathscr{E}^{a}_{+}(e)$.  

\textbf{Step 2: $e \in S^{d - 1} \setminus \mathbb{R} \mathbb{Z}^{d}$}  

First, we apply Proposition \ref{P: smooth_approximation} from Appendix \ref{A: approximation} to find that, for each $e' \in S^{d - 1}$,
	\begin{equation} \label{E: smoothing_approximation}
		\mathscr{E}^{a}(e') = \inf \left\{ \mathscr{T}^{a}_{e'}(U) \, \mid \, U \in \mathscr{X} \cap C^{\infty}_{\text{sgn}}(\mathbb{R} \times \mathbb{T}^{d}) \right\}.
	\end{equation}
Next, fix $U \in \mathscr{X} \cap C^{\infty}_{\text{sgn}}(\mathbb{R} \times \mathbb{T}^{d})$ and $(e_{n})_{n \in \mathbb{N}} \subseteq S^{d-1} \cap \mathbb{R} \mathbb{Z}^{d}$ such that $e = \lim_{n \to \infty} e_{n}$.  Given $n \in \mathbb{N}$, we have $\mathscr{E}^{a}_{+}(e_{n}) \leq \mathscr{T}^{a}_{e_{n}}(U)$ by the previous step.  Moreover, since $U \in C^{\infty}_{\text{sgn}}(\mathbb{R} \times \mathbb{T}^{d})$, it's easy to see that $e' \mapsto \mathscr{T}^{a}_{e'}(U)$ is continuous.  Arguing using Propositions \ref{P: compactness} and \ref{P: s_symmetry}, it's not hard to show that $e \mapsto \mathscr{E}^{a}_{+}(e)$ is itself lower semi-continuous.  Therefore,
	\begin{equation*}
		\mathscr{E}_{+}^{a}(e) \leq \liminf_{n \to \infty} \mathscr{E}_{+}^{a}(e_{n}) \leq \lim_{n \to \infty} \mathscr{T}^{a}_{e_{n}}(U) = \mathscr{T}^{a}_{e}(U).
	\end{equation*}
Invoking \eqref{E: smoothing_approximation}, we conclude $\mathscr{E}_{+}^{a}(e) \leq \mathscr{E}^{a}(e)$.   \qed \end{proof}  

It is worth observing at this stage that the functional $u \mapsto \mathcal{F}^{a}_{1}(u; Q_{e} \oplus_{e} \mathbb{R})$ that appeared repeatedly in the first step of the proof is precisely the one that is minimized in \cite{rabinowitz stredulinsky}.  In fact, slightly more can be said.

\begin{remark} \label{R: periodic representation}  The proof of Proposition \ref{P: remove constraint} shows that $\mathscr{E}^{a}(e)$ can be represented in the following way when $e \in \mathbb{R} \mathbb{Z}^{d}$:
	\begin{align*}
		\mathscr{E}^{a}(e) &= \mathcal{H}^{d-1}(Q_{e})^{-1} \min \bigg \{ \mathcal{F}^{a}_{1}(u; Q_{e} \oplus_{e} \mathbb{R}) \, \mid \, u : \mathbb{T}^{d-1}_{e} \oplus_{e} \mathbb{R} \to [-1,1],  \\
			&\qquad \quad \lim_{\langle x,e \rangle \to \pm \infty} u(x) = \pm 1 \quad \text{in} \, \, L^{1}_{\text{loc}}(Q_{e} \oplus_{e} \mathbb{R}) \bigg \}.
	\end{align*}
\end{remark}  
	
We make one last remark that will be needed later.

\begin{remark} \label{R: extension_to_space} The functional $\mathscr{T}^{a}_{e}$ can be extended to vectors $v \in \mathbb{R}^{d} \setminus \{0\}$ by 
	\begin{equation*}
		\mathscr{T}^{a}_{v}(U) = \int_{\mathbb{R} \times \mathbb{T}^{d}} \left( \frac{1}{2} \langle a(x) \mathcal{D}_{v}U, \mathcal{D}_{v}U \rangle + W(U) \right) \, dx \, ds,
	\end{equation*}
where $\mathcal{D}_{v} = v \partial_{s} + D_{x}$.  Letting $\mathscr{E}^{a}(v) = \min \left\{ \mathscr{T}^{a}_{v}(U) \, \mid \, U \in \mathscr{X}\right\}$, it is not hard to show that $\mathscr{E}^{a}(v) = \|v\| \mathscr{E}^{a}(\|v\|^{-1} v)$ and $U$ is a minimizer of $\mathscr{E}^{a}(v)$ if and only if the function $(s,x) \mapsto U(\|v\|^{-1}s,x)$ is a minimizer of $\mathscr{E}^{a}(\|v\|^{-1}v)$.  \end{remark}  

\section{Back to $\mathcal{F}^{a}_{1}$} \label{S: physical_coordinates}

In this section, we show that a minimizer $U_{e} \in \mathcal{M}_{e}(\mathbb{R} \times \mathbb{T}^{d})$ generates a family $\{u_{\zeta}\}_{\zeta \in \mathbb{R}}$ of plane-like minimizers of $\mathcal{F}^{a}_{1}$.  We also investigate consequences of this, including a uniqueness theorem when $e$ is irrational and a proof that continuous pulsating standing waves are necessarily in $\mathcal{M}_{e}(\mathbb{R} \times \mathbb{T}^{d})$.

\subsection{Plane-like minimizers}

To change from cylindrical coordinates in $\mathbb{R} \times \mathbb{T}^{d}$ back to the coordinates in $\mathbb{R}^{d}$, it is convenient to define the following transformation.  Given $e \in S^{d - 1}$, let $\mathcal{T}_{e} : \mathbb{R} \times \mathbb{R}^{d} \to \mathbb{R} \times \mathbb{R}^{d}$ be the map 
$\mathcal{T}_{e}(s,x) = (\langle x,e \rangle - s, x)$.
Notice that $\mathcal{T}_{e}$ is smooth and $\mathcal{T}_{e} \circ \mathcal{T}_{e} = \text{Id}$.   

Using $\mathcal{T}_{e}$ and what has already been proved, we find

\begin{prop} \label{P: plane_like_minimizers}  If $e \in S^{d-1}$, $U \in \mathcal{M}_{e}(\mathbb{R} \times \mathbb{T}^{d})$, and $\{u_{\zeta}\}_{\zeta \in \mathbb{R}}$ are the functions generated by $U$, then, for a.e.\ $\zeta \in \mathbb{R}$, $u_{\zeta}$ is a Class A minimizer of $\mathcal{F}^{a}_{1}$ in $\mathbb{R}^{d}$.  Moreover, there is a $\gamma \in (0,1)$ such that these minimizers are uniformly $\gamma$-H\"{o}lder continuous in $\mathbb{R}^{d}$.  \end{prop}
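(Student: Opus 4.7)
My plan is to transfer the global minimality of $U$ into a per-cross-section minimality of each slice $u_{\zeta}$ via the change-of-variables identity in Theorem \ref{T: ergodic_lemma}, and then to recognize cross-sectional minimality as the Class A property by reusing the lower-bound machinery developed in the proof of Proposition \ref{P: remove constraint}.  The key pointwise observation is that $\mathcal{D}_{e} U(\langle x, e \rangle - \zeta, x) = D u_{\zeta}(x)$, so the integrand of $\mathscr{T}^{a}_{e}(U)$ at the point $(\langle x, e\rangle - \zeta, x)$ coincides with the integrand of $\mathcal{F}^{a}_{1}(u_{\zeta}; \cdot)$ at $x$.

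I would first apply Theorem \ref{T: ergodic_lemma} to this integrand to obtain, in the rational case,
$$
\mathscr{E}^{a}(e) = \mathscr{T}^{a}_{e}(U) = m_{e}^{-1}\mathcal{H}^{d-1}(Q_{e})^{-1} \int_{0}^{m_{e}} \mathcal{F}^{a}_{1}(u_{\zeta}; Q_{e} \oplus_{e} \mathbb{R}) \, d\zeta,
$$
with a cube-exhaustion analog for irrational $e$.  I would then prove the matching cross-sectional lower bound: for any $v \colon \mathbb{R}^{d} \to [-1,1]$ of finite energy on $Q_{e} \oplus_{e} \mathbb{R}$ satisfying the asymptotic hypothesis (b) of Proposition \ref{P: approximation_energy_need}, $\mathcal{H}^{d-1}(Q_{e})^{-1} \mathcal{F}^{a}_{1}(v; Q_{e} \oplus_{e} \mathbb{R}) \geq \mathscr{E}^{a}(e)$.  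This is obtained by the chain: cut $v$ off using Proposition \ref{P: approximation_energy_need} (losing $\epsilon$ in energy), invoke Lemma \ref{L: asymptotic_birkhoff} so that the cut-off is $M\mathbb{Z}^{d}$-Birkhoff for some $M$, use Proposition \ref{P: extension} to lift it to a $V_{\epsilon} \in \mathscr{X}^{(M)}_{+}$ with $\mathscr{T}^{a}_{e, M}(V_{\epsilon}) = \mathcal{H}^{d-1}(Q_{e})^{-1} \mathcal{F}^{a}_{1}(v_{\epsilon}; Q_{e} \oplus_{e} \mathbb{R})$, apply Theorem \ref{T: no_symmetry_breaking} to compare with $\mathscr{E}^{a}(e)$, and let $\epsilon \to 0^{+}$.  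Combined with the identity above, this forces $\mathcal{H}^{d-1}(Q_{e})^{-1} \mathcal{F}^{a}_{1}(u_{\zeta}; Q_{e} \oplus_{e} \mathbb{R}) = \mathscr{E}^{a}(e)$ for a.e.\ $\zeta$.

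The Class A property is then immediate.  Given $\Omega \subseteq \mathbb{R}^{d}$ bounded open and $f \in C^{\infty}_{c}(\Omega; [-1,1])$, the $M_{e}$-periodicity of $u_{\zeta}$ (the Birkhoff inequality applied to both $k$ and $-k$ in $M_{e}$ forces equality) together with the $\mathbb{Z}^{d}$-periodicity of $a$ and $W$ allows one to translate so $\Omega \subseteq Q_{e} \oplus_{e} \mathbb{R}$.  Applying the cross-sectional lower bound to $u_{\zeta} + f$, which is still asymptotically $\pm 1$ since $f$ is compactly supported, gives
$$
\mathscr{E}^{a}(e) \leq \mathcal{H}^{d-1}(Q_{e})^{-1} \mathcal{F}^{a}_{1}(u_{\zeta} + f; Q_{e} \oplus_{e} \mathbb{R}) = \mathscr{E}^{a}(e) + \mathcal{H}^{d-1}(Q_{e})^{-1}\bigl[\mathcal{F}^{a}_{1}(u_{\zeta} + f; \Omega) - \mathcal{F}^{a}_{1}(u_{\zeta}; \Omega)\bigr],
$$
from which $\mathcal{F}^{a}_{1}(u_{\zeta} + f; \Omega) \geq \mathcal{F}^{a}_{1}(u_{\zeta}; \Omega)$ follows by cancellation.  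The uniform H\"{o}lder continuity follows from standard De Giorgi-Nash-Moser regularity for bounded Class A minimizers of Allen-Cahn-type energies with uniformly elliptic $a$ and bounded $W$; the exponent depends only on $\lambda, \Lambda$, and $\|W\|_{L^{\infty}([-1,1])}$.

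The main obstacle I anticipate is the cross-sectional lower bound for irrational $e$: Proposition \ref{P: extension} requires $e \in \mathbb{R}\mathbb{Z}^{d}$, and the fundamental cell $Q_{e}$ loses meaning.  I would handle this either by approximating $e$ by rationals $e_{n} \to e$ and passing to the limit using the lower semicontinuity of $e \mapsto \mathscr{E}^{a}(e)$ from Proposition \ref{P: compactness}, or alternatively by constructing a smeared perturbation $\phi(s,x) = \rho_{h}(\langle x, e \rangle - s - \zeta_{0}) f(x)$ of $U$ supported in a thin $\zeta$-slab near a Lebesgue density point of the putative bad set and contradicting the minimality of $U$ in $\mathscr{X}$ directly; the latter route demands care with measurability and integrability of $\zeta \mapsto \mathcal{F}^{a}_{1}(u_{\zeta}; \Omega)$ and with ensuring the periodicity of the perturbation in $x$.
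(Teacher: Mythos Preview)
Your strategy is quite different from the paper's. The paper does not first establish that each $u_\zeta$ has minimal fundamental-cell energy and then deduce Class A minimality; instead it perturbs $U$ directly. Given $g \in C^\infty_c(B(0,R))$ and a bounded measurable $B \subseteq \mathbb{R}$, it builds $\Phi_\epsilon(s,x) = \varphi_\epsilon(\langle x,e\rangle - s)\, g(x)$ with $\varphi_\epsilon \to \chi_B$, periodizes it to $\mathbb{R} \times 2M\mathbb{T}^d$ for suitable $M$, invokes Corollary~\ref{C: minimizer_symmetry_unbroken} to get $\mathscr{T}^a_{e,2M}(U + \tilde\Phi_\epsilon) \geq \mathscr{T}^a_{e,2M}(U)$, changes variables via $\mathcal{T}_e$, and sends $\epsilon \to 0$ to obtain $\int_B [\mathcal{F}^a_1(u_\zeta + g; B(0,R)) - \mathcal{F}^a_1(u_\zeta; B(0,R))]\, d\zeta \geq 0$. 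Since $B$ is arbitrary the integrand is nonnegative for a.e.\ $\zeta$, and separability of $H^1_{\text{loc}}$ then yields a single full-measure set on which this holds for all $g$. This argument makes no distinction between rational and irrational $e$ and is exactly your ``smeared perturbation'' alternative, so you have in fact identified the paper's method as your fallback.

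Your primary route has a genuine gap in the Class A step: you cannot in general translate an arbitrary bounded $\Omega$ into $Q_e \oplus_e \mathbb{R}$, since $Q_e$ has fixed finite diameter in $\langle e\rangle^\perp$. The repair is to rerun your entire chain with $Q_e$ replaced by a fundamental domain for a finite-index sublattice of $M_e$ large enough to contain a translate of $\Omega$; Theorem~\ref{T: no_symmetry_breaking} ensures the cross-sectional lower bound survives this, and $M_e$-periodicity of $u_\zeta$ ensures the equality from your first step does too. With that fix the rational case goes through and yields, as a bonus, the a.e.\ energy identity $\mathcal{H}^{d-1}(Q_e)^{-1}\mathcal{F}^a_1(u_\zeta; Q_e \oplus_e \mathbb{R}) = \mathscr{E}^a(e)$, which the paper establishes separately in Proposition~\ref{P: surface tension}. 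Your rational-approximation route for irrational $e$ is delicate, however: the null set of bad $\zeta$ depends on $e_n$, the minimizer $U_{e_n}$ need not be unique, and passing Class A minimality through the limit $u^{e_n}_\zeta \to u^e_\zeta$ requires convergence you have not arranged.
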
 

\begin{proof}  To start with, we claim that if $g \in C^{\infty}_{c}(\mathbb{R}^{d}; [-1,1])$ is supported in $B(0,R)$ for some $R > 0$, then there is an $A_{g} \subseteq \mathbb{R}$ such that $\mathcal{L}^{1}(\mathbb{R} \setminus A_{g}) = 0$ and
	\begin{equation} \label{E: minimizer_with_g}
		\mathcal{F}^{a}_{1}(u_{\zeta} + g; B(0,R)) \geq \mathcal{F}^{a}_{1}(u_{\zeta}; B(0,R)) \quad \text{if} \, \, \zeta \in A_{g}.
	\end{equation}

To see this, first, let $B$ be a Lebesgue measurable, bounded subset of $\mathbb{R}$.  We will prove that
	\begin{equation*}
		\int_{B} \mathcal{F}^{a}_{1}(u_{\zeta} + g; B(0,R)) \, d\zeta \geq \int_{B} \mathcal{F}^{a}_{1}(u_{\zeta}; B(0,R)) \, d \zeta.
	\end{equation*}
To do this, we begin by fixing a family $(\varphi_{\epsilon})_{\epsilon > 0} \subseteq C^{\infty}_{c}(\mathbb{R})$ such that $0 \leq \varphi_{\epsilon} \leq 1$ and $\varphi_{\epsilon} \to \chi_{B}$ a.e.\ in $\mathbb{R}$.  Since $B$ is bounded, we can assume there is an $S > 0$ such that the union of the supports of $(\varphi_{\epsilon})_{\epsilon > 0}$ is contained in $(-S,S)$.

To prove the claim, we will change variables using $\mathcal{T}_{e}$ and invoke Corollary \ref{C: minimizer_symmetry_unbroken}.  From the definition of $\{u_{\zeta}\}_{\zeta \in \mathbb{R}}$, we know that $u_{\zeta}(x) = U(\mathcal{T}_{e}(\zeta,x))$ and $Du_{\zeta}(x) = \mathcal{D}_{e}U(\mathcal{T}_{e}(\zeta,x))$ a.e.\ in $\mathbb{R} \times \mathbb{R}^{d}$.  This implies, in particular, that $(x,\zeta) \mapsto \|Du_{\zeta}(x)\|^{2}$ is locally integrable, a fact we will need shortly.

Define $(\Phi_{\epsilon})_{\epsilon > 0} \subseteq C^{\infty}(\mathbb{R} \times \mathbb{R}^{d})$ by
	\begin{equation*}
		\Phi_{\epsilon}(s,x) = \varphi_{\epsilon}(\langle x, e \rangle - s) g(x).  
	\end{equation*} 
We have defined $\Phi_{\epsilon}$ so that, when we change variables, we have $\varphi_{\epsilon}(\zeta) g(x) = \Phi_{\epsilon}(\mathcal{T}_{e}(\zeta,x))$.  Notice that $(\Phi_{\epsilon})_{\epsilon > 0} \subseteq C^{\infty}_{c}(\mathbb{R} \times \mathbb{R}^{d})$ since $\mathcal{T}_{e}$ is a diffeomorphism.

In fact, by the choice of $S$, there is an $M \in \mathbb{N}$ such that the support of $\Phi_{\epsilon}$ is contained in $\mathbb{R} \times [-M,M)^{d}$ independently of $\epsilon > 0$.  Defining $m= (M,M,\dots,M) \in \mathbb{Z}^{d}$, we see that $\hat{\Phi}_{\epsilon} := S_{m}\Phi_{\epsilon}$ is supported in $\mathbb{R} \times [0,2M)^{d}$.  Therefore, extending $\hat{\Phi}_{\epsilon}$ to a $2M \mathbb{Z}^{d}$-periodic function $\tilde{\Phi}_{\epsilon} \in C^{\infty}_{c}(\mathbb{R} \times 2M\mathbb{T}^{d})$ and applying Corollary \ref{C: minimizer_symmetry_unbroken}, we find
	\begin{equation*}
		\mathscr{T}^{a}_{e,2M_{\epsilon}}(U + \tilde{\Phi}_{\epsilon}) \geq \mathscr{T}^{a}_{e,2M_{\epsilon}}(U),
	\end{equation*}
which can be rewritten in terms of $\hat{\Phi}_{\epsilon}$ as
	\begin{align*}
		\int_{\mathbb{R} \times \mathbb{R}^{d}} &\left(\frac{1}{2} \langle a(x) \mathcal{D}_{e}(U + \hat{\Phi}_{\epsilon}), \mathcal{D}_{e}(U + \hat{\Phi}_{\epsilon}) \rangle - \frac{1}{2} \langle a(x) \mathcal{D}_{e}U, \mathcal{D}_{e}U \rangle \right. \\
		&\qquad \left. + W(U + \hat{\Phi}_{\epsilon}) - W(U) \right) \, dx \, ds \geq 0.
	\end{align*} 
Since $\hat{\Phi}_{\epsilon} = S_{m}\Phi_{\epsilon}$ and $U = S_{m} U$, the previous inequality remains true if $\hat{\Phi}_{\epsilon}$ is replaced by $\Phi_{\epsilon}$.

Changing back to the $(x,\zeta)$ coordinates using $\mathcal{T}_{e}$, we deduce that
	\begin{align*}
		0 \leq \int_{\mathbb{R} \times \mathbb{R}^{d}} &\left(\frac{1}{2} \langle a(x)[Du_{\zeta}(x) + \varphi_{\epsilon}(\zeta) Dg(x)], Du_{\zeta}(x) + \varphi_{\epsilon}(\zeta) Dg(x) \rangle \right. \\
		&\left. - \frac{1}{2} \langle a(x) Du_{\zeta}(x), Du_{\zeta}(x) \rangle \right) \, dx \, d\zeta \\
		& + \int_{\mathbb{R} \times \mathbb{R}^{d}} \left(W(u_{\zeta}(x) + \varphi_{\epsilon}(\zeta) g(x)) - W(u_{\zeta}(x)) \right) \, dx \, d\zeta.
	\end{align*}
In view of the local integrability of $(x,\zeta) \mapsto \|Du_{\zeta}(x)\|^{2}$, the integral and the limit $\epsilon \to 0^{+}$ can be interchanged to obtain, by Fubini's Theorem,
	\begin{equation*}
		\int_{B} \left(\mathcal{F}^{a}_{1}(u_{\zeta} + g; B(0,R)) - \mathcal{F}^{a}_{1}(u_{\zeta}; B(0,R))\right) \, d\zeta \geq 0.
	\end{equation*}
$B$ was an arbitrary bounded measurable set so we are left to conclude that there is an $A_{g} \subseteq \mathbb{R}$ such that $\mathcal{L}^{1}(\mathbb{R} \setminus A_{g}) = 0$ and \eqref{E: minimizer_with_g} holds.

Since $H^{1}_{\text{loc}}(\mathbb{R}^{d})$ is a Fr\'{e}chet space (and, in particular, it is separable), we conclude that we can find a Lebesgue measurable set $A$ such that $\mathcal{L}^{1}(\mathbb{R} \setminus A) = 0$ and, for each $R > 0$ and $g \in C^{\infty}_{c}(B(0,R);[-1,1])$, 
	\begin{equation*}
		\mathcal{F}^{a}_{1}(u_{\zeta} + g; B(0,R)) \geq \mathcal{F}^{a}_{1}(u_{\zeta}; B(0,R)) \quad \text{if} \, \, \zeta \in A.
	\end{equation*}  
Therefore, for each $\zeta \in A$, $u_{\zeta}$ is a Class A minimizer of $\mathcal{F}^{a}_{1}$.  
	
We have proved that $(u_{\zeta})_{\zeta \in A}$ is a (uniformly bounded) family of Class A minimizers of $\mathcal{F}^{a}_{1}$.  Therefore, \cite[Theorem 3.1]{giaquinta giusti} implies there is a $\gamma \in (0,1)$ and a $C_{\gamma} > 0$ depending only on $a$ and $W$ such that
	\begin{equation*}
		|u_{\zeta}(x) - u_{\zeta}(y)| \leq C_{\gamma} \|x - y\|^{\gamma} \quad \text{if} \, \, x,y \in \mathbb{R}^{d}, \, \, \zeta \in A.
	\end{equation*}  \qed \end{proof}   

Combining the monotonicity of $U_{e}$ and the equicontinuity of the plane-like minimizers in $\{u_{\zeta}\}_{\zeta \in \mathbb{R}}$, we obtain

	\begin{prop} \label{P: continuity of the minimizers}  If $U_{e} \in \mathcal{M}_{e}(\mathbb{R} \times \mathbb{T}^{d})$, then there are functions $U^{+}_{e}, U^{-}_{e} \in \mathcal{M}_{e}(\mathbb{R} \times \mathbb{T}^{d})$ with the following properties:
		\begin{itemize}
			\item[(i)] $U^{+}_{e} = U^{-}_{e} = U_{e}$ a.e.\ in $\mathbb{R} \times \mathbb{T}^{d}$.
			\item[(ii)] If $\{u_{\zeta}^{+}\}_{\zeta \in \mathbb{R}}$ (resp.\  $\{u_{\zeta}^{-}\}_{\zeta \in \mathbb{R}}$) denotes the family of functions generated by $U^{+}_{e}$ (resp.\ $U^{-}_{e}$), then the map $\zeta \mapsto u_{\zeta}^{+}$ (resp.\ $\zeta \mapsto u_{\zeta}^{-}$) is right-continuous (resp.\ left-continuous) with respect to the topology of local uniform convergence.
			\item[(iii)] For each $\zeta \in \mathbb{R}$, $u_{\zeta}^{+} = \lim_{\mu \to \zeta^{+}} u_{\mu}^{+} = \lim_{\mu \to \zeta^{+}} u_{\mu}^{-}$ and $u_{\zeta}^{-} = \lim_{\mu \to \zeta^{-}} u_{\mu}^{+} = \lim_{\mu \to \zeta^{-}} u_{\mu}^{-}$ locally uniformly in $\mathbb{R}^{d}$.
			\item[(iv)] The set $\mathscr{D} = \{\zeta \in \mathbb{R} \, \mid \, u_{\zeta}^{+} \neq u_{\zeta}^{-}\}$ is countable.
			\item[(v)] For each $\zeta \in \mathbb{R}$ and $\bullet \in \{+,-\}$, $\lim_{r \to \pm \infty} u_{\zeta}^{\bullet}(re + x^{\perp}) = \pm 1$ uniformly with respect to $x^{\perp} \in \langle e \rangle^{\perp}$.
			\item[(vi)] If $\mathscr{D}$ is empty, then $U_{e}^{+} = U_{e}^{-} \in UC(\mathbb{R} \times \mathbb{T}^{d})$ and $\zeta \mapsto u_{\zeta}$ is continuous in the topology of uniform convergence.
		\end{itemize}
	\end{prop}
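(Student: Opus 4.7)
The plan is to build the right- and left-continuous modifications $U^{\pm}_{e}$ by taking one-sided monotone limits of the generated family $\{u_{\zeta}\}_{\zeta \in \mathbb{R}}$, leveraging the uniform Hölder regularity supplied by Proposition~\ref{P: plane_like_minimizers}. First I would invoke Proposition~\ref{P: plane_like_minimizers} to extract a full-measure set $A \subseteq \mathbb{R}$ such that $\{u_{\zeta}\}_{\zeta \in A}$ consists of Class A minimizers of $\mathcal{F}^{a}_{1}$ that are uniformly $\gamma$-Hölder continuous in $\mathbb{R}^{d}$. The hypothesis $\partial_{s} U_{e} \geq 0$ together with the defining relation $u_{\zeta}(x) = U_{e}(\langle x,e\rangle - \zeta, x)$ yields $u_{\zeta_{1}} \geq u_{\zeta_{2}}$ a.e.\ whenever $\zeta_{1} < \zeta_{2}$ in $A$, and by continuity of the representatives this inequality is pointwise.

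For each $\zeta \in \mathbb{R}$, I would then define
\begin{equation*}
u^{+}_{\zeta}(x) := \inf_{\mu \in A, \, \mu > \zeta} u_{\mu}(x), \qquad u^{-}_{\zeta}(x) := \sup_{\mu \in A, \, \mu < \zeta} u_{\mu}(x).
\end{equation*}
Monotonicity gives pointwise convergence $u_{\mu} \to u^{\pm}_{\zeta}$ as $\mu \to \zeta^{\pm}$ along $A$, and the uniform Hölder bound upgrades this to locally uniform convergence via Arzelà–Ascoli. Hence each $u^{\pm}_{\zeta}$ inherits the Hölder modulus and the Class A minimality (the latter is stable under local uniform limits by lower semicontinuity plus the minimality test against $C^{\infty}_{c}$ perturbations). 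Setting $U^{\pm}_{e}(s,x) := u^{\pm}_{\langle x,e\rangle - s}(x)$ produces two measurable functions whose associated families are, by construction, right- and left-continuous in the topology of local uniform convergence, establishing (ii) and (iii).

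Next I would check (i), (iv) and (vi) using a standard countability argument: the map $\zeta \mapsto u_{\mu}(x)$ is non-increasing for each rational $x$, so has at most countably many jump points; taking a union over $\mathbb{Q}^{d}$ and invoking equicontinuity yields a countable set $\mathscr{D}$ outside which $u^{+}_{\zeta} = u^{-}_{\zeta}$ locally uniformly. Consequently the set $\{(s,x) \in \mathbb{R} \times \mathbb{T}^{d} : \langle x,e\rangle - s \notin \mathscr{D}\}$ has full Lebesgue measure, and on it one has $u^{+}_{\langle x,e\rangle -s}(x) = u^{-}_{\langle x,e\rangle - s}(x)$, matching $U_{e}(s,x)$ for a.e.\ $(s,x)$ by Fubini applied to the a.e.\ identification $u_{\zeta}(x) = U_{e}(\langle x,e\rangle - \zeta, x)$. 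Because $U^{\pm}_{e} = U_{e}$ a.e., both have energy $\mathscr{E}^{a}(e)$, so $U^{\pm}_{e} \in \mathcal{M}_{e}(\mathbb{R} \times \mathbb{T}^{d})$; when $\mathscr{D} = \emptyset$, the family $\zeta \mapsto u_{\zeta}$ is continuous and monotone on a complete metric space, and a Dini-type argument promotes continuity to uniform continuity on the cylinder, giving (vi).

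The main obstacle is property (v): upgrading the $L^{1}_{\text{loc}}$ asymptotics $T_{-s}U_{e} \to \pm 1$ as $s \to \pm\infty$ (encoded in $U_{e} \in \mathscr{X}_{+}$) to uniform-in-$x^{\perp}$ pointwise convergence $u^{\pm}_{\zeta}(re + x^{\perp}) \to \pm 1$ as $r \to \pm\infty$. I would argue by contradiction: if for some $\delta > 0$ there were $r_{n} \to \infty$ and $x^{\perp}_{n} \in \langle e\rangle^{\perp}$ with $u^{+}_{\zeta}(r_{n} e + x^{\perp}_{n}) \leq 1 - \delta$, the uniform Hölder estimate would force $u^{+}_{\zeta} \leq 1 - \delta/2$ on a ball of fixed radius $\rho = \rho(\delta,\gamma) > 0$ about $r_{n} e + x^{\perp}_{n}$. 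Using the covariance $u_{\zeta}(\cdot - k) = u_{\zeta + \langle k,e\rangle}$ (Corollary~\ref{C: new_existence}(iii)) to translate by suitable $k \in \mathbb{Z}^{d}$ with $\langle k,e\rangle$ small but $k$ shifting $r_{n}e + x^{\perp}_{n}$ into a fundamental domain, together with the $\zeta$-monotonicity, one converts this local defect into a strip in $\mathbb{R} \times \mathbb{T}^{d}$ on which $U_{e}$ stays bounded away from $+1$ for arbitrarily large $s$, contradicting \eqref{E: asymptotic}. The remaining statements of (v) (namely for the $-$ case and for the opposite end) follow symmetrically. Everything else is routine from monotonicity, equicontinuity, and the energy-preserving a.e.\ identification of $U^{\pm}_{e}$ with $U_{e}$.
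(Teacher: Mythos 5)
Your construction of $u^{\pm}_{\zeta}$ as one-sided monotone limits along the full-measure set $A$ supplied by Proposition~\ref{P: plane_like_minimizers} is exactly what the paper does, and the structure of the argument is the same throughout; the main value-added is in the places where the details differ slightly. For (iv) you count jumps of the scalar monotone maps $\zeta\mapsto u_{\zeta}(x)$ over $x\in\mathbb{Q}^{d}$ and then appeal to equicontinuity, whereas the paper encodes each jump as a nonempty open set in the separable space $C_{\text{loc}}(\mathbb{R}^{d})$ and invokes separability; both are standard and correct. For (v) you argue by contradiction, using the covariance $u_{\zeta}(\cdot-k)=u_{\zeta+\langle k,e\rangle}$ to push a hypothetical defect into a fixed fundamental cube while $s\to\infty$, whereas the paper argues directly: it first shows $u_{\zeta}(re+x^{\perp})\to\pm 1$ locally uniformly (from finiteness of the averaged energy) and then propagates via the Birkhoff inequality $u_{\zeta}(\cdot+k)\geq u_{\zeta}$ for $\langle k,e\rangle\geq 0$. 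These are morally equivalent — the Birkhoff property is exactly your covariance plus $\zeta$-monotonicity — so this is a valid alternative. (Note though that your phrase ``$\langle k,e\rangle$ small'' is a slip: the translations you need have $\langle k_{n},e\rangle\approx r_{n}\to\infty$, which is precisely what sends the defect to $s\to\infty$ and produces the $L^{1}_{\text{loc}}$ contradiction.)

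The one genuine gap is (vi). You invoke ``a Dini-type argument'' to promote local uniform continuity of $\zeta\mapsto u_{\zeta}$ to uniform continuity on all of $\mathbb{R}^{d}$, but Dini's theorem requires a compact domain and $\mathbb{R}^{d}$ is not compact; without further input, monotone pointwise convergence of continuous functions on $\mathbb{R}^{d}$ does not give uniform convergence. What actually closes the argument — and this is exactly what the paper's Step 4 does — is to combine the already-proved (v) (which handles the case $\|y_{n}\|\to\infty$ along the $e$-direction) with the translation covariance $u_{\zeta}(\cdot-k)=u_{\zeta+\langle k,e\rangle}$ to reduce the transverse escape to a convergent sequence inside $[0,1)^{d}$, at which point local uniform convergence applies. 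You should spell this reduction out; as written, the appeal to Dini is not enough to justify (vi).
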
  
	
\begin{remark}  A closer look at Proposition \ref{P: continuity of the minimizers} and its proof shows that $U_{e} \in C(\mathbb{R} \times \mathbb{T}^{d})$ if and only if $U_{e} \in UC(\mathbb{R} \times \mathbb{T}^{d})$.  \end{remark}  

Since we will need it later, we start by proving one of the assertions in Proposition \ref{P: continuity of the minimizers} as a separate lemma.

	\begin{lemma} \label{L: tedious lemma} If $u : \mathbb{R}^{d} \to [-1,1]$ satisfies the $\mathbb{Z}^{d}$-Birkhoff property with respect to the $e$ direction and
		\begin{equation} \label{E: local convergence at infinity}
			\lim_{s \to \pm \infty} u(\cdot + se) = \pm 1 \quad \text{locally uniformly in} \, \, \mathbb{R}^{d},
		\end{equation}
	then the convergence is actually uniform in the following sense:
		\begin{align}
			\lim_{R \to \infty} \inf \left\{ u(re + x^{\perp}) \, \mid \, r \geq R, \, \, x^{\perp} \in \langle e \rangle^{\perp} \right\} &= 1, \label{E: uniform convergence along the line 1}\\
			\lim_{R \to -\infty} \sup \left\{ u(re + x^{\perp}) \, \mid \, r \geq R, \, \, x^{\perp} \in \langle e \rangle^{\perp} \right\} &= -1. \label{E: uniform convergence along the line 2}
		\end{align}
	\end{lemma}  
	
		\begin{proof}  Given $\delta > 0$, \eqref{E: local convergence at infinity} implies we can fix $r > 0$ such that 
			\begin{equation*}
				\inf \left\{ u_{\zeta}(re + x) \, \mid \, x \in [0,1)^{d} \right\} \geq 1 - \delta.
			\end{equation*}
		Now observe that if $k \in \mathbb{Z}^{d}$ and $\langle k, e \rangle \geq 0$, then the Birkhoff property implies
			\begin{equation*}
				\inf \left\{ u_{\zeta}(re + x + k) \, \mid \, x \in [0,1)^{d} \right\} \geq 1 - \delta.
			\end{equation*}
		Therefore,
			\begin{equation*}
				u_{\zeta}(re + x) \geq 1 - \delta \quad \text{if} \, \, x \in \bigcup_{k \in \mathbb{Z}^{d} : \langle k, e \rangle \geq 0} k + [0,1)^{d}.
			\end{equation*}
		Observing now that $\bigcup_{k \in \mathbb{Z}^{d} : \langle k,e \rangle \geq 0} k + [0,1)^{d} \supseteq \langle e \rangle^{\perp} \oplus_{e} [\sqrt{d},\infty)$, we obtain
			\begin{equation*}
				\inf\left\{ u_{\zeta}( se + x^{\perp}) \, \mid \, s \geq r + \sqrt{d}, \, \, x^{\perp} \in \langle e \rangle^{\perp}\right\} \geq 1 - \delta.
			\end{equation*}
		Since $\delta$ was arbitrary, we conclude that $\lim_{r \to \infty} u_{\zeta}(re + x^{\perp}) = 1$ uniformly with respect to $x^{\perp} \in \langle e \rangle^{\perp}$.  
		
		The limit $\langle x,e \rangle \to -\infty$ can be treated via symmetrical arguments.\qed
\end{proof}  

	\begin{proof}[Proof of Proposition \ref{P: continuity of the minimizers}]  Let $\{u_{\zeta}\}_{\zeta \in \mathbb{R}}$ be the family of functions generated by $U_{e}$.  By Proposition \ref{P: plane_like_minimizers} and Theorem \ref{T: ergodic_lemma}, we can fix a set $A \subseteq \mathbb{R}$ such that $\mathcal{L}^{1}(\mathbb{R} \setminus A) = 0$ and, for each $\zeta \in A$, 
		\begin{itemize}
			\item[(a)] $u_{\zeta}$ is a Class A minimizer of $\mathcal{F}^{a}_{1}$,
			\item[(b)] $\limsup_{R \to \infty} R^{1 - d} \mathcal{F}^{a}_{1}(u_{\zeta}; Q^{e}(0,R) \oplus_{e} \mathbb{R}) < \infty$.
		\end{itemize} 
	Since $\mathcal{L}^{1}(\mathbb{R} \setminus A) = 0$, $A$ is necessarily dense in $\mathbb{R}$.  
	
	\textbf{Step 1:} Convergence as $|\langle x,e \rangle| \to \infty$
	
		Fix $\zeta \in A$.  By (b) above, $\mathcal{F}^{a}_{1}(u_{\zeta}; Q^{e}(0,R) \oplus_{e} \mathbb{R})< \infty$ for each $R > 0$.  Thus, the uniform continuity of $u_{\zeta}$ implies
			\begin{equation*}
				\lim_{r \to \pm \infty} W(u_{\zeta}(r e + x^{\perp})) = 0 \quad \text{if} \, \,  x^{\perp} \in \langle e \rangle^{\perp}.
			\end{equation*}  
		Thus, $|u_{\zeta}(\cdot + re)| \to 1$ locally uniformly in $\mathbb{R}^{d}$ as $r \to \pm \infty$.  Further, by appealing to the uniform continuity of $u_{\zeta}$ and the Birkhoff property, it is not hard to show that \eqref{E: local convergence at infinity} holds.  Hence by Lemma \ref{L: tedious lemma}, $\lim_{r \to \pm \infty} u_{\zeta}(re + x^{\perp}) = \pm 1$ uniformly with respect to $x^{\perp} \in \langle e \rangle^{\perp}$.  
			
	\textbf{Step 2:} Defining $\{u_{\zeta}^{+}\}_{\zeta \in \mathbb{R}}$ and $\{u_{\zeta}^{-}\}_{\zeta \in \mathbb{R}}$
	
	Given $\zeta \in \mathbb{R}$, define $u_{\zeta}^{+} = \lim_{A \ni \mu \to \zeta^{+}} u_{\mu}$ and $u_{\zeta}^{-} = \lim_{A \ni \mu \to \zeta^{-}} u_{\mu}$, which both exist locally uniformly in $\mathbb{R}^{d}$ by monotonicity and equicontinuity of $\mu \mapsto u_{\mu}$.   Also notice that $u_{\zeta}^{+} \leq u_{\zeta}^{-}$.  
	
	Define $U^{\pm}_{e}$ in $\mathbb{R} \times \mathbb{T}^{d}$ by  $U^{\pm}_{e}(s,x) = u^{\pm}_{\langle x,e \rangle - s}(x)$.  We leave it to the reader to verify that $U^{\pm}_{e}(s,x + k) = U^{\pm}_{e}(s,x)$ if $k \in \mathbb{Z}^{d}$.
		
		\textbf{Step 3:} One-sided continuity
		
		Define $\mathscr{D} = \{\zeta \in \mathbb{R} \, \mid \, u_{\zeta}^{+} \neq u_{\zeta}^{-}\}$.
		
		If $\zeta \in \mathscr{D}$, then there is an $x \in \mathbb{R}^{d}$ such that $u_{\zeta}^{+}(x) < u_{\zeta}^{-}(x)$.  Thus, for each $\zeta \in \mathscr{D}$, we can fix a nonempty, compact set $K_{\zeta} \subseteq \mathbb{R}^{d}$ such that $u_{\zeta}^{+} < u^{-}_{\zeta}$ in $K_{\zeta}$.  Let $U_{\zeta}$ be the open subset of $C_{\text{loc}}(\mathbb{R}^{d})$ defined by $U_{\zeta} = \{w \, \mid \, u^{+}_{\zeta} < w < u^{-}_{\zeta} \, \, \text{in} \, \, K_{\zeta}\}$.  
		
		Observe that since $\zeta \mapsto u_{\zeta}$ is non-increasing, it follows that $u_{\zeta}^{-} \geq u_{\mu}^{+}$ if $\zeta < \mu$.  Thus, $\{U_{\zeta}\}_{\zeta \in \mathscr{D}}$ is a disjoint family of open sets in $C_{\text{loc}}(\mathbb{R}^{d})$.  Since this is a separable metric space, we conclude that $\mathscr{D}$ must be countable.  
		
		Now $u_{\zeta}^{+} = u_{\zeta}^{-} = u_{\zeta}$ for a.e.\ $\zeta \in \mathbb{R}$, and, thus, Theorem \ref{T: ergodic_lemma} implies $U^{+}_{e} = U^{-}_{e} = U_{e}$ a.e.\ in $\mathbb{R} \times \mathbb{T}^{d}$.  
				
		\textbf{Step 4:} Uniform convergence when $\mathscr{D} = \emptyset$
		
		Assume now that $\mathscr{D}$ is empty.  Fix $\zeta \in \mathbb{R}$.  We claim that $u_{\mu} \to u_{\zeta}$ uniformly in $\mathbb{R}^{d}$ as $\mu \to \zeta$.  To see this, we argue by contradiction.  Suppose instead there is a $\delta > 0$ and two sequences $(y_{n})_{n \in \mathbb{N}} \subseteq \mathbb{R}^{d}$ and $(\mu_{n})_{n \in \mathbb{N}} \subseteq \mathbb{R}$ such that $|u_{\mu_{n}}(y_{n}) - u_{\zeta}(y_{n})| \geq \delta$ independently of $n$ even though $\lim_{n \to \infty} \mu_{n} = \zeta$.  Since we already know that $u_{\mu_{n}} \to u_{\zeta}$ locally uniformly, it follows that $\lim_{n \to \infty} \|y_{n}\| = \infty$.  
		
		For each $n \in \mathbb{N}$, let $[y_{n}] \in \mathbb{Z}^{d}$ be the vector such that $y - [y_{n}] \in [0,1)^{d}$.  Passing to a subsequence if necessary, fix $\xi \in [0,1]^{d}$ and $\gamma \in [-\infty,\infty]$ such that $\lim_{n \to \infty} \langle [y_{n}], e \rangle = \gamma$ in the topology of the extended reals and $\xi = \lim_{n \to \infty} (y_{n} - [y_{n}])$.  If $\gamma \in \{-\infty,\infty\}$, then $\lim_{n \to \infty} |u_{\mu_{n}}(y_{n}) - u_{\zeta}(y_{n})| = 0$ by the boundedness of $(\mu_{n})_{n \in \mathbb{N}}$ and Step 1.  Therefore, we can assume $|\gamma| < \infty$.  
		
		If we write $u_{\mu_{n}}(y_{n}) = u_{\mu_{n} - \langle [y_{n}], e \rangle}(y_{n} - [y_{n}])$ and $u_{\zeta}(y_{n}) = u_{\zeta - \langle [y_{n}], e \rangle}(y_{n} - [y_{n}])$, then the limits $\mu_{n} - \langle [y_{n}], e \rangle \to \zeta - \gamma$, $\zeta - \langle [y_{n}],e \rangle \to \zeta - \gamma$, and $y_{n} - [y_{n}] \to \xi$ imply
			\begin{align*}
				\delta &= \lim_{n \to \infty} |u_{\mu_{n} - \langle [y_{n}], e \rangle}(y_{n} - [y_{n}]) - u_{\zeta - \langle [y_{n}], e \rangle}(y_{n} - [y_{n}])| \\
				&= |u_{\zeta - \gamma}(\xi) - u_{\zeta - \gamma}(\xi)| = 0.
			\end{align*}     
		This contradiction shows that $u_{\zeta} = \lim_{\mu \to \zeta} u_{\mu}$ uniformly in $\mathbb{R}^{d}$.  From this and Proposition \ref{P: continuity_irrational_directions}, $U_{e}^{+} = U_{e}^{-} \in UC(\mathbb{R} \times \mathbb{T}^{d})$.  \qed
\end{proof}  

For later use, the next result establishes that when $a$ and $W$ are more regular, the plane-like minimizer $u_{\zeta}$ converges exponentially to $\pm 1$ in the $C^{1}$ topology as $\langle x,e \rangle \to \pm \infty$.

	\begin{prop} \label{P: annoying exponential convergence trick} Suppose that $a$ and $W$ satisfy \eqref{A: a_assumption_1}, \eqref{A: W_assumption_1}, \eqref{A: a_assumption_2}, and \eqref{A: W_assumption_2} and $e \in S^{d-1}$. If $u : \mathbb{R}^{d} \to [-1,1]$ is a Class A minimizer of $\mathcal{F}^{a}_{1}$ satisfying the $\mathbb{Z}^{d}$-Birkhoff property with respect to the $e$ direction and \eqref{E: local convergence at infinity} holds,
	then there is a constant $C_{0}$ depending only on $a$, $W$, and $u$ such that, for each $r \in \mathbb{R}$, 
		\begin{align}
			\sup \left\{ |u(x) - 1| + \|Du(x)\| \, \mid \, x \in re + \langle e \rangle^{\perp} \right\} &\leq C_{0} \exp(-C_{0}^{-1}r) \label{E: tedious exponential argument 1} \\
			\sup \left\{ |u(x) + 1| + \|Du(x)\| \, \mid \, x \in re + \langle e \rangle^{\perp} \right\} &\leq C_{0} \exp(C_{0}^{-1}r). \label{E: tedious exponential argument 2}
		\end{align}
	\end{prop}   
	
		\begin{proof}  Since $u$ is a bound solution of the Euler-Lagrange equation associated with $\mathcal{F}^{a}_{1}$, we know $Du$ is bounded by a constant depending only on $a$ and $W$.  Thus, it suffices to prove the existence of an $R_{0} > 0$ such that \eqref{E: tedious exponential argument 1} holds for all $r \geq R_{0}$ and \eqref{E: tedious exponential argument 2} holds for all $r \leq -R_{0}$.
		
		 The first observation is that since $u$ is $\mathbb{Z}^{d}$-Birkhoff in the $e$ direction and \eqref{E: local convergence at infinity} holds, Lemma \ref{L: tedious lemma} applies.  In particular, by \eqref{E: uniform convergence along the line 1}, \eqref{E: uniform convergence along the line 2}, and \eqref{A: W_assumption_2}, there is an $R_{0} > 0$ such that 
			\begin{align*}
				W'(u(x)) &\leq \frac{\alpha}{2} (u(x) - 1) \quad \text{if} \, \, \langle x,e \rangle \geq R_{0}, \\
				W'(u(x)) &\geq \frac{\alpha}{2}(u(x) + 1) \quad \text{if} \, \, \langle x,e \rangle \leq -R_{0}.
			\end{align*}
		
	Let us now prove that $|u(x) - 1|$ decays exponentially $\langle x,e \rangle \to \infty$.  Define $v(x) = 1 - u(x)$ and observe that, since $u$ is a Class A minimizer, we can compute, for each $y \in \mathbb{R}^{d}$ with $\langle y, e \rangle \geq R_{0}$,
		\begin{equation*}
			- \text{div}(a(y) Dv(y)) + \alpha v(y) = - \left(-\text{div}(a(y) Du(y)) + \alpha (u(y) - 1)\right) \leq 0.
		\end{equation*}
	At the same time, if $C_{0} > 0$ is sufficiently large, then the assumptions on $a$ imply the function $\overline{v}(y) = C_{0} \exp\left(-C_{0}^{-1} \langle y,e \rangle\right)$ satisfies
		\begin{align*}
			- \text{div}(a(y) D\overline{v}(y)) + \alpha \overline{v}(y) &= - \text{tr} \left( a(y) D^{2}\overline{v}(y)\right) - \langle (\text{div} \, a)(y), D\overline{v}(y) \rangle + \alpha \overline{v}(y) \\
			&= \left(- \langle a(y)e, e \rangle C_{0}^{-2} - \langle (\text{div} \, a)(y), e \rangle C_{0}^{-1} + \alpha \right) \overline{v}(y) \\
			&\geq \frac{\alpha}{2} \overline{v}(y).
		\end{align*}
Up to making $C_{0}$ larger if necessary, we can assume that $\overline{v}(y) \geq 2 \geq v(y)$ if $\langle y, e \rangle = R_{0}$, and, thus, by the maximum principle, $\overline{v}(y) \geq v(y)$ holds for all $y \in \mathbb{R}^{d}$ with $\langle y,e \rangle \geq R_{0}$.  

	Next, notice that by writing $W'(u) = \int_{0}^{1} W''(1 + t (u - 1) \, dt (u-1)$, we see that $v$ satisfies an equation of the form $-\text{div}(a(y) Dv(y)) + \tilde{\alpha}(y) v(y) = 0$ with a bounded, H\"{o}lder continuous coefficient $\tilde{\alpha}$.  Therefore, by Schauder estimates (see, e.g., \cite[Chapter 6]{Gilbarg Trudinger}), there is a $C_{1} > 0$ such that
		\begin{equation*}
			\|Dv(y)\| \leq C_{1}|v(y)| \quad \text{if} \, \, \langle y,e \rangle \geq R_{0}.
		\end{equation*}
	Hence since $Du = -Dv$, we deduce that $Du$ converges exponentially as $\langle x,e \rangle \to \infty$.  In particular, increasing $C_{0}$ if necessary, we conclude that \eqref{E: tedious exponential argument 1} holds for all $r \geq R_{0}$, as claimed.
	
	A similar argument establishes \eqref{E: tedious exponential argument 2} for $r \leq -R_{0}$. \qed
	\end{proof}  

\subsection{Surface tension}  Finally, we relate $\mathscr{E}^{a}$ to $\mathcal{F}^{a}_{1}$:

	\begin{prop} \label{P: surface tension} $\mathscr{E}^{a} = \tilde{\varphi}^{a}$.  Moreover, if $U_{e} \in \mathcal{M}_{e}(\mathbb{R} \times \mathbb{T}^{d})$, then, for a.e.\ $\zeta \in \mathbb{R}$,
		\begin{equation} \label{E: average_energy}
			\lim_{R \to \infty} R^{1 - d} \mathcal{F}^{a}_{1}(u_{\zeta}; Q^{e}(0,R) \oplus_{e} \mathbb{R}) = \tilde{\varphi}^{a}(e)
		\end{equation}
	Replacing $U_{e}$ by a right- or left-continuous representative as in Proposition \ref{P: continuity of the minimizers}, \eqref{E: average_energy} holds true for every $\zeta \in \mathbb{R}$ if $e \in \mathbb{R} \mathbb{Z}^{d}$ or \eqref{A: a_assumption_2} and \eqref{A: W_assumption_2} both hold.
	\end{prop}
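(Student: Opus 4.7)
The plan is to identify $\mathscr{E}^a(e)$ with $\tilde\varphi^a(e)$ by showing that both equal the slab density $L(\zeta) := \lim_{R \to \infty} R^{1-d} \mathcal{F}^a_1(u_\zeta; Q(0,R) \oplus_e \mathbb{R})$ for almost every $\zeta$, combining the ergodic identity of Theorem \ref{T: ergodic_lemma} with the $\Gamma$-convergence of $\mathcal{F}^a_\epsilon$ to $\tilde F^a$ from \cite{ansini braides}. Specifically, if $U_e \in \mathcal{M}_e$ is a minimizer and $\{u_\zeta\}_\zeta$ is its generated family, I will apply Theorem \ref{T: ergodic_lemma} to the pointwise integrand of $\mathscr{T}^a_e(U_e)$, whose generated family in $\mathbb{R}^d$ is the pointwise integrand of $\mathcal{F}^a_1(u_\zeta; \cdot)$, to obtain $\mathscr{T}^a_e(U_e) = \lim_T T^{-1} \int_0^T L(\zeta) \, d\zeta$ with the inner limit existing for a.e.\ $\zeta$.

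For the lower bound $L(\zeta) \geq \tilde\varphi^a(e)$, fix $\zeta$ such that the limit defining $L(\zeta)$ exists and $u_\zeta$ is a Class A minimizer (a.e.\ such $\zeta$ by Proposition \ref{P: plane_like_minimizers}). Proposition \ref{P: continuity of the minimizers}(v) gives $u_\zeta(re + x^\perp) \to \pm 1$ uniformly on $\langle e\rangle^\perp$ as $r \to \pm\infty$, so the rescalings $u^\epsilon(x) := u_\zeta(x/\epsilon)$ converge in $L^1_{\mathrm{loc}}$ to the planar transition $\bar u = \chi_{\{\langle x,e\rangle>0\}} - \chi_{\{\langle x,e\rangle<0\}}$. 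Applying the $\Gamma$-liminf inequality in a unit slab $A = Q(0,1) \oplus_e (-1,1)$ yields $\tilde\varphi^a(e) \leq \liminf_\epsilon \mathcal{F}^a_\epsilon(u^\epsilon; A)$, and the scaling identity $\mathcal{F}^a_\epsilon(u^\epsilon; A) = \epsilon^{d-1} \mathcal{F}^a_1(u_\zeta; A/\epsilon)$ (with $R = 1/\epsilon$) translates this to $L(\zeta) \geq \tilde\varphi^a(e)$. Integrating in $\zeta$ gives $\mathscr{E}^a(e) \geq \tilde\varphi^a(e)$.

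For the upper bound, fix $\delta > 0$ and, using the $\Gamma$-limsup of \cite{ansini braides}, produce for each large $R$ a function $w_R$ on $C_R := Q(0,R) \oplus_e (-R, R)$ that is non-decreasing in $e$ (arranged by standard truncation), coincides with $\pm 1$ near the horizontal faces of $\partial C_R$, and satisfies $\mathcal{F}^a_1(w_R; C_R) \leq R^{d-1}(\tilde\varphi^a(e) + \delta)$. For integers $M$ with $M/R \to \infty$, tile a fundamental cell of $M\mathbb{Z}^d$ in $\langle e\rangle^\perp$ by translates of $Q(0,R)$, place a copy of $w_R$ in each tile, and extend by $\pm 1$ outside the resulting $e$-slab. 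This yields an $M\mathbb{Z}^d$-periodic, $e$-monotone function $v = v_{R,M}$ that is Birkhoff by Lemma \ref{L: asymptotic_birkhoff} (after possibly enlarging $M$) and has total energy at most $M^{d-1}(\tilde\varphi^a(e) + \delta) + o(M^{d-1})$ in one fundamental cell. Lifting $v$ to $V \in \mathscr{X}^{(M)}_+$ via Proposition \ref{P: extension} (treating irrational $e$ by approximating from $\mathbb{R}\mathbb{Z}^d$ using the lower semi-continuity of $e \mapsto \mathscr{E}^a_+(e)$ from Proposition \ref{P: remove constraint}), Theorem \ref{T: no_symmetry_breaking} gives $\mathscr{E}^a(e) \leq \mathscr{T}^a_{e,M}(V) \leq \tilde\varphi^a(e) + \delta + o(1)$. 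Letting $R, M \to \infty$ and $\delta \to 0^+$ closes the gap and, together with Step 1, forces $L(\zeta) = \tilde\varphi^a(e)$ for a.e.\ $\zeta$, establishing \eqref{E: average_energy}. The extension to every $\zeta$ when $e \in \mathbb{R}\mathbb{Z}^d$ or under \eqref{A: a_assumption_2}--\eqref{A: W_assumption_2} follows by verifying continuity of $\zeta \mapsto u_\zeta$: periodicity modulo translations (Proposition \ref{P: continuity_rational_directions}) combined with the H\"older estimate of Proposition \ref{P: plane_like_minimizers} in the rational case, and uniqueness up to translation from Theorem \ref{T: existence}(2) in the smooth case. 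The principal obstacle is the upper bound construction: engineering a strictly periodic, Birkhoff competitor out of the unconstrained $\Gamma$-limsup sequence while ensuring that gluing errors along tile boundaries are absorbed into the $o(M^{d-1})$ term, which relies on arranging $w_R = \pm 1$ near the transverse faces of $\partial C_R$ and on taking $M/R$ arbitrarily large.
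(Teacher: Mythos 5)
Your proof takes a genuinely different route from the paper's, and the lower bound is sound, but the upper bound has a gap that you flag but do not actually resolve.

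The paper avoids invoking the $\Gamma$-limsup directly. Instead it introduces two Dirichlet-type set functions $\Phi(A)$ (minimum of $\mathcal{F}^{a}_{1}$ over $A \oplus_{e} \mathbb{R}$ with boundary trace $q_{e}$) and $\Psi(A)$ (same with boundary trace $u_{\zeta}$), quotes the cell-formula characterization $\lim_{R} R^{1-d}\Phi(Q(0,R)) = \tilde{\varphi}^{a}(e)$ from \cite{previous_paper} and \cite{ansini braides}, and then shows $\lim_{R}R^{1-d}\Psi = \lim_{R}R^{1-d}\Phi$ by a localized cutoff/gluing of the $\Phi$-minimizer to $u_{\zeta}$, using the uniform decay of both as $\langle x,e\rangle \to \pm\infty$. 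Since $u_{\zeta}$ is Class A, $\Psi(Q(0,R)) = \mathcal{F}^{a}_{1}(u_{\zeta}; Q(0,R)\oplus_{e}\mathbb{R})$, and Theorem \ref{T: ergodic_lemma} converts this into the desired identity for $\mathscr{E}^{a}(e)$. Crucially, because the competitors for $\Phi$ agree with $q_{e}$ on $\partial Q(0,R)\oplus_{e}\mathbb{R}$, the gluing is done entirely inside a single slab and no new periodic function is ever manufactured. Your lower bound (blow-down the minimizer, apply $\Gamma$-liminf in a unit slab, scale back) is a clean alternative and would be a perfectly acceptable replacement for the paper's ``vice versa'' direction.

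The gap is in your upper bound. You want to tile translates of $Q(0,R)$ with copies of a $\Gamma$-limsup competitor $w_{R}$, but a generic recovery sequence carries no control on its values on $\partial Q(0,R) \oplus_{e} \mathbb{R}$ (the lateral faces), so the tiled function need not lie in $H^{1}_{\mathrm{loc}}$. Your proposed remedy --- ``arranging $w_{R} = \pm 1$ near the transverse faces'' --- does not address this: $w_{R}$ must transition from $-1$ to $1$ across $\{\langle x,e\rangle = 0\}$, which meets every lateral face, so $w_{R}$ cannot equal $\pm1$ there. What is needed is that $w_{R}$ agrees with a fixed one-dimensional profile $q_{e}$ on those faces (or is $R$-periodic in the transverse directions); once you impose that, you are no longer using an arbitrary $\Gamma$-limsup recovery sequence but exactly the Dirichlet cell quantity $\Phi(Q(0,R))$ that the paper works with. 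There is a second, smaller mismatch: the tiled function is periodic with respect to the sublattice $M(M_{e})$, whereas Proposition \ref{P: extension} is stated for $M_{e}$-periodic inputs $v:\mathbb{T}^{d-1}_{e}\oplus\mathbb{R}\to[-1,1]$; a variant (with $Q_{e}$ replaced by a fundamental domain of $MM_{e}$ and a corresponding normalization) is needed and should be stated. Your irrational-direction reduction via lower semi-continuity of $\mathscr{E}^{a}_{+}$ is correct, and your treatment of the ``every $\zeta$'' refinement tracks the paper's. But until the boundary-matching issue is resolved, the upper bound --- and therefore the identity $\mathscr{E}^{a}=\tilde{\varphi}^{a}$ --- is not established by the argument as written.
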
    
	
	\begin{proof}  We start by proving \eqref{E: average_energy} when $e \in \mathbb{R} \mathbb{Z}^{d}$, then prove it in the irrational case, and we conclude by discussing the situation when \eqref{A: a_assumption_2} and \eqref{A: W_assumption_2} are in effect.
	
	\textbf{Step 1: Rational Directions}  
	
	Fix $e \in S^{d-1} \cap \mathbb{R} \mathbb{Z}^{d}$.  We claim that $\mathscr{E}^{a}(e) = \tilde{\varphi}^{a}(e)$.  Let $U_{e} \in \mathcal{M}_{e}(\mathbb{R} \times \mathbb{T}^{d})$ and denote by $\{u_{\zeta}\}_{\zeta \in \mathbb{R}}$ the functions generated by $U_{e}$.  By Proposition \ref{P: plane_like_minimizers} and Theorem \ref{T: ergodic_lemma}, we can fix $C \subseteq \mathbb{R}$ such that $\mathcal{L}^{1}(\mathbb{R} \setminus C) = 0$ and, for each $\zeta \in C$, $u_{\zeta}$ is a Class A minimizer of $\mathcal{F}^{a}_{1}$ and
		\begin{align} \label{E: energy_assumption}
			\mathscr{E}^{a}(e) &= \lim_{R \to \infty} R^{1-d} \mathcal{F}^{a}_{1}(u_{\zeta}; Q^{e}(0,R) \oplus_{e} \mathbb{R}) \\
			&= \mathcal{H}^{d-1}(Q_{e})^{-1} \mathcal{F}^{a}_{1}(u_{\zeta}; Q_{e} \oplus_{e} \mathbb{R}) \label{E: lower semicontinuous}
		\end{align}
This proves \eqref{E: average_energy} holds for almost every $\zeta$, as claimed.

Next, consider the case when $U_{e}$ is right-continuous.  In this case, even if $\zeta \in \mathbb{R} \setminus C$, there is nonetheless a sequence $(\zeta_{n})_{n \in \mathbb{N}} \subseteq C$ such that $\zeta_{n} \to \zeta$ as $n \to \infty$ and $\zeta_{n} \geq \zeta$ independently of $n$.  Note, at the same time, that the functional appearing in \eqref{E: lower semicontinuous} is lower semi-continuous with respect to the argument $u$, and $u_{\zeta_{n}} \to u_{\zeta}$ locally uniformly by Proposition \ref{P: continuity of the minimizers}.  Therefore, Remark \ref{R: periodic representation} implies
	\begin{align*}
		\mathscr{E}^{a}(e) &\leq \mathcal{H}^{d-1}(Q_{e})^{-1} \mathcal{F}^{a}_{1}(u_{\zeta}; Q_{e} \oplus_{e} \mathbb{R}) \\
		&\leq \liminf_{n \to \infty} \mathcal{H}^{d-1}(Q_{e})^{-1} \mathcal{F}^{a}_{1}(u_{\zeta_{n}}; Q_{e} \oplus_{e} \mathbb{R}) = \mathscr{E}^{a}(e).
	\end{align*}
Thus, when $U_{e}$ is right-continuous, \eqref{E: lower semicontinuous} holds for all $\zeta \in \mathbb{R}$.  The left-continuous case follows similarly.
		
	As far as rational directions are concerned, it only remains to prove that $\mathscr{E}^{a}(e) = \tilde{\varphi}^{a}(e)$.  Toward that end, let us fix a smooth function $q : \mathbb{R} \to [-1,1]$ satisfying
		\begin{equation*}
			\int_{-\infty}^{\infty} \left( \frac{\Lambda}{2} q'(s)^{2} + W(q(s)) \right) \, ds < \infty, \quad \lim_{s \to \pm \infty} q(s) = \pm 1.
		\end{equation*}
	Define $q_{e} : \mathbb{R}^{d} \to [-1,1]$ by $q_{e}(x) = q(\langle x, e \rangle)$.  
	
	Next, fix $\zeta \in C$ and define $\tilde{Q}_{n} = Q(0,n) \oplus_{e} [-n,n]$ for $n \in \mathbb{N}$.  By \cite{ansini braides}, we can fix $(u_{n})_{n \in \mathbb{N}} \subseteq H^{1}_{\text{loc}}(\mathbb{R}^{d})$ such that
		\begin{align*}
			u_{n} &= q_{e} \quad \text{in} \, \, \mathbb{R}^{d} \setminus \tilde{Q}_{n} \\
			\tilde{\varphi}^{a}(e) &= \lim_{n \to \infty} n^{1-d} \mathcal{F}^{a}_{1}(u_{n}; \tilde{Q}_{n}).
		\end{align*} 
	At this stage, it is convenient to move to macroscopic coordinates: define $(\tilde{u}_{n})_{n \in \mathbb{N}}$ by $\tilde{u}_{n}(x) = u_{n}(nx)$ and $\tilde{v}_{n}(x) = u_{\zeta}(nx)$.
	
	Fix $\alpha \in (0,1)$ and notice that $\tilde{u}_{n}, \tilde{v}_{n} \to \chi_{\{\langle x, e \rangle > 0\}} - \chi_{\langle x, e \rangle < 0\}}$ in $L^{1}((1 + \alpha) \tilde{Q}_{1} \setminus \tilde{Q}_{1})$ as $n \to \infty$.  Thus, arguing as in \cite{ansini braides}, we can find cut-off functions $(\psi_{n})_{n \in \mathbb{N}} \subseteq C^{\infty}_{c}((1 + \alpha) \tilde{Q}_{1}; [0,1])$ such that
		\begin{align*}
			\limsup_{n \to \infty} \mathcal{F}^{a}_{n^{-1}}( &(1 - \psi_{n}) \tilde{u}_{n} + \psi_{n} \tilde{v}_{n} ; (1 + \alpha) \tilde{Q}_{1}) \leq \lim_{n \to \infty} n^{1 - d} \mathcal{F}^{a}_{1}(u_{n} ; \tilde{Q}_{n}) + \omega(\alpha), \\
			\omega(\alpha) &= ((1 + \alpha)^{d -1} - 1) \int_{-\infty}^{\infty} \left( \frac{\Lambda}{2} q'(s)^{2} + W(q(s)) \right) \, ds \\
			&\qquad+ \lim_{n \to \infty} n^{1 - d} \mathcal{F}^{a}_{1}(u_{\zeta}; (1 + \alpha) \tilde{Q}_{n} \setminus \tilde{Q}_{n}).
		\end{align*}
	Since $u_{\zeta}$ and its derivative are invariant under translations by $M_{e}$, it follows that
		\begin{equation*}
			\lim_{n \to \infty} n^{1 - d} \mathcal{F}^{a}_{1}(u_{\zeta}; (1 + \alpha) \tilde{Q}_{n} \setminus \tilde{Q}_{n}) = ((1 + \alpha)^{d-1} - 1) \mathcal{H}^{d-1}(Q_{e})^{-1} \mathcal{F}^{a}_{1}(u_{\zeta}; Q_{e} \oplus_{e} \mathbb{R}).
		\end{equation*}
	Thus, $\omega(\alpha) \to 0$ as $\alpha \to 0^{+}$.  
	
	Now we invoke the Class A minimization property of $u_{\zeta}$ to find
		\begin{align*}
			(1 + \alpha)^{d-1} \mathcal{H}^{d-1}(Q_{e})^{-1} &\mathcal{F}^{a}_{1}(u_{\zeta}; Q_{e} \oplus_{e} \mathbb{R}) =  \lim_{n \to \infty} n^{1 - d} \mathcal{F}^{a}_{1}(u_{\zeta}; (1 + \alpha) \tilde{Q}_{n} \oplus_{e} \mathbb{R}) \\
			&\leq \limsup_{n \to \infty} \mathcal{F}^{a}_{n^{-1}}( (1 - \psi_{n}) \tilde{u}_{n} + \psi_{n} \tilde{v}_{n} ; (1 + \alpha) \tilde{Q}_{1}) \\
			&\leq \tilde{\varphi}^{a}(e) + \omega(\alpha)
		\end{align*}
	Thus, in the limit $\alpha \to 0^{+}$, we find $\mathcal{H}^{d-1}(Q_{e})^{-1} \mathcal{F}^{a}_{1}(u_{\zeta}; Q_{e} \oplus_{e} \mathbb{R}) \leq \tilde{\varphi}^{a}(e)$.  
	
	To see that $\tilde{\varphi}^{a}(e) \leq \mathcal{H}^{d-1}(Q_{e})^{-1} \mathcal{F}^{a}_{1}(u_{\zeta}; Q_{e} \oplus_{e} \mathbb{R})$, we reverse the roles of the boundary condition $q_{e}$ and $u_{\zeta}$.  
	
\textbf{Step 2: Irrational Directions}  

Now we turn to irrational directions.  Assume that $e \in S^{d-1} \setminus \mathbb{R} \mathbb{Z}^{d}$.  Since $\mathscr{E}^{a}$ is determined by its value in $C^{\infty}_{\text{sgn}}(\mathbb{R} \times \mathbb{T}^{d})$ by Proposition \ref{P: smooth_approximation}, it follows that $\mathscr{E}^{a}$ is upper semi-continuous.  At the same time, the existence of minimizers and their sequential compactness (i.e.\ Proposition \ref{P: compactness}) implies that $\mathscr{E}^{a}$ is lower semi-continuous.  Therefore, it is a continuous function.  Since $\mathscr{E}^{a}(e') = \tilde{\varphi}^{a}(e')$ for all $e' \in S^{d-1} \cap \mathbb{R} \mathbb{Z}^{d}$ and $\tilde{\varphi}^{a}$ is also continuous, we conclude by density that $\mathscr{E}^{a}(e) = \tilde{\varphi}^{a}(e)$.    
	
	Next, assume that $U_{e} \in \mathcal{M}_{e}(\mathbb{R} \times \mathbb{Z}^{d})$.  Arguing as in the first paragraph of Step 1 above, we see that
	\begin{equation*}
		\tilde{\varphi}^{a}(e) = \mathscr{E}^{a}(e) =  \lim_{R \to \infty} R^{1-d} \mathcal{F}^{a}_{1}(u_{\zeta}; Q^{e}(0,R) \oplus_{e} \mathbb{R}) \quad \text{for a.e.} \, \, \zeta \in \mathbb{R}.
	\end{equation*}
	
	\textbf{Step 3: Regular coefficients}  
	
	If \eqref{A: a_assumption_2} and \eqref{A: W_assumption_2} both hold, then even if $e \in S^{d-1} \setminus \mathbb{R} \mathbb{Z}^{d}$, we have a $\zeta$-independent bound on $\|Du_{\zeta}\|_{L^{\infty}(\mathbb{R}^{d})}$ and uniform exponential decay of $Du_{\zeta}$ as $\langle x,e \rangle \to \pm \infty$ by Proposition \ref{P: annoying exponential convergence trick}.  Therefore, straightforward adjustments show that the approach of Step 1 applies even if we do not know that $Du_{\zeta}$ is periodic (or almost periodic).  In particular, in this case, no matter the choice of $e$ or $\zeta$, the limiting energy density exists and satisfies \eqref{E: average_energy}.  \qed \end{proof}
	
\subsection{Uniqueness when $e \notin \mathbb{R} \mathbb{Z}^{d}$} When $a$ and $W$ are regular enough, the following uniqueness result applies.  In what follows, recall that $\{T_{s}\}_{s \in \mathbb{R}}$ are the translation operators of Section \ref{S: function notation}.

\begin{prop} \label{P: uniqueness} Fix $e \in S^{d - 1} \setminus \mathbb{R} \mathbb{Z}^{d}$ and assume that $a$ and $W$ satisfy \eqref{A: a_assumption_1}, \eqref{A: W_assumption_1}, \eqref{A: a_assumption_2}, and \eqref{A: W_assumption_2}.  Suppose $U_{e}^{(1)}, U_{e}^{(2)} \in \mathcal{M}_{e}(\mathbb{R} \times \mathbb{T}^{d})$ are defined in such a way that the functions $\{u^{(i)}_{\zeta}\}_{\zeta \in \mathbb{R}}$ generated by $U^{(i)}$ for $i \in \{1,2\}$ are both right-continuous or left-continuous with respect to $\zeta$.  If $\int_{\mathbb{T}^{d}} U^{(1)}_{e}(s,x) \, dx = \int_{\mathbb{T}^{d}} U^{(2)}_{e}(s,x) \, dx$ for some $s \in \mathbb{R}$, then $U^{(1)}_{e} = U^{(2)}_{e}$.

In particular, under these assumptions, there is a $U_{e}$ such that 
	\begin{equation*}
		\mathcal{M}_{e}(\mathbb{R} \times \mathbb{T}^{d}) = \{T_{s}U_{e}\}_{s \in \mathbb{R}}.
	\end{equation*}    \end{prop}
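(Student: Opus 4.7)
The strategy is to lift both minimizers to families of solutions of a semilinear elliptic PDE on $\mathbb{R}^d$, apply a strong maximum principle dichotomy per fiber, and then exploit the density of orbits coming from the irrationality of $e$ together with the average hypothesis.

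First I would upgrade the regularity of the minimizers. The change of variables $u^{(i)}_\zeta(x) = U^{(i)}(\langle x, e\rangle - \zeta, x)$ transforms the Euler--Lagrange equation \eqref{E: pulsating standing wave} into the uniformly elliptic semilinear equation $-\text{div}(a Du) + W'(u) = 0$ in $\mathbb{R}^d$, which each $u^{(i)}_\zeta$ satisfies. Under \eqref{A: a_assumption_2} and \eqref{A: W_assumption_2}, Schauder estimates combined with the uniform H\"older bound of Proposition \ref{P: plane_like_minimizers} yield uniform $C^{2,\alpha}$ bounds on the $u^{(i)}_\zeta$, and in particular continuous (indeed $C^{2,\alpha}$) dependence on $\zeta$. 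Since $e \notin \mathbb{R}\mathbb{Z}^d$, Proposition \ref{P: continuity_irrational_directions} then promotes this to $U^{(i)} \in C(\mathbb{R} \times \mathbb{T}^d)$, so the right- and left-continuous representatives coincide. By Lemma \ref{L: submodularity} with $M=1$, the functions $V^\pm := U^{(1)} \vee U^{(2)}$ and $U^{(1)} \wedge U^{(2)}$ are also minimizers in $\mathcal{M}_e(\mathbb{R} \times \mathbb{T}^d)$, so they inherit the same smoothness.

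The next step is a strong maximum principle trichotomy in physical coordinates. For each fixed $\zeta$, both $u^{(1)}_\zeta - v^-_\zeta \geq 0$ and $v^+_\zeta - u^{(1)}_\zeta \geq 0$ are classical nonnegative solutions of the linear elliptic equations obtained by linearizing $W'$ via the mean value theorem. The strong maximum principle in $\mathbb{R}^d$ therefore yields, for each $\zeta$, exactly one of: $u^{(1)}_\zeta \equiv u^{(2)}_\zeta$, or $u^{(1)}_\zeta < u^{(2)}_\zeta$ strictly on $\mathbb{R}^d$, or $u^{(1)}_\zeta > u^{(2)}_\zeta$ strictly on $\mathbb{R}^d$. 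Setting $A = \{\zeta : u^{(1)}_\zeta \leq u^{(2)}_\zeta\}$ and $B = \{\zeta : u^{(1)}_\zeta \geq u^{(2)}_\zeta\}$, both sets are closed (by continuity of $\zeta \mapsto u^{(i)}_\zeta$), $A \cup B = \mathbb{R}$, and the connectedness of $\mathbb{R}$ forces either $A = \mathbb{R}$ with $B = \emptyset$, $B = \mathbb{R}$ with $A = \emptyset$, or $A \cap B \neq \emptyset$.

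To conclude, I would observe that the two extremal cases would give $U^{(1)} < U^{(2)}$ or $U^{(1)} > U^{(2)}$ strictly on all of $\mathbb{R} \times \mathbb{T}^d$, which is incompatible with the hypothesis $\int_{\mathbb{T}^d} U^{(1)}(s_0, x)\,dx = \int_{\mathbb{T}^d} U^{(2)}(s_0, x)\,dx$. So there exists $\zeta_0 \in A \cap B$ with $u^{(1)}_{\zeta_0} \equiv u^{(2)}_{\zeta_0}$ on $\mathbb{R}^d$. Since $e \notin \mathbb{R}\mathbb{Z}^d$, the image of $x \mapsto (\langle x,e\rangle - \zeta_0, x \bmod \mathbb{Z}^d)$ is dense in $\mathbb{R} \times \mathbb{T}^d$ (by the same equidistribution underlying Theorem \ref{T: ergodic_lemma}), and continuity of $U^{(i)}$ then yields $U^{(1)} = U^{(2)}$ everywhere. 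The main obstacle will be the regularity step, since a priori the minimizers are only BV with $\partial_s U \geq 0$: one has to combine elliptic regularity for the physical-coordinates PDE with the asymptotic compatibility from Proposition \ref{P: continuity_irrational_directions} to extract genuine joint continuity on the cylinder, and similarly to verify that the lattice operations $V^\pm$ are smooth. Once this is in place, the rest is a direct application of the strong maximum principle, connectedness of $\mathbb{R}$, and irrational density.
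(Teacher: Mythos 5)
Your overall architecture — submodularity to make $U^{(1)}\vee U^{(2)}$ and $U^{(1)}\wedge U^{(2)}$ minimizers, the strong maximum principle dichotomy on each leaf $u^{(i)}_\zeta$, and a density argument to spread the conclusion — is the same as the paper's. But there is a genuine gap in your regularity step, and you cannot repair it the way you propose.

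You assert that the uniform $C^{2,\alpha}$ bounds on the family $(u^{(i)}_\zeta)_\zeta$ yield ``continuous (indeed $C^{2,\alpha}$) dependence on $\zeta$,'' and hence via Proposition~\ref{P: continuity_irrational_directions} that $U^{(i)}\in C(\mathbb{R}\times\mathbb{T}^d)$, so the right- and left-continuous representatives coincide. This does not follow. Schauder estimates control derivatives in $x$ for each fixed $\zeta$, giving equicontinuity of the family, but say nothing about continuity of the map $\zeta\mapsto u^{(i)}_\zeta$; by Proposition~\ref{P: continuity of the minimizers}(iv) that map is monotone with a possibly nonempty (countable) jump set $\mathscr{D}$, and full continuity of $U^{(i)}$ holds precisely when $\mathscr{D}=\emptyset$ (part (vi)). The compatibility condition (ii) of Proposition~\ref{P: continuity_irrational_directions} — that $u_\zeta(\cdot+k_n)\to u_{\zeta-\gamma}$ for \emph{every} approximating sequence, from both sides — fails at jump points, so that proposition cannot be invoked to upgrade a right-continuous representative to a continuous one. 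The very phrasing of Proposition~\ref{P: uniqueness}, which carefully assumes only one-sided continuity (and that both families be one-sided continuous from the \emph{same} side), exists because gaps in the lamination are not ruled out. Indeed Section~\ref{S: counterexamples} is devoted to showing such gaps occur for smooth $a,W$.

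Consequently your claim that $A=\{\zeta: u^{(1)}_\zeta\le u^{(2)}_\zeta\}$ and $B=\{\zeta: u^{(1)}_\zeta\ge u^{(2)}_\zeta\}$ are closed (so that connectedness of $\mathbb{R}$ applies) is unsupported, as is the final step where continuity of $U^{(i)}$ is used to propagate $u^{(1)}_{\zeta_0}=u^{(2)}_{\zeta_0}$ along a dense orbit. The fix is to abandon the continuity upgrade and use instead the two facts that are actually available: the translation identity $u^{(i)}_{\gamma+\langle k,e\rangle}=u^{(i)}_\gamma(\cdot-k)$ (so $A$ and $B$ are invariant under the dense group $\{\langle k,e\rangle:k\in\mathbb{Z}^d\}$) and the shared one-sided continuity (so $A$, $B$ are closed under limits from the appropriate side). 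A nonempty, translation-invariant subset of $\mathbb{R}$ that is right-closed (say) must be all of $\mathbb{R}$, which is how the paper forces the strong-maximum-principle dichotomy to be uniform in $\zeta$ and then derives the contradiction with the equal-average hypothesis by integrating over $\{0\}\times\mathbb{T}^d$. Without that replacement your proof does not go through.
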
  
	
		\begin{proof}  We begin by proving the first statement.  Note that, by replacing $U^{(i)}_{e}$ by $T_{s}U^{(i)}_{e}$ if necessary, we can assume that $s = 0$. 
		
		Define $\overline{U}_{e} = U^{(1)}_{e} \vee U^{(2)}_{e}$ and $\underline{U}_{e} = U^{(1)}_{e} \wedge U^{(2)}_{e}$.  By Lemma \ref{L: submodularity}, $\overline{U}_{e}, \underline{U}_{e} \in \mathcal{M}_{e}(\mathbb{R} \times \mathbb{T}^{d})$, and, thus, every element of the families $\{\bar{u}_{\zeta}\}_{\zeta \in \mathbb{R}}$ and $\{\underbar{u}_{\zeta}\}_{\zeta \in \mathbb{R}}$ is a stationary solution of \eqref{E: main}.  Since $\underbar{u}_{\zeta} \leq \bar{u}_{\zeta}$, the strong maximum principle implies that either $\underbar{u}_{\zeta} = \bar{u}_{\zeta}$ or $\underbar{u}_{\zeta} < \bar{u}_{\zeta}$ in $\mathbb{R}^{d}$ (cf.\ \cite[Corollary A.3]{valdinoci de la llave}).  To conclude, we will assume that there is a $\zeta' \in \mathbb{R}$ so that $u^{(1)}_{\zeta'} = \underbar{u}_{\zeta'} < \bar{u}_{\zeta'} = u^{(2)}_{\zeta'}$ and show that this leads to a contradiction.  (If instead $u_{\zeta'}^{(2)} = \underline{u}_{\zeta'} < \bar{u}_{\zeta'} = u_{\zeta'}^{(1)}$, switch the roles of $(1)$ and $(2)$.)
		
		From the identity $u_{\zeta'}^{(1)} < u_{\zeta'}^{(2)}$, we know that 
			\begin{equation*}
				u_{\zeta' + \langle k,e \rangle}^{(1)} = u_{\zeta'}^{(1)}(\cdot - k) < u_{\zeta'}^{(2)}(\cdot - k) = u_{\zeta' + \langle k,e \rangle}^{(2)} \quad \text{for each} \, \, k \in \mathbb{Z}^{d}.
			\end{equation*}
		Thus, since $\{\langle k,e \rangle \, \mid \, k \in \mathbb{Z}^{d}\}$ is dense by Proposition \ref{P: algebra} and $\zeta \mapsto u_{\zeta}$ is right-continuous, we deduce that $u_{\zeta}^{(1)} \leq u_{\zeta}^{(2)}$ for each $\zeta \in \mathbb{R}$.  Of course, by the strong maximum principle, for any given $\zeta \in \mathbb{R}$, we must have either $u_{\zeta}^{(1)} < u_{\zeta}^{(2)}$ or $u_{\zeta}^{(1)} = u_{\zeta}^{(2)}$.  But we know the inequality is strict when $\zeta = \zeta'$ so it must always be strict.  (Otherwise, if $u_{\zeta''}^{(1)} = u_{\zeta''}^{(2)}$ for some $\zeta'' \in \mathbb{R}$, we use the density of $\{\zeta'' + \langle k,e \rangle \, \mid \, k \in \mathbb{Z}^{d}\}$ to obtain the contradiction $u_{\zeta'}^{(1)} = u_{\zeta'}^{(2)}$.)  
		
		Integrating over $\{0\} \times \mathbb{T}^{d}$, we obtain
			\begin{align*}
				0 &< \int_{\mathbb{T}^{d}} \left(u^{(2)}_{\langle x,e \rangle}(x) - u^{(1)}_{\langle x,e \rangle}(x) \right) \, dx \\
					&= \int_{\mathbb{T}^{d}} (U_{e}^{(2)}(0,x) - U_{e}^{(1)}(0,x)) \, dx = 0.
			\end{align*}
		This contradiction shows $u^{(1)}_{\zeta} = u^{(2)}_{\zeta}$ for all $\zeta \in \mathbb{R}$, hence $U^{(1)}_{e} = U^{(2)}_{e}$. 
		
	Finally, we prove that $\mathcal{M}_{e}(\mathbb{R} \times \mathbb{T}^{d})$ equals the translates of a single minimizer in this setting.  By Theorem \ref{T: existence theorem main} and Proposition \ref{P: continuity of the minimizers}, we can fix a $U_{e} \in \mathcal{M}_{e}(\mathbb{R} \times \mathbb{T}^{d})$ for which the associated functions $\{u_{\zeta}^{e}\}_{\zeta \in \mathbb{R}}$ are right-continuous with respect to $\zeta$.  If $U \in \mathcal{M}_{e}(\mathbb{R} \times \mathbb{T}^{d})$, then, up to redefining $U$ on a set of measure zero, we can assume that it also has this right-continuity property.  By Proposition \ref{P: apriori}, there is an $\bar{s} \in \mathbb{R}$ such that $\int_{\mathbb{T}^{d}} T_{\bar{s}}U_{e}(0,x) \, dx = \int_{\mathbb{T}^{d}} U(0,x) \, dx$.  Hence what we just proved implies $U = T_{\bar{s}} U_{e} \in \{T_{s} U\}_{s \in \mathbb{R}}$.  \qed
	\end{proof}  
	
\subsection{Non-uniqueness when $e \in S^{d-1} \cap \mathbb{R} \mathbb{Z}^{d}$} \label{S: nonunique}  In the rational case, uniqueness does not hold in general.  It is worth stressing that this issue already manifests itself in the spatially homogeneous case, that is, when $a$ is constant; see the remark below.

\begin{prop}  Suppose $e \in S^{d-1} \cap \mathbb{R} \mathbb{Z}^{d}$ and let $m_{e} > 0$ be the constant given by \eqref{E: rational period}.  Fix $U \in \mathcal{M}_{e}(\mathbb{R} \times \mathbb{T}^{d})$ for which the corresponding functions $\{u_{\zeta}\}_{\zeta \in \mathbb{R}}$ are right-continuous with respect to $\zeta$.  If $U$ generates more than one function, that is, if there is a $\zeta_{1} \in (0,m_{e})$ such that $u_{0} \neq u_{\zeta_{1}} \neq u_{m_{e}}$, then $\mathcal{M}_{e}(\mathbb{R} \times \mathbb{T}^{d}) \setminus \{T_{s}U\}_{s \in \mathbb{R}}$ is nonempty.  \end{prop}  

Note that such a $\zeta_{1}$ certainly exists if $U$ is continuous.

\begin{proof} Let $h : \mathbb{R} \to \mathbb{R}$ be any right-continuous, non-decreasing function such that $h(\zeta + m_{e}) = h(\zeta) + m_{e}$.  Define functions $\{u^{(h)}_{\zeta}\}_{\zeta \in \mathbb{R}}$ in $\mathbb{R}^{d}$ by 
	\begin{equation*}
		u^{(h)}_{\zeta}(x) = u_{h(\zeta)}(x).
	\end{equation*}
A straightforward computation shows that the function $U^{(h)}$ defined in $\mathbb{R} \times \mathbb{T}^{d}$ by 
	\begin{equation} \label{E: time change}
		U^{(h)}(s,x) = u^{(h)}_{\langle x,e \rangle - s}(x)
	\end{equation}
determines an element of $\mathscr{X}_{+}$ that generates $\{u^{(h)}_{\zeta}\}_{\zeta \in \mathbb{R}}$.  Furthermore, using Theorem \ref{T: ergodic_lemma} and Proposition \ref{P: surface tension}, we can compute its energy:
	\begin{equation*}
		\mathscr{T}^{a}_{e}(U^{(h)}) = m_{e}^{-1} \mathcal{H}^{d-1}(Q_{e})^{-1} \int_{0}^{m_{e}} \mathcal{F}^{a}_{1}(u_{h(\zeta)}; Q_{e} \oplus_{e} \mathbb{R}) \, d\zeta = \mathscr{E}^{a}(e).
	\end{equation*}
Thus, $U^{(h)} \in \mathcal{M}_{e}(\mathbb{R} \times \mathbb{T}^{d})$.  

Define $U^{(h)}$ as above with $h(\zeta) = \lfloor m_{e}^{-1} \zeta \rfloor m_{e}$.  If $\{u^{(h)}_{\zeta}\}_{\zeta \in \mathbb{R}}$ are the functions generated by $U^{(h)}$, then $\{u^{(h)}_{\zeta}\}_{\zeta \in [0,m_{e})} = \{u_{0}\}$.  At the same time, given any $s \in \mathbb{R}$, if $\{u^{(s)}_{\zeta}\}_{\zeta \in \mathbb{R}}$ are the functions generated by $T_{s}U$, then 
	\begin{equation*}
		\{u^{(s)}_{\zeta}\}_{\zeta \in [0,m_{e})} \supseteq \{u_{\zeta_{1} + \lceil m_{e}^{-1}(s - \zeta_{1}) \rceil m_{e}}, u_{\lceil m_{e}^{-1}s \rceil m_{e}} \}.
	\end{equation*}
Hence the assumption on $\zeta_{1}$ implies that $\# \{u^{(s)}_{\zeta}\}_{\zeta \in [0,m_{e})} \geq 2$ and, thus, $\{u^{(h)}_{\zeta}\}_{\zeta \in [0,m_{e})} \neq \{u^{(s)}_{\zeta}\}_{\zeta \in [0,m_{e})}$.  It follows that $U^{(h)} \neq T_{s}U$.  Since $s$ was arbitrary, $U^{(h)} \in \mathcal{M}_{e}(\mathbb{R} \times \mathbb{T}^{d}) \setminus \{T_{s}U\}_{s \in \mathbb{R}}$. \qed
\end{proof}  

\begin{remark} \label{R: symmetry breaking} In the spatially homogeneous case, say, when $a \equiv \lambda \text{Id}$, there is a $U \in \mathcal{M}_{e}(\mathbb{R} \times \mathbb{T}^{d})$ given by $U(s,x) = q(s)$ for some $q : \mathbb{R} \to [-1,1]$.  Further, if $W$ satisfies \eqref{A: W_assumption_2}, then $q$ is unique up to translations and $q' > 0$ (see, e.g., \cite{alberti guide}).  This symmetry is not necessarily shared by all minimizers. 

Indeed, if we define $U^{(h)} \in \mathcal{M}_{e}(\mathbb{R} \times \mathbb{T}^{d})$ as in \eqref{E: time change}, then 
	\begin{equation*}
		U^{(h)}(s,x) = q(\langle x,e \rangle - h(\langle x,e \rangle - s)).
	\end{equation*} 
Here $U^{(h)}$ will have a non-trivial dependence on the $x$ variable if $h$ is nonlinear.  \end{remark}	

\subsection{On laminar media}  \label{S: laminar symmetry} At this stage, it is appropriate to mention an important distinction between the general and laminar settings.  Recall that $a$ is laminar if there is a $k \in \{1,2,\dots,d-1\}$ such that 
	\begin{equation}
		a(x + y) = a(x) \quad \text{for} \, \, x \in \mathbb{R}^{d}, \, \, y \in \mathbb{Z}^{k} \times \mathbb{R}^{d-k}. \label{E: laminar}
	\end{equation}
When $a$ and $W$ are regular enough, this symmetry is passed on to the pulsating standing waves.  We will see in the next sections that this has interesting ramifications.

The consequences of the laminarity assumption can be stated particularly easily in cylindrical coordinates, as we now show.

\begin{prop} \label{P: laminar symmetry} Suppose that $a$ is laminar, that is, \eqref{E: laminar} holds for some $k \in \{1,2,\dots,d-1\}$, and that assumptions \eqref{A: a_assumption_1}, \eqref{A: W_assumption_1}, \eqref{A: a_assumption_2}, and \eqref{A: W_assumption_2} hold.  If $e \in (S^{d-1} \setminus \mathbb{R} \mathbb{Z}^{d}) \cup (S^{d-1} \cap (\mathbb{R}^{k} \times \{0\}))$ and $U_{e} \in \mathcal{M}_{e}(\mathbb{R} \times \mathbb{T}^{d})$, then 
	\begin{equation} \label{E: laminar symmetry}
		\partial_{x_{i}} U_{e} \equiv 0 \quad \text{for each} \, \, i \in \{k + 1,\dots,d\}.
	\end{equation}
\end{prop}  

In other words, if $a$ only depends on the first $k$ coordinates, then any element of $\mathcal{M}_{e}(\mathbb{R} \times \mathbb{T}^{d})$ only depends on $(s,x_{1},\dots,x_{k})$.

\begin{proof}  Notice that it suffices to show that, for each $y \in \{0\} \times \mathbb{R}^{d-k}$, 
	\begin{equation} \label{E: desired symmetry property}
		U_{e}(s,x + y) = U_{e}(s,x) \quad \text{for a.e.} \, \,  (s,x) \in \mathbb{R} \times \mathbb{T}^{d}.
	\end{equation}
Furthermore, by Proposition \ref{P: continuity of the minimizers}, we can assume without loss of generality that the functions $\{u^{e}_{\zeta}\}_{\zeta \in \mathbb{R}}$ generated by $U_{e}$ are right-continuous.  

We split the proof into two cases: (i) $e \in S^{d-1} \setminus \mathbb{R} \mathbb{Z}^{d}$ and (ii) $e \in \mathbb{R} \mathbb{Z}^{k} \times \{0\}$.

\textbf{Case (i): $e \in S^{d-1} \setminus \mathbb{R} \mathbb{Z}^{d}$}

Fix a $y \in \{0\} \times \mathbb{R}^{d-k}$.  Since $a(x + y) = a(x)$ for all $x \in \mathbb{R}^{d}$, it follows that the function $U_{e}^{(y)}(s,x) = U_{e}(s,x+y)$ is an element of $\mathcal{M}_{e}(\mathbb{R} \times \mathbb{T}^{d})$.  At the same time,
	\begin{equation*}
		\int_{\mathbb{T}^{d}} U_{e}^{(y)}(0,x) \, dx = \int_{\mathbb{T}^{d}} U_{e}(0,x + y) \, dx = \int_{\mathbb{T}^{d}} U_{e}(0,x) \, dx.
	\end{equation*}
Therefore, by Proposition \ref{P: uniqueness}, $U_{e}^{(y)} = U_{e}$.

\textbf{Case (ii): $e \in S^{d-1} \cap (\mathbb{R} \mathbb{Z}^{k} \times \{0\})$} 

As in the previous step, fix $y \in \{0\} \times \mathbb{R}^{d-k}$, define $U_{e}^{(y)}(s,x) = U_{e}(s,x+y)$, and let $\{u_{\zeta}\}_{\zeta \in \mathbb{R}}$ and $\{u^{(y)}_{\zeta}\}_{\zeta \in \mathbb{R}}$ denote the functions generated by $U_{e}$ and $U_{e}^{(y)}$ respectively.  We claim that $u_{\zeta}^{(y)} \equiv u_{\zeta}$ in $\mathbb{R}^{d}$ independently of $\zeta \in \mathbb{R}$.

To see this, fix $\zeta \in \mathbb{R}$ and recall from Remark \ref{R: periodic representation} that $u_{\zeta}$ and $u_{\zeta}^{(y)}$ are minimizers of the functional $\mathcal{F}_{1}^{a}(\cdot; Q_{e} \oplus_{e} \mathbb{R})$.  Therefore, $u_{\zeta} \wedge u_{\zeta}^{(y)}$ and $u_{\zeta} \vee u_{\zeta}^{(y)}$ are also minimizers.  By the strong maximum principle (cf.\ \cite[Corollary A.3]{valdinoci de la llave}), these two functions must be strictly ordered.  Hence $u_{\zeta} > u_{\zeta}^{(y)}$, $u_{\zeta} < u_{\zeta}^{(y)}$, or $u_{\zeta} \equiv u_{\zeta}^{(y)}$.

At this stage, let us notice that $\langle y,e \rangle = 0$ and, thus, $u^{(y)}_{\zeta} = u_{\zeta}(\cdot + y)$ by definition.

Next, observe that if $u_{\zeta} > u_{\zeta}^{(y)}$, then this implies $u_{\zeta}(x) > u_{\zeta}(x + ny)$ for each $n \in \mathbb{N}$ and $x \in \mathbb{R}^{d}$.  To see that this is impossible, recall that $u_{\zeta}$ is a function in $\mathbb{T}^{d-1}_{e} \oplus_{e} \mathbb{R}$, and it is continuous due to the regularity of $a$ and $W'$.  Further, there are sequences $(n_{j})_{j \in \mathbb{N}} \subseteq \mathbb{N}$ and $(k_{j})_{j \in \mathbb{N}} \subseteq \{0\} \times \mathbb{Z}^{d - k}$ such that $n_{j} y - k_{j} \to 0$ as $j \to \infty$.  Since, by the choice of $e$, we know that $\{k_{j} \, \mid \, j \in \mathbb{N}\} \subseteq \{0\} \times \mathbb{Z}^{d -k} \subseteq M_{e}$, we obtain a contradiction as follows: 
	\begin{equation*}
		u_{\zeta}(x) > u_{\zeta}(x + y) \geq \lim_{j \to \infty} u_{\zeta}(x + n_{j} y) = \lim_{j \to \infty} u_{\zeta}(x + n_{j} y - k_{j}) = u_{\zeta}(x).
	\end{equation*}
Interchanging $y$ and $-y$, we deduce that the alternative $u_{\zeta} < u_{\zeta}^{(y)}$ is similarly impossible.  Therefore, $u_{\zeta} \equiv u_{\zeta}^{(y)}$.

We showed that $u_{\zeta}^{(y)} = u_{\zeta}$ for each $y \in \{0\} \times \mathbb{R}^{d-k}$ and $\zeta \in \mathbb{R}$.  Therefore, \eqref{E: desired symmetry property} holds. \qed  \end{proof}

As a direct consequence of the previous result, we deduce that, in the laminar setting, it is always possible to find minimizers with the same symmetry properties as $a$.

\begin{prop} \label{P: weak symmetry} If $a$ satisfies \eqref{E: laminar} and assumptions \eqref{A: a_assumption_1}, \eqref{A: W_assumption_1}, \eqref{A: a_assumption_2}, and \eqref{A: W_assumption_2} all hold, then, for each $e \in S^{d-1}$, there is a $U_{e} \in \mathcal{M}_{e}(\mathbb{R} \times \mathbb{T}^{d})$ that only depends on $(s,x_{1},\dots,x_{k})$, i.e., for which \eqref{E: laminar symmetry} holds.  \end{prop}  

As shown by Remark \ref{R: symmetry breaking}, in the generality of Proposition \ref{P: weak symmetry}, we cannot assert that \emph{all} elements of $\mathcal{M}_{e}(\mathbb{R} \times \mathbb{T}^{d})$ are independent of $(x_{k + 1},\dots,x_{d})$ as in Proposition \ref{P: laminar symmetry}, only that some are.

	\begin{proof}  The previous result shows this is true when $e \notin \mathbb{R} \mathbb{Z}^{d}$.  If $e \in \mathbb{R} \mathbb{Z}^{d}$, we can nonetheless choose $(e_{n})_{n \in \mathbb{N}} \subseteq S^{d-1} \setminus \mathbb{R} \mathbb{Z}^{d}$ such that $e = \lim_{n \to \infty} e_{n}$.  Fix $(U_{e_{n}})_{n \in \mathbb{N}}$ so that, for each $n$, $U_{e_{n}} \in \mathcal{M}_{e_{n}}(\mathbb{R} \times \mathbb{T}^{d})$ and $\int_{\mathbb{R} \times \mathbb{T}^{d}} U_{e_{n}}(0,x) \, dx = 0$.  By Proposition \ref{P: compactness}, there is a $U \in \mathcal{M}_{e}(\mathbb{R} \times \mathbb{T}^{d})$ such that, up to the extraction of a subsequence, $U_{e_{n}} \to U$ a.s.\ in $\mathbb{R} \times \mathbb{T}^{d}$.  Since Proposition \ref{P: laminar symmetry} implies $\partial_{x_{i}} U_{e_{n}} = 0$ independently of $i \in \{k + 1,\dots,d\}$ and $n \in \mathbb{N}$, $U$ inherits the same property in the limit. \qed \end{proof}  

The last two propositions show that, in effect, when $a$ is laminar, the Lagrangian can be lowered to the cylinder $\mathbb{R} \times \mathbb{T}^{k}$.   Indeed, if $U_{e}$ is a minimizer depending only on $(s,x_{1},\dots,x_{k})$, then its energy can be computed using an integral in $\mathbb{R} \times \mathbb{T}^{k}$:
	\begin{equation*}
		\mathscr{T}^{a}_{e}(U_{e}) = \int_{\mathbb{R} \times \mathbb{T}^{k}} \left(\frac{1}{2} \langle a(x) \mathcal{D}_{e} U_{e}, \mathcal{D}_{e} U_{e} \rangle + W(U_{e}) \right) \, dx \, ds.
	\end{equation*}
The fact of the matter is, at the expense of additional technical complications, we could have worked in $\mathbb{R} \times \mathbb{T}^{k}$ and defined the Lagrangian in this way from the beginning.

Of course, as we already saw in the proof of Proposition \ref{P: laminar symmetry}, if $U_{e}$ depends only on $(s,x_{1},\dots,x_{k})$, then this symmetry will be reflected by the functions $\{u^{e}_{\zeta}\}_{\zeta \in \mathbb{R}}$ it generates.  The next remark comments on one of the useful consequences of this.

\begin{remark} \label{R: symmetry birkhoff}  If $U_{e} \in L^{\infty}(\mathbb{R} \times \mathbb{T}^{d})$, $\partial_{s} U_{e} \geq 0$, and $U_{e}$ depends only on the variables $(s,x_{1},\dots,x_{k})$, then the functions $\{u^{e}_{\zeta}\}_{\zeta \in \mathbb{R}}$ generated by $U_{e}$ in the $e$ direction satisfy the $(\mathbb{Z}^{k} \times \mathbb{R}^{d- k})$-Birkhoff property.  More precisely, for each $\zeta \in \mathbb{R}$, the function $u^{e}_{\zeta}$ satisfies Definition \ref{D: birkhoff} with $M \mathbb{Z}^{d}$ replaced by $\mathbb{Z}^{k} \times \mathbb{R}^{d-k}$.  

This applies, in particular, to the functions of Propositions \ref{P: laminar symmetry} and \ref{P: weak symmetry}.    \end{remark}

The final result of this section shows that it is always possible to find continuous pulsating standing waves in certain directions in the laminar setting. 

	\begin{prop} \label{P: laminar foliations}  Fix $k < d$ and suppose that $e \in S^{d-1} \setminus (S^{k-1} \times \{0\})$.  If $a$ satisfies \eqref{E: laminar}, \eqref{A: a_assumption_1} and \eqref{A: W_assumption_1} hold, and $U \in \mathcal{M}_{e}(\mathbb{R} \times \mathbb{T}^{d})$ depends only on $(s,x_{1},\dots,x_{k})$, that is, \eqref{E: laminar symmetry} holds, then $U \in UC(\mathbb{R} \times \mathbb{T}^{d})$.  Moreover, if $\{u_{\zeta}\}_{\zeta \in \mathbb{R}}$ are the functions generated by $U$, then there is a $v \in \{0\} \times \mathbb{R}^{d-k}$ such that, for each $\zeta \in \mathbb{R}$,
		\begin{equation} \label{E: translation generation}
			u_{\zeta} = u_{0}(\cdot - \zeta v) \quad \text{in} \, \, \mathbb{R}^{d}.
		\end{equation}  \end{prop}  
		
			\begin{proof}  Fix $e' \in S^{d-1} \cap (\mathbb{R}^{k} \times \{0\})$ such that $\langle e', e \rangle > 0$.  Using \eqref{E: laminar symmetry}, we we can write
				\begin{align*}
					u_{\zeta}(x) &= U(\langle x,e \rangle - \zeta, x) \\
						&= U(\langle x - \zeta \langle e, e' \rangle^{-1}e', e \rangle, x - \zeta \langle e, e' \rangle^{-1} e') = u_{0}(x - \zeta \langle e, e' \rangle^{-1} e').
				\end{align*}
			This gives the desired expression for $u_{\zeta}$ with $v = \langle e, e' \rangle^{-1} e'$.
			
			We know that $u_{0}$ is uniformly continuous in $\mathbb{R}^{d}$ by Proposition \ref{P: continuity of the minimizers}.  Thus, the previous formula implies that $\zeta \mapsto u_{\zeta}$ is continuous.  Invoking Proposition \ref{P: continuity of the minimizers} again, we conclude that $U \in UC(\mathbb{R} \times \mathbb{T}^{d})$. \qed   \end{proof}  
  
\subsection{Continuous Pulsating Standing Waves as Minimizers}  Next, we prove Proposition \ref{P: continuous_minimizers} concerning continuous pulsating standing waves.  The argument given below was presented, in a slightly different context, by Cabr\'{e} during the conference ``Calculus of Variations and Nonlinear Partial Differential Equations" at Columbia University in 2016; he attributes it to Caffarelli.  This trick is sometimes called a ``sliding argument."

	\begin{proof}[Proof of Proposition \ref{P: continuous_minimizers}]  Assume that $U \in C(\mathbb{R} \times \mathbb{T}^{d}; [-1,1])$ satisfies $\mathcal{D}_{e}^{*}(a(x) \mathcal{D}_{e} U) + W'(U) = 0$ and $\partial_{s} U \geq 0$ in the distributional sense in $\mathbb{R} \times \mathbb{T}^{d}$ and
		\begin{equation*}
			\lim_{s \to \pm \infty} T_{-s}U = \pm 1 \quad \text{in} \, \, L^{1}_{\text{loc}}(\mathbb{R} \times \mathbb{T}^{d}).
		\end{equation*}  
	Let $\{u_{\zeta}\}_{\zeta \in \mathbb{R}}$ be the functions generated by $U$.  Arguing as in the proof of Proposition \ref{P: plane_like_minimizers}, we find that for a.e.\ $\zeta \in \mathbb{R}$, the function $u_{\zeta}$ is a distributional solution of $- \text{div}(a(x) Du_{\zeta}) + W'(u_{\zeta}) = 0$ in $\mathbb{R}^{d}$.  Since $(x,\zeta) \mapsto u_{\zeta}(x)$ is bounded and continuous, every member of the family is necessarily a distributional solution, and assumptions \eqref{A: a_assumption_2} and \eqref{A: W_assumption_2} together imply $\{u_{\zeta}\}_{\zeta \in \mathbb{R}} \subseteq C^{2,\alpha}(\mathbb{R}^{d})$. 
	
	Fix $\zeta \in \mathbb{R}$.  We claim that $u_{\zeta}$ is a Class A minimizer of $\mathcal{F}^{a}_{1}$.  To see this, fix $R > 0$ and pick $w \in H^{1}(B(0,R);[-1,1])$ such that
		\begin{align*}
			\mathcal{F}^{a}_{1}(w; B(0,R)) &= \inf \left\{ \mathcal{F}^{a}_{1}(u_{\zeta} + f; B(0,R)) \, \mid \, f \in C^{\infty}_{c}(B(0,R);[-1,1]) \right\}, \\
			w &= u_{\zeta} \, \, \text{on} \, \, \partial B(0,R).
		\end{align*}  
	Since \eqref{A: a_assumption_2} and \eqref{A: W_assumption_2} are in force, $w$ extends to a continuous function in $\overline{B(0,R)}$ (cf.\ \cite[Theorem 8.34]{Gilbarg Trudinger}), and the strong maximum principle (cf.\ \cite[Corollary A.3]{valdinoci de la llave}) implies 
		\begin{equation*}
			-1 < \min \left\{w(x) \, \mid \, x \in \overline{B(0,R)} \right\} \leq \max \left\{w(x) \, \mid \, x \in \overline{B(0,R)} \right\} < 1.
		\end{equation*}  
	
	Henceforth, let $\zeta_{1}, \zeta_{2} \in [-\infty,\infty]$ be defined by 
		\begin{align*}
			\zeta_{1} = \sup \left\{ \zeta' \in \mathbb{R} \, \mid \, u_{\zeta'} > w \, \, \text{in} \, \, B(0,R) \right\}, \\
			\zeta_{2} = \inf \left\{ \zeta' \in \mathbb{R} \, \mid \, u_{\zeta'} < w \, \, \text{in} \, \, B(0,R) \right\}.
		\end{align*}
	Since $u_{\zeta} \to \pm 1$ locally uniformly as $\zeta \to \pm \infty$, it follows that $- \infty < \zeta_{1} \leq \zeta_{2} < \infty$.  
	
	From the inequality $u_{\zeta_{1}} \geq u_{\zeta}$ on $\partial B(0,R)$ and the ordering of $\{u_{\zeta}\}_{\zeta \in \mathbb{R}}$, we know that $\zeta_{1} \leq \zeta$.  Similarly, $\zeta \leq \zeta_{2}$.  In particular, this implies $u_{\zeta_{2}} \leq u_{\zeta} \leq u_{\zeta_{1}}$ in $\mathbb{R}^{d}$.
	
	We claim that $u_{\zeta_{1}} = u_{\zeta} = u_{\zeta_{2}}$ in $\mathbb{R}^{d}$.  To see this, observe that the map $(x,\zeta) \mapsto u_{\zeta}(x)$ is continuous, and, thus, there is an $x_{1} \in \overline{B(0,R)}$ such that $u_{\zeta_{1}}(x_{1}) = w(x_{1})$.  Since $u_{\zeta_{1}} \geq w$, the strong maximum principle implies that we can assume without loss of generality that $x_{1} \in \partial B(0,R)$ (see, e.g., \cite[Corollary A.3]{valdinoci de la llave}).  Thus, $u_{\zeta_{1}}(x_{1}) = u_{\zeta}(x_{1})$.   Since $u_{\zeta}$ and $u_{\zeta_{1}}$ are solutions and $u_{\zeta_{1}} \geq u_{\zeta}$ in the whole space, we conclude that $u_{\zeta} = u_{\zeta_{1}}$.  A similar argument shows $u_{\zeta_{2}} = u_{\zeta}$.  Since $u_{\zeta_{2}} \leq w \leq u_{\zeta_{1}}$ in $\overline{B(0,R)}$ by construction, this implies $u_{\zeta} = w$.      
	
	We showed that if $\zeta \in \mathbb{R}$ and $R > 0$, then 
		\begin{equation*}
			\mathcal{F}^{a}_{1}(u_{\zeta}; B(0,R)) = \inf \left\{ \mathcal{F}^{a}_{1}(u_{\zeta} + f; B(0,R)) \, \mid \, f \in C^{\infty}_{c}(B(0,R); [-1,1]) \right\}.
		\end{equation*}
	Therefore, $\{u_{\zeta}\}_{\zeta \in \mathbb{R}}$ is a family of Class A minimizers of $\mathcal{F}^{a}_{1}$.  
	
	Finally, we prove that $U \in \mathcal{M}_{e}(\mathbb{R} \times \mathbb{T}^{d})$.  To start with, recall that $\partial_{s} U \geq 0$, hence each function in $\{u_{\zeta}\}_{\zeta \in \mathbb{R}}$ satisfies the $\mathbb{Z}^{d}$-Birkhoff property in the $e$ direction.  Moreover, due to elliptic regularity, the limit $\lim_{s \to \pm \infty} U(s,x) = \pm 1$ implies that each of these functions also satisfies \eqref{E: local convergence at infinity}.  Therefore, we can invoke Proposition \ref{P: annoying exponential convergence trick} to deduce that each member of the family $\{u_{\zeta}\}_{\zeta \in \mathbb{R}}$ converges to $\pm 1$ in $C^{1}$ at an exponential rate as $\langle x,e \rangle \to \pm \infty$.  Hence the functions $\{Du_{\zeta}\}_{\zeta \in \mathbb{R}}$ decay exponentially to zero as $\langle x,e \rangle \to \pm \infty$.  Since \eqref{A: W_assumption_2} is in effect, the same can be said of $\{W(u_{\zeta})\}_{\zeta \in \mathbb{R}}$.  Thus, for each $\zeta \in \mathbb{R}$, we have $\limsup_{R \to \infty} R^{1-d} \mathcal{F}^{a}_{1}(u_{\zeta};B(0,R)) < \infty$.  Since $\{u_{\zeta}\}_{\zeta \in \mathbb{R}}$ consists of Class A minimizers, we can argue as in Proposition \ref{P: surface tension} to find
		\begin{equation*}
			\lim_{R \to \infty} R^{1- d} \mathcal{F}^{a}_{1}(u_{\zeta}; B(0,R)) = \tilde{\varphi}^{a}(e) = \mathscr{E}^{a}(e) \quad \text{for each} \, \, \zeta \in \mathbb{R}.
		\end{equation*}
	Thus, by Theorem \ref{T: ergodic_lemma},
		\begin{equation*}
			\mathscr{T}^{a}_{e}(U) = \mathscr{E}^{a}(e)
		\end{equation*}
	and we conclude that $U \in \mathcal{M}_{e}(\mathbb{R} \times \mathbb{T}^{d})$. \qed
	     \end{proof}  
	     
\subsection{Proof of Theorem \ref{T: existence}}  For the reader's convenience, we show how the results of the previous two sections imply Theorem \ref{T: existence} and its corollary.

\begin{proof}[Proofs of Theorem \ref{T: existence} and Corollary \ref{C: new_existence}]  The existence of minimizers was proved in Proposition \ref{P: existence} and \ref{P: remove constraint}.  Statement (2) of the theorem is the result of Proposition \ref{P: uniqueness}, and statement (3) is that of Proposition \ref{P: surface tension}.  

Concerning Corollary \ref{C: new_existence}, Proposition \ref{P: continuity of the minimizers} covers statements (i)-(iv).  Statement (v) follows from Proposition \ref{P: surface tension}.  \qed 
\end{proof}  

\subsection{The mobility} \label{S: mobility}  In this section, we define the mobility $\tilde{M}^{a}$.  Following \cite{barles souganidis}, we would like to define it by $\tilde{M}^{a}(e) = \|\partial_{s} U_{e}\|^{2}_{L^{2}(\mathbb{R} \times \mathbb{T}^{d})}$ for some $U_{e} \in \mathcal{M}_{e}(\mathbb{R} \times \mathbb{T}^{k})$.  When no such minimizer exists with $\partial_{s} U_{e} \in L^{2}(\mathbb{R} \times \mathbb{T}^{d})$, we set $\tilde{M}^{a}(e) = \infty$.  However, this is not well-defined if $e \in S^{d-1} \cap \mathbb{R}\mathbb{Z}^{d}$.  

The difficulty becomes apparent following the construction in Section \ref{S: nonunique}.  Recall the definition of the period $m_{e}$ in \eqref{E: rational period}.  Suppose that $U \in \mathcal{M}_{e}(\mathbb{R} \times \mathbb{T}^{d})$ and $\|\partial_{s} U\|_{L^{2}(\mathbb{R} \times \mathbb{T}^{d})}^{2} < \infty$.  If we fix an $h \in H^{1}_{\text{loc}}(\mathbb{R})$ such that $h(\zeta + m_{e}) = h(\zeta) + m_{e}$, and we define $U^{(h)} \in \mathcal{M}_{e}(\mathbb{R} \times \mathbb{T}^{d})$ according to \eqref{E: time change}, then
	\begin{equation*}
		\|\partial_{s} U^{(h)}\|_{L^{2}(\mathbb{R} \times \mathbb{T}^{d})} = m_{e}^{-1} \int_{0}^{m_{e}} \left\{ \mathcal{H}^{d-1}(Q_{e})^{-1} \int_{Q_{e} \oplus_{e} \mathbb{R}} \partial_{\zeta} u_{\zeta}(x)^{2} \, dx \right\} |h'(\zeta)|^{2} \, d\zeta.
	\end{equation*}
Since the only restriction on the choice of $h$ is that $m_{e}^{-1} \int_{0}^{m_{e}} h'(\zeta) \, d\zeta = 0$, this quantity appears to depend on this choice in general.

Nonetheless, the following definition turns out to be useful.  To start with, given $e \in S^{d-1}$, define $\tilde{M}^{a}(e)$ by 
	\begin{equation}
		\tilde{M}^{a}(e) = \inf \left\{ \|\partial_{s} U\|_{L^{2}(\mathbb{R} \times \mathbb{T}^{d})}^{2} \, \mid \, U \in \mathcal{M}_{e}(\mathbb{R} \times \mathbb{T}^{d}) \right\}.
	\end{equation}
Let us emphasize that, in this definition, the infimum is only necessary when $e \in \mathbb{R} \mathbb{Z}^{d}$ since otherwise the minimizer is unique up to translation.

More generally, for $v \in \mathbb{R}^{d} \setminus \{0\}$, we extend the definition consistently with Remark \ref{R: extension_to_space}: 
	\begin{align}
		\tilde{M}^{a}(v) &= \|v\|^{-1} \tilde{M}^{a}(\|v\|^{-1} v) \nonumber \\
		&= \inf \left\{ \|\partial_{s} U\|_{L^{2}(\mathbb{R} \times \mathbb{T}^{d})}\|^{2} \, \mid \, U \in \mathscr{X}_{+}, \, \, \mathscr{E}^{a}(v) = \mathscr{T}^{a}_{v}(U) \right\}. \label{E: mobility}
	\end{align}
In Proposition \ref{P: regularization} below, we show that this is the mobility selected by elliptic regularization.  

\section{Analysis of the Laminar Case and Elliptic Regularization}  \label{S: einstein relation laminar media}  

The aim of this section is two-fold.  First, we analyze pulsating standing waves in the laminar setting, that is, when $a$ depends on only $k \leq d - 1$ of the $d$ variables.  As we will see, the laminar assumption unlocks a certain amount of regularity in specific directions, which can be exploited to prove a number of results of interest to us.  Most notably, this analysis will prepare the way for the analysis of the sharp interface limit and hence the proof of Theorem \ref{T: sharp_interface_limit_graphs}.

In addition to the laminar case, at the same time, we will also study the elliptic regularization procedure obtained by defining
	\begin{equation*}
		\mathscr{T}^{a,\delta}_{e}(V) = \int_{\mathbb{R} \times \mathbb{T}^{d}} \left( \frac{\delta}{2} |\partial_{s}V|^{2} + \frac{1}{2} \langle a(x) \mathcal{D}_{e} V, \mathcal{D}_{e} V \rangle + W(V) \right) \, dx \, ds.
	\end{equation*}
This is a classical approximation that has been studied in other contexts, and it is natural to seek to use it to draw conclusions about $\mathscr{T}^{a}_{e}$ and its minimizers.  

We prove below that the elliptically regularized functional has smooth minimizers and these provide approximations for both the surface tension $\tilde{\varphi}^{a}$ and mobility $\tilde{M}^{a}$.  We also provide evidence for an a priori upper bound on $(\tilde{M}^{a})^{-1} D^{2} \tilde{\varphi}^{a}$, and we prove Theorem \ref{T: no lower bound}, which establishes an obstruction to a matching lower bound.

The overarching reason that we study the laminar setting and elliptic regularization simultaneously in this section is, as we will see, the mathematical structures involved are closely related.

Throughout this section, assumptions \eqref{A: a_assumption_1}, \eqref{A: W_assumption_1}, \eqref{A: a_assumption_2}, and \eqref{A: W_assumption_2} are all in force. 

\subsection{Laminar Media} \label{S: laminar discussion}  We fix a $k \in \{1,2,\dots,d-1\}$ and assume that $a$ is $\mathbb{T}^{k} \times \mathbb{R}^{d - k}$-periodic, that is, \eqref{E: laminar} holds.
As was already mentioned previously, in this set-up it is convenient to replace $\mathbb{T}^{d}$ by $\mathbb{T}^{k}$ in cylindrical coordinates:
	\begin{equation} \label{E: hidden symmetry}
		\mathscr{T}^{a}_{e}(U) = \int_{\mathbb{R} \times \mathbb{T}^{k}} \left(\frac{\langle a(x) \mathcal{D}_{e} U, \mathcal{D}_{e}U \rangle}{2} + W(U) \right) \, dx \, ds, \quad (\mathcal{D}_{e} = e \partial_{s} + D_{x}).
	\end{equation}
Here $e \in S^{d -1} \subseteq \mathbb{R}^{d}$, $D_{x} = (\partial_{x_{1}},\dots,\partial_{x_{k}},0,\dots,0)$, and, by a slight abuse of notation, we treat $a$ as a function defined in $\mathbb{T}^{k}$.  In effect, we are identifying $\mathbb{T}^{k}$ everywhere with $\mathbb{T}^{k} \times \{0\} \subseteq \mathbb{T}^{d}$.  
	
Notice that if $e \notin S^{k - 1} \times \{0\}$, then $\mathscr{T}^{a}_{e}$ is actually uniformly elliptic in this setting.  Precisely, letting $P_{k}, P_{k}^{\perp} : \mathbb{R}^{d} \to \mathbb{R}^{d}$ denote the orthogonal projections onto $\mathbb{R}^{k} \times \{0\}$ and $\{0\} \times \mathbb{R}^{d - k}$, respectively, we can write
	\begin{align*}
		\frac{\langle a(x) \mathcal{D}_{e}U, \mathcal{D}_{e}U \rangle}{2} &\geq \frac{\lambda}{2} \left(\|P_{k}(e)\partial_{s}U + D_{x}U\|^{2} + \|P_{k}^{\perp}(e)\|^{2} |\partial_{s}U|^{2}\right)
			\geq 0
	\end{align*}
and equality holds if and only if $(\partial_{s}U,D_{x}U) = 0$.  

\subsection{Elliptic Regularization}  More generally, for $k \in \{1,2,\dots,d\}$ and $a$ satisfying \eqref{E: laminar}, $\delta \geq 0$, and $v \in \mathbb{R}^{d} \setminus \{0\}$, we will consider the elliptically regularized functional $\mathscr{T}^{a,\delta}_{v}(U)$ defined by 
	\begin{equation*}
		\mathscr{T}^{a,\delta}_{v}(U) = \int_{\mathbb{R} \times \mathbb{T}^{k}} \left(\frac{\delta}{2} |\partial_{s}U|^{2} + \frac{\langle a(x) \mathcal{D}_{v} U, \mathcal{D}_{v}U \rangle}{2} + W(U) \right) \, dx \, ds.
	\end{equation*}
Let us stress that here we include the case $k = d$.  

Define $\tilde{\varphi}^{a,\delta} : \mathbb{R}^{d} \setminus \{0\} \to [0,\infty)$ by 
	\begin{align}
		\tilde{\varphi}^{a,\delta}(v) &= \inf \left\{ \mathscr{T}_{v}^{a,\delta}(V) \, \mid \, V \in \mathscr{X}^{(k)} \right\}, \label{E: approximate surface tension definition} \\
		\mathscr{X}^{(k)} &= \left\{V \in L^{\infty}(\mathbb{R} \times \mathbb{T}^{k}; [-1,1]) \, \mid \, \lim_{s \to \pm \infty} T_{-s}V = \pm 1 \, \, \text{in} \, \, L^{1}_{\text{loc}}(\mathbb{R} \times \mathbb{T}^{k}) \right\}. \nonumber
	\end{align}
A straightforward argument involving Proposition \ref{P: weak symmetry} shows that $\tilde{\varphi}^{a,0} = \tilde{\varphi}^{a}$.

\subsection{Preliminaries}

In what follows, in order to tackle both the laminar and elliptic regularization contexts simultaneously, the following parameters will be useful.  We will fix a $k \in \{1,2,\dots,d\}$ for which \eqref{E: laminar} holds and choose a $\delta \geq 0$ and a compact set $K \subset \subset \mathbb{R}^{d} \setminus \{0\}$.  We have the following two sets of assumptions: 
	\begin{gather}
		\delta > 0, \quad \text{or}  \label{E: elliptic regularization assumption} \\
		k < d \quad \text{and} \quad K \subset \subset \mathbb{R}^{d} \setminus (\mathbb{R}^{k} \times \{0\}). \label{E: laminar assumption}
	\end{gather}
The laminar setting is covered by \eqref{E: laminar assumption}, while \eqref{E: elliptic regularization assumption} corresponds to elliptic regularization.  

In view of the preceding considerations, if one of \eqref{E: elliptic regularization assumption} or \eqref{E: laminar assumption} holds, then there are constants $\mu(\delta,K), M(\delta,K) > 0$ such that, for each $(t,p) \in \mathbb{R} \times (\mathbb{R}^{k} \times \{0\}) \subseteq \mathbb{R} \times \mathbb{R}^{d}$,
		\begin{equation} \label{E: extra ellipticity}
			\mu(\delta,K)(t^{2} + \|p\|^{2}) \leq \delta t^{2} + \langle a(x) (t v + p), tv + p \rangle \leq M(\delta,K) (t^{2} + \|p\|^{2}).
		\end{equation}
This uniform ellipticity will be essential in the results that follow.

First, due to uniform ellipticity, $\mathscr{T}^{a,\delta}_{v}$ fits into the framework of functionals considered in previous works, such as \cite{rabinowitz stredulinsky}.  Thus, as we prove next, it has smooth minimizers.  It is also important to notice that its coefficients are invariant under translations in the $s$ variable.  Hence, as we will see, these minimizers are unique up to translations.  The most subtle point that we will exploit, which leverages the regularity of the coefficients, is we can localize the zone in which minimizers transition from $1$ to $-1$ in a quantitative way.  This will be useful later when we differentiate minimizers with respect to the direction.

\begin{prop} \label{P: exponential decay pulsating} Fix $\delta \geq 0$, $k \in \{1,2,\dots,d\}$, and $K \subset \subset \mathbb{R}^{d} \setminus \{0\}$ satisfying \eqref{E: elliptic regularization assumption} or \eqref{E: laminar assumption}.  There is a unique $U_{v}^{\delta} \in UC(\mathbb{R} \times \mathbb{T}^{k} ; [-1,1])$ such that
	\begin{gather*}
		\mathscr{T}^{a,\delta}_{v}(U^{\delta}_{v}) = \tilde{\varphi}^{a,\delta}(v), \quad \lim_{s \to \pm \infty} U^{\delta}_{v}(s,x) = \pm 1, \quad \text{and} \quad \int_{\mathbb{T}^{k}} U_{v}^{\delta}(0,x) \, dx = 0.
	\end{gather*}
The map $v \mapsto U^{\delta}_{v}$ sends $K$ continuously into $BC(\mathbb{R} \times \mathbb{T}^{k})$ with the uniform norm topology.  Further, there is a constant $C > 1$ depending only on $K$, $a$, $W$, and $\delta$ such that 
	\begin{align}
		1 - U^{\delta}_{v}(s,x) &\leq C e^{-C^{-1}s}, \label{E: exponential pulsating estimate 1}\\
		U^{\delta}_{v}(s,x) +1  &\leq C e^{C^{-1}s}, \label{E: exponential pulsating estimate 2} \\
		|U^{\delta}_{v}(s,x) - U^{\delta}_{v}(s',x')| &\leq C \left(\|x - x'\| + |s - s'|\right)^{\frac{1}{C}}. \label{E: holder estimate}
	\end{align}
\end{prop}  

The proposition follows from \cite[Theorem 2.3]{valdinoci de la llave}, which, in turn, is a consequence of results in \cite{valdinoci}.  The main ingredient that we need that is not made explicit in \cite{valdinoci de la llave} is the dependence of the constants.  Accordingly, for the reader's convenience, we give a complete proof based on \cite{valdinoci} in Appendix \ref{A: annoying uniformly elliptic part}.

Notice that the minimization property of $U^{\delta}_{v}$ implies that it is a weak solution of the uniformly elliptic PDE
	\begin{equation}
		- \delta^{2} \partial_{s}^{2} U^{\delta}_{v} + \mathcal{D}_{v}^{*}(a(x) \mathcal{D}_{v} U^{\delta}_{v}) + W'(U^{\delta}_{v}) = 0 \quad \text{in} \, \, \mathbb{R} \times \mathbb{T}^{k}. \label{E: pulsating standing wave with regularization}
	\end{equation}
Much of the remainder of the section proceeds by exploiting properties of the linearized version of this equation.

To start with, we observe that \eqref{A: W_assumption_2} implies an exponential decay property for solutions of the linearized equation.

	\begin{prop} \label{P: linear equation estimate}  Fix $\delta \geq 0$, $k \in \{1,2,\dots,d\}$, and $K \subset \subset \mathbb{R}^{d} \setminus \{0\}$ satisfying \eqref{E: elliptic regularization assumption} or \eqref{E: laminar assumption}.  Suppose that $F \in C_{0}(\mathbb{R} \times \mathbb{T}^{k})$ and $G \in BC(\mathbb{R} \times \mathbb{T}^{k})$ are such that, for some $\Gamma > 0$ and $\alpha_{0} \in (0,\alpha)$,
		\begin{align*}
			|F(s,x)| &\leq \Gamma^{-1} \|F\|_{L^{\infty}(\mathbb{R} \times \mathbb{T}^{k})} e^{-\Gamma |s|} \quad \text{for} \, \, (s,x) \in \mathbb{R} \times \mathbb{T}^{k}
		\end{align*}
and $\|G\|_{L^{\infty}(\mathbb{R} \times \mathbb{T}^{k})} \leq \alpha_{0}$.  If $Q \in C_{0}(\mathbb{R} \times \mathbb{T}^{k}) \cap L^{2}(\mathbb{R} \times \mathbb{T}^{k})$ is a viscosity solution of the PDE
		\begin{equation} \label{E: linearized PDE}
			- \delta \partial_{s}^{2} Q + \mathcal{D}_{v}^{*}(a(x) \mathcal{D}_{v}Q) + (W''(U_{v}^{\delta}) + G) Q = F \quad \text{in} \, \, \mathbb{R} \times \mathbb{T}^{k},
		\end{equation}
	then there is a $\beta > 0$ depending only on $\delta$, $W$, $\alpha_{0}$, $K$, $\Gamma$, the ellipticity constants $\lambda$ and $\Lambda$ of \eqref{A: a_assumption_1}, and the constant $C$ of Proposition \ref{P: exponential decay pulsating} such that 
		\begin{equation} \label{E: better estimate}
			|Q(s,x)| \leq \beta^{-1} (\|F\|_{L^{\infty}(\mathbb{R} \times \mathbb{T}^{k})} \vee 1) \|Q\|_{L^{\infty}(\mathbb{R} \times \mathbb{T}^{k})} e^{-\beta |s|} \quad \text{for} \, \, (s,x) \in \mathbb{R} \times \mathbb{T}^{k}.
		\end{equation}
	\end{prop}  
	
	Note that it is not possible to remove the $\|Q\|_{L^{\infty}(\mathbb{R} \times \mathbb{T}^{k})}$ term from the right-hand side of \eqref{E: better estimate}.  Indeed, as we will see below, $\partial_{s} U^{\delta}_{v}$ solves \eqref{E: linearized PDE} with $F = G = 0$.  Hence, when $G = 0$, we can always replace $Q$ by $Q + c \partial_{s} U^{\delta}_{v}$ for some $c \in \mathbb{R}$, which makes $\|Q\|_{L^{\infty}(\mathbb{R} \times \mathbb{T}^{d})}$ arbitrarily large without changing $F$.  
	
		\begin{proof}  First, observe that it suffices to prove the result when $\|F\|_{L^{\infty}(\mathbb{R} \times \mathbb{T}^{k})} \leq 1$.  Indeed, this is the case since, by linearity, we can replace $F$ by $(\|F\|_{L^{\infty}(\mathbb{R} \times \mathbb{T}^{k})} \vee 1)^{-1}F$ and $Q$ by $(\|F\|_{L^{\infty}(\mathbb{R} \times \mathbb{T}^{k})} \vee 1)^{-1} Q$.
		
		By the exponential estimates \eqref{E: exponential pulsating estimate 1} and \eqref{E: exponential pulsating estimate 2} of Proposition \ref{P: exponential decay pulsating}, assumption \eqref{A: W_assumption_2}, and the assumption that $\|G\|_{L^{\infty}(\mathbb{R} \times \mathbb{T}^{k})} < \alpha$, there is an $M > 0$ depending only on $C$ and $W$ such that 
			\begin{equation*}
				W''(U^{\delta}_{v}(s,x)) + G(s,x) \geq \frac{1}{2}(\alpha - \alpha_{0}) \quad \text{if} \, \, (s,x) \in (\mathbb{R} \setminus [-M,M]) \times \mathbb{T}^{k}.
			\end{equation*}
		We will use this fact to obtain an upper bound on $Q$ in $(\mathbb{R} \setminus [-M,M]) \times \mathbb{T}^{k}$.  It is convenient to start by working in $[M,\infty) \times \mathbb{T}^{k}$.  
		
		We seek a bound of the form $\overline{Q}(s) = \beta^{-1} \|Q\|_{L^{\infty}(\mathbb{R} \times \mathbb{T}^{k})} e^{-\beta s}$.  Differentiating, we note that
			\begin{align*}
				-\delta \partial_{s}^{2} \overline{Q} &+ \mathcal{D}_{v}^{*}(a(x) \mathcal{D}_{v} \overline{Q}) + (W''(U^{\delta}_{v}) + G(s,x)) \overline{Q} \\
				&\geq \left(-\delta \beta^{2} - \Lambda \|v\|^{2} \beta^{2} - \|\text{div} \, a\|_{L^{\infty}(\mathbb{T}^{k})} \|v\| \beta + \frac{1}{2} \left(\alpha - \alpha_{0} \right) \right) \overline{Q} \\ 
				&\quad \text{in} \, \, (M,\infty) \times \mathbb{T}^{k}.
			\end{align*}
		Hence, fixing $\beta > 0$ small enough depending only on $\delta$, $\Lambda$, $\|\text{div} a\|_{L^{\infty}(\mathbb{T}^{k})}$, $\|F\|_{L^{\infty}(\mathbb{R} \times \mathbb{T}^{k})}$, $\alpha - \alpha_{0}$, and $\Gamma$, we obtain
			\begin{align*}
				-\delta \partial_{s}^{2} \overline{Q} + \mathcal{D}_{v}^{*}(a(x) \mathcal{D}_{v} \overline{Q}) + (W''(U^{\delta}_{v}) + G) \overline{Q} &> \Gamma^{-1} e^{-\Gamma s} \quad \text{in} \, \, (M,\infty) \times \mathbb{T}^{k}.
			\end{align*}
		Further, making $\beta$ smaller if necessary, we have $\overline{Q} \geq Q$ on $\{M\} \times \mathbb{T}^{k}$.  Applying the maximum principle to $\overline{Q} - Q$, we conclude that $Q \leq \overline{Q}$ in $[M,\infty) \times \mathbb{T}^{k}$.  
		
	A similar argument applies in the interval $(-\infty,-M] \times \mathbb{T}^{k}$.  \qed \end{proof}  
	
When it comes to studying solutions of the linearized equation, it will be important to note that the derivative $\partial_{s} U^{\delta}_{v}$ is a positive element of the kernel of the linearized operator.  The next result contains the main properties of this function.
	
	\begin{prop} \label{P: eigenfunction_estimate}  If $\delta \geq 0$, $k \in \{1,2,\dots,d\}$, and $K \subset \subset \mathbb{R}^{d} \setminus \{0\}$ are chosen such that \eqref{E: elliptic regularization assumption} or \eqref{E: laminar assumption} hold and $U^{\delta}_{v}$ is the minimizer of $\mathscr{T}^{\delta}_{v}$ from Proposition \ref{P: exponential decay pulsating}, then $\partial_{s} U^{\delta}_{v} \in L^{2}(\mathbb{R} \times \mathbb{T}^{k})$ and $\partial_{s} U_{v}^{\delta} > 0$ in $\mathbb{R} \times \mathbb{T}^{k}$.  Furthermore, there is a constant $\gamma > 0$ depending only on $\delta$, $W$, $K$, the ellipticity constants of \eqref{A: a_assumption_1}, and the constant $C$ of Proposition \ref{P: exponential decay pulsating} such that
		\begin{equation}
			\partial_{s}U_{v}^{\delta}(s,x) \leq \gamma^{-1} \sqrt{\mu(\delta,K)^{-1} \tilde{\varphi}^{a,\delta}(v)} e^{-\gamma |s|} \quad \text{for} \, \, (s,x) \in \mathbb{R} \times \mathbb{T}^{k}. \label{E: eigenfunction estimate}
		\end{equation}
	\end{prop}
	
	\begin{proof}  We start by proving that $\partial_{s} U^{\delta}_{v}$ is non-negative, then we show it is in $L^{2}(\mathbb{R} \times \mathbb{T}^{k})$, and we conclude by proving positivity and the exponential estimate.  
	
	\textbf{Step 1: $\partial_{s}U^{\delta}_{v} \geq 0$}  
	
	First, we show that $\partial_{s} U_{v}^{\delta} \geq 0$ in $\mathbb{R} \times \mathbb{T}^{k}$.  Indeed, given $s_{0} > 0$, if we define $U(s,x) = U^{\delta}_{v}(s + s_{0},x)$, then the corresponding properties of $U^{\delta}_{v}$ imply that
		\begin{equation*}
			\mathscr{T}^{a,\delta}_{v}(U) < \infty \quad \text{and} \quad \mathscr{T}^{a,\delta}_{v}(U) \leq \mathscr{T}^{a,\delta}_{v}(U + f) \quad \text{if} \, \, f \in C^{\infty}_{c}(\mathbb{R} \times \mathbb{T}^{k}; [-1,1]).
		\end{equation*}
	Furthermore, a straightforward computation shows that the minimum $U_{m} = U^{\delta}_{v} \wedge U$ and the maximum $U_{M} = U^{\delta}_{v} \vee U$ have the same properties.  In particular, $U_{m}$ and $U_{M}$ are weak solutions of the Euler-Lagrange equation $- \delta \partial_{s}^{2} U_{\bullet} + \mathcal{D}_{v}^{*}(a(x) \mathcal{D}_{v} U_{\bullet}) + W'(U_{\bullet}) = 0$ in $\mathbb{R} \times \mathbb{T}^{k}$ and $U_{m} \leq U_{M}$.  By the strong maximum principle, either $U_{m} < U_{M}$ everywhere or else $U_{m} = U_{M}$.  
	
	If $U_{m} = U_{M}$, then $U^{\delta}_{v}(s + n s_{0},x) = U^{\delta}_{v}(s,x)$ for each $(s,x) \in \mathbb{R} \times \mathbb{T}^{k}$ and $n \in \mathbb{N}$.  This is impossible since $U^{\delta}_{v}(s,x) \to \pm 1$ as $s \to \pm \infty$.  Therefore, $U_{m} < U_{M}$ holds everywhere in $\mathbb{R} \times \mathbb{T}^{k}$.  Hence either $U^{\delta}_{v}(s,x) < U^{\delta}_{v}(s + s_{0},x)$ in $\mathbb{R} \times \mathbb{T}^{k}$ or $U^{\delta}_{v}(s,x) > U^{\delta}_{v}(s + s_{0},x)$.  Again, the asymptotic behavior of $U^{\delta}_{v}$ as $s \to \pm \infty$ rules out all but one option.  We conclude that $U^{\delta}_{v}(s + s_{0},x) > U^{\delta}_{v}(s,x)$ for all $(s,x) \in \mathbb{R} \times \mathbb{T}^{k}$.  
	
	Since $s_{0} > 0$ was arbitrary, we conclude that $\partial_{s} U^{\delta}_{v} \geq 0$ holds in $\mathbb{R} \times \mathbb{T}^{k}$.  
	
	\textbf{Step 2: $\partial_{s} U^{\delta}_{v} \in L^{2}(\mathbb{R} \times \mathbb{T}^{k})$} 
	
	We can invoke \eqref{E: extra ellipticity} directly to obtain the estimate
		\begin{equation} \label{E: easy ellipticity thing}
			\mu(\delta,K) \int_{\mathbb{R} \times \mathbb{T}^{k}} |\partial_{s} U^{\delta}_{v}|^{2} \, dx \,ds \leq \mathscr{T}^{a,\delta}_{v}(U^{\delta}_{v}) < \infty.
		\end{equation}
	This proves $\partial_{s}U_{v}^{\delta} \in L^{2}(\mathbb{R} \times \mathbb{T}^{k})$.
	
	 \textbf{Step 3: Positivity and Exponential estimate}  
	 
	 Finally, observe that $V^{\delta}_{v} := \partial_{s}U_{v}^{\delta}$ is a weak solution of the uniformly elliptic PDE 
	 	\begin{equation*}
			-\delta_{s}^{2}V^{\delta}_{v} + \mathcal{D}_{v}^{*}(a(x)\mathcal{D}_{v} V_{v}^{\delta}) + W''(U_{v}^{\delta}) V_{v}^{\delta} = 0 \quad \text{in} \, \, \mathbb{R} \times \mathbb{T}^{k}.
		\end{equation*}
	Therefore, the Harnack inequality \cite[Theorem 8.20]{Gilbarg Trudinger} implies that $V^{\delta} > 0$.
	
	By the local maximum principle (see \cite[Theorem 4.1]{Han Lin} or \cite[Theorem 8.17]{Gilbarg Trudinger}), there is a constant $D > 0$ depending only on $W$ and the ellipticity constants $\mu(\delta,K)$ and $M(\delta,K)$ such tha
		\begin{equation} \label{E: local maximum principle}
			\|V^{\delta}_{v}\|_{L^{\infty}(\mathbb{R} \times \mathbb{T}^{k})} \leq D \|V^{\delta}_{v}\|_{L^{2}(\mathbb{R} \times \mathbb{T}^{k})}.
		\end{equation}
	This proves $V^{\delta}_{v} \in L^{\infty}(\mathbb{R} \times \mathbb{T}^{k})$.  Elliptic regularity now implies that $V^{\delta}_{v} \in UC(\mathbb{R} \times \mathbb{T}^{k}) \cap L^{2}(\mathbb{R} \times \mathbb{T}^{k})$, and hence $V^{\delta}_{v} \in C_{0}(\mathbb{R} \times \mathbb{T}^{k})$.  Combining this and \eqref{E: local maximum principle} with Proposition \ref{P: linear equation estimate}, we obtain an estimate of the form
		\begin{equation*}
			V_{v}^{\delta}(s,x) \leq \gamma^{-1} \|V_{v}^{\delta}\|_{L^{2}(\mathbb{R} \times \mathbb{T}^{k})} e^{-\gamma |s|} \quad \text{for} \, \, (s,x) \in \mathbb{R} \times \mathbb{T}^{k}
		\end{equation*}
	We conclude by observing that \eqref{E: easy ellipticity thing} implies that
		\begin{equation*}
			\mu(\delta,K) \|V_{v}^{\delta}\|_{L^{2}(\mathbb{R} \times \mathbb{T}^{k})}^{2} \leq \mathscr{T}^{a,\delta}_{v}(U^{\delta}_{v}) = \tilde{\varphi}^{a,\delta}(v).
		\end{equation*}   \qed	\end{proof}  
	
\subsection{Analysis of $\mathcal{L}^{\delta}_{v}$}  In this section, we analyze the operator $\mathcal{L}_{v}^{\delta}$ obtained by linearizing the pulsating wave equation around $U_{v}^{\delta}$.  More precisely, for $v \in \mathbb{R}^{d} \setminus \{0\}$, we define the unbounded operator $\mathcal{L}_{v}^{\delta}$ in $L^{2}(\mathbb{R} \times \mathbb{T}^{k})$ as follows:
	\begin{equation*}
		\left\{ \begin{array}{l}
			D(\mathcal{L}_{v}^{\delta}) = H^{2}(\mathbb{R} \times \mathbb{T}^{k}), \\
			\mathcal{L}^{\delta}_{v}\Phi = -\delta \partial_{s}^{2} \Phi + \mathcal{D}_{v}^{*}(a(x) \mathcal{D}_{v}\Phi) + W''(U_{v}^{\delta}) \Phi.
		\end{array} \right.
	\end{equation*}

Throughout the remainder of this section, we will write $V_{v}^{\delta} := \partial_{s} U_{v}^{\delta}$ for convenience.  As was noted already in the proof of Proposition \ref{P: eigenfunction_estimate}, this function is in the kernel of $\mathcal{L}^{\delta}_{v}$, that is, $\mathcal{L}^{\delta}_{v} V^{\delta}_{v} = 0$.  

The first result we will need is useful representation of the quadratic form determined by $\mathcal{L}^{\delta}_{v}$.

\begin{prop} \label{P: non-negative_operator}  If $\Phi \in H^{2}(\mathbb{R} \times \mathbb{T}^{k})$ and $\Psi = (V_{v}^{\delta})^{-1} \Phi$, then
	\begin{align*}
		\int_{\mathbb{R} \times \mathbb{T}^{k}} &\left( \delta |\partial_{s} \Phi|^{2} + \langle a(x) \mathcal{D}_{v} \Phi, \mathcal{D}_{v}\Phi \rangle + W''(U_{v}^{\delta}) |\Phi|^{2} \right) \, dx \, ds \\
			&\qquad \qquad \qquad \qquad = \int_{\mathbb{R} \times \mathbb{T}^{k}} \left( \delta |\partial_{s} \Psi|^{2} + \langle a(x) \mathcal{D}_{v} \Psi, D_{v}\Psi \rangle \right) |V_{v}^{\delta}|^{2} \, dx \, ds.
	\end{align*}
\end{prop}  
	
	\begin{proof}  When $\Phi$ is smooth, this is a classical argument involving integration-by-parts and the fact that $V_{v}^{\delta}$ is a positive eigenfunction of $\mathcal{L}_{v}^{\delta}$.  The general case follows by approximation. \qed \end{proof}

Finally, we will need the following result to construct the correctors used in the analysis of the sharp interface limit.

	\begin{prop} \label{P: linear_eqn} $\mathcal{L}_{v}^{\delta}$ is closed, self-adjoint, and $\text{Ker}(\mathcal{L}_{v}^{\delta}) = \langle V_{v}^{\delta} \rangle$.  Moreover, $\text{Ran}(\mathcal{L}_{v}^{\delta}) = \langle V_{v}^{\delta} \rangle^{\perp}$.  \end{prop}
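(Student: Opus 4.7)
The plan is to obtain closedness and self-adjointness from the uniform ellipticity of $\mathcal{L}_{e}$ in this non-degenerate laminar setting, to extract the kernel description from the quadratic-form identity of Proposition \ref{P: non-negative_operator}, and to establish closedness of the range via a Persson-type spectral-gap argument; the last step will be the main technical obstacle. Concretely, since $e \notin S^{k-1} \times \{0\}$, the computation at the end of Section \ref{S: laminar discussion} makes the principal part $\Phi \mapsto \mathcal{D}_{e}^{*}(a(x)\mathcal{D}_{e}\Phi)$ uniformly elliptic in $(s,x) \in \mathbb{R} \times \mathbb{T}^{k}$. The coefficients are bounded and smooth by \eqref{A: a_assumption_2}, \eqref{A: W_assumption_2}, and the interior Schauder regularity of $U_{e}$ coming from Proposition \ref{P: bounded_width_for_minimizers}. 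The $L^{2}$-realization on $H^{2}(\mathbb{R} \times \mathbb{T}^{k})$ is therefore (after a harmless shift) the Friedrichs extension of the semibounded quadratic form $\Phi \mapsto \int \langle a\mathcal{D}_{e}\Phi,\mathcal{D}_{e}\Phi\rangle + W''(U_{e})\Phi^{2}$, with the domain identification coming from elliptic regularity; in particular it is self-adjoint and automatically closed.

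For the kernel, differentiating the Euler--Lagrange equation for $U_{e}$ in $s$ gives $\mathcal{L}_{e} V_{e} = 0$, and Proposition \ref{P: eigenfunction_estimate} combined with Schauder estimates puts $V_{e}$ in $H^{2}$, so $\langle V_{e} \rangle \subseteq \mathrm{Ker}(\mathcal{L}_{e})$. Conversely, fix $\Phi \in H^{2}$ with $\mathcal{L}_{e}\Phi = 0$; pairing against $\Phi$ and integrating by parts yields
\begin{equation*}
\int_{\mathbb{R}\times\mathbb{T}^{k}} \bigl(\langle a(x)\mathcal{D}_{e}\Phi,\mathcal{D}_{e}\Phi\rangle + W''(U_{e})\Phi^{2}\bigr)\, dx\, ds = 0.
\end{equation*}
The Harnack inequality applied to the non-negative solution $V_{e}$ of $\mathcal{L}_{e} V_{e} = 0$ (non-trivial because $U_{e}$ is non-constant) forces $V_{e} > 0$ everywhere, so $\Psi := V_{e}^{-1} \Phi$ is well-defined, and Proposition \ref{P: non-negative_operator} converts the displayed identity into $\int \langle a\mathcal{D}_{e}\Psi, \mathcal{D}_{e}\Psi \rangle V_{e}^{2}\, dx\, ds = 0$. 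Uniform ellipticity of $a$ and positivity of $V_{e}$ then force $\mathcal{D}_{e}\Psi = 0$ a.e. In the laminar setting with $e \notin S^{k-1} \times \{0\}$ this implies $\Psi$ is constant: picking any $i > k$ with $e_{i} \neq 0$, the $i$-th component gives $\partial_{s}\Psi = 0$, and the first $k$ components then give $\partial_{x_{j}} \Psi = 0$ for $1 \leq j \leq k$. Hence $\Phi \in \langle V_{e} \rangle$.

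By self-adjointness, $\overline{\mathrm{Ran}(\mathcal{L}_{e})} = \mathrm{Ker}(\mathcal{L}_{e})^{\perp} = \langle V_{e} \rangle^{\perp}$, so the remaining issue---and the main obstacle, since $\mathbb{R} \times \mathbb{T}^{k}$ is non-compact and $\mathcal{L}_{e}$ has no compact resolvent---is closedness of the range. The plan is to produce a spectral gap at zero: Proposition \ref{P: bounded_width_for_minimizers} gives $U_{e}(s,\cdot) \to \pm 1$ exponentially as $s \to \pm \infty$, so \eqref{A: W_assumption_2} furnishes $M > 0$ with $W''(U_{e}) \geq \alpha/2$ on $\{|s| > M\}$. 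Persson's theorem, which characterizes $\inf \sigma_{\mathrm{ess}}(\mathcal{L}_{e})$ as the infimum of Rayleigh quotients of functions supported outside compact subsets of $\mathbb{R} \times \mathbb{T}^{k}$, then delivers $\inf \sigma_{\mathrm{ess}}(\mathcal{L}_{e}) \geq \alpha/2 > 0$. Consequently $0$ is at most an isolated eigenvalue of finite multiplicity, the range of $\mathcal{L}_{e}$ is closed, and $\mathrm{Ran}(\mathcal{L}_{e}) = \langle V_{e} \rangle^{\perp}$.
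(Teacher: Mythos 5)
Your proposal is correct and reaches the same conclusions, but the closed-range step proceeds along a genuinely different path from the paper. For closedness and self-adjointness, the paper shows the graph of $\mathcal{L}_e$ is closed via interior $L^2$ estimates summed over unit slabs and checks $D(\mathcal{L}_e^*) = D(\mathcal{L}_e)$ directly (by mollification); your Friedrichs-extension route plus elliptic regularity is an equivalent, slightly more high-level packaging of the same facts. Your kernel identification matches the paper's: both read it off Proposition \ref{P: non-negative_operator} via $\Psi = V_e^{-1}\Phi$ and the non-degeneracy of $\mathcal{D}_e$ when $e \notin S^{k-1}\times\{0\}$ (the paper compresses this into a one-line citation, and your filling-in of the strict positivity of $V_e$ via Harnack is the right justification). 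The real divergence is in closedness of the range. The paper writes $W''(U_e) = \tilde{\alpha}(s) + (W''(U_e) - \tilde{\alpha}(s))$ with $\tilde\alpha(s) = W''(\mathrm{sgn}(s))$, inverts the resulting constant-potential operator $\mathcal{L}_\alpha$, and factors $\mathcal{L}_e = (\mathrm{Id} + M_\alpha\mathcal{L}_\alpha^{-1})\mathcal{L}_\alpha$, with the decay of $W''(U_e) - \tilde\alpha(s)$ at infinity forcing $M_\alpha\mathcal{L}_\alpha^{-1}$ to be compact so that the Fredholm alternative gives closed range. You instead invoke Persson's theorem to locate $\inf\sigma_{\mathrm{ess}}(\mathcal{L}_e) \geq \alpha/2$ from the same exponential decay, concluding that $0$ is an isolated eigenvalue of finite multiplicity. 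Both arguments exploit the same spectral gap at infinity created by $W''(\pm 1) > 0$; the paper's factorization makes the compactness concrete and self-contained, whereas your spectral route is shorter but asks the reader to accept Persson's theorem for uniformly elliptic divergence-form operators on the cylinder $\mathbb{R}\times\mathbb{T}^k$, which does hold but is usually stated for Schr\"odinger operators on $\mathbb{R}^n$ and so would merit a sentence of justification.
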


\begin{proof}	Define $\tilde{\alpha} : \mathbb{R} \setminus \{0\} \to (0,\infty)$ by $\tilde{\alpha}(s) = W''(\text{sgn}(s))$ and let $\mathcal{H}^{\delta}_{v}$ on $L^{2}(\mathbb{R} \times \mathbb{T}^{k})$ be the unbounded operator with domain $H^{2}(\mathbb{R} \times \mathbb{T}^{k})$ given by
		\begin{equation*}
			\mathcal{H}^{\delta}_{v}\Phi = -\delta \partial_{s}^{2} \Phi + \mathcal{D}_{v}^{*}(a(x) \mathcal{D}_{v}\Phi) + \tilde{\alpha}(s) \Phi.
		\end{equation*}

	From \eqref{A: a_assumption_1}, \eqref{A: a_assumption_2}, \eqref{E: elliptic regularization assumption}, and \eqref{E: laminar assumption}, we can show that $\mathcal{H}^{\delta}_{v}$ is a closed operator.  Indeed, by $L^{2}$ estimates for uniformly elliptic equations (cf.\ \cite[Theorem 9.11]{Gilbarg Trudinger} or \cite[Section 6.3.1]{evans}), 
		\begin{equation*}
			\|\Phi\|^{2}_{H^{2}([n,n+1] \times \mathbb{T}^{k})} \leq C(\|\Phi\|_{L^{2}([n-1,n+2] \times \mathbb{T}^{k})}^{2} + \|\mathcal{H}^{\delta}_{v}\Phi\|_{L^{2}([n-1,n+2] \times \mathbb{T}^{k})}^{2})
		\end{equation*}  
	Summing over $n$, we find
		\begin{equation*}
			\|\Phi\|^{2}_{H^{2}(\mathbb{R} \times \mathbb{T}^{k})} \leq C(\|\Phi\|_{L^{2}(\mathbb{R} \times \mathbb{T}^{k})}^{2} + \|\mathcal{H}^{\delta}_{v}\Phi\|_{L^{2}(\mathbb{R} \times \mathbb{T}^{k})}^{2})
		\end{equation*}
	Thus, the graph of $\mathcal{H}^{\delta}_{v}$ is closed in $L^{2}(\mathbb{R} \times \mathbb{T}^{k}) \times L^{2}(\mathbb{R} \times \mathbb{T}^{k})$, and $\mathcal{H}^{\delta}_{v}$ is a closed operator.  
		
		Since $W''(1) \wedge W''(-1) > 0$ by \eqref{A: W_assumption_2}, the operator $(\mathcal{H}^{\delta}_{v})^{-1} : L^{2}(\mathbb{R} \times \mathbb{T}^{k}) \to H^{2}(\mathbb{R} \times \mathbb{T}^{k})$ exists and is bounded.
	
	Observe that we can write $\mathcal{L}_{v}^{\delta} = \mathcal{H}^{\delta}_{v} + M^{\delta}_{v}$, where $M^{\delta}_{v} \Phi = (W''(U_{v}^{\delta}) - \tilde{\alpha}(s))\Phi$ is a bounded linear operator on $L^{2}(\mathbb{R} \times \mathbb{T}^{k})$.  In particular, $\mathcal{L}^{\delta}_{v} = (\text{Id} + M_{v}^{\delta} (\mathcal{H}^{\delta}_{v})^{-1}) \mathcal{H}^{\delta}_{v}$.  Since $(\mathcal{H}^{\delta}_{v})^{-1}$ takes $L^{2}(\mathbb{R} \times \mathbb{T}^{k})$ continuously into $H^{2}(\mathbb{R} \times \mathbb{T}^{k})$ and $W''(U_{e}) - \tilde{\alpha}(s) \to 0$ uniformly as $|s| \to \infty$, it follows that $M_{\alpha}^{\delta} (\mathcal{H}^{\delta}_{v})^{-1}$ is compact.  Therefore, by the Fredholm alternative, $\text{Id} + M_{\alpha}^{\delta} (\mathcal{H}^{\delta}_{v})^{-1}$ is a closed operator with closed range.  Since $\mathcal{L}^{\delta}_{v} = (\text{Id} + M_{\alpha}^{\delta} (\mathcal{H}^{\delta}_{v})^{-1})\mathcal{H}^{\delta}_{v}$, we deduce that $\mathcal{L}_{v}^{\delta}$ is also.  
	
	$\mathcal{L}_{v}^{\delta}$ is clearly symmetric.  Therefore, to prove it is self-adjoint, it is only necessary to show that $D((\mathcal{L}_{v}^{\delta})^{*}) = D(\mathcal{L}_{v}^{\delta})$.  This follows, for example, by mollification.  
	
	The previous proposition showed $\text{Ker}(\mathcal{L}_{v}^{\delta}) = \langle V_{v}^{\delta} \rangle$.  Finally, since $\mathcal{L}_{v}^{\delta}$ is self-adjoint with closed range, the identity
$\text{Ran}(\mathcal{L}_{v}^{\delta}) = \langle V_{v}^{\delta} \rangle^{\perp}$ follows. \qed
\end{proof}

\subsection{Derivatives with respect to $v$}

Since our eventual goal is to obtain formulas for the first and second derivatives of $\tilde{\varphi}^{a,\delta}$, in this section, we differentiate the map $v \mapsto U^{\delta}_{v}$.  To start with, we fix $\xi \in \mathbb{R}^{d}$ and $h \in \mathbb{R}$ and define $R^{\delta,\xi}_{v,h}$ by 
	\begin{equation*}
		R^{\delta,\xi}_{v,h}(s,x) = \frac{U^{\delta}_{v + h \xi}(s,x) - U^{\delta}_{v}(s,x)}{h}.
	\end{equation*}
	
The following result follows from a direct manipulation of the equations solved by $U^{\delta}_{v + h \xi}$ and $U^{\delta}_{v}$.

	\begin{prop}  $R^{\delta,\xi}_{v,h}$ satisfies the PDE
		\begin{equation*}
			\left\{ \begin{array}{r l}
				\mathcal{L}^{\delta}_{e}R^{\delta,\xi}_{v,h} = B^{\delta}_{h} R^{\delta,\xi}_{v,h} + K^{\delta}_{h} & \text{in} \, \, \mathbb{R} \times \mathbb{T}^{k}, \\
				\int_{\mathbb{T}^{k}} R^{\delta,\xi}_{v,h}(0,x) \, dx = 0,
				\end{array} \right.
		\end{equation*}
	where $B^{\delta}_{h}$ and $K^{\delta}_{h}$ are given by 
		\begin{align*}
			B^{\delta}_{h} &= - \int_{0}^{1} \{W''(U^{\delta}_{v} + t[U^{\delta}_{v + h \xi} - U^{\delta}_{v}]) - W''(U^{\delta}_{v})\} \, dt, \\
			K^{\delta}_{h} &= \langle \xi, a(x) \mathcal{D}_{v}V^{\delta}_{v + h \xi}\rangle + \langle \xi, a(x) \mathcal{D}_{v + h \xi}V^{\delta}_{v + h \xi}\rangle + \langle \text{div}\, a, \xi \rangle V^{\delta}_{v + h \xi}.
		\end{align*}
	The sequence $(K^{\delta}_{h})_{h \in (-1,1)}$ is uniformly bounded in $C_{0}(\mathbb{R} \times \mathbb{T}^{k})$ and 
		\begin{equation*}
			\lim_{h \to 0} \|B^{\delta}_{h}\|_{L^{\infty}(\mathbb{R} \times \mathbb{T}^{k})} = 0.
		\end{equation*}
	\end{prop}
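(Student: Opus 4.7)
The PDE for $R^\xi_{e,h}$ is forced by subtracting the pulsating wave equations satisfied by $U_{e+h\xi}$ and $U_e$, dividing by $h$, and rearranging. The plan is to execute this computation systematically using two elementary facts: the decomposition $\mathcal{D}_{e+h\xi} = \mathcal{D}_e + h\xi\partial_s$ (and the corresponding one for its adjoint, $\mathcal{D}^*_{e+h\xi} = \mathcal{D}^*_e - h\partial_s\langle\cdot,\xi\rangle$), and the Newton identity $h^{-1}(W'(U_{e+h\xi}) - W'(U_e)) = \left(\int_0^1 W''(U_e + t(U_{e+h\xi}-U_e))\,dt\right) R^\xi_{e,h}$.

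First, I would insert these into the subtracted equations and split terms so that $\mathcal{D}_e^*(a(x)\mathcal{D}_e R^\xi_{e,h})$ emerges from the principal parts and the nonlinearity contributes a term of the form $(W''(U_e) - B_h)\,R^\xi_{e,h}$, yielding $\mathcal{L}_e R^\xi_{e,h}$ on the left once $W''(U_e) R^\xi_{e,h}$ is added and subtracted. The remaining source terms on the right, all of order $O(1)$ in $h$ and proportional to $V_{e+h\xi}=\partial_s U_{e+h\xi}$, must be consolidated into $K_h$. The key algebraic moves are: $a(x)$ is independent of $s$, so $\partial_s$ commutes past $a$ and through $\mathcal{D}_e$, giving $\partial_s\langle a(x)\mathcal{D}_e U_{e+h\xi},\xi\rangle = \langle\xi, a(x)\mathcal{D}_e V_{e+h\xi}\rangle$; expanding $\mathcal{D}_e^*(a(x)\xi V_{e+h\xi})$ via $\mathcal{D}_e^* F = -\partial_s\langle F,e\rangle - \operatorname{div}_x F$ and the identity $\operatorname{div}_x(a(x)\xi) = \langle\operatorname{div} a,\xi\rangle$ (using symmetry of $a$) produces the $\langle\operatorname{div} a,\xi\rangle V_{e+h\xi}$ term; finally, the leftover $h$-order discrepancy from $\mathcal{D}^*_{e+h\xi}-\mathcal{D}^*_e$ combines with $\langle\xi, a(x)\mathcal{D}_e V_{e+h\xi}\rangle$ to promote it to $\langle\xi, a(x)\mathcal{D}_{e+h\xi}V_{e+h\xi}\rangle$ via $\mathcal{D}_{e+h\xi}V_{e+h\xi} = \mathcal{D}_e V_{e+h\xi} + h\xi\,\partial_s V_{e+h\xi}$. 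The normalization $\int_{\mathbb{T}^k} R^\xi_{e,h}(0,x)\,dx = 0$ is immediate from the defining normalizations of $U_e$ and $U_{e+h\xi}$.

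Second, I would establish the two analytic bounds. The vanishing $\|B_h\|_{L^\infty}\to 0$ follows from the $C^{\alpha}$-continuity of $W''$ granted by \eqref{A: W_assumption_2}, provided I can show $U_{e+h\xi}\to U_e$ uniformly on $\mathbb{R}\times\mathbb{T}^k$ as $h\to 0$. That uniform convergence I would obtain by combining Proposition \ref{P: compactness} (any subsequential $L^1_{\mathrm{loc}}$-limit lies in $\mathcal{M}_e(\mathbb{R}\times\mathbb{T}^k)$) with uniqueness under the normalization $\int_{\mathbb{T}^k} U(0,\cdot)\,dx = 0$ (so every subsequential limit equals $U_e$), then upgrading local $L^1$-convergence to local uniform convergence via interior Schauder estimates from \eqref{A: a_assumption_2}--\eqref{A: W_assumption_2}, and controlling the tails by the $e$-uniform exponential decay of $U_{e'}\mp 1$ from Proposition \ref{P: bounded_width_for_minimizers}. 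For uniform boundedness of $(K_h)_h$ in $C_0(\mathbb{R}\times\mathbb{T}^k)$, I would apply Proposition \ref{P: eigenfunction_estimate} to $V_{e+h\xi}$ to obtain $h$-uniform exponential decay, then upgrade this to a $C^1$-type bound with exponential decay on $\mathcal{D}_e V_{e+h\xi}$ and $\mathcal{D}_{e+h\xi}V_{e+h\xi}$ by Schauder estimates applied to the uniformly elliptic linearized equation $\mathcal{L}_{e+h\xi}V_{e+h\xi}=0$, using the $h$-uniform ellipticity which holds as long as $e+h\xi$ stays at positive distance from $S^{k-1}\times\{0\}$.

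The main technical obstacle I anticipate is the $h$-uniform $C^1$-control of $V_{e+h\xi}$ with exponential decay: one has a pointwise bound by Proposition \ref{P: eigenfunction_estimate}, but propagating it to $\mathcal{D}_e V_{e+h\xi}$ with $h$-independent constants requires a careful application of Schauder estimates on shifted unit cylinders, combined with either a maximum-principle argument or a weighted-energy estimate that exploits the positive lower bound on $W''(U_e)$ outside a compact $s$-interval (itself controlled uniformly in $e$ by Proposition \ref{P: bounded_width_for_minimizers}). Everything else in the argument is a bookkeeping exercise in the identities above.
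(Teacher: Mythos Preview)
Your proposal is correct and follows precisely the approach the paper indicates: the paper states only that the result ``follows from a direct manipulation of the equations \eqref{E: general_pulsating_wave_equation} satisfied by $U_{e+h\xi}$ and $U_e$'' without writing out the computation, and your decomposition via $\mathcal{D}_{e+h\xi}=\mathcal{D}_e+h\xi\partial_s$, the Newton identity for $W'$, and the expansion of $\mathcal{D}_e^*(a\xi V_{e+h\xi})$ is exactly the intended manipulation. Your treatment of the analytic bounds (uniform convergence $U_{e+h\xi}\to U_e$ via compactness and uniqueness for $B_h$, and the $h$-uniform exponential decay of $V_{e+h\xi}$ combined with Schauder estimates for $K_h$) supplies the details the paper omits and is consistent with how these ingredients are used elsewhere in Section~\ref{S: einstein relation laminar media}.
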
  
	
Now we use uniform ellipticity to pass to the limit $h \to 0$.

	\begin{prop} \label{P: differentiability_of_waves}  The limit $R^{\delta,\xi}_{v} = \lim_{h \to 0} R^{\delta,\xi}_{v,h}$ exists in $C^{2}_{0}(\mathbb{R} \times \mathbb{T}^{k})$ and $L^{2}(\mathbb{R} \times \mathbb{T}^{k})$.  Moreover, $R^{\delta,\xi}_{v}$ is the unique solution of the equation
		\begin{equation*}
			\left\{ \begin{array}{r l}
				\mathcal{L}^{\delta}_{v}R^{\delta,\xi}_{v} = 2 \langle a(x) \xi, \mathcal{D}_{v} V^{\delta}_{v} \rangle + \langle \text{div} \, a, \xi \rangle V^{\delta}_{v} & \text{in} \, \, \mathbb{R} \times \mathbb{T}^{k}, \\
				\int_{\mathbb{R} \times \mathbb{T}^{k}} R^{\delta,\xi}_{v}(0,x) \, dx = 0.
			\end{array} \right.
		\end{equation*}
	\end{prop}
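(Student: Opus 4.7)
The plan is to combine the Fredholm theory for $\mathcal{L}_e$ (Proposition \ref{P: linear_eqn}) with uniform Schauder and barrier estimates and the exponential decay of $V_e := \partial_s U_e$ (Proposition \ref{P: eigenfunction_estimate}). Formally sending $h \to 0$ in the PDE of the previous proposition, using $\|B_h\|_\infty \to 0$ and the convergence $K_h \to K_0 := 2\langle a(x)\xi, \mathcal{D}_e V_e\rangle + \langle \text{div}\, a,\xi\rangle V_e$, any accumulation point $R^{\xi}_e$ must solve $\mathcal{L}_e R^{\xi}_e = K_0$ with $\int_{\mathbb{T}^k} R^{\xi}_e(0,\cdot)\, dx = 0$. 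The program therefore reduces to establishing $h$-independent bounds on $R^{\xi}_{e,h}$, extracting a limit, and verifying uniqueness of that limit.

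The main obstacle is the a priori bound, because $\mathcal{L}_e$ has a one-dimensional kernel spanned by $V_e$. I would argue by contradiction. Suppose $\|R^{\xi}_{e,h_n}\|_{L^2} \to \infty$ along a sequence $h_n \to 0$, and set $\tilde{R}_n := R^{\xi}_{e,h_n}/\|R^{\xi}_{e,h_n}\|_{L^2}$. Then $\tilde{R}_n$ satisfies $\mathcal{L}_e \tilde{R}_n = B_{h_n}\tilde{R}_n + \|R^{\xi}_{e,h_n}\|_{L^2}^{-1} K_{h_n}$ together with $\int_{\mathbb{T}^k} \tilde{R}_n(0,\cdot)\, dx = 0$. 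Since $e \notin S^{k-1}\times\{0\}$, the equation is uniformly elliptic in $\mathbb{R}\times\mathbb{T}^k$, so interior $L^p$ and Schauder estimates transfer the $L^2$ bound into uniform $C^{2,\alpha}$ bounds. Outside a slab $\{|s| \leq M\}$, we have $W''(U_e) \geq \alpha/2$ by Proposition \ref{P: bounded_width_for_minimizers}, hence a barrier argument (as in the proof of that proposition) combined with the uniform exponential decay of $K_{h_n}$ (inherited from Proposition \ref{P: eigenfunction_estimate} applied to $V_{e + h\xi}$ for small $h$) yields uniform exponential decay of $\tilde{R}_n$. Arzelà–Ascoli and dominated convergence then produce, along a subsequence, a limit $\tilde{R} \in C^2_0(\mathbb{R}\times\mathbb{T}^k) \cap L^2(\mathbb{R}\times\mathbb{T}^k)$ with $\|\tilde{R}\|_{L^2} = 1$ and $\mathcal{L}_e \tilde{R} = 0$. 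By Proposition \ref{P: linear_eqn}, $\tilde{R} = c V_e$ for some $c \in \mathbb{R}$; but $\int_{\mathbb{T}^k} V_e(0,\cdot)\, dx > 0$ because $V_e$ is a nonnegative nontrivial solution of $\mathcal{L}_e V_e = 0$ and hence strictly positive by the strong maximum principle, while $\int_{\mathbb{T}^k} \tilde{R}(0,\cdot)\, dx = 0$, forcing $c = 0$ and contradicting $\|\tilde{R}\|_{L^2} = 1$.

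With the uniform $L^2$ bound in hand, the same Schauder and barrier arguments applied to $R^{\xi}_{e,h}$ itself yield constants $C, \nu > 0$, independent of small $h$, such that $|R^{\xi}_{e,h}(s,x)| + \|DR^{\xi}_{e,h}(s,x)\| + \|D^2 R^{\xi}_{e,h}(s,x)\| \leq C e^{-\nu |s|}$. Compactness then produces a subsequential limit $R^{\xi}_e$ in $C^2_0 \cap L^2$, and passing to the limit in the PDE gives the claimed equation and normalization. Finally, any two solutions differ by an element of $\ker(\mathcal{L}_e) = \langle V_e \rangle$, but the normalization combined with $\int_{\mathbb{T}^k} V_e(0,\cdot)\, dx > 0$ forces the difference to vanish. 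This uniqueness upgrades subsequential convergence to convergence of the entire family $R^{\xi}_{e,h} \to R^{\xi}_e$ as $h \to 0$, both in $C^2_0$ and in $L^2$.
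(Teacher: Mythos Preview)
Your proposal is correct and follows essentially the same approach as the paper: a contradiction argument via normalization, exponential barrier estimates outside a slab, compactness, and identification of the limit with an element of $\ker(\mathcal{L}_e) = \langle V_e\rangle$ that is then forced to vanish by the cross-sectional normalization. The only cosmetic differences are that the paper normalizes by the $L^\infty$ norm rather than the $L^2$ norm and explicitly carries along the orthogonal decomposition $R^{\xi}_{e,h} = c_h V_e + Q^{\xi}_{e,h}$, whereas you work directly with $R^{\xi}_{e,h}$; both routes lead to the same contradiction and the same uniqueness argument.
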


	\begin{proof}  The main technicality in the proof is that we need to work around the kernel of $\mathcal{L}_{v}^{\delta}$.  Since $(R^{\delta,\xi}_{v,h})_{h \in \mathbb{R}} \subseteq L^{2}(\mathbb{R} \times \mathbb{T}^{k})$, we can fix $(Q^{\delta,\xi}_{v,h})_{h \in \mathbb{R}} \subseteq \langle V_{e}\rangle^{\perp}$ and $(c^{\delta}_{h})_{h \in \mathbb{R}}$ such that 
		\begin{equation*}
			R^{\delta,\xi}_{v,h} = c^{\delta}_{h} V^{\delta}_{v} + Q^{\delta,\xi}_{v,h}.
		\end{equation*}
	
	Since $U^{\delta}_{v + h \xi} \to U^{\delta}_{v}$ uniformly as $h \to 0$, we can fix a $\eta > 0$ such that 
		\begin{equation} \label{E: generate_exponential_estimates}
			\|B^{\delta}_{h}\|_{L^{\infty}(\mathbb{R} \times \mathbb{T}^{k})} \leq \frac{\alpha}{2} < W''(1) \wedge W''(-1) \quad \text{if} \, \,  |h| < \eta.
		\end{equation}
Notice that with this choice of $\eta$, the family $(R^{\delta,\xi}_{v,h})_{h \in (-\eta,\eta)}$ satisfies the hypotheses of Proposition \ref{P: eigenfunction_estimate}.  Hence there is a $C(\eta) > 0$ such that, for each $h \in (-\eta,\eta)$,
	\begin{equation} \label{E: exponential_estimate}
		|R^{\delta}_{v,h}(s,x)| \leq C(\eta) \|R^{\delta}_{v,h}\|_{L^{\infty}(\mathbb{R} \times \mathbb{T}^{k})} \exp \left( - C(\eta)^{-1} |s| \right).
	\end{equation}
	
	We claim that $(Q^{\delta,\xi}_{v,h})_{h \in (-\eta,\eta)}$ is pre-compact in $C_{0}(\mathbb{R} \times \mathbb{T}^{k})$ and $(c^{\delta}_{h})_{h \in (-\eta,\eta)}$ is bounded in $\mathbb{R}$.  To see this, we first prove that $\limsup_{h \to 0} \|R^{\delta,\xi}_{v,h}\|_{L^{\infty}(\mathbb{R} \times \mathbb{T}^{k})} < \infty$.  
	
	Assume to the contrary that there is a sequence $(h_{n})_{n \in \mathbb{N}} \subseteq (-\eta,\eta)$ such that $\lim_{n \to \infty} \|R^{\delta,\xi}_{v,h_{n}}\|_{L^{\infty}(\mathbb{R} \times \mathbb{T}^{k})} = \infty$.  
	
	Define $(\tilde{R}_{n})_{n \in \mathbb{N}}$ by $\tilde{R}_{n} = \|R^{\delta,\xi}_{v,h_{n}}\|^{-1}_{L^{\infty}(\mathbb{R} \times \mathbb{T}^{k})} R^{\delta,\xi}_{v,h_{n}}$.  Notice that $\tilde{R}_{n}$ satisfies the PDE 
		\begin{equation} \label{E: contra_eqn}
			\mathcal{L}^{\delta}_{v}\tilde{R}_{n} = B^{\delta}_{h_{n}} \tilde{R}_{n} + \|R^{\delta,\xi}_{v,h_{n}}\|^{-1}_{L^{\infty}(\mathbb{R} \times \mathbb{T}^{k})} K^{\delta}_{h_{n}}.
		\end{equation}
Thus, Schauder estimates (cf.\ \cite[Theorem 6.2]{Gilbarg Trudinger}) imply $(\tilde{R}_{n})_{n \in \mathbb{N}}$ is bounded in $C^{2,\mu}_{0}(\mathbb{R} \times \mathbb{T}^{k})$ for some $\mu \in (0,1)$.  

We claim that $(\tilde{R}_{n})_{n \in \mathbb{N}}$ is pre-compact in $C^{2}_{0}(\mathbb{R} \times \mathbb{T}^{k})$.  	Let us write
	\begin{align*}
		\tilde{R}_{n} &= \tilde{Q}_{n} + \tilde{c}_{n} V_{v}^{\delta}, \\
		\tilde{Q}_{n} &= \|R^{\delta,\xi}_{v,h_{n}}\|^{-1}_{L^{\infty}(\mathbb{R} \times \mathbb{T}^{k})} Q^{\delta,\xi}_{v,h_{n}}, \\
		\tilde{c}_{n} &= \|R^{\delta,\xi}_{v,h_{n}}\|^{-1}_{L^{\infty}(\mathbb{R} \times \mathbb{T}^{k})} c^{\delta}_{h_{n}}.
	\end{align*}
Notice that $\|\tilde{R}_{n}\|_{L^{2}(\mathbb{R} \times \mathbb{T}^{k})}^{2} = \|\tilde{Q}_{n}\|_{L^{2}(\mathbb{R} \times \mathbb{T}^{k})}^{2} + |\tilde{c}_{n}|^{2} \|V_{e}\|_{L^{2}(\mathbb{R} \times \mathbb{T}^{k})}^{2}$.  Moreover, our previous exponential estimate translates to the following one:
	\begin{equation} 
		|\tilde{R}_{n}(s,x)| \leq C(\eta) e^{-C(\eta)^{-1} |s|} \quad \text{for} \, \, (s,x) \in \mathbb{R} \times \mathbb{T}^{k}.
	\end{equation}
Combining this bound with the identity $\|\tilde{R}_{n}\|_{L^{\infty}(\mathbb{R} \times \mathbb{T}^{k})} = 1$, we can invoke Schauder estimates to deduce that $(\tilde{R}_{n})_{n \in \mathbb{N}}$ is pre-compact in both $C^{2}_{0}(\mathbb{R} \times \mathbb{T}^{k})$ and $L^{2}(\mathbb{R} \times \mathbb{T}^{k})$.  An immediate consequence is the boundedness of $(\tilde{c}_{n})_{n \in \mathbb{N}}$.

By compactness, we can assume without loss of generality (i.e.\ by passing to a sub-sequence) that there is an $\tilde{R} \in C^{2}_{0}(\mathbb{R} \times \mathbb{T}^{k})$ and a $\tilde{c} \in \mathbb{R}$ such that $\tilde{R}_{n} \to \tilde{R}$ in $C^{2}_{0}(\mathbb{R} \times \mathbb{T}^{k})$ and $\tilde{c}_{n} \to \tilde{c}$.  Passing to the limit in \eqref{E: contra_eqn} and recalling that $B_{h} \to 0$ uniformly, we find
	\begin{equation*}
		\mathcal{L}_{v}^{\delta}\tilde{R} = 0.
	\end{equation*}   
Thus, $\tilde{R} = \tilde{c} V_{v}^{\delta}$ by Proposition \ref{P: linear_eqn}.  On the other hand,
	\begin{equation*}
		\tilde{c} \int_{\mathbb{T}^{k}} V_{v}^{\delta}(0,x) \, dx = \lim_{n \to \infty}  \int_{\mathbb{T}^{k}} \tilde{R}_{n}(0,x) \, dx = 0.
	\end{equation*} 
Since $V_{v}^{\delta} > 0$ in $\mathbb{R} \times \mathbb{T}^{k}$, this can only mean that $\tilde{c} = 0$, and hence $\tilde{R} = 0$.  This is a contradiction, however, since $\|\tilde{R}\|_{L^{\infty}(\mathbb{R} \times \mathbb{T}^{k})} = \lim_{n \to \infty} \|\tilde{R}_{n}\|_{L^{\infty}(\mathbb{R} \times \mathbb{T}^{k})} = 1$.

	From the preceding discussion, we deduce that $(R^{\delta,\xi}_{v,h})_{h \in (-\eta,\eta)}$ is bounded in $C_{0}(\mathbb{R} \times \mathbb{T}^{k})$.  By Schauder estimates, it is actually bounded in $C^{2,\mu}_{0}(\mathbb{R} \times \mathbb{T}^{k})$.  In view of the estimate \eqref{E: exponential_estimate}, $(R^{\delta,\xi}_{v,h})_{h \in (-\eta,\eta)}$ is pre-compact in both $C^{2}_{0}(\mathbb{R} \times \mathbb{T}^{k})$ and $L^{2}(\mathbb{R} \times \mathbb{T}^{k})$, which implies the real numbers $(c_{h}^{\delta})_{h \in (-\eta,\eta)}$ are also bounded.  Thus, $(Q^{\delta,\xi}_{v,h})_{h \in (-\eta,\eta)}$ is pre-compact in $C^{2}_{0}(\mathbb{R} \times \mathbb{T}^{k})$ and $L^{2}(\mathbb{R} \times \mathbb{T}^{k})$ as well.
	
Pick a sequence $(h_{n})_{n \in \mathbb{N}} \subseteq (0,\infty)$ such that $h_{n} \to 0$ as $n \to \infty$.  Without loss of generality, we can assume there is a $\bar{Q} \in C^{2}_{0}(\mathbb{R} \times \mathbb{T}^{k})$ and a $\bar{c} \in \mathbb{R}$ such that $\bar{Q} = \lim_{n \to \infty} Q^{\delta,\xi}_{v,h_{n}}$ in $C^{2}_{0}(\mathbb{R} \times \mathbb{T}^{k})$ and $L^{2}(\mathbb{R} \times \mathbb{T}^{k})$ and $\bar{c} = \lim_{n \to \infty} c^{\delta}_{h_{n}}$.  Passing to the limit in the equations satisfied by $(Q^{\delta,\xi}_{v,h})_{h \in (-\eta,\eta)}$, we find
	\begin{equation} \label{E: derivative_equation}
		\mathcal{L}_{v}^{\delta} \bar{Q} = \bar{K}^{\delta},
	\end{equation}
where $\bar{K}^{\delta} = \lim_{h \to \infty} K_{h}^{\delta} = 2 \langle \xi, a \mathcal{D}_{v}V_{v}^{\delta} \rangle + \langle \text{div} \, a, \xi \rangle V_{v}^{\delta}$.

Notice that there is at most one solution of \eqref{E: derivative_equation} in $H^{2}(\mathbb{R} \times \mathbb{T}^{k}) \cap \langle V_{e} \rangle^{\perp}$.  Indeed, if $\tilde{Q} \in H^{2}(\mathbb{R} \times \mathbb{T}^{k}) \cap \langle V_{e} \rangle^{\perp}$ is another solution, then $\mathcal{L}_{e}(\tilde{Q} - \bar{Q}) = 0$.  In particular, by Proposition \ref{P: linear_eqn}, $\tilde{Q} - \bar{Q} \in \langle V_{e} \rangle \cap \langle V_{e} \rangle^{\perp} = \{0\}$.  

The previous paragraph shows that the limiting function $\bar{Q}$ did not depend on the sequence $(h_{n})_{n \in \mathbb{N}}$.  Furthermore, integrating on $\{0\} \times \mathbb{T}^{k}$, we obtain
	\begin{equation*}
		0 = \int_{\mathbb{T}^{k}} \bar{Q}(0,x) \, dx + \bar{c} \int_{\mathbb{T}^{k}} V_{v}^{\delta}(0,x) \, dx.
	\end{equation*}
Thus, $\bar{c}$ is also uniquely determined, independently of $(h_{n})_{n \in \mathbb{N}}$.  

Putting it all together, we conclude that there is a unique $Q^{\delta,\xi}_{v} \in C^{2}_{0}(\mathbb{R} \times \mathbb{T}^{k})$ and a unique $c^{\delta} \in \mathbb{R}$ such that $Q^{\delta,\xi}_{v} + c^{\delta} V^{\delta}_{v} = \lim_{h \to 0} R^{\delta,\xi}_{v,h}$ in $C^{2}_{0}(\mathbb{R} \times \mathbb{T}^{k})$ and $L^{2}(\mathbb{R} \times \mathbb{T}^{k})$.  Furthermore, $Q^{\delta,\xi}_{v}$ is the unique solution of $\mathcal{L}_{v}^{\delta}Q^{\delta,\xi}_{v}= 2 \langle \xi, a \mathcal{D}_{v}V^{\delta}_{v} \rangle + \langle \text{div} \, a, \xi \rangle V^{\delta}_{v}$ in $\langle V^{\delta}_{v} \rangle^{\perp}$ and $c$ is determined by the requirement that $\int_{\mathbb{T}^{k}} (Q^{\delta,\xi}_{v}(0,x) + c V_{v}^{\delta}(0,x)) \, dx = 0$. \qed \end{proof}  

\begin{remark} \label{R: regularity in derivative}  Notice that
	\begin{align*}
		\frac{V^{\delta}_{v + h \xi} - V^{\delta}_{v}}{h} &= \partial_{s} R^{\delta,\xi}_{v,h}, \\
		\frac{D_{x}U^{\delta}_{v + h \xi} - D_{x} U^{\delta}_{v}}{h} &= D_{x} R^{\delta,\xi}_{v,h}.
	\end{align*}
Thus, the $C^{2}_{0}(\mathbb{R} \times \mathbb{T}^{k})$ convergence just proved implies that the limits $\partial_{s} R_{v}^{\delta,\xi} = \lim_{h \to 0} \frac{V_{v + h\xi}^{\delta} - V_{v}^{\delta}}{h}$ and $D_{x}R^{\delta,\xi}_{v} = \lim_{h \to 0} \frac{D_{x}U^{\delta}_{v + h \xi} - D_{x} U^{\delta}_{v}}{h}$ both exist in $C_{0}(\mathbb{R} \times \mathbb{T}^{k})$.  Appealing to Schauder estimates and the uniform exponential decay of $(R^{\delta,\xi}_{v,h})_{h \in (-1,1)}$ as $|s| \to \infty$, we can show this convergence also holds in $L^{p}(\mathbb{R} \times \mathbb{T}^{k})$ for any $p \in [1,\infty)$.  \end{remark}

\subsection{Derivatives of $\tilde{\varphi}^{a,\delta}$}  Now we compute the derivatives of $\tilde{\varphi}^{a,\delta}$.  In the case of the gradient, the proof works very generally.

	\begin{prop}  Fix $\delta \geq 0$, $k \in \{1,2,\dots,d\}$, and $K \subseteq \mathbb{R}^{d} \setminus \{0\}$ such that \eqref{E: elliptic regularization assumption} or \eqref{E: laminar assumption} hold.  For each $v \in K$, $\tilde{\varphi}^{a,\delta}$ is differentiable at $v$ and 
		\begin{equation} \label{E: derivative formula}
			D \tilde{\varphi}^{a,\delta}(v) = \int_{\mathbb{R} \times \mathbb{T}^{k}} \partial_{s} U_{v}^{\delta} a(x) \mathcal{D}_{v} U_{v}^{\delta} \, dx \, ds.
		\end{equation}
	\end{prop}
	
	\begin{proof}  Fix $v \in K$.  Recall that we can compute $\tilde{\varphi}^{a,\delta}(v)$ via the formula
		\begin{equation*}
			\tilde{\varphi}^{a,\delta}(v) = \mathscr{T}^{a,\delta}_{v}(U_{v}^{\delta}).
		\end{equation*}
	Since $\mathbb{R}^{d} \setminus (\mathbb{R}^{k} \times \{0\})$ is open for $k < d$, up to replacing $K$ by $K \cup \overline{B(v,\delta)}$ for some small $\delta > 0$, we can assume that $K$ contains a neighborhood of $v$.
	
	In view of Proposition \ref{P: differentiability_of_waves} and Remark \ref{R: regularity in derivative}, we can differentiate under the integral sign to find
		\begin{align*}
			\frac{\tilde{\varphi}^{a,\delta}(v + h \xi) - \tilde{\varphi}^{a,\delta}(v)}{h} &= \int_{\mathbb{R} \times \mathbb{T}^{k}} \left(\delta \partial_{s} U^{\delta}_{v} \partial_{s} R^{\delta,\xi}_{v} + \langle a(x) \mathcal{D}_{v} U^{\delta}_{v}, \mathcal{D}_{v}R^{\delta,\xi}_{v} \rangle \right) \, dx \, ds \\
			&\quad +\int_{\mathbb{R} \times \mathbb{T}^{k}} \left( \langle a(x) \mathcal{D}_{v} U^{\delta}_{v}, \xi \rangle \partial_{s} U^{\delta}_{v} + W'(U^{\delta}_{v}) R^{\delta,\xi}_{v} \right) \, dx \, ds \\
			&\quad + O(h).
		\end{align*}
	Observe that we can write
		\begin{align*}
			\int_{\mathbb{R} \times \mathbb{T}^{k}} &\left(\delta \partial_{s} U^{\delta}_{v} \partial_{s} R^{\delta,\xi}_{v} + \langle a(x) \mathcal{D}_{v} U^{\delta}_{v}, \mathcal{D}_{v}R^{\delta,\xi}_{v} \rangle + W'(U^{\delta}_{v}) R^{\delta,\xi}_{v} \right) \, dx \, ds \\
			&\quad = \int_{\mathbb{R} \times \mathbb{T}^{k}} R^{\delta,\xi}_{v} \left(-\delta \partial_{s}^{2} U^{\delta}_{v} + \mathcal{D}_{v}^{*}(a(x) \mathcal{D}_{v} U^{\delta}_{v}) + W'(U^{\delta}_{v}) \right) \, dx \, ds = 0.
		\end{align*}
	Thus,
		\begin{equation} \label{E: derivative part}
			\lim_{h \to 0} \frac{\tilde{\varphi}^{a,\delta}(v + h \xi) - \tilde{\varphi}^{a,\delta}(v)}{h} = \int_{\mathbb{R} \times \mathbb{T}^{k}} \partial_{s}U^{\delta}_{v} \langle a(x) \mathcal{D}_{v}U^{\delta}_{v}, \xi \rangle \, dx \, ds.
		\end{equation}
	We showed that $\tilde{\varphi}^{a,\delta}$ is Gateaux differentiable.  Since the right-hand side is a continuous function of $v$, it is Fr\'{e}chet differentiable, and then \eqref{E: derivative formula} follows from \eqref{E: derivative part}.  \qed \end{proof}  	
	
Next, we find a formula for $D^{2} \tilde{\varphi}^{a}$.  Here is where we use the full strength of the regularity afforded by laminarity.

	\begin{prop} \label{P: convexity}  $\tilde{\varphi}^{a,\delta} \in C^{2}(\mathbb{R}^{d} \setminus \{0\})$ for $\delta > 0$, and $\tilde{\varphi}^{a} \in C^{2}(\mathbb{R}^{d} \setminus (\mathbb{R}^{k} \times \{0\}))$ if $k < d$.  Further, if \eqref{E: elliptic regularization assumption} or \eqref{E: laminar assumption} hold, then, for each $v \in K$ and $\xi \in \mathbb{R}^{d}$, if $\Psi_{v}^{\delta,\xi}$ is defined in $\mathbb{R} \times \mathbb{T}^{k}$ by $\Psi_{v}^{\delta,\xi} = (V_{v}^{\delta})^{-1} R_{v}^{\delta,\xi}$, then
		\begin{align} 
			\langle D^{2} \tilde{\varphi}^{a,\delta}(v) \xi, \xi \rangle &= \int_{\mathbb{R} \times \mathbb{T}^{k}} \langle a(x) (\xi + \mathcal{D}_{v} \Psi_{v}^{\delta,\xi}), \xi + \mathcal{D}_{v} \Psi_{v}^{\delta,\xi} \rangle  |V_{v}^{\delta}|^{2} \, dx \, ds \label{E: convexity_legendre_transform} \\
			&\quad + \int_{\mathbb{R} \times \mathbb{T}^{k}} \delta |\partial_{s} \Psi_{v}^{\delta,\xi}|^{2} |V^{\delta}_{v}|^{2} \, dx \, ds. \nonumber
		\end{align}
In particular, $\tilde{\varphi}^{a,\delta}$ is convex for every $\delta \geq 0$.  Moreover, the following bound holds:
		\begin{equation} \label{E: upper bound general case}
			\tilde{M}^{a,\delta}(v)^{-1}D^{2}\tilde{\varphi}^{a,\delta}(v) \leq \Lambda \text{Id} \quad \text{in} \, \, K.
		\end{equation}
	\end{prop}  
	
	\begin{proof}  Fix $v \in K$.  Differentiating under the integral sign in \eqref{E: derivative formula} using Proposition \ref{P: differentiability_of_waves} and Remark \ref{R: regularity in derivative}, we obtain
		\begin{align*}
			D^{2}\tilde{\varphi}^{a,\delta}(v) &= \int_{\mathbb{R} \times \mathbb{T}^{k}} \left(|V_{v}^{\delta}|^{2} a(x) + a(x) \mathcal{D}_{v}U_{v}^{\delta} \otimes \partial_{s} R_{v}^{\delta} + V_{v}^{\delta} a(x) \mathcal{D}_{v}R_{v}^{\delta} \right) \, dx \, ds \\
					&= \int_{\mathbb{R} \times \mathbb{T}^{k}} \left(|V^{\delta}_{v}|^{2} a(x) -a(x) \mathcal{D}_{v} V_{v}^{\delta} \otimes R_{v}^{\delta} + \mathcal{D}_{v}^{*}(a(x) V_{v}^{\delta}) R_{v}^{\delta}\right) \, dx \, ds \\
					&=  \int_{\mathbb{R} \times \mathbb{T}^{k}} \left[|V_{v}^{\delta}|^{2} a(x) \, dx \, ds - \left(2 a(x) \mathcal{D}_{v} V^{\delta}_{v} + V^{\delta}_{v} \text{div} \, a \right) \otimes R_{v}^{\delta} \right]  \, dx \, ds \\
					&= \int_{\mathbb{R} \times \mathbb{T}^{k}} |V^{\delta}_{v}|^{2} a(x) \, dx \, ds - \int_{\mathbb{R} \times \mathbb{T}^{k}} \mathcal{L}_{v}^{\delta} R_{v}^{\delta} \otimes R_{v}^{\delta} \, dx \, ds.
		\end{align*}
	Integration-by-parts then gives
		\begin{align*}
			\langle D^{2} &\tilde{\varphi}^{a,\delta}(v) \xi, \xi \rangle =  \int_{\mathbb{R} \times \mathbb{T}^{k}} \langle a(x)\xi, \xi \rangle |V_{v}^{\delta}|^{2} \, dx \, ds \\
			&\quad - \int_{\mathbb{R} \times \mathbb{T}^{k}} \left( \delta |\partial_{s} R^{\delta,\xi}_{v}|^{2} + \langle a(x) \mathcal{D}_{v} R_{v}^{\delta,\xi}, \mathcal{D}_{v} R^{\delta,\xi}_{v} \rangle + W''(U_{e}) |R^{\delta,\xi}_{v}|^{2} \right) \, dx \, ds.
		\end{align*}
	Since $\mathcal{L}_{v}^{\delta}$ is a non-negative operator by Proposition \ref{P: non-negative_operator}, the right-most term is non-positive and we deduce from this that
		\begin{equation*}
			\langle D^{2} \tilde{\varphi}^{a,\delta}(v) \xi, \xi \rangle \leq  \int_{\mathbb{R} \times \mathbb{T}^{k}} \langle a(x)\xi, \xi \rangle |V_{v}^{\delta}|^{2} \, dx \, ds \leq \Lambda \tilde{M}^{a}(e).
		\end{equation*}

Now we substitute $\Psi_{v}^{\delta,\xi}$ for $R_{v}^{\delta,\xi}$ and use the equation satisfied by $R_{v}^{\delta,\xi}$ to obtain
		\begin{align*}
			\langle D^{2} \tilde{\varphi}^{a,\delta}(v) \xi, \xi \rangle &= \int_{\mathbb{R} \times \mathbb{T}^{k}} \langle a(x)(\xi + \mathcal{D}_{v} \Psi^{\delta,\xi}_{v}), \xi + \mathcal{D}_{v} \Psi^{\delta,\xi}_{v} \rangle |V_{v}^{\delta,\xi}|^{2} \, dx \, ds \\
				&\quad + \int_{\mathbb{R} \times \mathbb{T}^{k}} \delta |\partial_{s} \Psi^{\delta,\xi}_{v}|^{2} |V^{\delta}_{v}|^{2} \, dx \, ds.
		\end{align*}
This establishes \eqref{E: convexity_legendre_transform} and it shows that $D^{2} \tilde{\varphi}^{a,\delta} \geq 0$ in $K$.  For $\delta > 0$, this proves $\tilde{\varphi}^{a,\delta}$ is convex in $\mathbb{R}^{d} \setminus \{0\}$.  \qed \end{proof} 

Where the surface tension $\tilde{\varphi}^{a}$ is concerned, the previous computation shows that it is strictly convex away from the lamination.  More precisely, we have

\begin{prop}  In the laminar setting, that is, when $k \in \{1,2,\dots,d-1\}$, $\tilde{\varphi}^{a}$ is strictly convex away from the laminations in the sense that, for each $e \in S^{d-1} \setminus (S^{k-1} \setminus \{0\})$, 
	\begin{equation*}
		\langle D^{2} \tilde{\varphi}^{a}(e) \xi, \xi \rangle > 0 \quad \text{if} \, \, \xi \in \mathbb{R}^{d} \cap \langle e \rangle^{\perp}.
	\end{equation*}
Furthermore, the following upper bound holds:
	\begin{equation*}
		\tilde{M}^{a}(e)^{-1} D^{2} \tilde{\varphi}^{a}(e) \leq \Lambda (\text{Id} - e \otimes e) \quad \text{for} \, \, e \in S^{d-1} \setminus (S^{k-1} \times \{0\}).
	\end{equation*}
\end{prop}   

	\begin{proof}  Suppose $\xi \in \mathbb{R}^{d} \setminus \langle v \rangle$.  If $\langle D^{2} \tilde{\varphi}^{a}(e) \xi, \xi \rangle = 0$, then \eqref{E: convexity_legendre_transform} would imply that $\xi + \mathcal{D}_{e} \Psi^{0,\xi}_{e} = 0$ in $\mathbb{R} \times \mathbb{T}^{k}$.  However, from the definition of $\mathcal{D}_{e}$, this would yield
	\begin{equation*}
		0 = \int_{\mathbb{T}^{k}} (\xi + \mathcal{D}_{e}\Psi_{e}^{0,\xi}(0,x)) \, dx = \xi + e \int_{\mathbb{T}^{k}} \partial_{s} \Psi_{e}^{0,\xi}(0,x) \, dx,
	\end{equation*}
which is impossible unless $\xi \in \langle e \rangle$. 

Finally, we know that $\tilde{\varphi}^{a}$ is positively one-homogeneous so $D^{2}\tilde{\varphi}^{a}(e) e = 0$ for each $e \in S^{d-1} \setminus (S^{k-1} \times \{0\})$.  Thus, the final statement follows from the bound in the previous result.  \qed \end{proof}
		
Finally, we show that the previous computation is consistent with the one in \cite{barles souganidis}.

	\begin{prop}  If $e \in S^{d - 1} \setminus (S^{k - 1} \times \{0\})$, then the matrix $\tilde{\mathcal{S}}^{a}$ of \eqref{E: anisotropic curvature flow} satisfies $\tilde{\mathcal{S}}^{a}(e) = D^{2}\tilde{\varphi}^{a}(e)$.  \end{prop}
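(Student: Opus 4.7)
The plan is to verify the claimed identity by comparing the formula for $D^{2}\tilde{\varphi}^{a}(e)$ obtained in Proposition \ref{P: convexity} with the expression for $\tilde{\mathcal{S}}^{a}(e)$ derived in \cite[Section 6]{barles souganidis}. Both objects are quadratic forms on $\mathbb{R}^{d}$, so it suffices to compare $\langle \tilde{\mathcal{S}}^{a}(e)\xi,\xi\rangle$ and $\langle D^{2}\tilde{\varphi}^{a}(e)\xi,\xi\rangle$ for an arbitrary test vector $\xi \in \mathbb{R}^{d}$, and a natural route is to identify the auxiliary function used in the Barles-Souganidis cell-problem description of $\tilde{\mathcal{S}}^{a}(e)$ with the function $\Psi_{e}^{\xi} = V_{e}^{-1} R_{e}^{\xi}$ that appears in our Legendre-transform-type formula \eqref{E: convexity_legendre_transform}.

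First I would transcribe the Barles-Souganidis formula. In their analysis, $\tilde{\mathcal{S}}^{a}(e)$ is obtained by expanding the formal asymptotic ansatz $u^{\epsilon}(x,t) \approx U_{e}(\epsilon^{-1}\mathrm{dist}(x,\Gamma_{t}),\epsilon^{-1}x) + \epsilon(\text{correctors})$, and the Fredholm solvability condition for the first corrector forces the introduction of auxiliary functions $\chi^{\xi}$ defined, after differentiating the pulsating standing wave equation in the direction $\xi$, as the solutions of a linearized PDE of the form $\mathcal{L}_{e}\chi^{\xi} = F^{\xi}$ with $F^{\xi} = 2\langle a(x)\xi,\mathcal{D}_{e}V_{e}\rangle + \langle \mathrm{div}\, a,\xi\rangle V_{e}$. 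This is exactly the equation satisfied by $R_{e}^{\xi}$ in Proposition \ref{P: differentiability_of_waves}, so up to an element of $\mathrm{Ker}(\mathcal{L}_{e}) = \langle V_{e}\rangle$ (which is harmless because $\tilde{\mathcal{S}}^{a}(e)$ is defined via integrals that annihilate constant multiples of $V_{e}$, see below), we have $\chi^{\xi} = R_{e}^{\xi}$.

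Second, I would rewrite both formulas in the common form
\begin{equation*}
\int_{\mathbb{R}\times\mathbb{T}^{k}}\langle a(x)(\xi + \mathcal{D}_{e}\Psi_{e}^{\xi}), \xi + \mathcal{D}_{e}\Psi_{e}^{\xi}\rangle V_{e}^{2}\,dx\,ds,
\end{equation*}
with $\Psi_{e}^{\xi} = V_{e}^{-1} R_{e}^{\xi}$. For $\langle D^{2}\tilde{\varphi}^{a}(e)\xi,\xi\rangle$ this is exactly \eqref{E: convexity_legendre_transform}. For $\langle \tilde{\mathcal{S}}^{a}(e)\xi,\xi\rangle$ it requires rewriting the Barles-Souganidis integrand, which mixes $V_{e}$, $\mathcal{D}_{e}U_{e}$, $a(x)$, and $\chi^{\xi}$, in this symmetric "homogenization matrix" form. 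The standard manipulation is to substitute $\chi^{\xi} = \Psi_{e}^{\xi} V_{e}$ and apply the conjugation identity from the proof of Proposition \ref{P: non-negative_operator},
\begin{equation*}
\mathcal{L}_{e}(\Psi V_{e}) = \bigl(\mathcal{D}_{e}^{*}(a(x)\mathcal{D}_{e}\Psi) - 2V_{e}^{-1}\langle a(x)\mathcal{D}_{e}V_{e},\mathcal{D}_{e}\Psi\rangle\bigr)V_{e},
\end{equation*}
together with an integration by parts against $\Psi_{e}^{\xi} V_{e}$ and the pulsating standing wave equation for $U_{e}$, to convert the Barles-Souganidis integrand into $\langle a(x)(\xi + \mathcal{D}_{e}\Psi_{e}^{\xi}), \xi + \mathcal{D}_{e}\Psi_{e}^{\xi}\rangle V_{e}^{2}$.

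The main obstacle, as I see it, is administrative rather than analytic: one must line up the sign and normalization conventions of \cite{barles souganidis} (whose formulas are derived from the formal asymptotic expansion of the parabolic PDE, not from a variational principle) with the conventions used here for $\mathscr{T}^{a}_{e}$, $\mathcal{D}_{e}$, and $V_{e} = \partial_{s} U_{e}$, and then carry out the integration-by-parts reduction above carefully enough to see the quadratic form rearranges into a perfect square. Once this is done, the well-definedness of $\tilde{\mathcal{S}}^{a}(e)$ for $e \in S^{d-1} \setminus (S^{k-1}\times\{0\})$ is automatic from the well-definedness of $R_{e}^{\xi}$ and $V_{e} > 0$ established in Propositions \ref{P: eigenfunction_estimate} and \ref{P: differentiability_of_waves}, and the equality $\tilde{\mathcal{S}}^{a}(e) = D^{2}\tilde{\varphi}^{a}(e)$ follows by polarization.
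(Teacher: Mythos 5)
Your plan is correct in outline but takes a longer route than the paper's proof. The paper never invokes the ``perfect square'' representation \eqref{E: convexity_legendre_transform} at all for this step: instead, it starts from the raw formula
\begin{equation*}
D^{2}\tilde{\varphi}^{a}(e) = \int_{\mathbb{R} \times \mathbb{T}^{k}} \left(V_{e}^{2} a(x) + a(x) \mathcal{D}_{e}U_{e} \otimes \partial_{s} R_{e} + V_{e} a(x) \mathcal{D}_{e}R_{e} \right) \, dx \, ds
\end{equation*}
obtained in the proof of Proposition \ref{P: convexity} by differentiating $D\tilde{\varphi}^{a}(e) = \int a(x)\mathcal{D}_{e}U_{e}\partial_{s}U_{e}$ under the integral sign, then integrates by parts once (writing $\mathcal{D}_{e}U_{e} = e\partial_{s}U_{e} + D_{x}U_{e}$) and symmetrizes the tensor products via the elementary identity $\langle(w\otimes v)\xi,\xi\rangle = \langle(v\otimes w)\xi,\xi\rangle$, landing exactly on the integrand of \cite[Eq.\ 6.22]{barles souganidis}. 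No conjugation by $V_{e}$ and no Proposition \ref{P: non-negative_operator} are needed. Your plan manipulates \emph{both} sides into a third, common form, so it is roughly twice the algebra; it would still work, since $V_{e} > 0$ makes $\Psi_{e}^{\xi} = V_{e}^{-1}R_{e}^{\xi}$ well-defined and the conjugation identity does convert the Barles--Souganidis integrand into the quadratic form $\int \langle a(\xi + \mathcal{D}_{e}\Psi_{e}^{\xi}), \xi + \mathcal{D}_{e}\Psi_{e}^{\xi}\rangle V_{e}^{2}$. One small point you should make explicit rather than defer: you claim the normalization mismatch $\chi^{\xi} = R_{e}^{\xi} + c_{\xi}V_{e}$ is harmless, and this is indeed true, but the reason is that the replacement $R_{e}^{\xi} \mapsto R_{e}^{\xi} + c_{\xi}V_{e}$ changes the integrand in \eqref{E: useful_later} by $c_{\xi}\int V_{e}\bar{K}$ with $\bar{K} = 2\langle a\xi,\mathcal{D}_{e}V_{e}\rangle + \langle\mathrm{div}\,a,\xi\rangle V_{e}$, and $\bar{K} \in \mathrm{Ran}(\mathcal{L}_{e}) = \langle V_{e}\rangle^{\perp}$ by Proposition \ref{P: linear_eqn}, so the change vanishes.
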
  
	
	\begin{proof}  Recall that in the very first computation in the previous proof, we obtained
		\begin{equation*}
			D^{2}\tilde{\varphi}^{a}(e) = \int_{\mathbb{R} \times \mathbb{T}^{k}} \left(|V_{e}^{0}|^{2} a(x) + a(x) \mathcal{D}_{e}U^{0}_{e} \otimes \partial_{s} R^{0}_{e} + V^{0}_{e} a(x) \mathcal{D}_{e}R^{0}_{e} \right) \, dx \, ds.
		\end{equation*}
	Writing $\mathcal{D}_{e}U^{0}_{e} = e \partial_{s} U^{0}_{e} + D_{x}U^{0}_{e}$ and integrating both terms by parts, we arrive at
		\begin{align*}
			D^{2}\tilde{\varphi}^{a}(e) &= \int_{\mathbb{R} \times \mathbb{T}^{k}} \big (|V_{e}^{0}|^{2} a(x) + V_{e}^{0} \cdot \text{div} \, a \otimes R^{0}_{e} + 2 V^{0}_{e} a(x) D_{x} R^{0}_{e} \\
			&\qquad \qquad + 2 V^{0}_{e} a(x) e \otimes \partial_{s} R^{0}_{e} \big ) \, dx \, ds.
		\end{align*} 
	
	To finish the proof, first, recall that symmetric matrices are determined by their quadratic forms.  Therefore, it only remains to show that $\langle D^{2} \tilde{\varphi}(e) \xi, \xi \rangle = \langle \tilde{\mathcal{S}}^{a}(e)\xi, \xi\rangle$ independently of the choice of $\xi \in \mathbb{R}^{d}$.  Additionally, recall that if $w,v \in \mathbb{R}^{d}$, then 
		\begin{equation} \label{E: tensor algebra}
			\langle (w \otimes v) \xi, \xi \rangle = \langle \xi, w \rangle \langle v, \xi \rangle = \langle (v \otimes w) \xi, \xi \rangle.
		\end{equation}
		
	Using \eqref{E: tensor algebra}, we find
		\begin{align}
			\langle D^{2} \tilde{\varphi}(e) \xi, \xi \rangle &= \int_{\mathbb{R} \times \mathbb{T}^{k}} V^{0}_{e} \left( V^{0}_{e} \langle a(x) \xi, \xi \rangle + \langle \text{div}\, a, \xi \rangle R_{e}^{0,\xi} 		
			 \right.   \label{E: useful_later}
			\\
			&\qquad \left. + 2 \langle a(x) D_{x} R_{e}^{0,\xi}, \xi \rangle + 2 \langle a(x) e, \xi \rangle \partial_{s} R_{e}^{0,\xi} \right) \, dx \, ds \nonumber \\
				&= \int_{\mathbb{R} \times \mathbb{T}^{k}} V^{0}_{e} \left( \langle (a(x) e \otimes \partial_{s} R^{0}_{e}) \xi, \xi \rangle + \langle (\partial_{s} R^{0}_{e} \otimes a(x)e) \xi, \xi \rangle   \right. \nonumber \\
				&\qquad \left. + V^{0}_{e} \langle a(x) \xi, \xi \rangle + 2 \langle a(x) D_{x} R^{0}_{e}\xi, \xi \rangle + \frac{1}{2} \left(\langle (\text{div}\, a \otimes R^{0}_{e})\xi, \xi \rangle  \right. \right.\nonumber \\
				&\qquad \left. \left. + \langle (R^{0}_{e} \otimes \text{div} \, a) \xi, \xi \rangle \right) \right) \, dx \, ds. \nonumber
		\end{align}
	Thus, by our previous observation and \cite[Equation 6.22]{barles souganidis},
		\begin{align*}
			D^{2}\tilde{\varphi}(e) &= \int_{\mathbb{R} \times \mathbb{T}^{k}} V^{0}_{e} \bigg( a(x) e \otimes \partial_{s} R^{0}_{e} + \partial_{s} R^{0}_{e} \otimes a(x)e + V^{0}_{e} a(x) \\
				&\qquad  + 2 a(x) D_{x} R^{0}_{e} + \frac{1}{2} \left(\text{div}\, a \otimes R^{0}_{e} + R^{0}_{e} \otimes \text{div} \, a \right) \bigg) \, dx \, ds = \tilde{\mathcal{S}}^{a}(e).
		\end{align*} \qed
	\end{proof}  
		
\subsection{Higher regularity and correctors} \label{S: higher_regularity}  The purpose of this section is twofold.  First, we discuss higher regularity of $\tilde{\varphi}^{a}$ that can be deduced when additional differentiability assumptions are imposed on $W$.  Next, with this additional regularity, we construct correctors that will be used in the analysis of the sharp interface limit.  

In this section and Section \ref{S: sharp_interface_limit}, since $\delta = 0$ throughout, we will write $U_{v}$ in place of $U^{0}_{v}$, $\mathcal{L}_{v}$ instead of $\mathcal{L}_{v}^{0}$, $R_{v}^{\xi}$ for $R_{v}^{0,\xi}$, and so on.  

Concerning the differentiability of $\tilde{\varphi}^{a}$, we have
	
	\begin{prop} \label{P: higher_regularity} If in addition to \eqref{A: a_assumption_1}, \eqref{A: W_assumption_1}, \eqref{A: a_assumption_2}, \eqref{A: W_assumption_2} and \eqref{E: laminar}, we also assume that $W \in C^{n,\alpha}([-1,1])$ for some $n \in \mathbb{N} \setminus \{1\}$, then $\tilde{\varphi}^{a} \in C^{n}(\mathbb{R}^{d} \setminus (\mathbb{R}^{k} \times \{0\}))$ and the map $v \mapsto U_{v}$ is $(n - 1)$-times continuously Fr\'{e}chet differentiable in the $BC(\mathbb{R} \times \mathbb{T}^{k})$ topology.  \end{prop}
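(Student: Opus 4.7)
The plan is induction on $n$, with base case $n = 2$ supplied by Theorem \ref{T: differentiability}(i),(iii). Assume inductively that, under $W \in C^{n-1,\alpha}$, the map $e \mapsto U_e$ is $(n-2)$-times continuously Fr\'echet differentiable into $BC(\mathbb{R} \times \mathbb{T}^k)$ and $\tilde{\varphi}^a \in C^{n-1}(\mathbb{R}^d \setminus (\mathbb{R}^k \times \{0\}))$. Now assume $W \in C^{n,\alpha}$. Fix directions $\xi_1, \dots, \xi_{n-1} \in \mathbb{R}^d$ and let $R_e^{(j)} = \partial_{\xi_j} \cdots \partial_{\xi_1} U_e$ denote the iterated directional derivatives, well-defined in $BC(\mathbb{R} \times \mathbb{T}^k)$ for $j \leq n-2$. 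Differentiating the pulsating wave equation $\mathcal{D}_e^*(a(x) \mathcal{D}_e U_e) + W'(U_e) = 0$ exactly $n-1$ times and reorganizing via the Leibniz and Fa\`a di Bruno formulas gives
\begin{equation*}
\mathcal{L}_e R_e^{(n-1)} = G_{n-1},
\end{equation*}
where $G_{n-1}$ is a polynomial in $\{R_e^{(k)}\}_{k \leq n-2}$ with coefficients involving $a$, $\mathrm{div}\, a$, and $W^{(j)}(U_e)$ for $j \leq n$. The hypothesis $W \in C^{n,\alpha}$ together with the inductively-established exponential decay of each $R_e^{(k)}$ places $G_{n-1}$ in $BC(\mathbb{R} \times \mathbb{T}^k) \cap L^2(\mathbb{R} \times \mathbb{T}^k)$.

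Existence of $R_e^{(n-1)}$ then follows by mimicking the compactness argument in the proof of Proposition \ref{P: differentiability_of_waves}. The difference quotient $R_{e,h}^{(n-1)} = h^{-1}(R_{e + h \xi_{n-1}}^{(n-2)} - R_e^{(n-2)})$ satisfies $\mathcal{L}_e R_{e,h}^{(n-1)} = G_{n-1} + o(1)$ in $L^\infty$ as $h \to 0$. Decomposing $R_{e,h}^{(n-1)} = c_h V_e + Q_{e,h}^{(n-1)}$ with $Q_{e,h}^{(n-1)} \in \langle V_e \rangle^\perp$, the closed range of $\mathcal{L}_e$ (Proposition \ref{P: linear_eqn}) together with Schauder estimates yields a uniform $C^{2,\mu}$-bound on $Q_{e,h}^{(n-1)}$, while exponential decay of $V_e$ and the convergence $W''(U_e) \to W''(\pm 1) > 0$ as $s \to \pm\infty$ (Propositions \ref{P: bounded_width_for_minimizers} and \ref{P: eigenfunction_estimate}) propagate by the maximum-principle barrier argument of Proposition \ref{P: bounded_width_for_minimizers} to give exponential decay of $Q_{e,h}^{(n-1)}$. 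This yields pre-compactness in $BC(\mathbb{R} \times \mathbb{T}^k)$, and uniqueness modulo $V_e$ combined with the normalization $\int_{\mathbb{T}^k} R_e^{(n-1)}(0,x) \, dx = 0$ fixes the limit $c = \lim_{h \to 0} c_h$. Continuous dependence on $e$ follows from the inductive continuity of $G_{n-1}$ in $e$ and the continuity of the bounded right-inverse of $\mathcal{L}_e$ on $\langle V_e \rangle^\perp$ (uniform on compact subsets of $S^{d-1} \setminus (S^{k-1} \times \{0\})$).

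Finally, the $C^n$ regularity of $\tilde{\varphi}^a$ follows by differentiating $n-1$ additional times the formula
\begin{equation*}
D\tilde{\varphi}^a(v) = \int_{\mathbb{R} \times \mathbb{T}^k} a(x) \mathcal{D}_v U_v \cdot \partial_s U_v \, dx \, ds
\end{equation*}
from Theorem \ref{T: differentiability}(iv), extended to $v \in \mathbb{R}^d \setminus (\mathbb{R}^k \times \{0\})$ via Remark \ref{R: extension_to_space}. The just-established $C^{n-1}$ Fr\'echet differentiability of $v \mapsto U_v$ into $BC(\mathbb{R} \times \mathbb{T}^k)$ combined with uniform exponential decay of $\partial_s U_v$ and $\mathcal{D}_v U_v$ justifies differentiation under the integral sign. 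The main obstacle throughout is technical bookkeeping: one must verify that all exponential-decay estimates remain uniform on compact subsets of $S^{d-1} \setminus (S^{k-1} \times \{0\})$ at every order, since the effective ellipticity constant of $\mathscr{T}^a_e$ in this laminar setting scales like $\|P_k^\perp(e)\|^2$, so all Schauder and barrier estimates must respect this degeneracy locally in $e$.
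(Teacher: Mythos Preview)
Your proposal is correct and follows exactly the approach the paper indicates: induction on $n$ with the base case $n=2$ supplied by Proposition~\ref{P: differentiability_of_waves}, and the inductive step carried out by mimicking that proposition's compactness-and-normalization argument for the higher difference quotients. The paper's own proof simply states this induction scheme and leaves the details to the reader, so you have in effect written out precisely what the author omitted.
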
  
	
	\begin{proof}  The proof is by induction on $n \in \mathbb{N} \setminus \{1\}$.  The base case $n = 2$ was treated already in Proposition \ref{P: differentiability_of_waves}.  The rest of the details are left to the interested reader. \qed \end{proof}

Next, we mention auxiliary functions that will be used in the investigation of the sharp interface limit.

	\begin{prop} \label{P: correctors}  If \eqref{A: a_assumption_1}, \eqref{A: W_assumption_1}, \eqref{A: a_assumption_2}, \eqref{A: W_assumption_2}, and \eqref{E: laminar} all hold, then, for each $e \in S^{d-1} \setminus (S^{k-1} \times \{0\})$ and $\xi \in \mathbb{R}^{d}$, there is a unique $P^{\xi}_{e} \in H^{2}(\mathbb{R} \times \mathbb{T}^{d})$ solving the PDE
		\begin{equation*}
			\left\{ \begin{array}{r l}
					\mathcal{L}_{e} P^{\xi}_{e} = F_{\xi,e} & \text{in} \, \, \mathbb{R} \times \mathbb{T}^{d}, \\
				\lim_{|s| \to \infty} P^{\xi}_{e}(s,x) = 0 & \text{uniformly in} \, \, \mathbb{T}^{d}, \\
				\int_{\mathbb{T}^{d}} P^{\xi}_{e}(0,x) \, dx = 0,
				\end{array} \right.
		\end{equation*}
	where $F_{\xi,e} : \mathbb{R} \times \mathbb{T}^{d} \to \mathbb{R}$ is given by 
		\begin{align*}
			F_{\xi,e} &= - \tilde{M}^{a}(e)^{-1} \langle D^{2} \tilde{\varphi}^{a}(e) \xi, \xi \rangle V_{e} + \langle a(x)\xi, \xi \rangle V_{e} + \langle \text{div} \, a, \xi \rangle R^{\xi}_{e}  \\
				&\qquad + 2 \langle a(x) \xi, D_{x} R^{\xi}_{e} \rangle + 2 \langle a(x) e, \xi \rangle \partial_{s} R^{\xi}_{e}.
		\end{align*}
	
	If, in addition, $W \in C^{4,\alpha}([-1,1])$, then, for each fixed $\xi$, the function $P^{\xi} : S^{d-1} \setminus (S^{k-1} \times \{0\}) \to H^{2}(\mathbb{R} \times \mathbb{T}^{d})$ given by $P^{\xi}(e) = P_{e}^{\xi}$ is twice continuously differentiable with respect to both the $H^{2}(\mathbb{R} \times \mathbb{T}^{d})$ and $C_{0}(\mathbb{R} \times \mathbb{T}^{d})$ topologies.  \end{prop}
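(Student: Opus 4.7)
The plan is as follows. First, I would establish that the right-hand side $F_{\xi,e}$ lies in $L^{2}(\mathbb{R}\times\mathbb{T}^{d})$ and is orthogonal to the kernel $\langle V_{e}\rangle$ of $\mathcal{L}_{e}$. The $L^{2}$ bound follows from the uniform exponential decay estimates on $V_{e}$, $\mathcal{D}_{e}U_{e}$, $R_{e}^{\xi}$, and its first derivatives as $|s|\to\infty$, which are furnished by Propositions \ref{P: bounded_width_for_minimizers}, \ref{P: eigenfunction_estimate}, \ref{P: differentiability_of_waves}, and Remark \ref{R: regularity in derivative}. The orthogonality $\int F_{\xi,e}V_{e}\,dx\,ds = 0$ is essentially a restatement of identity \eqref{E: useful_later}: pairing the last four summands of $F_{\xi,e}$ with $V_{e}$ reproduces exactly $\langle D^{2}\tilde{\varphi}^{a}(e)\xi,\xi\rangle$, while pairing the first summand with $V_{e}$ gives $-\langle D^{2}\tilde{\varphi}^{a}(e)\xi,\xi\rangle$ upon recognizing $\tilde{M}^{a}(e)=\|V_{e}\|_{L^{2}}^{2}$. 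With these conditions verified, Proposition \ref{P: linear_eqn} produces an $H^{2}$-solution $P$, and the full affine space of solutions is $P+\mathbb{R}V_{e}$.

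For uniqueness, the strict positivity $V_{e}>0$ (obtained from $\partial_{s}U_{e}\geq 0$, the PDE $\mathcal{L}_{e}V_{e}=0$, and the strong maximum principle, using $V_{e}\not\equiv 0$) implies $\int_{\mathbb{T}^{d}}V_{e}(0,x)\,dx>0$, so the normalization $\int_{\mathbb{T}^{d}}P(0,x)\,dx=0$ singles out a unique representative of the coset $P+\mathbb{R}V_{e}$. For the uniform decay, I would mimic the barrier argument from Proposition \ref{P: eigenfunction_estimate}: by Proposition \ref{P: bounded_width_for_minimizers}, $W''(U_{e})\geq \alpha/2$ once $|s|$ is sufficiently large, so in that regime $\mathcal{L}_{e}$ is uniformly elliptic with strictly positive zeroth-order coefficient; combined with exponential decay of $F_{\xi,e}$ and Schauder upgrades of the $H^{2}$-regularity, comparison against an exponentially decaying super-solution forces $|P_{e}^{\xi}(s,x)|\leq C e^{-\beta|s|}$ for a suitably small $\beta>0$, which in particular yields the claimed uniform limit.

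For smoothness in $(e,\xi)$ when $W\in C^{4,\alpha}$, I would recast the problem through the augmented operator
\[
T_{e}:H^{2}(\mathbb{R}\times\mathbb{T}^{d})\oplus\mathbb{R}\to L^{2}(\mathbb{R}\times\mathbb{T}^{d})\oplus\mathbb{R},\qquad T_{e}(P,c)=\bigl(\mathcal{L}_{e}P+cV_{e},\ \textstyle\int_{\mathbb{T}^{d}}P(0,x)\,dx\bigr).
\]
The preceding paragraphs show $T_{e}$ is a bijection, hence a Banach-space isomorphism by the open mapping theorem, and $P_{e}^{\xi}$ is recovered as the first component of $T_{e}^{-1}(F_{\xi,e},0)$ (with second component $0$ by the orthogonality check). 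Proposition \ref{P: higher_regularity} with $n=4$ gives that $e\mapsto U_{e}$ is three times continuously Frechet differentiable into $BC(\mathbb{R}\times\mathbb{T}^{k})$, which, after combining with exponential decay and Schauder estimates, propagates to differentiability of $e\mapsto V_{e}$ and $e\mapsto W''(U_{e})$ into $H^{2}$, and of $(e,\xi)\mapsto R_{e}^{\xi}$ into $H^{2}$ as well. Consequently $e\mapsto T_{e}$ and $(e,\xi)\mapsto(F_{\xi,e},0)$ are each twice continuously Frechet differentiable, and the conclusion follows because operator inversion is smooth on the open set of isomorphisms. The main obstacle is precisely this last step: both the kernel $\langle V_{e}\rangle$ and the range condition $\langle V_{e}\rangle^{\perp}$ move with $e$, and the augmented formulation is introduced specifically to bypass that difficulty by packaging the problem as a smoothly varying family of isomorphisms between fixed Banach spaces.
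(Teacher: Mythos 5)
Your proposal is correct and agrees with the paper on the existence, uniqueness, and decay parts: the orthogonality $F_{\xi,e}\perp V_{e}$ is obtained from \eqref{E: useful_later} together with $\tilde{M}^{a}(e)=\|V_{e}\|_{L^{2}}^{2}$, existence then comes from Proposition \ref{P: linear_eqn}, uniqueness is fixed by the normalization on $\{s=0\}$ because $V_{e}>0$, and the exponential decay is run through the same barrier construction used for $V_{e}$ and $R^{\xi}_{e}$.

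Where you genuinely diverge from the paper is in the regularity of $(e,\xi)\mapsto P^{\xi}_{e}$. The paper's proof says the differentiability ``proceeds exactly as in the corresponding proof for $U_{e}$,'' i.e.\ it repeats the difference-quotient-and-compactness scheme of Proposition \ref{P: differentiability_of_waves}: form $\frac{1}{h}(P^{\xi}_{e+h\eta}-P^{\xi}_{e})$, split off the $V_{e}$-component, prove a uniform $L^{\infty}$ bound by contradiction, extract a $C^{2}_{0}$-convergent subsequence via Schauder estimates and exponential decay, and identify the limit by the Fredholm alternative. You instead introduce the augmented operator $T_{e}(P,c)=\bigl(\mathcal{L}_{e}P+cV_{e},\int_{\mathbb{T}^{d}}P(0,x)\,dx\bigr)$, verify it is an isomorphism $H^{2}\oplus\mathbb{R}\to L^{2}\oplus\mathbb{R}$ (the $c$-coordinate absorbing the kernel and the normalization functional absorbing the cokernel), and then recover $P^{\xi}_{e}$ as the first component of $T_{e}^{-1}(F_{\xi,e},0)$, so that its regularity in $(e,\xi)$ is read off from the smoothness of operator inversion together with the already-established $C^{2}$ dependence of $\mathcal{L}_{e}$, $V_{e}$, and $F_{\xi,e}$ on $(e,\xi)$ (via Proposition \ref{P: higher_regularity}, Remark \ref{R: regularity in derivative}, and Schauder estimates). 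This is a genuinely different route. The advantage of the augmented-operator framing is precisely what you flag at the end: since $\ker\mathcal{L}_{e}=\langle V_{e}\rangle$ and $\operatorname{Ran}\mathcal{L}_{e}=\langle V_{e}\rangle^{\perp}$ both move with $e$, the bare Fredholm picture requires a separate bookkeeping step at each order of differentiation, whereas $T_{e}$ is a bijection between \emph{fixed} Banach spaces, so the inverse function theorem on the open set of isomorphisms handles the moving kernel and range in one stroke. The paper's compactness argument, on the other hand, is more self-contained and does not require verifying that the trace functional $P\mapsto\int_{\mathbb{T}^{d}}P(0,x)\,dx$ is bounded on $H^{2}(\mathbb{R}\times\mathbb{T}^{k})$ (true, but worth stating) or that $e\mapsto V_{e}\in L^{2}$ is itself $C^{2}$ (which you would need to spell out, using the exponential decay of the difference quotients of $V_{e}$ from Remark \ref{R: regularity in derivative}). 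Both arguments are sound; yours is cleaner at the cost of one additional functional-analytic abstraction.
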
  
		
	\begin{proof}  Concerning existence, notice that $F_{\xi,e} \in \langle V_{e} \rangle^{\perp}$ by \eqref{E: useful_later} and the definition of $\tilde{M}^{a}(e)$.  Thus, Proposition \ref{P: linear_eqn} provides the existence of $P_{e}^{\xi}$.  Arguing as in the analysis of the function $R^{\xi}_{e}$, we see that $P_{e}^{\xi}(s,x) \to 0$ at an exponential rate as $|s| \to \infty$.  
	
	The proof that $e \mapsto P^{\xi}_{e}$ is differentiable in $e$ proceeds exactly as in the corresponding proof for $U_{e}$.  This is where we need a third derivative of $W$.  The second derivative in $e$ is obtained the same way, and explains our assumption that $W$ has four continuous derivatives. \qed \end{proof}  
	
In the proof of Theorem \ref{T: sharp_interface_limit_graphs}, we will use the following extension of the previous construction.  Given $A \in \mathcal{S}_{d}$, if we expand $A$ as $A = \sum_{i = 1}^{d} \lambda_{i} \xi^{A}_{i} \otimes \xi^{A}_{i}$, where $\{\xi^{A}_{1},\dots,\xi^{A}_{d}\}$ is an orthonormal basis, then we can define $P_{e}^{A} \in H^{2}(\mathbb{R} \times \mathbb{T}^{k}) \cap C^{2}_{0}(\mathbb{R} \times \mathbb{T}^{k})$ by 
	\begin{equation} \label{E: decomposition}
		P_{e}^{A} = \sum_{i = 1}^{d} \lambda_{i} P_{e}^{\xi_{i}^{A}}.
	\end{equation}
Notice that $P_{e}^{A}$ satisfies the equation
	\begin{equation} \label{E: corrector equation with matrix}
		\mathcal{L}_{e} P^{A}_{e} = F_{A,e} \quad \text{in} \, \, \mathbb{R} \times \mathbb{T}^{k}, \quad \int_{\mathbb{T}^{k}} P^{A}_{e}(0,x) \, dx = 0,
	\end{equation}
where $F_{A,e}$ is given by 
	\begin{align*}
		F_{A,e} &= - \tilde{M}^{a}(e)^{-1} \text{tr}(D^{2} \tilde{\varphi}^{a}(e)A) V_{e} + \text{tr}(a(x)A) V_{e} + \text{tr}([\text{div} \, a \otimes R_{e}] A)  \\
			&\quad + 2 \text{tr}( [a(x) \otimes D_{x}R_{e}] A) + 2 \text{tr}( [a(x)e \otimes \partial_{s} R_{e}]A).
	\end{align*}
Observe that, by uniqueness of solutions of \eqref{E: corrector equation with matrix}, the map $A \mapsto P_{e}^{A}$ is linear.  Hence as a consequence of Proposition \ref{P: correctors}, we have

	\begin{prop} \label{P: key corrector result} If \eqref{A: a_assumption_1}, \eqref{A: W_assumption_1}, \eqref{A: a_assumption_2}, \eqref{A: W_assumption_2}, \eqref{A: additional_regularity}, and \eqref{E: laminar} all hold, then there is a $C^{2}$ map $P : S^{d-1} \setminus (S^{k-1} \times \{0\}) \to C^{2}_{0}(\mathbb{R} \times \mathbb{T}^{k}; \mathcal{S}_{d})$ such that if $e \in S^{d-1} \setminus (S^{k-1} \times \{0\})$, $A \in \mathcal{S}_{d}$, and $P^{A}_{e}$ is given by \eqref{E: decomposition}, then 
		\begin{equation*}
			P_{e}^{A}(s,x) = \text{tr}\left( P_{e}(s,x) A \right) \quad \text{for each} \, \, (s,x) \in \mathbb{R} \times \mathbb{T}^{k}.
		\end{equation*}
	Furthermore, for each $K \subset \subset S^{d-1} \setminus (S^{k-1} \times \{0\})$, there is a constant $C(K) > 0$ such that 
		\begin{equation*}
			\sqrt{\text{tr}(P_{e}(s,x)^{2})} \leq C(K) \exp \left( - C(K)^{-1} |s| \right) \quad \text{for} \, \, (s,x) \in \mathbb{R} \times \mathbb{T}^{k}.
		\end{equation*}\end{prop}

\subsection{On elliptic regularization} \label{S: elliptic regularization}  The remainder of the section deals with elliptic regularization, the main goal being the proof of Theorem \ref{T: no lower bound}.  Accordingly, we henceforth set $k = d$.  

We start with a simple observation concerning the approximate mobility $\tilde{M}^{a,\delta}$, namely, its monotonicity in the regularization parameter $\delta$.

\begin{prop} \label{P: monotone mobility}  If $a$ and $W$ satisfy \eqref{A: a_assumption_1}, \eqref{A: W_assumption_1}, \eqref{A: a_assumption_2}, and \eqref{A: W_assumption_2}, then, for any $v \in \mathbb{R}^{d}$, the function $\delta \mapsto \tilde{M}^{a,\delta}(v)$ is non-increasing in $(0,\infty)$.  \end{prop} 

	\begin{proof}  If $\delta_{1},\delta_{2} \in (0,\infty)$ and $\delta_{1} < \delta_{2}$, then we use the minimization properties of $U^{\delta_{1}}_{v}$ and $U^{\delta_{2}}_{v}$ to compute:
	\begin{align*}
		\mathscr{T}^{a}_{v,\delta_{1}}(U^{\delta_{1}}_{v}) + \frac{1}{2} (\delta_{2} - \delta_{1}) \tilde{M}^{a,\delta_{1}}(v) &= \mathscr{T}^{a}_{v,\delta_{2}}(U^{\delta_{1}}_{v}) \\
		&\geq \mathscr{T}^{a}_{v,\delta_{2}}(U^{\delta_{2}}_{v}) \\
		&= \mathscr{T}^{a}_{v,\delta_{1}}(U^{\delta_{2}}_{v}) + \frac{1}{2} (\delta_{2} - \delta_{1}) \tilde{M}^{a,\delta_{2}}(v) \\
		&\geq \mathscr{T}^{a}_{v,\delta_{1}}(U^{\delta_{1}}_{v}) + \frac{1}{2} (\delta_{2} - \delta_{1}) \tilde{M}^{a,\delta_{2}}(v).
	\end{align*}
	Hence $\tilde{M}^{a,\delta_{1}}(v) \geq \tilde{M}^{a,\delta_{2}}(v)$. \qed \end{proof}

The monotonicity of $\tilde{M}^{a,\delta}(v)$ in $\delta$ will be used in the next section to prove Theorem \ref{T: no lower bound}.  In the meantime, we use it to show that elliptic regularization provides an approximation of both the surface tension $\tilde{\varphi}^{a}$ and the mobility $\tilde{M}^{a}$ as defined in Section \ref{S: mobility}.   

	\begin{prop} \label{P: regularization}  $\lim_{\delta \to 0^{+}} (\tilde{\varphi}^{a,\delta}, \tilde{M}^{a,\delta}) = (\tilde{\varphi}^{a},\tilde{M}^{a})$ pointwise in $\mathbb{R}^{d} \setminus \{0\}$. \end{prop}  
	
		\begin{proof}  We begin by proving that $\lim_{\delta \to 0^{+}} \tilde{\varphi}^{a,\delta} = \tilde{\varphi}^{a}$.  First, observe that we can use a candidate of the form $V(s,x) = q(s)$ in \eqref{E: approximate surface tension definition} to see that, for any fixed $v$, the set $\{\tilde{\varphi}^{a,\delta}(v) \, \mid \, \delta \in (0,1]\}$ is bounded.
		
		Fix $v \in \mathbb{R}^{d} \setminus \{0\}$.  To prove that $\tilde{\varphi}^{a,\delta}(v) \to \tilde{\varphi}^{a}(v)$ as $\delta \to 0^{+}$, it suffices to show that it is true along any convergent subsequence.  Thus, let us fix a $(\delta_{n})_{n \in \mathbb{N}} \subseteq (0,1]$ such that $\lim_{n \to \infty} \delta_{n} = 0$ and the limit $\lim_{n \to \infty} \tilde{\varphi}^{a,\delta_{n}}(v)$ exists.  
		
		Notice that $\mathscr{T}^{a}_{v}(U^{\delta_{n}}_{v}) \leq \mathscr{T}^{a}_{v,\delta_{n}}(U^{\delta_{n}}_{v}) = \tilde{\varphi}^{a,\delta_{n}}(v)$ for all $n \in \mathbb{N}$.  Therefore, by Proposition \ref{P: compactness} and Remark \ref{R: extension_to_space}, there is a subsequence $(n_{j})_{j \in \mathbb{N}} \subseteq \mathbb{N}$ and a $U \in \mathscr{X}$ such that 
			\begin{align*}
				\lim_{j \to \infty} \delta_{n_{j}} = 0, \quad \lim_{j \to \infty} U^{\delta_{n_{j}}}_{v} = U \quad \text{a.e. in} \, \, \mathbb{R} \times \mathbb{T}^{d}, \\
				 \text{and} \quad \mathscr{T}^{a}_{v}(U) \leq \lim_{j \to \infty} \mathscr{T}^{a}_{v,\delta_{n_{j}}}(U^{\delta_{n_{j}}}_{v}) = \lim_{n \to \infty} \tilde{\varphi}^{a,\delta_{n}}(v).
			\end{align*}  
		Given any $V \in $, we compute
			\begin{align*}
				\mathscr{T}^{a}_{v}(U) \leq \lim_{n \to \infty} \tilde{\varphi}^{a,\delta_{n}}(v) \leq \lim_{n \to \infty} \mathscr{T}^{a}_{v,\delta_{n}}(V) = \mathscr{T}^{a}_{v}(V).
			\end{align*}
		Thus, by Proposition \ref{P: smooth_approximation} and Remark \ref{R: extension_to_space},
			\begin{equation*}
				\mathscr{E}^{a}(v) \leq \mathscr{T}^{a}_{v}(U) \leq \lim_{n \to \infty} \tilde{\varphi}^{a,\delta_{n}}(v) \leq \mathscr{E}^{a}(v)
			\end{equation*} 
		Since $\mathscr{E}^{a} = \tilde{\varphi}^{a}$, we conclude that $\lim_{n \to \infty} \tilde{\varphi}^{a,\delta_{n}}(v) = \tilde{\varphi}^{a}(v)$ as claimed.
		
		Finally, we show that $\lim_{\delta \to 0^{+}} \tilde{M}^{a,\delta} = \tilde{M}^{a}$, interpreting the limit in the sense of extended real numbers if necessary.  (The limit necessarily exists by monotonicity.)  To start with, we will prove that
			\begin{equation} \label{E: useful mobility bound}
				\tilde{M}^{a}(v) \geq \sup \left\{ \tilde{M}^{a,\delta}(v) \, \mid \, \delta > 0\right\}.
			\end{equation}
		This implies that $\tilde{M}^{a}(v) = \lim_{\delta \to 0^{+}} \tilde{M}^{a,\delta}(v)$ if $\lim_{\delta \to 0^{+}} \tilde{M}^{a,\delta}(v) = \infty$.  
		
		To see that \eqref{E: useful mobility bound} holds, suppose that $\tilde{M}^{a}(v) < \infty$; otherwise, the bound is automatic.  Since the mobility is finite, the definition \eqref{E: mobility} implies that we can fix a $U \in \mathscr{X}_{+}$ such that $\mathscr{E}^{a}(v) = \mathscr{T}^{a}_{v}(U)$ and $\partial_{s} U \in L^{2}(\mathbb{R} \times \mathbb{T}^{d})$.  Using the minimization properties of $U$ and $U^{\delta}_{v}$ for arbitrary $\delta > 0$, we find
			\begin{equation*}
				\frac{1}{2} \delta \tilde{M}^{a}(v) + \mathscr{T}^{a}_{v}(U) = \mathscr{T}^{a}_{v,\delta}(U) \geq \mathscr{T}^{a}_{v,\delta}(U^{\delta}) \geq \frac{1}{2} \delta \tilde{M}^{a,\delta}(v) + \mathscr{T}^{a}_{v}(U).
	\end{equation*}
Thus, since this is true for any $U$ as above, \eqref{E: useful mobility bound} holds.

It remains to consider the case when $\lim_{\delta \to 0^{+}} \tilde{M}^{a,\delta}(v) < \infty$.  Recall that we know $\limsup_{\delta \to 0^{+}} \mathscr{T}^{a}_{e}(U^{\delta}) < \infty$ since $\mathscr{T}^{a}_{v,\delta}(U^{\delta}) \to \tilde{\varphi}^{a}(v)$.  Together with the bound on $(\tilde{M}^{a,\delta}(v))_{\delta > 0}$, this implies that $(U^{\delta})_{\delta > 0}$ is bounded in the space $H^{1}((-M,M) \times \mathbb{T}^{d})$ for each $M > 0$.  Thus, let us choose a subsequence $(\delta_{n})_{n \in \mathbb{N}} \subseteq (0,1]$ and a $U \in \mathscr{X}_{+}$ such that
		\begin{gather*}
			\lim_{n \to \infty} \delta_{n} = 0, \quad \lim_{n \to \infty} U^{\delta_{n}} = U \quad \text{a.e. in} \, \, \mathbb{R} \times \mathbb{T}^{d}, \quad \text{and} \\
			\lim_{n \to \infty} \partial_{s} U^{\delta_{n}} = \partial_{s} U \quad \text{weakly in} \, \, L^{2}(\mathbb{R} \times \mathbb{T}^{d}).
		\end{gather*}
Since $\mathscr{T}^{a}_{v}(U) \leq \liminf_{n \to \infty} \mathscr{T}^{a}_{v,\delta_{n}}(U^{\delta_{n}}_{v}) = \tilde{\varphi}^{a}(v)$, we know that $\mathscr{T}^{a}_{v}(U) = \mathscr{E}^{a}(v)$, that is, $U$ is a candidate for the infimum in \eqref{E: mobility}.  Applying this together with the weak convergence of $(\partial_{s} U^{\delta_{n}})_{n \in \mathbb{N}}$, we find
	\begin{equation*}
		\tilde{M}^{a}(v) \leq \int_{\mathbb{R} \times \mathbb{T}^{d}} |\partial_{s} U|^{2} \, dx \, ds \leq \liminf_{n \to \infty} \int_{\mathbb{R} \times \mathbb{T}^{d}} |\partial_{s} U_{v}^{\delta_{n}}|^{2} \, dx \, ds = \lim_{\delta \to 0^{+}} \tilde{M}^{a,\delta}(v).
	\end{equation*}
Combining this with \eqref{E: useful mobility bound}, we conclude that $\tilde{M}^{a}(v) = \lim_{\delta \to 0^{+}} \tilde{M}^{a,\delta}(v)$.  \qed \end{proof}

\subsection{Lower bound implies smoothness} \label{S: no lower bound} In the previous section, we have shown that elliptic regularization provides an approximation of both the surface tension $\tilde{\varphi}^{a}$ and the mobility $\tilde{M}^{a}$.  Thus, it seems natural to interpret \eqref{E: upper bound general case} as an a priori bound on $(\tilde{M}^{a})^{-1} \|D^{2} \tilde{\varphi}^{a}\|$.  In this section, we prove Theorem \ref{T: no lower bound}, which concerns the possibility of a matching lower bound.
	
	In view of some classical counter-examples in Aubry-Mather theory, we expect the conclusions of Theorem \ref{T: no lower bound} to be false for ``generic" coefficients $a$ and $W$.  Put another way, instead of interpreting the theorem as a positive result, it seems more appropriate to view it as an obstruction to bounds like \eqref{E: lower bound}.
	
	\begin{proof}[Proof of Theorem \ref{T: no lower bound}]  Let us assume that there is an $\mathcal{H}^{d-1}$-measurable $A \subseteq E$ such that $\mathcal{H}^{d - 1}(A) > 0$ and $\tilde{M}^{a}(e) = \infty$ for each $e \in A$.  It is convenient to define $\tilde{A} = \{v \in \mathbb{R}^{d} \setminus \{0\} \, \mid \, \frac{v}{\|v\|} \in A\}$.  Notice that $\tilde{A}$ is Lebesgue measurable and $\mathcal{L}^{d}(\tilde{A}) > 0$.  By Proposition \ref{P: regularization}, if $v \in \tilde{A}$, then			\begin{equation*}
			\lim_{\delta \to 0^{+}} \tilde{M}^{a,\delta}(v) = \infty.
		\end{equation*}
		
	Given that $\mathcal{L}^{d}(\tilde{A}) > 0$, we can fix a compact set $K \subseteq \tilde{A}$ with $\mathcal{L}^{d}(K) > 0$.  
	
	By Proposition \ref{P: regularization}, $\tilde{\varphi}^{a,\delta} \to \tilde{\varphi}^{a}$ pointwise in $\mathbb{R}^{d}$.  Since these functions are convex, the convergence is actually better than pointwise --- it is local uniform convergence.  From this, it is immediate that $D^{2} \tilde{\varphi}^{a,\delta} \overset{*}{\rightharpoonup} D^{2} \tilde{\varphi}^{a}$ in $C_{0}(\mathbb{R}^{d})^{*}$.  Thus, if $\xi \in S^{d - 1}$, we find
		\begin{align*}
			\int_{K} \langle D^{2}\tilde{\varphi}^{a}(dv) \xi, \xi \rangle &\geq \limsup_{\delta \to 0^{+}} \int_{K} \langle D^{2}\tilde{\varphi}^{a,\delta}(v) \xi, \xi \rangle \, dv \\
				&\geq c \limsup_{\delta \to 0^{+}} \int_{K} \tilde{M}^{a,\delta}(v) \, dv.
		\end{align*}  
	Recall from Proposition \ref{P: monotone mobility} that the function $\delta \mapsto \tilde{M}^{a,\delta}$ is non-increasing.  Therefore, by the monotone convergence theorem, 
		\begin{equation*}
			\lim_{\delta \to 0^{+}} \int_{K} \int_{\mathbb{R} \times \mathbb{T}^{d}} \partial_{s} U^{\delta}_{v}(s,x)^{2} \, dx \, ds \, dv = \int_{K} \int_{\mathbb{R} \times \mathbb{T}^{d}} \partial_{s} U_{v}(s,x)^{2} \, dx \, ds \, dv = \infty.
		\end{equation*}
	From this, we conclude that
		\begin{equation*}
			\int_{K} \langle D^{2}\tilde{\varphi}^{a}(dv)\xi, \xi \rangle = \infty.
		\end{equation*}
	However, this contradicts the fact that $D^{2}\tilde{\varphi}^{a}$ is a Radon measure.  \qed \end{proof}

\section{An Example in 2D}  \label{S: example_2D}

Given what we have proved in the previous section, it is natural to ask what happens as $\text{dist}(e, S^{k - 1} \times \{0\}) \to 0$.  This section is devoted to the study of a specific class of examples.  We will see that some of the natural regularity properties we might hope for actually break down as the angle between $e$ and the laminations tends to zero.  

\subsection{Class of matrix fields $a$}  Let $\delta, \kappa \in (0,\frac{1}{4})$ be free parameters to be determined below.  Let $a_{1} : \mathbb{T} \to \mathbb{R}$ be a periodic function satisfying
\begin{equation*}
a_{1}(x) = 1 \, \, \text{if} \, \, x \in \left[\kappa, \frac{1}{2} - \kappa\right], \quad a_{1}(x) = \delta \, \, \text{if} \, \, x \in \left[\frac{1}{2} + \kappa, 1 - \kappa\right]
\end{equation*}
and monotone in each interval in between.
We also assume that $a_{1}$ is symmetric with respect to reflections around $\frac{1}{4}$ and $\frac{3}{4}$, that is,
\begin{equation} \label{E: reflection}
a_{1} \left(x \right) = a_{1} \left( \frac{1}{4} + \left(\frac{1}{4} - x \right) \right), \quad a_{1} \left( x \right) = a_{1} \left( \frac{3}{4} + \left( \frac{3}{4} - x \right) \right).
\end{equation}

Let $a_{2} : \mathbb{T} \to (0,\infty)$ be any positive periodic function.  We will assume, for definitness, that, like $a_{1}$, $a_{2}$ satisfies $\delta \leq a_{2}(x) \leq 1$ for each $x \in \mathbb{T}$.  

Finally, define $a : \mathbb{T} \to \mathcal{S}_{2}$ by 
	\begin{equation} \label{E: matrix field}
		a(x) = a_{1}(x) e_{1} \otimes e_{1} + a_{2}(x) e_{2} \otimes e_{2}.
	\end{equation}
	
Lastly, again for definiteness, we will use $W(u) = \frac{1}{4}(1 - u^{2})^{2}$ in this section.
	
We will prove the following:

\begin{theorem} \label{T: counterexample} For each $\kappa \in (0,\frac{1}{4})$, there is a $\bar{\delta} > 0$ such that if $\delta \in (0,\bar{\delta})$, $a$ is given by \eqref{E: matrix field}, and $W(u) = \frac{1}{4}(1 - u^{2})^{2}$, then no minimizer in $\mathcal{M}_{e_{1}}(\mathbb{R} \times \mathbb{T})$ or $\mathcal{M}_{-e_{1}}(\mathbb{R} \times \mathbb{T})$ is continuous.  \end{theorem}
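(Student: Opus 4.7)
The plan is to prove that $\mathscr{E}^{a}(\pm e_{1}) \leq C\sqrt{\delta}$ for small $\delta$, whereas the 1D energy of any heteroclinic whose zero sits inside a high-$a_{1}$ interval is bounded below by a constant $c_{0} > 0$ independent of $\delta$. This traps the zero of every generator $u_{\zeta}$ of a minimizer away from the high-$a_{1}$ strips; a winding argument using the Birkhoff relation $u_{\zeta+1}(x_{1}) = u_{\zeta}(x_{1}-1)$ then prevents $\zeta \mapsto u_{\zeta}$ from being continuous, which by Proposition \ref{P: continuity of the minimizers}(vi) rules out continuity of $U$. Throughout I reduce to the one-dimensional problem: since $a$ depends only on $x_{1}$ and $e = \pm e_{1}$, each $u_{\zeta}$ depends only on $x_{1}$, and by Proposition \ref{P: plane_like_minimizers} it is (for a.e.\ $\zeta$) a Class A minimizer of $u \mapsto \int_{\mathbb{R}}(\tfrac{1}{2}a_{1}u'^{2}+W(u))\,dx_{1}$.

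For the upper bound I take a competitor built from a rescaled $\tanh$-type profile of width $\sqrt{\delta}$ centered in the middle of the low-$a_{1}$ interval $[\tfrac{1}{2}+\kappa, 1-\kappa]$ (where $a_{1}\equiv\delta$), letting the translation depend on $\zeta$ through a piecewise-constant, integer-valued function so the Birkhoff relation holds automatically. For $\sqrt{\delta}\ll\tfrac{1}{2}-2\kappa$ the transition fits inside the low-$a_{1}$ strip, and the standard equipartition calculation gives $\mathscr{T}^{a}_{e_{1}}(U) \leq C(\kappa, W)\sqrt{\delta}$, up to exponentially small tails outside the strip.

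The heart of the argument is a $\delta$-uniform lower bound: if $u: \mathbb{R}\to[-1,1]$ has asymptotes $\pm 1$ and $u(y_{0})=0$ for some $y_{0} \in [\kappa+n, \tfrac{1}{2}-\kappa+n]$ (a high-$a_{1}$ interval, on which $a_{1}\equiv 1$), then $\mathcal{F}^{a}_{1}(u)\geq c_{0}(\kappa, W)>0$. The pointwise Modica-type inequality $\tfrac{1}{2}a_{1}u'^{2}+W(u)\geq\sqrt{2 a_{1} W(u)}\,|u'|$, integrated over any sub-interval of $[\kappa+n, \tfrac{1}{2}-\kappa+n]$ on which $u$ runs from $0$ to $\pm 3/4$, contributes at least $\Phi(3/4) := \int_{0}^{3/4}\sqrt{2W(v)}\,dv > 0$; if instead $u$ remains in $[-3/4, 3/4]$ throughout the interval, then $W(u)\geq W(3/4)$ there, giving a $W$-contribution at least $W(3/4)(\tfrac{1}{2}-2\kappa)$. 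Setting $c_{0} = \min\{\Phi(3/4),\,W(3/4)(\tfrac{1}{2}-2\kappa)\}$ completes this step.

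Fix $\bar{\delta}$ with $C\sqrt{\bar{\delta}}<c_{0}$, take $\delta<\bar{\delta}$, and suppose for contradiction that some $U\in\mathcal{M}_{e_{1}}(\mathbb{R}\times\mathbb{T})$ is continuous. Proposition \ref{P: continuity of the minimizers}(vi) gives uniform continuity of $\zeta\mapsto u_{\zeta}$, and a limiting argument together with Propositions \ref{P: plane_like_minimizers} and \ref{P: surface tension} promotes both the Class A property and the identity $\mathcal{F}^{a}_{1}(u_{\zeta}) = \tilde{\varphi}^{a}(e_{1})$ from a.e.\ $\zeta$ to every $\zeta$; combined with the upper bound, $\mathcal{F}^{a}_{1}(u_{\zeta}) < c_{0}$ for every $\zeta$. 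The lower bound now forces every zero of every $u_{\zeta}$ to lie outside $\bigcup_{n} [\kappa+n, \tfrac{1}{2}-\kappa+n]$. By classical 1D double-well theory, each $u_{\zeta}$ is smooth and strictly increasing (a non-decreasing smooth Euler--Lagrange solution with asymptotes $\pm 1$ cannot have an interval of constancy), so its zero $t(\zeta)$ is unique; the transversality $u_{\zeta}'(t(\zeta))>0$ and uniform continuity of the family give continuity of $\zeta \mapsto t(\zeta)$, while Birkhoff yields $t(\zeta+1)=t(\zeta)+1$. Hence $t([0,1])=[t(0),t(0)+1]$ is an interval of length one, which must meet some $[\kappa+n, \tfrac{1}{2}-\kappa+n]$: the desired contradiction. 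The case $-e_{1}$ follows verbatim after interchanging $+1 \leftrightarrow -1$. The main obstacle is the $\delta$-uniform lower bound; the rest amounts to bookkeeping around the Birkhoff relation and the continuity characterization in Proposition \ref{P: continuity of the minimizers}.
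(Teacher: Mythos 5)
Your proposal is correct and rests on the same core idea as the paper's proof: an $O(\sqrt{\delta})$ upper bound on $\mathscr{E}^{a}(\pm e_{1})$ (via a $\tanh$ profile compressed to width $\sqrt{\delta}$ centred in the low-$a_{1}$ strip), an $O(1)$ lower bound for any heteroclinic whose zero lands in a high-$a_{1}$ strip, and the observation that continuity of $U$ would force some $u_{\zeta}$ to have such a zero. There are, however, two tactical differences. (1) For the lower bound, the paper exploits the imposed reflection symmetry of $a_{1}$ about $\tfrac{1}{4}$: it symmetrizes the competitor and then runs a case analysis with a free parameter $\zeta$; you instead use the pointwise Modica inequality $\tfrac{1}{2}a_{1}u'^{2}+W(u)\geq\sqrt{2a_{1}W(u)}\,|u'|$ together with a dichotomy at level $\pm 3/4$, which cleanly avoids the symmetry hypothesis and in fact gives the bound for a zero anywhere in a high-$a_{1}$ interval, not just at $\tfrac{1}{4}$. (2) For the final contradiction, the paper applies the intermediate value theorem directly to the continuous map $\zeta\mapsto u_{\zeta}(\tfrac{1}{4})$, whose limits as $\zeta\to\mp\infty$ are $\pm 1$, producing a $\zeta_{*}$ with $u_{\zeta_{*}}(\tfrac{1}{4})=0$; you instead track a zero $t(\zeta)$ of $u_{\zeta}$, argue continuity and the relation $t(\zeta+1)=t(\zeta)+1$, and invoke a winding argument. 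That route works, but it is longer and quietly invokes strict monotonicity of $u_{\zeta}$ in $x_{1}$, which you assert as ``classical 1D double-well theory'' without addressing why the $a_{1}$-heterogeneity doesn't destroy it; this can be sidestepped (e.g.\ $t(\zeta)$ can be taken to be the smallest zero, whose transversality follows from ODE uniqueness since $W'(0)=0$, so the count of zeros is locally constant and $t$ is continuous), but the paper's direct IVT at the fixed point $\tfrac{1}{4}$ avoids the issue entirely. Your proof is therefore correct, slightly more robust in the lower bound, and slightly less economical in the conclusion.
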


In the language of Aubry-Mather theory, we prove that the sets of plane-like minimizers in the directions $e_{1}$ and $-e_{1}$ have gaps.  Notice that, by our assumptions, we only need to prove this for the direction $e_{1}$ and then the $-e_{1}$ case will follow by symmetry.

For completeness, we show how Theorem \ref{T: counterexample} implies Theorem \ref{T: non_integrable}.

\begin{proof}[Proof of Theorem \ref{T: non_integrable}]  To see that there may not be continuous minimizers in dimension $d = 1$, let $a_{0} : \mathbb{T} \to (0,\infty)$ be given by $a_{0} = a_{1}$, with $a_{1}$ defined as above, and set $a_{2} = a_{1}$.  Notice that the one-dimensional minimizers $\mathcal{M}_{\pm e_{1}}^{a_{0}}(\mathbb{R} \times \mathbb{T})$ for $a_{0}$ correspond to the minimizers $\mathcal{M}_{\pm e_{1}}^{a}(\mathbb{R} \times \mathbb{T})$ for the function $a$ of \eqref{E: matrix field} in two-dimensions  since Proposition \ref{P: laminar symmetry} implies the latter depend only on the first coordinate.  Hence Theorem \ref{T: counterexample} implies that $\mathcal{M}_{e_{1}}^{a_{0}}(\mathbb{R} \times \mathbb{T}) \cup \mathcal{M}_{-e_{1}}^{a_{0}}(\mathbb{R} \times \mathbb{T})$ consists of discontinuous functions.

Next, suppose that $d \in \mathbb{N}$ is arbitrary.  Once again, let $a_{0} = a_{1} = a_{2}$.  Let $a : \mathbb{T}^{d} \to \mathcal{S}_{d}$ be the matrix field $a(x) = a_{0}(x_{1})$.  Observe that, by Proposition \ref{P: laminar symmetry}, if $U \in \mathcal{M}^{a}_{e_{1}}(\mathbb{R} \times \mathbb{T}^{d}) \cup \mathcal{M}^{a}_{-e_{1}}(\mathbb{R} \times \mathbb{T})$, then $U(x) = \tilde{U}(x_{1})$ for some $\tilde{U} \in \mathcal{M}^{a_{0}}_{e_{1}}(\mathbb{R} \times \mathbb{T}) \cup \mathcal{M}^{a_{0}}_{-e_{1}}(\mathbb{R} \times \mathbb{T})$.  By the one-dimensional result of the previous paragraph, $\tilde{U}$ is discontinuous, and hence $U$ is also.     \qed \end{proof} 

\subsection{Gaps} \label{S: gaps}  To prove Theorem \ref{T: counterexample}, we will start by showing that there is no plane-like minimizer of \eqref{E: functional} satisfying $u(\frac{1}{4}) = 0$.  We start by proving a lower bound on the energy of an arbitrary front-like function $u$ with $u \left(\frac{1}{4} \right) =0$.  In particular, we show this is $O(1)$.  We then show that it is possible to find a front-like function with $u \left(\frac{3}{4}\right) = 0$ for which the energy is on the order of $\sqrt{\delta}$ provided $\delta \ll 1$.  

We start with the lower bound:

\begin{prop} \label{P: O_1_estimate}  If $u : \mathbb{R} \to [-1,1]$ satisfies $u \left(\frac{1}{4} \right) = 0$ and $\lim_{x \to \pm \infty} u(x) = \pm 1$, then there is a universal constant $\sigma_{\kappa} > 0$ depending only on $W$ and $\kappa$ such that
\begin{equation*}
\int_{-\infty}^{\infty} \left( \frac{a(x) u'(x)^{2}}{2} + W(u(x)) \right) \, dx \geq \sigma_{\kappa}.
\end{equation*}   \end{prop}  

The proof is inspired by an idea appearing in the lecture notes of Alberti \cite{alberti guide}.  To lighten the notation, we will write
\begin{equation*}
\mathcal{F}^{a}_{1}(u;[a,b]) = \int_{a}^{b} \left( \frac{a(x) u'(x)^{2}}{2} + W(u(x)) \right) \, dx
\end{equation*}
if $u : \mathbb{R} \to [-1,1]$ is any function and $a < b$.  

\begin{proof}  First, we make the following observation:
\begin{equation*}
\mathcal{F}^{a}_{1}(u; \mathbb{R}) \geq 2 \min \left\{ \mathcal{F}^{a}_{1}\left(u; \left(-\infty,\frac{1}{4}\right)\right), \mathcal{F}^{a}_{1}\left(u; \left(\frac{1}{4},\infty\right)\right) \right\}.
\end{equation*}
Assume without loss of generality that $\mathcal{F}^{a}_{1}(u; (-\infty,\frac{1}{4})) \geq \mathcal{F}^{a}_{1}(u; (\frac{1}{4},\infty))$.  It follows that if we define $u_{\text{sym}}$ by 
\begin{equation*}
u_{\text{sym}}(x) = \left\{ \begin{array}{r l}
					u(x), & x \geq \frac{1}{4}, \\
					-u \left(\frac{1}{4} + \left(\frac{1}{4} -x\right) \right), & x \leq \frac{1}{4},
					\end{array} 
					\right.
\end{equation*}
then $u_{\text{sym}}\left(\frac{1}{4}\right) = 0$ and, by the symmetry properties of $a$ and $W$, 
\begin{equation*}
\mathcal{F}^{a}_{1}(u_{\text{sym}}; \mathbb{R}) = 2 \mathcal{F}^{a}_{1} \left(u; \left(\frac{1}{4},\infty\right)\right) \leq \mathcal{F}^{a}_{1}(u; \mathbb{R}).
\end{equation*}  
Thus, we can assume that $u(\frac{1}{4} + (\frac{1}{4}-x)) = -u(x)$ in what follows.  

Let $\zeta \in (0,1)$ be a free parameter.  There are two cases to check: 
\begin{itemize}
\item[(i)] $|u (\bar{x})| \geq 1 - \zeta$ for some $\bar{x} \in \left(\frac{1}{4}, \frac{1}{2} - \kappa\right]$,
\item[(ii)] $|u| < 1 - \zeta$ in $\left(\frac{1}{4}, \frac{1}{2} - \kappa\right]$.
\end{itemize}

Consider case (i) first.  Notice that $|u \left(\frac{1}{4} + (\frac{1}{4} - \bar{x}) \right)| \geq 1 - \zeta$ by symmetry.  We will estimate $\mathcal{F}^{a}_{1}(u; \left[\frac{1}{2} - \bar{x}, \bar{x}\right])$ by extending $u$ to a function on $\mathbb{R}$ in a controlled way and then taking advantage of what we know about the energy when $a \equiv 1$.  

First, assume that $u(\bar{x}) \geq 1 - \zeta$.  Note that $u(\frac{1}{4} + (\frac{1}{4} - \bar{x})) \leq -1 + \zeta$.  Define $\bar{u} : \mathbb{R} \to [-1,1]$ by 
\begin{equation*}
\left\{\begin{array}{l l}
\bar{u}(x) = -1  &\quad \text{if} \, \, x \in (-\infty, \frac{1}{2} - (\bar{x} + \zeta)], \\
\bar{u}(x) = u(x) &\quad \text{if} \, \, x \in \left[\frac{1}{2} - \bar{x}, \bar{x}\right], \\
\bar{u}(x) = 1 &\quad \text{if} \, \, x \in [\bar{x} + \zeta, \infty),
\end{array} \right.
\end{equation*}
and interpolating linearly in between.  (Note that $\frac{1}{2} - y = \frac{1}{4} + ( \frac{1}{4} - y)$.)  If we momentarily replace $a$ by $1$, we have
\begin{align*}
\int_{-\infty}^{\infty} \left(\frac{\bar{u}'(x)^{2}}{2} + W(\bar{u}(x)) \right) \, dx &\leq \mathcal{F}^{a}_{1} \left(u; \left[\frac{1}{2} - \bar{x}, \bar{x}\right]\right) + 2\zeta \left(\left(\frac{1 - u(\bar{x})}{\zeta}\right)^{2} \right. \\
	&\quad  + \max\{W(u) \, \mid \, u(\bar{x}) \leq u \leq 1\} \Bigg) \\
	&\leq \mathcal{F}^{a}_{1} \left(u; \left[\frac{1}{2} - \bar{x}, \bar{x}\right]\right) \\
	&\quad+ 2(1 + \max\{W(u) \, \mid \, 0 \leq u \leq 1\})\zeta.
\end{align*}
On the other hand, we know that the left-hand side is bounded below by classical arguments.  Specifically, we obtain (cf.\ \cite{alberti guide})
\begin{align*}
\int_{-\infty}^{\infty} \left(\frac{\bar{u}'(x)^{2}}{2} + W(\bar{u}(x)) \right) \, dx 
		&\geq \int_{-1}^{1} \sqrt{2W(u)} \, du.
\end{align*}
Putting it all together, we find
\begin{equation} \label{E: energy_case_i}
\mathcal{F}^{a}_{1}\left(u; \left[\kappa, \frac{1}{2} - \kappa\right]\right) \geq \int_{-1}^{1} \sqrt{2W(u)} \, du - C\zeta =: f(\zeta),
\end{equation}
where $C = 2(1 + \max\{W(u) \, \mid \, 0 \leq u \leq 1\})$. 

If instead we had $u(\bar{x}) \leq -1 + \zeta$, then we could repeat the previous computation defining $\bar{u}$ instead by
\begin{equation*}
	\left\{\begin{array}{l l}
		\bar{u}(x) = 1 &\quad \text{if} \, \, x \in (-\infty, \frac{1}{2} - (\bar{x} + \zeta)], \\
		\bar{u}(x) = u(x) &\quad \text{if} \, \, x \in \left[\frac{1}{2} - \bar{x}, a\right], \\
		\bar{u}(x) = -1 &\quad \text{if} \, \, x \in [\bar{x} + \zeta, \infty),
	\end{array} \right.
\end{equation*}
and interpolating linearly in between.  (The fact that $\lim_{x \to \pm \infty} \bar{u}(x) = \mp 1$ is not relevant where the estimation of the energy is concerned.)  Therefore, in case (i), estimate \eqref{E: energy_case_i} holds.

Now consider case (ii).  Since $|u| < 1 - \zeta$ in $\left(\frac{1}{4}, \frac{1}{2} + \kappa\right]$ and $u$ is anti-symmetric about $\frac{1}{4}$, it follows that $|u| < 1 - \zeta$ in $[\frac{1}{2} - \kappa, \frac{1}{2} + \kappa]$.  Thus, we obtain the following trivial bound:
\begin{align*}
\mathcal{F}^{a}_{1}\left(u; \left[\kappa, \frac{1}{2} - \kappa\right]\right) &\geq \min \left\{ W(u) \, \mid \, -(1 - \zeta) < u < 1 + \zeta \right\} \left(\frac{1}{2} - 2 \kappa \right) \\
&=: g(\zeta).
\end{align*}

To conclude, we pick $\zeta_{\kappa} > 0$ so small that $f(\zeta_{\kappa}) > 0$ and then we set 
\begin{equation*}
\sigma_{\kappa} = \min \left\{f(\zeta_{\kappa}),g(\zeta_{\kappa}) \right\}.
\end{equation*}
Finally, we have
$
\mathcal{F}^{a}_{1}(u; \mathbb{R}) \geq \mathcal{F}^{a}_{1}\left(u; \left[\kappa, \frac{1}{2} - \kappa\right]\right) \geq \sigma_{\kappa}$.
 \qed \end{proof}  

Now we show that it is possible to get a better energy than in the previous result.

\begin{prop} \label{P: root_delta}  For each $\kappa \in (0,\frac{1}{4})$, as $\delta \to 0^{+}$, there is a $u^{\delta} : \mathbb{R} \to [-1,1]$ satisfying $\lim_{x \to \pm 1} u(x) = \pm 1$ such that
\begin{equation*}
\mathcal{F}^{a}_{1}(u^{\delta}; \mathbb{R}) \leq \sqrt{\delta} \left(\int_{-1}^{1} \sqrt{W(u)} \, du + o(1)\right).
\end{equation*}
  \end{prop}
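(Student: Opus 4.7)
\medskip

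\noindent\textbf{Proof plan for Proposition \ref{P: root_delta}.}
The plan is to place a rescaled version of the classical one--dimensional standing wave entirely inside the low-conductivity window $[\tfrac12+\kappa,\, 1-\kappa]$, on which $a_1 \equiv \delta$. Because this window has length $\geq \tfrac12 - 2\kappa$, a profile whose characteristic width is of order $\sqrt{\delta}$ will fit inside once $\delta$ is small, and the change of variables $y=(x-\tfrac34)/\sqrt{\delta}$ turns $\mathcal{F}^{a}_1$ into $\sqrt{\delta}$ times the classical action functional in the $y$ variable. This is the source of the $\sqrt{\delta}$ gain over the $O(1)$ lower bound of Proposition \ref{P: O_1_estimate}.

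\smallskip

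\noindent First I would fix a model profile $q_0 \in C^\infty(\mathbb{R};[-1,1])$, namely the unique (up to translation) monotone solution of $q_0'' = W'(q_0)$ with $q_0(\pm\infty) = \pm 1$. Standard ODE arguments give the first-integral identity $q_0'(y) = \sqrt{2W(q_0(y))}$, the equipartition
\[
 \int_{-\infty}^{\infty} \left( \tfrac12 q_0'(y)^2 + W(q_0(y)) \right) dy \;=\; \int_{-1}^{1} \sqrt{2W(u)}\,du,
\]
and exponential decay of $1 \mp q_0(y)$ as $y \to \pm\infty$ (since $W''(\pm 1) > 0$). Next I would introduce a truncated profile $\tilde q_L$: equal to $q_0$ on $[-L,L]$, equal to $\pm 1$ outside $[-2L,2L]$, monotone and valued in $[-1,1]$ on $[L,2L]$ and $[-2L,-L]$, obtained by a linear interpolation between $q_0(\pm L)$ and $\pm 1$. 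By the exponential decay of $q_0$ and the smoothness of $W$, one checks
\[
 \int_{-\infty}^{\infty} \left( \tfrac12 \tilde q_L'(y)^2 + W(\tilde q_L(y)) \right) dy \;=\; \int_{-1}^{1} \sqrt{2W(u)}\,du + o(1) \quad \text{as } L \to \infty.
\]

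\smallskip

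\noindent Then I set $u^{\delta}(x) = \tilde q_{L(\delta)}\bigl((x-\tfrac34)/\sqrt{\delta}\bigr)$, choosing $L(\delta) \to \infty$ slowly enough that $2\sqrt{\delta}\, L(\delta) < \tfrac14 - \kappa$ (e.g.\ $L(\delta) = \delta^{-1/4}$). With this choice, the set where $u^{\delta}$ is non-constant lies inside $[\tfrac12 + \kappa,\, 1-\kappa]$, on which $a_1 \equiv \delta$. Performing the change of variables $y = (x-\tfrac34)/\sqrt{\delta}$ yields
\[
 \mathcal{F}^{a}_1(u^{\delta};\mathbb{R}) \;=\; \int_{-\infty}^{\infty} \left( \frac{\delta}{2\delta}\, \tilde q_{L(\delta)}'(y)^2 + W(\tilde q_{L(\delta)}(y)) \right) \sqrt{\delta}\, dy \;=\; \sqrt{\delta}\left( \int_{-1}^{1} \sqrt{2W(u)}\,du + o(1) \right),
\]
which is the claimed upper bound (up to the normalization convention under which the classical 1D transition cost is written). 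One should also arrange, by an elementary modification, that $u^{\delta}(x) \to \pm 1$ as $x \to \pm\infty$ in the strict sense required by the statement.

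\smallskip

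\noindent There is no genuine obstacle here; the construction is a routine exercise once one identifies the right scaling. The only point requiring minor care is the choice of $L(\delta)$: it must tend to infinity fast enough to kill the truncation error but slowly enough that $\sqrt{\delta}\,L(\delta) \to 0$, so that the transition region remains inside the plateau $\{a_1 = \delta\}$. The scaling $L(\delta) = \delta^{-1/4}$ satisfies both requirements.
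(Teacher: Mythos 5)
Your proof is correct and follows essentially the same idea as the paper's: rescale a one-dimensional standing wave by $\sqrt{\delta}$, center it at $3/4$ so that the transition layer sits inside the low-conductivity plateau $\{a_1 = \delta\}$, and change variables to extract the factor $\sqrt{\delta}$. The only structural difference is that the paper uses the untruncated $\tanh$ profile directly (exploiting that for $W = \frac14(1-u^2)^2$ this is the exact standing wave) and then estimates the contribution from the tails lying outside $[\tfrac12+\kappa,1-\kappa]$ by the exponential decay of $\operatorname{sech}^2$, whereas you truncate the profile so the gradient is supported inside the plateau and absorb the truncation cost into the $o(1)$. Both approaches yield the same bound with the same amount of work; your truncated version has the slight advantage of making the recovery sequence compactly supported modulo constants and of handling general $W$ in one pass (which the paper relegates to a remark immediately after the proposition). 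One small cosmetic point: the equipartition identity produces $\int_{-1}^1 \sqrt{2W(u)}\,du$, while the proposition's statement writes $\int_{-1}^1\sqrt{W(u)}\,du$; the paper has the same constant mismatch (it is harmless for the $O(\sqrt{\delta})$ versus $O(1)$ dichotomy being established), and you correctly flag that the precise constant is convention-dependent.
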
  

\begin{proof}  Define $u : \mathbb{R} \to \mathbb{R}$ by 
\begin{equation*}
u(x) = \tanh \left(\frac{x - \frac{3}{4}}{\sqrt{\delta}} \right).
\end{equation*}
Since $u$ is a minimal front-like stationary solution for \eqref{E: main} with $a$ replaced by $\delta \text{Id}$, we already have the following result to work with:
\begin{equation*}
\int_{-\infty}^{\infty} \left( \frac{\delta u'(x)^{2}}{2} + W(u(x)) \right) \, dx = \sqrt{\delta} \int_{-1}^{1} \sqrt{2W(u)} \, du.
\end{equation*}
Thus, compensating for the error introduced by changing $a$, we find
\begin{align*}
\mathcal{F}^{a}_{1}(u; \mathbb{R}) &\leq \sqrt{\delta} \int_{-1}^{1} \sqrt{W(u)} \, du + \frac{(1 - \delta)}{2} \int_{-\infty}^{\frac{1}{2} + \kappa} u'(x)^{2} \, dx \\
	&\quad+ \frac{(1 - \delta)}{2} \int_{1 - \kappa}^{\infty} u'(x)^{2} \, dx \\
	&\leq \sqrt{\delta} \int_{-1}^{1} \sqrt{W(u)} \, du + (1 - \delta) \int_{1 - \kappa}^{\infty} e^{- \frac{2x}{\sqrt{\delta}}} \, dx \\
	&= \sqrt{\delta} \int_{-1}^{1} \sqrt{W(u)} \, du + (1 - \delta) \sqrt{\delta} \exp\left(-\frac{2(1 - \kappa)}{\sqrt{\delta}}\right).
\end{align*}
Since $(1 - \delta) \sqrt{\delta} \exp\left(-\frac{2(1 - \kappa)}{\sqrt{\delta}}\right) = o(\sqrt{\delta})$ as $\delta \to 0^{+}$, the result follows.  \qed
\end{proof}  

\begin{remark}  If $W(u) = \frac{1}{4}(1 - u^{2})^{2}$ is replaced by any other potential satisfying \eqref{A: W_assumption_1} and \eqref{A: W_assumption_2}, then Proposition \ref{P: root_delta} still holds, but it is necessary to replace the hyperbolic tangent by the standing wave solution of $-u'' + W'(u) = 0$ in $\mathbb{R}$.  By \eqref{A: W_assumption_2} and Schauder estimates, this function enjoys the same exponential decay properties that were used to establish the $\sqrt{\delta}$ estimate.  \end{remark}  

Putting Propositions \ref{P: root_delta} and \ref{P: O_1_estimate} together, we see that if $\delta$ is sufficiently small (depending on $\kappa$), then there is no minimal front-like stationary solution $u$ of \eqref{E: main} satisfying $u \left(\frac{1}{4} \right) = 0$.

Now we conclude the proof:

\begin{proof}[Proof of Theorem \ref{T: counterexample}]  By the symmetry assumptions on $a$, we know that $U \in \mathcal{M}_{-e_{1}}(\mathbb{R} \times \mathbb{T})$ if and only if the function $\tilde{U}(s,x) = U(s,\frac{1 - 2x}{2})$ satisfies $\tilde{U} \in \mathcal{M}_{e_{1}}(\mathbb{R} \times \mathbb{T})$.  Thus, we only need to study $\mathcal{M}_{e_{1}}(\mathbb{R} \times \mathbb{T})$.  

Suppose $U \in \mathcal{M}_{e_{1}}(\mathbb{R} \times \mathbb{T})$ is in $C(\mathbb{R} \times \mathbb{T})$.  Let $\{u_{\zeta}\}_{\zeta \in \mathbb{R}}$ be the functions generated by $U$.  By assumption, the function $\zeta \mapsto u_{\zeta}(\frac{1}{4})$ is continuous.  Moreover, $\lim_{\zeta \to -\infty} u_{\zeta}(\frac{1}{4}) = 1$ and $\lim_{\zeta \to \infty} u_{\zeta}(\frac{1}{4}) = -1$.  Thus, there is a $\zeta_{*} \in \mathbb{R}$ such that $u_{\zeta_{*}}(\frac{1}{4}) = 0$.     

On the other hand, since $U$ is continuous, Proposition \ref{P: continuity of the minimizers} implies $u_{\zeta_{*}}$ is a plane-like minimizer of $\mathcal{F}^{a}_{1}$.  In particular, if $u^{\delta}$ is the function obtained in Proposition \ref{P: root_delta} and $\sigma_{\kappa} > 0$ is the constant from Proposition \ref{P: O_1_estimate}, then
	\begin{equation*}
		\sigma_{\kappa} \leq \mathcal{F}^{a}_{1}(u_{\zeta_{*}}; \mathbb{R}) \leq \mathcal{F}^{a}_{1}(u^{\delta}; \mathbb{R}).
	\end{equation*}  
This is a contradiction if $\delta > 0$ is chosen small enough.  \qed \end{proof}  

\subsection{Non-differentiability at $\pm e_{1}$} \label{S: not differentiable}  Now we show that $\tilde{\varphi}^{a}$ is not differentiable at $e_{1}$ (nor, by symmetry, at $-e_{1}$).  To start with, it will be useful in what follows to utilize so-called heteroclinic minimizers located inside the gaps of the one-dimensional ones.  Since we are working in a laminar medium in $\mathbb{R}^{2}$, the structure of these heteroclinic solutions is particularly simple.  A much more general treatment can be found in \cite{minimal_laminations}.

In the rest of this section, we will write $(x,y)$ for points in $\mathbb{R}^{2}$ with $\langle (x,y), e_{1} \rangle = x$ and $\langle (x,y), e_{2} \rangle = y$.  Moreover, for $e \in S^{1} \setminus \{e_{1},-e_{1}\}$, we will let $\{u^{e}_{\zeta}\}_{\zeta \in \mathbb{R}}$ denote the family of functions generated by $U_{e}$, where $U_{e}$ is the unique minimizer in $\mathcal{M}_{e}(\mathbb{R} \times \mathbb{T})$ with $\int_{\mathbb{T}} U_{e}(0,x) \, dx = 0$.  

\begin{prop} \label{P: tertiary_foliation}  If $(\nu_{n})_{n \in \mathbb{N}} \subseteq S^{1} \setminus \{e_{1},-e_{1}\}$ and $(\zeta_{n})_{n \in \mathbb{N}} \subseteq \mathbb{R}$ satisfy, for each $n \in \mathbb{N}$,
	\begin{itemize}
		\item[(i)] $\langle \nu_{n},e_{2} \rangle > 0$ (resp.\ $\langle \nu_{n}, e_{2} \rangle < 0$),
		\item[(ii)] $u^{\nu_{n}}_{\zeta_{n}}(\frac{1}{4},0) = 0$,
	\end{itemize}
and if $\lim_{n \to \infty} \nu_{n} = e_{1}$, then there is a subsequence $(n_{j})_{j \in \mathbb{N}}$ and a Class A minimizer $u$ of $\mathcal{F}^{a}_{1}$ such that $u = \lim_{j \to \infty} u^{\nu_{n_{j}}}_{\zeta_{n_{j}}}$ locally uniformly in $\mathbb{R}^{2}$ and
	\begin{align*}
		u(x + ke_{1},y) &\geq u (x,y) \quad \text{(resp.} \, \, u(x+ ke_{1},y) \leq u(x,y)\text{)} \quad \text{if} \, \, k \in \mathbb{N}, \\
		u(x,y + \delta) &> u(x,y) \quad \text{(resp.} \, \, u(x,y + \delta) < u(x,y)\text{)} \quad \text{if} \, \, \delta > 0, \\
		\lim_{x \to \pm \infty} u(x,y) &= \pm 1 \quad \text{(resp.} \lim_{x \to \pm \infty} u(x,y) = \mp 1\text{)}.
	\end{align*}  
\end{prop}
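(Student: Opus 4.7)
The plan is to extract a limit via Arzel\`{a}-Ascoli, identify it as a Class A minimizer, pass to the limit in the Birkhoff inequalities and in the direct cylindrical formula to recover the one-sided monotonicities, and finally upgrade the $e_2$-monotonicity to a strict inequality using the strong maximum principle together with the gap established in Section \ref{S: gaps}.

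Writing $e_n = \cos\theta_n \, e_1 + \sin\theta_n \, e_2$ with $\theta_n \to 0$, each $u^{e_n}_{\zeta_n}$ is a Class A minimizer solving $-\operatorname{div}(a\,Du) + W'(u) = 0$, so Schauder estimates under \eqref{A: a_assumption_2} and \eqref{A: W_assumption_2} yield uniform $C^{2,\alpha}_{\mathrm{loc}}(\mathbb{R}^{2})$ bounds. A subsequence therefore converges in $C^{2}_{\mathrm{loc}}$ to some $u \in C^{2}(\mathbb{R}^{2};[-1,1])$, which is itself a Class A minimizer of $\mathcal{F}^{a}_{1}$ by the standard stability under local convergence (cf.\ Proposition \ref{P: plane_like_minimizers}). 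Proposition \ref{P: bounded_width_for_minimizers} supplies uniform exponential bounds $|u^{e_n}_{\zeta_n} \mp 1| \leq C e^{\mp\nu(\langle x, e_n \rangle - \zeta_n)}$ that are $n$-independent; combined with the normalization $u^{e_n}_{\zeta_n}(1/4,0) = 0$, these force $(\zeta_n)$ to be bounded and, passed to the limit, yield $\lim_{x \to \pm\infty} u(x,y) = \pm 1$.

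For the $e_1$-Birkhoff property, for any $k \in \mathbb{N}$ we have $\langle (k,0), e_n\rangle = k\cos\theta_n > 0$ once $n$ is large, so Corollary \ref{C: new_existence}\,(ii)--(iii) gives $u^{e_n}_{\zeta_n}(\,\cdot\, + ke_1) \geq u^{e_n}_{\zeta_n}$, and this passes to the limit. For the $e_2$-monotonicity in case (i), exploit the cylindrical formula
\[
u^{e_n}_{\zeta_n}(x,y) \;=\; U_{e_n}\bigl(x \cos\theta_n + y \sin\theta_n - \zeta_n,\; x\bigr),
\]
which together with $\partial_s U_{e_n} \geq 0$ yields $\partial_y u^{e_n}_{\zeta_n} = \sin\theta_n \, \partial_s U_{e_n} \geq 0$; then $C^{1}_{\mathrm{loc}}$ convergence gives $\partial_y u \geq 0$. (Case (ii) is identical with the sign of $\sin\theta_n$ reversed.)

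To upgrade to strict monotonicity in $e_2$, observe that since $a$ is independent of $y$, differentiating the Euler--Lagrange equation in $y$ shows $w := \partial_y u$ satisfies a uniformly elliptic linear equation with bounded coefficients; the strong maximum principle thus gives the dichotomy $w \equiv 0$ or $w > 0$ everywhere. In the former case $u$ would be $y$-independent and, by a standard $2D$-to-$1D$ reduction (testing $2D$ Class A minimality against perturbations $g(x)\chi(y)/L$ and letting $L \to \infty$), a $1D$ Class A minimizer of $\mathcal{F}^{a}_{1}$ with $u(1/4) = 0$. Proposition \ref{P: O_1_estimate} then forces $\mathcal{F}^{a}_{1}(u;\mathbb{R}) \geq \sigma_\kappa$, while truncating $u^\delta$ from Proposition \ref{P: root_delta} against $u$ outside a large interval and invoking $1D$ Class A minimality gives $\mathcal{F}^{a}_{1}(u;\mathbb{R}) \leq \mathcal{F}^{a}_{1}(u^\delta;\mathbb{R}) \leq O(\sqrt{\delta}) < \sigma_\kappa$ for $\delta$ small, contradicting the gap. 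Hence $\partial_y u > 0$ everywhere, giving the strict inequality. The main obstacle is precisely this last step: the strong maximum principle alone only delivers the dichotomy, and ruling out $y$-independence requires both the $1D$ reduction of plane-like minimizers and the sharp quantitative gap from Section \ref{S: gaps}.
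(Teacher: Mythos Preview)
Your proof is correct and follows essentially the same route as the paper: Arzel\`{a}--Ascoli plus elliptic regularity for compactness, the Birkhoff inequality and the cylindrical formula $\partial_y u^{e_n}_{\zeta_n} = \sin\theta_n\,\partial_s U_{e_n}$ for the monotonicities, and the strong maximum principle dichotomy on $\partial_y u$ resolved via the gap from Section~\ref{S: gaps}. The only real difference is in how you obtain $\lim_{x\to\pm\infty}u(x,y)=\pm 1$: you invoke the uniform exponential bounds of Proposition~\ref{P: bounded_width_for_minimizers} directly (which also gives boundedness of $(\zeta_n)$), whereas the paper instead passes to a subsequential limit $U_{e_n}\to U\in\mathcal{M}_{e_1}(\mathbb{R}\times\mathbb{T})$ and reads the asymptotics off that; both arguments are valid. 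You also spell out the 2D-to-1D reduction that the paper leaves implicit when it says the $y$-independent case ``would contradict Theorem~\ref{T: counterexample}.''
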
  

	\begin{proof}  To start with, assume that $\langle \nu_{n},e_{2} \rangle > 0$ independently of $n$.  By the Arzel\`{a}-Ascoli Theorem and elliptic regularity, if $(n_{j})_{j \in \mathbb{N}} \subseteq \mathbb{N}$ is any subsequence, then there is a further subsequence $(n_{j_{k}})_{k \in \mathbb{N}}$ and a Class A minimizer $u$ of $\mathcal{F}^{a}_{1}$ such that $u = \lim_{k \to \infty} u^{\nu_{n_{j_{k}}}}_{\zeta_{n_{j_{k}}}}$ locally uniformly.  
	
	Note, on the other hand, that, passing to another sub-sequence if necessary, $U_{e_{n}} \to U$ pointwise as $n \to \infty$, where $U \in \mathcal{M}_{e_{1}}(\mathbb{R} \times \mathbb{T})$.  (This follows from the compactness result proved in Section \ref{S: existence}.)  Using this and the monotonicity of the families $\{u^{\nu_{n}}_{\zeta}\}_{\zeta \in \mathbb{R}}$, it is not hard to show that $\lim_{\langle x,e_{1} \rangle \to \pm \infty} u(x) = \pm 1$ uniformly in $\langle e_{1} \rangle^{\perp}$.  
	
	If $k \in \mathbb{N}$, then $\langle ke_{1}, \nu_{n} \rangle > 0$ for large enough $n$.  Thus, $u(x + ke_{1},y) \geq u(x,y)$.  Similarly, if $k \in - \mathbb{N}$, then $u(x + k,y) \leq u(x,y)$.  In view of what was proved in the previous paragraph, the inequality is strict if $|k| > 0$.  
	
	Now recall that we can write
		\begin{equation*}
			u^{\nu_{n}}_{\zeta_{n}}(x,y) = U_{\nu_{n}}(x \langle \nu_{n}, e_{1} \rangle + y \langle \nu_{n}, e_{2} \rangle - \zeta, x).
		\end{equation*}
	Thus, $(u^{\nu_{n}}_{\zeta_{n}})_{y} = \langle \nu_{n},e_{2} \rangle \partial_{s} U_{\nu_{n}} > 0$ in $\mathbb{R}^{2}$.  Therefore, since $u^{\nu_{n}}_{\zeta_{n}} \to u$ locally uniformly, it follows that $u_{y} \geq 0$.  Finally, observe that $v = u_{y}$ satisfies $- \text{div}(a(x) Dv) + W''(u) v = 0$ in $\mathbb{R}^{2}$.  Thus, by the strong maximum principle (cf.\ \cite[Corollary A.3]{valdinoci de la llave}), either $v > 0$ or $v \equiv 0$.
	
	If $u_{y} \equiv 0$, then $u = u(x)$ and then the fact that $u(\frac{1}{4}) = 0$ and $u$ is a Class A minimizer heteroclinic between $1$ and $-1$ would contradict Theorem \ref{T: counterexample}.  Therefore, $u_{y} > 0$ in $\mathbb{R}^{2}$. \qed \end{proof}  
	    
At this point, we will want to dig deeper into the properties of the minimizers $\{u^{e}_{\zeta}\}_{\zeta \in \mathbb{R}}$ generated by $U_{e}$ with $e \in S^{1} \setminus \{e_{1},-e_{1}\}$.  

Notice that $u^{e}_{\zeta}(x,y) = U_{e}(x \langle e, e_{1} \rangle + y \langle e,e_{2} \rangle - \zeta, x)$.  From this, we see that 
	\begin{equation*}
		u^{e}_{\zeta}(x,y) = u^{e}_{0}(x,y -\langle e,e_{2} \rangle^{-1}\zeta).
	\end{equation*} 
In particular, the functions $\{u^{e}_{\zeta}\}_{\zeta \in \mathbb{R}}$ are generated by translation in the $y$ variable.  

Next, in connection with Remark \ref{R: symmetry birkhoff}, observe that $\{u^{e}_{\zeta}\}_{\zeta \in \mathbb{R}}$ is periodic with respect to a finer lattice than the module $M_{e}$ defined in Section \ref{S: preliminary_analysis}.  In this simple, two-dimensional setting, we can simply observe that
	\begin{align*}
		u^{e}_{\zeta} \left(x + 1, y - \frac{\langle e, e_{1} \rangle}{\langle e, e_{2} \rangle}\right)
		&= U_{e}(x \langle e, e_{1} \rangle + y \langle e - \zeta, e_{2} \rangle, x) = u^{e}_{\zeta}(x,y).
	\end{align*}
From this, it is convenient to define $I_{e}$ (analogous to $Q_{e}$ in Section \ref{S: preliminary_analysis}) by
	\begin{equation}
		I_{e} = \left\{s \left(1,- \langle e, e_{2} \rangle ^{-1} \langle e, e_{1} \rangle \right) \, \mid \, s \in [0,1]\right\}.
	\end{equation} 
	
Now we show that $\tilde{\varphi}^{a}$ is not differentiable at $\{e_{1},-e_{1}\}$ in this set-up:
	
\begin{prop} \label{P: non-differentiability} If we define $e_{\theta} = \cos(\theta) e_{1} + \sin(\theta)e_{2}$, then 
	\begin{equation*}
		\lim_{\theta \to 0^{+}} \langle D \tilde{\varphi}^{a}(e_{\theta}),e_{2} \rangle > 0, \quad \lim_{\theta \to 0^{-}} \langle D \tilde{\varphi}^{a}(e_{\theta}), e_{2} \rangle < 0.
	\end{equation*}  
In particular, $\tilde{\varphi}^{a}$ is not differentiable at $e_{1}$ or $-e_{1}$.  \end{prop}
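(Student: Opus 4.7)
My plan rests on Theorem \ref{T: differentiability}(iv) combined with the diagonal laminar structure of $a$. For $\theta \notin \{0,\pi\}$ set $V_{e_\theta} := \partial_s U_{e_\theta}$. Since $a(x) = a_1(x) e_1 \otimes e_1 + a_2(x) e_2 \otimes e_2$ and $D_x U_{e_\theta}$ has only an $e_1$-component, the $e_2$-component of $\mathcal{D}_{e_\theta} U_{e_\theta} = e_\theta V_{e_\theta} + D_x U_{e_\theta}$ is simply $\sin\theta \cdot V_{e_\theta}$, and the formula in Theorem \ref{T: differentiability}(iv) collapses to
\begin{equation*}
\langle D\tilde{\varphi}^{a}(e_\theta), e_2\rangle \;=\; \sin\theta \int_{\mathbb{R}\times \mathbb{T}} a_2(x)\, V_{e_\theta}(s,x)^2 \, dx\, ds.
\end{equation*}

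Next I translate this cylindrical integral back to physical coordinates via the identity $u^{e_\theta}_0(x,y) = U_{e_\theta}(x\cos\theta + y\sin\theta, x)$. Direct differentiation gives $(u^{e_\theta}_0)_y = \sin\theta\cdot V_{e_\theta}$ at the corresponding point, and the change of variable $s = x\cos\theta + y\sin\theta$ at fixed $x \in [0,1)$ (so $ds = \sin\theta\, dy$) yields
\begin{equation*}
\sin\theta \int_{\mathbb{R}\times \mathbb{T}} a_2(x)\, V_{e_\theta}^{2} \, dx\, ds \;=\; \text{sgn}(\sin\theta) \int_0^1 \int_{-\infty}^\infty a_2(x)\, (u^{e_\theta}_0)_y(x,y)^2 \, dy\, dx.
\end{equation*}
Both sides are finite for $\theta\ne 0,\pi$ by Proposition \ref{P: eigenfunction_estimate}, and the right-hand side is invariant under $y$-translation, hence unchanged if $u^{e_\theta}_0$ is replaced by any translate $u^{e_\theta}_\zeta$.

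To pass to the limit $\theta \to 0^+$, fix a sequence $\theta_n \to 0^+$ and, using continuity of $U_{e_{\theta_n}}$ and the asymptotic behavior, apply the intermediate value theorem to pick $\zeta_n\in\mathbb{R}$ with $u^{e_{\theta_n}}_{\zeta_n}(1/4, 0) = 0$. Proposition \ref{P: tertiary_foliation} then furnishes a subsequence along which $u^{e_{\theta_n}}_{\zeta_n}$ converges locally uniformly (and in $C^{1}_{\mathrm{loc}}$, by Schauder estimates for minimizers) to a Class A heteroclinic minimizer $u$ of $\mathcal{F}^{a}_{1}$ satisfying $u_y > 0$ strictly in $\mathbb{R}^{2}$. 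Fatou's lemma yields
\begin{equation*}
\liminf_{n\to\infty} \int_0^1 \int_{-\infty}^\infty a_2(x)\, (u^{e_{\theta_n}}_{\zeta_n})_y(x,y)^2 \, dy\, dx \;\geq\; \int_0^1 \int_{-\infty}^\infty a_2(x)\, u_y(x,y)^2 \, dy\, dx \;>\; 0,
\end{equation*}
strict positivity following from $u_y > 0$ everywhere and $a_2 \geq \delta > 0$. Combining the previous two displays gives $\liminf_{\theta \to 0^+} \langle D\tilde{\varphi}^{a}(e_\theta), e_2\rangle > 0$. The case $\theta \to 0^-$ is identical but with signs reversed: $\sin\theta<0$ forces $u_y < 0$ for any heteroclinic limit, producing $\limsup_{\theta\to 0^-}\langle D\tilde{\varphi}^{a}(e_\theta),e_2\rangle < 0$. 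Since the one-homogeneous extension of $\tilde{\varphi}^{a}$ is convex (proved in the previous section) and the gradient of a convex function is continuous wherever it exists, the disagreement of the two one-sided limits precludes differentiability at $e_1$; the corresponding statement at $-e_1$ follows from the reflection symmetries of $a_1$.

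The main obstacle is ensuring that the heteroclinic limit from Proposition \ref{P: tertiary_foliation} genuinely depends on $y$, rather than collapsing to a one-dimensional plane-like minimizer (in which case the lower bound $\int u_y^2 a_2 > 0$ would fail). This is exactly where the gap analysis of Section \ref{S: gaps} intervenes: Propositions \ref{P: O_1_estimate} and \ref{P: root_delta} rule out any 1D Class A minimizer passing through $0$ at $x = 1/4$, so the limit must be a non-trivial 2D object, and the strong maximum principle applied to the linearized equation then upgrades the weak monotonicity $u_y \geq 0$ to the strict monotonicity needed for the lower bound.
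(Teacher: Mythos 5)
Your argument follows the same strategy as the paper: rewrite $\langle D\tilde\varphi^a(e_\theta),e_2\rangle$ as $\mathrm{sgn}(\sin\theta)$ times a manifestly nonnegative integral via the laminar diagonal structure and a change of variables, fix $\zeta_\theta$ so that $u^{e_\theta}_{\zeta_\theta}(\tfrac14,0)=0$, and use Proposition~\ref{P: tertiary_foliation} together with the gap analysis of Section~\ref{S: gaps} to force the subsequential heteroclinic limit $u$ to be strictly increasing in $y$. Your use of Fatou's lemma (with $C^1_{\mathrm{loc}}$ convergence furnished by Schauder estimates) to obtain $\liminf_n\int a_2\,(u^{e_{\theta_n}}_{\zeta_n})_y^2\,dy\,dx\geq\int a_2\,u_y^2\,dy\,dx>0$ is a clean alternative to the paper's contradiction argument, which instead assumes the liminf vanishes, deduces $(u^{e_{\theta_n}}_{\zeta_n})_y\to0$ in $L^2_{\mathrm{loc}}$, and then uses weak convergence to conclude $u_y\equiv0$. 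The two steps are equivalent in substance.

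There is one gap relative to the full statement of the proposition. You establish $\liminf_{\theta\to0^+}\langle D\tilde\varphi^a(e_\theta),e_2\rangle>0$ and $\limsup_{\theta\to0^-}\langle D\tilde\varphi^a(e_\theta),e_2\rangle<0$, which suffices for the non-differentiability conclusion, but the proposition also asserts the actual one-sided limits $\lim_{\theta\to0^\pm}\langle D\tilde\varphi^a(e_\theta),e_2\rangle$ exist. The appeal to ``the gradient of a convex function is continuous wherever it exists'' does not give this, since we are precisely at a point where the gradient fails to exist. The paper gets existence of the one-sided limits from the two-dimensional structure: $\partial\tilde\varphi^a(e_1)$ is a compact convex subset of a line (hence a point or a segment), and as $\theta\to0^\pm$ the gradients $D\tilde\varphi^a(e_\theta)$ converge to its two (possibly coincident) endpoints. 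This is the step you should add; once it is in place, the $\liminf/\limsup$ bounds you proved pin the one-sided limits on the correct sides of zero.
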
  

	\begin{proof}  Suppose $\theta \in (-\frac{\pi}{2},\frac{\pi}{2}) \setminus \{0\}$ and fix $\zeta_{\theta} \in \mathbb{R}$ such that $u_{\zeta_{\theta}}^{e_{\theta}}$ satisfies $u_{\zeta_{\theta}}^{e_{\theta}}(\frac{1}{4},0) = 0$.  Recall from Section \ref{S: einstein relation laminar media} that $D \tilde{\varphi}^{a}(e_{\theta})$ is given by
		\begin{align*}
			D \tilde{\varphi}^{a}(e_{\theta}) &= \int_{\mathbb{R} \times \mathbb{T}^{d}} a(x) \mathcal{D}_{e_{\theta}} U_{e_{\theta}} \partial_{s}U_{e_{\theta}} \, dx \, ds \\
				&= \langle e_{\theta}, e_{2} \rangle^{-1} \mathcal{H}^{1}(I_{e_{\theta}})^{-1} \int_{I_{e_{\theta}} \oplus_{e_{\theta}} \mathbb{R}} a(x) D u^{e_{\theta}}_{\zeta_{\theta}}(x,y) \partial_{y} u^{e_{\theta}}_{\zeta_{\theta}}(x,y) \, dx \, dy \\
				&=  \text{sgn}(\langle e_{\theta},e_{2} \rangle)\int_{I_{e_{\theta}} \oplus_{e_{\theta}} \mathbb{R}} a(x) D u^{e_{\theta}}_{\zeta_{\theta}}(x,y) \partial_{y} u^{e_{\theta}}_{\zeta_{\theta}}(x,y) \, dx \, dy.
		\end{align*}
	In particular, since $a$ takes values in the diagonal matrices,
		\begin{equation*}
			\langle D \tilde{\varphi}^{a}(e_{\theta}), e_{2} \rangle = \text{sgn}(\langle e_{\theta},e_{2} \rangle) \int_{I_{e_{\theta}} \oplus_{e_{\theta}} \mathbb{R}} a_{2}(x) \partial_{y} u_{\zeta_{\theta}}^{e_{\theta}}(x,y)^{2} \, dx \, dy.
		\end{equation*}
	This shows that $\langle D \tilde{\varphi}^{a}(e_{\theta}), e_{2} \rangle > 0$ if $\langle e_{\theta},e_{2} \rangle > 0$ and $\langle D \tilde{\varphi}^{a}(e_{\theta}), e_{2} \rangle < 0$ if $\langle e_{\theta},e_{2} \rangle < 0$.  
	
	Since we are working in dimension two, note that $\partial \tilde{\varphi}^{a}(e_{1})$ is either a singleton or a line segment.  Thus, $\lim_{\theta \to 0^{+}} D\tilde{\varphi}^{a}(e_{\theta})$ and $\lim_{\theta \to 0^{-}} D \tilde{\varphi}^{a}$ both exist and converge to either $D \tilde{\varphi}^{a}(e_{1})$ or the (distinct) boundary endpoints of $\partial \tilde{\varphi}^{a}(e_{1})$.  Thus, from the previous paragraph, we see that $D\tilde{\varphi}^{a}(e_{1})$ exists only if $\lim_{\theta \to 0} \langle D \tilde{\varphi}^{a}(e_{\theta}),e_{2} \rangle = 0$.  That means that to prove non-differentiability, we only need to show that 
		\begin{equation*}
			\liminf_{\theta \to 0} |\langle D \tilde{\varphi}(e_{\theta}), e_{2} \rangle| > 0.
		\end{equation*}  
	
	We will proceed arguing by contradiction.  Suppose that, in contrast to what we wish to prove, $\lim_{\theta \to 0^{+}} \langle D \tilde{\varphi}(e_{\theta}),e_{2} \rangle = 0$.  Appealing to our previous computations and the positivity of $a_{2}$, we find
		\begin{equation} \label{E: integral}
			 \lim_{\theta \to 0^{+}} \int_{I_{e_{\theta}} \oplus \mathbb{R}} \partial_{y} u_{\zeta_{\theta}}^{e_{\theta}}(x,y)^{2} \, dx \, dy = 0.
		\end{equation}
	We claim this is impossible.
	
	Indeed, if \eqref{E: integral} were true, then we would deduce that $(u_{\zeta_{\theta}}^{e_{\theta}})_{y} \to 0$ in $L^{2}_{\text{loc}}(\mathbb{R}^{2})$.  Passing to a sub-sequence $\theta_{n} \to 0^{+}$, we can assume that there is a Class A minimizer $u$ satisfying the conclusions of Proposition \ref{P: tertiary_foliation} such that $u^{e_{\theta_{n}}}_{\zeta_{\theta_{n}}} \to u$ locally uniformly.  From the local uniform convergence, we deduce that $(u^{e_{\theta_{n}}}_{\zeta_{\theta_{n}}})_{y} \rightharpoonup u_{y}$ in $L^{2}_{\text{loc}}(\mathbb{R}^{2})$.  We are left to conclude that $u_{y} \equiv 0$, which contradicts the fact that $u$ is strictly increasing in the $y$ variable.  
	
	Thus, we conclude that $\tilde{\varphi}^{a}$ is not differentiable at $e_{1}$.  That $\tilde{\varphi}^{a}$ is not differentiable at $-e_{1}$ follows by symmetry. \qed  
\end{proof}  

From the previous result, we deduce

\begin{prop}  In the notation of Proposition \ref{P: non-differentiability}, there are positive constants $C_{+},C_{-} > 0$ such that 
	\begin{equation*}
		C_{\pm} = \lim_{\theta \to 0^{\pm}} |\langle e_{\theta},e_{2} \rangle| \int_{\mathbb{R} \times \mathbb{T}} a_{2}(x) \partial_{s}U_{e_{\theta}}(s,x)^{2} \, ds \, dx.
	\end{equation*}
\end{prop}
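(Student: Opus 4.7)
The plan is to derive the statement as essentially an immediate consequence of Proposition \ref{P: non-differentiability}, connected by one short identity that exploits the diagonal structure of $a$.

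First I would compute $\langle D\tilde\varphi^a(e_\theta), e_2 \rangle$ explicitly using Theorem \ref{T: differentiability}(iv):
\begin{equation*}
D\tilde\varphi^a(e) = \int_{\mathbb{R} \times \mathbb{T}} a(x) \mathcal{D}_e U_e \, \partial_s U_e \, dx \, ds.
\end{equation*}
Since $a(x) = a_1(x) e_1 \otimes e_1 + a_2(x) e_2 \otimes e_2$ is diagonal and, in the 2D laminar case with $k=1$, the operator $D_x U_e = (\partial_{x_1} U_e, 0)$ has no $e_2$-component, the $e_2$-component of $\mathcal{D}_e U_e = e \partial_s U_e + D_x U_e$ is simply $\langle e, e_2 \rangle \partial_s U_e$. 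Thus $\langle a(x) \mathcal{D}_e U_e, e_2 \rangle = a_2(x) \langle e, e_2 \rangle \partial_s U_e$ pointwise, and integration yields
\begin{equation*}
\langle D\tilde\varphi^a(e), e_2 \rangle = \langle e, e_2 \rangle \int_{\mathbb{R} \times \mathbb{T}} a_2(x) (\partial_s U_e)^2 \, dx \, ds
\end{equation*}
for every $e \in S^1 \setminus \{e_1, -e_1\}$. Since the right-hand integral is non-negative, taking absolute values gives the clean identity
\begin{equation*}
|\langle e_\theta, e_2 \rangle| \int_{\mathbb{R} \times \mathbb{T}} a_2(x) (\partial_s U_{e_\theta})^2 \, dx \, ds = |\langle D\tilde\varphi^a(e_\theta), e_2 \rangle|.
\end{equation*}

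Next I would invoke Proposition \ref{P: non-differentiability}, which already asserts that both one-sided limits $\lim_{\theta \to 0^\pm} \langle D\tilde\varphi^a(e_\theta), e_2 \rangle$ exist and are nonzero (strictly positive from above and strictly negative from below). Existence of these limits is the key structural input and rests on the convexity of $\tilde\varphi^a$: its one-homogeneous extension to $\mathbb{R}^2$ has a compact convex subdifferential at $e_1$, which in 2D is a line segment, and approach to $e_1$ through the $C^2$-region from the upper or lower semicircle selects one of its two endpoints as the limit of $D\tilde\varphi^a(e_\theta)$. Setting
\begin{equation*}
C_\pm := \left|\lim_{\theta \to 0^\pm} \langle D\tilde\varphi^a(e_\theta), e_2 \rangle\right|
\end{equation*}
then yields the desired positive constants.

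In short, the bulk of the work has already been carried out in the previous proposition; all that remains is the pointwise identity $\langle a(x)\mathcal{D}_e U_e, e_2 \rangle = a_2(x)\langle e,e_2\rangle \partial_s U_e$, which crucially uses both the diagonality of $a$ and the laminarity assumption $k = 1 < d = 2$. I do not anticipate any real obstacle.
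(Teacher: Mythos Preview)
Your proposal is correct and follows essentially the same approach as the paper: both derive the identity $|\langle D\tilde{\varphi}^{a}(e_{\theta}),e_{2}\rangle| = |\langle e_{\theta},e_{2}\rangle|\int_{\mathbb{R}\times\mathbb{T}} a_{2}(x)(\partial_{s}U_{e_{\theta}})^{2}\,dx\,ds$ from the diagonal form of $a$ and the fact that $\langle \mathcal{D}_{e}U_{e},e_{2}\rangle = \langle e,e_{2}\rangle\,\partial_{s}U_{e}$, and then invoke Proposition~\ref{P: non-differentiability} for the existence and positivity of the one-sided limits.
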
  

	\begin{proof}  In the previous proof, we showed that $\lim_{\theta \to 0^{\pm}} |\langle D\tilde{\varphi}^{a}(e_{\theta}), e_{2} \rangle| > 0$.
	
	On the other hand, using the identity $\langle \mathcal{D}_{e_{\theta}} U_{e_{\theta}}, e_{2} \rangle = \langle e_{\theta},e_{2} \rangle \partial_{s} U_{e_{\theta}}$, we find
		\begin{align*}
			|\langle D\tilde{\varphi}^{a}(e_{\theta}), e_{2} \rangle| &= \left| \int_{\mathbb{R} \times \mathbb{T}}  \langle a(x)\mathcal{D}_{e_{\theta}}U_{e_{\theta}}(s,x), e_{2} \rangle \partial_{s} U_{e_{\theta}}(s,x) \, dx \, ds\right| \\
				&= \left|\langle e_{\theta},e_{2} \rangle \right| \int_{\mathbb{R} \times \mathbb{T}} a_{2}(x) \partial_{s} U_{e_{\theta}}(s,x)^{2} \, dx \, ds. 
		\end{align*} \qed
	\end{proof}  
	
Finally, we conclude with the

\begin{proof}[Proof of Theorem \ref{T: dim_2_stuff}]  Notice that the previous corollary and \eqref{A: a_assumption_1} shows that (ii) holds.  

We already proved (i) in Proposition \ref{P: non-differentiability}.   Now we prove (iii).  We proceed by appealing to the fact that $D^{2}\tilde{\varphi}^{a}$ is a Radon measure in $\mathbb{R}^{d}$.  

From (ii), we know there is a $C > 0$ such that
	\begin{equation*}
		C^{-1} |\langle e, e_{2} \rangle|^{-1} \leq \tilde{M}^{a}(e) \leq C |\langle e, e_{2} \rangle|^{-1}
	\end{equation*}
For convenience, extend $\tilde{M}^{a}$ to $\mathbb{R}^{d} \setminus \{0\}$ by $\tilde{M}^{a}(v) = \tilde{M}^{a}(\|v\|^{-1}v)$.  From this, we see that
		\begin{align*}
			C^{-1} \int_{\{\frac{1}{2} \leq \|v\| \leq 2\}} &\left(\frac{\|D^{2}\tilde{\varphi}^{a}(v)\|}{\tilde{M}^{a}(v)}\right) |\langle v, e_{2} \rangle|^{-1} \, dv \\
			&\qquad \leq \int_{\{\frac{1}{2} \leq \|v\| \leq 2\}} \left(\frac{\|D^{2} \tilde{\varphi}^{a}(v)\|}{\tilde{M}^{a}(v)}\right) \tilde{M}^{a}(v) \, dv \\
				&\qquad \leq \| D^{2} \tilde{\varphi}^{a}\| \left(\left\{\frac{1}{2} \leq \|v\| \leq 2\right\}\right) < \infty.
		\end{align*}
	Since $e \mapsto |\langle e, e_{2} \rangle|^{-1}$ is not integrable in any arc of $S^{1}$ containing $e_{1}$ or $-e_{1}$ and $v \mapsto \tilde{\varphi}^{a}(v)$ is $1$-homogeneous, we conclude $\liminf_{e \to \pm e_{1}} \frac{\|D^{2}\tilde{\varphi}^{a}(e)\|}{\tilde{M}^{a}(e)} = 0$. 
	
Finally, note that we can take $a$ to have the form $a = a_{0} \text{Id}$ by setting $a_{2} = a_{1}$ in \eqref{E: matrix field}. \qed  \end{proof}
	
\section{Sharp Interface Limit for Graphs}  \label{S: sharp_interface_limit}

In this section, we prove Theorem \ref{T: sharp_interface_limit_graphs}.  Concerning $a$ and $W$, assumptions \eqref{A: a_assumption_1}, \eqref{A: W_assumption_1}, \eqref{A: a_assumption_2}, and \eqref{A: W_assumption_2} are all in effect, $a$ is $\mathbb{Z}^{k} \times \mathbb{R}^{d-k}$-periodic, and, in addition, we assume \eqref{A: additional_regularity} and \eqref{A: sign}.

The additional regularity of $W$ in \eqref{A: additional_regularity} allows us to invoke Propositions \ref{P: higher_regularity} and \ref{P: key corrector result} concerning the regularity of $\tilde{\varphi}^{a}$, $e\mapsto U_{e}$, and $e \mapsto P_{e}$.

The condition on the sign of $W'$ in \eqref{A: sign} frequently appears in the literature on reaction diffusion equations.  It is only needed in the proof of  Lemma \ref{L: initialization} below.

In what follows, we fix $e \in S^{d - 1} \setminus (\mathbb{R}^{k} \times \{0\})$.
	
We let $\pi : \mathbb{R}^{d} \to \mathbb{R}^{d -1}$ be a linear isometry annihilating $\langle e \rangle$ and we will use the identification $x = (x_{e},x')$, where $x_{e} = \langle x, e \rangle$ and $x' = \pi(x)$.  

Finally, since we are interested in Theorem \ref{T: sharp_interface_limit_graphs}, it will be convenient to fix an orientation.  More precisely, notice that assumption (i) of the theorem actually implies (via the intermediate value theorem) that 
	\begin{align*}
		&\text{either} \quad \{u_{0} > 0\} = \left\{x \in \mathbb{R}^{d} \, \mid \, x_{e} > \mathcal{U}(x') \right\} \\
		&\text{or} \quad \{u_{0} > 0\} = \left\{ x \in \mathbb{R}^{d} \, \mid \, x_{e} < \mathcal{U}(x') \right\}.
	\end{align*}
Notice that if the former is true, then we can switch to the latter by replacing $e$ by $-e$.  Therefore, in order to avoid disorienting case analyses, we will assume henceforth that the former always holds, that is,
	\begin{equation} \label{E: orientation}
		\{u_{0} > 0\} = \{(x_{e},x') \in \mathbb{R}^{d} \, \mid \, x_{e} > \mathcal{U}(x') \}.
	\end{equation}

\subsection{Graph Equation}  To make sense of the anisotropic curvature flow \eqref{E: anisotropic curvature flow}, we use the level set formulation.  Recall that this encodes the geometric flow via the nonlinear parabolic PDE
	\begin{equation} \label{E: level set}
		\left\{ \begin{array}{r l}
			u_{t} - \tilde{M}^{a}(\widehat{Du})^{-1} \text{tr} \left( D^{2} \tilde{\varphi}^{a}(\widehat{Du}) D^{2}u \right) = 0 & \text{in} \, \, \mathbb{R}^{d} \times (0,T), \\
			u = u_{0} & \text{on} \, \, \mathbb{R}^{d} \times \{0\}.
		\end{array} \right.
	\end{equation}
The idea of the level set approach is that the solution $(E_{t})_{t \geq 0}$ of \eqref{E: anisotropic curvature flow} for a given initial open set $E_{0}$ can be obtained by choosing the initial datum $u_{0}$ so that $\{u_{0} > 0\} = E_{0}$ and defining $E_{t} = \{u(\cdot,t) > 0\}$.  See \cite{barles souganidis} for a complete account of the level set formulation. 

Note that \eqref{E: level set} is ambiguous at this stage since the ratio $(\tilde{M}^{a})^{-1} D^{2}\tilde{\varphi}^{a}$ is not necessarily well-defined in the directions of the laminations.  Nonetheless, as in the case of mean curvature flow, \eqref{E: anisotropic curvature flow} has a graph formulation, which is unambiguous when the graphs cross the laminations.  More precisely, define the matrix field $\tilde{\mathcal{G}} : \mathbb{R}^{d -1} \to \mathcal{S}_{d-1}$ by 
	\begin{equation*}
		\tilde{\mathcal{G}}(q) = \tilde{M}^{a}(q_{g})^{-1} \pi D^{2}\tilde{\varphi}^{a}(q_{g}) \pi^{*},
	\end{equation*}
where $q_{g} = (1 + \|q\|^{2})^{-\frac{1}{2}} (1,-q)$ and $\pi$ is the linear map defined above.  This is well-defined by our assumptions on $e$.

One can prove the following (e.g.\ using inf- and sup-convolutions):

	\begin{prop} \label{P: graph_equation_fact} Suppose $h : \mathbb{R}^{d -1} \times [0,T] \to \mathbb{R}$ is upper (resp.\ lower) semi-continuous and $h_{0} \in C(\mathbb{R}^{d- 1})$.  The function $u: \mathbb{R}^{d} \times [0,T] \to \mathbb{R}$ given by $u(x,t) = x_{e} - h(x',t)$ is a viscosity sub-solution (resp.\ super-solution) of \eqref{E: level set} with initial datum $u_{0}(x) = x_{e} - h_{0}(x')$ if and only if $h$ is a viscosity super-solution (resp.\ sub-solution) of 
		\begin{equation} \label{E: graph_equation}
			\left\{ \begin{array}{r l}
				h_{t} - \text{tr}(\tilde{\mathcal{G}}(Dh)D^{2}h) = 0 & \text{in} \, \, \mathbb{R}^{d -1} \times (0,T), \\
				h(\cdot,0) = h_{0} & \text{on} \, \, \mathbb{R}^{d -1}.
			\end{array} \right.
		\end{equation}
	\end{prop}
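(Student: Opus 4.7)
The plan is to proceed in two movements: first a smooth/classical computation showing that substituting $u(x,t)=x_e-h(x',t)$ into the level-set form of \eqref{E: anisotropic curvature flow} reproduces \eqref{E: graph_equation}, and then an upgrade to the viscosity setting via a canonical bijection between smooth test functions touching $u$ and smooth test functions touching $h$.

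For the smooth computation, I would invoke Corollary \ref{C: einstein_relation} to identify $\tilde{\mathcal S}^a$ with $D^{2}\tilde\varphi^a$ and exploit the fact that $D^{2}\tilde\varphi^a(\nu)\nu=0$ by $1$-homogeneity, so the tangential projections entering the second fundamental form $A=-\|Du\|^{-1}P_\nu D^{2}u\,P_\nu$ drop out of $\mathrm{tr}(D^{2}\tilde\varphi^a(\nu)A)$. Combined with the velocity formula $V=-u_t/\|Du\|$, the geometric equation becomes the level-set PDE
\[
u_t \;=\; \tilde M^a(\hat p)^{-1}\,\mathrm{tr}\!\bigl(D^{2}\tilde\varphi^a(\hat p)\,D^{2}u\bigr), \qquad \hat p=\tfrac{Du}{\|Du\|}.
\]
Plugging in $u=x_e-h$ gives $\hat p=q_g$ with $q=Dh$, $u_t=-h_t$, and $D^{2}u$ supported only on the $x'{\times}x'$ block as $-D^{2}h$. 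Recognising that $\pi D^{2}\tilde\varphi^a(q_g)\pi^{*}=\tilde M^a(q_g)\tilde{\mathcal G}(q)$ by definition of $\tilde{\mathcal G}$, the right-hand side collapses to $-\mathrm{tr}(\tilde{\mathcal G}(Dh)\,D^{2}h)$, and the equation rearranges to \eqref{E: graph_equation}. Note in particular that the sign flips correctly: the level-set sub-solution inequality $u_t\le\tilde M^a(\hat p)^{-1}\mathrm{tr}(D^{2}\tilde\varphi^a(\hat p)D^{2}u)$ becomes $h_t\ge\mathrm{tr}(\tilde{\mathcal G}(Dh)D^{2}h)$, which is the super-solution inequality for \eqref{E: graph_equation}.

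For the viscosity step, the essential point is that $\|Du\|\ge 1$ everywhere, so the level-set nonlinearity is smooth in $(p,X)$ along all relevant trajectories and no singular-$p$ issues arise. If $\phi\in C^{2}$ touches $u$ from above at $(x_0,t_0)$, then $\partial_{x_e}\phi(x_0,t_0)=1$, so the implicit function theorem furnishes a smooth $\Phi(x',t)$ near $((x_0)',t_0)$ satisfying $\phi(\Phi(x',t),x',t)=u(x_0,t_0)$; monotonicity in $x_e$ forces $\Phi\le h$ locally with equality at $((x_0)',t_0)$, so $\Phi$ touches $h$ from below. The reverse construction $\phi(x,t):=x_e-\Phi(x',t)$ (with a small perturbation to upgrade to strict touching when needed) yields the other half of the bijection. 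The main obstacle is the algebraic bookkeeping, since implicit differentiation relates $D\Phi,D^{2}\Phi$ to $D\phi,D^{2}\phi$ through non-trivial corrections involving $\partial_{x_e}\phi$; verifying that these corrections are absorbed by the $\hat p$-dependence of the level-set nonlinearity is the delicate point. This is sidestepped by the geometric invariance of the level-set PDE: its value at a point depends only on the level set of $\phi$ through that point, so running the reduction of the first movement directly on $\phi(x,t)=x_e-\Phi(x',t)$ immediately matches the two inequalities. The parenthetical suggestion of inf/sup-convolutions offers an alternative route: regularise $h$ to a semiconvex/semiconcave function where the classical reduction holds pointwise almost everywhere, then pass to the limit using stability of viscosity solutions.
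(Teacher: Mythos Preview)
The paper does not actually prove this proposition: the sentence immediately preceding it reads ``One can prove the following (e.g.\ using inf- and sup-convolutions),'' and no further argument is given. Your proposal therefore supplies considerably more than the paper does.

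Your primary route---the smooth reduction followed by a test-function correspondence exploiting geometric invariance of the level-set operator---is correct and is a genuinely different path from the paper's hint. The crucial observation $\partial_{x_e}\phi(x_0,t_0)=1$ (obtained by restricting the touching condition to the $x_e$-fibre) is exactly what makes the implicit graph $\Phi$ available, and since at the touching point $D\phi=(1,-D\Phi)$ coincides with the gradient of $x_e-\Phi$, the geometric invariance $F(p,X+\mu\,p\otimes p)=F(p,X)$ does transfer the inequality cleanly. One small imprecision: the $\Phi$ you construct satisfies $h-\Phi\ge h(x_0',t_0)-\Phi(x_0',t_0)$ locally, not literally $\Phi\le h$, but this is harmless since only the local-minimum property matters for the viscosity test. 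The inf/sup-convolution route you mention at the end is precisely what the paper suggests; it trades the implicit-function bookkeeping for a regularise-then-pass-to-the-limit argument via stability of viscosity solutions. Your test-function approach is more direct and makes the sub/super flip transparent, while the convolution approach avoids tracking second-derivative corrections altogether---either is standard, and both are sound here because $\|Du\|\ge 1$ keeps the operator away from its singularity.
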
  
	
In terms of the geometric flow, Proposition \ref{P: graph_equation_fact} says that if $E_{0} = \{x_{e} > h_{0}(x')\}$ and $h$ is a solution of \eqref{E: graph_equation}, we can generate a graphical solution $(E_{t})_{t \geq 0}$ of \eqref{E: anisotropic curvature flow} with initial datum $E_{0}$ by setting $E_{t} = \{x_{e} > h(x',t)\}$.  This is precisely the approach we will take in Theorem \ref{T: sharp_interface_limit_graphs}.
	
In studying \eqref{E: graph_equation}, we will restrict to the following families of test functions $\mathscr{P}^{+}(\bar{M})$ and $\mathscr{P}^{-}(\bar{M})$: first, we say that $\varphi \in \mathscr{P}_{0}^{\pm}(\bar{M})$ if there is a smooth function $\varphi_{0} : \mathbb{R}^{d-1} \to \mathbb{R}$, an $x_{0}' \in \mathbb{R}^{d-1}$, constants $M, R > 0$ with $M \geq \bar{M}$, a $C \in \mathbb{R}$, and a cut-off function $\rho \in C^{\infty}_{c}(\mathbb{R}^{d-1})$ satisfying $\rho(x') = 1$ for $\|x' - x_{0}'\| \leq R$ such that
	\begin{equation*}
		\varphi(x') = \varphi_{0}(x') \rho(x') \pm (1 - \rho(x')) M\|x' - x_{0}'\|+ C.
	\end{equation*}
Finally, we say that $\varphi \in \mathscr{P}^{\pm}(\bar{M})$ if there is a $\varphi_{1} \in \mathscr{P}_{0}^{\pm}(\bar{M})$, a $t_{0} \in [0,T]$, and constants $a,b \in \mathbb{R}$ such that
	\begin{equation*}
		\varphi(x',t) = \varphi_{1}(x') + a(t - t_{0}) + \frac{b(t-t_{0})^{2}}{2}.
	\end{equation*}

The idea is we want test functions whose graphs (at any fixed time) have nice tubular neighborhoods and normal vectors bounded a positive distance away from $S^{k-1} \times \{0\}$.  Since the graph of any function in $\mathscr{P}^{+}(\bar{M})$ or $\mathscr{P}^{-}(\bar{M})$ is a compact perturbation of a cone, these functions are well suited to the purpose.
	
Here is how we define sub- and super-solutions of \eqref{E: graph_equation} using $\mathscr{P}^{+}$ and $\mathscr{P}^{-}$:

	\begin{definition}  Given an $\bar{M} > 0$, we say that an upper semi-continuous function $h : \mathbb{R}^{d -1} \times [0,T] \to \mathbb{R}$ is a $\mathscr{P}(\bar{M})$-sub-solution of \eqref{E: graph_equation} if, for each $\varphi \in \mathscr{P}^{+}(\bar{M})$, if $h - \varphi$ has a strict global maximum at $(x_{0}',t_{0}) \in \mathbb{R}^{d-1} \times (0,T)$, then 
		\begin{equation*}
			\varphi_{t}(x_{0}',t_{0}) - \text{tr}(\tilde{\mathcal{G}}(D\varphi(x_{0}',t_{0}))D^{2}\varphi(x_{0}',t_{0})) \leq 0.
		\end{equation*} 
		
	We say that a lower semi-continuous function $g : \mathbb{R}^{d -1} \times [0,T] \to \mathbb{R}$ is a $\mathscr{P}(\bar{M})$-super-solution of \eqref{E: graph_equation} if, for each $\varphi \in \mathscr{P}^{-}(\bar{M})$, if $g - \varphi$ has a strict global minimum at $(x_{0}',t_{0}) \in \mathbb{R}^{d-1} \times (0,T)$, then 
		\begin{equation*}
			\varphi_{t}(x_{0}',t_{0}) - \text{tr}(\tilde{\mathcal{G}}(D\varphi(x_{0}',t_{0}))D^{2}\varphi(x_{0}',t_{0})) \geq 0.
		\end{equation*}
	\end{definition} 

Since we are working with sub- and super-solutions that remain a bounded distance from a hyperplane, the next result is not hard to prove.

	\begin{prop} \label{P: comparison_principle}  Fix a $q \in \mathbb{R}^{d-1}$.  If $h : \mathbb{R}^{d -1} \times [0,T] \to \mathbb{R}$ is an upper semi-continuous $\mathscr{P}(\|q\| + 1)$-sub-solution of \eqref{E: graph_equation} and $g : \mathbb{R}^{d-1} \times [0,T] \to \mathbb{R}$ is a lower semi-continuous $\mathscr{P}(\|q\|+1)$-super-solution, and if $h$ and $g$ satisfy		
		\begin{align*}
			&\sup \left\{ |h(x',t) - \langle q,x' \rangle| + |g(x',t) - \langle q, x' \rangle| \, \mid \, (x',t) \in \mathbb{R}^{d-1} \times [0,T] \right\} < \infty, \\
		&\lim_{\delta \to 0^{+}} \sup \left\{h(x',0) - g(y',0) \, \mid \, x',y' \in \mathbb{R}^{d-1}, \, \, \|x' - y'\| \leq \delta \right\} = 0,
	\end{align*}
then $h \leq g$ in $\mathbb{R}^{d-1} \times [0,T]$.  \end{prop}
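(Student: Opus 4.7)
The plan is to execute the standard doubling-of-variables argument for viscosity solutions, adapted to the restricted test function class $\mathscr{P}^{\pm}(\|q\|+1)$. The hypotheses are tailored so that the relevant gradient magnitudes are a priori bounded by $\|q\|+1$ uniformly, which keeps the spatial direction $q_g$ away from the degenerate set $S^{k-1}\times\{0\}$ where $\tilde{\mathcal{G}}$ might fail to be continuous; on this bounded set, $\tilde{\mathcal{G}}$ is continuous (in fact $C^{0}$) by Theorem~\ref{T: differentiability}, and the matrix $\tilde{\mathcal{G}}(p)$ is positive semidefinite by Proposition~\ref{P: convexity}. These are exactly the ingredients needed for the classical contradiction argument.

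To set up, I would first perturb to a strict super-solution by replacing $g$ with $g+\delta t$ for $\delta>0$ and reducing to showing $h\le g+\delta t$. Assume for contradiction that the supremum of $h-g-\delta t$ is strictly positive. Introduce the doubled functional
\begin{equation*}
\Phi_{\epsilon,\eta}(x',y',t)=h(x',t)-g(y',t)-\delta t-\tfrac{1}{2\epsilon}\|x'-y'\|^{2}-\eta\sqrt{1+\|x'-x_{0}'\|^{2}},
\end{equation*}
where $x_{0}'$ is chosen so the supremum of $h-g-\delta t$ is nearly attained near $x_{0}'$. The hypothesis that $h-\langle q,\cdot\rangle$ and $g-\langle q,\cdot\rangle$ are bounded, combined with the quadratic penalty, forces $\Phi_{\epsilon,\eta}\to-\infty$ as $\|x'\|+\|y'\|\to\infty$, so the supremum is attained at some $(x_{\epsilon}',y_{\epsilon}',t_{\epsilon})$. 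The joint continuity hypothesis at $t=0$ and the strict positivity of the supremum force $t_{\epsilon}>0$ for $\epsilon,\eta$ small. Standard penalty estimates give $\|x_{\epsilon}'-y_{\epsilon}'\|^{2}/\epsilon\to 0$, and the gradient $p_{\epsilon}=(x_{\epsilon}'-y_{\epsilon}')/\epsilon$ stays bounded.

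The delicate step is producing touching test functions that lie in $\mathscr{P}^{+}(\|q\|+1)$ and $\mathscr{P}^{-}(\|q\|+1)$. After sup/inf-convolution in the $t$ variable to regularize $g$ (or equivalently, after applying the Crandall--Ishii lemma), the natural touching function for $h$ at $(x_{\epsilon}',t_{\epsilon})$ is a quadratic polynomial $\varphi_{\mathrm{quad}}$ in $(x',t)$. To replace $\varphi_{\mathrm{quad}}(\cdot,t)$ by a function of the prescribed cone-like form, I would fix a smooth cutoff $\rho$ equal to $1$ on $B(x_{\epsilon}',R)$ and set $\varphi^{+}(x',t)=\rho(x')\varphi_{\mathrm{quad}}(x',t)+(1-\rho(x'))(M_{0}\|x'-x_{\epsilon}'\|+C(t))$ with $M_{0}=\|q\|+1$. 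Because $h(x',t)-\langle q,x'\rangle$ is bounded by some constant $K$ while $M_{0}>\|q\|$, one can choose $R=R(K,\|q\|,\varphi_{\mathrm{quad}})$ and $C(t)$ so that $h-\varphi^{+}$ still achieves a strict global maximum at $(x_{\epsilon}',t_{\epsilon})$; a Taylor expansion in $t$ puts the time dependence in the required $a(t-t_{\epsilon})+\tfrac{b}{2}(t-t_{\epsilon})^{2}$ form modulo a term that can be absorbed into $\varphi_{\mathrm{quad}}$. A symmetric construction produces $\varphi^{-}\in\mathscr{P}^{-}(\|q\|+1)$ touching $g+\delta t$ from below at $(y_{\epsilon}',t_{\epsilon})$.

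Applying the definitions to $\varphi^{+}$ and $\varphi^{-}$, and using the Crandall--Ishii matrix inequality $X_{\epsilon}\le Y_{\epsilon}+O(\epsilon^{-1})$ with the standard refinement that guarantees $X_{\epsilon}\le Y_{\epsilon}$ in the limit, together with the positive semidefiniteness of $\tilde{\mathcal{G}}(p_{\epsilon})$, subtracting the two inequalities and sending $\epsilon\to 0$ (with $\eta$ fixed, then $\eta,\delta\to 0$) yields the contradiction $\delta\le 0$. The main obstacle is precisely the bookkeeping in the third paragraph: verifying that the ball radius $R$ and the cone constant $C$ can be chosen uniformly in $\epsilon$ and $\eta$, so that the modified test functions genuinely realize strict global extrema while keeping the jets at $(x_{\epsilon}',t_{\epsilon})$ and $(y_{\epsilon}',t_{\epsilon})$ equal to the jets of $\varphi_{\mathrm{quad}}$. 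Once that uniformity is in hand, the rest of the argument follows the classical template of Crandall--Ishii--Lions verbatim.
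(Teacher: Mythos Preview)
Your approach and the paper's share the same central idea—the cone-modification trick that replaces a quadratic test function by one with the prescribed conical behavior at infinity—but the paper packages it more cleanly. Rather than embedding this construction inside a full doubling-of-variables argument, the paper uses it once to prove a reduction lemma: any $\mathscr{P}(\bar M)$-sub-solution (resp.\ super-solution) that is bounded is already an ordinary viscosity sub-solution (resp.\ super-solution). Concretely, given a quadratic $\varphi_{\mathrm{quad}}$ touching $h$ from above with a strict global maximum at $(x_0',t_0)$, one chooses $R$ large enough that $h(x',t)<(\bar M+1)\|x'\|$ for $\|x'\|\ge R$, and then replaces $\varphi_{\mathrm{quad}}(\cdot,t)$ by $\rho\,\varphi_{\mathrm{quad}}+(1-\rho)(\bar M+1)\|x'\|$ exactly as you describe. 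With that reduction in hand, the comparison principle becomes a direct citation of the standard result for \eqref{E: graph_equation} with sub- and super-solutions lying a bounded distance from a fixed linear function.

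This modular route sidesteps precisely the obstacle you flagged. In your direct argument, after invoking the Crandall--Ishii lemma you obtain closure-of-semijet elements $(a,p_{\epsilon},X_{\epsilon})$ and $(a,p_{\epsilon},Y_{\epsilon})$, not smooth touching functions; converting those back into $\mathscr{P}^{\pm}$-test functions (which the definition demands) is exactly the content of the paper's reduction lemma, so you would have to prove it anyway—only now with the added burden of uniformity in $\epsilon$ and $\eta$. Doing the reduction first means the cone modification is carried out once, for a single fixed quadratic at a single point, with no parameters to track, and the Crandall--Ishii machinery is then applied in the ordinary viscosity framework where the semijet/test-function equivalence is already standard. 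Your route is not wrong, but it reproves the reduction lemma in disguise while simultaneously carrying the full comparison machinery; the paper's separation of concerns is shorter and cleanly avoids the semijet-versus-test-function subtlety.
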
  
	
The reason the proposition is true is that a $\mathscr{P}(\|q\| + 1)$-sub-solution is actually a sub-solution in the usual sense, and similarly for super-solutions.  We prove this in Appendix \ref{A: comparison} below.  The comparison result then follows by standard arguments.

Since many of the results to come do not depend on the particular value of $\bar{M}$, we will write
	\begin{equation*}
		\mathscr{P}^{\pm}_{0} = \bigcup_{\bar{M} > 0} \mathscr{P}^{\pm}_{0}(\bar{M}).
	\end{equation*}
	
\subsection{Subgraphs and Supergraphs}

To show that the limiting macroscopic behavior of the boundary of the sets $\{u^{\epsilon} \approx 1\}$ and $\{u^{\epsilon} \approx -1\}$ is described by a graph evolving by \eqref{E: graph_equation}, we will use the minimal supergraphs and maximal subgraphs defined next.  Given $t > 0$, we define sets $\Omega^{1}_{t}$ and $\Omega_{t}^{2}$ by 
	\begin{align*}
		\Omega^{1}_{t} &= \left\{x \in \mathbb{R}^{d} \, \mid \, \liminf \nolimits_{*} u^{\epsilon}(x,t) = 1 \right\}, \\
		\Omega^{2}_{t} &= \left\{x \in \mathbb{R}^{d} \, \mid \, \limsup \nolimits^{*} u^{\epsilon}(x,t) = -1 \right\}.
	\end{align*}
The minimal supergraph and maximal subgraph of $\Omega^{1}_{t}$ and $\Omega^{2}_{t}$, respectively, are described by the functions
	\begin{align}
		\eta(x',t) &= \inf \left\{ y \in \mathbb{R} \, \mid \, (y,\infty) \times \{x'\} \subseteq \Omega^{1}_{t} \right\}, \label{E: definition_of_eta} \\
		\nu(x',t) &= \sup \left\{ y \in \mathbb{R} \, \mid \, (-\infty,y) \times \{x'\} \subseteq \Omega^{2}_{t} \right\}. \label{E: definition_of_nu}
	\end{align}
In the next section, we will see that $\eta = \nu$, this function solves \eqref{E: graph_equation} and is continuous, and $\Omega^{1}_{t} = \{x_{e} > \eta(x',t)\}$ and $\Omega_{t}^{2} = \{x_{e} < \eta(x',t)\}$.  
	
The next result, which follows from assumptions (i) and (ii) of Theorem \ref{T: sharp_interface_limit_graphs} and ideas presented in \cite{barles souganidis}, ensures that $\eta$ and $\nu$ are a bounded distance from the function $x' \mapsto \langle q, x' \rangle$ for all time.  In the statement, the direction $e_{0} \in S^{d-1}$ is chosen so that $\{x_{e} \geq \langle q,x'\rangle\} = \{\langle x,e_{0} \rangle \geq 0\}$.
	
		\begin{prop} \label{P: bounded_width}  Assume that $u_{0} : \mathbb{R}^{d} \to [-1,1]$ is a uniformly continuous function satisfying hypothesis (i) and (ii) of Theorem \ref{T: sharp_interface_limit_graphs} and let $e_{0} = \frac{e - \pi^{-1}(q)}{\|e - \pi^{-1}(q)\|}$.  
	If $(u^{\epsilon})_{\epsilon > 0}$ are the solutions of 
		\begin{equation*}
			\left\{ \begin{array}{r l}
				u^{\epsilon}_{t} - \text{div}(a(\frac{x}{\epsilon})Du^{\epsilon}) + \epsilon^{-2} W'(u^{\epsilon}) = 0 & \text{in} \, \, \mathbb{R}^{d} \times (0,\infty), \\
				u^{\epsilon}(\cdot,0) = u_{0} & \text{on} \, \, \mathbb{R}^{d},
			\end{array} \right.
		\end{equation*}
	and $\limsup^{*} u^{\epsilon}$ and $\liminf_{*} u^{\epsilon}$ are the upper and lower half-relaxed limits defined by \eqref{E: upper half relaxed} and \eqref{E: lower half relaxed}, then for each $s,t \in (0,\infty)$ satisfying $s < t$ and each $\delta \in (\delta_{0},1)$, there is an $M' > 0$ and an $\epsilon_{0} > 0$ such that, for each $\epsilon \in (0,\epsilon_{0})$,
		\begin{align*}
			u^{\epsilon} &\geq 1 - \delta \quad \text{in} \, \, \{x \in \mathbb{R}^{d} \, \mid \, \langle x, e_{0} \rangle \geq M' \} \times \left[s,t\right], \\
			u^{\epsilon} &\leq - 1 + \delta \quad \text{in} \, \, \{x \in \mathbb{R}^{d} \, \mid \, \langle x,e_{0} \rangle \leq -M'\} \times \left[s, t\right].
		\end{align*}
	In particular,
		\begin{align*}
			\liminf \nolimits_{*} u^{\epsilon} &= 1 \quad \text{in} \, \, \{x \in \mathbb{R}^{d} \, \mid \, \langle x, e_{0} \rangle \geq M' \} \times \left(0,t\right], \\
			\limsup \nolimits^{*} u^{\epsilon} &= - 1 \quad \text{in} \, \, \{x \in \mathbb{R}^{d} \, \mid \, \langle x,e_{0} \rangle \leq -M'\} \times \left(0,t\right].
		\end{align*}
	\end{prop}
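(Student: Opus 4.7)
The plan is to sandwich $u^\epsilon$ between stationary sub- and super-solution barriers constructed from the pulsating standing wave $U_{e_0}$, then invoke the parabolic comparison principle for \eqref{E: main}.

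I begin by converting (i) and (ii) into half-space bounds on $u_0$.  Setting $C_0 := \sup\{|\mathcal{U}(x') - \langle q, x'\rangle| : x' \in \mathbb{R}^{d-1}\} < \infty$, the graph $\{u_0 = 0\}$ lies in the slab $\{|\langle x, e_0\rangle| \leq C_0\|e - \pi^{-1}(q)\|^{-1}\}$, so (ii) yields $\{|u_0| < 1 - \delta_0\} \subseteq \{|\langle x, e_0\rangle| \leq \rho\}$ with $\rho := C_0\|e - \pi^{-1}(q)\|^{-1} + R$.  Combined with the orientation \eqref{E: orientation}, this gives $u_0 \geq 1 - \delta_0$ on $\{\langle x, e_0\rangle \geq \rho\}$ and $u_0 \leq -1 + \delta_0$ on $\{\langle x, e_0\rangle \leq -\rho\}$.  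A short computation using $e \in \{0\}\times\mathbb{R}^{d-k}$ and $\pi^{-1}(q) \in \langle e\rangle^\perp$ shows that $e_0 \notin S^{k-1}\times\{0\}$ for every admissible $q$, so Proposition \ref{P: bounded_width_for_minimizers} and its exponential estimates for $U_{e_0}$ are available.

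I then introduce the stationary barriers $\Phi^{\zeta}_{\epsilon}(x) := U_{e_0}(\epsilon^{-1}(\langle x, e_0\rangle - \zeta), x/\epsilon)$; these are classical solutions of \eqref{E: main} because $u^{e_0}_\zeta$ is a smooth plane-like minimizer of $\mathcal{F}^a_1$.  Proposition \ref{P: bounded_width_for_minimizers} yields, for each $\delta > 0$, a width $M_\delta > 0$ with $\Phi^{\zeta}_{\epsilon} \geq 1 - \delta$ on $\{\langle x, e_0\rangle \geq \zeta + \epsilon M_\delta\}$ and $\Phi^{\zeta}_{\epsilon} \leq -1 + \delta$ on $\{\langle x, e_0\rangle \leq \zeta - \epsilon M_\delta\}$.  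To prove the first inequality of the proposition I would take $\zeta_-$ slightly exceeding $\rho$ so that the transition layer of $\Phi^{\zeta_-}_{\epsilon}$ sits just to the right of the transition slab for $u_0$; once $\Phi^{\zeta_-}_{\epsilon}(\cdot, 0) \leq u_0$ is verified, the comparison principle yields $u^\epsilon \geq \Phi^{\zeta_-}_{\epsilon}$ on $\mathbb{R}^d \times [0, T]$, and taking $M' > \rho + M_\delta + M_{\delta_0}$ gives $u^\epsilon \geq 1 - \delta$ on $\{\langle x, e_0\rangle \geq M'\} \times [s, t]$ for all $\epsilon < \epsilon_0$ small enough.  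A symmetric super-solution argument produces the complementary upper bound, and the \emph{in particular} half-relaxed limit assertions follow by letting $\delta \to 0^+$.

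The hard part will be securing the pointwise initial inequality $\Phi^{\zeta_-}_{\epsilon}(\cdot, 0) \leq u_0$: the pulsating standing wave is strictly bounded below by $-1$ (Proposition \ref{P: bounded_width_for_minimizers}), while $u_0$ may attain the value $-1$ on a set of positive measure inside the slab $\{|\langle x, e_0\rangle| \leq \rho\}$, so the naive comparison can fail.  A clean way to handle this is to initiate the barrier comparison at a small positive time $\tau \in (0, s)$ rather than at $t = 0$: the strong parabolic maximum principle gives $u^\epsilon(\cdot, \tau) > -1$ everywhere, and a short-time generation-of-interface estimate (obtained by comparing locally with the ODE $\dot\phi = -\epsilon^{-2}W'(\phi)$ started at $\phi(0) = 1 - \delta_0$ in regions where $u_0 \geq 1 - \delta_0$, using the sign condition \eqref{A: sign}) supplies the quantitative lower bound on $u^\epsilon(\cdot, \tau)$ needed to arrange $\Phi^{\zeta_-}_{\epsilon}(\cdot, \tau) \leq u^\epsilon(\cdot, \tau)$.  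Once the initial comparison is secured at time $\tau$, the stationary barrier argument proceeds on $[\tau, T] \supseteq [s, t]$ without further change.
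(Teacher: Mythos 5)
Your high-level idea (convert (i)--(ii) into half-space bounds, identify $e_{0} \notin S^{k-1} \times \{0\}$, compare against a translate of the pulsating wave) is sound in spirit, but the \emph{stationary} barrier $\Phi^{\zeta}_{\epsilon}(x) = U_{e_{0}}(\epsilon^{-1}(\langle x, e_{0}\rangle - \zeta), x/\epsilon)$ cannot actually be slipped under $u^{\epsilon}(\cdot,\tau)$, and your proposed fix does not close the gap. Because $\Phi^{\zeta}_{\epsilon}$ is an \emph{exact} solution, it has no slack: by the strong maximum principle it satisfies $-1 < \Phi^{\zeta}_{\epsilon} < 1$ everywhere, with both tails approaching $\pm 1$ only asymptotically. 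This creates two obstructions. On the $-1$ side, $u_{0}$ (and hence $u^{\epsilon}(\cdot,\tau)$) can be pinned at $-1$ on an unbounded region, and the strong parabolic maximum principle gives $u^{\epsilon}(\cdot,\tau) > -1$ only qualitatively---it provides no $\epsilon$-uniform lower bound on $u^{\epsilon}(\cdot,\tau)+1$ that could dominate the exponentially small tail $\Phi^{\zeta}_{\epsilon}+1 \sim Ce^{-\nu\epsilon^{-1}|\cdot|}$, whose decay rate is comparable to the $\epsilon^{-1}$ parabolic decay rate of the linearization at $-1$. On the $+1$ side the situation is worse and is not addressed by the ODE comparison at all: for $\langle x, e_{0}\rangle$ large, $\Phi^{\zeta}_{\epsilon}(x) \to 1$, whereas the ODE/generation bound only gives $u^{\epsilon}(x,\tau) \geq 1 - O(\epsilon)$ (or $1 - \delta_{0}$ if you only assume hypothesis (ii)); since there is no \emph{lower} bound on $1 - U_{e_{0}}(s,\cdot)$ decaying slower than $e^{-\nu s}$, points with $\langle x,e_{0}\rangle - \zeta_{-} \gg \epsilon \log(1/\epsilon)$ will have $\Phi^{\zeta_{-}}_{\epsilon}(x) > u^{\epsilon}(x,\tau)$, and the ordering fails.

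The paper's proof circumvents both problems precisely by abandoning the exact stationary barrier. It uses the Barles--Souganidis generation lemma to pass to an initial datum of the form $(1-\beta\epsilon)\chi_{\{\varphi\geq\beta\}} - \chi_{\{\varphi<\beta\}}$, and then constructs a \emph{strict} sub-solution
\begin{equation*}
v(x,t) = U_{e_{0}}\Big(\tfrac{\Phi(x,t)-2\beta}{\epsilon},\tfrac{x}{\epsilon}\Big) + \epsilon\Big(V_{e_{0}}\big(\cdots\big) - 2\beta\Big), \qquad \Phi(x,t) = \langle x, e_{0}\rangle - M - t,
\end{equation*}
followed by $w = \max\{v,-1\}$. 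The linear drift $-t$ in $\Phi$ introduces an extra $-\epsilon^{-1}V_{e_{0}}$ on the left-hand side, while the $-2\beta\epsilon$ shift (together with the exponential decay of $V_{e_{0}}$) pushes $v(\cdot,0)$ strictly below $-1$ in $\{\varphi<\beta\}$ and below $1-\beta\epsilon$ in $\{\varphi\geq\beta\}$, so that the $\max$ with $-1$ remains a sub-solution and sits below the post-generation state. It is exactly this built-in $O(\epsilon)$ slack---absent from your $\Phi^{\zeta}_{\epsilon}$---that makes the initial comparison tractable. If you wish to salvage a version of your argument, you would at minimum need to replace your stationary barrier by a shifted, time-retarded one of the form $\max\{U_{e_{0}}(\epsilon^{-1}(\langle x,e_{0}\rangle - \zeta - t - 2\beta), x/\epsilon) + \epsilon(V_{e_{0}} - 2\beta), -1\}$ and re-establish the sub-solution inequality, at which point you have reproduced the paper's construction.
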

	
The proof of the proposition is deferred until Section \ref{S: bounded_width}.  Next, we make our previous comments concerning $\eta$ and $\nu$ precise:

	\begin{prop} \label{P: bounded_from_linear}  Fix $T > 0$.  There is an $M_{0} > 0$ such that, for all $t \in (0,T)$,
		\begin{equation*}
			\langle q, x' \rangle - M_{0} \leq \nu(x',t) \leq \eta(x',t) \leq \langle q, x' \rangle + M_{0}.
		\end{equation*}
	\end{prop}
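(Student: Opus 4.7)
The approach is to deduce the proposition directly from Proposition \ref{P: bounded_width}, together with the trivial observation that $\Omega^{1}_{t} \cap \Omega^{2}_{t} = \emptyset$, since $\liminf_{*} u^{\epsilon}(x,t) \leq \limsup \nolimits^{*} u^{\epsilon}(x,t)$ at every point.

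The first step is to rewrite the half-spaces appearing in Proposition \ref{P: bounded_width} in terms of the graph variables. By the definition of $e_{0}$ together with the isometry $\pi$, one computes
$$
x_{e} - \langle q, x' \rangle = \|e - \pi^{-1}(q)\| \, \langle x, e_{0} \rangle, \qquad \|e - \pi^{-1}(q)\| = \sqrt{1 + \|q\|^{2}}.
$$
Setting $M_{0} = M' \sqrt{1 + \|q\|^{2}}$ therefore identifies $\{\langle x, e_{0} \rangle \geq M'\}$ with the ``upper graph region'' $\{x_{e} \geq \langle q, x' \rangle + M_{0}\}$ and analogously for the lower one. Applying Proposition \ref{P: bounded_width} to any closed subinterval of $(0,T]$ and invoking the half-relaxed limit assertions stated there, I would obtain the set inclusions
$$
\{x_{e} \geq \langle q, x' \rangle + M_{0}\} \subseteq \Omega^{1}_{t}, \qquad \{x_{e} \leq \langle q, x' \rangle - M_{0}\} \subseteq \Omega^{2}_{t}
$$
for every $t \in (0,T]$, provided $M'$ can be chosen uniformly in the left endpoint of the subinterval as it tends to zero.

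With these inclusions in hand, the bounds $\eta(x',t) \leq \langle q, x' \rangle + M_{0}$ and $\nu(x',t) \geq \langle q, x' \rangle - M_{0}$ fall out immediately from \eqref{E: definition_of_eta} and \eqref{E: definition_of_nu}. The middle inequality $\nu \leq \eta$ I would prove by contradiction: if $\eta(x',t) < \nu(x',t)$ somewhere, choose $y_{1} < y_{2}$ inside the open interval $(\eta(x',t),\nu(x',t))$. Then $(y_{1},\infty) \times \{x'\} \subseteq \Omega^{1}_{t}$ and $(-\infty, y_{2}) \times \{x'\} \subseteq \Omega^{2}_{t}$, so the non-empty product $(y_{1},y_{2}) \times \{x'\}$ lies in $\Omega^{1}_{t} \cap \Omega^{2}_{t}$, contradicting disjointness.

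The only substantive point that is not pure bookkeeping is the uniformity issue mentioned above: the constant $M'$ from Proposition \ref{P: bounded_width} a priori depends on $(s,t,\delta)$, and one must verify that it does not blow up as $s \to 0^{+}$ so that a single $M_{0}$ works for all $t \in (0,T]$. This will be handled in the proof of Proposition \ref{P: bounded_width} itself (Section \ref{S: bounded_width}), where a pulsating standing wave translated and matched to the initial graph furnishes a barrier whose width is dictated by the hypotheses (i) and (ii) of Theorem \ref{T: sharp_interface_limit_graphs} and is preserved for all positive times by the comparison principle.
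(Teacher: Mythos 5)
Your proposal is correct and takes essentially the same approach as the paper: cite Proposition \ref{P: bounded_width} to obtain the inclusions $\Omega^{1}_{t} \supseteq \{\langle x,e_{0}\rangle \geq M'\}$ and $\Omega^{2}_{t} \supseteq \{\langle x,e_{0}\rangle \leq -M'\}$, unravel the definitions of $\eta$ and $\nu$ using $x_{e} - \langle q,x'\rangle = \|e-\pi^{-1}(q)\|\,\langle x,e_{0}\rangle$, and deduce $\nu \leq \eta$ from the disjointness of $\Omega^{1}_{t}$ and $\Omega^{2}_{t}$. The uniformity concern you flag is in fact already resolved by the ``in particular'' clause of Proposition \ref{P: bounded_width}, whose $M'$ depends on the final time $t$ but not on the left endpoint $s$, so a single $M_{0}$ works on all of $(0,T]$.
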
  
	
		\begin{proof}  First, notice that the inequality $\limsup^{*} u^{\epsilon}(x,t) \geq \liminf_{*} u^{\epsilon}(x,t)$ implies that $\Omega^{1}_{t} \cap \Omega^{2}_{t} = \emptyset$ for all $t > 0$.  From this, we find that $\nu \leq \eta$ pointwise.
		
		By Proposition \ref{P: bounded_width}, there is an $M' > 0$ such that, for each $t \in (0,T]$,
			\begin{equation*}
				\Omega_{t}^{1} \supseteq \{x \in \mathbb{R}^{d} \, \mid \, \langle x,e_{0} \rangle \geq M'\}, \quad 
				\Omega_{t}^{2} \supseteq \{x \in \mathbb{R}^{d} \, \mid \, \langle x,e_{0} \rangle \leq -M' \}.
			\end{equation*}
		Recall that here $e_{0} = \frac{e - \pi^{-1}(q)}{\|e - \pi^{-1}(q)\|}$.  Unraveling the definitions, we find that
			\begin{equation*}
				\eta(x',t) \leq \langle q, x' \rangle + M'\|e - \pi^{-1}(q)\| \quad \text{if} \, \, x' \in \mathbb{R}^{d-1}.
			\end{equation*}
		The same reasoning shows that $\nu(x',t) \geq \langle q, x' \rangle - M'\|e - \pi^{-1}(q)\|$. \qed		\end{proof}

	Notice that we have not said anything about the structure of the macroscopic phases when $t = 0$.  As we shall see, this is somewhat natural and will not present any difficulties later.

\subsection{Proof of the Sharp Interface Limit}  We will prove Theorem \ref{T: sharp_interface_limit_graphs} by showing that the functions $\eta$ and $\nu$ of the previous section are respectively sub- and super-solutions of \eqref{E: graph_equation}.  To do this, we will construct mesoscopic sub- and super-solutions as in \cite{barles souganidis}.  It is possible to do this in spite of the possible irregularity of the pulsating standing waves precisely because of the graph assumption, the point being we can work with smooth sub- and super-solutions of \eqref{E: graph_equation} whose normal vectors avoid the set $S^{k - 1} \times \{0\}$.  

In what follows, for $t \geq 0$, we define upper and lower semi-continuous envelopes $\eta^{*}$ and $\nu_{*}$ by
	\begin{align*}
		\eta^{*}(x',t) &= \lim_{\delta \to 0^{+}} \sup \left\{ \eta(\tilde{x}',\tilde{t}) \, \mid \, \|\tilde{x}'-x'\| + |\tilde{t} - t| < \delta, \, \, 0 < \tilde{t} \leq T\right\}, \\
		\nu_{*}(x',t) &= \lim_{\delta \to 0^{+}} \inf \left\{ \nu(\tilde{x}',\tilde{t}) \, \mid \, \|\tilde{x}' - x'\| + |\tilde{t} - t| < \delta, \, \, 0 < \tilde{t} \leq T \right\}.
	\end{align*}	

In view of the comparison principle, all we need to prove is the following:

\begin{prop} \label{P: hard_part} Fix $T > 0$.  If $\eta$ is the function defined by \eqref{E: definition_of_eta}, then $\eta^{*}$ is a $\mathscr{P}(\|q\| + 1)$-sub-solution of \eqref{E: graph_equation} and $\eta^{*}(\cdot,0) \leq \mathcal{U}$ in $\mathbb{R}^{d-1}$.
	
	Similarly, if $\nu$ is the function defined by \eqref{E: definition_of_nu}, then $\nu_{*}$ is a $\mathscr{P}(\|q\| + 1)$-super-solution of \eqref{E: graph_equation} and $\mathcal{U} \leq \nu_{*}(\cdot,0)$ in $\mathbb{R}^{d-1}$.  \end{prop}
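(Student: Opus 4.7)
The plan is to reduce both assertions to the standard Barles--Souganidis perturbed-test-function framework, using the regularity of $e\mapsto U_e$ (Proposition \ref{P: higher_regularity}) and the correctors $P^{A}_{e}$ (Proposition \ref{P: correctors}) supplied by the laminar analysis of Section \ref{S: einstein relation laminar media}. The key observation making this work is that for $\varphi\in\mathscr{P}^{\pm}(\|q\|+1)$, the associated graph-normal
\[
    n(x',t)\;=\;(1+\|D\varphi(x',t)\|^{2})^{-\tfrac{1}{2}}\bigl(e-\pi^{*}D\varphi(x',t)\bigr)
\]
stays in a compact subset of $S^{d-1}\setminus(S^{k-1}\times\{0\})$, since $e\in(\mathbb{R}^{k}\times\{0\})^{\perp}$ gives $\langle n,e\rangle\geq(1+(\|q\|+1)^{2})^{-1/2}>0$. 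This keeps us in the regime where the analysis of Section \ref{S: einstein relation laminar media} applies uniformly.

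\textbf{Sub-solution property for $\eta^{*}$.} Fix $\varphi\in\mathscr{P}^{+}(\|q\|+1)$ such that $\eta^{*}-\varphi$ attains a strict global maximum at $(x_{0}',t_{0})\in\mathbb{R}^{d-1}\times(0,T]$, and argue by contradiction assuming $\varphi_{t}-\mathrm{tr}(\tilde{\mathcal{G}}(D\varphi)D^{2}\varphi)>0$ at $(x_{0}',t_{0})$. In a space-time neighborhood $\mathcal{O}$ of $(x_{0}',t_{0})$, I would localize the test function (slightly enlarging $\varphi$ at $\infty$ if needed so that the graph condition persists) and construct a mesoscopic ansatz
\[
    w^{\epsilon}(x,t)\;=\;U_{n(x',t)}\!\Bigl(\tfrac{x_{e}-\varphi(x',t)}{\epsilon},\tfrac{x}{\epsilon}\Bigr)+\epsilon\, P^{\,A(x',t)}_{n(x',t)}\!\Bigl(\tfrac{x_{e}-\varphi(x',t)}{\epsilon},\tfrac{x}{\epsilon}\Bigr),
\]
where $A(x',t)=-D^{2}\varphi(x',t)$ is treated as a symmetric matrix via $\pi^{*}$. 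Plugging into the operator $\partial_{t}-\mathrm{div}(a(\cdot/\epsilon)D\cdot)+\epsilon^{-2}W'(\cdot)$ and using the chain rule, the $O(\epsilon^{-2})$ terms cancel by the pulsating standing wave equation \eqref{E: pulsating standing wave}. The remaining $O(\epsilon^{-1})$ terms group into
\[
    V_{n}\bigl(\varphi_{t}-\mathrm{tr}(\tilde{\mathcal{G}}(D\varphi)D^{2}\varphi)\bigr)\;+\;\mathcal{L}_{n}P^{A}_{n}\;-\;\mathrm{(bulk\ curvature\ terms)},
\]
and by the very definition of $F_{A,e}$ in Remark \ref{R: corrector}, $\mathcal{L}_{n}P^{A}_{n}$ exactly cancels the bulk curvature terms modulo the factor $V_{n}\tilde{M}^{a}(n)^{-1}\mathrm{tr}(D^{2}\tilde{\varphi}^{a}(n)A)$. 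The surviving contribution has the sign of $\varphi_{t}-\mathrm{tr}(\tilde{\mathcal{G}}(D\varphi)D^{2}\varphi)$, so $w^{\epsilon}$ is a strict super-solution in $\mathcal{O}$ for $\epsilon$ small. The strict maximum property of $\eta^{*}-\varphi$ together with Proposition \ref{P: bounded_width} (to control behavior far from the front, where $U_{n}\to\pm1$ exponentially by Proposition \ref{P: bounded_width_for_minimizers}) lets me compare $u^{\epsilon}\leq w^{\epsilon}$ on the parabolic boundary of $\mathcal{O}$, hence inside $\mathcal{O}$, yielding $\eta\leq\varphi+o(1)$ and contradicting the strict max.

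\textbf{Super-solution property for $\nu_{*}$.} This is entirely symmetric: use $\varphi\in\mathscr{P}^{-}(\|q\|+1)$, flip the ansatz by taking $U_{n}$ shifted in the opposite direction, and use the corresponding subsolution of \eqref{E: main}.

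\textbf{Initial trace.} To prove $\eta^{*}(\cdot,0)\leq\mathcal{U}$, I would for each $y_{0}'\in\mathbb{R}^{d-1}$ and each $\delta>0$ construct a global stationary super-solution of \eqref{E: main} of the form $U_{e'}(\epsilon^{-1}(\langle x,e'\rangle-c),x/\epsilon)$ for a direction $e'$ close to $e$ and a height $c>\mathcal{U}(y_{0}')+\delta$, chosen so that this super-solution sits above $u_{0}$ everywhere at $t=0$ (this is where hypothesis (ii) of Theorem \ref{T: sharp_interface_limit_graphs}, and in particular the uniform $R$-bounded width of the transition region, is essential, as is the sign condition \eqref{A: sign}, cf.\ Lemma \ref{L: initialization}). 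Comparison then forces $u^{\epsilon}$ to stay below this wave at positive times near $(x_{0}',0)$ with $x_{0}'$ close to $y_{0}'$, which translates into $\eta^{*}(y_{0}',0)\leq\mathcal{U}(y_{0}')+\delta$. Letting $\delta\to0$ gives the conclusion; the reverse inequality $\nu_{*}(\cdot,0)\geq\mathcal{U}$ is analogous.

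The main obstacle I anticipate is verifying cleanly that the ansatz $w^{\epsilon}$ is globally defined and that the $O(\epsilon)$ and $O(1)$ error terms produced by differentiating $n(x',t)$, $A(x',t)$, and $P^{A}_{n}$ in $(x,t)$ are genuinely lower order --- here one uses the $C^{2}$ regularity of $(e,A)\mapsto P^{A}_{e}$ into $BC(\mathbb{R}\times\mathbb{T}^{k})$ from Proposition \ref{P: correctors} (requiring $W\in C^{4,\alpha}$, whence \eqref{A: additional_regularity}) and the exponential decay of $V_{e},R^{\xi}_{e},P^{\xi}_{e}$ in $s$ (Propositions \ref{P: eigenfunction_estimate} and \ref{P: correctors}) to absorb them into the strict subsolution margin. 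The graph restriction is exactly what makes this possible: without it, $n$ could degenerate into $S^{k-1}\times\{0\}$ where $U_{n}$ need not be continuous, as Theorems \ref{T: non_integrable}--\ref{T: dim_2_stuff} warn.
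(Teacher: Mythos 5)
Your proposal correctly identifies the two essential ingredients — that the graph restriction keeps normal vectors away from $S^{k-1}\times\{0\}$ so that $U_{e}$, $R^{\xi}_{e}$, $P^{A}_{e}$ are available and regular in $e$, and that the Barles--Souganidis perturbed-test-function machinery then applies — and in broad outline this matches the paper's strategy. However, there is a consistent orientation error that propagates through both steps and must be fixed before the argument is valid.

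\textbf{Sub-solution step.} Under the contradiction hypothesis $\varphi_{t}-\mathrm{tr}(\tilde{\mathcal{G}}(D\varphi)D^{2}\varphi)>0$, the mesoscopic ansatz $w^{\epsilon}=U_{n}+\epsilon P^{A}_{n}$ is a \emph{sub}-solution of \eqref{E: main}, not a super-solution: the surviving $O(\epsilon^{-1})$ term is $-V_{n}\bigl(\varphi_{t}-\mathrm{tr}(\tilde{\mathcal{G}}(D\varphi)D^{2}\varphi)\bigr)<0$ since $V_{n}=\partial_{s}U_{n}>0$ (compare the inequality in Lemma \ref{L: propagation}). Correspondingly, the comparison one needs is $u^{\epsilon}\geq w^{\epsilon}$, so that the phase $\{u^{\epsilon}\approx 1\}$ is forced to extend below the graph of $\varphi$, forcing $\eta<\varphi$ near $(x_{0}',t_{0})$ and contradicting the strict maximum. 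As written you assert $w^{\epsilon}$ is a super-solution and $u^{\epsilon}\leq w^{\epsilon}$: that pair is internally consistent, but a pointwise \emph{upper} bound on $u^{\epsilon}$ controls $\Omega^{2}_{t}$ and gives a \emph{lower} bound on $\eta$, which does not contradict $\eta^{*}(x_{0}',t_{0})=\varphi(x_{0}',t_{0})$. The final conclusion ``$\eta\leq\varphi+o(1)$'' does not follow from what precedes it.

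\textbf{Initial trace.} The same reversal appears, and here it is fatal rather than a labeling issue: to prove $\eta^{*}(\cdot,0)\leq\mathcal{U}$ one must exhibit points strictly inside the epigraph of $\mathcal{U}$ that lie in $\Omega^{1}_{t}$ for all small $t>0$, which requires pushing $u^{\epsilon}$ \emph{up} toward $+1$ with a mesoscopic sub-solution started \emph{below} $u_{0}$ (this is precisely what Proposition \ref{P: main_construction} supplies via the initial data $(1-\mu)\chi_{\{x_{e}\geq\varphi\}}-\chi_{\{x_{e}<\varphi\}}\leq u_{0}$). A planar super-solution ``sitting above $u_{0}$'' with transition at $c>\mathcal{U}(y_{0}')+\delta$ cannot even be constructed: at $x=(\mathcal{U}(y_{0}')+\delta/2,\,y_{0}')$ one has $u_{0}(x)>0$ by hypothesis (i) and \eqref{E: orientation}, while the candidate wave there is $\approx -1$, violating the ordering. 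Even in regimes where a global planar super-solution dominating $u_{0}$ exists, $u^{\epsilon}\leq(\text{super-solution})$ localizes $\Omega^{2}_{t}$, i.e.\ bounds $\nu_{*}$ from below — useful for the second half of the proposition, not the first. For the first half you need a sub-solution argument, symmetric to the one you would (correctly) use for $\nu_{*}(\cdot,0)\geq\mathcal{U}$.

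On structure: the paper factors the mesoscopic construction into Proposition \ref{P: main_construction} (via Lemmas \ref{L: initialization}--\ref{L: propagation}), then argues the sub-/super-solution properties purely at the level of the level sets $\Omega^{1}_{t},\Omega^{2}_{t}$ and a short-time sub-flow $\Phi_{\alpha}^{h,-}$ started at time $t_{0}-h$; this avoids having to compare $u^{\epsilon}$ with $w^{\epsilon}$ on a parabolic boundary near the isolated maximum, where the ordering is not strict. It also uses the signed-distance variable $d_{\Phi}$ (with $|Dd_{\Phi}|=1$, $Dd_{\Phi}$ the unit normal) and a $-2\beta$ shift in place of your raw $x_{e}-\varphi$ and unit $n$; your ansatz with $U_{n(x',t)}$ evaluated at $\epsilon^{-1}(x_{e}-\varphi)$ mismatches the non-unit vector $e-\pi^{*}D\varphi$, so the $O(\epsilon^{-2})$ terms do not cancel cleanly without first re-scaling by $\|e-\pi^{*}D\varphi\|$ (cf.\ Remark \ref{R: extension_to_space}). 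These are repairable technicalities, but the orientation errors above are not mere typos and need to be addressed for the proof to go through.
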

	
Taking the proposition for granted for now, we can proceed with the

\begin{proof}[Proof of Theorem \ref{T: sharp_interface_limit_graphs}]  Fix $T > 0$.  By definition, since $\eta(x',t) \geq \nu(x',t)$ for all $(x',t) \in \mathbb{R}^{d-1} \times (0,T]$, it follows that $\eta^{*} \geq \eta \geq \nu \geq \nu_{*}$ in $\mathbb{R}^{d-1} \times (0,T]$.  

By Proposition \ref{P: hard_part}, $\eta^{*}$ is a $\mathscr{P}(\|q\|+1)$-sub-solution of \eqref{E: graph_equation} and $\nu_{*}$, a $\mathscr{P}(\|q\|+1)$-super-solution, and $\eta^{*}(\cdot,0) \leq \mathcal{U} \leq \nu_{*}(\cdot,0)$.  Note that the latter inequality and the uniform continuity of $\mathcal{U}$ yields
	\begin{align*}
		&\limsup_{\delta \to 0^{+}} \sup \left\{ \eta^{*}(x',0) - \nu_{*}(y',0) \, \mid \, \|x' - y'\| \leq \delta \right\} \\
		&\quad\leq \lim_{\delta \to 0^{+}} \sup \left\{ \mathcal{U}(x') - \mathcal{U}(y') \, \mid \, \|x' - y'\| \leq \delta\right\} = 0.
	\end{align*}
From this and Proposition \ref{P: bounded_from_linear}, Proposition \ref{P: comparison_principle} implies that $\eta^{*} \leq \nu_{*}$ in $\mathbb{R}^{d-1} \times [0,T]$.  

We showed that $\eta^{*} \leq \nu_{*} \leq \nu \leq \eta \leq \eta^{*}$.  Therefore, $\eta = \nu$ in $\mathbb{R}^{d-1} \times (0,T]$ and, by the definition of $\eta$ and $\nu$, 
	\begin{equation*}
		u^{\epsilon} \to \left\{ \begin{array}{r l}
									1, & \text{locally uniformly in} \, \, \{(x,t) \, \mid \, x_{e} > \eta(x',t)\}, \\
									-1, & \text{locally uniformly in} \, \, \{(x,t) \, \mid \, x_{e} < \eta(x',t)\}.
						\end{array} \right.
	\end{equation*}
Since $\eta$ solves \eqref{E: graph_equation} and $T$ was arbitrary, we complete the proof by invoking Proposition \ref{P: graph_equation_fact}. \qed \end{proof}  
	
To prove Proposition \ref{P: hard_part}, of course, we need a link between the macroscopic problem and the mesoscopic one.  Here we follow the construction of \cite{barles souganidis}, making the necessary alterations so that we we can use graphs as ``test surfaces" instead of compact hypersurfaces.
	
	In what follows, we define the non-linear semi-group $T^{\epsilon}$ so that
		\begin{equation*}
			[T^{\epsilon}(t)w](x) = u(x,t),
		\end{equation*} 
	where $u$ solves \eqref{E: main} with $u(\cdot,0) = w$.  In this section, we will frequently use the fact that this semi-group is monotone: that is, if $u_{1},u_{2} \in BUC(\mathbb{R}^{d})$ satisfy $u_{1} \leq u_{2}$ in $\mathbb{R}^{d}$, then the comparison principle associated with \eqref{E: main} (cf.\ \cite[Proposition 2.1]{aronson weinberger}) implies
		\begin{equation*}
			T^{\epsilon}(t)u_{1} \leq T^{\epsilon}(t)u_{2} \quad \text{in} \, \, \mathbb{R}^{d} \, \, \text{for each} \, \, t > 0.
		\end{equation*} 
	
	Here is the main result we will need:

		\begin{prop} \label{P: main_construction}  For each $\alpha > 0$, $\delta \in (0,1)$, and $\varphi \in \mathscr{P}_{0}^{+}$ (resp.\ $\varphi \in \mathscr{P}_{0}^{-}$), there is an $h_{0} > 0$ depending on $\varphi$ only through bounds on $\|D\varphi\|_{L^{\infty}(\mathbb{R}^{d-1})}$, $\|D^{2}\varphi\|_{L^{\infty}(\mathbb{R}^{d-1})}$, $\|D^{3}\varphi\|_{L^{\infty}(\mathbb{R}^{d-1})}$, and $\|D^{4}\varphi\|_{L^{\infty}(\mathbb{R}^{d-1})}$ such that, for each $h \in (0,h_{0}]$, if $x = (x_{e},x')$ satisfies
		\begin{align*}
			x_{e} &> \varphi(x') + h(\text{tr}(\mathcal{G}(D\varphi(x')) D^{2}\varphi(x')) + \alpha) \\
			(resp. \, \, x_{e} &< \varphi(x') + h(\text{tr}(\mathcal{G}(D\varphi(x'))D^{2} \varphi(x')) - \alpha)),
		\end{align*}
	then, employing the half-relaxed limit notation \eqref{E: upper half relaxed} and \eqref{E: lower half relaxed}, we have
		\begin{align*}
			&\liminf \nolimits_{*} T^{\epsilon}(h)[(1 - \delta) \chi_{\{x_{e} \geq \varphi(x')\}} - \chi_{\{x_{e} < \varphi(x')\}}](x) = 1 \\
			(resp.\ &\limsup \nolimits^{*} T^{\epsilon}(h)[\chi_{\{x_{e} > \varphi(x')\}} + (-1 + \delta) \chi_{\{x_{e} \leq \varphi(x')\}}](x) = -1).
		\end{align*}
	\end{prop}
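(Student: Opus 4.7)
The plan is to follow the Barles--Souganidis method of constructing mesoscopic sub- and super-solutions of \eqref{E: main} out of the pulsating standing wave $U_{e}$ plus the corrector $P_{e}^{A}$ from Proposition \ref{P: correctors}, but parametrized point-by-point by the local unit normal to the graph of $\varphi$. Since the two cases are symmetric, I describe only the sub-solution case $\varphi \in \mathscr{P}_{0}^{+}$. For $x' \in \mathbb{R}^{d-1}$, let $e(x') \in S^{d-1}$ be the upward unit normal to the graph of $\varphi$ at $(\varphi(x'),x')$ and let $A(x') \in \mathcal{S}_{d}$ encode $D^{2}\varphi(x')$ through the identification used in Proposition \ref{P: graph_equation_fact}, so that $\tilde{M}^{a}(e(x'))^{-1}\operatorname{tr}(D^{2}\tilde{\varphi}^{a}(e(x'))A(x')) = \operatorname{tr}(\tilde{\mathcal{G}}(D\varphi(x'))D^{2}\varphi(x'))$. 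Writing $c(x')$ for this quantity and $\tau^{\epsilon}(x,t) = \epsilon^{-1}(x_{e} - \varphi(x') - (c(x') + \alpha) t) - K$ with $K>0$ a large shift, I propose the ansatz
\begin{equation*}
w^{\epsilon}(x,t) = U_{e(x')}\!\left(\tau^{\epsilon}(x,t),\tfrac{x}{\epsilon}\right) + \epsilon\, P^{A(x')}_{e(x')}\!\left(\tau^{\epsilon}(x,t),\tfrac{x}{\epsilon}\right).
\end{equation*}
By the orientation hypothesis on $e$ and the shape of $\mathscr{P}_{0}^{+}(\bar M)$, the map $x' \mapsto e(x')$ stays in a fixed compact subset of $S^{d-1}\setminus(S^{k-1}\times\{0\})$, so the smooth dependence $e \mapsto U_{e}$ and $(e,A) \mapsto P_{e}^{A}$ provided by Propositions \ref{P: higher_regularity} and \ref{P: correctors} applies uniformly.

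The verification reduces to Taylor expansion of $\mathcal{N}^{\epsilon}[w^{\epsilon}] := w^{\epsilon}_{t} - \operatorname{div}(a(\cdot/\epsilon)Dw^{\epsilon}) + \epsilon^{-2}W'(w^{\epsilon})$ in powers of $\epsilon$. The $O(\epsilon^{-2})$ contribution vanishes by the pulsating standing wave equation \eqref{E: pulsating standing wave}. Using the chain rule and matching the effective velocity $c(x')$ with the choice of $P_{e(x')}^{A(x')}$, the $O(\epsilon^{-1})$ contribution collapses, at each frozen $x'$, to $\mathcal{L}_{e(x')} P^{A(x')}_{e(x')} - F_{A(x'),e(x')}$ evaluated at $(\tau^{\epsilon}(x,t), x/\epsilon)$, which is zero by Proposition \ref{P: correctors} and Remark \ref{R: corrector}; this is exactly where the solvability condition $F_{A,e} \in \langle V_{e} \rangle^{\perp}$ from Proposition \ref{P: linear_eqn} enters. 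The $O(1)$ remainder is a bounded combination of derivatives of $U_{e}$, $P_{e}^{A}$, and $\varphi$ up to order four, multiplied by the dominant negative term $-\alpha \tilde{M}^{a}(e(x'))\partial_{s}U_{e(x')}$. Inside any compact $s$-strip, the latter dominates; outside, the exponential decay of $\partial_{s}U_{e}$ and $P_{e}^{A}$ (Propositions \ref{P: eigenfunction_estimate} and \ref{P: correctors}) together with \eqref{A: W_assumption_2} and strict convexity of $W$ near $\pm 1$ force $\mathcal{N}^{\epsilon}[w^{\epsilon}] \leq 0$ directly. Hence $w^{\epsilon}$ is a smooth sub-solution of \eqref{E: main} on $\mathbb{R}^{d} \times [0,h_{0}]$ for all sufficiently small $\epsilon$.

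To conclude, I compare initial data: choosing $K$ sufficiently large (depending on $\delta$) and using the exponential convergence $U_{e(x')}(s,\cdot) \to \pm 1$ from Proposition \ref{P: bounded_width_for_minimizers}, one obtains
\begin{equation*}
w^{\epsilon}(x,0) \leq (1-\delta)\chi_{\{x_{e}\geq\varphi(x')\}} - \chi_{\{x_{e}<\varphi(x')\}}
\end{equation*}
for every small $\epsilon$; the standard comparison principle for \eqref{E: main} then propagates this inequality up to $t=h$. At any point $x$ satisfying $x_{e} > \varphi(x') + h(\operatorname{tr}(\tilde{\mathcal{G}}(D\varphi(x'))D^{2}\varphi(x')) + \alpha)$, the argument $\tau^{\epsilon}(x,h)$ tends to $+\infty$ as $\epsilon \to 0$, so $w^{\epsilon}(x,h) \to 1$, whence $\liminf_{*} T^{\epsilon}(h)[\,\cdot\,](x) = 1$ as desired. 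The super-solution case is obtained by replacing $U_{e}$ with $-U_{e}(-s,\cdot)$-type reflections, $\alpha$ with $-\alpha$, and reversing the inequalities.

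The main obstacle I anticipate is enforcing uniformity in $x'$ of every estimate: $\varphi$ is only $C^{\infty}$ locally and asymptotically linear, so it is essential that the regularity of $x' \mapsto (e(x'),A(x'),U_{e(x')},P_{e(x')}^{A(x')})$ not deteriorate at infinity. This is exactly what the class $\mathscr{P}_{0}^{\pm}(\bar M)$ is engineered to guarantee --- $D\varphi$ is globally bounded by $\bar M$, $D^{k}\varphi$ is compactly supported for $k\geq 2$, so $e(x')$ is constant outside a ball and $A(x') \equiv 0$ there, reducing the construction in the tails to a straight planar standing wave whose sub-solution property follows immediately from \eqref{E: pulsating standing wave} and \eqref{A: W_assumption_2}. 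Matching the compactly-supported "bulk" piece to this constant tail, using the $C^{2}$ dependence of $P_{e}^{A}$ on $(e,A)$ given by Proposition \ref{P: correctors}, yields the uniform $h_{0}$ depending only on the listed derivatives of $\varphi$.
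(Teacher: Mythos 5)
Your proposal follows the same broad strategy as the paper --- build a mesoscopic sub-solution from $U_{e}$ plus the corrector $P_{e}^{A}$ and apply comparison --- but the specific ansatz has two concrete gaps, and the paper's choices are designed precisely to close them. First, the advertised $O(\epsilon^{-2})$ cancellation fails. In your ansatz $D_{x}\tau^{\epsilon} = \epsilon^{-1}(e - \pi^{-1}D\varphi(x')) + O(1) = \epsilon^{-1}\sqrt{1+|D\varphi(x')|^{2}}\,e(x') + O(1)$, so the leading term in $\epsilon^{2}\,\text{div}(a(\cdot/\epsilon)Dw^{\epsilon})$ is $\mathcal{D}_{v(x')}^{*}(a\,\mathcal{D}_{v(x')}U_{e(x')})$ with $v(x') = \sqrt{1+|D\varphi(x')|^{2}}\,e(x')$, a \emph{non-unit} vector. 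Since $U_{e(x')}$ solves the pulsating wave equation for $\mathcal{D}_{e(x')}$, not for $\mathcal{D}_{v(x')}$ (compare Remark \ref{R: extension_to_space}), the $\epsilon^{-2}$ terms do not cancel unless $D\varphi(x') = 0$. This is why Lemma \ref{L: propagation} writes the ansatz with the signed distance $d(x,t)$ to the moving graph and the unit normal $Dd(x,t)$, with the $C^{4}$-regularity of $d$ supplied by Proposition \ref{P: surface_fact}: the vertical displacement $x_{e} - \varphi(x')$ differs from $d$ by the factor $\sqrt{1+|D\varphi|^{2}}$, and that factor does not drop out.

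Second, the claimed inequality $w^{\epsilon}(\cdot,0)\leq (1-\delta)\chi_{\{x_{e}\geq\varphi\}} - \chi_{\{x_{e}<\varphi\}}$ is false: for any fixed $x$ with $x_{e} > \varphi(x')$, $\tau^{\epsilon}(x,0) = \epsilon^{-1}(x_{e}-\varphi(x')) - K \to +\infty$ as $\epsilon\to 0^{+}$, so $w^{\epsilon}(x,0)\to 1 > 1-\delta$, and no choice of $K$ fixes this since it is outrun by $\epsilon^{-1}$; nor can you truncate by $1-\delta$, since the minimum of sub-solutions is generally not a sub-solution. This is exactly what the initialization step (Lemma \ref{L: initialization}) is for: after time $t_{\epsilon} = \tau\epsilon^{2}|\log\epsilon|$ the plateau value is raised from $1-\delta$ to $1-\beta\epsilon$, and the propagation ansatz carries an explicit $-2\beta\epsilon$ offset so that its far-field limit $1-2\beta\epsilon$ sits strictly below $1-\beta\epsilon$. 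Skipping the initialization makes the comparison you invoke unavailable. A smaller point: outside the cut-off $\rho$, functions in $\mathscr{P}_{0}^{\pm}(\bar M)$ agree with the \emph{cone} $\pm M\|x'-x_{0}'\| + C$, not a plane, so $e(x')$ rotates at infinity and $D^{2}\varphi$ decays only like $\|x'-x_{0}'\|^{-1}$; the far-field reduction to a single planar standing wave does not hold, although the normals do stay in a fixed compact subset of $S^{d-1}\setminus(S^{k-1}\times\{0\})$, which is the property that is actually required for the uniform constants.
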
  

As in \cite{barles souganidis}, the proposition is proved in two steps.  First, there is the initialization step.

	\begin{lemma} \label{L: initialization} If $\varphi \in \mathscr{P}_{0}^{+} \cup \mathscr{P}_{0}^{-}$, then, for any $\beta > 0$ sufficiently small and $\delta \in (0,1)$, there are constants $\tau(\beta), \epsilon(\beta) > 0$ such that if $t_{\epsilon} = \tau(\beta) \epsilon^{2} |\log(\epsilon)|$ and $d_{\varphi}$ is the signed distance to the graph $\{x_{e} = \varphi(x')\}$, positive in $\{x_{e} > \varphi(x')\}$, then, for each $\epsilon \in (0,\epsilon(\beta))$, 
		\begin{align*}
			T^{\epsilon}(t_{\epsilon})[(1- \delta) \chi_{\{x_{e} \geq \varphi(x')\}} - \chi_{\{x_{e} < \varphi(x')\}}] \geq (1 - \beta \epsilon) \chi_{\{d_{\varphi}(x)\geq \beta\}} - \chi_{\{d_{\varphi}(x) < \beta\}}, \\
			T^{\epsilon}(t_{\epsilon})[\chi_{\{x_{e} > \varphi(x')\}} + (-1 + \delta) \chi_{\{x_{e} \leq \varphi(x')\}}] \leq \chi_{\{d_{\varphi} > -\beta\}} + (-1 + \beta \epsilon) \chi_{\{d_{\varphi} \leq - \beta\}}.
		\end{align*} 
	\end{lemma}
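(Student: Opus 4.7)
The super-solution statement follows from the sub-solution one by applying the latter to $-u^{\epsilon}$, which solves \eqref{E: main} with potential $W(-\cdot)$; this potential satisfies the same hypotheses, so I focus on constructing a sub-solution. The trivial bound $u^{\epsilon} \geq -1$ (by comparison with the stationary solution $-1$, using $W'(-1)=0$) handles the region $\{d_{\varphi}(x) < \beta\}$, so the task is to produce a PDE sub-solution of \eqref{E: main} which at time $t_{\epsilon}$ dominates $1 - \beta\epsilon$ on $\{d_{\varphi} \geq \beta\}$.

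The driver of the argument is an ODE estimate. Assumption \eqref{A: sign} together with $W''(1) \geq \alpha$ from \eqref{A: W_assumption_2} yield constants $\gamma, \delta_{0} > 0$ such that $W'(u) \leq -\gamma(1-u)$ for $u \in [1-\delta_{0},1]$. Consequently any spatially homogeneous function $\psi(t)$ with $\dot \psi = -\epsilon^{-2} W'(\psi)$ and $\psi(0) \in [1-\delta_{0}, 1-\delta]$ satisfies $1 - \psi(t) \leq (1-\delta) e^{-\gamma t / \epsilon^{2}}$, so that $1 - \psi(t_{\epsilon}) \leq (1-\delta)\epsilon^{\gamma \tau}$, which is $\leq \beta\epsilon$ once $\tau$ is taken with $\gamma \tau \geq 2$ (and $\epsilon$ is small). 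This $\psi$ is a stationary profile in $x$, so by itself it is an exact solution of \eqref{E: main}; it is not however dominated by the initial datum of $u^{\epsilon}$ on the negative side $\{x_e < \varphi(x')\}$.

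The candidate sub-solution is therefore obtained by interpolating between $\psi$ on the positive side, away from the interface, and $-1$ on the negative side, through a smooth monotone transition in the mesoscopic variable $d_{\varphi}(x)/\epsilon$. Concretely, I seek $v^{\epsilon}(x,t) = Q\bigl(d_{\varphi}(x)/\epsilon, t\bigr)$ with $Q(\rho,0) \leq (1-\delta)\chi_{\{\rho > 0\}} - \chi_{\{\rho \leq 0\}}$, $Q$ non-decreasing in $\rho$, and $Q(\rho, t_{\epsilon}) \geq 1 - \beta\epsilon$ for $\rho \geq \beta/\epsilon$. To make $v^{\epsilon}$ a sub-solution in the tubular neighborhood of $\{x_{e} = \varphi(x')\}$, substitute into \eqref{E: main} and obtain
\begin{equation*}
	Q_{t} - \epsilon^{-2}\langle a(x/\epsilon)Dd_{\varphi}, Dd_{\varphi}\rangle Q_{\rho\rho} - \epsilon^{-1} B_{\epsilon}(x)\, Q_{\rho} + \epsilon^{-2} W'(Q),
\end{equation*}
where $B_{\epsilon}$ is bounded by a constant depending only on $\|a\|_{C^{1}}$ and the derivatives of $d_{\varphi}$. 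The tubular neighborhood theorem of Appendix C applies uniformly here because every $\varphi \in \mathscr{P}_{0}^{\pm}$ equals a cone off a compact set and so has uniformly bounded derivatives of all orders; thus there is a uniform inradius $R_{0} > 0$ on which $d_{\varphi}$ is $C^{4}$ with bounds depending only on $\|D^{j}\varphi\|_{\infty}$ for $j \leq 4$.

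The main obstacle is choosing $Q$ so that the displayed expression is $\leq 0$. Standard practice in this setting (following the mesoscopic profile construction of Barles--Souganidis) is to take $Q$ of the form $q(\rho) + r_{\epsilon}(t)$, where $q$ is a smoothing of the step function satisfying $q'' - \beta_{0} W'(q) \leq 0$ in the mesoscopic variable, for a suitable choice of $\beta_{0}$ reflecting the lower bound of $\langle a D d_{\varphi}, D d_{\varphi} \rangle$; the correction $r_{\epsilon}(t)$ is then driven upward by the residual reaction, and the scale $t_{\epsilon} = \tau\epsilon^{2}|\log\epsilon|$ is exactly the amount of time needed for this correction to raise $q(\rho)$ above $1 - \beta\epsilon$ for $\rho \geq \beta/\epsilon$, while the macroscopic interface moves by $O(\epsilon|\log\epsilon|)$, which is much less than $\beta$. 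Outside the tubular neighborhood one simply clips $v^{\epsilon}$ to $\pm 1$, and the usual truncation argument produces a genuine viscosity sub-solution to compare against $u^{\epsilon}$, finishing the proof.
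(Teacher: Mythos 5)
Your overall plan—reduce to the sub-solution case via $u \mapsto -u$, invoke the uniform tubular neighborhood theorem, and drive the transition via the ODE $\dot\psi = -\epsilon^{-2}W'(\psi)$ on the time scale $\tau\epsilon^{2}|\log\epsilon|$—matches the paper's sketch (which, like yours, defers the detailed mesoscopic construction to Barles--Souganidis). However, there is a concrete error in the residual you write down. Substituting $v^{\epsilon}(x,t) = Q(d_{\varphi}(x)/\epsilon, t)$ into \eqref{E: main}, the chain rule applied to $a(x/\epsilon)$ produces
\begin{equation*}
\text{div}\!\left(a\!\left(\tfrac{x}{\epsilon}\right)Dv^{\epsilon}\right)
= \epsilon^{-2}\langle a Dd_{\varphi}, Dd_{\varphi}\rangle Q_{\rho\rho}
+ \epsilon^{-2}\bigl((\text{div}\, a)\!\left(\tfrac{x}{\epsilon}\right)\cdot Dd_{\varphi}\bigr) Q_{\rho}
+ \epsilon^{-1}\bigl(a:D^{2}d_{\varphi}\bigr)Q_{\rho},
\end{equation*}
so the drift is $\epsilon^{-2}(\text{div}\,a)\cdot Dd_{\varphi} + \epsilon^{-1}a:D^{2}d_{\varphi}$, not $\epsilon^{-1}B_{\epsilon}$ with $B_{\epsilon}$ bounded. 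If you factor out $\epsilon^{-1}$, your $B_{\epsilon}$ contains the term $\epsilon^{-1}(\text{div}\,a)(x/\epsilon)\cdot Dd_{\varphi}$, which is \emph{not} uniformly bounded in $\epsilon$; in the laminar setting this term does not vanish (it would only vanish if $Dd_{\varphi}$ were orthogonal to $\text{div}\,a$ everywhere, which is not the case in general). Absorbing this oscillation is precisely what the pulsating wave $U_{e}$ does in the propagation step, but a profile $Q(\rho,t)$ that does not depend on the fast variable $x/\epsilon$ cannot absorb it.

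The fix is that the initialization step should \emph{not} use a mesoscopic profile. The paper's $\psi(x) = \rho(d_{\varphi}(x))$ transitions over the macroscopic distance scale $\beta$, not over the scale $\epsilon$; the sub-solution is then of the form $\chi(t/\epsilon^{2}, \psi(x) - g(t/\epsilon^{2}))$ where $\chi$ is the ODE flow of $\dot\chi = -W'(\chi)$ and $g$ is a small corrective drift. Because $\psi$ is fixed at the macroscopic scale, the residual from the oscillating $a$ enters only through $\chi_{\xi}\,\epsilon^{-1}(\text{div}\,a)\cdot D\psi$, which is dominated by the $\epsilon^{-2}$-scale drift of $g$ at the cost of a displacement $O(\epsilon|\log\epsilon|) \ll \beta$ by time $t_{\epsilon}$. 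Your ODE contraction estimate and the reduction to the sub-solution case are both fine, but you should replace the ansatz $Q(d_{\varphi}/\epsilon,t)$ by the macroscopic one $\chi(t/\epsilon^{2},\rho(d_{\varphi}(x))-g)$ so the residual actually closes.
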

	
	Next, the propagation step:
	
	\begin{lemma} \label{L: propagation}  Assume that $\varphi \in \mathscr{P}_{0}^{+} \cup \mathscr{P}_{0}^{-}$.  For any $\alpha > 0$, there is an $h_{0} > 0$ depending on $\varphi$  only through $\max\{\|D^{i}\varphi\|_{L^{\infty}(\mathbb{R}^{d-1})} \, \mid \, 1 \leq i \leq 4\}$ such that if $0 < \beta \leq \bar{\beta}(\alpha,\varphi)$, $0 < \epsilon \leq \bar{\epsilon}(\alpha,\beta,\varphi)$, and $d_{\varphi}$ is the signed distance function to the set $\{x_{e} = \varphi(x')\}$, positive in $\{x_{e} > \varphi(x')\}$, then there is a sub-solution (resp.\ super-solution) $w^{\epsilon}$ in $\mathbb{R}^{d} \times (0,h_{0}]$ such that
		\begin{align*}
			&w^{\epsilon}(\cdot,0) \leq (1 - \beta \epsilon) \chi_{\{d_{\varphi}(x) \geq \beta\}} - \chi_{\{d_{\varphi}(x) < \beta\}} \quad \text{in} \, \, \mathbb{R}^{d} \\
		(\text{resp.} \, \, &w^{\epsilon}(\cdot,0) \geq \chi_{\{d(x) > - \beta\}} + (-1 + \beta \epsilon) \chi_{\{d_{\varphi}(x) \leq - \beta\}} \quad \text{in} \, \, \mathbb{R}^{d}).
		\end{align*}
	Furthermore, if $\tilde{\Phi}_{\alpha}^{-} : \mathbb{R}^{d-1} \times [0,h_{0}] \to \mathbb{R}$ is given by
		\begin{equation*}
			\tilde{\Phi}_{\alpha}^{-}(x',t) = \varphi(x') + t(\tilde{\mathcal{G}}(D\varphi(x'),D^{2}\varphi(x')) + \alpha);
		\end{equation*} 
	$d_{\Phi}(\cdot,t)$ is the signed distance to the graph $\{x_{e} = \tilde{\Phi}_{\alpha}^{-}(x',t)\}$, positive in the set $\{x_{e} > \tilde{\Phi}_{\alpha}^{-}(x',t)\}$; and $(x,t) \in \mathbb{R}^{d} \times (0,h_{0}]$ is such that $d_{\Phi}(x,t) > 2 \beta$, then 
		\begin{equation*}
			\liminf \nolimits_{*} w^{\epsilon}(x,t) = 1
		\end{equation*}
	(resp.\ if $\tilde{\Phi}_{\alpha}^{+}(x',t) = \varphi(x') + t (\tilde{\mathcal{G}}(D\varphi(x'),D^{2}\varphi(x')) - \alpha)$; $d_{\Phi}(\cdot,t)$ is the signed distance to the graph $\{x_{e} = \tilde{\Phi}_{\alpha}^{+}(x',t)\}$, positive in the set $\{x_{e} > \tilde{\Phi}_{\alpha}^{+}(x',t)\}$; and $(x,t) \in \mathbb{R}^{d} \times (0,h_{0}]$ is such that $d_{\Phi}(x,t) < - 2 \beta$, then
		\begin{equation*}
			\limsup \nolimits^{*} w^{\epsilon}(x,t) = - 1).
		\end{equation*}
	\end{lemma}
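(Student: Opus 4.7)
The approach is a two-scale ansatz patterned on \cite{barles souganidis}, using the pulsating standing wave $U_e$ and the first-order corrector $P_e^A$ from Proposition \ref{P: correctors}. I describe the sub-solution case; the super-solution case is symmetric. First, I would apply the tubular neighborhood theorem of Appendix C to the moving graph $\Gamma_t = \{x_e = \tilde\Phi_\alpha^-(\cdot,t)\}$ to obtain a time-uniform neighborhood $\mathcal{N}$ on which the signed distance $d_\Phi$ (positive above $\Gamma_t$) and the normal $n(x,t) = D d_\Phi(x,t)$ are $C^4$-smooth with bounds depending only on $\max_{1 \leq i \leq 4}\|D^i\varphi\|_{L^\infty}$. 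Because $e$ is orthogonal to $S^{k-1}\times\{0\}$ and $\|D\varphi\|_{L^\infty}<\infty$, the normals $n(x,t)$ lie in a compact subset of $S^{d-1}\setminus(S^{k-1}\times\{0\})$, so the waves $U_e$, mobility $\tilde M^a(e)$, correctors $P_e^A$, and their derivatives from Sections \ref{S: einstein relation laminar media} and \ref{S: higher_regularity} are smooth along $n(x,t)$ with uniform bounds. The coefficient $\tilde{\mathcal{G}}_\alpha$ can be taken as $\tilde{\mathcal{G}}$ itself, which is $C^2$ on the compact range of $D\varphi$ by Proposition \ref{P: higher_regularity}.

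With $A(x,t) = D^2 d_\Phi(x,t)$, the mesoscopic ansatz is
\begin{equation*}
w^\epsilon(x,t) = U_{n(x,t)}\!\left(\tfrac{d_\Phi(x,t)}{\epsilon},\, \tfrac{x}{\epsilon}\right) + \epsilon \, P_{n(x,t)}^{A(x,t)}\!\left(\tfrac{d_\Phi(x,t)}{\epsilon},\, \tfrac{x}{\epsilon}\right) - \beta\epsilon
\end{equation*}
inside $\mathcal{N}$, extended globally by taking the max with $-1$; the exponential decay of $U_n - \text{sgn}(z)$ and of $P_n^A$ as $|z|\to\infty$, uniform on the compact set of admissible normals (Propositions \ref{P: bounded_width_for_minimizers}, \ref{P: eigenfunction_estimate}, \ref{P: correctors}), makes this gluing compatible with the viscosity sub-solution property.

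Next, I would verify the sub-solution inequality inside $\mathcal{N}$ by the standard chain-rule expansion in powers of $\epsilon$. Writing $z = d_\Phi/\epsilon$, $y = x/\epsilon$, and using $Dd_\Phi = n$, one obtains $\partial_t w^\epsilon - \text{div}(a(y) D w^\epsilon) + \epsilon^{-2} W'(w^\epsilon) = \epsilon^{-2} E_{-2} + \epsilon^{-1} E_{-1} + E_0$. The leading term $E_{-2} = \mathcal{D}_n^*(a(y)\mathcal{D}_n U_n) + W'(U_n)$ vanishes by \eqref{E: pulsating standing wave}. After careful bookkeeping of the chain-rule derivatives (using Proposition \ref{P: differentiability_of_waves} and Remarks \ref{R: regularity in derivative}, \ref{R: corrector}), $E_{-1}$ reduces to a strictly negative expression of the form $-\alpha V_n$ modulo the $-\beta W''(U_n)$ contribution of the shift: the corrector equation $\mathcal{L}_n P_n^A = F_{A,n}$ annihilates the dominant divergence-curvature terms, while the Einstein relation (Corollary \ref{C: einstein_relation}) together with $\partial_t d_\Phi = -\tilde{\mathcal{G}}_\alpha(D\varphi,D^2\varphi) - \alpha$ on $\Gamma_t$ cancels the remaining curvature/velocity terms. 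Strict negativity of $E_{-1}$ throughout $\mathcal{N}$ is achieved by calibrating $\beta$ small against $\alpha$ (since $V_n > 0$ pointwise and is bounded below on compacts by the strong maximum principle applied to the linearization), and the order-$\epsilon^0$ error $E_0$ is uniformly bounded on $\mathcal{N}$ by the regularity and exponential decay estimates of Propositions \ref{P: correctors} and \ref{P: eigenfunction_estimate}. Hence for all sufficiently small $\epsilon$, $w^\epsilon$ is a classical sub-solution of the PDE inside $\mathcal{N}$. The initial bound follows from the exponential estimate $|U_n(d_\varphi/\epsilon,\cdot) - 1| \leq C e^{-c\beta/\epsilon}$ on $\{d_\varphi \geq \beta\}$ (Proposition \ref{P: bounded_width_for_minimizers}), which makes the $-\beta\epsilon$ shift dominant; the limit claim $\liminf_* w^\epsilon(x,t) = 1$ on $\{d_\Phi(x,t) > 2\beta\}$ follows from the same exponential decay together with the uniform decay of $P_n^A$.

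The main obstacle will be the \emph{uniform} control of the order-$\epsilon^0$ error $E_0$ over the unbounded neighborhood $\mathcal{N}$: this requires combining the $C^2$-Frechet differentiability of $(e,A)\mapsto P_e^A$ from Proposition \ref{P: correctors} with the uniform exponential decay in $z$ of $V_e$, $R^\xi_e$, and $P_e^A$ across the compact set of admissible normals, together with the uniform $C^4$ bounds on $d_\Phi$ furnished by Appendix C and by $\max_{1\leq i\leq 4}\|D^i\varphi\|_{L^\infty}$. A secondary but conceptually important point is the sign of $E_{-1}$ in the transition zone $\{|z|\lesssim 1\}$ where $W''(U_n)$ can be negative; this dictates the calibration of $\beta$ against $\alpha$ and $\varphi$ reflected in the $\bar\beta(\alpha,\varphi)$ threshold of the statement. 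Finally, the viscosity sub-solution property across $\partial\mathcal{N}$ is handled by the standard fact that the maximum of two viscosity sub-solutions of a degenerate parabolic PDE is again a viscosity sub-solution.
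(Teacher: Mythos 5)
Your ansatz omits the crucial $2\beta$-offset in the $s$-argument, and this breaks the initial-time bound. The paper's sub-solution uses $U_{Dd(x,t)}\bigl((d(x,t)-2\beta)/\epsilon, x/\epsilon\bigr) + \epsilon\bigl(P^{D^2 d(x,t)}_{Dd(x,t)}((d(x,t)-2\beta)/\epsilon, x/\epsilon) - 2\beta\bigr)$, whereas you write $U_{n(x,t)}(d_\Phi(x,t)/\epsilon, x/\epsilon) + \epsilon P_{n}^{A}(d_\Phi/\epsilon, x/\epsilon) - \beta\epsilon$. Consider $t = 0$ and a point with $0 < d_\varphi(x) < \beta$: the target upper bound is $-\chi_{\{d_\varphi < \beta\}}(x) = -1$, but your argument $d_\varphi(x)/\epsilon \to +\infty$ as $\epsilon \to 0^+$, so $U_{n} \to 1$ exponentially fast and your $w^\epsilon(x,0) \approx 1 - \beta\epsilon$, which is far above $-1$. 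Taking the max with $-1$ only raises values and cannot help. The paper's offset places the transition layer at $\{d_\varphi \approx 2\beta\}$, so that on $\{d_\varphi < \beta\}$ the rescaled variable $(d_\varphi - 2\beta)/\epsilon < -\beta/\epsilon$ is very negative, $U$ is exponentially close to $-1$, and the additional $-2\beta\epsilon$ pushes the profile below $-1$; the max-with-$(-1)$ gluing then delivers exactly the required inequality. Without this offset (and the matching constant in the corrector term) the initial comparison simply fails on a strip of width $\beta$.

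A second issue: you cannot take $\tilde{\mathcal{G}}_\alpha = \tilde{\mathcal{G}}$. Under \eqref{A: additional_regularity}, $W \in C^{4,\alpha}$ gives $\tilde{\varphi}^a \in C^4$ (Proposition \ref{P: higher_regularity}), so $D^2\tilde{\varphi}^a$, and hence $\tilde{\mathcal{G}}$, is only $C^2$. The function $\tilde{\Phi}^-_\alpha(x',t) = \varphi(x') + t(\text{tr}(\tilde{\mathcal{G}}_\alpha(D\varphi)D^2\varphi) + \alpha)$ must be $C^5$ with bounded derivatives to feed into the tubular neighborhood theorem of Appendix C (Proposition \ref{P: surface_fact} explicitly assumes $\Phi$ is $C^5$), and this is exactly why the paper introduces a smooth mollification $\tilde{\mathcal{G}}_\alpha$ satisfying $|\text{tr}(\tilde{\mathcal{G}}_\alpha(p)X) - \text{tr}(\tilde{\mathcal{G}}(p)X)| \leq \alpha/4$ on compacts. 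The $\alpha/4$ tolerance is then absorbed into the $\alpha/2$-strict sub-flow property of $\{x_e > \tilde{\Phi}^-_\alpha\}$. Your suggestion would leave $\tilde{\Phi}^-_\alpha$ only $C^2$ in $x'$, which is insufficient for the $C^4$ regularity of $d_\Phi$ that the PDE expansion requires. The rest of your outline --- the chain-rule expansion, the role of the corrector equation from Remark \ref{R: corrector}, the calibration of $\beta$ against $\alpha$ in the transition zone where $W''(U_n) < 0$, and the uniform exponential decay from Propositions \ref{P: bounded_width_for_minimizers}, \ref{P: eigenfunction_estimate}, \ref{P: correctors} --- matches the paper's strategy, but these two errors in the construction itself need to be fixed first.
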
  

The proofs of Lemmas \ref{L: initialization} and \ref{L: propagation} are presented in Sections \ref{S: initialization} and \ref{S: propagation}, respectively.  
			
Deferring the proof of Proposition \ref{P: main_construction} until the end of this section, we now use it to show that $\eta^{*}$ and $\nu_{*}$ have the desired properties:

\begin{proof}[Proof of Proposition \ref{P: hard_part}]  

\textbf{Step 1: Sub- and super-solution properties}

We will show that $\eta^{*}$ is a $\mathscr{P}(\|q\| + 1)$-sub-solution; the proof that $\nu_{*}$ is a $\mathscr{P}(\|q\| + 1)$-super-solution is similar.  Suppose that $\varphi \in \mathscr{P}^{+}(\|q\| + 1)$ and $\eta^{*} - \varphi$ has a strict global maximum in $\mathbb{R}^{d-1} \times [0,T]$ at $(x_{0}',t_{0})$ for some $t_{0} > 0$.  Without loss of generality (i.e.\ subtracting a constant from $\varphi$ if necessary), we can assume that $\eta^{*}(x_{0}',t_{0}) = \varphi(x_{0}',t_{0})$.  
	
	We argue by contradiction.  That is, let us assume that there is an $\alpha > 0$ such that 
		\begin{equation*}
			\varphi_{t}(x_{0}',t_{0}) - \text{tr}(\tilde{\mathcal{G}}(D\varphi(x_{0}',t_{0})) D^{2}\varphi(x_{0}',t_{0})) \geq 4 \alpha.
		\end{equation*}
		
	In what follows, it's convenient to define $x_{0,e} = \varphi(x_{0}',t_{0})$. 
	
	Since $(x_{0}',t_{0})$ is a strict global maximum of $\eta^{*} - \varphi$ and $\eta^{*}(x_{0}',t_{0}) = \varphi(x_{0}',t_{0})$, it follows that 
		\begin{equation*}
			\{(x_{e},x') \, \mid \, x_{e} > \varphi(x',t_{0} - h) \} \subseteq \Omega_{t_{0} - h}^{1} \quad \text{if} \, \, h \in (0,t_{0}).
		\end{equation*}
	In particular, since $u^{\epsilon} \to 1$ locally uniformly in $\bigcup_{0 < t \leq T} \Omega_{t}^{1} \times \{t\}$, we can invoke Proposition \ref{P: bounded_width} and the fact that $\varphi \in \mathscr{P}^{+}(\|q\| + 1)$ to find an $\epsilon_{0} > 0$ such that if $\epsilon \in (0,\epsilon_{0})$, then
		\begin{equation*}
			u^{\epsilon} \geq 1 - \delta \quad \text{in} \, \, \bigcup_{\frac{t_{0}}{2} \leq h \leq t_{0}}\{x_{e} > \varphi(x',t_{0} - h)\} \times \{t_{0} - h\}.
		\end{equation*}
	
	Let $\tilde{\Phi}_{\alpha}^{h,-}(x',t) = \varphi(x',t_{0} - h) + t [\text{tr}(\tilde{\mathcal{G}}(D\varphi(x',t_{0} -h))D^{2}\varphi(x',t_{0}-h)) + \alpha]$.  By Proposition \ref{P: main_construction}, there is an $h_{0} \in (0,T - t_{0})$ such that, for each $s \in (0,h_{0})$ and each $h \in [\frac{t_{0}}{2},t_{0}]$,
		\begin{equation*}
			\tilde{\Phi}_{\alpha}^{h,-}(x',s) < x_{e} \quad \implies \quad (x_{e},x') \in \Omega_{t_{0} - h + s}^{1}.
		\end{equation*}
	Now, as $h \to 0^{+}$, we have
		\begin{align*}
			x_{0,e} &= \varphi(x_{0}',t_{0}) > \varphi(x_{0}',t_{0}-h) + \varphi_{t}(x_{0}',t_{0}) h - 3 \alpha h+ o(h) \\
				&= \varphi(x_{0}',t_{0} - h) + h(\text{tr}(\tilde{\mathcal{G}}(D\varphi(x_{0}',t_{0})) D^{2} \varphi(x_{0}',t_{0})) + \alpha) \\
				&\quad + h(\varphi_{t}(x_{0}',t_{0}) - \text{tr}(\tilde{\mathcal{G}}(D\varphi(x_{0}',t_{0})) D^{2} \varphi(x_{0}',t_{0})) - \alpha) - 3 \alpha h + o(h) \\
				&\geq  \varphi(x_{0}',t_{0} - h) + h(\text{tr}(\tilde{\mathcal{G}}(D\varphi(x_{0}',t_{0}-h)) D^{2} \varphi(x_{0}',t_{0}-h)) + \alpha) \\
				&\quad + \alpha h+ o(h) \\
				&= \tilde{\Phi}^{h,-}_{\alpha}(x_{0}',h) + \alpha h + o(h).
		\end{align*}
	Thus, there is an $h' \in (0,h_{0})$ such that
		\begin{equation*}
			x_{0,e} > \tilde{\Phi}^{h',-}_{\alpha}(x_{0}',h').
		\end{equation*}
	By the continuity of $(x_{e},x',t) \mapsto x_{e} - \tilde{\Phi}^{h',-}_{\alpha}(x',t)$, we deduce that there is an $r \in (0,h_{0} - h')$ such that $|x_{e} - x_{e,0}| + \|x' - x_{0}'\| + |t-h'| < r$ implies
		\begin{equation*}
			x_{e} > \tilde{\Phi}^{h',-}_{\alpha}(x',t).
		\end{equation*}
	In other words, $(x_{e},x') \in \Omega_{t_{0} - h' + t}$ for all such triples $(x_{e},x',t)$.  In particular, by taking $x_{e} = x_{0,e} - \frac{r}{2}$, $\|x' - x_{0}'\| < \frac{r}{2}$, and $|t - h'| < \frac{r}{2}$, we find
		\begin{equation*}
			\eta(x',t_{0} - h' + t) \leq x_{0,e} - \frac{r}{2} = \varphi(x_{0}',t_{0}) - \frac{r}{2}.
		\end{equation*}
	Now this contradicts our assumption that $\eta^{*}(x_{0}',t_{0}) = \varphi(x_{0}',t_{0})$ since
		\begin{equation*}
			\eta^{*}(x_{0}',t_{0}) \leq \sup \left\{ \eta(x',t_{0} +s) \, \mid \, \|x' - x_{0}'\| < \frac{r}{2}, \, \, |s| < \frac{r}{2} \right\} \leq \varphi(x_{0}',t_{0}) - \frac{r}{2}.
		\end{equation*}

	Since $\varphi \in \mathscr{P}^{+}(\|q\| + 1)$ and $\alpha > 0$ were arbitrary, we deduce that $\eta^{*}$ is a $\mathscr{P}(\|q\| + 1)$-sub-solution.  
	
\textbf{Step 2: Initial condition}
	
	It remains to show that $\eta^{*}(\cdot,0) \leq \mathcal{U} \leq \nu_{*}(\cdot,0)$.  We only treat the inequality involving $\eta^{*}(\cdot,0)$ since the other one can be handled similarly.
	
	First, observe that it suffices to prove that if $x_{0} \in \mathbb{R}^{d}$ satisfies $x_{0,e} > \mathcal{U}(x_{0}')$, then there is a $\delta > 0$ and a $T' > 0$ such that $B(x_{0},\delta) \subseteq \bigcap_{0 < t < T'} \{x_{e} > \eta(x',t)\}$.  Indeed, once we have proved this, it will follow that
		\begin{equation*}
			\eta(x', t) \leq x_{0,e} - \frac{\delta}{2} \quad \text{if} \, \, \|x'- x_{0}'\| < \frac{\delta}{2}, \, \, 0 < t < T
		\end{equation*}
	and, thus,
		\begin{equation*}
			\eta^{*}(x_{0}',0) \leq \sup \left\{ \eta(x',t) \, \mid \, \|x' - x_{0}'\| < \frac{\delta}{2}, \, \, 0 < t < T\right\} \leq x_{0,e} - \frac{\delta}{2} \leq x_{0,e}.
		\end{equation*}
	Sending $x_{0,e} \to \mathcal{U}(x_{0}')$, we deduce that
		\begin{equation*}
		\eta^{*}(x_{0}',0) \leq \mathcal{U}(x_{0}').
		\end{equation*}
	Since $x_{0}'$ was arbitrary, we can then conclude that $\eta^{*}(\cdot,0) \leq \mathcal{U}$ in $\mathbb{R}^{d-1}$. 
	
	It only remains to prove the claim.  Fix $x_{0} \in \mathbb{R}^{d}$ with $x_{0,e} > \mathcal{U}(x_{0}')$.  A quick contradiction argument involving assumption (ii) shows that there are constants $c > 0$ and $\mu \in (\delta_{0},1)$ depending on $x_{0}$ such that
	\begin{equation} \label{E: key for case 2}
		\bigcup_{r > 0} re + B(x_{0},c) \subseteq \{u_{0} > 1 - \mu\}.
	\end{equation}
Indeed, were this not the case, we could find a sequence $(x_{n})_{n \in \mathbb{N}} \subseteq \mathbb{R}^{d}$ such that 
	\begin{align*}
		&\liminf_{n \to \infty}x_{n,e} \geq x_{0,e} > \mathcal{U}(x_{0}') \\
		&\limsup_{n \to \infty} [|x_{n}' - x_{0}'| + u_{0}(x_{n})] \leq 0
	\end{align*}  
Notice that the last inequality implies $u_{0}(x_{n}) \to 0^{+}$ according to \eqref{E: orientation}.  By hypotheses (i) and (ii), $\limsup_{n \to \infty} x_{n,e} < \infty$.  Thus, we can assume without loss of generality that $x_{n} \to x_{*}$ in $\mathbb{R}^{d}$ and this implies $u_{0}(x_{*}) = 0$ even though $x_{*,e} \geq x_{0,e} > \mathcal{U}(x_{0}') = \mathcal{U}(x_{*}')$.  That contradicts hypothesis (i).  
	
	Recall that, by hypotheses (i) and (ii) and \eqref{E: orientation}, we can choose a large $M > 0$ such that 
		\begin{equation} \label{E: contain plane}
			\{x \in \mathbb{R}^{d} \, \mid \, x_{e} \geq \langle q, x' \rangle + M\} \subseteq \{u_{0} \geq 1 - \delta_{0}\} \subseteq \{u_{0} > 1 - \mu\}.
		\end{equation}

	Now using \eqref{E: key for case 2} and \eqref{E: contain plane}, we let $\varphi \in \mathscr{P}^{+}_{0}$ be a smooth function such that $\{x \in \mathbb{R}^{d} \, \mid \, x_{e} \geq \varphi(x')\} \subseteq \{u_{0} > 1 - \mu\}$, $B(x_{0},\delta) \subseteq \{x_{e} > \varphi(x')\}$ for some $\delta \in (0,c)$, and $\varphi(x') = \bar{M}\|x' - x_{0}'\|$ whenever $\|x' - x_{0}'\|$ is sufficiently large for some $\bar{M} > \|q\| + 1$.  
	
	 Applying Proposition \ref{P: main_construction}, we obtain an $h_{0} > 0$ such that if $x = (x_{e},x')$ and $h \in (0,h_{0}]$ satisfy
	 	\begin{equation} \label{E: subtle_trick}
		x_{e} > \varphi(x') + h(\text{tr}(\tilde{\mathcal{G}}(D\varphi(x'))D^{2}\varphi(x')) + 1),
		\end{equation}
	then 
	 	\begin{equation*}
			\liminf \nolimits_{*} T^{\epsilon}(h)[(1 - \mu) \chi_{\{x_{e} > \varphi(x')\}} - \chi_{\{x_{e} < \varphi(x')\}}] = 1.
		\end{equation*} 
	Thus, since $u^{\epsilon}(\cdot,0) = u_{0}$ and $\{x_{e} > \varphi(x')\} \subseteq \{u_{0} > 1 - \mu\}$, the comparison principle implies
		\begin{equation*}
			\liminf \nolimits_{*} u^{\epsilon}(x,h) = 1.
		\end{equation*}  
Given that $\varphi$ is smooth, $0 \leq \tilde{\mathcal{G}} \leq \Lambda \text{Id}$, and $x_{0,e} > \varphi(x'_{0})$, we conclude there is a $\delta > 0$ and a $T' \leq h_{0}$ such that 
	\begin{equation*}
		\eqref{E: subtle_trick} \, \, \text{holds} \quad \text{if} \quad (x_{e},x') \in B(x_{0},\delta), \, \, h \in (0,T'].
	\end{equation*}
Since \eqref{E: subtle_trick} remains true if $x_{e}$ is increased, we conclude that
	\begin{equation*}
		\eqref{E: subtle_trick} \, \, \text{holds} \quad \text{if} \quad (x_{e},x') \in \bigcup_{r > 0} re + B(x_{0}, \delta), \, \, h \in (0,T']
	\end{equation*}
and, thus,
	\begin{equation*}
		\bigcup_{r > 0} re + B(x_{0}, \delta) \subseteq \Omega_{h}^{1} \quad \text{if} \, \, h \in (0,T'].
	\end{equation*}  
Recalling the definition of the function $\eta$, we obtain $B(x_{0},\delta) \subseteq \bigcap_{0 < t \leq T'} \{x_{e} > \eta(x',t)\}$ as claimed.   \qed \end{proof}  

Finally, for completeness, we prove Proposition \ref{P: main_construction} exactly as in \cite{barles souganidis}.

\begin{proof}[Proof of Proposition \ref{P: main_construction}]  We only prove the one half of the lemma since the other half follows by similar arguments.

Fix $\alpha > 0$, $\delta \in (0,1)$, and $\varphi \in \mathscr{P}^{+}_{0}$.  Let $h_{0} > 0$ be the corresponding constant from Lemma \ref{L: propagation}.

Suppose that $(x,h) \in \mathbb{R}^{d} \times (0,h_{0}]$ is such that 
	\begin{equation*}
		x_{e} > \varphi(x') + h \left( \text{tr} \left( \mathcal{G}(D\varphi(x')) D^{2} \varphi(x') \right) + \alpha \right) =: \tilde{\Phi}^{-}_{\alpha}(x',t).
	\end{equation*}
For $t \in [0,h_{0}]$, let $d_{\Phi}$ be the signed distance to the graph $\{x_{e} = \tilde{\Phi}^{-}_{\alpha}(x',t)\}$, positive in $\{x_{e} > \tilde{\Phi}^{-}_{\alpha}(x',t)\}$, as in the statement of Lemma \ref{L: propagation}; and let $d_{\varphi} = d_{\Phi}(\cdot,0)$, which, in view of the definition of $\tilde{\Phi}^{-}_{\alpha}$, equals the signed distance to the graph $\{x_{e} = \varphi(x')\}$.  It will be convenient to let $\beta_{0} > 0$ be given by 
	\begin{equation*}
		2 \beta_{0} = d_{\Phi}(x,h).
	\end{equation*}

By Lemma \ref{L: propagation} and the choice of $\beta_{0}$, there is a $\bar{\beta}(\alpha,\varphi) > 0$ and, for each $\beta \in \bar{\beta}(\alpha,\varphi)$, an $\bar{\epsilon}(\alpha,\beta,\varphi) > 0$ such that if $\epsilon \in (0,\bar{\epsilon}(\alpha,\beta,\varphi))$ and $\beta \in (0,\bar{\beta}(\alpha,\varphi) \wedge \beta_{0})$, then we can fix a subsolution $w^{\epsilon,\beta}$ of \eqref{E: main} in $\mathbb{R}^{d} \times (0,h_{0})$ such that 
	\begin{equation} \label{E: maddening inequality 1}
		w^{\epsilon,\beta}(\cdot,0) \leq (1 - \beta \epsilon) \chi_{\{d_{\varphi} \geq \beta\}} - \chi_{\{d_{\varphi} < \beta\}} \quad \text{in} \, \, \mathbb{R}^{d}
	\end{equation}
and, in the limit $\epsilon \to 0^{+}$, we have
	\begin{equation*}
		\liminf \nolimits_{*} w^{\epsilon,\beta}(x,h) = 1.
	\end{equation*}

Next, define functions $(v^{\epsilon})_{\epsilon > 0}$ in $\mathbb{R}^{d} \times [0,\infty)$ by setting
	\begin{equation*}
		v^{\epsilon}(\cdot,t) = T^{\epsilon}(t)[(1 - \delta) \chi_{\{x_{e} \geq \varphi(x')\}} - \chi_{\{x_{e} < \varphi(x')\}}] \quad \text{in} \, \, \mathbb{R}^{d}.
	\end{equation*}
By Lemma \ref{L: initialization}, there is a $\beta \in (0,\bar{\beta}(\alpha,\varphi) \wedge \beta_{0})$, which we fix for the rest of the proof, and constants $\epsilon(\beta), \tau(\beta) > 0$ such that if $\epsilon \in (0,\epsilon(\beta))$, then 
	\begin{equation} \label{E: maddening inequality 2}
		v^{\epsilon}(\cdot,\tau(\beta) \epsilon^{2} |\log(\epsilon)|) \geq (1 - \beta \epsilon) \chi_{\{d_{\varphi} \geq \beta\}} - \chi_{\{d_{\varphi} < \beta\}} \quad \text{in} \, \, \mathbb{R}^{d}.
	\end{equation}

Combining \eqref{E: maddening inequality 1} and \eqref{E: maddening inequality 2} and invoking the comparison principle, we deduce that, for each $t \in (0,h_{0}]$,
	\begin{equation*}
		v^{\epsilon}(\cdot,t + \tau(\beta) \epsilon^{2} |\log(\epsilon)|) \geq w^{\epsilon,\beta}(\cdot,t) \quad \text{in} \, \, \mathbb{R}^{d}.
	\end{equation*}
Therefore, in the limit $\epsilon \to 0^{+}$, we recover
	\begin{equation*}
		\liminf \nolimits_{*} v^{\epsilon}(x,h) \geq \liminf \nolimits_{*} w^{\epsilon,\beta}(x,h) = 1.
	\end{equation*}
\qed \end{proof}

\subsection{Proof of Lemma \ref{L: propagation}} \label{S: propagation}

This section treats the propagation step in the construction of sub- and supersolutions, or Lemma \ref{L: propagation}.  As mentioned above, we follow the arguments in \cite{barles souganidis}, making minor changes where necessary and employing the results of Appendix \ref{A: tubular_neighborhoods} to replace compact hypersurfaces by graphs.  

Since the two halves of the lemma are symmetrical, we only prove the first half.  Further, in the proofs, we will write $C$ to denote a generic positive constant whose value can change from line to line.

	To start the proof, we fix an arbitrary $\alpha > 0$.

	Next, we replace $\tilde{\Phi}_{\alpha}^{-}$ by a suitable smooth approximation.  Fix a smooth function $\tilde{\mathcal{G}}_{\alpha} : \mathbb{R}^{d} \to \mathcal{S}_{d-1}$ such that if $\|p\| \leq \|D^{2}\varphi\|_{L^{\infty}(\mathbb{R}^{d-1})}$ and $\|X\| \leq \|D^{2}\varphi\|_{L^{\infty}(\mathbb{R}^{d-1})}$, then
		\begin{equation} \label{E: approximate nonlinearity}
			\left| \text{tr} \left( \tilde{\mathcal{G}}_{\alpha}(p) X \right) -  \text{tr}\left(\tilde{\mathcal{G}}(p)X\right) \right| \leq \frac{\alpha}{4}.
		\end{equation}
	Define $\tilde{\Gamma}_{\alpha}^{-}$ by 
	\begin{equation*}
		\tilde{\Gamma}_{\alpha}^{-}(x',t) = \varphi(x') +t \left(\text{tr} (\tilde{\mathcal{G}}_{\alpha}(D\varphi(x')) D^{2}\varphi(x')) + \frac{3 \alpha}{4}\right).
	\end{equation*}
Notice that the first derivative of $\tilde{\Gamma}_{\alpha}^{-}$ is bounded, that is,
	\begin{equation}
		C_{\Gamma} := \sup \left\{ \|D \tilde{\Gamma}_{\alpha}^{-}(x',t)\| \, \mid \, (x',t) \in \mathbb{R}^{d-1} \times [0,1] \right\} < \infty. \label{E: deriv bounded graph part}
	\end{equation}
	
Since $\varphi \in \mathscr{P}_{0}^{+}$ and $\tilde{\mathcal{G}}_{\alpha}$ is smooth, $\tilde{\Gamma}_{\alpha}^{-}$ is smooth in both variables and there is an $h_{0} \in (0,1)$ depending only on $\alpha$ and $\max\{\|D^{i} \varphi\|_{L^{\infty}(\mathbb{R}^{d-1})} \, \mid \, i \in \{1,2,3,4\}\}$ such that
	\begin{equation}
		\tilde{\Gamma}_{\alpha,t}^{-} - \text{tr}(\tilde{\mathcal{G}}(D\tilde{\Gamma}_{\alpha}^{-}) D^{2}\tilde{\Gamma}_{\alpha}^{-}) \geq \frac{5 \alpha}{8} \quad \text{in} \, \, \mathbb{R}^{d - 1} \times (0,h_{0}]. \label{E: graph equation needed}
	\end{equation}
Define open sets $(E_{t})_{t \geq 0}$ by 
	\begin{equation*}
		E_{t} = \left\{x \in \mathbb{R}^{d} \, \mid \, x_{e} > \tilde{\Gamma}^{-}_{\alpha}(x',t) \right\}.
	\end{equation*}
Note that \eqref{E: graph equation needed} implies that $(E_{t})_{t \geq 0}$ is a sub-flow of \eqref{E: anisotropic curvature flow}.  

For each $t \in [0,h_{0}]$, let $d_{\Gamma}(\cdot,t) : \mathbb{R}^{d} \to \mathbb{R}$ be the signed distance to the graph $\partial E_{t}$, positive in $E_{t}$.  By Proposition \ref{P: surface_fact} and \eqref{E: graph equation needed}, we can fix a $\gamma > 0$ such that $d_{\Gamma} \in BUC^{4}(\{|d_{\Gamma}| < \gamma\})$ and, in that domain, $d_{\Gamma}$ satisfies the differential inequality
	\begin{equation} \label{E: distance function subsolution}
		(d_{\Gamma})_{t} - \text{tr}(\tilde{\mathcal{S}}^{a}(Dd_{\Gamma}) D^{2}d_{\Gamma}) \leq - \frac{\alpha}{4 \sqrt{1 + C_{\Gamma}^{2}}}.
	\end{equation}
Henceforth, let us write $d = d_{\Gamma}$ to declutter the notation.

Now we start the construction of the mesoscopic sub-solution.  As in \cite{barles souganidis}, we define a function $v^{\epsilon}$ in $\{|d| < \gamma\}$ by  
	\begin{equation*}
		v^{\epsilon}(x,t) = U_{Dd(x,t)} \left(\frac{d(x,t)-2 \beta}{\epsilon},\frac{x}{\epsilon} \right) + \epsilon \left( P^{D^{2}d(x,t)}_{Dd(x,t)} \left( \frac{d(x,t) - 2 \beta}{\epsilon}, \frac{x}{\epsilon}\right) - 2 \beta \right),
	\end{equation*}
where $P^{D^{2}d(x,t)}_{Dd(x,t)}$ is the corrector defined in Section \ref{S: higher_regularity} and $\beta > 0$ is a free variable to be determined below.

At this stage, it is convenient to define $K_{\Gamma} \subset \subset S^{d-1}$ and $\mathcal{S}_{d}(\Gamma) \subseteq \mathcal{S}_{d}$ by
	\begin{align*}
		K_{\Gamma} &= \overline{\{Dd(x,t) \, \mid \, (x,t) \in \mathbb{R}^{d} \times [0,h_{0}], \, \, |d_{\Gamma}(x,t)| < \gamma \}}, \\
		\mathcal{S}_{d}(\Gamma) &= \overline{\{D^{2}d(x,t) \, \mid \, (x,t) \in \mathbb{R}^{d} \times [0,h_{0}], \, \, |d_{\Gamma}(x,t)| < \gamma \}}.
	\end{align*}
Note that $\mathcal{S}_{d}(\Gamma)$ is compact since $d$ has bounded second derivatives in the domain $\{|d| < \gamma\}$.

\begin{prop}  $K_{\Gamma} \subset \subset S^{d-1} \setminus (\mathbb{R}^{k} \times \{0\})$.  \end{prop}  

	\begin{proof}  First of all, since $d$ is a signed distance function and it is $C^{4}$ in $\{|d| < \gamma\}$, we know that $K_{\Gamma} \subseteq S^{d-1}$.  
	
	To prove that $K_{\Gamma} \subset \subset \mathbb{R}^{d} \setminus (\mathbb{R}^{k} \times \{0\})$, we will show that 
		\begin{equation} \label{E: distance compact set thing}
			\text{dist}(K_{\Gamma}, \mathbb{R}^{k} \times \{0\}) \geq \frac{1}{\sqrt{1 + C_{\Gamma}^{2}}},
		\end{equation}
	where $C_{\Gamma}$ is given by \eqref{E: deriv bounded graph part}.
	To see this, notice that if $(x,t) \in \mathbb{R}^{d} \times [0,h_{0}]$ and $|d(x,t)| < \gamma$, then there is a unique $y \in \{d(\cdot,t) = 0\}$ such that $|d(x,t)| = \| y - x\|$ and 
		\begin{equation*}
			Dd(x,t) = \frac{e - \pi^{*}(D\tilde{\Gamma}^{-}_{\alpha}(y',t))}{\sqrt{1 + \|D\tilde{\Gamma}^{-}_{\alpha}(y',t)\|^{2}}}.
		\end{equation*}
	In particular,
		\begin{equation*}
			\text{dist}(Dd(x,t), \mathbb{R}^{k} \times \{0\}) = \frac{1}{\sqrt{1 + \|D\tilde{\Gamma}^{-}_{\alpha}(y',t)\|^{2}}} \geq \frac{1}{\sqrt{1 + C_{\Gamma}^{2}}}.
		\end{equation*}
	  Since $(x,t)$ was arbitrary, this proves \eqref{E: distance compact set thing}. \qed \end{proof}  

\begin{prop} \label{P: subsolution first part} There are constants $\nu, \beta_{0} > 0$ depending only on $\alpha$ and the constant $C$ of \eqref{E: deriv bounded graph part} such that if $\beta \in (0,\beta_{0})$, then 
	\begin{align*}
		v^{\epsilon}_{t} - \text{div}(a(\epsilon^{-1}x) Dv^{\epsilon}) &+ \epsilon^{-2} W'(v^{\epsilon}) \leq -\epsilon^{-1} \nu + O(1) \\
		&\qquad\text{in} \, \, \{(x,t) \in \mathbb{R}^{d} \times (0,h_{0}) \, \mid \, |d_{\gamma}(x,t)| \leq \gamma\}.
	\end{align*}
\end{prop}  

	\begin{proof}  We have assumed that $k < d$.  Together with the previous result, this implies that \eqref{E: laminar assumption} is in effect. Thus, by Proposition \ref{P: key corrector result}, the map $(s,x,v,A) \mapsto P^{A}_{v}(s,x)$ is twice continuously differentiable in a neighborhood of $\mathbb{R} \times \mathbb{T}^{d} \times K_{\Gamma} \times \mathcal{S}_{d}(\Gamma)$ and all its derivatives (up to second order) are bounded in that neighborhood.  Thus, plugging $v^{\epsilon}$ into the equation and expanding the terms involved, we find
		\begin{align*}
			v^{\epsilon}_{t} - \text{div}(a(\epsilon^{-1}x) Dv^{\epsilon}) + &\epsilon^{-2} W'(v^{\epsilon}) = \epsilon^{-2} \left(\mathcal{D}_{Dd}^{*}(a(\epsilon^{-1}x) \mathcal{D}_{Dd} U_{Dd}) + W'(U_{Dd}) \right) \\
		& + \epsilon^{-1} \left( \mathcal{D}_{Dd}^{*}(a(\epsilon^{-1}x) \mathcal{D}_{Dd}P^{D^{2}d}_{Dd}) + W''(U_{Dd}) P^{D^{2}d}_{Dd} \right) \\
		& + \epsilon^{-1} \partial_{s} U_{Dd} \left[ d_{t}(x,t) - \text{tr}\left(a(\epsilon^{-1} x)D^{2} d(x,t)\right) \right] \\
		& - \epsilon^{-1} \left[ 2\left \langle a(\epsilon^{-1}x) Dd(x,t), D^{2}d(x,t) \partial_{s} D_{e}U_{Dd} \right \rangle \right.\\
		& \left.+ 2 \text{tr} \left( a(\epsilon^{-1} x) D^{2}d(x,t) D_{x}D_{e}U_{Dd} \right) - 2 \beta W''(U_{Dd(x,t)})\right. \\
		& \left. + \left \langle \text{div} \, a(\epsilon^{-1} x), D^{2} d(x,t) D_{e}U_{Dd} \right \rangle \right] + O(1).
		\end{align*}
Invoking the PDE solved by the pulsating standing wave $(s,x) \mapsto U_{v}(s,x)$ and the corrector $(s,x) \mapsto P^{A}_{v}(s,x)$ for arbitrary $(v,A) \in K_{\Gamma} \times \mathcal{S}_{d}(\Gamma)$, this becomes 
	\begin{align*}
		v^{\epsilon}_{t} - \text{div}(a(\epsilon^{-1}x) Dv^{\epsilon}) &+ \epsilon^{-2} W'(v^{\epsilon})  \\
			&\quad = \epsilon^{-1} \partial_{s} U_{Dd(x,t)} [d_{t}(x,t) - \text{tr}(\tilde{\mathcal{S}}^{a}(Dd(x,t)) D^{2}d(x,t))] \\
			&\quad \left. - 2 \beta \epsilon^{-1} W''(U_{Dd(x,t)}) \right) + O(1) \\
			&\quad \leq \epsilon^{-1} \left(-\frac{\alpha \partial_{s}U_{Dd(x,t)}}{4\sqrt{1 + C^{2}}} - 2 \beta W''(U_{Dd(x,t)})\right) + O(1).
	\end{align*}
In the last line, we used the estimate \eqref{E: distance function subsolution}.  To complete the proof, it only remains to show that the $\epsilon^{-1}$ order term is bounded away from zero.

Here we follow \cite[Lemma 4.3]{barles souganidis}.  Applying Proposition \ref{P: exponential decay pulsating} with $K = K_{\Gamma}$ and recalling \eqref{A: W_assumption_2}, we find that
	\begin{gather*}
		\lim_{M \to \infty} \inf \left\{ W''(U_{v}(s,x)) \, \mid \, (s,x) \in (\mathbb{R} \setminus [-M,M]) \times \mathbb{T}^{k}, \, \, v \in K_{\Gamma} \right\} \geq \alpha > 0.
	\end{gather*}
Thus, in particular, we can choose an $M_{1} > 0$ such that 
	\begin{align*}
		\eta := \inf \left\{ W''(U_{v}(s,x)) \, \mid \, (s,x) \in (\mathbb{R} \setminus [-M_{1},M_{1}]) \times \mathbb{T}^{k}, \, \, v \in K_{\Gamma} \right\} > 0.
	\end{align*}
At the same time, the map $(s,x,v) \mapsto \partial_{s}U_{v}(s,x)$ is positive and continuous in the compact set $[-M_{1},M_{1}] \times \mathbb{T}^{k} \times K_{\Gamma}$ by Proposition \ref{P: eigenfunction_estimate} and Remark \ref{R: regularity in derivative}.  Hence it is bounded away from zero in that set and we can choose a small $\beta_{0} > 0$ depending only on $\alpha$ and $C$ such that
	\begin{align*}
		\nu_{1} &:= \inf \left\{ \frac{\alpha \partial_{s}U_{v}(s,x)}{4\sqrt{1 + C^{2}}} - 2 \beta_{0} |W''(U_{v}(s,x))| \, \mid \, (s,x) \in [-M_{1},M_{1}] \times \mathbb{T}^{k}, \right. \\
		&\qquad  \left. v \in K_{\Gamma} \right\} > 0.
	\end{align*}
Putting it all together, we find that, for each $\beta \in (0,\beta_{0})$,
	\begin{align*}
		\nu &:= \inf \left\{ \frac{\alpha \partial_{s}U_{v}(s,x)}{4\sqrt{1 + C^{2}}} + 2 \beta W''(U_{v}(s,x)) \, \mid \, (s,x) \in \mathbb{R} \times \mathbb{T}^{k}, \, \, v \in K_{\Gamma} \right\} \\
			&\geq \min\left\{ \beta \eta, \nu_{1} \right\} > 0.
	\end{align*}
\qed \end{proof}  

It remains to extend the subsolution $v^{\epsilon}$ to the entire space $\mathbb{R}^{d} \times [0,h_{0}]$.  To start with, we define the extension $\bar{v}^{\epsilon} : \{d < \gamma\} \to \mathbb{R}$ by 
	\begin{equation} \label{E: subsolution construction step two}
		\bar{v}^{\epsilon}(x,t) = \left\{ \begin{array}{r l}
								\max\{v^{\epsilon}(x,t),-1\}, & \text{if} \, \, -\frac{\gamma}{2} < d(x,t) < \gamma, \\
								-1, & \text{if} \, \, d(x,t) \leq - \frac{\gamma}{2}
							\end{array} \right.
	\end{equation}

\begin{prop} \label{P: subsolution second part} Let $\beta_{0}$ be the constant determined in Proposition \ref{P: subsolution first part}.  There is an $\epsilon_{0} > 0$ such that if $\beta \in (0,\beta_{0})$ and $\epsilon \in (0,\epsilon_{0})$, then 
	\begin{align*}
		\bar{v}^{\epsilon}_{t} - \text{div}(a(\epsilon^{-1}x) D\bar{v}^{\epsilon}) &+ \epsilon^{-2} W'(\bar{v}^{\epsilon}) \leq 0 \\
			& \text{in} \, \, \{(x,t) \in \mathbb{R}^{d} \times (0,h_{0}) \, \mid \, d_{\Gamma}(x,t) < \gamma\}.
	\end{align*}
\end{prop}

	\begin{proof}  By the previous proposition, we know that $v^{\epsilon}$ is a sub-solution of \eqref{E: main} in $\{-\gamma/2 < d < \gamma\}$ if $\beta \in (0,\beta_{0})$ and $\epsilon$ is small enough.  Therefore, being the maximum of two sub-solutions in this domain, $\bar{v}^{\epsilon}$ is itself a sub-solution there.  
	
	To conclude, it suffices to show that $\bar{v}^{\epsilon}$ is a sub-solution in the domain $\{d < - \gamma/4\}$ if $\beta \in (0,\beta_{0})$ and $\epsilon$ is small enough.  In fact, if $(x,t) \in \mathbb{R}^{d} \times (0,h_{0})$ and $-\gamma < d(x,t) < -\frac{\gamma}{4}$, then we can invoke the exponential decay estimates in Propositions \ref{P: exponential decay pulsating} and \ref{P: key corrector result} to find
		\begin{align*}
			v^{\epsilon}(x,t) &\leq U_{Dd(x,t)}\left(-\frac{\gamma}{4\epsilon},\frac{x}{\epsilon}\right) + \epsilon \left( P_{Dd(x,t)}^{D^{2}d(x,t)}\left(\frac{d(x,t) - 2 \beta}{\epsilon},\frac{x}{\epsilon}\right) - 2 \beta \right) \\
			&\leq -1 + (1 + \epsilon) C \exp\left(- \frac{\gamma}{4 C \epsilon} \right) - 2 \beta \epsilon.
		\end{align*}
	Hence $v^{\epsilon} < -1 - \beta \epsilon$ in $\{-\gamma < d_{\Gamma} < -\frac{\gamma}{4}\}$ as soon as $\epsilon$ is small enough.  In particular, for $\beta \in (0,\beta_{0})$ and $\epsilon$ small enough,
		\begin{equation*}
			\bar{v}^{\epsilon}(x,t) = -1 \quad \text{if} \, \, (x,t) \in \mathbb{R}^{d} \times (0,h_{0}) \, \, \text{and} \, \,  d(x,t) < -\frac{\gamma}{4}.
		\end{equation*}
	   Since $-1$ is a sub-solution, it follows that $\bar{v}^{\epsilon}$ is a sub-solution in $\{d < - \gamma/4\}$. \qed \end{proof}  
	   
Finally, we extend $\bar{v}^{\epsilon}$ to a sub-solution in $\mathbb{R}^{d} \times [0,h_{0}]$.  Toward this end, given a $\beta \in (0,\beta_{0})$, we fix a smooth, non-decreasing function $\psi_{\beta} : \mathbb{R} \to [0,1]$ such that 
	\begin{equation*}
		\psi_{\beta}(r) = 1 \quad \text{if} \, \, r \geq \frac{7 \gamma}{8} + 2 \beta, \quad \psi_{\beta}(r) = 0 \quad \text{if} \, \, r \leq \frac{3 \gamma}{4} + 2 \beta.
	\end{equation*}  
Let $w^{\epsilon} : \mathbb{R}^{d} \times [0,h_{0}] \to \mathbb{R}$ be given by 
	\begin{equation*}
		w^{\epsilon}(x,t) = \{1 - \psi_{\beta}(d(x,t))\} \bar{v}^{\epsilon}(x,t) + \psi_{\beta}(x,t) (1 - \beta \epsilon).
	\end{equation*}
	
\begin{prop}  Let $\beta_{0}$ and $\epsilon_{0}$ be the constants of Propositions \ref{P: subsolution first part} and \ref{P: subsolution second part}, respectively.  There is an $\epsilon_{\bigstar} \in (0,\epsilon_{0})$ such that if $\beta \in (0,\beta_{0} \wedge 1)$ and $\epsilon \in (0,\epsilon_{\bigstar})$, then 
	\begin{align*}
		w^{\epsilon}_{t} - \text{div}(a(\epsilon^{-1} x) Dw^{\epsilon}) + \epsilon^{-2} W'(w^{\epsilon}) \leq 0 \quad \text{in} \, \, \mathbb{R}^{d} \times (0,h_{0}), \\
		w^{\epsilon}(\cdot,0) \leq (1 - \beta \epsilon) \chi_{\{d(\cdot,0) \geq \beta\}} - \chi_{\{d(\cdot,0) < \beta\}} \quad \text{on} \, \, \mathbb{R}^{d} \times \{0\}.
	\end{align*}
Furthermore, for $\beta \in (0,\beta_{0})$, we have
	\begin{equation*}
		\liminf\nolimits_{*} w^{\epsilon}(x,t) = 1 \quad \text{if} \, \, d(x,t) > 2 \beta.
	\end{equation*}
\end{prop}  

In the proof, we will use the following consequence of Schauder estimates.  Observe that if $v \in K_{\Gamma}$, then the function $U_{v} - 1$ is a solution of the uniformly elliptic PDE
	\begin{equation*}
		\mathcal{D}_{v}^{*}(a(x) \mathcal{D}_{v}(U_{v} - 1)) = -W'(U_{v}) \quad \text{in} \, \, \mathbb{R} \times \mathbb{T}^{k}.
	\end{equation*}
Hence, by Schauder estimates, \eqref{E: exponential pulsating estimate 1}, and \eqref{E: exponential pulsating estimate 2}, there is a $C > 0$ such that 
	\begin{equation*}
		\|\mathcal{D}_{v}U_{v}(s,x)\| \leq C \exp(-C^{-1}|s|) + C |W'(U_{v}(s,x))|.
	\end{equation*}
At the same time, by \eqref{A: W_assumption_2}, there is an $M_{\bigstar} > 0$ such that 
	\begin{equation*}
		|W'(U_{v}(s,x)| \leq 2\alpha |U_{v}(s,x) - 1| \quad \text{if} \, \, (s,x) \in (\mathbb{R} \setminus [-M_{\bigstar}, M_{\bigstar}]) \times \mathbb{T}^{k}.
	\end{equation*}
Hence, up to making $C$ larger, we have
	\begin{equation} \label{E: derivative estimate}
		\|\mathcal{D}_{v}U_{v}(s,x)\| \leq C \exp(-C^{-1}|s|) \quad \text{for} \, \, (s,x) \in \mathbb{R} \times \mathbb{T}^{k}.
	\end{equation}

	\begin{proof}  We start by proving that $w^{\epsilon}$ is a sub-solution provided $\epsilon$ is sufficiently small; then we prove the bound on $w^{\epsilon}(\cdot,0)$; and, finally, we prove the claim concerning $\liminf_{*} w^{\epsilon}$.  Henceforth fix $\beta \in (0,\beta_{0})$.
	
	\textbf{Step 1: $w^{\epsilon}$ is a sub-solution}
	
	To start with, we will show that $w^{\epsilon}$ is a sub-solution in $\{d > \frac{\gamma}{2} + 2 \beta\}$.  Observe that if $(x,t) \in \mathbb{R}^{d} \times (0,h_{0})$ and $d(x,t) \geq \frac{\gamma}{2} + 2 \beta$, then we can use the exponential estimates in Proposition \ref{P: exponential decay pulsating} to find
		\begin{align}
			v^{\epsilon}(x,t) &= U_{Dd(x,t)}\left(\frac{d(x,t) - 2 \beta}{\epsilon}, \frac{x}{\epsilon} \right) + \epsilon \left( P^{D^{2}d(x,t)}_{Dd(x,t)} \left(\frac{d(x,t) - 2 \beta}{\epsilon}, \frac{x}{\epsilon} \right) - 2 \beta \right) \nonumber \\
				&= 1 + C \exp \left(-\frac{\gamma}{2 C \epsilon} \right) + O(\epsilon). \label{E: silly v estimate we need}
		\end{align}
	Hence, if $\epsilon$ is small enough, we have
		\begin{equation*}
			\bar{v}^{\epsilon} = v^{\epsilon} \quad \text{in} \, \, \left\{(x,t) \in \mathbb{R}^{d} \times (0,h_{0}) \, \mid \, d(x,t) > \frac{\gamma}{2} + 2 \beta \right\}.
		\end{equation*}
	
	Since $\bar{v}^{\epsilon} = v^{\epsilon}$ in $\{d > \frac{\gamma}{2} + 2 \beta\}$, $w^{\epsilon}$ is smooth enough in that domain for us to evaluate its derivatives directly.  Doing so and applying Proposition \ref{P: subsolution first part}, we find
		\begin{align*}
			w^{\epsilon}_{t} - \text{div}(a(\epsilon^{-1}x) Dw^{\epsilon}) + \epsilon^{-2} W'(w^{\epsilon}) &\leq (1 - \psi_{\beta}(d))(- \nu \epsilon^{-1} + O(1)) \\
			&\quad + \epsilon^{-2} \psi_{\beta}(d) W'(1 - \beta \epsilon) + \mathcal{I}(\epsilon,\beta),
		\end{align*}
	where the error term $\mathcal{I}(\epsilon,\beta)$ is given by 
		\begin{align*}
			\mathcal{I}(\epsilon,\beta) &= \epsilon^{-2} \left[ W'(w^{\epsilon}) - (1 - \psi_{\beta}(d)) W'(v^{\epsilon}) - \psi_{\beta}(d) W'(1 - \beta \epsilon) \right] \\
			&\quad + (1 - \beta \epsilon - v^{\epsilon}) \psi_{\beta}'(d) d_{t} - 2 \psi_{\beta}'(d) \langle a(\epsilon^{-1}x) Dv^{\epsilon}, Dd \rangle \\
			&\quad - (1 - \beta \epsilon - v^{\epsilon}) \left[ \text{tr}(a(\epsilon^{-1}x) D^{2}d) + \langle a(\epsilon^{-1}x) Dd, Dd \rangle \right] \\
			&\quad - \epsilon^{-1} (1 - \beta \epsilon - v^{\epsilon}) \psi_{\beta}'(d) \langle (\text{div} \, a)(\epsilon^{-1}x), Dd \rangle.
		\end{align*}
Notice that $W'(1 - \beta \epsilon) = W'(1) - \beta \epsilon W''(1) + O(\epsilon^{2})$.  Thus, we have
	\begin{align*}
		w^{\epsilon}_{t} - \text{div}(a(\epsilon^{-1}x) Dw^{\epsilon}) &+ \epsilon^{-2} W'(w^{\epsilon}) \leq - \min\{\nu, W''(1)\} \epsilon^{-1} + O(1) + \mathcal{I}(\epsilon,\beta) \\
			&\qquad \text{in} \, \, \left\{(x,t) \in \mathbb{R}^{d} \times (0,h_{0}) \, \mid \, d(x,t) > \frac{\gamma}{2} + 2 \beta \right\}.
	\end{align*}
To conclude that $w^{\epsilon}$ is a sub-solution in $\{d > \frac{\gamma}{2} + 2 \beta\}$, we will show that $\mathcal{I}(\epsilon,\beta) = O(1)$ as $\epsilon \to 0^{+}$.  

Indeed, we can write
	\begin{align*}
		\mathcal{I}(\epsilon,\beta) &= \mathcal{I}_{1}(\epsilon,\beta) + \mathcal{I}_{2}(\epsilon,\beta) + \mathcal{I}_{3}(\epsilon,\beta), \\
		\mathcal{I}_{1}(\epsilon,\beta) &:= \epsilon^{-2} \left[ W'(w^{\epsilon}) - (1 - \psi_{\beta}(d)) W'(v^{\epsilon}) - \psi_{\beta}(d) W'(1 - \beta \epsilon) \right], \\
		\mathcal{I}_{2}(\epsilon,\beta) &:= (1 - \beta \epsilon - v^{\epsilon}) \psi_{\beta}'(d) d_{t} - 2 \psi_{\beta}'(d) \langle a(\epsilon^{-1}x) Dv^{\epsilon}, Dd \rangle \\
					&\quad - \epsilon^{-1} (1 - \beta \epsilon - v^{\epsilon}) \psi_{\beta}'(d) \langle (\text{div} \, a)(\epsilon^{-1}x), Dd \rangle, \\
		\mathcal{I}_{3}(\epsilon,\beta) &:= - (1 - \beta \epsilon - v^{\epsilon}) \left[ \text{tr}(a(\epsilon^{-1}x) D^{2}d) + \langle a(\epsilon^{-1}x) Dd, Dd \rangle \right].
	\end{align*}
In view of \eqref{E: silly v estimate we need}, we have
	\begin{equation*}
		\mathcal{I}_{3}(\epsilon,\beta) = O(1) (1 - \beta \epsilon - v^{\epsilon}) = O(\epsilon).
	\end{equation*}
Similarly, notice that we can write
	\begin{align*}
		Dv^{\epsilon}(x,t) &= \epsilon^{-1} \mathcal{D}_{Dd(x,t)} U_{Dd(x,t)}\left(\frac{d(x,t) - 2 \beta}{\epsilon}, \frac{x}{\epsilon}\right) + O(1)
	\end{align*}
and, thus, by \eqref{E: derivative estimate},
	\begin{equation*}
		\|Dv^{\epsilon}(x,t)\| \leq \epsilon^{-1} C \exp \left( - \frac{\gamma}{2 C \epsilon} \right) + O(1) = O(1).
	\end{equation*}
Combining this with \eqref{E: silly v estimate we need}, we obtain
	\begin{equation*}
		\mathcal{I}_{2}(\epsilon,\beta) = O(\epsilon) + O(1) - \epsilon^{-1}(1 - \beta \epsilon - v^{\epsilon}) \psi_{\beta}'(d) \langle (\text{div} \, a)(\epsilon^{-1}x), Dd \rangle = O(1).
	\end{equation*}
Finally, Taylor expanding the $W'$ terms in $\mathcal{I}_{1}(\epsilon,\beta)$ around $1$ and invoking \eqref{E: silly v estimate we need} once more, the linear terms cancel and we are left with
	\begin{equation*}
		\mathcal{I}_{1}(\epsilon,\beta) = \epsilon^{-2} \left\{ O([v^{\epsilon} - 1]^{2}) + O(\epsilon^{2}) + O([w^{\epsilon} - 1]^{2}) \right\} = O(1).
	\end{equation*}

Combining the results of the previous two paragraphs, we conclude that 
	\begin{align*}
		w^{\epsilon}_{t} - \text{div}(a(\epsilon^{-1}x) Dw^{\epsilon}) &+ \epsilon^{-2} W'(w^{\epsilon}) \leq - \min\{\nu, W''(1)\} \epsilon^{-1} + O(1)\\
			&\qquad \text{in} \, \, \left\{(x,t) \in \mathbb{R}^{d} \times (0,h_{0}) \, \mid \, d(x,t) > \frac{\gamma}{2} + 2 \beta \right\}.
	\end{align*}
Hence as soon as $\epsilon$ is small enough, $w^{\epsilon}$ is a sub-solution in $\{d > \frac{\gamma}{2} + 2 \beta\}$.  

Next, notice that $w^{\epsilon} = \bar{v}^{\epsilon}$ in $\{d < \frac{3 \gamma}{4} + 2 \beta\}$ by construction.  Therefore, Proposition \ref{P: subsolution second part} implies it is a sub-solution in that domain as well provided $\epsilon < \epsilon_{0}$.  Since the two domains cover $\mathbb{R}^{d} \times (0,h_{0})$, we conclude that $w^{\epsilon}$ is a sub-solution.

\textbf{Step 2: Bound on $w^{\epsilon}(\cdot,0)$}

First, observe that if $d(x,0) < \beta$, then, arguing as in \eqref{E: silly v estimate we need}, we find
	\begin{equation*}
		v^{\epsilon}(x,t) \leq -1 + C \exp \left( -\frac{\beta}{C \epsilon} \right) - 2 \beta \epsilon.
	\end{equation*}
Thus, as soon as $\epsilon$ is sufficiently small, we have
	\begin{equation*}
		w^{\epsilon}(x,t) = \bar{v}^{\epsilon}(x,t) = -1.
	\end{equation*}
This proves $w^{\epsilon}(x,0) \leq -1$ if $d(x,0) < \beta$.
	
Next, we need to show that $w^{\epsilon}(x,0) \leq 1 - \beta \epsilon$ if $d(x,0) \geq \beta$.  Let us begin by fixing $c > 0$ such that 
	\begin{equation*}
		\sup \left\{ |P_{v}^{A}(s,x)| \, \mid \, (s,x) \in [c,\infty) \times \mathbb{T}^{k}, v \in K_{\Gamma}, \, \, A \in \mathcal{S}_{d}(\Gamma) \right\} \leq \beta.
	\end{equation*}
This is possible by Proposition \ref{P: key corrector result}.  With this choice of $c$, observe that if $2\beta + c \epsilon \leq d(x,0)$, then 
	\begin{equation*}
		v^{\epsilon}(x,t) \leq 1 + \epsilon \left(P^{D^{2}d(x,t)}_{Dd(x,t)}\left(\frac{d(x,0) - 2 \beta}{\epsilon},\frac{x}{\epsilon}\right) - 2 \beta\right) \leq -1 - \beta \epsilon.
	\end{equation*}
On the other hand, since $v \mapsto 1 - U_{v}(s,x)$ is positive and continuous, there is a $\delta > 0$ such that
	\begin{equation*}
		1 - \delta := \sup \left\{ U_{v}(s,x) \, \mid \, (s,x) \in (-\infty,c] \times \mathbb{T}^{k}, \, \,  v \in K_{\Gamma} \right\}.
	\end{equation*}
Hence the inequality $\beta \leq d(x,0) \leq 2 \beta + c \epsilon$ implies that
	\begin{equation*}
		v^{\epsilon}(x,t) \leq 1 - \delta - 2 \beta \epsilon + O(\epsilon).
	\end{equation*}
Thus, for $\epsilon$ small enough, we have $w^{\epsilon}(x,0) \leq 1 - \beta \epsilon$ whenever $d(x,0) \geq \beta$.

\textbf{Step 3: Behavior of $\liminf_{*} w^{\epsilon}$} 

Finally, notice that the inequality $d(x,t) > 2 \beta$ implies the existence of an $\eta > 0$ and a neighborhood $\mathcal{O}$ of $(x,t)$ such that $d(y,s) \geq 2 \beta + \eta$ for $(y,s) \in \mathcal{O}$.  Arguing as in \eqref{E: silly v estimate we need}, we deduce that
	\begin{equation*}
		\inf \left\{ w^{\epsilon}(y,s) \, \mid \, (y,s) \in \mathcal{O} \right\} \geq 1 + C \exp \left( - \frac{\eta}{C \epsilon} \right) + O(\epsilon)
	\end{equation*}
and, thus,
	\begin{equation*}
		\liminf\nolimits_{*} w^{\epsilon}(x,t) = 1.
	\end{equation*}
\qed \end{proof}  

It only remains to use the previous result to verify the lemma:

\begin{proof}[Proof of Lemma \ref{L: propagation}]  Notice that $d(\cdot,0) = d_{\varphi}$.  Hence the previous proposition implies that $w^{\epsilon}(\cdot,0) \leq (1 - \beta \epsilon) \chi_{\{d_{\varphi} \geq \beta\}} - \chi_{\{d_{\varphi} < \beta\}}$ provided $\epsilon$ is sufficiently small.  

Next, we will show that if $d_{\Phi}(x,t) > 2 \beta$ for some $(x,t) \in \mathbb{R}^{d} \times (0,h_{0})$, then $\liminf_{*} w^{\epsilon}(x,t) = 1$.  In view of the previous proposition, it suffices to show that $d(x,t) > 2 \beta$ in this case.  In fact, we claim that $d \geq d_{\varphi}$.  

Indeed, since we are working with signed distances, the inequality $d \geq d_{\varphi}$ holds if and only if $\{d_{\varphi} > 0\} \subseteq \{d > 0\}$.  To see that the latter condition holds, observe that $d(x,t) > 0$ if and only if $x_{e} > \tilde{\Phi}_{\alpha}^{-}(x',t)$.  At the same time, if $x_{e} > \tilde{\Phi}_{\alpha}^{-}(x',t)$, then, using \eqref{E: approximate nonlinearity}, we compute
	\begin{align*}
		\tilde{\Gamma}^{-}_{\alpha}(x',t) &= \varphi(x') + t \left[ \text{tr} \left( \mathcal{G}_{\alpha}(D\varphi(x')) D^{2}\varphi(x') \right) + \frac{3 \alpha}{4} \right] \\
			&\leq \varphi(x') + t \left[ \text{tr} \left( \mathcal{G}(D\varphi(x')) D^{2} \varphi(x') \right) + \alpha \right] \\
			&\quad + t \left| \text{tr} \left( [\mathcal{G}_{\alpha}(D\varphi(x')) - \mathcal{G}(D\varphi(x'))] D^{2}\varphi(x') \right) \right| - \frac{t \alpha}{4} \\
			&\leq \tilde{\Phi}_{\alpha}^{-}(x',t) < x_{e}.
	\end{align*}
Hence the inequality $x_{e} > \tilde{\Phi}_{\alpha}^{-}(x',t)$ implies $x_{e} > \tilde{\Gamma}_{\alpha}^{-}(x',t)$, which implies that $d(x,t) > 0$.  This proves $\{d_{\varphi} > 0\} \subseteq \{d > 0\}$ and, thus, also the bound $d \geq d_{\varphi}$.  

In view of the previous paragraph, if $d_{\Phi}(x,t) > 2 \beta$, then we must also have $d(x,t) > 2 \beta$.  Therefore, by the previous proposition, if $d_{\Phi}(x,t) > 2 \beta$, then $\liminf_{*} w^{\epsilon}(x,t) = 1$. \qed \end{proof}  

\subsection{Proof of Lemma \ref{L: initialization}} \label{S: initialization}

We proceed next to the analysis of the initialization step, or Lemma \ref{L: initialization}.  As in the last section, we only prove one half of the lemma.  

The basic result we need here is from \cite{chen}.  

\begin{lemma}[\cite{chen}, Section 3] \label{L: chen lemma} If $W$ satisfies \eqref{A: W_assumption_1}, \eqref{A: W_assumption_2}, \eqref{A: additional_regularity}, and \eqref{A: sign} , then, for each $\epsilon > 0$, there is a $C^{1}$ function $\overline{W}_{\epsilon} : \mathbb{R} \to \mathbb{R}$ such that
	\begin{equation} \label{E: comparing chen thing}
		\overline{W}_{\epsilon}'(u) \geq W'(u) \quad \text{if} \, \, u \in [-1,1]
	\end{equation}
and a $C^{2}$ function $\chi^{\epsilon} : \mathbb{R} \times [0,\infty) \to \mathbb{R}$ satisfying the ODE
	\begin{equation*}
		\left\{ \begin{array}{r l}
			\chi^{\epsilon}_{t}(s,t) = -\overline{W}'_{\epsilon}(\chi^{\epsilon}(s,t)) & \text{for} \, \, (s,t) \in \mathbb{R} \times [0,\infty), \\
			\chi^{\epsilon}(s,0) = s & \text{if} \, \, s \in \mathbb{R}
		\end{array} \right.
	\end{equation*}
and for which the following statements all hold:
	\begin{itemize}
		\item[(i)] $\chi^{\epsilon}(s,t) < \chi^{\epsilon}(s',t)$ if $s < s'$.
		\item[(ii)] For each $\beta > 0$, there are $\tau(\beta), \epsilon(\beta) > 0$ such that, for each $\epsilon \in (0,\epsilon(\beta))$,
			\begin{align*}
				\chi^{\epsilon}(s,t) &\geq 1 - \beta \epsilon \quad \text{if} \, \, s \geq 3 \epsilon |\log(\epsilon)|, \, \, t \geq \tau(\beta) |\log(\epsilon)|.
			\end{align*}
		\item[(iii)] Given any $a > 0$, there are constants $M(a), \epsilon'(a) > 0$ such that, for each $\epsilon \in (0,\epsilon(a))$,
			\begin{equation*}
				\frac{|\chi_{ss}(s,t)|}{\chi_{s}(s,t)} \leq \frac{M(a)}{\epsilon} \quad \text{for} \, \, (s,t) \in \mathbb{R} \times [0,a |\log(\epsilon)|].
			\end{equation*}
	\end{itemize}
\end{lemma} 

	\begin{proof}[Proof of Lemma \ref{L: initialization}]  To start with, define $\psi : \mathbb{R}^{d} \to \mathbb{R}$ by $\psi(x) = \rho(d_{\varphi}(x)$, where $\rho : \mathbb{R} \to \mathbb{R}$ is a smooth, non-decreasing function such that 
		\begin{equation*}
			\rho(s) = \left\{ \begin{array}{r l}
							-1, & \text{if} \, \, s \leq 0, \\
							1- \delta, & \text{if} \, \, s \geq \beta
						\end{array} \right.
		\end{equation*}
	
	Let $\epsilon(\beta), \tau(\beta) > 0$ and $\chi^{\epsilon}$ be as in Lemma \ref{L: chen lemma}, and let $a = \tau(\beta)$.  Henceforth assume that $\epsilon \in (0,\epsilon(\beta) \wedge \epsilon'(a))$.  
	
	Letting $M(a)$ be the constant from Lemma \ref{L: chen lemma}, (iii), we define $K > 0$ by
		\begin{equation*}
			K = 1 + M(a) + \|\text{div}\, a\|_{L^{\infty}(\mathbb{T}^{d})} \|D\psi\|_{L^{\infty}(\mathbb{R}^{d})}.
		\end{equation*}Define $\underline{u}^{\epsilon} : \mathbb{R}^{d} \times [0,\infty) \to \mathbb{R}$ by 
		\begin{equation*}
			\underline{u}^{\epsilon}(x,t) = \chi^{\epsilon}(\psi(x) - \epsilon^{-1}Kt, \epsilon^{-2}t).
		\end{equation*}
	Observe that, in the domain $\mathbb{R}^{d} \times [0,\tau(\beta)\epsilon^{2} |\log(\epsilon)|]$, we can compute
		\begin{align*}
			\underline{u}^{\epsilon}_{t} - \text{div}(a(\epsilon^{-1}x) D\underline{u}^{\epsilon}) &+ \epsilon^{-2} \overline{W}_{\epsilon}'(\underline{u}^{\epsilon}) \\
			&= \epsilon^{-1} \chi^{\epsilon}_{s} \left( - K - \langle a(\epsilon^{-1}x) D\psi(x), D\psi(x) \rangle \frac{\chi_{ss}^{\epsilon}}{\chi_{s}^{\epsilon}} \right. \\
			&\quad \left. - \epsilon \text{tr} (a(\epsilon^{-1}x) D^{2}\psi(x)) - \langle (\text{div} \, a)(\epsilon^{-1}x), D\psi(x) \rangle \right) \\
			&\leq \epsilon^{-1} \chi^{\epsilon}_{s} \left( -1 + \epsilon \text{tr}(a(\epsilon^{-1} x) D^{2}\psi(x)) \right).
		\end{align*}
	Hence $\underline{u}^{\epsilon}_{t} - \text{div}(a(\epsilon^{-1}x) D\underline{u}^{\epsilon}) + \epsilon^{-2} \overline{W}'_{\epsilon}(\underline{u}^{\epsilon}) \leq -\frac{1}{2}\epsilon^{-1}$ in $\mathbb{R}^{d} \times [0,\tau(\beta)\epsilon^{2}|\log(\epsilon)|]$ as soon as $\epsilon$ is small enough.
	
At the same time, if we let $\overline{u}^{\epsilon} : \mathbb{R}^{d} \times [0,\infty) \to \mathbb{R}$ be the solution of the PDE
	\begin{equation*}
		\left\{ \begin{array}{r l}
				\overline{u}^{\epsilon}_{t} - \text{div}(a(\epsilon^{-1}x) D\overline{u}) + \epsilon^{-2} W'(\overline{u}^{\epsilon}) = 0 & \text{in} \, \, \mathbb{R}^{d} \times [0,\infty), \\
				\overline{u}^{\epsilon} = (1 - \delta) \chi_{\{d_{\varphi} \geq 0\}} - \chi_{\{d \varphi < 0\}} & \text{on} \, \, \mathbb{R}^{d} \times \{0\}.
			\end{array} \right.
	\end{equation*}
then \eqref{E: comparing chen thing} implies that $\overline{u}^{\epsilon}_{t} - \text{div}(a(\epsilon^{-1}x) D\overline{u}) + \epsilon^{-2} \overline{W}'_{\epsilon}(\overline{u}^{\epsilon}) \geq 0$ in $\mathbb{R}^{d} \times [0,\infty)$.  Furthermore, notice that
	\begin{equation*}
		v^{\epsilon}(x,0) = \chi^{\epsilon}(\psi(x),0) = \psi(x) \geq \overline{u}^{\epsilon}(x,0) \quad \text{if} \, \, x \in \mathbb{R}^{d}.
	\end{equation*}
Accordingly, the comparison principle implies that
	\begin{equation} \label{E: working it through initialization}
		\overline{u}^{\epsilon} \geq \underline{u}^{\epsilon} \quad \text{in} \, \, \mathbb{R}^{d} \times [0, \tau(\beta) \epsilon^{2} |\log(\epsilon)|].
	\end{equation}

Finally, we show that 
	\begin{equation*}
		\overline{u}^{\epsilon}(\cdot,\tau(\beta) \epsilon^{2} |\log(\epsilon)|) \geq (1 - \beta \epsilon) \chi_{\{d_{\varphi} \geq \beta\}} - \chi_{\{d_{\varphi} < \beta\}} \quad \text{in} \, \, \mathbb{R}^{d}.
	\end{equation*}  
Notice that the inequality in $\{d_{\varphi} < \beta\}$ is automatic since $|\overline{u}^{\epsilon}| \leq 1$.  Suppose that  $d_{\varphi}(x) \geq \beta$.  Observe that if $\epsilon$ is small enough, then
	\begin{equation*}
		1 - \delta - K \tau(\beta) \epsilon |\log(\epsilon)| \geq 3 \epsilon |\log(\epsilon)|
	\end{equation*}
Thus, using \eqref{E: working it through initialization} and (i) and (ii) of Lemma \ref{L: chen lemma}, we find
	\begin{equation*}
		\overline{u}^{\epsilon}(x,\tau(\beta)\epsilon^{2}|\log(\epsilon)|) \geq \chi^{\epsilon}(1 - \delta - K \tau(\beta) \epsilon |\log(\epsilon)|,\tau(\beta) |\log(\epsilon)|) \geq 1 - \beta \epsilon.
	\end{equation*}
\qed \end{proof}   
	
\subsection{Proof of Proposition \ref{P: bounded_width}} \label{S: bounded_width}  Here we give the
	
\begin{proof}[Proof of Proposition \ref{P: bounded_width}]  Let $\beta, M > 0$ be free parameters.  Set $\varphi(x) = \langle x,e_{0} \rangle - M$ and $\tilde{\Phi}(x,t) = \langle x, e_{0} \rangle - M - t$.  
		
		By assumptions (i) and (ii) of Theorem \ref{T: sharp_interface_limit_graphs} and \eqref{E: orientation}, if we choose $M > 0$ large enough, then $\{u_{0} \geq 1 - \delta\} \supseteq \{u_{0} \geq 1 - \delta_{0}\} \supseteq \{x \in \mathbb{R}^{d} \, \mid \, \langle x,e_{0} \rangle \geq M\}$.  Hence 
			\begin{equation*}
				u^{\epsilon}(\cdot,0) = u_{0} \geq (1 - \delta) \chi_{\{\varphi \geq 0\}} -\chi_{\{\varphi < 0\}}.
			\end{equation*}
		Now Lemma 4.1 in \cite{barles souganidis} implies
			\begin{equation*}
				u^{\epsilon}(\cdot, t_{\epsilon}) \geq T^{\epsilon}(t_{\epsilon})[(1 - \delta) \chi_{\{\varphi \geq 0\}} - \chi_{\{\varphi < 0\}}] \geq (1 - \beta \epsilon) \chi_{\{\varphi \geq \beta\}} - \chi_{\{\varphi < \beta\}}
			\end{equation*} 
		for some $t_{\epsilon} \to 0^{+}$ depending on $\beta$ and $\delta$.  
			
		Now we will show that we can extend this estimate up to time $t$.  Let $V_{e_{0}} = \partial_{s} U_{e_{0}}$ and define $v : \mathbb{R}^{d} \times [0,T] \to \mathbb{R}$ by 
			\begin{equation*}
				v(x,t) = U_{e_{0}} \left(\frac{\tilde{\Phi}(x,t) - 2 \beta}{\epsilon}, \frac{x}{\epsilon}\right) + \epsilon \left(V_{e_{0}}\left(\frac{\tilde{\Phi}(x,t) - 2 \beta}{\epsilon}, \frac{x}{\epsilon}\right) - 2 \beta\right).
			\end{equation*} 	
		Plugging $v$ into the equation, we find
			\begin{align*}
				v_{t} - \text{div} \left(a\left(\frac{x}{\epsilon}\right) Dv\right) + \epsilon^{-2} W'(v) &= - \epsilon^{-1} (V_{e} + 2W''(U_{e}) \beta) + O(1).
			\end{align*}
		Here is where we choose $\beta$.  Since $W''(U_{e_{0}}) > 0$ holds when $|\langle x,e_{0}\rangle|$ is large enough, we only need to choose a $\beta$ small enough that we have
		\begin{equation*}
			V_{e_{0}} \geq 2 \beta (\|W''\|_{L^{\infty}([-1,1])} + 1)
		\end{equation*} 
	for all $x \in \mathbb{R}^{d}$ for which $|\langle x,e_{0}\rangle - M| + t$ is in some bounded interval.  With this choice, $v$ is a sub-solution if $\epsilon$ is sufficiently small.  
		
		Next, set $w(x,t) = \max\{v(x,t),-1\}$ and note that $w$ is also a sub-solution.  Observe that the exponential estimates on $V_{e_{0}}$ as $s \to \infty$ (i.e.\ Proposition \ref{P: eigenfunction_estimate}) yield the existence of an $\epsilon_{0} > 0$ such that if $\epsilon \in (0,\epsilon_{0})$, then
			\begin{align*}
				w(x,0) &= \max \left\{U_{e_{0}} \left(\frac{\phi(x) - 2 \beta}{\epsilon}, \frac{x}{\epsilon} \right)
				+ \epsilon \left(V_{e_{0}} \left(\frac{\phi(x) - 2 \beta}{\epsilon}, \frac{x}{\epsilon}\right) - 2 \beta\right), - 1 \right\} \\
					&\leq \left(1- \beta \epsilon\right) \chi_{\{\phi \geq \beta\}} - \chi_{\{\phi < \beta\}}.
			\end{align*}
		Finally, observe that if $\langle x,e_{0} \rangle \geq M'$ for some $M' > 1 + M + t + 2 \beta$, then, by making $\epsilon_{0}$ smaller if necessary, we obtain, for $\epsilon \in (0,\epsilon_{0})$ and $s \in \left[0,t\right]$,
			\begin{align*}
				w(x,s) &= U_{e_{0}} \left( \frac{\langle x, e_{0}\rangle - M - s - 2 \beta}{\epsilon}, \frac{x}{\epsilon} \right) \\
				&\qquad+ \epsilon \left( V_{e_{0}} \left( \frac{\langle x, e_{0} \rangle - M - s - 2 \beta}{\epsilon}, \frac{x}{\epsilon} \right) - 2 \beta \right) \\
					&\geq 1 - C e^{ -(C \epsilon)^{-1}(M' - M - t - 2 \beta)} - 2 \beta \epsilon \\
					&\geq 1 - \delta.
			\end{align*} 
Putting it all together, we deduce that if $r \in \left[s,t\right]$, $\langle x,e \rangle \geq M'$, and $\epsilon$ is sufficiently small, then
	\begin{equation*}
		u^{\epsilon}(x,r) \geq w(x,r - t_{\epsilon}) \geq 1 - \delta.
	\end{equation*}
	
The lower bound is obtained similarly. \qed
		\end{proof}

\appendix

\section{Technical Lemmata} \label{A: elementary_properties}

\subsection{Approximation results} \label{A: approximation}  The main goal of this section is to prove

\begin{prop} \label{P: smooth_approximation} The following two identities hold:
	\begin{align*}
	\mathscr{E}^{a}(e) &= \inf \left\{ \mathscr{T}^{a}_{e}(U) \, \mid \, U \in \mathscr{X} \cap C^{\infty}_{\text{sgn}}(\mathbb{R} \times \mathbb{T}^{d}) \right\}, \\
		\inf \left\{ \mathscr{T}^{a}_{e}(U) \, \mid \, U \in \mathscr{X}_{+} \right\} &= \inf \left\{ \mathscr{T}^{a}_{e}(U) \, \mid \, U \in \mathscr{X}_{+} \cap C^{\infty}_{\text{sgn}}(\mathbb{R} \times \mathbb{T}^{d}) \right\}.
	\end{align*}
\end{prop}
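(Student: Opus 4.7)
The plan is a standard two-step approximation: truncate $U$ so that it equals $\pm 1$ outside a bounded strip in $s$, then mollify. Since the $\geq$ directions in both identities are immediate, it suffices to show that given any $U \in \mathscr{X}$ (respectively $\mathscr{X}_{+}$) with $\mathscr{T}^{a}_{e}(U) < \infty$ and any $\delta > 0$, one can produce a smooth $V \in C^{\infty}_{\text{sgn}}(\mathbb{R} \times \mathbb{T}^{d}) \cap \mathscr{X}$ (respectively $\cap \mathscr{X}_{+}$) with $\mathscr{T}^{a}_{e}(V) \leq \mathscr{T}^{a}_{e}(U) + \delta$, after which a diagonal extraction finishes the proof.

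For the truncation step, fix $\eta_{M} \in C^{\infty}(\mathbb{R}; [0,1])$ with $\eta_{M} \equiv 1$ on $[-M, M]$ and $\eta_{M} \equiv 0$ off $[-M-1, M+1]$, and $\sigma_{M} \in C^{\infty}(\mathbb{R}; [-1,1])$ non-decreasing with $\sigma_{M} \equiv -1$ on $(-\infty, -M]$ and $\sigma_{M} \equiv 1$ on $[M, \infty)$. Define
\begin{equation*}
U_{M} := \eta_{M} U + (1 - \eta_{M}) \sigma_{M}.
\end{equation*}
Since $\sigma_{M}'$ vanishes wherever $\eta_{M} \neq 1$, one computes $\partial_{s}U_{M} = \eta_{M} \partial_{s} U + \eta_{M}' (U - \sigma_{M})$. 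On $(M, M+1)$ both factors of the second term are nonpositive, and on $(-M-1, -M)$ both are nonnegative, so the truncation preserves the monotonicity constraint whenever it is present. Clearly $|U_{M}| \leq 1$, and $U_{M} \equiv \pm 1$ for $\pm s \geq M+1$. The only nontrivial point is energy control in the transition strips: by the asymptotic condition \eqref{E: asymptotic} one has $\int_{[M, M+1] \times \mathbb{T}^{d}} |U - 1| \, dx \, ds \to 0$ (and similarly at $-\infty$), and together with the finiteness of $\mathscr{T}^{a}_{e}(U)$ this is enough to conclude $\mathscr{T}^{a}_{e}(U_{M}) \to \mathscr{T}^{a}_{e}(U)$ as $M \to \infty$.

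For the mollification step, convolve $U_{M}$ with a non-negative, symmetric product mollifier $\rho_{\epsilon}(s, x) = \epsilon^{-(d+1)} \rho_{1}(s/\epsilon) \rho_{2}(x/\epsilon)$ supported in $|s| \leq \epsilon$. Then $U_{M}^{\epsilon} := U_{M} \ast \rho_{\epsilon}$ is smooth, takes values in $[-1, 1]$, is identically $\pm 1$ for $\pm s \geq M + 1 + \epsilon$, and satisfies $\partial_{s} U_{M}^{\epsilon} \geq 0$ whenever $\partial_{s} U_{M} \geq 0$; in particular $U_{M}^{\epsilon} \in C^{\infty}_{\text{sgn}}(\mathbb{R} \times \mathbb{T}^{d}) \cap \mathscr{X}$ (respectively $\cap \mathscr{X}_{+}$). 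Since $\mathcal{D}_{e} U_{M} \in L^{2}(\mathbb{R} \times \mathbb{T}^{d})$, standard properties of mollifiers give $\mathcal{D}_{e} U_{M}^{\epsilon} \to \mathcal{D}_{e} U_{M}$ in $L^{2}$, and the $L^{\infty}$ bound on $a$ then delivers convergence of the quadratic-form term. For the potential term, $W \circ U_{M}^{\epsilon}$ is supported in the bounded strip $\{|s| \leq M + 1 + \epsilon\}$, converges pointwise a.e., and is dominated by $\|W\|_{L^{\infty}([-1,1])}$, so dominated convergence yields $L^{1}$ convergence. Hence $\mathscr{T}^{a}_{e}(U_{M}^{\epsilon}) \to \mathscr{T}^{a}_{e}(U_{M})$ as $\epsilon \to 0$, and a diagonal selection produces the desired $V$.

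The main subtlety is the truncation step, where the transition-strip error must vanish using only the $L^{1}_{\text{loc}}$-type asymptotic condition \eqref{E: asymptotic} --- pointwise information is not available --- while for the second identity one must simultaneously preserve $\partial_{s} U \geq 0$. Choosing $\sigma_{M}$ to be constantly $\pm 1$ on the transition intervals $[\pm M, \pm (M+1)]$ serves both purposes at once: it kills the otherwise-troublesome $(1 - \eta_{M})\sigma_{M}' e$ contribution to $\mathcal{D}_{e}U_{M}$, reducing the transition-strip error to $L^{1}$ quantities controllable by \eqref{E: asymptotic}, and it forces the sign-compatibility between $\eta_{M}'$ and $U - \sigma_{M}$ that yields $\partial_{s} U_{M} \geq 0$ for free.
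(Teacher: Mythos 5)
Your proposal is correct and follows essentially the same route as the paper: a truncation step to make the candidate equal to $\pm1$ outside a bounded strip in $s$, followed by mollification. The paper's Proposition~\ref{P: cut_off} uses $U_{N} = \varphi_{N} U + (1 - \varphi_{N}) \operatorname{sgn}(s)$, which agrees with your $U_{M} = \eta_{M} U + (1 - \eta_{M})\sigma_{M}$ on the transition strips (both force the replacement function to be constantly $\pm1$ there), and the paper likewise notes that monotonicity is preserved when $\operatorname{sgn}(s)\,\partial_{s}\varphi_{N} \leq 0$, exactly your sign analysis of $\eta_{M}'(U - \sigma_{M})$; the mollification step (Proposition~\ref{P: smooth_approx}) is identical. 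The only genuine difference is that you spell out the transition-strip energy estimate, whereas the paper delegates this to a citation.
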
  

First, we observe that any $U \in \mathscr{X}$ can be well approximated by a function in $\mathscr{X}$ that equals $\text{sgn}(s)$ outside of a compact subset of $\mathbb{R} \times \mathbb{T}^{d}$:

\begin{prop} \label{P: cut_off}  If $U$ is a measurable function on $\mathbb{R} \times \mathbb{T}^{d}$ such that $|U| \leq 1$ a.e., $\mathscr{T}^{a}_{e}(U) < \infty$, and \eqref{E: asymptotic} holds, then, for each $\epsilon > 0$, there is a measurable function $\tilde{U}_{\epsilon}$ in $\mathbb{R} \times \mathbb{T}^{d}$ and an $M_{\epsilon} > 0$ such that
	\begin{itemize}
		\item[(i)] $|\tilde{U}_{\epsilon}| \leq 1$ a.e.,
		\item[(ii)] $\mathscr{T}^{a}_{e}(\tilde{U}_{\epsilon}) < \infty$,
		\item[(iii)] $\text{sgn}(s) \tilde{U}(s,x) = 1$ if $|s| \geq M_{\epsilon}$,
		\item[(iv)] $|\mathscr{T}^{a}_{e}(\tilde{U}_{\epsilon}) - \mathscr{T}^{a}_{\epsilon}(U)| < \epsilon$.
	\end{itemize}
If, in addition, $\partial_{s} U \geq 0$, then $\tilde{U}_{\epsilon}$ can be chosen in such a way that $\partial_{s} \tilde{U}_{\epsilon} \geq 0$.  
\end{prop}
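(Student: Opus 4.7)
The plan is to build $\tilde{U}_{\epsilon}$ by smoothly interpolating $U$ with the equilibria $\pm 1$ for $|s|$ large. Fix $R \geq 1$ and a smooth cutoff $\eta_{R} : \mathbb{R} \to [0,1]$ equal to $1$ on $[-R,R]$, vanishing outside $[-R-1,R+1]$, with $|\eta_{R}'| \leq 2$, and set
\[
    \tilde{U}(s,x) = \eta_{R}(s) U(s,x) + (1 - \eta_{R}(s)) \text{sgn}(s).
\]
Then $\tilde{U}$ is pointwise a convex combination of functions in $[-1,1]$ (giving (i)) and coincides with $\text{sgn}(s)$ for $|s| \geq R+1$ (giving (iii) with $M_{\epsilon} = R+1$). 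The whole argument reduces to choosing $R$ large depending on $\epsilon$ so that (iv) holds, and then verifying monotonicity; (ii) is then immediate since $\mathscr{T}^{a}_{e}(U) < \infty$.

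For the gradient part, the product rule gives
\[
    \mathcal{D}_{e} \tilde{U} = \eta_{R} \mathcal{D}_{e} U + \eta_{R}'(U - \text{sgn}(s))\, e,
\]
which is unambiguous even when $\partial_{s}U$ is not separately in $L^{2}$. Since $\eta_{R} \equiv 1$ on $[-R,R]$, this expression agrees with $\mathcal{D}_{e}U$ there, is zero outside $[-R-1,R+1]$, and on the transition annulus $\{R < |s| < R+1\}$ contributes at most
\[
    2\Lambda \int_{\{R<|s|<R+1\}} \|\mathcal{D}_{e} U\|^{2}\, dx\, ds + 16\Lambda \int_{\{R<|s|<R+1\}} |U - \text{sgn}(s)|\, dx\, ds,
\]
where we used $(U - \text{sgn}(s))^{2} \leq 2|U - \text{sgn}(s)|$. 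The first term vanishes as $R \to \infty$ because $\mathcal{D}_{e}U \in L^{2}$; the second vanishes by the $L^{1}_{\text{loc}}$ convergence in \eqref{E: asymptotic}.

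The potential term is the delicate point, since $W$ is only assumed continuous. On $[-R,R]$ we have $W(\tilde{U}) = W(U)$, and outside $[-R-1,R+1]$ we have $W(\tilde{U}) = 0$; the total over these two regions differs from $\int W(U)$ by at most $\int_{\{|s|>R\}} W(U) \, dx \, ds$, which tends to zero as $R \to \infty$ by dominated convergence. On the transition annulus, the identity $|\tilde{U}(s,x) - \text{sgn}(s)| = \eta_{R}(s)|U(s,x) - \text{sgn}(s)|$ is the key: letting $\omega$ be a modulus of continuity for $W$ at $\pm 1$ (using $W(\pm 1) = 0$), for any $\delta > 0$ split the annulus into $\{|U - \text{sgn}(s)| \leq \delta\}$, where $W(\tilde{U}) \leq \omega(\delta)$, and the complement $A_{R,\delta}$, where Markov's inequality and \eqref{E: asymptotic} give $\mathcal{L}^{d+1}(A_{R,\delta}) \leq \delta^{-1} \int_{\{R<|s|<R+1\}} |U - \text{sgn}(s)| \to 0$. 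Since $W(\tilde{U}) \leq \|W\|_{L^{\infty}([-1,1])}$ on $A_{R,\delta}$, the contribution is bounded by $2\omega(\delta) + \|W\|_{L^{\infty}} \mathcal{L}^{d+1}(A_{R,\delta})$; choosing first $\delta$ small and then $R$ large makes it arbitrarily small, proving (iv).

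Finally, in the monotone case the distributional identity
\[
    \partial_{s} \tilde{U} = \eta_{R}\, \partial_{s} U + \eta_{R}'(U - \text{sgn}(s))
\]
holds (the Dirac mass from $\partial_{s}\text{sgn}$ is killed because $1 - \eta_{R}$ vanishes near $s = 0$ once $R \geq 1$). The first summand is non-negative since $\eta_{R} \geq 0$ and $\partial_{s} U \geq 0$. For the second, on $\{s > 0\}$ we have $\eta_{R}' \leq 0$ and $U - 1 \leq 0$, while on $\{s < 0\}$ we have $\eta_{R}' \geq 0$ and $U + 1 \geq 0$; either way the product is non-negative. The main obstacle is the potential estimate on the transition annulus, where continuity of $W$ (rather than any quantitative regularity) forces the modulus-of-continuity splitting above; every other step is essentially a bookkeeping exercise with the product rule and \eqref{E: asymptotic}.
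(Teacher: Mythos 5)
Your construction is exactly the one the paper uses: the convex-combination cutoff $\tilde{U} = \eta_R U + (1-\eta_R)\,\text{sgn}(s)$ with $\text{sgn}(s)\,\eta_R' \leq 0$ to preserve monotonicity. The paper defers the verification of (iv) to a cited reference, while you fill in the details (the product-rule identity for $\mathcal{D}_e\tilde{U}$, dominated convergence for the gradient and outer potential terms, and the modulus-of-continuity splitting on the transition annulus to handle $W$ being merely continuous), and these details are all correct.
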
  

\begin{proof}  Let $\varphi_{N}$ be a smooth cut-off function supported in $\mathbb{R} \times [-N,N]$ and set $U_{N} = \varphi_{N} U + (1 - \varphi_{N}) \text{sgn}(s)$.  Notice that the monotonicity property of $U$ is preserved provided $\text{sgn}(s) \partial_{s} \varphi_{N} \leq 0$.  Suitably choosing $N$ and the cut-off function, it is possible to ensure that (iv) holds (cf.\ \cite[Proof of Proposition 6]{previous_paper}).  \qed \end{proof}  

Next, we show that we can smooth the function obtained in the previous result without affecting its energy too much:

\begin{prop} \label{P: smooth_approx} If $U$ satisfies the hypotheses of Proposition \ref{P: cut_off}, then, for each $\epsilon > 0$, there is a $\hat{U}_{\epsilon} \in C^{\infty}_{\text{sgn}}(\mathbb{R} \times \mathbb{T}^{d})$ and an $\hat{M}_{\epsilon} > 0$ such that 
	\begin{itemize}
		\item[(i)] $\text{sgn}(s)\hat{U}_{\epsilon}(s,x) = 1$ if $|s| \geq \hat{M}_{\epsilon}$,
		\item[(ii)] $|\hat{U}_{\epsilon}| \leq 1$ in $\mathbb{R} \times \mathbb{T}^{d}$,
		\item[(iii)] $\mathscr{T}^{a}_{e}(\hat{U}_{\epsilon}) < \infty$,
		\item[(iv)] $|\mathscr{T}^{a}_{e}(U) - \mathscr{T}^{a}_{e}(\hat{U}_{\epsilon})| < \epsilon$.
	\end{itemize}
Furthermore, we may assume that $\partial_{s} \hat{U}_{\epsilon} \geq 0$ if $\partial_{s} U \geq 0$ and that $U = \lim_{\epsilon \to 0^{+}} \hat{U}_{\epsilon}$ a.e.\
\end{prop}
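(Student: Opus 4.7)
The plan is to combine the cut-off from Proposition \ref{P: cut_off} with standard mollification on $\mathbb{R} \times \mathbb{T}^{d}$, verifying at each step that the resulting function still takes values in $[-1,1]$, preserves $\text{sgn}(s)$ at infinity, preserves monotonicity in $s$ when applicable, and does not perturb the energy too much.

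First, I would apply Proposition \ref{P: cut_off} to replace $U$ by a function $\tilde{U}$ equal to $\text{sgn}(s)$ outside $\mathbb{R} \times [-N,N] \times \mathbb{T}^{d}$ (for some $N$ depending on $\epsilon$), with $|\mathscr{T}^{a}_{e}(U) - \mathscr{T}^{a}_{e}(\tilde{U})| < \epsilon/2$, and preserving $\partial_{s} U \geq 0$ if applicable. Next I would fix a product mollifier $\rho_{\delta}(s,x) = \delta^{-(d+1)} \eta_{1}(s/\delta) \eta_{2}(x/\delta)$ with $\eta_{1}, \eta_{2}$ smooth, nonnegative, integrating to one, supported in the unit balls, and set $\hat{U}_{\epsilon} := \rho_{\delta} * \tilde{U}$ (with convolution in $x$ done on the torus). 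The immediate properties are: $\hat{U}_{\epsilon} \in C^{\infty}$ with $|\hat{U}_{\epsilon}| \leq 1$ because $\rho_{\delta}$ is a nonnegative probability density; $\text{sgn}(s)\hat{U}_{\epsilon}(s,x) = 1$ for $|s| \geq N + \delta =: \hat{M}_{\epsilon}$ since $\tilde{U}$ is constant there; and $\partial_{s} \hat{U}_{\epsilon} = \rho_{\delta} * \partial_{s} \tilde{U} \geq 0$ when $\partial_{s} U \geq 0$. Pointwise a.e.\ convergence $\hat{U}_{\epsilon} \to U$ as $\delta \to 0^{+}$ follows from the Lebesgue differentiation theorem applied to $\tilde{U}$, combined with Proposition \ref{P: cut_off}.

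For the energy, note that the finiteness $\mathscr{T}^{a}_{e}(\tilde{U}) < \infty$ yields $\mathcal{D}_{e} \tilde{U} \in L^{2}(\mathbb{R} \times \mathbb{T}^{d})$, and since $\mathcal{D}_{e}$ commutes with convolution we have $\mathcal{D}_{e} \hat{U}_{\epsilon} = \rho_{\delta} * \mathcal{D}_{e} \tilde{U} \to \mathcal{D}_{e} \tilde{U}$ in $L^{2}$. Using only $\lambda \leq a \leq \Lambda$ and the elementary bilinear inequality
\[
\left| \int \langle a \mathcal{D}_{e} \hat{U}_{\epsilon}, \mathcal{D}_{e} \hat{U}_{\epsilon}\rangle - \int \langle a \mathcal{D}_{e}\tilde{U}, \mathcal{D}_{e}\tilde{U}\rangle \right| \leq \Lambda \|\mathcal{D}_{e}(\hat{U}_{\epsilon} - \tilde{U})\|_{L^{2}} \|\mathcal{D}_{e}(\hat{U}_{\epsilon} + \tilde{U})\|_{L^{2}},
\]
the gradient part of the energy converges. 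For the potential term, $W(\tilde{U})$ and $W(\hat{U}_{\epsilon})$ both vanish outside $[-(N+1), N+1] \times \mathbb{T}^{d}$ for $\delta < 1$, and on that compact region $W$ is bounded and continuous on $[-1,1]$ while $\hat{U}_{\epsilon} \to \tilde{U}$ a.e.\ and is uniformly bounded, so dominated convergence gives $\int W(\hat{U}_{\epsilon}) \to \int W(\tilde{U})$. Choosing $\delta = \delta(\epsilon)$ small enough makes the mollification error smaller than $\epsilon/2$, and combining with the cut-off error yields (iv).

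There is no real obstacle; the only step that requires mild care is the bilinear estimate for the gradient energy, which is necessary because $a$ is only assumed measurable (so one cannot pass to the limit directly under the integral). Once that is settled, all other properties reduce to standard facts about nonnegative mollifiers and the fact that $\tilde{U}$ is identically $\pm 1$ outside a compact slab in the $s$-direction. Proposition \ref{P: smooth_approximation} then follows by applying the above construction to any admissible $U$, noting that the monotonicity-preserving version handles $\mathscr{X}_{+}$.
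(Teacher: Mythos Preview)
Your proposal is correct and follows exactly the same approach as the paper: apply the cut-off of Proposition \ref{P: cut_off} and then mollify on $\mathbb{R}\times\mathbb{T}^{d}$, checking that the mollifier preserves the bound $|\hat U_\epsilon|\le 1$, the $\text{sgn}(s)$ behavior outside a slab, monotonicity in $s$, and the energy up to a small error. The paper's proof is only a few lines and leaves the convergence of the energy implicit, whereas you have spelled out the bilinear estimate for the gradient term and the dominated-convergence argument for the potential term; both are routine and your treatment is accurate.
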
  

\begin{proof}  Use a mollifier in $\mathbb{R} \times \mathbb{T}^{d}$ to smooth the function obtained in Proposition \ref{P: cut_off}.  Property (ii) is immediate, monotonicity in $s$ is preserved, and (i) holds with $\hat{M}_{\epsilon} \geq M_{\epsilon}$.  (iii) follows from the fact that mollification commutes with $\mathcal{D}_{e}$ and (iv) holds as soon as the mollification parameter is small enough. \qed \end{proof}

Now we proceed with the

\begin{proof}[Proof of Proposition \ref{P: smooth_approximation}]  If $U \in \mathscr{X}$, then Proposition \ref{P: smooth_approx} implies there is a family $(\hat{U}_{\epsilon})_{\epsilon > 0} \in \mathscr{X} \cap C^{\infty}_{\text{sgn}}(\mathbb{R} \times \mathbb{T}^{d})$ such that $\lim_{\epsilon \to 0^{+}} \mathscr{T}^{a}_{e}(\hat{U}_{\epsilon}) = \mathscr{T}^{a}_{e}(U)$.  Thus,
	\begin{equation*}
		\inf \left\{ \mathscr{T}^{a}_{e}(U) \, \mid \, U \in \mathscr{X} \right\} \geq \inf \left\{ \mathscr{T}^{a}_{e}(U) \, \mid \, U \in \mathscr{X} \cap C^{\infty}_{\text{sgn}}(\mathbb{R} \times \mathbb{T}^{d}) \right\}.
	\end{equation*}
The inclusion $\mathscr{X} \cap C^{\infty}_{\text{sgn}}(\mathbb{R} \times \mathbb{T}^{d}) \subseteq \mathscr{X}$ provides the complementary inequality.  

Since for each $U \in \mathscr{X}_{+}$, there is a family $(\hat{U}_{\epsilon})_{\epsilon > 0} \subseteq C^{\infty}_{c}(\mathbb{R} \times \mathbb{T}^{d})$ as above with $\partial_{s} \hat{U}_{\epsilon} \geq 0$, the other identity follows similarly. \qed \end{proof}  

\subsection{On a Class of Uniformly Elliptic Allen-Cahn Functionals in Cylinders}  \label{A: annoying uniformly elliptic part}

This section provides a proof of Proposition \ref{P: exponential decay pulsating}.  As was already mentioned in Section \ref{S: einstein relation laminar media}, the proposition essentially follows from \cite[Theorem 2.3]{valdinoci de la llave}.  Since the fact that the constants in \eqref{E: exponential pulsating estimate 1} and \eqref{E: exponential pulsating estimate 2} do not depend on $v$ is important elsewhere in the paper, but this is not made precise in \cite{valdinoci de la llave}, we provide a complete proof here for the reader's convenience.

\begin{proof}[Proof of Proposition \ref{P: exponential decay pulsating}]  We need to prove existence and uniqueness of $U^{\delta}_{v}$, continuity with respect to $v$, and the estimates \eqref{E: exponential pulsating estimate 2}, \eqref{E: exponential pulsating estimate 2}, and \eqref{E: holder estimate}.  The work is really in the estimates \eqref{E: exponential pulsating estimate 1} and \eqref{E: exponential pulsating estimate 2}, which follow from \cite{valdinoci}.  Since \cite{valdinoci} proves existence at the same time as it establishes the necessary ingredients for these estimates, we discuss existence, \eqref{E: exponential pulsating estimate 1}, and \eqref{E: exponential pulsating estimate 2} simultaneously in the final step of the proof.  We start with the easier issues, namely, uniqueness and \eqref{E: holder estimate}.

\textbf{Step 1: Uniqueness and continuity}  

To establish uniqueness of $U^{\delta}_{v}$, we need to show that there is at most one function $U \in \mathscr{X}^{(k)}$ satisfying the following conditions:
	\begin{gather*}
		\mathscr{T}^{a,\delta}_{v}(U) = \tilde{\varphi}^{a,\delta}(v), \quad \int_{\mathbb{T}^{k}} U(0,x) \, dx = 0, \quad \text{and} \quad \lim_{s \to \pm \infty} U(s,x) = \pm 1.
	\end{gather*}
Suppose, then, that $U$ and $\tilde{U}$ both have these properties.  We will show that $U = \tilde{U}$.  

To start with, observe that the identity $\mathscr{T}^{a,\delta}_{v}(U) = \mathscr{T}^{a,\delta}_{v}(\tilde{U}) = \tilde{\varphi}^{a,\delta}(v)$ imply that both are weak solutions of the Euler-Lagrange equation \eqref{E: pulsating standing wave with regularization}.  By \eqref{E: extra ellipticity}, this equation is uniformly elliptic, and, thus, the boundedness of $U$ and $\tilde{U}$ can be bootstrapped to H\"{o}lder continuity in $\mathbb{R} \times \mathbb{T}^{k}$.

Next, notice that to prove $U = \tilde{U}$, it suffices to show that $U_{m} = U_{M}$, where $U_{m} = U \wedge \tilde{U}$ and $U_{M} = U \vee \tilde{U}$.  

Finally, observe that $\mathscr{T}^{a,\delta}_{v}(U_{m}) + \mathscr{T}^{a,\delta}_{v}(U_{M}) \leq \mathscr{T}^{a,\delta}_{v}(U) + \mathscr{T}^{a,\delta}_{v}(\tilde{U})$, and, thus, we have $\mathscr{T}^{a,\delta}_{v}(U) = \mathscr{T}^{a,\delta}_{v}(\tilde{U}) = \tilde{\varphi}^{a,\delta}(v)$.  Hence $U_{m}$ and $U_{M}$ are both solutions of the Euler-Lagrange equation \eqref{E: pulsating standing wave with regularization}.  Since $U_{m} \leq U_{M}$, the strong maximum principle implies that either $U_{m} = U_{M}$, in which case we are done, or else the inequality $U_{m} < U_{M}$ holds pointwise in $\mathbb{R} \times \mathbb{T}^{k}$.  This last possibility cannot occur, however.  Indeed, if it did, then $U < \tilde{U}$ or $\tilde{U} < U$ would follow by continuity, and then we can compute $|\int_{\mathbb{R} \times \mathbb{T}^{k}} (U(0,x) - \tilde{U}(0,x)) \, dx| = \int_{\mathbb{R} \times \mathbb{T}^{k}} |U(0,x) - \tilde{U}(0,x)| \, dx > 0$, a contradiction.  

Now that uniqueness is clear, continuity of $v \mapsto U^{\delta}_{v}$ follows by a straightforward compactness argument we omit.

\textbf{Step 2: H\"{o}lder estimate}

As noted in Step 1, the function $U^{\delta}_{v}$, if it exists, is a bounded solution of the Euler-Lagrange equation \eqref{E: pulsating standing wave with regularization}.  Hence, by elliptic regularity (see, e.g., \cite[Chapter 4]{Han Lin} or \cite[Chapter 8]{Gilbarg Trudinger}), it is uniformly $\alpha$-H\"{o}lder continuous in $\mathbb{R} \times \mathbb{T}^{k}$ with a modulus that only depends on the ellipticity constants in \eqref{E: extra ellipticity} and $\|W'\|_{L^{\infty}([-1,1])}$.  

\textbf{Step 3: Existence and uniform exponential decay}

Finally, we prove the existence of $U^{\delta}_{v}$ and its exponential decay.  In the notation of \cite{valdinoci}, we are interested in $n = k + 1$, $\omega = (-1,0,0,\dots,0)$, and $\tilde{\mathbb{R}}^{n} = \mathbb{R} \times \mathbb{T}^{k}$.  Let us write the functional $\mathscr{T}^{a,\delta}_{v}$ in the suggestive form
	\begin{equation*}
		\mathscr{T}^{a,\delta}_{v}(V) = \int_{\mathbb{R} \times \mathbb{T}^{k}} \left(\frac{1}{2} \langle A_{v}(s,x) (\partial_{s},D_{x})V, (\partial_{s},D_{x})V \rangle + W(V) \right) \, dx \, ds,
	\end{equation*}
which, due to \eqref{E: extra ellipticity} and the assumptions on $a$ and $W$, fits the hypotheses of \cite{valdinoci}.  
(Of course, $\partial_{s} A_{v} \equiv 0$, but that is irrelevant for us here.)  Since \eqref{A: W_assumption_2} holds, we can fix an $\eta \in (0,1)$ such that
	\begin{equation*}
		W''(u) \geq \frac{\alpha}{2} \quad \text{if} \, \, u \in [-1,-1 + \eta] \cup [1-\eta,1].
	\end{equation*}
For us, $\eta$ will play the role played by $99/100$ in \cite{valdinoci}. 

By Theorem 8.1 in \cite{valdinoci} and its proof applied with the choice of $n$ and $\omega$ above and $\eta$ replacing $99/100$, there is a $U \in H^{1}_{\text{loc}}(\mathbb{R} \times \mathbb{T}^{k}; [-1,1])$ such that 
	\begin{itemize}
		\item[(i)] $\mathscr{T}^{a,\delta}_{v}(U) < \infty$,
		\item[(ii)] $\lim_{s \to \pm \infty} U(s,x) = \pm 1$ for each $(s,x) \in \mathbb{R} \times \mathbb{T}^{k}$,
		\item[(iii)] For any $M > 0$ and any $V \in H^{1}_{\text{loc}}(\mathbb{R} \times \mathbb{T}^{k};[-1,1])$ such that $\text{sgn}(s,x) V(s,x) = 1$ in $[-M,M] \times \mathbb{T}^{k}$, we have
			\begin{equation*}
				\mathscr{T}^{a,\delta}_{v}(U) \leq \mathscr{T}^{a,\delta}_{v}(V).
			\end{equation*}
		\item[(iv)] There is an $\bar{M} > 0$ depending only on the ellipticity constants $\mu(\delta,K)$ and $M(\delta,K)$ of $A_{v}$ (see \eqref{E: extra ellipticity}), $\|\text{div}_{s,x} \, A_{v}\|_{L^{\infty}(\mathbb{R} \times \mathbb{T}^{k})}$, and $W$ such that 
			\begin{equation*}
				\{(s,x) \in \mathbb{R} \times \mathbb{T}^{k} \, \mid \, -1 + \eta \leq U(s,x) \leq 1 - \eta\} \subseteq [-\bar{M},0] \times \mathbb{T}^{k}.
			\end{equation*}  
	\end{itemize}
Notice that, by an approximation argument (cf.\ Proposition \ref{P: cut_off}), (iii) implies that 
	\begin{equation*}
		\mathscr{T}^{a,\delta}_{v}(U) \leq \mathscr{T}^{a,\delta}_{v}(V) \quad \text{if} \, \, V \in \mathscr{X}^{(k)}.
	\end{equation*}
Hence $\mathscr{T}^{a,\delta}_{v}(U) = \tilde{\varphi}^{a,\delta}(v)$.  

Next, notice that there is a dimensional constant $L > 0$ such that 
	\begin{equation*}
		\|\text{div}_{s,x}A_{v}\|_{L^{\infty}(\mathbb{R} \times \mathbb{T}^{k})} \leq L(1 + C_{K}^{2}) \|Da\|_{L^{\infty}(\mathbb{T}^{k})},
	\end{equation*} 
where $C_{K} = \max\{\|v\| \, \mid \, \|v\| \in K\}$.  Thus, (iii) implies that $\bar{M}$ only depends on $\delta$, $K$, $a$, and $W$.  

To get an exponential estimate on $U$, observe that, by the choice of $\eta$, $U$ satisfies the differential inequality 
	\begin{equation*}
		- \delta \partial_{s}^{2} U + \mathcal{D}_{v}^{*}(a(x) \mathcal{D}_{v}U) + \frac{\alpha}{2} (U(s,x) - 1) \geq 0 \quad \text{in} \, \, [0,\infty) \times \mathbb{T}^{k}.  
	\end{equation*}
Thus, applying the maximum principle to $1 - U(s,x) - B \exp(-B^{-1}s)$ for universal large $B$ (cf.\ the proof of Proposition \ref{P: annoying exponential convergence trick}) we find
	\begin{equation*}
		1 - U(s,x) \leq B \exp(-B^{-1}s) \quad \text{if} \, \, s \geq 0.
	\end{equation*}
Since $|U| \leq 1$, by making $B$ larger if necessary, we can assume that $1 - U \leq B \exp(-B^{-1}s)$ in $\mathbb{R} \times \mathbb{T}^{k}$.  

A similar argument using the equation satisfied by $U + 1$ in $(-\infty,-\bar{M}] \times \mathbb{T}^{k}$ shows that $U + 1 \leq B^{-1} \exp(B^{-1}s)$, again with a possibly larger $B$.  Here the choice of $B$ depends on $\bar{M}$, hence it only depends on $\delta$, $K$, $a$, and $W$.

Finally, since $\lim_{s \to \pm \infty} U(s,x) = \pm 1$ for each $(s,x) \in \mathbb{R} \times \mathbb{T}^{k}$ and $U$ is uniformly continuous in $\mathbb{R} \times \mathbb{T}^{k}$ by elliptic regularity, we can choose a $\bar{s} \in \mathbb{R}$ such that
	\begin{equation*}
		\int_{\mathbb{T}^{k}} U(\bar{s},x) \, dx = 0.
	\end{equation*}
From the bounds on $U$, we deduce that $- B \log(B) \leq \bar{s} \leq B \log(B)$.  

Finally, if we define $U^{\delta}_{v}(s,x) = U(s + \bar{x},s)$, then $\mathscr{T}^{a,\delta}_{v}(U^{\delta}_{v}) = \mathscr{T}^{a,\delta}_{v}(U) = \tilde{\varphi}^{a,\delta}(v)$ by translation invariance and
	\begin{equation*}
		1 - U^{\delta}_{v}(s,x) \leq (B e^{-B^{-1}\bar{s}})\exp(-B^{-1}s), \quad U^{\delta}_{v}(s,x) + 1 \leq (B e^{B^{-1}\bar{s}}) \exp(B^{-1}s).
	\end{equation*}
Thus, since we know $|\bar{s}| \leq B \log(B)$, we conclude that $U^{\delta}_{v}$ satisfies the estimates \eqref{E: exponential pulsating estimate 1} and \eqref{E: exponential pulsating estimate 2} for some $C_{0}$ with the desired dependence on the data.
\qed \end{proof}

\subsection{Comparison Principle for $\mathscr{P}$-sub- and super-solutions} \label{A: comparison}  Here we give the 

\begin{proof}[Proof of Proposition \ref{P: comparison_principle}]  Since a comparison principle for ordinary viscosity sub- and super-solutions of \eqref{E: graph_equation} is already known, it suffices to prove that a $\mathscr{P}(\bar{M})$-sub-solution (resp.\ $\mathscr{P}(\bar{M})$-super-solution) is an ordinary sub-solution (resp.\ super-solution).  We will only prove the former statement since the latter follows from analogous arguments.

Suppose then that $h$ is a $\mathscr{P}(\bar{M})$-sub-solution for some fixed $\bar{M} > 0$.  To see that it is an ordinary viscosity sub-solution, it suffices, through the usual reductions, to show that if there is a $(x_{0},t_{0}) \in \mathbb{R}^{d} \times [0,T]$ such that
	\begin{equation} \label{E: strict_global_max}
		h(x,t) \leq h(x_{0},t_{0}) + \langle p, x - x_{0} \rangle + \frac{1}{2} \langle A(x -x_{0}), x - x_{0} \rangle + a (t - t_{0}) + \frac{b(t - t_{0})^{2}}{2},	
	\end{equation}
where $p \in \mathbb{R}^{d-1}$, $A \in \mathcal{S}_{d}$, and $a,b \in \mathbb{R}$, then
	\begin{equation*}
		a - \text{tr}(\tilde{\mathcal{G}}(p) A) \leq 0.
	\end{equation*}
Notice that, by perturbing $A$ and $b$ if necessary, we can assume that $(x_{0},t_{0})$ is the only point in $\mathbb{R}^{d-1} \times [0,T]$ where equality holds in \eqref{E: strict_global_max}.  
	
Since $h$ is bounded, there is an $R > 0$ such that
	\begin{equation*}
		h(x,t) + |a| T +\frac{|b| T}{2}+ 1 \leq \|x\|
	\end{equation*}
if $\|x\| \geq R$ and $t \in [0,T]$.  Without loss of generality, we can assume that $\|x_{0}\| < R$.  

Let $\rho \in C^{\infty}_{c}(\mathbb{R}^{d-1}; [0,1])$ satisfy $\rho(x) = 1$ if $\|x\| \leq R$.  

A straightforward computation now shows that
	\begin{align*}
		h(x,t) &\leq \left(h(x_{0},t_{0})+  \langle p, x - x_{0} \rangle + \frac{1}{2} \langle A(x- x_{0}), x - x_{0} \rangle\right) \rho(x) \\
			&\qquad + (1 - \rho(x)) (\bar{M} + 1)\|x\| + a(t - t_{0}) + \frac{b(t- t_{0})^{2}}{2}.
	\end{align*}
Thus, since $h$ is a $\mathscr{P}(\bar{M})$-sub-solution and $\rho = 1$ in a neighborhood of $(x_{0},t_{0})$, we find
	\begin{equation*}
		a - \text{tr}(\tilde{\mathcal{G}}(p) A) \leq 0.
	\end{equation*} \qed
\end{proof}

\section{Transformation Properties of $\mathcal{L}^{d+1}$ in $\mathbb{R} \times \mathbb{T}^{d}$} \label{A: transformation properties} 

Here we provide the proof of Theorem \ref{T: ergodic_lemma} on the transformation properties of $\mathcal{L}^{d+1}$ under the map $(x,\zeta) \mapsto (\langle x,e \rangle - \zeta,x)$.  We begin with the more demanding irrational case and then sketch how to carry the arguments over to the rational one.

\subsection{Irrational directions --- Preliminaries}  

In the proof of Theorem \ref{T: ergodic_lemma}, we will be interested in averages of periodic funcitons over cubes in $\langle e \rangle^{\perp}$.  When $e$ is irrational, it turns out that such averages are readily analyzed.  To explain this, we digress into some ergodic theory.  

Recall that $M_{e}$ is the module of integers orthogonal to $e$.  Being a submodule of $\mathbb{Z}^{d}$, we can fix a $\mathbb{Z}$-basis $\{k_{1},\dots,k_{r}\}$ of $M_{e}$.  Notice that these are necessarily linearly independent over $\mathbb{R}$.  The first result we will use says we can construct from these an orthogonal $\mathbb{Q}$-basis of $\text{span}_{\mathbb{Q}} M_{e}$:

\begin{lemma}  There is an orthogonal set of vectors $\{k'_{1},\dots,k'_{r}\} \subseteq M_{e}$ such that 
	\begin{equation*}
		\text{span}_{\mathbb{Q}}\{k'_{1},\dots,k'_{r}\} = \text{span}_{\mathbb{Q}} M_{e}
	\end{equation*}
\end{lemma}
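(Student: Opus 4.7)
\medskip

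The plan is to apply the Gram--Schmidt process over $\mathbb{Q}$ to the given $\mathbb{Z}$-basis $\{k_1, \dots, k_r\}$ of $M_e$, and then clear denominators to stay inside $M_e$. Concretely, I would define inductively
\begin{equation*}
k_j' \;=\; k_j \;-\; \sum_{i = 1}^{j - 1} \frac{\langle k_j, k_i' \rangle}{\langle k_i', k_i' \rangle} \, k_i' \quad \text{for} \, \, j = 1, 2, \dots, r,
\end{equation*}
starting with $k_1' = k_1$. A standard induction shows the vectors $\{k_1', \dots, k_r'\}$ are pairwise orthogonal and span the same $\mathbb{Q}$-subspace of $\mathbb{R}^d$ as $\{k_1, \dots, k_r\}$, which by hypothesis equals $\mathrm{span}_{\mathbb{Q}} M_e$.

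The only issue is that the $k_j'$ produced by Gram--Schmidt generally have rational, not integer, coordinates, so a priori they need not lie in $M_e \subseteq \mathbb{Z}^d$. Since the coefficients in the formula for $k_j'$ are rational (being ratios of inner products of integer vectors), each $k_j'$ is a $\mathbb{Q}$-linear combination of $k_1, \dots, k_j$ with rational coefficients depending only on data computable from the original basis. Thus I can fix a common denominator $N_j \in \mathbb{N}$ so that $N_j k_j' \in \mathbb{Z}$-$\mathrm{span}\{k_1, \dots, k_j\} \subseteq M_e$. Replacing each $k_j'$ by $N_j k_j'$ preserves orthogonality, keeps the vectors inside $M_e$, and does not alter the $\mathbb{Q}$-span, since scaling by nonzero rationals commutes with taking rational span. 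This yields the desired orthogonal family in $M_e$ with $\mathrm{span}_{\mathbb{Q}}\{k_1', \dots, k_r'\} = \mathrm{span}_{\mathbb{Q}} M_e$.

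There is no substantive obstacle here: Gram--Schmidt is routine, and the rationality of all the intermediate coefficients means clearing denominators is harmless. The only mild subtlety to record in the write-up is that rationality of $\langle k_j, k_i' \rangle / \langle k_i', k_i' \rangle$ follows because inner products of integer vectors are integers and the recursion preserves rational coefficients, so an induction on $j$ is needed to justify the denominator-clearing step.
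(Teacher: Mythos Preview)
Your proposal is correct and follows essentially the same Gram--Schmidt approach as the paper. In fact, your version is more careful: the paper's displayed recursion omits the normalizing factors $\|k_i'\|^{-2}$ (so as written it would not actually produce orthogonal vectors), whereas you use the correct Gram--Schmidt coefficients and then clear denominators to land back in $M_e$, which is exactly the right fix.
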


\begin{proof}  Let $\{k_{1},\dots,k_{r}\}$ be a $\mathbb{Z}$-basis of $M_{e}$.  Let $k'_{1} = k_{1}$.  Suppose for some $\ell < r$ we have chosen an orthogonal set $\{k_{1}',\dots,k_{\ell}'\} \subseteq M_{e}$ in such a way that $\text{span}_{\mathbb{Q}} \{k_{1}',\dots,k_{\ell}'\} = \text{span}_{\mathbb{Q}}\{k_{1},\dots,k_{\ell}\}$.  We define $k_{\ell + 1}'$ as follows:
\begin{equation*}
k_{\ell + 1}' = k_{\ell + 1} - \sum_{i = 1}^{\ell} \langle k_{\ell + 1}, k_{i}' \rangle k_{i}'.
\end{equation*} 
Clearly, $\text{span}_{\mathbb{Q}}\{k_{1}',\dots,k_{\ell}',k_{\ell + 1}'\} = \text{span}_{\mathbb{Q}}\{k_{1},\dots,k_{\ell},k_{\ell + 1}\}$.  Moreover, $\{k_{1}',\dots,k_{\ell + 1}'\}$ is orthogonal and contained in $M_{e}$.  We continue until we reach $\ell = r$. \qed \end{proof}     

Henceforth, let $\{k_{1},\dots,k_{r}\} \subseteq M_{e}$ be an orthogonal $\mathbb{Q}$-basis of $M_{e}$ and fix an orthonormal basis $\{e_{r + 1},\dots,e_{d - 1}\}$ of $\langle k_{1},\dots,k_{r},e\rangle^{\perp}$ in $\mathbb{R}^{d}$.  Notice that $\{k_{1},\dots,k_{r},e_{r + 1},\dots,e_{d - 1}\}$ spans $\langle e \rangle^{\perp}$.   

As we are interested in averaging $\mathbb{Z}^{d}$-periodic functions over cubes orthogonal to $e$, it is natural to introduce the following group of transformations: given $y \in \langle e \rangle^{\perp}$, we define $\tilde{T}_{y} : \mathbb{T}^{d} \to \mathbb{T}^{d}$ by $\tilde{T}_{y}(x) = x + y$.  Clearly, $\{\tilde{T}_{y}\}_{y \in \langle e \rangle^{\perp}}$ forms a group under composition in the natural way, that is,
\begin{equation*}
\tilde{T}_{0} = \text{Id}, \quad \tilde{T}_{x + y} = \tilde{T}_{x} \circ \tilde{T}_{y}.
\end{equation*}
Moreover, each element of the group preserves the Lebesgue measure $\mathcal{L}^{d}$ on $\mathbb{T}^{d}$.  In fact, we can say more:

\begin{theorem} \label{T: unique_invariant}  $\mathcal{L}^{d}$ is the unique Borel probability measure invariant under $\{\tilde{T}_{y}\}_{y \in \langle e \rangle^{\perp}}$.  In particular, $\mathcal{L}^{d}$ is ergodic.  \end{theorem}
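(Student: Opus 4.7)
The plan is to prove uniqueness via Fourier analysis on $\mathbb{T}^d$, exploiting the fact that $e$ is irrational (the ambient standing assumption whenever we use the orthogonal decomposition $\{k_1,\dots,k_r,e_{r+1},\dots,e_{d-1}\}$ of $\langle e\rangle^\perp$). Any Borel probability measure $\mu$ on $\mathbb{T}^d$ is determined by its Fourier coefficients $\hat\mu(k) = \int_{\mathbb{T}^d} e^{-2\pi i \langle k,x\rangle}\,d\mu(x)$ for $k \in \mathbb{Z}^d$, so it suffices to show that invariance under $\{\tilde T_y\}_{y \in \langle e\rangle^\perp}$ forces $\hat\mu(k) = 0$ whenever $k \neq 0$.

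First I would observe that if $\mu$ is $\tilde T_y$-invariant, then for every $k \in \mathbb{Z}^d$ and every $y \in \langle e\rangle^\perp$ the change of variables $x \mapsto x + y$ gives $\hat\mu(k) = e^{-2\pi i \langle k,y\rangle}\hat\mu(k)$, so either $\hat\mu(k) = 0$ or $\langle k,y\rangle \in \mathbb{Z}$ for every $y \in \langle e\rangle^\perp$. The second alternative, applied to the continuous linear functional $y \mapsto \langle k,y\rangle$ on the real vector space $\langle e\rangle^\perp$, forces $\langle k,y\rangle = 0$ for all $y \in \langle e\rangle^\perp$, i.e.\ $k \in (\langle e\rangle^\perp)^\perp = \langle e\rangle$. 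Thus $k = \lambda e$ for some $\lambda \in \mathbb{R}$, and since $k \in \mathbb{Z}^d$ this forces $e \in \mathbb{R}\mathbb{Z}^d$ unless $k = 0$. By the standing irrationality hypothesis on $e$, we conclude $k = 0$. Hence $\hat\mu(k) = \hat{\mathcal{L}^d}(k)$ for every $k \in \mathbb{Z}^d$, which gives $\mu = \mathcal{L}^d$.

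For the ergodicity assertion, I would use the classical fact that unique invariance implies ergodicity for a continuous group action preserving a Borel probability measure: if $A \subseteq \mathbb{T}^d$ were a $\{\tilde T_y\}_{y \in \langle e\rangle^\perp}$-invariant Borel set with $0 < \mathcal{L}^d(A) < 1$, then the normalized restriction $\mathcal{L}^d(A)^{-1}\mathcal{L}^d(\,\cdot\,\cap A)$ would be a second invariant probability measure distinct from $\mathcal{L}^d$, contradicting what we just proved.

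The only real obstacle is the irrationality step, namely ruling out $k \in \mathbb{Z}^d \cap \langle e\rangle$ with $k \neq 0$; this is exactly Proposition~\ref{P: algebra} (the equivalence of $e \in \mathbb{R}\mathbb{Z}^d$ with $\mathbb{Z}^d \cap \langle e\rangle \neq \{0\}$, or equivalently with $M_e$ having rank $d-1$), so no new ingredients are required beyond invoking that result. Everything else is bookkeeping.
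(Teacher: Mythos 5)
Your proof is correct and follows essentially the same route as the paper: compare Fourier coefficients, use invariance to get $\hat\mu(k) = e^{-2\pi i\langle k,y\rangle}\hat\mu(k)$ for all $y\in\langle e\rangle^\perp$, and rule out nonzero $k\in\mathbb{Z}^d\cap\langle e\rangle$ by irrationality of $e$, with the uniqueness-implies-ergodicity step at the end. Your disposal of the vanishing alternative (directly observing $k\perp\langle e\rangle^\perp\Rightarrow k\in\langle e\rangle\Rightarrow k=0$) is a slight streamlining of the paper's appeal to its auxiliary lemma about the orthogonal basis $\{k_1,\dots,k_r,e_{r+1},\dots,e_{d-1}\}$, but both rest on the same fact.
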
  

To prove this, it is convenient to start with an auxiliary lemma:

\begin{lemma}  If $k \in \mathbb{Z}^{d}$ and $\langle k, e_{i} \rangle = 0$ independently of $i \in \{r + 1,\dots,d - 1\}$, then $k \in M_{e}$.  \end{lemma}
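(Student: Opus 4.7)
The plan is to exploit the orthogonal basis structure of $\mathbb{R}^{d}$ coming from the preceding constructions. Note that $\{k_{1},\dots,k_{r},e_{r+1},\dots,e_{d-1},e\}$ is an orthogonal basis of $\mathbb{R}^{d}$: the first $d-1$ vectors span $\langle e\rangle^{\perp}$ by construction, and $e$ is orthogonal to $\langle e\rangle^{\perp}$. The hypothesis $\langle k,e_{j}\rangle = 0$ for $j = r+1,\dots,d-1$ places $k$ in the orthogonal complement of $\mathrm{span}\{e_{r+1},\dots,e_{d-1}\}$, which is precisely $\mathrm{span}\{k_{1},\dots,k_{r},e\}$. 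Thus I can write
$$k = \sum_{i=1}^{r} \lambda_{i} k_{i} + \mu\, e \quad \text{with} \quad \lambda_{i},\mu \in \mathbb{R},$$
and the lemma reduces to verifying $\mu = 0$.

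The key observation is that the $\lambda_{i}$ are automatically rational. Indeed, taking the inner product with $k_{j}$ and using the orthogonality of $\{k_{1},\dots,k_{r}\}$ together with the identity $\langle k_{j},e\rangle = 0$ (which holds since $k_{j}\in M_{e}$), I obtain
$$\lambda_{j} = \frac{\langle k,k_{j}\rangle}{\|k_{j}\|^{2}}.$$
Both numerator and denominator are integers, as they are inner products of vectors in $\mathbb{Z}^{d}$, so $\lambda_{j} \in \mathbb{Q}$.

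Clearing denominators yields an $N \in \mathbb{N}$ such that $w := \sum_{i}(N\lambda_{i})k_{i}$ lies in $\mathbb{Z}^{d}$. Rearranging the decomposition of $k$ then gives $N\mu\, e = Nk - w \in \mathbb{Z}^{d}$. If $\mu$ were nonzero, this would make $e$ a rational multiple of a nonzero integer vector, i.e.\ $e \in \mathbb{R}\mathbb{Z}^{d}$, contradicting the standing irrationality assumption of this subsection. Hence $\mu = 0$, which is exactly $\langle k,e\rangle = 0$, so $k \in M_{e}$. The argument offers no real obstacle; the one nontrivial input is the rationality of the coefficients $\lambda_{i}$, which is dictated by the orthogonality of the chosen basis of $M_{e}$ together with $k_{i} \perp e$.
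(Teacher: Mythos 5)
Your proof is correct and follows essentially the same route as the paper's: decompose $k$ in the orthogonal basis $\{k_1,\dots,k_r,e_{r+1},\dots,e_{d-1},e\}$, observe that the coefficients on the $k_i$ are rational because $k,k_i \in \mathbb{Z}^d$ and the $k_i$ are pairwise orthogonal and orthogonal to $e$, and then use $e \notin \mathbb{R}\mathbb{Z}^d$ to force the $e$-component to vanish. The only cosmetic difference is that the paper concludes from $\kappa e \in \mathbb{Q}^d$ directly, whereas you clear denominators to land in $\mathbb{Z}^d$; these are interchangeable.
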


\begin{proof}  Since $\{k_{1},\dots,k_{r},e_{r + 1},\dots,e_{d-1},e\}$ is an orthogonal basis of $\mathbb{R}^{d}$, any such $k$ can be written as
	\begin{equation*}
		k = \sum_{i = 1}^{r} \frac{\langle k, k_{i} \rangle}{\|k_{i}\|^{2}} k_{i} + \kappa e
	\end{equation*}
for some $\kappa \in \mathbb{R}$.  To see that $k \in M_{e}$, we only need to show that $\kappa = 0$.  

Now notice that, by our choice of $\{k_{1},\dots,k_{r}\}$, $\frac{\langle k, k_{i} \rangle}{\|k_{i}\|^{2}} \in \mathbb{Q}$ for each $i$.  Thus, $\kappa e = k - \sum_{i = 1}^{r} \frac{\langle k, k_{i} \rangle}{\|k_{i}\|^{2}} k_{i} \in \mathbb{Q}^{d}$.   From the fact that $e \notin \mathbb{R} \mathbb{Z}^{d}$, we conclude $\langle k,e \rangle = \kappa = 0$.  \qed \end{proof}   

Using the lemma, the theorem follows easily:

\begin{proof}[Proof of Theorem \ref{T: unique_invariant}]  Suppose $\mu$ is a Borel probability measure on $\mathbb{T}^{d}$ that is invariant under $\{\tilde{T}_{y}\}_{y \in \langle e \rangle^{\perp}}$.  We will show that $\mu$ equals $\mathcal{L}^{d}$ by computing its Fourier series.  Specifically, we only need to show that $\hat{\mu}(k) = \delta_{0k}$ independently of $k \in \mathbb{Z}^{d}$.  

Since $\mu$ is a probability measure, we find $\hat{\mu}(0) = 1 = \delta_{00}$ by definition.  

Now assume $k \in \mathbb{Z}^{d} \setminus \{0\}$.  We claim that $\hat{\mu}(k) = 0$.  Indeed, since $\mu$ is preserved by $\{\tilde{T}_{y}\}_{y \in \langle e \rangle^{\perp}}$, we can write
\begin{equation*}
\hat{\mu}(k) = \int_{\mathbb{T}^{d}} e^{-i 2 \pi \langle k, x + y \rangle} \, \mu(dx) = e^{-i 2 \pi \langle k, y \rangle} \hat{\mu}(k) \quad \text{if} \, \, y \in \langle  e \rangle^{\perp}.
\end{equation*}
To conclude, we only need to show that there is a $y \in \langle e \rangle^{\perp}$ such that $e^{i 2 \pi \langle k, y \rangle} = -1$.  

Now we use the lemma.  The linear functional $y \mapsto \langle k, y \rangle$ either vanishes on $\langle e \rangle^{\perp}$ or its range equals $\mathbb{R}$.  In view of the previous lemma and the assumption that $k \neq 0$, the second case is the only possibility.  Thus, we can fix a $y_{0} \in \langle e \rangle^{\perp}$ such that $\langle k, y_{0} \rangle = \frac{1}{2}$.  In particular, $e^{-i 2 \pi \langle k, y_{0} \rangle} = -1$.  

The uniqueness of the invariant measure implies ergodicity (cf.\ \cite[Section 4.7]{brin stuck}). \qed 
\end{proof}   

The ergodicity of $\mathcal{L}^{d}$ implies the following result concerning averaging:

\begin{prop}  If $f \in L^{1}(\mathbb{T}^{d})$, then, for a.e.\ $s \in \mathbb{R}$, we have 
\begin{equation} \label{E: ergodic_theorem_sliced}
\lim_{R \to \infty} R^{1 - d} \int_{Q^{e}(0,R)} f(se + x^{\perp}) \, d x^{\perp} = \int_{\mathbb{T}^{d}} f(y) \, dy.
\end{equation}
\end{prop}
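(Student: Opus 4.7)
The plan is to establish the result by reducing to trigonometric polynomials and then bootstrapping via density.  First, for $f(y) = e^{2\pi i \langle k, y \rangle}$ with $k \in \mathbb{Z}^{d}$, I will compute directly
\begin{equation*}
R^{1-d} \int_{Q(0,R)} f(se + x^{\perp}) \, dx^{\perp} = e^{2\pi i s \langle k, e \rangle} \cdot R^{1-d} \int_{Q(0,R)} e^{2\pi i \langle k, x^{\perp} \rangle} \, dx^{\perp}.
\end{equation*}
For $k = 0$ this equals $1 = \int_{\mathbb{T}^{d}} f$, while for $k \neq 0$ the preceding lemma combined with $e \notin \mathbb{R} \mathbb{Z}^{d}$ forces $k$ to have a nontrivial projection onto $\langle e \rangle^{\perp}$.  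Working in an orthonormal basis of $\langle e \rangle^{\perp}$ in which $Q(0,R)$ is coordinate-aligned factors the integral into $d-1$ one-dimensional Fourier integrals, at least one of which is $O(1)$ while the rest grow at most linearly in $R$; dividing by $R^{d-1}$ then yields $O(R^{-1}) \to 0$, uniformly in $s$.

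Next, by Stone--Weierstrass and linearity, the convergence will extend to every $f \in C(\mathbb{T}^{d})$, still uniformly in $s$.  This is really just a reformulation of the unique ergodicity of the $\langle e \rangle^{\perp}$-action established in Theorem \ref{T: unique_invariant}, via the classical equivalence between unique ergodicity and uniform convergence of Birkhoff averages of continuous functions.

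To pass to a general $f \in L^{1}(\mathbb{T}^{d})$, I would approximate by $(f_{n}) \subseteq C(\mathbb{T}^{d})$ with $f_{n} \to f$ in $L^{1}$ and write $g_{R}(s) = R^{1-d} \int_{Q(0,R)} f(se + x^{\perp}) \, dx^{\perp}$ (and analogously $g_{R}^{(n)}$).  The isometry $(s, x^{\perp}) \mapsto s e + x^{\perp}$ from $\mathbb{R} \times \langle e \rangle^{\perp}$ onto $\mathbb{R}^{d}$, combined with the $\mathbb{Z}^{d}$-periodicity of $|f - f_{n}|$, will give, for each $T, R \geq 1$,
\begin{equation*}
\int_{0}^{T} \big| g_{R}(s) - g_{R}^{(n)}(s) \big| \, ds \leq R^{1-d} \int_{[0,T] \times Q(0, R)} |f - f_{n}|(se + x^{\perp}) \, dx^{\perp} \, ds \leq C T \, \| f - f_{n} \|_{L^{1}(\mathbb{T}^{d})},
\end{equation*}
the last bound coming from estimating the integral of the $\mathbb{Z}^{d}$-periodic function $|f - f_{n}|$ over a tube in $\mathbb{R}^{d}$ of volume $TR^{d-1}$ by the number of unit cubes it meets.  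Combined with the previous paragraph, this yields $g_{R} \to \int f$ in $L^{1}([0, T])$ for every $T > 0$.

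The hard part will be upgrading this $L^{1}_{\text{loc}}$-convergence to pointwise a.e.\ convergence in $s$.  This is delicate because the orbit $s \mapsto se \bmod \mathbb{Z}^{d}$ has zero $\mathcal{L}^{d}$-measure in $\mathbb{T}^{d}$, so a.e.\ assertions on $\mathbb{T}^{d}$ cannot simply be sliced onto $\mathbb{R}$ by Fubini.  My plan for this step is to choose the approximating sequence with $\|f_{n+1} - f_{n}\|_{L^{1}} \leq 2^{-n}$ and to transfer the Hardy--Littlewood weak-type $(1,1)$ maximal inequality for the rectangular maximal function on $\mathbb{T}^{d}$ (adapted to the $\langle e \rangle^{\perp}$-action) down to $\mathbb{R}$ by the same tube-Fubini computation, applied with a thin perpendicular cross-section.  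A Borel--Cantelli argument will then control the exceptional set along a sequence $R_{j} \to \infty$, and the oscillation between consecutive radii will be absorbed by that same maximal function.
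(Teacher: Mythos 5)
Your steps (1)--(3) are correct and amount to a self-contained re-derivation, via Fourier modes and density, of the a.e.\ convergence that the paper instead obtains by invoking Wiener's pointwise ergodic theorem for the $\langle e\rangle^{\perp}$-action.  In particular, the Fourier computation for $k \neq 0$ is sound: $e \notin \mathbb{R}\mathbb{Z}^{d}$ forces the projection of $k$ onto $\langle e \rangle^{\perp}$ to be nonzero, and the resulting $O(R^{-1})$ decay gives uniform-in-$s$ convergence for continuous $f$.  The $L^{1}_{\text{loc}}$ estimate in step (3) is also fine.  This is a genuinely more elementary route than the paper's to the penultimate stage.

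The gap is in step (4), and it is precisely the obstacle you yourself flag.  The paper's proof is short because it exploits a structural feature that your sketch does not capture: the full-$\mathcal{L}^{d}$-measure set $B$ on which the averages converge can be taken \emph{invariant} under every $\tilde T_{y}$, $y \in \langle e \rangle^{\perp}$.  Indeed, writing $A_{R}f(x) = R^{1-d}\int_{Q(0,R)} f(x + z)\,dz$ and taking $f \geq 0$, the inclusions $Q(0,R - 2|y|_{\infty}) \subseteq Q(0,R) + y \subseteq Q(0,R + 2|y|_{\infty})$ give
\begin{equation*}
\left(\tfrac{R - 2|y|_{\infty}}{R}\right)^{d-1} A_{R - 2|y|_{\infty}} f(x) \leq A_{R} f(x + y) \leq \left(\tfrac{R + 2|y|_{\infty}}{R}\right)^{d-1} A_{R + 2|y|_{\infty}} f(x),
\end{equation*}
so $\limsup_{R} A_{R}f$ and $\liminf_{R} A_{R}f$ are $\langle e \rangle^{\perp}$-invariant and hence so is $B$.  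Invariance is what makes Fubini effective on a measure-zero orbit: for a.e.\ $s$, some $w^{\perp}$ has $se + w^{\perp} \in B$, and invariance then gives $se \in B$.  Your proposed fix --- transferring a weak-type $(1,1)$ maximal inequality from $\mathbb{T}^{d}$ to $\mathbb{R}$ ``by the same tube-Fubini computation, applied with a thin perpendicular cross-section'' --- does not close this gap as written.  If $\{Mf > \lambda\}$ has small $\mathcal{L}^{d}$-measure, integrating its indicator over a tube of radius $\epsilon$ and applying Fubini controls the $(d-1)$-dimensional perpendicular slices for a.e.\ $s$, but says nothing about whether the single point $se$ is in the bad set, and sending $\epsilon \to 0$ yields no information.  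The step only goes through once one observes, as above, that the bad set is invariant under $\langle e \rangle^{\perp}$-translations and hence a union of orbits; with that observation the Fubini slicing finishes immediately and you do not even need Borel--Cantelli.  Without it, step (4) is a genuine hole.
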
  

\begin{proof}  By the ergodic theorem, there is a Lebesgue measurable, $\{\tilde{T}_{y}\}_{y \in \langle e \rangle^{\perp}}$-invariant set $B \subseteq \mathbb{R}^{d}$ such that $\mathcal{L}^{d}(\mathbb{R}^{d} \setminus B) = 0$ and
\begin{equation*}
\lim_{R \to \infty} R^{1 - d} \int_{Q^{e}(0,R)} f(x + x^{\perp}) \, d x^{\perp} = \int_{\mathbb{T}^{d}} f(y) \, dy \quad \text{if} \, \, x \in B.
\end{equation*}

Define $A \subseteq \mathbb{R}$ by 
\begin{equation*}
A = \left\{s \in \mathbb{R} \, \mid \, s e + w^{\perp} \in B \, \, \text{for almost every} \, \, w^{\perp} \in \langle e \rangle^{\perp} \right\}.
\end{equation*}  
We claim that $\mathcal{L}^{1}(\mathbb{R} \setminus A) = 0$.  Indeed, by Fubini's Theorem, we can write
\begin{equation*}
0 = \mathcal{L}^{d}(\mathbb{R}^{d} \setminus B)= \int_{\mathbb{R} \setminus A} \mathcal{H}^{d - 1}(\{w \in \langle e \rangle^{\perp} \, \mid \, se + w \notin B\}) \, ds.
\end{equation*}
We are left to conclude that $\mathcal{L}^{1}(\mathbb{R} \setminus A) = 0$.

Finally, we claim that if $s \in A$, then \eqref{E: ergodic_theorem_sliced} holds.  To see this, observe that there is a $w^{\perp} \in \langle e \rangle^{\perp}$ so that $se + w^{\perp} \in B$ and, thus, by $\{\tilde{T}_{y}\}_{y \in \langle e \rangle^{\perp}}$-invariance, $se \in B$.  Therefore, \eqref{E: ergodic_theorem_sliced} follows.    \qed \end{proof}  

\subsection{Irrational directions --- Main results}  We now establish the integral decomposition in the irrational case.  

\begin{prop} \label{P: proof of ergodic lemma} Suppose $e \in S^{d - 1} \setminus \mathbb{R} \mathbb{Z}^{d}$ and $F \in L^{1}(\mathbb{R} \times \mathbb{T}^{d - 1})$.  Let $\{f_{\zeta}\}_{\zeta \in \mathbb{R}}$ be the functions generated by $F$.  For almost every $\zeta \in \mathbb{R}$, we have
\begin{equation} \label{E: easy}
\int_{-\infty}^{\infty} \int_{\mathbb{T}^{d}} F(s,x) \, dx \, ds = \lim_{R \to \infty} R^{1 - d} \int_{Q^{e}(0,R) \oplus_{e} \mathbb{R}} f_{\zeta}(x) \, dx.
\end{equation}
\end{prop}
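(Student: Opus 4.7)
The plan is to reduce the statement to the already-proved ergodic theorem (the proposition before this one) for a single $L^1$ function on $\mathbb{T}^d$. The key trick is to collapse the $s$-integration inside $G_R(\zeta) := R^{1-d}\int_{Q^e(0,R)\oplus_e\mathbb{R}} f_\zeta(y)\,dy$ into a single auxiliary function on the torus, so that only a $(d-1)$-dimensional ergodic average in $\langle e\rangle^\perp$ remains. The main (and only) obstacle is making the Fubini swap rigorous; once that is in hand, everything else is a direct citation.

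First I would introduce $H:\mathbb{T}^d\to\mathbb{R}$ by
\[
H(y) := \int_{\mathbb{R}} F(s,y+se)\,ds,
\]
where $F(s,\cdot)$ is interpreted as a $\mathbb{Z}^d$-periodic function on $\mathbb{R}^d$. By Tonelli and the translation invariance of Lebesgue measure on $\mathbb{T}^d$, $\int_{\mathbb{T}^d}\int_{\mathbb{R}}|F|(s,y+se)\,ds\,dy = \|F\|_{L^1(\mathbb{R}\times\mathbb{T}^d)}<\infty$, so $H$ is well defined a.e., lies in $L^1(\mathbb{T}^d)$, and satisfies
\[
\int_{\mathbb{T}^d} H(y)\,dy \;=\; \int_{\mathbb{R}}\int_{\mathbb{T}^d} F(s,y)\,dy\,ds.
\]

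Next I would carry out the Fubini computation. Writing $y=x^\perp+te$ with $x^\perp\in\langle e\rangle^\perp$, $t\in\mathbb{R}$, and then substituting $s=t-\zeta$, gives
\[
R^{1-d}\int_{Q^e(0,R)\oplus_e\mathbb{R}} f_\zeta(y)\,dy
 = R^{1-d}\int_{Q^e(0,R)}\!\!\int_{\mathbb{R}} F(s,x^\perp+(s+\zeta)e)\,ds\,dx^\perp.
\]
To justify Fubini and identify the inner integral with $H(\zeta e+x^\perp)$, I would check that the double integral is absolutely convergent for a.e.\ $\zeta$. This follows from a standard periodic-over-slab estimate: covering the slab $Q^e(0,R)\oplus_e[s-T/2,s+T/2]$ by $\mathbb{Z}^d$-translates of the unit cube and using the $\mathbb{Z}^d$-periodicity of $F(s,\cdot)$ yields
\[
\int_{-T/2}^{T/2} R^{1-d}\int_{Q^e(0,R)\oplus_e\mathbb{R}} |f_\zeta(y)|\,dy\,d\zeta \;\leq\; C(T+1)\,\|F\|_{L^1(\mathbb{R}\times\mathbb{T}^d)}
\]
for all $R\geq 1$, and so for a.e.\ $\zeta\in\mathbb{R}$ the integral is finite for every $R$. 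Once Fubini applies, the inner $s$-integral is exactly $H(\zeta e+x^\perp)$, giving the identity
\[
R^{1-d}\int_{Q^e(0,R)\oplus_e\mathbb{R}} f_\zeta(y)\,dy \;=\; R^{1-d}\int_{Q^e(0,R)} H(\zeta e+x^\perp)\,dx^\perp.
\]

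Finally, I would apply the preceding proposition (the ergodic averaging result for $L^1(\mathbb{T}^d)$ functions along planes transverse to $e$) to $H$: for a.e.\ $\zeta\in\mathbb{R}$,
\[
\lim_{R\to\infty} R^{1-d}\int_{Q^e(0,R)} H(\zeta e+x^\perp)\,dx^\perp \;=\; \int_{\mathbb{T}^d} H(y)\,dy \;=\; \int_{\mathbb{R}}\int_{\mathbb{T}^d} F(s,y)\,dy\,ds,
\]
which is exactly \eqref{E: easy}. Intersecting the full-measure set of $\zeta$ from the Fubini step with the full-measure set from the ergodic theorem yields the desired almost-everywhere statement. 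The hardest bookkeeping is the integrated $L^1$-bound used to enforce Fubini, but this is an elementary periodicity count; the rest is just unwinding the change of variables and quoting the established pointwise ergodic theorem.
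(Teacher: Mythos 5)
Your proof is correct and follows essentially the same route as the paper's: collapse the $s$-integral via Fubini and a change of variables into the function $H(y)=\int F(s,y+se)\,ds$ on $\mathbb{T}^{d}$, then apply the preceding ergodic-averaging proposition to $H$. The only difference is that you justify the Fubini swap with an explicit integrated slab estimate, whereas the paper handles the a.e.\ finiteness implicitly (the same preceding proposition, applied to $\int|F(s,\cdot+se)|\,ds\in L^{1}(\mathbb{T}^{d})$, already supplies it); this is a welcome elaboration but not a different argument.
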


\begin{proof}  Let $\zeta \in \mathbb{R}$ be a free parameter.  First, we make some simplifications to the right-hand side of \eqref{E: easy}:
\begin{align*}
\int_{Q^{e}(0,R) \oplus_{e} \mathbb{R}} f_{\zeta}(x) \, dx &= \int_{Q^{e}(0,R)} \int_{-\infty}^{\infty} F(y - \zeta, ye + x^{\perp}) \, dy \, dx^{\perp} \\
		&= \int_{Q^{e}(0,R)} \int_{-\infty}^{\infty} F(s, (s + \zeta)e + x^{\perp}) \, ds \, dx^{\perp}.
\end{align*} 
Since $F \in L^{1}(\mathbb{R} \times \mathbb{T}^{d})$, it follows that $y \mapsto \int_{-\infty}^{\infty} F(s, (s + \zeta)e + y) \, ds$ is in $L^{1}(\mathbb{T}^{d})$, no matter the choice of $\zeta$.  Therefore, the previous lemma implies that almost every $\zeta \in \mathbb{R}$ satisfies
\begin{align*}
\lim_{R \to \infty} R^{1 - d} \int_{Q^{e}(0,R) \oplus_{e} \mathbb{R}} f_{\zeta}(x) \, dx &= \lim_{R \to \infty} R^{1 - d} \int_{Q^{e}(0,R)} \int_{-\infty}^{\infty} F(s, (s + \zeta)e + x^{\perp}) \, ds \, dx^{\perp} \\
	&= \int_{\mathbb{T}^{d}} \left(\int_{-\infty}^{\infty} F(s,(s + \zeta)e + y) \, ds \right) \, dy \\
	&= \int_{-\infty}^{\infty} \int_{\mathbb{T}^{d}} F(s,y) \, dy \, ds.
\end{align*} \qed
\end{proof}  

Finally, though we will not provide a complete proof of Proposition \ref{P: algebra}, by now the following observation is well within reach:

\begin{prop} \label{P: rational_irrational}  If $e \notin \mathbb{R} \mathbb{Z}^{d}$, then $M_{e}$ has rank less than $d - 1$ and $\{\langle k, e \rangle \, \mid \, k \in \mathbb{Z}^{d}\}$ is a dense subgroup of $\mathbb{R}$.  \end{prop}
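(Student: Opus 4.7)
The plan is to prove each of the two assertions by establishing its contrapositive, since both follow quickly from elementary linear algebra over $\mathbb{Q}$ together with the classical structure theorem for additive subgroups of $\mathbb{R}$.

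For the rank statement I would assume $\mathrm{rank}(M_e) = d - 1$ and deduce $e \in \mathbb{R}\mathbb{Z}^d$. Fix a $\mathbb{Z}$-basis $\{k_1, \dots, k_{d-1}\}$ of $M_e$; since these are $\mathbb{Z}$-independent integer vectors they are automatically $\mathbb{R}$-independent, so the subspace $V := \mathrm{span}_{\mathbb{R}}\{k_1, \dots, k_{d-1}\}$ is $(d-1)$-dimensional. Form the rational matrix $A \in \mathbb{Z}^{(d-1) \times d}$ whose rows are $k_1, \dots, k_{d-1}$; then $V^{\perp} = \ker A$ is one-dimensional, and a standard argument (Gaussian elimination over $\mathbb{Q}$, or equivalently the signed $(d-1)$-minor / cross product construction) produces a nonzero vector $v \in \mathbb{Z}^d$ spanning it. By the very definition of $M_e$ we have $e \perp k_i$ for every $i$, i.e.\ $e \in V^{\perp} = \mathbb{R} v \subseteq \mathbb{R}\mathbb{Z}^d$, contradicting the hypothesis.

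For the density statement I would set $G := \{\langle k, e\rangle \mid k \in \mathbb{Z}^d\}$, which is manifestly an additive subgroup of $\mathbb{R}$, and invoke the classical dichotomy that every such subgroup is either dense in $\mathbb{R}$ or of the form $c\mathbb{Z}$ for some $c \geq 0$. Assume for contradiction that $G$ is not dense. If $c = 0$, then $\langle k, e\rangle = 0$ for every $k \in \mathbb{Z}^d$; applying this to the standard basis vectors $e_1, \dots, e_d$ forces every component of $e$ to vanish, contradicting $e \in S^{d-1}$. If $c > 0$, then $\langle e_i, e\rangle \in c\mathbb{Z}$ for each $i$, so each component of $e$ is an integer multiple of $c$, giving $e \in c\mathbb{Z}^d \subseteq \mathbb{R}\mathbb{Z}^d$. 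Either alternative contradicts $e \notin \mathbb{R}\mathbb{Z}^d$, so $G$ is dense.

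There is no real obstacle here: the argument assembles three very elementary ingredients, namely (a) that the orthogonal complement in $\mathbb{R}^d$ of a $\mathbb{Q}$-rational subspace is again $\mathbb{Q}$-rational, (b) the closed-subgroups-of-$\mathbb{R}$ dichotomy, and (c) that $e \in S^{d-1}$ is nonzero and has a well-defined coordinate expansion in the standard basis.
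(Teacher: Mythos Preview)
Your proof is correct, and both parts are handled cleanly. The approach, however, differs from the paper's. The paper treats the two assertions together: it builds an orthogonal $\mathbb{Q}$-basis $\{k_1,\dots,k_r\}$ of $\text{span}_{\mathbb{Q}} M_e$, extends it to an orthogonal rational basis $\{k_1,\dots,k_d\}$ of $\mathbb{Q}^d$, and then shows that $\{\langle k_{r+1},e\rangle,\dots,\langle k_d,e\rangle\}$ is $\mathbb{Z}$-independent, so the image subgroup has rank $d-r$. Since $e \notin \mathbb{R}\mathbb{Z}^d$ forces $r < d-1$, this rank exceeds one and density follows from the fact that any subgroup of $\mathbb{R}$ of rank greater than one is dense. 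Your argument instead proves the two claims separately and by direct contrapositive: for the rank claim you observe that a rank-$(d-1)$ sublattice of $\mathbb{Z}^d$ has a one-dimensional rational orthogonal complement, and for density you test the discrete alternative $G=c\mathbb{Z}$ against the standard basis vectors. Your route is shorter and entirely self-contained (it does not rely on the auxiliary Gram--Schmidt lemma over $\mathbb{Q}$ that the paper proves beforehand), while the paper's route has the conceptual payoff of exhibiting the exact relation $\text{rk}(\iota(\mathbb{Z}^d)) = d - \text{rk}(M_e)$.
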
  

	\begin{proof}  Define $\i : \mathbb{Z}^{d} \to \mathbb{R}$ by 
		\begin{equation*}
			\i(k) = \langle k, e \rangle.
		\end{equation*}
	Notice that $\i$ is a group homomorphism.  Therefore, $\{\langle k, e \rangle \, \mid \, k \in \mathbb{Z}^{d}\} = \i(\mathbb{Z}^{d})$ is a subgroup of $\mathbb{R}$.  Recall that any subgroup of $\mathbb{R}$ with rank greater than one is necessarily dense.  Therefore, we will prove that $\i(\mathbb{Z}^{d})$ has rank greater than one.
	
	As before, let $\{k_{1},\dots,k_{r}\} \subseteq \mathbb{Q}^{d}$ be a $\mathbb{Q}$-basis of $\text{span}_{\mathbb{Q}}M_{e}$ satisfying $\langle k_{i}, k_{j} \rangle = 0$ if $i \neq j$.  Next, fix an orthogonal set $\{k_{r + 1},\dots,k_{d}\} \subseteq \mathbb{Q}^{d}$ such that $\{k_{1},\dots,k_{d}\}$ spans $\mathbb{Q}^{d}$.  Multiplying by a scalar if necessary, we can assume that $\{k_{r + 1},\dots,k_{d}\} \subseteq \mathbb{Z}^{d}$.  Evidently, $e \in \text{span}_{\mathbb{R}}\{k_{r + 1},\dots,k_{d}\}$, and the fact that $e \notin \mathbb{R} \mathbb{Z}^{d}$ implies $r < d - 1$.
	
	We claim that $\{\langle k_{r + 1},e \rangle, \dots, \langle k_{d}, e \rangle\}$ is independent over $\mathbb{Z}$.  Indeed, given integers $m_{r+1},\dots,m_{d}$, if $\sum_{i = r + 1}^{d} m_{i} \langle k_{i}, e \rangle = 0$, then $\sum_{i = r + 1}^{d} m_{i} k_{i} \in M_{e}$.  In view of the fact that the set $\{k_{r + 1},\dots,k_{d}\}$ is linearly independent over $\mathbb{Q}$, this implies 	
	\begin{equation*}
		m_{r + 1} = \dots = m_{d} = 0.
	\end{equation*}
	
We conclude that $\text{rk}(\i(\mathbb{Z}^{d})) = d - r > 1$.  Therefore, $\i(\mathbb{Z}^{d})$ is dense as claimed.  Finally, notice that $M_{e}$ has rank $r < d - 1$.  \qed
	\end{proof}  
	
\subsection{Rational directions}  The argument for rational directions is similar to the irrational case, except that the translations considered earlier now have many ergodic invariant measures.

Assume that $e \in \mathbb{R} \mathbb{Z}^{d}$ and define $\{\tilde{T}_{y}\}_{y \in \langle e \rangle^{\perp}}$ as before.  To understand how these translations act on $\mathbb{T}^{d}$, it is convenient to observe that $\mathbb{T}^{d}$ can be decomposed as
	\begin{equation*}
		\mathbb{T}^{d} = \bigcup_{m \in [0,m_{e})} \mathbb{T}^{d-1}_{e}(m), 
	\end{equation*}
where the hypersurface $\mathbb{T}^{d-1}_{e}(m)$ is defined by
	\begin{equation*}
		\mathbb{T}^{d-1}_{e}(m) = \left\{y \in \mathbb{T}^{d} \, \mid \, \langle y,e \rangle = m + \langle k,e \rangle \, \, \text{for some} \, \, k \in \mathbb{Z}^{d} \right\}
	\end{equation*}
and $m_{e} > 0$ is the period defined by \eqref{E: rational period}

Notice that $\{\mathbb{T}^{d-1}_{e}(m) \, \mid \, m \in [0,m_{e})\}$ is precisely the family of all orbits of $\{\tilde{T}_{y}\}_{y \in \langle e \rangle^{\perp}}$.  Further, considering the case when $e$ is one of the coordinate vectors, the following result is natural:

	\begin{prop}  For each $m \in [0,m_{e})$, the normalized $(d-1)$-dimensional Hausdorff measure on $\mathbb{T}^{d-1}_{e}(m)$ is an ergodic invariant probability measure of $\{\tilde{T}_{y}\}_{y \in \langle e \rangle^{\perp}}$.  These are the only ergodic invariant probability measures.  \end{prop}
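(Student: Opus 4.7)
The plan is to adapt the Fourier argument from the proof of Theorem \ref{T: unique_invariant}, the key difference being that in the rational case the translation action admits the nontrivial invariant $\pi(x) := \langle x,e\rangle \bmod m_e$, which breaks unique ergodicity and produces a one-parameter family of ergodic components. That $\pi : \mathbb{T}^d \to \mathbb{R}/m_e\mathbb{Z}$ is well-defined follows from Proposition \ref{P: algebra} (iii), and its level sets are exactly $\{\mathbb{T}^{d-1}_e(m)\}_{m \in [0,m_e)}$. I would begin by verifying that each $\nu_m$, the normalized $(d-1)$-dimensional Hausdorff measure on $\mathbb{T}^{d-1}_e(m)$, is invariant under $\{\tilde T_y\}_{y \in \langle e\rangle^\perp}$: since $\pi$ is invariant, $\tilde T_y$ preserves each fiber, and its restriction to the fiber is a rigid translation, hence an $\mathcal{H}^{d-1}$-isometry.

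For the converse direction, let $\mu$ be any $\{\tilde T_y\}$-invariant Borel probability measure on $\mathbb{T}^d$. The identity $\hat\mu(k) = e^{-2\pi i\langle k,y\rangle}\hat\mu(k)$, valid for all $y \in \langle e\rangle^\perp$ exactly as in the irrational case, forces $\hat\mu(k) = 0$ whenever the linear functional $\langle k,\cdot\rangle$ does not vanish identically on $\langle e\rangle^\perp$, that is, whenever $k \notin \langle e\rangle \cap \mathbb{Z}^d$. Since $e \in \mathbb{R}\mathbb{Z}^d$, one has $\langle e\rangle \cap \mathbb{Z}^d = \mathbb{Z}v$ for the primitive integer vector $v$ parallel to $e$, and the surviving Fourier modes $\{\hat\mu(nv)\}_{n\in\mathbb{Z}}$ are precisely the Fourier coefficients of $\pi_*\mu$. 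Therefore $\mu \mapsto \pi_*\mu$ is an affine injection from the simplex $\mathcal{M}$ of invariant probability measures into the simplex $\mathcal{P}(\mathbb{R}/m_e\mathbb{Z})$, with explicit inverse $\rho \mapsto \int_{[0,m_e)} \nu_m \, d\rho(m)$ (whose image lies in $\mathcal{M}$ by the previous paragraph), giving an affine bijection $\mathcal{M} \cong \mathcal{P}(\mathbb{R}/m_e\mathbb{Z})$.

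Since ergodic measures are precisely the extreme points of $\mathcal{M}$ and affine bijections preserve extreme points, the ergodic invariant probability measures correspond to the extreme points of $\mathcal{P}(\mathbb{R}/m_e\mathbb{Z})$, namely the Dirac masses $\delta_m$. Pulling these back through the bijection gives exactly $\{\nu_m\}_{m\in[0,m_e)}$, which simultaneously yields the ergodicity of each $\nu_m$ and the fact that no other ergodic invariant probability measures exist. The main technical point I expect to need care with is justifying that the disintegration $\rho \mapsto \int \nu_m\, d\rho(m)$ is well-defined as a Borel probability measure on $\mathbb{T}^d$ and that its Fourier coefficient at $nv$ really does recover $\hat\rho(n)$; this is a routine Fubini computation once one fixes a Borel parametrization of the fibers, most conveniently via the identification $\mathbb{T}^{d-1}_e(m) \cong \langle e\rangle^\perp / M_e$, which is available because Proposition \ref{P: algebra} (ii) implies $M_e$ is a full rank-$(d-1)$ lattice in $\langle e\rangle^\perp$.
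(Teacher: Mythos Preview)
Your argument is correct and considerably more detailed than what the paper offers: the paper gives no proof beyond the remark that one can reduce to the case where $e$ is a coordinate vector, in which case the torus factors as $\mathbb{T}\times\mathbb{T}^{d-1}$ with $\{\tilde T_y\}$ acting as the full translation group on the second factor, and the result is immediate. Your approach instead stays in the original coordinates and extends the Fourier computation of Theorem~\ref{T: unique_invariant} directly: you correctly identify that invariance kills $\hat\mu(k)$ for $k\notin\mathbb{Z}v$ (with $v$ the primitive integer direction of $e$), and then the key observation that the surviving coefficients $\hat\mu(nv)$ coincide with the Fourier coefficients of $\pi_*\mu$ on $\mathbb{R}/m_e\mathbb{Z}$ gives the affine bijection $\mathcal{M}\cong\mathcal{P}(\mathbb{R}/m_e\mathbb{Z})$, from which the extreme-point identification follows cleanly. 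The paper's route is shorter once one accepts the lattice change of basis (an $SL(d,\mathbb{Z})$ map sending $v$ to a standard basis vector), but yours has the virtue of making the mechanism transparent and parallel to the irrational case, and it avoids having to verify that the relevant structures (the slices $\mathbb{T}^{d-1}_e(m)$, their Hausdorff measures, and the group action) transform correctly under that change of basis. The technical caveat you flag about the measurability of $m\mapsto\nu_m$ is genuine but mild, and your suggested parametrization via $\langle e\rangle^\perp/M_e$ handles it.
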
   
	
By considering each $\mathbb{T}^{d-1}_{e}(m)$ as a torus in its own right, it is thus not hard to show that if $N \in L^{1}(\mathbb{T}^{d})$, then 
	\begin{equation*}
		\lim_{R \to \infty} R^{1-d} \int_{Q^{e}(0,R)} N(x + \xi) \, \mathcal{H}^{d-1}(d \xi) = \fint_{\mathbb{T}^{d-1}_{e}(\langle x, e \rangle)} N(\eta) \, \mathcal{H}^{d-1}(d \eta) \quad \text{for a.e.} \, \, x \in \mathbb{T}^{d}.
	\end{equation*}
Thus, in the proof of Proposition \ref{P: proof of ergodic lemma}, we find, for a.e.\ $\zeta \in [0,m_{e})$,
	\begin{align*}
		\lim_{R \to \infty} R^{1 - d} \int_{Q^{e}(0,R) \oplus_{e} \mathbb{R}} f_{\zeta}(x) \, dx &= \lim_{R \to \infty} R^{1 - d} \int_{Q^{e}(0,R)} \int_{-\infty}^{\infty} F(s, (s + \zeta)e + x^{\perp}) \, ds \, dx^{\perp} \\
			&= \fint_{\mathbb{T}^{d-1}_{e}(\zeta)} \left(\int_{-\infty}^{\infty} F(s,se + \xi) \, ds \right) \, \mathcal{H}^{d-1}(d \xi).
	\end{align*}
Since $f_{\zeta}$ is a function in $\mathbb{T}_{e}^{d-1} \oplus_{e} \mathbb{R}$, the left-hand side is readily identified:
	\begin{equation*}
		\mathcal{H}^{d-1}(Q_{e})^{-1} \int_{Q_{e} \oplus_{e} \mathbb{R}} f_{\zeta}(x) \, dx = \lim_{R \to \infty} R^{1 - d} \int_{Q^{e}(0,R) \oplus_{e} \mathbb{R}} f_{\zeta}(x) \, dx.
	\end{equation*}
Combining these and averaging in $\zeta$, we conclude
	\begin{align*}
		m_{e}^{-1} \mathcal{H}^{d-1}(Q_{e})^{-1} \int_{0}^{m_{e}} \int_{Q_{e} \oplus_{e} \mathbb{R}} f_{\zeta}(x) \, dx \, d\zeta  = \int_{\mathbb{R} \times \mathbb{T}^{d}} F(s,x) \, dx \, ds.
	\end{align*}
	
\section{Tubular Neighborhoods of Graphs}  \label{A: tubular_neighborhoods}

In this appendix, we construct a tubular neighborhood of a smooth graph in $\mathbb{R}^{d}$.  The existence of such a tubular neighborhood is an essential ingredient in the proof of Lemma \ref{L: propagation}.  Since technical considerations arise that are not present when compact hypersurfaces are considered instead of graphs, we provide the details for the convenience of the reader.

	In what follows, if $\Omega \subseteq \mathbb{R}^{d + 1}$ is an open set and $n \in \mathbb{N}$, then $BUC^{n}(\Omega)$ is the space of $C^{n}$ functions $f : \Omega \to \mathbb{R}$ such that $D^{m}f$ is bounded and uniformly continuous in $\Omega$ for each $m \in \{0,1,\dots,n\}$.  

	\begin{prop} \label{P: surface_fact}  Suppose $\Phi : \mathbb{R}^{d-1} \times (-1,1) \to \mathbb{R}$ is $C^{5}$ with bounded and uniformly continuous derivatives.
	For each $t \in (-1,1)$, let $\mathcal{U}_{t}$ be the epigraph defined by 
		\begin{equation*}
			\mathcal{U}_{t} = \{(x_{e},x') \in \mathbb{R}^{d} \, \mid \, x_{e} > \Phi(x',t) \}.
		\end{equation*}
	If $d : \mathbb{R}^{d} \times (-1,1) \to \mathbb{R}$ is defined so that $d(\cdot,t)$ is the signed distance function to $\partial \mathcal{U}_{t}$, positive in $\mathcal{U}_{t}$, then there is a positive number $\gamma > 0$ such that:
		\begin{itemize}
			\item[(i)] $d$ is $C^{4}$ in the set $\{(x,t) \in \mathbb{R}^{d} \times (-1,1) \, \mid \, |d(x,t)| < \gamma \}$.
			\item[(ii)]  For each $\delta \in (0,\gamma)$, $d \in BUC^{4}(\Omega_{\delta})$, where $\Omega_{\delta} = \{(x,t) \in \mathbb{R}^{d} \times (-1,1) \, \mid \, |d(x,t) | < \gamma - \delta\}$.
		\end{itemize}      
	\end{prop}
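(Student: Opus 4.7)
The approach is the standard one via the normal bundle, executed quantitatively so that everything is uniform in $x'$ and $t$. Parametrize the graph $\partial \mathcal{U}_{t}$ by $y' \mapsto (\Phi(y',t), y')$, and define the unit normal field
\begin{equation*}
n(y',t) = \frac{(1, -D_{x'}\Phi(y',t))}{\sqrt{1 + \|D_{x'}\Phi(y',t)\|^{2}}},
\end{equation*}
oriented into $\mathcal{U}_{t}$. Because $\Phi \in BUC^{5}(\mathbb{R}^{d-1} \times (-1,1))$, the map $n$ is $BUC^{4}$ and its image lies in a compact subset of $S^{d-1}$. Introduce the normal-bundle chart
\begin{equation*}
F(y',r,t) = ((\Phi(y',t), y') + r \, n(y',t), t),
\end{equation*}
which is $BUC^{4}$ into $\mathbb{R}^{d} \times (-1,1)$ and which at $r = 0$ has Jacobian in $(y',r)$ equal to the map sending the graph tangents and $n$ into $\mathbb{R}^{d}$; this Jacobian is uniformly invertible, with bounds on $(D_{(y',r)}F)^{-1}$ depending only on $\|D_{x'}\Phi\|_{L^{\infty}}$.

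The plan is then to apply a quantitative inverse function theorem to $F$. Since $DF|_{r=0}$ is uniformly invertible and $D^{2}F$ is uniformly bounded (both using only $\|\Phi\|_{C^{2}}$), there is a $\gamma_{0} > 0$, independent of $(y',t)$, such that $F$ is a $C^{4}$-diffeomorphism from each ball $B((y'_{0},0,t_{0}), \gamma_{0})$ onto its image; moreover $F^{-1}$ satisfies uniform $BUC^{4}$ bounds on these images because the formula for derivatives of the inverse involves only $(DF)^{-1}$ and the higher derivatives of $F$, all of which are uniformly controlled. Next, I upgrade this local injectivity to global injectivity on $\mathbb{R}^{d-1} \times (-\gamma, \gamma) \times (-1,1)$ for some $\gamma \in (0,\gamma_{0}]$: if $F(y'_{1}, r_{1}, t) = F(y'_{2}, r_{2}, t)$ with $|r_{i}| < \gamma$, then the angle between $n(y'_{1},t)$ and $n(y'_{2},t)$ is at most $C \|D^{2}\Phi\|_{L^{\infty}} \|y'_{1} - y'_{2}\|$, while the identity $(\Phi(y'_{1},t), y'_{1}) - (\Phi(y'_{2},t), y'_{2}) = r_{2} n(y'_{2},t) - r_{1} n(y'_{1},t)$ forces $\|y'_{1}-y'_{2}\| \leq C \gamma \|y'_{1}-y'_{2}\|$, which is impossible for $\gamma$ small.

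Having obtained a global $BUC^{4}$ diffeomorphism $F$ from $\mathbb{R}^{d-1} \times (-\gamma, \gamma) \times (-1,1)$ onto an open set $\mathcal{N} \subseteq \mathbb{R}^{d} \times (-1,1)$, I verify the geometric identification: for $(x,t) \in \mathcal{N}$ with $F^{-1}(x,t) = (y',r,t)$, the point $(\Phi(y',t), y')$ is the unique nearest point on $\partial \mathcal{U}_{t}$ to $x$ (uniqueness follows from the injectivity argument above), so $d(x,t) = r$. In particular, $\mathcal{N} \supseteq \{(x,t) : |d(x,t)| < \gamma\}$, and on this set $d$ coincides with the last component of $F^{-1}$, hence is $C^{4}$. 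For the uniform conclusion in (ii), note that the inverse function theorem yields uniform $BUC^{4}$ bounds on $F^{-1}$ on sets of the form $F(\mathbb{R}^{d-1} \times [-(\gamma-\delta), \gamma-\delta] \times (-1,1))$, because the determinant of $DF$ stays away from zero uniformly there; this gives $d \in BUC^{4}(\Omega_{\delta})$.

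\textbf{Main obstacle.} The routine part is applying the inverse function theorem locally. The delicate step is the global injectivity argument in paragraph two, which replaces the compactness of the hypersurface used in the usual tubular neighborhood theorem; the argument hinges on the uniform bound on $D^{2}\Phi$ to control the twisting of the normal field, and it is here that $\gamma$ must be chosen small in a way that depends on $\|\Phi\|_{C^{2}}$. Everything else is bookkeeping to ensure that derivatives up to order four transfer from $F$ to $F^{-1}$ with uniform constants.
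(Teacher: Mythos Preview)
Your plan is correct and follows the same overall architecture as the paper: parametrize a tubular neighborhood by the normal-bundle map $F(y',r,t)=\big((\Phi(y',t),y')+r\,n(y',t),\,t\big)$, show it is a $C^{4}$ diffeomorphism onto $\{|d|<\gamma\}$, and read off $d$ as the $r$-coordinate of $F^{-1}$.  The one place where you and the paper genuinely diverge is the global injectivity step.

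The paper does not pass through a quantitative inverse function theorem at all.  Instead it proves a short geometric lemma: for a $C^{2}$ graph with $\|D^{2}\varphi\|_{\infty}\le C$, the open ball of radius $C^{-1}$ tangent to the graph from the epigraph side lies entirely in the epigraph and touches $\partial\mathcal{S}$ only at the tangency point (the proof compares the graph with the tangent paraboloid of opening $C$).  This immediately gives global injectivity of $\Psi_{r}$ on $\mathbb{R}^{d-1}\times(-\gamma,\gamma)$ with the explicit value $\gamma=\|D^{2}\Phi\|_{L^{\infty}}^{-1}$, and also yields the unique nearest-point property in one stroke.  Local invertibility of $D\Psi_{r}$ is then checked directly from $\|Dn\|\le\gamma^{-1}$, so no abstract inverse-function-theorem constant enters.

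Your analytic route also works, but the inequality you wrote is slightly off: from $P_{1}-P_{2}=r_{2}n_{2}-r_{1}n_{1}$ one gets $\|y'_{1}-y'_{2}\|\le |r_{1}-r_{2}|+C\gamma\|D^{2}\Phi\|\,\|y'_{1}-y'_{2}\|$, and projecting onto $n_{1}$ gives $|r_{1}-r_{2}|=O(\|D^{2}\Phi\|\,\|y'_{1}-y'_{2}\|^{2})$, not $O(\gamma\|y'_{1}-y'_{2}\|)$.  You close the argument by first observing crudely that $\|y'_{1}-y'_{2}\|<2\gamma$ (just the $x'$-component of $r_{2}n_{2}-r_{1}n_{1}$), and then either absorbing the quadratic term or invoking your local injectivity radius.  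The upshot is the same; the paper's inscribed-ball lemma just packages this more cleanly and delivers an explicit $\gamma$.
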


The following preliminary fact will be used in the proof:

	\begin{lemma} \label{L: balls}  Suppose $\varphi : \mathbb{R}^{d -1} \to \mathbb{R}$ is $C^{2}$ and there is a constant $C > 0$ such that $\|D^{2}\varphi(x')\| \leq C$ for all $x' \in \mathbb{R}^{d-1}$.  Let $\mathcal{S} = \{(x_{e},x') \in \mathbb{R}^{d} \, \mid \, x_{e} > \varphi(x') \}$.  If $(x_{e},x') \in \partial \mathcal{S}$ and $B$ is the open ball of radius $C^{-1}$ tangent to $\partial \mathcal{S}$ at $(x_{e},x')$ from inside $\mathcal{S}$, then $B \subseteq \mathcal{S}$ and $\partial B \cap \partial \mathcal{S} = \{(x_{e},x')\}$.    \end{lemma}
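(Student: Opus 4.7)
The plan is to reduce the claim to an explicit squared-distance calculation combined with Taylor's theorem. After translating, assume $x_0' = 0$, write $p_0 = (\varphi(0), 0)$, set $q = D\varphi(0)$ and $\alpha = (1+\|q\|^2)^{-1/2}$. Since $\mathcal{S} = \{F > 0\}$ with $F(x_e, x') = x_e - \varphi(x')$ and $\nabla F = (1, -D\varphi)$, the inward unit normal to $\partial \mathcal{S}$ at $p_0$ is $\nu = \alpha(1, -q)$, and the center of the tangent ball of radius $r = 1/C$ is $c = p_0 + r\nu$.

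First, for each $y' \in \mathbb{R}^{d-1}$, I compute the squared distance from the graph point $(\varphi(y'), y')$ to $c$, namely
\[
f(y') = (\varphi(y') - \varphi(0) - r\alpha)^2 + \|y' + r\alpha q\|^2.
\]
Expanding and using the identity $\alpha^2(1+\|q\|^2) = 1$ to cancel the quadratic-in-$r$ terms, this simplifies to
\[
f(y') - r^2 = (\varphi(y') - \varphi(0))^2 + \|y'\|^2 - 2 r \alpha R(y'),
\]
where $R(y') := \varphi(y') - \varphi(0) - \langle q, y'\rangle$ is the first-order Taylor remainder at the origin. By Taylor's theorem and $\|D^2\varphi\| \leq C$, one has $|R(y')| \leq \tfrac{1}{2} C \|y'\|^2$, so $2 r \alpha |R(y')| \leq \alpha \|y'\|^2$, and therefore
\[
f(y') - r^2 \geq (\varphi(y') - \varphi(0))^2 + (1-\alpha)\|y'\|^2 \geq 0.
\]

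Second, I extract strict inequality for $y' \neq 0$. If $\alpha < 1$, then $(1-\alpha)\|y'\|^2 > 0$ already suffices. The borderline case $\alpha = 1$ (i.e.\ $q = 0$) is the only delicate point: then $R(y') = \varphi(y') - \varphi(0)$, so forcing $f(y') = r^2$ requires simultaneously $(\varphi(y') - \varphi(0))^2 = 0$ and $\|y'\|^2 = 2r R(y') = 0$, giving $y' = 0$. This yields $\partial B \cap \partial \mathcal{S} = \{p_0\}$.

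Finally, to conclude $B \subseteq \mathcal{S}$, I use connectedness. The open ball $B$ contains the segment $\{p_0 + t\nu : 0 < t < 2r\}$ near $t = 0^+$, which visibly lies in $\mathcal{S}$ by definition of the inward normal. The previous step shows $B$ is disjoint from $\partial \mathcal{S}$ (the tangency occurs only at $p_0 \in \partial B$, not in the open ball). Since $B$ is connected, intersects $\mathcal{S}$, and avoids $\partial \mathcal{S}$, it must lie entirely in $\mathcal{S}$. The main obstacle is really just the bookkeeping in the borderline case $\alpha = 1$; everything else is direct algebra plus one Taylor estimate.
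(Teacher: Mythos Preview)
Your proof is correct. The approaches differ in structure: the paper bounds $\varphi$ above by the tangent paraboloid $\psi(\tilde{x}') = \varphi(x') + \langle D\varphi(x'), \tilde{x}' - x'\rangle + \tfrac{C}{2}\|\tilde{x}' - x'\|^2$, observes that the epigraph $\mathscr{P}$ of $\psi$ sits inside $\mathcal{S}$ with the same inward normal at the base point, and then invokes a ``calculus exercise'' for the paraboloid to place the ball inside $\mathscr{P}\subseteq\mathcal{S}$. You instead compute the squared distance $f(y')$ from an arbitrary graph point to the ball center directly and use the Taylor remainder bound $|R(y')|\le \tfrac{C}{2}\|y'\|^2$ to get $f(y')\ge r^2$, then finish with a connectedness argument. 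Your route is more self-contained (the paper's ``calculus exercise'' for the paraboloid is essentially the same squared-distance computation you carry out in general), and your connectedness step cleanly separates the distance estimate from the containment conclusion. The paper's reduction has the conceptual advantage of isolating the extremal case but leaves that case to the reader. One small remark: your treatment of the borderline $\alpha=1$ case is terse but sound---from the lower bound $f(y')-r^2\ge (\varphi(y')-\varphi(0))^2$ you get $\varphi(y')=\varphi(0)$, and then the \emph{exact} identity $f(y')-r^2 = R(y')^2 + \|y'\|^2 - 2rR(y')$ with $R(y')=0$ forces $\|y'\|^2=0$.
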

	
	We defer the proof of the lemma to the end of this section and proceed with the   
	
		\begin{proof}[Proof of Proposition \ref{P: surface_fact}]  To start with, let us define $\gamma$ by 
			\begin{equation*}
				\frac{1}{\gamma} = \|D^{2}\Phi\|_{L^{\infty}(\mathbb{R}^{d - 1} \times (-1,1))}.
			\end{equation*}
			
		\textbf{Step 1: Tubular neighborhoods}  
	
	For each $r \in (-1,1)$, define the parametrization $\psi_{r} : \mathbb{R}^{d-1} \to \partial \mathcal{U}_{r}$ by 
		\begin{equation*}
			\psi_{r}(y) = (\Phi(y,r),y).
		\end{equation*} 
Next, define $\Psi_{r} : \mathbb{R}^{d-1} \times \mathbb{R} \to \mathbb{R}^{d}$ by 
		\begin{equation*}
			\Psi_{r}(y,\xi) = \psi_{r}(y) + \xi n(\psi_{r}(y),r).
		\end{equation*}
	where $n(\cdot,r)$ is the normal vector to $\partial \mathcal{U}_{r}$ pointing into $\mathcal{U}_{r}$.   Explicitly, $n(\cdot,r)$ can be computed as
		\begin{equation*}
			n(x,r) = \frac{(1,-D\Phi(x',r))}{\sqrt{1 + \|D\Phi(x',r)\|^{2}}}.
		\end{equation*}	
The assumptions on $\Phi$ imply that the map $\Psi : (y,\xi,s) \mapsto \Psi_{s}(y,\xi)$ is in $C^{4}$.  We claim that the map $\Psi_{r} : \mathbb{R}^{d-1} \times (-\gamma,\gamma) \to \mathbb{R}^{d}$ is a diffeomorphism, no matter the choice of $r \in (-1,1)$.
	
	To see this, first, notice that $D\Psi_{r}$ can be represented in matrix form as
		\begin{equation*}
			D\Psi_{r}(y,\xi) = \left( \begin{array}{c c c c c}
									\frac{\partial \psi_{r}}{\partial y_{1}} + \xi Dn(\psi_{r}) \frac{\partial \psi_{r}}{\partial y_{1}} & \dots & \frac{\partial \psi_{r}}{\partial y_{d-1}} + \xi Dn(\psi_{r}) \frac{\partial \psi_{r}}{\partial y_{d-1}} & n(\psi_{r}(y))
								\end{array} \right).
		\end{equation*}
	Since $\psi_{r}$ parametrizes $\partial \mathcal{U}_{r}$, $\left \{\frac{\partial \psi_{r}}{\partial y_{1}}, \frac{\partial \psi_{r}}{\partial y_{2}}, \dots, \frac{\partial \psi_{r}}{\partial y_{d-1}} \right\}$ spans $\langle n \rangle^{\perp}$ at each point.  Moreover, recall that $Dn$ maps $\langle n \rangle^{\perp}$ into itself, and the definition of $n$ implies 
		\begin{equation*}
			\|Dn\|_{L^{\infty}(\mathbb{R}^{d-1} \times [-1,1])} \leq \gamma^{-1}.
		\end{equation*}  
Thus, if $|\xi| < \gamma$, then $\text{Id} + \xi Dn(\psi_{r})$ is an invertible operator on $\langle n \rangle^{\perp}$.  In particular, this shows $D\Psi_{r}$ is invertible in $\mathbb{R}^{d-1} \times (-\gamma,\gamma)$. 
	
	In addition, we claim that $\Psi_{r} : \mathbb{R}^{d-1} \times (-\gamma,\gamma) \to \mathbb{R}^{d}$ is injective.  To see this, suppose $\Psi_{r}(y,\xi) = \Psi_{r}(\tilde{y},\tilde{\xi})$.  Notice that, in general, $\Psi_{r}(\cdot,\xi)$ maps $\mathbb{R}^{d-1}$ into $\mathcal{U}_{r}$ if $\xi > 0$ and into $\mathbb{R}^{d} \setminus \overline{\mathcal{U}_{r}}$ if $\xi < 0$.  Thus, we know that $\xi$ and $\tilde{\xi}$ have the same sign.  Of course, if $\xi = \tilde{\xi} = 0$, then $y = \tilde{y}$ follows from the injectivity of the parametrization $\psi_{r}$.  Therefore, let us assume without loss of generality that $0 \leq \xi \leq \tilde{\xi}$ with $\tilde{\xi} \neq 0$.  Since $\tilde{\xi} < \gamma$, Lemma \ref{L: balls} implies that the open ball $B(\Psi_{r}(\tilde{y},\tilde{\xi}), \tilde{\xi})$ is entirely contained in $\mathcal{U}_{r}$ and its boundary intersects $\partial \mathcal{U}_{r}$ only at $\psi_{r}(\tilde{y})$.  On the other hand, $\|\Psi_{r}(\tilde{y},\tilde{\xi}) - \psi_{r}(y)\| = \|\Psi_{r}(y,\xi) - \psi_{r}(y)\| = \xi \leq \tilde{\xi}$.  Since $\psi_{r}(y) \in \partial \mathcal{U}_{r}$, the only way these two observations can be consistent is if $\xi = \tilde{\xi}$ and $\psi_{r}(y) = \psi_{r}(\tilde{y})$.  Therefore, we conclude that $\Psi_{r}$ is injective in $\mathbb{R}^{d-1} \times (-\gamma,\gamma)$ as claimed.  
	
Putting together the results of the previous two paragraphs, we see that $\Psi_{r} : \mathbb{R}^{d-1} \times (-\gamma,\gamma) \to \mathbb{R}^{d}$ is a diffeomorphism onto its range.
	
Finally, observe that if $(\tilde{x},s) \in \mathbb{R}^{d} \times (-1,1)$ satisfies $\text{dist}(\tilde{x},\partial \mathcal{U}_{s}) < \gamma$, we can take any $x \in \partial \mathcal{U}_{s}$ satisfying $\|x - \tilde{x}\| = \text{dist}(\tilde{x},\partial \mathcal{U}_{s})$ and then, arguing as before, we see that $\tilde{x} = \Psi_{s}(x)$ and
		\begin{equation} \label{E: explicit formula for d}
			d(\tilde{x},s) = \pi_{2}(\Psi_{s}^{-1}(\tilde{x})).
		\end{equation}      
(Here $\pi_{2} : \mathbb{R}^{d-1} \times (-\gamma,\gamma) \to (-\gamma,\gamma)$ is the projection onto the second factor.)  
		
	\textbf{Step 2: Regularity of $d$}  
	
	Since $(r,y,\xi) \mapsto \Psi_{r}(y,\xi)$ is $C^{4}$ in all three variables, it is not hard to prove that $(\tilde{x},s) \mapsto \Psi_{s}^{-1}(\tilde{x})$ is $C^{4}$ in both variables in $\{|d| < \gamma\}$.  Of course, from this and \eqref{E: explicit formula for d}, it follows that $d$ is $C^{4}$ in both variables.  Note, in addition, that if $\delta \in (0,\gamma)$, then the assumptions on $\Phi$ and the fact that $\|D\Psi_{s}^{-1}\|$ is bounded in $\Omega_{\delta}$ together imply that all four of the derivatives of $(\tilde{x},s) \mapsto \Psi_{s}^{-1}$ are bounded and uniformly continuous in $\Omega_{\delta}$.  Thus, \eqref{E: explicit formula for d} implies $d \in BUC^{4}(\Omega_{\delta})$.            	\qed \end{proof}  
	
It only remains to treat the
	
	\begin{proof}[Proof of Lemma \ref{L: balls}]  Given such a point $(x_{e},x')$, we can write, for an arbitrary $\tilde{x}' \in \mathbb{R}^{d-1}$,
		\begin{equation*}
			\varphi(\tilde{x}') \leq \varphi(x') + \langle D\varphi(x'), \tilde{x}' - x' \rangle + \frac{C \|\tilde{x}' - x'\|^{2}}{2}.
		\end{equation*}
	Thus, if we define $\psi: \mathbb{R}^{d-1} \to \mathbb{R}$ by $\psi(\tilde{x}') = \varphi(x') + \langle D \varphi(x'), \tilde{x}' - x' \rangle + \frac{C \|\tilde{x}' - x'\|^{2}}{2}$, then the epigraph $\mathscr{P} = \{(\tilde{x}_{e},\tilde{x}') \, \mid \, \tilde{x}_{e} > \psi(\tilde{x}')\}$ is contained in $\mathcal{S}$.
	
	Now $\psi$ is a paraboloid of opening $C$, and a calculus exercise shows that the open ball $B$ of radius $C^{-1}$ tangent to $\partial \mathscr{P}$ at $(x_{e},x')$ from inside $\mathscr{P}$ satisfies $B \subseteq \mathscr{P}$ and $\partial B \cap \partial \mathscr{P} = \{(x_{e},x')\}$.  Since the normal vectors of $\mathscr{P}$ and $\mathcal{S}$ at $(x_{e},x')$ coincide and $\mathscr{P} \subseteq \mathcal{S}$, we arrive at the desired conclusion.  \qed \end{proof}

\section*{Acknowledgements}  

The author is grateful to his thesis adviser, P.E.\ Souganidis, for introducing him to this subject and for many enlightening discussions, and to the anonymous reviewers for suggestions on improving the paper.

\end{document}